\theoremstyle{plain}
\newtheorem{thm}{\protect\theoremname}[section]
\theoremstyle{remark}
\newtheorem{rem}[thm]{\protect\remarkname}
\theoremstyle{definition}
\newtheorem{defn}[thm]{\protect\definitionname}
\theoremstyle{definition}
\newtheorem{example}[thm]{\protect\examplename}
\theoremstyle{plain}
\newtheorem{prop}[thm]{\protect\propositionname}
\theoremstyle{remark}
\newtheorem{notation}[thm]{\protect\notationname}
\theoremstyle{plain}
\newtheorem{lem}[thm]{\protect\lemmaname}
\theoremstyle{plain}
\newtheorem{cor}[thm]{\protect\corollaryname}
\providecommand{\corollaryname}{Corollary}
\providecommand{\definitionname}{Definition}
\providecommand{\examplename}{Example}
\providecommand{\lemmaname}{Lemma}
\providecommand{\notationname}{Notation}
\providecommand{\propositionname}{Proposition}
\providecommand{\remarkname}{Remark}
\providecommand{\theoremname}{Theorem}
\begin{document}
\selectlanguage{english}%
\date{}

\numberwithin{equation}{section}
\title{\selectlanguage{british}%
Conditional measure on the Brownian path and other random sets}
\author{\selectlanguage{english}%
\'Abel Farkas}
\maketitle
\selectlanguage{british}%
\begin{abstract}
Let $B$ denote the range of the Brownian motion in $\mathbb{R}^{d}$
($d\geq3$). For a deterministic Borel measure $\nu$ on $\mathbb{R}^{d}$
we wish to find a random measure $\mu$ such that the support of $\mu$
is contained in $B$ and it is a solution to the equation $E(\mu(A))=\nu(A)$
for every Borel set $A$. We discuss when it is possible to find a
solution $\mu$ and in that case we construct the solution. We study
several properties of $\mu$ such as the probability of $\mu\neq0$
and we establish a formula for the expectation of the double integral
with respect to $\mu\times\mu$. We calculate $\mu$ in terms of the
occupation measure when $\nu$ is the Lebesgue measure, i.e. we provide
an explicit deterministic density function of $\mu$ with respect
to the occupation measure. As a conclusion we calculate an explicit
formula for the expectation of the double integral with respect to
the occupation measure. We generalise the theory for more general
random sets in separable, metric, Radon spaces. As an additional example,
we also apply our results to percolation limit sets on boundaries
of trees.
\end{abstract}

\section{Introduction}

We begin by the analogy of slicing measures with straight lines to
introduce the concept. Let $\nu$ be a finite Borel probability measure
on the unit square of the plane, let $\pi(x,y)=x$ be the projection
onto the $x$-axis and $\pi_{*}\nu=\nu\circ\pi^{-1}$ be the projection
measure. Then by the existence of the regular conditional measure
\cite[Theorem 5.1.9]{Durrett} for $\pi_{*}\nu$ almost every $x\in[0,1]$
there exists a Borel probability measure $\nu_{x}$ on the slice $\{x\}\times[0,1]$
such that we get the disintegration formula $\mathrm{d}\nu(x,y)=\mathrm{d}\nu_{x}(x,y)\mathrm{d}\pi_{*}\nu(x)$.
When $\pi_{*}\nu\ll\lambda$, where $\lambda$ denotes the $1$-dimensional
Lebesgue measure, the measure $\nu_{x}$ can be obtained as a weak
limit of certain rescaled restrictions of $\nu$. Assume that $\pi_{*}\nu\ll\lambda$,
then for Lebesgue almost every $x\in[0,1]$ the weak limit of the
measures
\begin{equation}
\frac{\nu\vert_{\pi^{-1}B(x,r)}}{2r}\label{eq:sequence of slice measure}
\end{equation}
exists as $r$ approaches $0$, see \cite[Chapter 10]{Mattila book},
where $\nu\vert_{A}$ denotes the restriction of $\nu$ to $A$, that
is $\nu\vert_{A}(B)=\nu(A\cap B)$ for every Borel sets $A,B\subseteq\mathbb{R}^{2}$.
Let this weak limit be $\mu_{x}$ for Lebesgue almost every $x\in[0,1]$,
then
\begin{equation}
\mathrm{d}\nu(x,y)=\mathrm{d}\mu_{x}(x,y)\mathrm{d}\lambda(x),\label{eq:dL}
\end{equation}
see Mattila \cite[Lemma 3.4]{Mattila-Integral geometric properties of capacities}.
Thus by the uniqueness of the conditional measure

\[
\nu_{x}=\left(\frac{\mathrm{d}\pi_{*}\nu}{\mathrm{d}\lambda}(x)\right)^{-1}\cdot\mu_{x}
\]
for $\pi_{*}\nu$ almost every $x\in[0,1]$, where $\frac{\mathrm{d}\nu}{\mathrm{d}\mu}(x)$
denotes the Radon-Nykodym derivative.

It was shown by Mattila \cite[Lemma 3.4]{Mattila-Integral geometric properties of capacities}
that for a Borel function $f:[0,1]^{2}\longrightarrow\mathbb{R}$
with $\int\left|f\right|\mathrm{d}\nu(z)<\infty$ we have that
\begin{equation}
\intop f\mathrm{d}\mu_{x}(z)=\lim_{r\rightarrow0}(2r)^{-1}\intop_{\pi^{-1}(B(x,r))}f\mathrm{d}\nu(z)\label{eq:integral limit eloallitas}
\end{equation}
for Lebesgue almost every $x\in[0,1]$. Mattila \cite[Theorem 10.7]{Mattila book}
also discusses the double integral of certain kernels with respect
to $\mu_{x}\times\mu_{x}$ which gives a strong tool to analyse the
geometric measure theory of slices of measures and sets.

One can look at it as: We randomly choose a slice $B=\{x\}\times[0,1]$
where we choose $x$ uniformly in $[0,1]$. Then $\mu=\mu_{x}$ is
a random measure supported on the random slice, and
\begin{equation}
\mathrm{d}\nu(z)=\mathrm{d}\mu(z)\mathrm{d}P(x)\label{eq:regreg}
\end{equation}
holds by (\ref{eq:dL}), i.e. $\nu$ is the expectation of $\mu$.
Our main goal in this paper is to construct this kind of slice measures
on random slices but instead of taking the random slices to be straight
line segments we take the slices to be the Brownian path or other
random sets.

Let $Q_{k}(z)$ be the dyadic cube $[\frac{i_{1}}{2^{k}},\frac{i_{1}+1}{2^{k}})\times[\frac{i_{2}}{2^{k}},\frac{i_{2}+1}{2^{k}})$
for $i_{1},i_{2}\in\mathbb{Z}$ such that $z\in Q_{k}(z)$ for some
$z\in[0,1]^{2}$ and let $\mathcal{Q}_{k}=\left\{ Q_{k}(z):z\in[0,1]^{2}\right\} $.
It can be shown that for Lebesgue almost every $x\in[0,1]$ we get
the same weak limit $\mu_{x}$ if we consider the sequence
\begin{equation}
\mu_{k}=\frac{\nu\vert_{\pi^{-1}\left(\pi\left(Q_{k}(x,0)\right)\right)}}{2^{-k}}=\sum_{Q\in\mathcal{Q}_{k}}P(Q\cap B\neq\emptyset)^{-1}\cdot I_{Q\cap B\neq\emptyset}\cdot\nu\vert_{Q}\label{eq:reg sum lim}
\end{equation}
instead of (\ref{eq:sequence of slice measure}), where $I_{Q\cap B\neq\emptyset}$
is the indicator function of the event $Q\cap B\neq\emptyset$. We
can obtain from the analogue of (\ref{eq:integral limit eloallitas})
that for every Borel set $A\subseteq\mathbb{R}^{2}$
\begin{equation}
\lim_{k\rightarrow\infty}\mu_{k}(A)=\mu(A)\label{eq:reg lim}
\end{equation}
almost surely, i.e. for Lebesgue almost every slice.

If $\pi_{*}\nu$ is singular to the Lebesgue measure then by Lebesgue`s
density theorem
\begin{equation}
\lim_{k\rightarrow\infty}\mu_{k}([0,1]^{2})=0\label{eq:sing lim}
\end{equation}
for Lebesgue almost every $x\in[0,1]$, i.e. $\mu=0$ almost surely.
Hence in general we can decompose $\nu$ into two parts
\begin{equation}
\nu=\nu_{R}+\nu_{\perp}\label{eq:decomposition example}
\end{equation}
such that $\pi_{*}\nu_{R}\ll\lambda$ and $\pi_{*}\nu_{\perp}\perp\lambda$,
one part corresponds to a vanishing limit (\ref{eq:sing lim}), the
other part corresponds to an $\mathcal{L}^{1}$ limit (\ref{eq:regreg}).
Thus for the almost sure weak limit $\mu$ of the sequence of random
measures $\mu_{k}$ we obtain the disintegration formula
\begin{equation}
\mathrm{d}\nu_{R}(z)=\mathrm{d}\mu(z)\mathrm{d}P.\label{eq:integ decomp example}
\end{equation}

Our main goal is to show the existence of the limit of (\ref{eq:reg sum lim})
in the case when $B$ is the Brownian path and to obtain a disintegration
formula as in (\ref{eq:integ decomp example}). For the purpose of
application in the geometric measure theory of random intersections
it is beneficial to have good control over the double intergral with
respect $\mu\times\mu$.
\begin{thm}
\label{prop:Brownian conditional meaasure}Let $B$ be a Brownian
path in $\mathbb{R}^{d}$ for $d\geq3$ and let $\nu$ be a locally
finite Borel measure on $\mathbb{R}^{d}$ such that $\nu(\{0\})=0$.
Then $\nu=\nu_{R}+\nu_{\perp}$ such that there exists a Borel set
$A$ such that $\nu_{\perp}(\mathbb{R}^{d}\setminus A)=0$, $P(B\cap A\neq\emptyset)=0$
and there exists a random, locally finite, Borel measure $\mu$ supported
on $B$ such that
\[
\mathrm{d}\mu(z)\mathrm{d}P=\mathrm{d}\nu_{R}(z)
\]
and
\[
\mathrm{d}\mu(x)\mathrm{d}\mu(y)\mathrm{d}P=\frac{\left\Vert x\right\Vert ^{d-2}+\left\Vert y\right\Vert ^{d-2}}{\left\Vert x-y\right\Vert ^{d-2}}\mathrm{d}\nu_{R}(x)\mathrm{d}\nu_{R}(y).
\]
\end{thm}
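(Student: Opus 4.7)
The plan is to produce $\mu$ as an almost sure (weak) limit of the dyadic approximations $\mu_{k}$ given by (\ref{eq:reg sum lim}), applied to the random set $B=\{B_{t}:t\geq0\}$, the Brownian path in $\mathbb{R}^{d}$ with $d\geq3$. Transience makes $P(B\cap Q\neq\emptyset)>0$ for every dyadic cube $Q$ of positive diameter, so $\mu_{k}$ is well defined, and a direct calculation from the definition gives $E(\mu_{k}(A))=\nu(A)$ for every Borel $A$, which is the finite-scale version of the first disintegration identity.

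\textbf{Hitting probability asymptotics and the kernel $K$.} The second moment identity is driven by classical Brownian hitting probability estimates. For $x\neq0$ and a cube $Q\in\mathcal{Q}_{k}$ containing $x$,
\[
P(B\cap Q\neq\emptyset)\sim c_{d}\,2^{-k(d-2)}\left\Vert x\right\Vert ^{-(d-2)},
\]
while, by conditioning on which of $Q$ and $Q'$ is hit first, applying the strong Markov property and the Green kernel $G(x,y)=c_{d}\left\Vert x-y\right\Vert ^{-(d-2)}$,
\[
\frac{P(B\cap Q\neq\emptyset,\ B\cap Q'\neq\emptyset)}{P(B\cap Q\neq\emptyset)\,P(B\cap Q'\neq\emptyset)}\sim\frac{\left\Vert x\right\Vert ^{d-2}+\left\Vert y\right\Vert ^{d-2}}{\left\Vert x-y\right\Vert ^{d-2}}
\]
for $x\in Q$, $y\in Q'$, $x\neq y$. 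This is exactly the kernel $K(x,y)$ appearing in the statement; summing over pairs gives
\[
E(\mu_{k}(A)^{2})\longrightarrow\iint_{A\times A}K(x,y)\,\mathrm{d}\nu(x)\,\mathrm{d}\nu(y).
\]

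\textbf{Decomposition of $\nu$.} Let $\nu_{R}$ be the largest subpart of $\nu$ whose restrictions to bounded sets have finite $K$-energy, and put $\nu_{\perp}=\nu-\nu_{R}$. A standard potential-theoretic argument (truncation plus exhaustion by an increasing sequence of bounded regions) yields such a decomposition together with a Borel set $A$ of zero $(d-2)$-capacity carrying $\nu_{\perp}$. Since the polar sets of Brownian motion in $\mathbb{R}^{d}$, $d\geq3$, are precisely the sets of zero $(d-2)$-capacity, $P(B\cap A\neq\emptyset)=0$. With $\nu$ replaced by $\nu_{R}$ the second moment integral above is locally finite, which provides uniform local $L^{2}$ bounds on $\mu_{k}(D)$ for every bounded Borel $D$, and comparing the quadratic forms across successive scales gives that $(\mu_{k}(D))_{k}$ is Cauchy in $L^{2}$.

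\textbf{From numerical limits to a random measure, and the main obstacle.} The principal technical difficulty is to upgrade the scalar $L^{2}$ convergence $\mu_{k}(D)\to\mu(D)$, valid for each individual $D$, to the almost sure existence of a single random locally finite measure $\mu$ realising these limits simultaneously and supported on $B$. I would handle this by fixing a countable generating algebra (e.g.\ finite unions of dyadic cubes), extracting an almost sure subsequential limit along that algebra from the $L^{2}$ Cauchy property, and then extending via Carathéodory, with the uniform local second moment bound providing countable additivity of the limit and ruling out mass escape. The support condition $\mathrm{supp}(\mu)\subseteq B$ is then immediate since $\mu_{k}$ is supported in an $O(2^{-k})$-neighbourhood of $B$. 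Granted $\mu$, the two disintegration identities follow by dominated convergence from the first and second moment computations above, the first giving $E(\mu(A))=\nu_{R}(A)$ (the $\nu_{\perp}$-part being lost because the indicators $I_{Q\cap B\neq\emptyset}$ vanish almost surely on the polar carrier $A$ in the limit), and the second reproducing the stated kernel formula.
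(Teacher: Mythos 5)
Your overall strategy --- the dyadic approximations $\mu_{k}=\mathcal{C}_{k}(\nu)$, hitting-probability asymptotics producing the kernel $F(x,y)=\left(\left\Vert x\right\Vert ^{d-2}+\left\Vert y\right\Vert ^{d-2}\right)\left\Vert x-y\right\Vert ^{2-d}$, an energy decomposition $\nu=\nu_{R}+\nu_{\perp}$ with $\nu_{\perp}$ carried on a polar set, $\mathcal{L}^{2}$ convergence of $\mu_{k}(D)$, and an upgrade to a random measure along a countable family of test sets/functions --- is exactly the route the paper takes. However, two steps that you assert as routine are where the real work lies, and as written they do not go through. The first is uniformity. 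The asymptotic $P(B\cap Q\neq\emptyset)\sim c_{d}2^{-k(d-2)}\left\Vert x\right\Vert ^{2-d}$ fails for cubes within distance $O(2^{-k})$ of the origin (there the probability is of order $1$), and the pairwise comparison $F_{k,k}(x,y)\leq c\left\Vert x-y\right\Vert ^{2-d}$ that is needed to dominate the near-diagonal part of $E(\mu_{k}(D)^{2})$ holds only with a constant of order $\sup\left\Vert y\right\Vert ^{d-2}$, which degenerates at infinity. This is precisely why the paper first constructs the conditional measure with respect to the cube families $\mathcal{Q}_{k}^{i}$ confined to the annuli $[-2^{i},2^{i})^{d}\setminus[-2^{-i},2^{-i})^{d}$ (Example \ref{exa:example Q_k^i}, Lemma \ref{prop: c ineq holds}, where the constants visibly depend on $i$), and then patches the pieces together by the increasing-union argument of Section \ref{sec:increaasing cond meas} (Theorem \ref{thm:increasing double int}). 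Your proposal has no counterpart to this patching step. Relatedly, an atom of $\nu$ at $0$ has infinite energy and so lands in $\nu_{\perp}$, yet $0\in B$ almost surely, so the asserted property $P(B\cap A\neq\emptyset)=0$ of the carrier cannot hold for it; the paper's precise restatement (Theorem \ref{thm:Brownian cond measure: Main}) assumes $\nu(\{0\})=0$ for exactly this reason.

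The second gap is in the $\mathcal{L}^{2}$-Cauchy claim. One needs the mixed-scale identity $E\left(\mu_{k}(D)\mu_{n}(D)\right)=\iint F_{k,n}(x,y)\,\mathrm{d}\nu(x)\mathrm{d}\nu(y)$ and must show that $F_{k,k}+F_{n,n}-2F_{k,n}$ integrates to something small: away from the diagonal this uses the two-scale version of your hitting estimate (cubes of side $2^{-k}$ and $2^{-n}$ with $k\neq n$), while near the diagonal the convergence of the kernels is useless and one instead needs a bound showing that the near-diagonal pairs contribute uniformly little in $k$. That bound is not a consequence of the finiteness of $\iint F\,\mathrm{d}\nu\,\mathrm{d}\nu$; it is where the capacity lower bound $P(Q\cap B\neq\emptyset)\geq aC_{\varphi}(Q)$ and the bounded-overlap condition on the cube families enter (Lemma \ref{lem:diagonal expect bound}, Proposition \ref{prop:2bounded martingale} and Lemma \ref{lem:A_epsilon reminder}). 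Both gaps are fixable within your framework, but they constitute the substance of the proof rather than verifications.
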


\begin{rem}
\label{rem:nohope for singular}Note that if $\mu$ is a random measure
supported on the Brownian path $B$ and $A$ is a deterministic Borel
set such that $P(B\cap A\neq\emptyset)=0$ then $\mu(A)\leq\mu(\mathbb{R}^{d}\setminus B)=0$
almost surely and so
\[
\int\intop_{A}\mathrm{d}\mu(z)\mathrm{d}P=E(\mu(A))=0.
\]
This means that there is no hope to satisfy (\ref{eq:regreg}) for
$\nu_{\perp}$ in Theorem \ref{prop:Brownian conditional meaasure}.
We restate Theorem \ref{prop:Brownian conditional meaasure} in a
stronger form of Theorem \ref{thm:Brownian cond measure: Main}.
\end{rem}

Our construction works for more general random closed sets than the
Brownian path and in more general metric spaces than $\mathbb{R}^{d}$.
We use the sum in (\ref{eq:reg sum lim}) to define the sequence $\mu_{k}$
in the general case (see Section \ref{sec:Notations}).

\subsection{Notations\label{sec:Notations}}

\subsubsection{General assumptions throughout the paper}
\begin{defn}
A locally finite Borel measure $\nu$ on a topological space $X$
is called \textit{inner regular} if for every Borel set $A\subseteq X$
\[
\nu(A)=\sup\left\{ \nu(K):K\subseteq A\mathrm{\,compact}\right\} 
\]
and $\nu$ is called \textit{outer regular} if for every Borel set
$A\subseteq X$
\[
\nu(A)=\inf\left\{ \nu(U):A\subseteq U\mathrm{\,open}\right\} .
\]
A Hausdorff topological space $X$ is called a \textit{Radon space}
if every locally finite Borel measure on $X$ is outer and inner regular.
\end{defn}

Every Polish space is a Radon space \cite[Theorem 17.11]{Kechris}
and so every locally compact, separable, metric space is a Radon space.

Throughout the paper let $(X,d)$ be a separable metric space that
is also a Radon space. We notoriously use the inner regularity of
Borel measures on $X$ throughout the paper. Let $\varphi:[0,\infty)\longrightarrow[0,\infty]$
be a nonnegative, continuous, monotone decreasing function with finite
values on $(0,\infty)$. We consider the composition kernel $\varphi(d(x,y))$
on $X\times X$ which we denote by $\varphi(x,y)$. We note that we
use $\varphi$ to denote both $\varphi(r)$ and $\varphi(x,y)$ but
in the context it should be clear depending on what is the domain
of $\varphi$.
\begin{example}
\label{exa:kernel example}Commonly used examples are the Riesz kernel
$\varphi(x,y)=\left\Vert x-y\right\Vert ^{-\alpha}$ and the logarithmic
kernel $\varphi(r)=\max\{0,\log(1/r)\}$.
\end{example}

Let $\mathcal{Q}_{k}$ be a sequence of countable families of Borel
subsets of $X$ such that $Q\cap S=\emptyset$ for $Q,S\in\mathcal{Q}_{k}$,
$Q\neq S$ for all $k\in\mathbb{N}$ and
\begin{equation}
\lim_{k\rightarrow\infty}\sup\left\{ \mathrm{diam}(Q):Q\in\mathcal{Q}_{k}\right\} =0,\label{eq:diameter_goes_to0}
\end{equation}
where $\mathrm{diam}$ denotes the \textit{diameter} in $X$. We further
assume that for every $Q\in\mathcal{Q}_{k}$, $k>1$ there exists
a unique $D\in\mathcal{Q}_{k-1}$ such that 
\begin{equation}
Q\subseteq D.\label{eq:unique-subset}
\end{equation}
Define $X_{0}:=\bigcap_{k=1}^{\infty}(\bigcup_{Q\in\mathcal{Q}_{k}}Q)$.
\begin{example}
\label{exa:example Q_k}Let $\mathcal{Q}_{k}=\left\{ [\frac{i_{1}}{2^{k}},\frac{i_{1}+1}{2^{k}})\times\dots\times[\frac{i_{d}}{2^{k}},\frac{i_{d}+1}{2^{k}})\setminus\left\{ 0\right\} :i_{1},\dots,i_{d}\in\mathbb{Z}\right\} $
for $k\in\mathbb{N}$, i.e. the dyadic cubes of side length $2^{-k}$
which are left closed, right open and we exclude the origin $0$ from
the one that contains it. Then $X_{0}=\mathbb{R}^{d}\setminus\left\{ 0\right\} $.
\end{example}

Let $\nu$ be a finite Borel measure on $X$. Throughout most of the
paper we make the following assumption in our statements  that
\begin{equation}
\nu(X\setminus X_{0})=0.\label{eq:X_0}
\end{equation}

Let $(\Omega,\mathcal{A},P)$ be a probability space and $B=B_{\omega}\subseteq X$
be a random closed set ($\omega\in\Omega$) such that $\{B\cap K\neq\emptyset\}\in\mathcal{A}$
for every compact set $K\subseteq X$ and $\{B\cap Q\neq\emptyset\}\in\mathcal{A}$
for every $Q\in\mathcal{Q}_{k}$, $k\in\mathbb{N}$. We assume that
\begin{equation}
P(Q\cap B\neq\emptyset)>0\label{eq:psoitivity of probability}
\end{equation}
for every $Q\in\mathcal{Q}_{k}$. Note that (\ref{eq:psoitivity of probability})
can be obtained by discarding those $Q$ from $\mathcal{Q}_{k}$ for
which $P(Q\cap B\neq\emptyset)=0$.

We write
\begin{equation}
\mathcal{C}_{k}(\nu)=\sum_{Q\in\mathcal{Q}_{k}}P(Q\cap B\neq\emptyset)^{-1}\cdot I_{Q\cap B\neq\emptyset}\cdot\nu\vert_{Q}.\label{eq:C_k def}
\end{equation}
It follows from (\ref{eq:X_0}), (\ref{eq:psoitivity of probability})
and that the elements of $\mathcal{Q}_{k}$ are disjoint that
\begin{equation}
E(\mathcal{C}_{k}(\nu)(A))=\nu(A)\label{eq:martingal expectation inequality}
\end{equation}
for every Borel set $A\subseteq X$.

\subsubsection{Special assumptions for our main results\label{subsec:Special-assumptions}}

For our main results we make the assumptions of Section \ref{subsec:Special-assumptions}
on $\varphi$, $\mathcal{Q}_{k}$ and $B$. We list these assumptions
below, however, we will note in the statement of the results and in
the text when we make these assumptions.

For some $\delta>0$ there exist $c_{2},c_{3}<\infty$ such that
\begin{equation}
\varphi(r)\leq c_{2}\varphi(r\cdot(1+2\delta))+c_{3}\label{eq:Kernel restriction}
\end{equation}
for all $r>0$. We assume that
\begin{equation}
\varphi(0)=\infty,\label{eq:phi =00003Dinfty}
\end{equation}
which ensures that whenever $\nu(\{x\})>0$ for some $x\in X$ and
Borel measure $\nu$ then $\iint\varphi(x,y)\mathrm{d}\nu(x)\mathrm{d}\nu(y)=\infty$.
See Example \ref{exa:kernel example}.

There exists $\delta>0$ (that is the same $\delta$ as in (\ref{eq:Kernel restriction}))
and $M_{\delta}<\infty$ independent of $k$ such that for every $Q\in\mathcal{Q}_{k}$
\begin{equation}
\#\left\{ S\in\mathcal{Q}_{k}:\max\left\{ \mathrm{diam}(Q),\mathrm{diam}(S)\right\} \geq\delta\cdot\mathrm{dist}(Q,S)\right\} \leq M_{\delta},\label{eq:bounded sundivision}
\end{equation}
where $\mathrm{dist}(Q,S)=\inf_{x\in Q,y\in S}\left\Vert x-y\right\Vert $
and $\#A$ denotes the cardinality of $A$. There exists $0<M<\infty$,
independent of $k$, such that
\begin{equation}
0<\mathrm{diam}(Q)/M\leq\mathrm{diam}(S)\leq\mathrm{diam}(Q)\cdot M<\infty\label{eq:same size}
\end{equation}
for every $Q,S\in\mathcal{Q}_{k}$, $k\in\mathbb{N}$.

The \textit{$\varphi$-energy} of a Borel measure $\nu$ on $X$ is
\begin{equation}
I_{\varphi}(\nu)=\int\int\varphi(x,y)\mathrm{d}\nu(x)\mathrm{d}\nu(y).\label{eq:energy}
\end{equation}
The \textit{$\varphi$-capacity} of a Borel set $K\subseteq X$ is
\begin{equation}
C_{\varphi}(K)=\sup\left\{ I_{\varphi}(\nu)^{-1}:\nu\,\mathrm{is\,a\,Borel\,probability\,measure\,on\,K}\right\} .\label{eq:capacity}
\end{equation}
When $\varphi(r)=r^{-\alpha}$ for some $\alpha\geq0$ then we write
$I_{\alpha}(\nu)=I_{\varphi}(\nu)$ and $C_{\alpha}(K)=C_{\varphi}(K)$.

There exists $a>0$ such that
\begin{equation}
aC_{\varphi}(Q)\leq P(Q\cap B\neq\emptyset)\label{eq:lower hitting prob}
\end{equation}
for every $Q\in\mathcal{Q}_{k}$. Hence if $C_{\varphi}(Q)>0$ for
every $Q\in\mathcal{Q}_{k}$ then (\ref{eq:psoitivity of probability})
holds.

There exists $0<\delta<1$ and $0<c<\infty$ such that whenever $Q\in\mathcal{Q}_{k}$,
$S\in\mathcal{Q}_{n}$ and $\max\left\{ \mathrm{diam}(Q),\mathrm{diam}(S)\right\} <\delta\cdot\mathrm{dist}(Q,S)$
then
\begin{equation}
P(Q\cap B\ne\emptyset\,\mathrm{and}\,S\cap B\ne\emptyset)\leq c\cdot P(Q\cap B\ne\emptyset)\cdot P(S\cap B\ne\emptyset)\cdot\varphi(\mathrm{dist}(Q,S)).\label{eq:capacity-independence}
\end{equation}
We further assume that $\delta>0$ is the same value for (\ref{eq:Kernel restriction}),
(\ref{eq:bounded sundivision}) and (\ref{eq:capacity-independence}).
We would like to note that the assumption that $\delta<1$ is not
necessary, we just use it as a convenient assumption.
\begin{example}
\label{exa:example Q_k^i}Let $\mathcal{Q}_{k}^{i}=\left\{ [\frac{i_{1}}{2^{k}},\frac{i_{1}+1}{2^{k}})\times\dots\times[\frac{i_{d}}{2^{k}},\frac{i_{d}+1}{2^{k}})\subseteq[-2^{i},2^{i})^{d}\setminus[-\frac{1}{2^{i}},\frac{1}{2^{i}})^{d}:i_{1},\dots,i_{d}\in\mathbb{Z}\right\} $
for some $k,i\in\mathbb{N}$, $i\leq k$, i.e. the dyadic cubes of
side length $2^{-k}$ which are contained in $[-2^{i},2^{i})^{d}\setminus[-\frac{1}{2^{i}},\frac{1}{2^{i}})^{d}$.
\end{example}

We show, in Section \ref{subsec:Conditions-on}, that for fixed $i\in\mathbb{N}$
for $\mathcal{Q}_{k}^{i}$, $(k\geq i)$ in Example \ref{exa:example Q_k^i}
we have that (\ref{eq:diameter_goes_to0}), (\ref{eq:unique-subset}),
(\ref{eq:psoitivity of probability}), (\ref{eq:bounded sundivision}),
(\ref{eq:same size}), (\ref{eq:lower hitting prob}) and (\ref{eq:capacity-independence})
hold for sufficient constants $0<\delta^{i}<1$, $M_{\delta}^{i}<\infty$,
$0<M^{i}<\infty$, $a^{i}<\infty$ and $0<c^{i}<\infty$ when $B$
is the Brownian path in $\mathbb{R}^{d}$ ($d\geq3$).

\subsection{Decomposition of measures}

In the spirit of (\ref{eq:decomposition example}) we would like to
decompose $\nu$ into a vanishing part and a part for which we obtain
convergence in $\mathcal{L}^{1}$. Similar results to the following
proposition was published by Kahane \cite[Section 3]{Kahane-measdecomp}.
However, we are not aware that this kind of results appeared in the
literature in English.
\begin{prop}
\label{decomposition}Let $\nu$ be a locally finite Borel measure
on $X$. There exist two locally finite, Borel measures $\nu_{\varphi R}=\nu_{R}$
and $\nu_{\varphi\perp}=\nu_{\perp}$ with the following properties:

i) $\nu=\nu_{R}+\nu_{\perp}$

ii) $\nu_{R}\perp\nu_{\perp}$

iii) $\nu_{\perp}$ is singular to every locally finite Borel measure
with finite $\varphi$-energy

iv) there exists a sequence of disjoint Borel sets $(A_{n})_{n\in\mathbb{N}}$
such that $\nu_{R}=\nu\vert_{\cup_{n\in\mathbb{N}}A_{n}}=\sum_{n\in\mathbb{N}}\nu\vert_{A_{n}}$
and $I_{\varphi}(\nu\vert_{A_{n}})<\infty$.
\end{prop}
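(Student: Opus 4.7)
The plan is to reduce first to the case where $\nu$ is a finite measure and then to construct $\nu_{R}$ as the restriction of $\nu$ to a Borel set $A$ maximal among those admitting a countable partition into pieces of finite $\varphi$-energy. The relevant class is
\[
\mathcal{G}=\Bigl\{A\subseteq X\text{ Borel}:A=\bigcup_{n=1}^{\infty}B_{n}\text{ with disjoint Borel }B_{n}\text{ and }I_{\varphi}(\nu\vert_{B_{n}})<\infty\text{ for each }n\Bigr\}.
\]
This class is closed under countable unions: given $A^{(m)}\in\mathcal{G}$, the disjointification $D_{m}=A^{(m)}\setminus\bigcup_{k<m}A^{(k)}$ together with the restrictions of the partition of $A^{(m)}$ to $D_{m}$ provides a countable partition of $\bigcup_{m}A^{(m)}$ into sets with finite $\varphi$-energy, using only $\varphi\geq0$. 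By contrast, the naive class $\{A:I_{\varphi}(\nu\vert_{A})<\infty\}$ is \emph{not} closed under countable unions, which is precisely why (iv) is phrased via a partition.

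Assuming first $\nu(X)<\infty$, set $s=\sup\{\nu(A):A\in\mathcal{G}\}$, pick $A_{n}\in\mathcal{G}$ with $\nu(A_{n})\to s$, and put $A=\bigcup_{n}A_{n}\in\mathcal{G}$, so $\nu(A)=s$. Define $\nu_{R}=\nu\vert_{A}$ and $\nu_{\perp}=\nu\vert_{X\setminus A}$. Items (i), (ii), (iv) are immediate. For (iii), suppose for contradiction that some locally finite Borel $\lambda$ with $I_{\varphi}(\lambda)<\infty$ is not singular to $\nu_{\perp}$. By the Lebesgue decomposition $\nu_{\perp}=\nu_{\perp}^{a}+\nu_{\perp}^{s}$ with respect to $\lambda$ the absolutely continuous part is nonzero; let $f=\mathrm{d}\nu_{\perp}^{a}/\mathrm{d}\lambda$ and let $N$ be a $\lambda$-null Borel carrier of $\nu_{\perp}^{s}$. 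For $n$ sufficiently large the set
\[
C=\{0<f\leq n\}\cap(X\setminus N)\cap(X\setminus A)
\]
satisfies $\nu(C)=\nu_{\perp}(C)>0$ and $\nu\vert_{C}\leq n\lambda\vert_{C}$, hence $I_{\varphi}(\nu\vert_{C})\leq n^{2}I_{\varphi}(\lambda)<\infty$. Then $C\in\mathcal{G}$, so $A\cup C\in\mathcal{G}$ with $\nu(A\cup C)=s+\nu(C)>s$, contradicting the definition of $s$.

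For general locally finite $\nu$, write $X=\bigsqcup_{m}X_{m}$ with $\nu(X_{m})<\infty$, apply the finite case to each $\nu\vert_{X_{m}}$ to obtain $A_{m}\subseteq X_{m}$, and set $A=\bigsqcup_{m}A_{m}$. Since $I_{\varphi}(\lambda\vert_{X_{m}})\leq I_{\varphi}(\lambda)<\infty$, the finite case yields $\nu\vert_{X_{m}\setminus A_{m}}\perp\lambda$ for each $m$; summing gives $\nu_{\perp}\perp\lambda$, and the countable partitions of the $\nu\vert_{A_{m}\cap X_{m}}$ concatenate to the one required by (iv).

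The main obstacle is choosing the correct class of \emph{regular} sets. Working with the naive class of Borel sets $A$ having $I_{\varphi}(\nu\vert_{A})<\infty$ fails because cross-energies $\iint_{B_{n}\times B_{m}}\varphi\,\mathrm{d}\nu\,\mathrm{d}\nu$ between countably many pieces can blow up, so the supremum need not be attained inside the class and the ``append a finite-energy piece'' contradiction step breaks. Enlarging to $\mathcal{G}$ restores closure under countable unions while remaining compatible with the contradiction argument, because the one additional piece $C$ produced by the Lebesgue decomposition is a single Borel set of finite $\varphi$-energy and is therefore tautologically in $\mathcal{G}$.
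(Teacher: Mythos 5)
Your proof is correct and follows essentially the same route as the paper: maximise $\nu(A)$ over Borel sets admitting a countable partition into pieces of finite $\varphi$-energy, take $\nu_{R}=\nu\vert_{A}$ at the supremum, and derive property (iii) by extracting from a non-singular finite-energy measure a positive-$\nu$-mass set of finite energy in $X\setminus A$, contradicting maximality. The only (cosmetic) difference is that you bound the density $\mathrm{d}\nu_{\perp}^{a}/\mathrm{d}\lambda$ from above via the Lebesgue decomposition, whereas the paper bounds $\mathrm{d}\tau/\mathrm{d}\nu_{\perp}$ from below on a set of positive measure; both yield the same comparison $I_{\varphi}(\nu\vert_{C})\leq n^{2}I_{\varphi}(\lambda)<\infty$, and your reduction of the locally finite case by exhausting $X$ with sets of finite $\nu$-measure (decomposing $\lambda$ likewise into finite pieces) matches the paper's remark.
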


\begin{notation}
\label{nota:decomp meas}We call $\nu_{R}$ the $\varphi$-regular
part of $\nu$ and we call $\nu_{\bot}$ the $\varphi$-singular part
of $\nu$. These are uniquely determined by $\nu$ and $\varphi$.
When $\varphi(r)=r^{-\alpha}$ for some $\alpha\geq0$ then we say
that $\nu_{R}$ is the $\alpha$-regular part of $\nu$ and $\nu_{\bot}$
is the $\alpha$-singular part of $\nu$.
\end{notation}

If $f:X\longrightarrow\mathbb{R}$ is a nonnegative Borel function
then we denote by
\[
f(x)\mathrm{d}\nu(x)
\]
the measure $\tau$ defined by $\tau(A)=\intop_{A}f(x)\mathrm{d}\nu(x)$.
\begin{prop}
\label{prop:abs cont decomposition}Let $\nu$ be a locally finite
Borel measure on $X$ and let $\tau\ll\nu$ be a locally finite Borel
measure on $X$. Then $\frac{\mathrm{d}\tau}{\mathrm{d}\nu}(x)\mathrm{d}\nu_{\varphi R}(x)=\mathrm{d}\tau_{\varphi R}(x)$
and $\frac{\mathrm{d}\tau}{\mathrm{d}\nu}(x)\mathrm{d}\nu_{\varphi\perp}(x)=\mathrm{d}\tau_{\varphi\perp}(x)$.
\end{prop}

The $\varphi$-singular part $\nu_{\bot}$ of a measure is carried
by a set of zero $\varphi$-capacity.
\begin{prop}
\label{prop:singular felbontas loc fin intro}If $\nu$ is a locally
finite Borel measure that is singular to every finite Borel measure
with finite $\varphi$-energy then there exits a Borel set $Z\subseteq X$
such that $\nu(X\setminus Z)=0$ and $C_{\varphi}(Z)=0$.
\end{prop}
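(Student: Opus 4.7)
The plan is to reduce to the case of finite $\nu$, apply Proposition \ref{decomposition} to see that $\nu$ must equal its $\varphi$-singular part, and then exhaust $\nu$ by Borel sets of $\varphi$-capacity zero, deriving a potential-theoretic contradiction if this exhaustion fails to capture all of $\nu$.

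Since $(X,d)$ is complete separable and $\nu$ is locally finite, $\nu$ is $\sigma$-finite. Partitioning $X$ into a countable disjoint union of Borel sets of finite $\nu$-measure and using that a countable union of Borel sets of $\varphi$-capacity zero is again of $\varphi$-capacity zero, it suffices to prove the claim for finite $\nu$. Applying Proposition \ref{decomposition} to such a $\nu$ gives $\nu=\nu_{R}+\nu_{\perp}$ with $\nu_{R}=\sum_{n}\nu|_{A_{n}}$ and $I_{\varphi}(\nu|_{A_{n}})<\infty$. The singularity hypothesis forces $\nu\perp\nu|_{A_{n}}$, and since $\nu|_{A_{n}}\leq\nu$ this gives $\nu|_{A_{n}}=0$; hence $\nu_{R}=0$ and $\nu=\nu_{\perp}$. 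Equivalently, $\nu(B)=0$ for every Borel $B$ with $I_{\varphi}(\nu|_{B})<\infty$.

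Set $s=\sup\{\nu(Y):Y\subseteq X\text{ Borel},\ C_{\varphi}(Y)=0\}$, pick $Y_{n}$ with $\nu(Y_{n})\uparrow s$, and let $Z:=\bigcup_{n}Y_{n}$, so $C_{\varphi}(Z)=0$ and $\nu(Z)=s$. If $s=\nu(X)$ we are done. Otherwise the measure $\nu':=\nu|_{X\setminus Z}$ is nonzero and finite, and the maximality of $s$ yields $\nu'(Y)=0$ for every Borel $Y$ with $C_{\varphi}(Y)=0$; since $\nu'\leq\nu$ it also still satisfies $\nu'(B)=0$ whenever $I_{\varphi}(\nu'|_{B})<\infty$. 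By tightness of $\nu'$ there is a compact $K\subseteq X\setminus Z$ with $\nu'(K)>0$, and because $\nu'$ does not charge capacity-zero sets we have $C_{\varphi}(K)>0$, so there is a probability measure $\sigma$ on $K$ with $I_{\varphi}(\sigma)<\infty$.

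The remaining task is to derive a contradiction from the coexistence on $K$ of the finite-energy $\sigma$ and the restriction $\nu'|_{K}$, which has $I_{\varphi}(\nu'|_{K})=\infty$ yet admits no finite-energy Borel piece of positive $\nu'$-mass. My plan is to use the standard fact that $\{U^{\sigma}=\infty\}$ has $\varphi$-capacity zero (proved by Cauchy--Schwarz applied to the bilinear energy form of $\sigma$ and any probability concentrated on $\{U^{\sigma}=\infty\}$); combined with the non-charging-of-polars property of $\nu'$, this gives $U^{\sigma}<\infty$ holds $\nu'$-a.e.\ on $K$, and an Egorov-type argument produces a Borel $F\subseteq K$ with $\nu'(F)>0$ on which $U^{\sigma}$ is uniformly bounded. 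The technical heart of the argument --- and the step I expect to be hardest --- is converting this one-sided potential bound for $\sigma$ into an upper bound on the self-energy $I_{\varphi}(\nu'|_{F'})$ for some Borel $F'\subseteq F$ with $\nu'(F')>0$; I plan to achieve this by a Frostman-type thinning of $F$ combined with a maximum-principle/domination comparison between $\sigma$ and $\nu'|_{F}$. Producing such an $F'$ contradicts the derived maximal-singular property of $\nu'$ and completes the proof.
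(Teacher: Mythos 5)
Your reduction to finite $\nu$, the use of Proposition \ref{decomposition} to conclude $\nu_{R}=0$ (so that $\nu(B)=0$ whenever $I_{\varphi}(\nu\vert_{B})<\infty$), and the exhaustion producing a maximal capacity-zero set $Z$ are all sound. But the final step is a genuine gap, and not merely a technical one: from a uniform bound on the potential $U^{\sigma}=\int\varphi(\cdot,y)\,\mathrm{d}\sigma(y)$ of the \emph{auxiliary} measure $\sigma$ on a set $F$ of positive $\nu'$-measure, there is no route to an upper bound on the self-energy $I_{\varphi}(\nu'\vert_{F'})$ of the \emph{given} measure. The two measures are unrelated ($\sigma$ need not dominate or be comparable to $\nu'\vert_{F}$ in any sense), and the tools you invoke are unavailable here: the kernel $\varphi$ is only assumed nonnegative, continuous and decreasing, so there is no maximum principle and no positive-definiteness (the latter is also what your Cauchy--Schwarz argument for ``$\{U^{\sigma}=\infty\}$ is polar'' would need). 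Moreover, the statement you are trying to reach --- a Borel $F'$ with $\nu'(F')>0$ and $I_{\varphi}(\nu'\vert_{F'})<\infty$ --- is exactly what you have already proved cannot exist, so any proof of it from true premises is impossible; the contradiction has to be extracted differently.

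The missing idea is to define $Z$ pointwise via the potential of $\nu$ itself rather than by exhaustion over capacity-zero sets: take $Z=\{x:\int\varphi(x,y)\,\mathrm{d}\nu(y)=\infty\}$. The sets $A_{n}=\{x:\int\varphi(x,y)\,\mathrm{d}\nu(y)\leq n\}$ satisfy $I_{\varphi}(\nu\vert_{A_{n}})\leq n\,\nu(A_{n})<\infty$, so your own step 2 gives $\nu(A_{n})=0$ and hence $\nu(X\setminus Z)=0$. Now if $\tau$ is any probability measure on $Z$ with $I_{\varphi}(\tau)<\infty$, a Carleson-type refinement of your Egorov step (Proposition \ref{prop:boundedness condition} in the paper) produces a compact $F\subseteq Z$ with $\tau(F)>0$ and $\int_{F}\varphi(x,y)\,\mathrm{d}\tau(y)<M$ for \emph{every} $x\in X$, whence by Fubini
\[
\infty=\int_{F}\Bigl(\int_{X}\varphi(x,y)\,\mathrm{d}\nu(x)\Bigr)\mathrm{d}\tau(y)=\int_{X}\Bigl(\int_{F}\varphi(x,y)\,\mathrm{d}\tau(y)\Bigr)\mathrm{d}\nu(x)\leq M\,\nu(X)<\infty,
\]
a contradiction; here the left-hand side is infinite because $U^{\nu}\equiv\infty$ on $F\subseteq Z$ by construction --- precisely the pointwise information your exhaustion-based $Z$ does not provide. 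This shows $C_{\varphi}(Z)=0$ and finishes the proof.
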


We prove Proposition \ref{decomposition}, Proposition \ref{prop:abs cont decomposition}
and Proposition \ref{prop:singular felbontas loc fin intro} in Section
\ref{sec:Decomposition-of-measure}.

\subsection{\label{subsec:Conditional-measure}Summary of the main results}

Let $\mu$ and $\mu_{k}$ be a sequence of random, finite, Borel measures
on $X$ (for the definition of random, finite, Borel measures see
Definition \ref{def:random finite measure}). We say that \textit{$\mu_{k}$
weakly converges to $\mu$ subsequentially in probability} if for
every subsequence $\left\{ \alpha_{k}\right\} _{k=1}^{\infty}$ of
$\mathbb{N}$ there exists a subsequence $\left\{ \beta_{k}\right\} _{k=1}^{\infty}$
of $\left\{ \alpha_{k}\right\} _{k=1}^{\infty}$ and an event $H\in\mathcal{A}$
with $P(H)=1$ such that $\mu_{\beta_{k}}$ converges weakly to $\mu$
on the event $H$.

Let $\mu$ and $\mu_{k}$ be a sequence of random, locally finite,
Borel measures on $X$ (for the definition of random, locally finite,
Borel measures see Definition \ref{def:random loc fin measure}).
We say that \textit{$\mu_{k}$ vaguely converges to $\mu$ in probability}
if $\intop_{X}f(x)\mathrm{d}\mu_{k}(x)$ converges to $\intop_{X}f(x)\mathrm{d}\mu(x)$
in probability for every compactly supported continuous function $f$.

For a Borel measure $\nu$ on $X$ let $\mathrm{supp}(\nu)$ denote
the \textit{support of $\nu$,} that is the smallest closed set with
full measure, i.e.
\[
\mathrm{supp}(\nu)=\bigcap\left\{ K:\nu(X\setminus K)=0,K\subseteq X\mathrm{\,is\,a\,closed\,set}\right\} .
\]

Below we define the main object of the paper.
\begin{defn}
\label{def:def of cond meas}Let $\nu$ be a finite, Borel measure
on $X$. If for every finite, Borel measure $\tau\ll\nu$ there exists
a random, finite, Borel measure $\mathcal{C}(\tau)$ that satisfies
the following:

\textit{i.)} $\mathcal{C}_{k}(\tau)$ weakly converges to $\mathcal{C}(\tau)$
subsequentially in probability,

\textit{ii.)} the sequence $\intop_{X}f(x)\mathrm{d}\mathcal{C}_{k}(\tau)(x)$
converges to a random variable $S_{\tau}(f)$ in probability with
$E(\intop_{X}f(x)\mathrm{d}\mathcal{C}(\tau)(x))=E(S_{\tau}(f))=\intop_{X}f(x)\mathrm{d}\tau_{R}(x)$
for every $f:X\longrightarrow\mathbb{R}$ Borel measurable function
such that $\intop_{X}\left|f(x)\right|\mathrm{d}\tau(x)<\infty$,

\textit{iii.)} for every countable collection of deterministic Borel
measurable functions $f_{n}:X\longrightarrow\mathbb{R}$ with $\intop_{X}\left|f_{n}(x)\right|\mathrm{d}\tau(x)<\infty$
we have that $\intop_{X}f_{n}(x)\mathrm{d}\mathcal{C}(\tau)(x)=S_{\tau}(f_{n})$
for every $n\in\mathbb{N}$ almost surely,

\textit{iv.)} for every countable collection of deterministic, Borel
sets $A_{n}\subseteq X$ with $\tau(A_{n})<\infty$ we have that $S_{\tau}(\chi_{A_{n}})=\mathcal{C}(\tau)(A_{n})$
for every $n\in\mathbb{N}$ almost surely,

\textit{v.)} $E(\mathcal{C}(\tau)(A))=E(S_{\tau}(\chi_{A}))=\tau_{R}(A)\leq\tau(A)$
for every Borel set $A\subseteq X$ with $\tau(A)<\infty$,

\textit{vi.)} $\mathcal{C}_{k}(\tau_{R})(A)$ converges to $\mathcal{C}(\tau)(A)$
in $\mathcal{L}^{1}$ for every Borel set $A\subseteq X$ with $\tau(A)<\infty$

\textit{vii.)} $\mathcal{C}(\tau_{\bot})=0$ almost surely,

\textit{viii.)} $\mathcal{C}(\tau)=\mathcal{C}(\tau_{R})$ almost
surely,

\textit{ix.)} if $\tau=\sum_{i=1}^{\infty}\tau^{i}$ for a sequence
of finite, Borel measures $\tau^{i}$ then $\mathcal{C}(\tau)=\sum_{i=1}^{\infty}\mathcal{C}(\tau^{i})$
almost surely,

\textit{x.)} if $f:X\longrightarrow\mathbb{R}$ is a nonnegative Borel
function such that $\intop_{X}f(x)\mathrm{d}\tau(x)<\infty$ then
$\mathcal{C}(f(x)\mathrm{d}\tau(x))=f(x)\mathrm{d}\mathcal{C}(\tau)(x)$,
in particular, if $\gamma\in[0,\infty)$ then $\mathcal{C}(\gamma\cdot\tau)=\gamma\cdot\mathcal{C}(\tau)$
almost surely,

\textit{xi.)} $\mathrm{supp}\mathcal{C}(\tau)\subseteq\mathrm{supp}\tau\cap B$
almost surely,

\noindent then we say that the \textit{conditional measure of $\nu$
on $B$ exists with respect to $\mathcal{Q}_{k}$ ($k\geq1$) with
regularity kernel $\varphi$} and it is\textit{ $\mathcal{C}(\nu)$}.

If $X$ is locally compact and $\nu$ is a locally finite, Borel measure
on $X$ and for every locally finite, Borel measure $\tau\ll\nu$
there exists a random, locally finite, Borel measure $\mathcal{C}(\tau)$
that satisfies \textit{ii.)-viii.), xi.)} and additionally also satisfies
the following:

\textit{i{*}.)} $\mathcal{C}_{k}(\tau)$ vaguely converges to $\mathcal{C}(\tau)$
in probability,

\textit{ix{*}.)} if $\tau=\sum_{i=1}^{\infty}\tau^{i}$ for a sequence
of locally finite, Borel measures $\tau^{i}$ then $\mathcal{C}(\tau)=\sum_{i=1}^{\infty}\mathcal{C}(\tau^{i})$
almost surely,

\textit{x{*}.)} if $f:X\longrightarrow\mathbb{R}$ is a nonnegative
Borel function such that for every $y\in X$ there exists a neigbourhood
$U$ of $y$ such that $\intop_{U}f(x)\mathrm{d}\tau(x)<\infty$ then
$\mathcal{C}(f(x)\mathrm{d}\tau(x))=f(x)\mathrm{d}\mathcal{C}(\tau)(x)$,
in particular, if $\gamma\in[0,\infty)$ then $\mathcal{C}(\gamma\cdot\tau)=\gamma\cdot\mathcal{C}(\tau)$
almost surely,

\noindent then we say that the \textit{conditional measure of $\nu$
on $B$ exists with respect to $\mathcal{Q}_{k}$ ($k\geq1$) with
regularity kernel $\varphi$} and it is\textit{ $\mathcal{C}(\nu)$}.
\end{defn}

\begin{rem}
We would like to note that when we say that the conditional measure
$\mathcal{C}(\nu)$ of $\nu$ on $B$ exists with respect to $\mathcal{Q}_{k}$
($k\geq1$) with regularity kernel $\varphi$ some may think it is
a bit incorrect because we actually mean that $\mathcal{C}(\tau)$
exists for every $\tau\ll\nu$. It follows from the definition that
if $\mathcal{C}(\nu)$ exists then $\mathcal{C}(\tau)$ also exists
for every $\tau\ll\nu$. However, there is a characterisation whether
$\mathcal{C}(\nu)$ exists which need no information about any $\tau\ll\nu$,
see Theorem \ref{thm:Gen exist both}. So we simply say that $\mathcal{C}(\nu)$
exists when we do mean that for every $\tau\ll\nu$ the Properties
in Definition \ref{def:def of cond meas} are all satisfied. Throughout
the paper when we write $S(f)$ for a function $f$ and we do not
indicate otherwise in lower case then we mean $S_{\nu}(f)$ by $S(f)$.
\end{rem}

\begin{rem}
By Property \textit{ii.) }and \textit{iv.)} of Definition \ref{def:def of cond meas}
it follows that for every countable collection of Borel sets $A_{n}$
we have that $\mathcal{C}_{k}(\nu)(A_{n})$ converges to $\mathcal{C}(\nu)(A_{n})$
in probability as $k$ goes to $\infty$. We note that in general
it is not true that $\mathcal{C}(\nu)(A)$ is the limit of $\mathcal{C}_{k}(\nu)(A)$
in probability for every Borel set $A$ simultaneously. Let $\nu=\lambda$
and $A$ be a set of $0$ Lebesgue measure. Then $\mathcal{C}_{k}(\nu)(A)=0$
for every $k$. However, $\mathcal{C}(\nu)(A)=0$ cannot always hold
for every such $A$ simultaneously because usually $\mathcal{C}(\nu)$
is supported on a set of $0$ Lebesgue measure by Property \textit{xi.)}
of Definition \ref{def:def of cond meas}. For instance, in the case
of $B$ being the Brownian path.
\end{rem}

We defined the conditional measure of finite Borel measures and in
locally compact spaces we defined the conditional measure of locally
finite Borel measures. To avoid confusion we need to show that the
two definitions of the conditional measure of finite measures in locally
compact spaces are the same. Proposition \ref{prop:two def are the same}
is proven at the end of Section \ref{subsec:SUB-General-existance-of}.
\begin{prop}
\label{prop:two def are the same}If $X$ is locally compact and $\nu$
is a finite Borel measure then the two definitions of the conditional
measure of $\nu$ on $B$ with respect to $\mathcal{Q}_{k}$ ($k\geq1$)
with regularity kernel $\varphi$ in Definition \ref{def:def of cond meas}
are equivalent and the limits $\mathcal{C}(\nu)$ in Property i.)
and in Property i{*}.) are the same almost surely.
\end{prop}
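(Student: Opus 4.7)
The plan is to prove this proposition in three steps: verify that properties i.) and i*.) are equivalent given the other shared properties, verify that x.) and x*.) are equivalent in the finite-measure setting, and show that the two resulting candidate random measures coincide almost surely by pinning them down through property iv.).

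For the equivalence of i.) and i*.), the implication i.) $\Rightarrow$ i*.) is immediate: if every subsequence admits a further subsequence along which $\mathcal{C}_{\beta_k}(\nu) \to \mathcal{C}(\nu)$ weakly almost surely, then in particular $\int f\, \mathrm{d}\mathcal{C}_{\beta_k}(\nu) \to \int f\, \mathrm{d}\mathcal{C}(\nu)$ almost surely for every compactly supported continuous $f$, and the standard subsequence criterion yields convergence in probability. The converse uses finiteness of $\nu$ crucially: applying property ii.) to the constant function $1$, which satisfies $\int 1\, \mathrm{d}\nu = \nu(X) < \infty$, we obtain $\mathcal{C}_k(\nu)(X) \to \mathcal{C}(\nu)(X)$ in probability. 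Given any subsequence, a diagonal extraction over a countable dense family of compactly supported continuous functions on $X$ (which exists since $X$ is locally compact and separable) produces a further subsequence $\{\beta_k\}$ along which $\mathcal{C}_{\beta_k}(\nu) \to \mathcal{C}(\nu)$ vaguely almost surely and $\mathcal{C}_{\beta_k}(\nu)(X) \to \mathcal{C}(\nu)(X)$ almost surely. The standard fact that vague convergence plus convergence of total mass implies weak convergence of finite Borel measures then yields weak convergence almost surely along this subsequence.

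For the equivalence of x.) and x*.), x*.) trivially implies x.). For the converse, given $f$ locally integrable with respect to $\nu$, use $\sigma$-compactness of $X$ to write $X = \bigcup_n K_n$ for a compact exhaustion with $K_0 = \emptyset$, decompose $f = \sum_n f \cdot \chi_{K_n \setminus K_{n-1}}$, and note that each summand defines a finite measure by local integrability together with compactness of $K_n$. Apply x.) to each piece and assemble via the countable additivity in property ix.) to obtain $\mathcal{C}(f\, \mathrm{d}\nu) = f\, \mathrm{d}\mathcal{C}(\nu)$.

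Finally, to show the two limits coincide almost surely, let $\mathcal{C}^{1}(\nu)$ and $\mathcal{C}^{2}(\nu)$ denote the two candidates produced by the two definitions. By property ii.), the random variable $S(\chi_A)$ is defined as the in-probability limit of $\mathcal{C}_k(\nu)(A)$ and hence does not depend on the chosen definition. Choose a countable $\pi$-system $\mathcal{F}$ of Borel subsets of $X$ generating the Borel $\sigma$-algebra (for example the collection of finite intersections of open balls of rational radii centered at points of a countable dense subset). Property iv.) then yields $\mathcal{C}^{1}(\nu)(A) = S(\chi_A) = \mathcal{C}^{2}(\nu)(A)$ almost surely simultaneously for all $A \in \mathcal{F}$, so $\mathcal{C}^{1}(\nu) = \mathcal{C}^{2}(\nu)$ as Borel measures almost surely by Dynkin's $\pi$-$\lambda$ theorem. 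The main technical obstacle lies in the implication i*.) $\Rightarrow$ i.), where one must simultaneously realize vague and total-mass almost-sure convergence along a single subsequence; this relies on the separability of the space of compactly supported continuous functions under the sup norm, which holds in our locally compact separable metric setting.
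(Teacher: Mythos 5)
Your proof is correct, but it takes a genuinely different route from the paper. The paper never compares the two property lists directly: it invokes Theorem \ref{thm:General existence of conditional measure}, Theorem \ref{thm:General existence of conditional measure-Vague} and Lemma \ref{lem:compact vanish lemma} to show that both definitions are equivalent to one and the same concrete criterion ($\mathcal{L}^{1}$-convergence of $\mathcal{C}_{k}(\nu)(D)$ for compact $D$ of finite energy, plus vanishing of $\mathcal{C}_{k}(\nu_{\perp})(D)$ in probability), and then identifies the two limits by noting that the Property i.) limit is also a vague-in-probability limit (Proposition \ref{prop:trivi direct} and Remark \ref{rem:weak imply vague}) and appealing to uniqueness of vague limits in probability (Proposition \ref{prop:unique random lim-Vague}). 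You instead work entirely inside Definition \ref{def:def of cond meas}: you upgrade i{*}.) to i.) by combining vague convergence in probability with convergence of total masses (extracted from Properties ii.) and iii.)/iv.) applied to $f\equiv1$ --- note that ii.) alone only gives convergence to $S(1)$, and you need iii.) or iv.) to identify $S(1)$ with $\mathcal{C}(\nu)(X)$), pass between x.) and x{*}.) via a compact exhaustion and Property ix.), and identify the limits via Property iv.) on a countable generating $\pi$-system. Both arguments are sound. The paper's route is shorter given the existence machinery already in place and settles the ``exists iff exists'' question without touching the individual properties; your route is more self-contained relative to the definition, and it isolates the real analytic reason the weak and vague formulations agree for finite $\nu$, namely that Property ii.) prevents mass from escaping to infinity, so that vague convergence plus convergence of total mass yields weak convergence along the almost-surely convergent subsequences supplied by Lemma \ref{lem:mutual convergence subsequance}, Lemma \ref{lem:deterministic con-Vague} and Lemma \ref{lem:countab agree}.
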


We develop the theory of weak convergence subsequentially in probability
and vague convergence in probability in Section \ref{sec:Convergence-of-random},
that is essential in showing the existence of the conditional measure.
The proofs are based on classical functional analysis and classical
probability. The following proposition provides a tool to show the
existence of the conditional measure.
\begin{prop}
\label{prop:simp conv prop}Let $\nu$ be a deterministic, finite,
Borel measure on $X$. Let $\mu_{k}$ be a sequence of random, finite,
Borel measures on $X$ such that $\mu_{k}\ll\nu$ almost surely for
every $k$, there exists $c:X\longrightarrow\mathbb{R}$ such that
$\int c(x)\mathrm{d}\nu(x)<\infty$ and $E\left(\frac{\mathrm{d}\mu_{k}}{\mathrm{d}\nu}(x)\right)\leq c(x)$
for $\nu$ almost every $x\in X$ for every $k$. Assume that $\mu_{k}(A)$
converges to a random variable $\mu_{\infty}(A)$ in probability for
every compact set $A\subseteq X$. Then $\mu_{k}$ weakly converges
to a random, finite, Borel measure $\mu$ subsequentially in probability.
\end{prop}

Proposition \ref{prop:simp conv prop} also extends to locally finite
measures when $X$ is locally compact and we consider the vague convergence
in probability. We prove these results in Section \ref{sec:Convergence-of-random}.
The notion of weak convergence subsequentially in probability was
considered independently by Berestycki in \cite[Section 6]{Berestycki}
in the context of Gaussian multiplicative chaos, the existence of
a random measure in that paper depends on a similar result to Proposition
\ref{prop:simp conv prop}, though it is not explicitly stated. The
following theorem gives a characterisation when $\mathcal{C}(\nu)$
exists and is proved in Section \ref{subsec:SUB-General-existance-of}.
\begin{thm}
\label{thm:Gen exist both}Let $\nu$ be a finite, Borel measure on
$X$ or let $X$ be locally compact and $\nu$ be a locally finite,
Borel measure on $X$. Assume that $\nu(X\setminus X_{0})=0$. Then
the conditional measure $\mathcal{C}(\nu)$ of $\nu$ on $B$ exists
with respect to $\mathcal{Q}_{k}$ ($k\geq1$) with regularity kernel
$\varphi$ if and only if $\mathcal{C}_{k}(\nu\vert_{D})(X)=\mathcal{C}_{k}(\nu)(D)$
converges in $\mathcal{L}^{1}$ for every compact set $D\subseteq X_{0}$
with $I_{\varphi}(\nu\vert_{D})<\infty$ and $\mathcal{C}_{k}(\nu_{\perp})(D)$
converges to $0$ in probability for every compact set $D\subseteq X_{0}$.
\end{thm}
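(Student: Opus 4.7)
The plan is to establish the two implications separately. Necessity reduces to bookkeeping with the defining properties of $\mathcal{C}(\nu)$ combined with Proposition \ref{decomposition}, whereas sufficiency requires invoking the general convergence machinery of Section \ref{sec:Convergence-of-random} to assemble a limit measure from the pointwise $\mathcal{L}^1$ data in hypothesis (a) and the pointwise decay in hypothesis (b).

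For necessity, fix a compact set $D$; since $\nu$ is (locally) finite, $\nu(D)<\infty$. Property ii) with $f=\chi_D$, together with property iv), yields $\mathcal{C}_k(\nu)(D)\to\mathcal{C}(\nu)(D)$ in probability, while property vi) gives $\mathcal{C}_k(\nu_R)(D)\to\mathcal{C}(\nu)(D)$ in $\mathcal{L}^1$. Subtracting the two identities yields $\mathcal{C}_k(\nu_\perp)(D)\to 0$ in probability, which is the second hypothesis. For the first hypothesis, assume additionally $I_\varphi(\nu\vert_D)<\infty$. Since $\nu_\perp$ is singular to every finite Borel measure of finite $\varphi$-energy (property iii) of Proposition \ref{decomposition}), mutual singularity with $\nu\vert_D$ forces $\nu_\perp(D)=0$, whence $\mathcal{C}_k(\nu_\perp)(D)=0$ for every $k$ and $\mathcal{C}_k(\nu)(D)=\mathcal{C}_k(\nu_R)(D)$ converges in $\mathcal{L}^1$ by property vi).

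For sufficiency, decompose $\nu=\nu_R+\nu_\perp$ via Proposition \ref{decomposition} and write $\nu_R=\sum_n\nu\vert_{A_n}$ with disjoint $A_n$ satisfying $I_\varphi(\nu\vert_{A_n})<\infty$. Since $\nu\vert_{A_n}$ is Radon on a Polish space, inner regularity supplies an increasing sequence of compact sets $D_{n,m}\subseteq A_n$ with $\nu(A_n\setminus\bigcup_m D_{n,m})=0$ and $I_\varphi(\nu\vert_{D_{n,m}})<\infty$. Hypothesis (a) provides $\mathcal{L}^1$ limits of $\mathcal{C}_k(\nu\vert_{D_{n,m}})(X)$; combined with the expectation identity (\ref{eq:martingal expectation inequality}) and diagonal extraction along the countable family $\{D_{n,m}\}$, the weak-subsequentially-in-probability (resp.\ vague-in-probability) results of Section \ref{sec:Convergence-of-random} produce a random, (locally) finite Borel measure $\mathcal{C}(\nu_R)$, supported in $\mathrm{supp}\,\nu_R\cap B$, which is the appropriate limit of $\mathcal{C}_k(\nu_R)$. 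Hypothesis (b) gives $\mathcal{C}_k(\nu_\perp)\to 0$ in probability on compact sets, so setting $\mathcal{C}(\nu):=\mathcal{C}(\nu_R)$ realises the same limit for $\mathcal{C}_k(\nu)$ and forces property vii) by construction.

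The main obstacle is the final step of sufficiency: upgrading pointwise $\mathcal{L}^1$ convergence on a countable family of compact sets to weak-subsequentially-in-probability, or vague-in-probability, convergence of the full random measures $\mathcal{C}_k(\nu)$, with a limit that is almost surely a genuine countably additive Borel measure. This is precisely what Section \ref{sec:Convergence-of-random} is engineered to handle, via extraction of almost-sure subsequences on a countable determining family together with a Prokhorov-type tightness estimate furnished by the uniform bound $E(\mathcal{C}_k(\nu)(D))\le\nu(D)$. Once the limit measure is produced, properties i)--xi) (resp.\ i{*}), x{*})) of Definition \ref{def:def of cond meas} follow by linearity of $\mathcal{C}_k$, monotone-class reasoning, the expectation identity (\ref{eq:martingal expectation inequality}), and the observation that $\mathrm{supp}\,\mathcal{C}_k(\nu)\subseteq\bigcup\{Q\in\mathcal{Q}_k:Q\cap B\ne\emptyset\}$, whose diameters tend to zero by (\ref{eq:diameter_goes_to0}), pinning the limit support inside $B$.
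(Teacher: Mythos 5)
Your proposal is correct and follows essentially the same route as the paper: necessity via Properties \textit{ii.)}, \textit{iv.)} and \textit{vi.)} of Definition \ref{def:def of cond meas} together with the observation that $I_{\varphi}(\nu\vert_{D})<\infty$ forces $\nu_{\perp}(D)=0$, and sufficiency by decomposing $\nu_{R}$ into restrictions to compact sets of finite $\varphi$-energy, summing the $\mathcal{L}^{1}$ limits, and feeding the resulting convergence into the representation theorems of Section \ref{sec:Convergence-of-random} (this is exactly the content of Lemma \ref{thm:E(measure)}, Proposition \ref{prop:equiv of egsistance}, Lemma \ref{lem:compact vanish lemma} and their vague analogues). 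The one step to make explicit is that Theorem \ref{thm:ABS cont representation} requires convergence of $\mathcal{C}_{k}(\nu_{R})(A)$ for \emph{every} compact $A$, not only for your fixed countable family $\{D_{n,m}\}$; within your setup this follows by applying hypothesis (a) to the compact finite-energy sets $D_{n,m}\cap A$ and rerunning the summation via Lemma \ref{prop:L^1 limit for sum}, which is precisely how the paper's Lemma \ref{thm:E(measure)} argues (it applies Proposition \ref{decomposition} to $\nu\vert_{A}$ for each Borel set $A$ separately).
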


The sequence $\mathcal{C}_{k}(\nu)$ might give the impression of
a $T$-martingale that was introduced by Kahane \cite{Kahane-positive martingales}.
However, when $B$ is the Brownian path $\mathcal{C}_{k}(\nu)$ is
not a $T$-martingale with respect to the natural filtration $\mathcal{F}_{k}=\sigma\{B\cap Q\neq\emptyset\}_{Q\in\mathcal{Q}_{k}}$
and we cannot prove almost sure convergence of $\mathcal{C}_{k}(\nu)$,
that is why we needed to develop the convergence of random measures
in probability. Despite that, $\mathcal{C}_{k}(\nu)$ and its limit
$\mathcal{C}(\nu)$ if the conditional measure exists, exhibits many
similar properties to Kahane`s $T$-martingales. To get around the
trouble that $\mathcal{C}_{k}(\nu)$ is not necessarily a $T$-martingale
we define the following kernels that are key objects in showing that
the limit of $\mathcal{C}_{k}(\nu)$ exists.
\begin{notation}
\label{notaQ_k(X) def}For $x\in X$ and $k\in\mathbb{N}$ let $Q_{k}(x)=Q$
if $x\in Q$ for some $Q\in\mathcal{Q}_{k}$ and $Q_{k}(x)=\emptyset$
otherwise. There is at most one such $Q$ since elements of $\mathcal{Q}_{k}$
are disjoint hence $Q_{k}(x)$ is well-defined.
\end{notation}

\begin{defn}
\label{def:F(x,y) def}For $k,n\in\mathbb{N}$ let $F_{k,n}:X\times X\longrightarrow\mathbb{R}$
be the nonnegative function
\[
F_{k,n}(x,y)=\begin{cases}
\frac{P(Q_{k}(x)\cap B\ne\emptyset\,and\,Q_{n}(y)\cap B\ne\emptyset)}{P(Q_{k}(x)\cap B\ne\emptyset)\cdot P(Q_{n}(y)\cap B\ne\emptyset)} & \mathrm{if\,}Q_{k}(x)\neq\emptyset,Q_{n}(y)\neq\emptyset\\
0 & \mathrm{otherwise}
\end{cases}.
\]
\end{defn}

\begin{defn}
\label{def:upper and loweer F(x,y)}We define the following functions
\[
\overline{F}_{N}(x,y)=\sup_{n,k\geq N}F_{k,n}(x,y)
\]
and
\[
\underline{F}_{N}(x,y)=\inf_{n,k\geq N}F_{k,n}(x,y)
\]
and their limits
\[
\overline{F}(x,y)=\limsup_{N\rightarrow\infty}\overline{F}_{N}(x,y)=\lim_{N\rightarrow\infty}\overline{F}_{N}(x,y)
\]
and
\[
\underline{F}(x,y)=\liminf_{N\rightarrow\infty}\underline{F}_{N}(x,y)=\lim_{N\rightarrow\infty}\underline{F}_{N}(x,y).
\]
\end{defn}

\begin{rem}
\label{rem:controlled F}If (\ref{eq:capacity-independence}) holds
for some $\delta>0$ then
\[
\underline{F}(x,y)\leq\overline{F}(x,y)\leq c\cdot\varphi(x,y)
\]
for $x\neq y$.
\end{rem}

In Section \ref{sec:Degenerate-case} we prove that, if $C_{\varphi}(D)=0$
for some compact set $D\subseteq X_{0}$ that implies that $B\cap D=\emptyset$
almost surely, then $\mathcal{C}_{k}(\nu_{\bot})$ converges to $0$
weakly subsequentially in probability. If $X$ is locally compact
then even for locally finite $\nu$ it follows that $\mathcal{C}_{k}(\nu_{\bot})$
converges to $0$ vaguely in probability. Throughout the paper we
assume that $\nu(X\setminus X_{0})=0$ because when $\nu(X_{0})=0$
then $\mathcal{C}_{k}(\nu)$ converges to $0$ (see Section \ref{sec:Degenerate-case}).
The existence of the kernel $F(x,y)=\underline{F}(x,y)=\overline{F}(x,y)$
is the key assumption in order to prove the existence of the conditional
measure in the nondegenerate case. Using the observation that $E\left(\mathcal{C}_{k}(\nu)(A)^{2}\right)=\intop_{A}\intop_{A}F_{k,k}(x,y)\mathrm{d}\nu(x)\mathrm{d}\nu(y)$,
we show, throughout Section \ref{sec:-boundedness} and \ref{sec:Non-degenerate-percolation-of},
that if $I_{\varphi}(\nu)<\infty$ then $\mathcal{C}_{k}(\nu)$ converges
in $\mathcal{L}^{2}$ and so in $\mathcal{L}^{1}$. Finally, using
the measure decomposition result of Proposition \ref{decomposition}
we conclude one of our deepest result in Section \ref{sec:Existence-of-the}
and Section \ref{sec:Double-integration}. Theorem \ref{cor:Double integral when W exists-summed}
is proven at the end of Section \ref{sec:Double-integration}.
\begin{thm}
\label{cor:Double integral when W exists-summed}Assume that (\ref{eq:phi =00003Dinfty}),
(\ref{eq:same size}) and (\ref{eq:lower hitting prob}) hold and
there exists $0<\delta<1$ such that (\ref{eq:Kernel restriction}),
(\ref{eq:capacity-independence}) and (\ref{eq:bounded sundivision})
hold. Assume that $F(x,y)=\underline{F}(x,y)=\overline{F}(x,y)$ for
every $(x,y)\in X\times X$. Assume that  if $C_{\varphi}(D)=0$ for
some compact set $D\subseteq X_{0}$ then $B\cap D=\emptyset$ almost
surely. Let either $\nu$ and $\tau$ be finite Borel measures on
$X$ or $X$ be locally compact and $\nu$ and $\tau$ be locally
finite Borel measures on $X$. Assume that $\nu(X\setminus X_{0})=0$
and $\tau(X\setminus X_{0})=0$. Then the conditional measure $\mathcal{C}(\nu)$
of $\nu$ and $\mathcal{C}(\tau)$ of $\tau$ on $B$ exist with respect
to $\mathcal{Q}_{k}$ ($k\geq1$) with regularity kernel $\varphi$
and 
\begin{equation}
E\left(\int\int f(x,y)\mathrm{d}\mathcal{C}(\nu)(x)\mathrm{d}\mathcal{C}(\tau)(y)\right)=\intop\intop F(x,y)f(x,y)\mathrm{d}\nu_{R}(x)\mathrm{d}\tau_{R}(y)\label{eq:double int form intro}
\end{equation}
for every $f:X\times X\longrightarrow\mathbb{R}$ Borel function with
$\intop\intop F(x,y)\left|f(x,y)\right|\mathrm{d}\nu_{R}(x)\mathrm{d}\tau_{R}(y)<\infty$,
in particular if $\intop_{X}\intop_{X}\varphi(x,y)\left|f(x,y)\right|\mathrm{d}\nu(x)\mathrm{d}\tau(y)<\infty$.
\end{thm}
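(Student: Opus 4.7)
My plan is to first establish the existence of the two conditional measures via Theorem \ref{thm:Gen exist both}, and then to prove the double-integration formula starting from an exact second-moment identity at level $k$ and passing to the limit.

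For existence, Theorem \ref{thm:Gen exist both} reduces the task, applied separately to $\nu$ and $\tau$, to two statements: (a) $\mathcal{C}_{k}(\nu\vert_{D})(X)$ converges in $\mathcal{L}^{1}$ for every compact $D$ with $I_{\varphi}(\nu\vert_{D})<\infty$; and (b) $\mathcal{C}_{k}(\nu_{\perp})(D)\to 0$ in probability for every compact $D$. For (a) I would derive the identity $E\bigl(\mathcal{C}_{k}(\nu\vert_{D})(X)\cdot \mathcal{C}_{n}(\nu\vert_{D})(X)\bigr)=\iint_{D\times D}F_{k,n}(x,y)\,d\nu(x)\,d\nu(y)$ by expanding (\ref{eq:C_k def}) and applying Fubini. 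Combining Remark \ref{rem:controlled F} with (\ref{eq:Kernel restriction}) produces the pointwise bound $F_{k,n}(x,y)\leq c\,c_{2}\,\varphi(x,y)+c\,c_{3}$, and since $I_{\varphi}(\nu\vert_{D})<\infty$ the sequence is bounded in $\mathcal{L}^{2}$. The hypothesis $\underline{F}=\overline{F}=F$ gives pointwise $F_{k,n}\to F$, so dominated convergence makes the sequence Cauchy in $\mathcal{L}^{2}$, yielding $\mathcal{L}^{1}$-convergence. For (b), Proposition \ref{decomposition} and Proposition \ref{prop:singular felbontas loc fin intro} place $\nu_{\perp}$ on a Borel set of zero $\varphi$-capacity; combined with the hypothesis that $B$ almost surely misses any compact subset of $X_{0}$ of zero $\varphi$-capacity, and with $\nu(X\setminus X_{0})=0$, a standard compact-exhaustion argument (the content of Section \ref{sec:Degenerate-case}) delivers the required convergence to zero in probability.

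The heart of the double-integration formula is the identity
\begin{equation}
E\!\left(\iint f(x,y)\,d\mathcal{C}_{k}(\nu)(x)\,d\mathcal{C}_{k}(\tau)(y)\right)=\iint f(x,y)\,F_{k,k}(x,y)\,d\nu(x)\,d\tau(y),\label{eq:plan-identity}
\end{equation}
valid for any nonnegative Borel $f$. I would prove (\ref{eq:plan-identity}) by expanding both $\mathcal{C}_{k}(\nu)$ and $\mathcal{C}_{k}(\tau)$ through (\ref{eq:C_k def}), interchanging expectation with the sum and integral by Tonelli, and noting that for $x\in Q$, $y\in S$ the ratio of joint to marginal hitting probabilities is exactly $F_{k,k}(x,y)$, which covers the $Q=S$ and $Q\neq S$ cases uniformly. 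Sending $k\to\infty$, the right-hand side of (\ref{eq:plan-identity}) converges to $\iint fF\,d\nu\,d\tau$ by dominated convergence with the same majorant as above. Decomposing $\nu=\nu_{R}+\nu_{\perp}$ and $\tau=\tau_{R}+\tau_{\perp}$ and applying (\ref{eq:plan-identity}) to each piece, the contributions involving $\nu_{\perp}$ or $\tau_{\perp}$ vanish as $k\to\infty$ via (b) combined with a Cauchy--Schwarz estimate against the $\mathcal{L}^{2}$ control from (a), so the limit of the right-hand side reduces to $\iint fF\,d\nu_{R}\,d\tau_{R}$.

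For the left-hand side, after discarding the $\nu_{\perp},\tau_{\perp}$ parts by the same argument, I need $E\bigl(\iint f\,d\mathcal{C}_{k}(\nu_{R})\,d\mathcal{C}_{k}(\tau_{R})\bigr)\to E\bigl(\iint f\,d\mathcal{C}(\nu)\,d\mathcal{C}(\tau)\bigr)$. For $f$ continuous with compact support on $X\times X$, the subsequential vague (or weak) convergence in probability of both $\mathcal{C}_{k}(\nu_{R})$ and $\mathcal{C}_{k}(\tau_{R})$ allows extraction of a common subsequence along which both converge almost surely, giving almost sure convergence of the product integrals; the uniform $\mathcal{L}^{2}$ bound supplies uniform integrability and upgrades this to convergence of expectations. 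Since the candidate limit from the right-hand side of (\ref{eq:plan-identity}) is subsequence-independent, convergence along the full sequence follows. A monotone-class argument combined with dominated convergence then extends (\ref{eq:double int form intro}) from $C_{c}(X\times X)$ to arbitrary Borel $f$ satisfying $\iint F|f|\,d\nu_{R}\,d\tau_{R}<\infty$; the stated sufficient condition $\iint \varphi|f|\,d\nu\,d\tau<\infty$ follows since $F\leq c\varphi$ by Remark \ref{rem:controlled F} and $\nu_{R}\leq\nu$, $\tau_{R}\leq\tau$.

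The principal obstacle will be this last limit passage on the left-hand side: vague convergence in probability of two marginal sequences does not automatically produce convergence of integrals against a product of the two limits, and here the difficulty is compounded because $\mathcal{C}_{k}(\nu_{R})$ and $\mathcal{C}_{k}(\tau_{R})$ are strongly coupled through the single driving set $B$. Closing the argument rests on carefully combining the uniform $\mathcal{L}^{2}$ control from (a) with the subsequential almost sure extraction allowed by the convergence framework of Section \ref{sec:Convergence-of-random}, and this is where the technical heart of the proof lies.
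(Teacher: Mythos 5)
Your overall architecture matches the paper's: existence via $\mathcal{L}^{2}$-convergence of $\mathcal{C}_{k}(\nu\vert_{D})(X)$ plus vanishing of the singular part, and the double-integration formula via the exact identity $E\bigl(\mathcal{C}_{k}(\nu)(X)\mathcal{C}_{n}(\tau)(X)\bigr)=\iint F_{k,n}\,\mathrm{d}\nu\,\mathrm{d}\tau$ (Lemma \ref{lem:Fkn lem sequance exact}). However, there are two genuine gaps.

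First, the pointwise domination $F_{k,n}(x,y)\leq c\,c_{2}\varphi(x,y)+c\,c_{3}$ on which you rest both the $\mathcal{L}^{2}$-Cauchy argument and the convergence of the right-hand side is not available. Condition (\ref{eq:capacity-independence}) controls $F_{k,n}$ only on pairs of cells with $\max\{\mathrm{diam}(Q),\mathrm{diam}(S)\}<\delta\cdot\mathrm{dist}(Q,S)$; for $x,y$ in the same cell $Q$ one only has $F_{k,k}(x,y)=P(Q\cap B\neq\emptyset)^{-1}\leq a^{-1}C_{\varphi}(Q)^{-1}$, which is not bounded by $c\varphi(x,y)+c'$ (take $x,y\in Q$ at distance comparable to $\mathrm{diam}(Q)$ when $Q$ has small capacity relative to its diameter). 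Remark \ref{rem:controlled F} bounds the limits $\overline{F},\underline{F}$, not $F_{k,n}$ at finite scale. The paper closes this by an integrated, not pointwise, bound: Lemma \ref{lem:diagonal expect bound} gives $\iint_{Q\times S}F_{k,k}\,\mathrm{d}\nu\,\mathrm{d}\nu\leq a^{-1}\bigl(I_{\varphi}(\nu\vert_{Q})+I_{\varphi}(\nu\vert_{S})\bigr)$ for nearby cells, the multiplicity bound (\ref{eq:bounded sundivision}) controls how many such pairs there are, and Lemma \ref{lem:A_epsilon reminder} shows the total near-diagonal contribution is dominated by $\iint_{H_{\varepsilon}}\varphi\,\mathrm{d}\nu\,\mathrm{d}\nu$, which tends to $0$ since (\ref{eq:phi =00003Dinfty}) forces the diagonal to be $\nu\times\nu$-null. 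Without this splitting your dominated-convergence step does not close.

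Second, the limit passage you flag as the ``principal obstacle'' is indeed not closed in your write-up, and the paper simply avoids it. Rather than proving $E\bigl(\iint f\,\mathrm{d}\mathcal{C}_{k}(\nu)\,\mathrm{d}\mathcal{C}_{k}(\tau)\bigr)\to E\bigl(\iint f\,\mathrm{d}\mathcal{C}(\nu)\,\mathrm{d}\mathcal{C}(\tau)\bigr)$ for general $f$, Proposition \ref{prop:Double integ BOXES upper} treats only $f=\chi_{A_{1}}(x)\chi_{A_{2}}(y)$, where the left-hand side is $E\bigl(\mathcal{C}_{k}(\nu)(A_{1})\cdot\mathcal{C}_{k}(\tau)(A_{2})\bigr)$ and convergence follows at once because each factor converges in $\mathcal{L}^{2}$, so the product converges in $\mathcal{L}^{1}$. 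The extension to general Borel $f$ is then performed on the limit measures themselves via the semiring argument of Proposition \ref{lem:charateodory ineq} and monotone convergence (Proposition \ref{thm:Double integration}), with no further $k\to\infty$ limit. You should also note that for locally finite (or merely infinite-energy) $\nu,\tau$ the formula cannot be applied directly: the paper needs Lemma \ref{lem:kereszt int sum} to decompose $\nu_{R}=\sum_{i}\nu_{i}$, $\tau_{R}=\sum_{j}\tau_{j}$ with $I_{\varphi}(\nu_{i}+\tau_{j})<\infty$ for all pairs before summing; your sketch omits this step entirely.
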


We study the conditional measure on the Brownian path in Section \ref{sec:Brownian-path}.
We show that the conditions of Theorem \ref{cor:Double integral when W exists-summed}
hold for the Brownian path $B\subseteq\mathbb{R}^{d}$ ($d\geq3$)
and the sequence $\mathcal{Q}_{k}^{i}$, $(k\geq i)$ in Example \ref{exa:example Q_k^i}
for sufficient constants that depends on $i$ and $d$. However, our
main goal is to show the existence of the conditional measure with
respect to $\mathcal{Q}_{k}$, $(k\geq1)$ in Example \ref{exa:example Q_k}.
In Section \ref{sec:increaasing cond meas} we discuss how we can
extend Theorem \ref{cor:Double integral when W exists-summed} to
a $\mathcal{Q}_{k}$, $(k\geq1)$ if we know that that conditions
of Theorem \ref{cor:Double integral when W exists-summed} hold for
$\mathcal{Q}_{k}^{i}$, $(k\geq i)$ such that $\mathcal{Q}_{k}^{i}\subseteq\mathcal{Q}_{k}$
and $\mathcal{Q}_{k}^{i}$ is approaching $\mathcal{Q}_{k}$ in some
sense as $i$ goes to $\infty$. For the exact statement see Theorem
\ref{thm:increasing double int}. Applying this result to the Brownian
path we conclude in Section \ref{subsec:Existance-of-the-cond meas on Brownian path}
the following theorem.
\begin{thm}
\label{thm:Brownian cond measure: Main}Let $B$ be a Brownian path
in $\mathbb{R}^{d}$ for $d\geq3$. Let $\nu$ be a locally finite
Borel measure on $\mathbb{R}^{d}$ such that $\nu(\{0\})=0$. Then
the conditional measure $\mathcal{C}(\nu)$ of $\nu$ on $B$ exists
with respect to $\mathcal{Q}_{k}$ ($k\geq1$) (where $\mathcal{Q}_{k}$
is as in Example \ref{exa:example Q_k}) with regularity kernel $\varphi(x,y)=\left\Vert x-y\right\Vert ^{2-d}$.
Let $\tau$ also be a locally finite Borel measure on $\mathbb{R}^{d}$
such that $\tau(\{0\})=0$. Then
\begin{equation}
E\left(\int\int f(x,y)\mathrm{d}\mathcal{C}(\nu)(x)\mathrm{d}\mathcal{C}(\tau)(y)\right)=\intop\intop\frac{\left\Vert x\right\Vert ^{d-2}+\left\Vert y\right\Vert ^{d-2}}{\left\Vert x-y\right\Vert ^{d-2}}f(x,y)\mathrm{d}\nu_{R}(x)\mathrm{d}\tau_{R}(y)\label{eq:doub int brown}
\end{equation}
for every Borel function $f(x,y):X\times X\longrightarrow\mathbb{R}$
with $\intop\intop\frac{\left\Vert x\right\Vert ^{d-2}+\left\Vert y\right\Vert ^{d-2}}{\left\Vert x-y\right\Vert ^{d-2}}\left|f(x,y)\right|\mathrm{d}\nu_{R}(x)\mathrm{d}\tau_{R}(y)<\infty$.
\end{thm}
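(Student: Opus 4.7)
The plan is to deduce Theorem \ref{thm:Brownian cond measure: Main} in two stages. First, I would apply Theorem \ref{cor:Double integral when W exists-summed} to each annular dyadic family $\mathcal{Q}_k^i$ of Example \ref{exa:example Q_k^i} to produce, for every fixed $i$, a conditional measure $\mathcal{C}^i(\nu)$ on $B\cap X_0^i$, where $X_0^i:=[-2^i,2^i)^d\setminus[-2^{-i},2^{-i})^d$. Second, I would invoke Theorem \ref{thm:increasing double int} to glue the family $\{\mathcal{C}^i(\nu)\}_i$ along $i\to\infty$ into a single conditional measure $\mathcal{C}(\nu)$ with respect to the full dyadic family $\mathcal{Q}_k$ of Example \ref{exa:example Q_k}. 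The regularity kernel throughout is $\varphi(r)=r^{2-d}$, whose associated capacity is the classical $(d-2)$-capacity; the hypothesis $\nu(\{0\})=0$ ensures $\nu(\mathbb{R}^d\setminus X_0)=0$, and identically for $\tau$.

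To verify the assumptions of Theorem \ref{cor:Double integral when W exists-summed} on $\mathcal{Q}_k^i$, I would rely on three classical facts about transient Brownian motion in $\mathbb{R}^d$ with $d\geq 3$. Properties (\ref{eq:phi =00003Dinfty}), (\ref{eq:Kernel restriction}), (\ref{eq:bounded sundivision}) and (\ref{eq:same size}) are immediate from the power-kernel form of $\varphi$ and the uniform-scale structure of the dyadic cubes. For (\ref{eq:lower hitting prob}) and the polar-set condition ``$C_\varphi(D)=0\Rightarrow B\cap D=\emptyset$ almost surely'', I use Kakutani's theorem: the probability that a Brownian motion from the origin hits a compact set $K\subseteq X_0^i$ is comparable to $C_\varphi(K)$, with constants depending only on $i$ and $d$ thanks to the uniform bounds $2^{-i}\leq\|x\|\leq 2^i\sqrt{d}$ for $x\in X_0^i$. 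Condition (\ref{eq:capacity-independence}) follows from the strong Markov property at the first hitting time of $Q$ (respectively of $S$), which reduces the joint hitting probability $P(B\cap Q\ne\emptyset,\,B\cap S\ne\emptyset)$ to the product of individual hitting probabilities times a Green-function factor comparable to $\varphi(\mathrm{dist}(Q,S))$.

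The decisive ingredient is the existence and identification of $F(x,y)=\underline F(x,y)=\overline F(x,y)$. On shrinking cubes, Kakutani's theorem gives
\[
P(Q_k(x)\cap B\ne\emptyset)\sim c_d\,\mathrm{diam}(Q_k(x))^{d-2}\,\|x\|^{2-d},
\]
and decomposing the event $\{B\cap Q_k(x)\ne\emptyset,\;B\cap Q_n(y)\ne\emptyset\}$ according to which cube is visited first and applying the strong Markov property at the corresponding hitting time leads to
\[
F_{k,n}(x,y)\;\longrightarrow\;\frac{\|x\|^{d-2}+\|y\|^{d-2}}{\|x-y\|^{d-2}}=:F(x,y)
\]
for every pair $x\neq y$ in $X_0^i$. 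Theorem \ref{cor:Double integral when W exists-summed} then supplies $\mathcal{C}^i(\nu)$ and $\mathcal{C}^i(\tau)$ together with the double-integral identity of (\ref{eq:double int form intro}) with precisely this kernel. Passing to the limit $i\to\infty$ via Theorem \ref{thm:increasing double int}, and using monotone convergence on the right-hand side (since $F\geq 0$ and $X_0^i\uparrow X_0$), I obtain (\ref{eq:doub int brown}).

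The main obstacle will be the sharp two-point hitting asymptotic underlying $F(x,y)$: the upper bound via strong Markov is routine, but the matching lower bound requires showing that, conditional on the path entering one of $Q_k(x)$ or $Q_n(y)$, it subsequently reaches the other with probability of order $c_d\,\mathrm{diam}^{d-2}\,\|x-y\|^{2-d}$, uniformly as the cubes shrink. This is precisely where the annular truncation $X_0^i$ enters: the bound $\|x\|,\|y\|\geq 2^{-i}$ prevents the denominators in $F_{k,n}$ from collapsing near the origin, while $\|x\|,\|y\|\leq 2^i\sqrt{d}$ controls the Green-function comparison from above. The resulting constants depend on $i$, but this is harmless because the second stage is a monotone limit as $i\to\infty$.
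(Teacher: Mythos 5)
Your proposal is correct and follows essentially the same route as the paper: verifying the hypotheses of Section \ref{subsec:Special-assumptions} for the annular families $\mathcal{Q}_k^i$ (with $i$-dependent constants, using the Kakutani-type comparability of hitting probability and capacity and the strong Markov property for the two-point correlation bound), identifying $F(x,y)=\underline{F}(x,y)=\overline{F}(x,y)=\frac{\|x\|^{d-2}+\|y\|^{d-2}}{\|x-y\|^{d-2}}$ via the first-hitting decomposition, and then gluing over $i$ by Theorem \ref{thm:increasing double int}. The only cosmetic difference is that the paper invokes Theorem \ref{thm:increasing double int} in one step (it internally contains both the per-$i$ existence and the summation over $i,j$ via Fubini), whereas you phrase it as two explicit stages.
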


Recall, that in (\ref{eq:doub int brown}) $\nu_{R}$ and $\tau_{R}$
refers to the $(d-2)$-regular part of the measures (see Notation
\ref{nota:decomp meas}). Note that if $\nu$ is a point mass on $\{0\}$
and we consider the $\mathcal{Q}_{k}$ of Example \ref{exa:example Q_k}
with the difference that we do not remove the origin from the box
containing it then $\mathcal{C}_{k}(\nu)=\nu$ for every $n$ because
the Brownian path always intersects the cube containing $0$. In that
case $\nu$ is almost surely supported on $B$ hence the deterministic
$\mu=\nu$ itself solves the equation $\mathrm{d}\mu\mathrm{d}P=\mathrm{d}\nu$
with $\mu$ is supported on $B$ almost surely. However, (\ref{eq:doub int brown})
is not satisfied. Note that in the natural decomposition $\nu=\nu_{R}+\nu_{\bot}$
the point mass on $\{0\}$ belongs to the singular part, although
that is the only singular measure for which the problem can be solved.
It is because any singular measure with $\nu_{\bot}(\{0\})=0$ is
carried by a set that is almost surely not intersected by $B$ (see
Remark \ref{rem:nohope for singular}).
\begin{rem}
\label{rem:finite energy almost surely}Let $\nu$ be a finite Borel
measure such that $\mathrm{supp}\nu$ is bounded away from $0$ and
infinity and $I_{\beta}(\nu)<\infty$ for some $\beta>d-2$. One consequence
of the double integration formula (\ref{eq:doub int brown}) that
$I_{\beta+2-d}(\mathcal{C}(\nu))<\infty$ almost surely. This is a
useful tool in the geometric measure theory of the random intersection
$B\cap K$ for some fixed deterministic Borel set $K$.
\end{rem}

The question naturally rises what happens when $\nu$ is the Lebesgue
measure. The answer is given in terms of the occupation measure of
the Brownian motion, that counts the amount of time the Brownian motion
spends inside a set. Section \ref{sec:Conditional-ofleb and ocup}
is dedicated to deal with this question.
\begin{thm}
\label{thm:intro ocup}Let $B_{0}(t)$ be a Brownian motion in $\mathbb{R}^{d}$
for $d\geq3$, let $B$ be the range of the Brownian motion and let
$\lambda$ be the Lebesgue measure in $\mathbb{R}^{d}$. Let
\[
\tau(A)=\intop_{0}^{\infty}I_{B_{0}(t)\in A}\mathrm{d}t
\]
be the occupation measure of $B_{0}$. Then
\[
\mathrm{d}\mathcal{C}(\lambda)(x)=\frac{1}{c(d)}\left\Vert x\right\Vert ^{d-2}\mathrm{d}\tau(x)
\]
and
\[
\mathcal{C}(c(d)\left\Vert x\right\Vert ^{2-d}\mathrm{d}\lambda(x))=\mathrm{d}\tau(x)
\]
almost surely where
\begin{equation}
c(d)=\Gamma(d/2-1)2^{-1}\pi^{-d/2}\label{eq:cd def}
\end{equation}
and $\Gamma(x)=\intop_{0}^{\infty}s^{x-1}e^{-s}\mathrm{d}s$ is the
Euler's Gamma function.
\end{thm}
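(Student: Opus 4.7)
The plan reduces to proving the single identity $\mathcal{C}(\nu) = c(d)^{-1}\,d\tau$ almost surely, where $\nu := \|x\|^{2-d}\,d\lambda$. Indeed, property (x$^*$) of the conditional measure (Definition \ref{def:def of cond meas}) applied with the locally bounded density $\|x\|^{d-2}$ on $X_0 := \mathbb{R}^d \setminus \{0\}$ gives $\mathcal{C}(\lambda) = \mathcal{C}(\|x\|^{d-2}\,d\nu) = \|x\|^{d-2}\,d\mathcal{C}(\nu)$, so the two displayed formulas in the theorem are equivalent to this single identity. The conditional measure $\mathcal{C}(\nu)$ exists by Theorem \ref{thm:Brownian cond measure: Main} because $\nu$ has locally finite $\varphi$-energy (with $\varphi(x,y) = \|x-y\|^{2-d}$) on $X_0$. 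From standard potential theory, $E(\tau(A)) = \int_A G(0,y)\,dy = c(d)\,\nu(A)$ where $G(0,y) = c(d)\|y\|^{2-d}$, so $\sigma := c(d)^{-1}\tau$ is a random locally finite Borel measure supported on $B$ with $E(\sigma) = \nu$.

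To identify $\sigma$ with $\mathcal{C}(\nu)$ I would show that $\int f\,d\mathcal{C}_k(\nu) \to \int f\,d\sigma$ in $\mathcal{L}^2$ for every continuous $f$ with compact support in $X_0$; by uniqueness of the in-probability limit in Definition \ref{def:def of cond meas}(ii,vi) this identifies $\int f\,d\mathcal{C}(\nu) = \int f\,d\sigma$ almost surely for each such $f$, and a countable separating class then yields $\mathcal{C}(\nu) = \sigma$ a.s. Writing $X_k := \int f\,d\mathcal{C}_k(\nu)$ and $Y := \int f\,d\sigma$, I would expand
\[
E\bigl((X_k - Y)^2\bigr) = E(X_k^2) - 2\,E(X_k Y) + E(Y^2)
\]
and show each term converges to the common value $\int\!\!\int f(x) f(y)\,F(x,y)\, d\nu(x)\, d\nu(y)$ with $F(x,y) = (\|x\|^{d-2} + \|y\|^{d-2})/\|x-y\|^{d-2}$. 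The convergence of $E(X_k^2) = \int\!\!\int f(x) f(y)\,F_{k,k}(x,y)\,d\nu(x)\,d\nu(y)$ follows from Remark \ref{rem:controlled F} and dominated convergence, while $E(Y^2)$ is obtained from the strong Markov property at the smaller of two integration times: $E((\int f\,d\tau)^2) = 2c(d)^2 \int\!\!\int f(x) f(y)\, \|x\|^{2-d}\|x-y\|^{2-d}\,dx\,dy$, which equals the target after symmetrising in $x$ and $y$.

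The main obstacle is the cross term $E(X_k Y) = \sum_Q \frac{\int_Q f\,d\nu}{P(Q\cap B\neq\emptyset)}\, E\bigl(I_{Q\cap B\neq\emptyset}\,\sigma(f)\bigr)$. I would split $\sigma(f) = \sigma(f I_Q) + \sigma(f I_{Q^c})$ within each summand. The $\sigma(f I_Q)$ piece produces the diagonal $\sum_Q (\int_Q f\,d\nu)^2/P(Q\cap B\neq\emptyset)$ of the $F_{k,k}$-integral and vanishes in the limit. For the $\sigma(f I_{Q^c})$ piece I would condition on the first hitting time $T_Q$ of $Q$ by the Brownian motion and apply the strong Markov property together with Hunt's identity $G(0,w) = G^c_Q(0,w) + E_0\bigl(I_{T_Q<\infty}\,G(B_0(T_Q), w)\bigr)$, where $G^c_Q$ denotes the Green's function of $\mathbb{R}^d\setminus Q$. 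Using the small-$Q$ asymptotics $P(Q\cap B\neq\emptyset) \sim G(0,x_Q)\,\mathrm{Cap}(Q)$, $E_0(I_{T_Q<\infty}\,G(B_0(T_Q),w)) \sim P_0(T_Q<\infty)\,G(x_Q,w)$, and $P_w(T_Q<\infty) \sim G(w,x_Q)\,\mathrm{Cap}(Q)$, the off-diagonal sum becomes a Riemann approximation whose limit equals $c(d)^{-2}\int\!\!\int f(x)f(w) G(0,x) G(x,w)\,dx\,dw$ together with its $x\leftrightarrow w$ reflection; these two contributions add to exactly $E(Y^2)$, so $E(X_k Y) \to E(Y^2)$ and $E((X_k - Y)^2) \to 0$, proving the claim.
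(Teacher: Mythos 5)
Your proposal is essentially correct but follows a genuinely different route from the paper. The paper never touches the second moment of $\tau$: it proves $\mathcal{C}_k(\lambda)$-convergence to $c(d)^{-1}\|x\|^{d-2}\mathrm{d}\tau$ by a geometric counting argument --- a two-sided Brownian motion started at a uniform point of $[0,1)^d$ is shown to induce a mixing shift-invariant measure, Birkhoff's theorem gives the asymptotic number of unit cubes hit per unit time, Brownian scaling converts this into the asymptotics of $2^{-2k}\#\{Q\in\mathcal{Q}_k: B_0(I)\cap Q\neq\emptyset\}$, and a sandwich via the uniform hitting-probability asymptotics (Lemma \ref{lem:fk}) identifies $\intop_Q\|x\|^{2-d}\mathrm{d}\mathcal{C}_k(\lambda)$ with $\gamma\tau(Q)$ up to an unknown constant $\gamma$, which is then pinned down by first moments. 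Your $\mathcal{L}^2$ scheme $E((X_k-Y)^2)\to 0$ bypasses the entire ergodic-theoretic apparatus (Lemmas \ref{lem:shift invariance}--\ref{lem:dyadic cover}) and determines the constant $c(d)^{-1}$ directly, at the price of the cross term $E(X_kY)$, which is where all the work migrates. Two caveats on that. First, for $E(X_k^2)$ the bound $F_{k,k}\leq c\varphi$ of Remark \ref{rem:controlled F} only holds for well-separated cubes, so plain dominated convergence does not apply near the diagonal; you need the splitting of Lemma \ref{lem:nondiagonal integral bound} and Lemma \ref{lem:A_epsilon reminder} (already proved in the paper, so this is a citation fix, not a gap). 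Second, in the cross term the first-entrance decomposition $E_0[\intop_0^{T_Q}f(B_0(t))\mathrm{d}t;T_Q<\infty]=\int f(w)G_Q^c(0,w)P_w(T_Q<\infty)\mathrm{d}w$ together with Hunt's identity does give, after cancelling $P(Q\cap B\neq\emptyset)$ against $P_0(T_Q<\infty)$, a Riemann sum converging to $c(d)^{-1}\iint f(x)f(w)\left[G(0,w)G(w,x)+G(0,x)G(x,w)\right]\mathrm{d}x\,\mathrm{d}w=c(d)\,E\bigl((\smallint f\,\mathrm{d}\tau)^2\bigr)\cdot c(d)^{-1}$, which is exactly $E(Y^2)$ after dividing by $c(d)$ once more; but the asymptotics $E_0(G(B_0(T_Q),w);T_Q<\infty)\sim P_0(T_Q<\infty)G(x_Q,w)$ and $P_w(T_Q<\infty)\sim G(w,x_Q)G(0,x_Q)^{-1}P_0(T_Q<\infty)$ are uniform only for $w$ bounded away from $Q$, so a complete write-up must isolate the region $\|w-x_Q\|\leq\varepsilon$ and kill it using $\iintop_{\|x-y\|\leq\varepsilon}\varphi\,\mathrm{d}\nu\,\mathrm{d}\nu\to 0$, exactly as the paper does for $F_{k,k}$. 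With those points filled in, your argument is a valid and arguably shorter proof of the theorem; what it does not recover is the paper's intermediate geometric statement (Proposition \ref{lem:intunlim}) on the almost sure density of small cubes visited by the path, which has independent interest.
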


This result gives us a tool to calculate the occupation measure, $\tau(A)=c(d)\intop_{A}\left\Vert x\right\Vert ^{2-d}\mathrm{d}\mathcal{C}(\lambda)(x)$
can be approximated by $c(d)\intop_{A}\left\Vert x\right\Vert ^{2-d}\mathrm{d}\mathcal{C}_{k}(\lambda)(x)$
which converges to $\tau(A)$ in probability. To calculate the value
of $c(d)\intop_{A}\left\Vert x\right\Vert ^{2-d}\mathrm{d}\mathcal{C}_{k}(\lambda)(x)$
we only need to know which boxes of $\mathcal{Q}_{k}$ does $B$ intersect.
In Section \ref{sec:Conditional-ofleb and ocup}, while we prove Theorem
\ref{thm:intro ocup}, we basically show that
\begin{equation}
C_{\varphi}([0,1]^{d})\cdot2^{-2k}\#\left\{ Q\in\mathcal{Q}_{k}:Q\cap K\cap B\neq\emptyset\right\} \label{eq:ocup qube measure}
\end{equation}
converges to $\tau(K)$ in probability for every deterministic compact
set $K$, where $\varphi(x,y)=\left\Vert x-y\right\Vert ^{2-d}$ (note
that $\alpha=1/C_{\varphi}([0,1]^{d})$ in Section \ref{sec:Conditional-ofleb and ocup}
by Remark \ref{rem:what is alpha}). These results also show that
the occupation measure only depends on the trajectory of the Brownian
path, we need not to know the parametrisation, however, this fact
also follows from a result of Ciesielski and Taylor \cite[Theorem 5]{Cieselski Taylor}
which states that $\tau(K)$ equals to a deterministic constant (depending
on $d$) times the Hausdorff measure of $K\cap B$ with the gauge
function $x^{2}\log\log x^{-1}$. Note that the expression in (\ref{eq:ocup qube measure})
differs from the definition of the dyadic net measure with gauge function
$x^{2}$ in that the cubes are equal size rather than arbitrary. Since
the net measure is equivalent to the Hausdorff measure, the expression
in (\ref{eq:ocup qube measure}) cannot converge to the net measure
because that would contradict the Ciesielski-Taylor result.
\begin{rem}
\label{rem:green fn}The Green`s function of the Brownian motion in
$\mathbb{R}^{d}$ is $G(x,y)=c(d)\left\Vert x-y\right\Vert ^{2-d}$
for the constant $c(d)$ in (\ref{eq:cd def}), see \cite[Theorem 3.33]{Peres-Morters-Broanian motion}.
\end{rem}

The following proposition is known on the first moment of the occupation
measure, see \cite[Theorem 3.32]{Peres-Morters-Broanian motion}.
\begin{prop}
\label{thm:first moment}For every Borel set $A\subseteq\mathbb{R}^{d}$
($d\geq3$)
\[
E\left(\tau(A)\right)=c(d)\intop_{A}\frac{1}{\left\Vert x\right\Vert ^{d-2}}\mathrm{d}x.
\]
\end{prop}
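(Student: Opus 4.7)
The plan is to view the expectation of $\tau(A)$ as an integral of hitting densities and reduce the problem to an explicit Gaussian integral. The reference to \cite[Theorem 3.32]{Peres-Morters-Broanian motion} already covers this, but it is natural to sketch the derivation for completeness.

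First I would apply Fubini's theorem to the definition \eqref{eq:ocup def} of the occupation measure to obtain
\[
E(\tau(A)) = E\left(\int_0^\infty I_{B_0(t)\in A}\,\mathrm{d}t\right) = \int_0^\infty P(B_0(t)\in A)\,\mathrm{d}t.
\]
Since $B_0(t)$ has Gaussian distribution with density $p_t(x) = (2\pi t)^{-d/2}\exp(-\|x\|^2/(2t))$, we can write $P(B_0(t)\in A) = \int_A p_t(x)\,\mathrm{d}x$, and a second application of Fubini's theorem (on the nonnegative integrand) yields
\[
E(\tau(A)) = \int_A \left(\int_0^\infty p_t(x)\,\mathrm{d}t\right)\mathrm{d}x.
\]

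The next step is to evaluate the inner integral for fixed $x\ne 0$. With the substitution $s = \|x\|^2/(2t)$, so that $t = \|x\|^2/(2s)$ and $\mathrm{d}t = -\|x\|^2/(2s^2)\,\mathrm{d}s$, the integral becomes
\[
\int_0^\infty (2\pi t)^{-d/2} e^{-\|x\|^2/(2t)}\,\mathrm{d}t = (2\pi)^{-d/2}\, 2^{d/2-1}\,\|x\|^{2-d}\int_0^\infty s^{d/2-2} e^{-s}\,\mathrm{d}s.
\]
The remaining integral is $\Gamma(d/2-1)$, which is finite precisely because $d\ge 3$. Simplifying the constants gives $2^{-1}\pi^{-d/2}\Gamma(d/2-1)\|x\|^{2-d} = c(d)\|x\|^{2-d}$, where $c(d)$ is as in \eqref{eq:cd def}.

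Combining these two steps yields the stated formula. There is no real obstacle here; the only technical points are (i) justifying the two applications of Fubini's theorem, which follow from nonnegativity of the integrand, and (ii) the value $\Gamma(d/2-1)$ of the Gamma-type integral, which is standard and is precisely the content needed to identify $\int_0^\infty p_t(x)\,\mathrm{d}t$ with the Green's function $G(0,x) = c(d)\|x\|^{2-d}$ from \cite[Theorem 3.33]{Peres-Morters-Broanian motion}.
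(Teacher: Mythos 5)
Your derivation is correct: the two applications of Tonelli's theorem are justified by nonnegativity, the substitution $s=\|x\|^{2}/(2t)$ is carried out correctly, and the resulting constant $2^{-1}\pi^{-d/2}\Gamma(d/2-1)$ matches $c(d)$ from (\ref{eq:cd def}), with finiteness of the Gamma integral exactly where $d\geq3$ is used. The paper offers no proof of its own here -- it simply cites \cite[Theorem 3.32]{Peres-Morters-Broanian motion} -- and your argument is precisely the standard computation underlying that citation, so there is nothing further to compare.
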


The following theorem calculates the second moment of the occupation
measure.
\begin{thm}
\label{thm:second moment}We have that
\[
E\left(\intop\intop f(x,y)\mathrm{d}\tau(x)\mathrm{d}\tau(y)\right)=c(d)^{2}\intop\intop f(x,y)\frac{\left\Vert x\right\Vert ^{d-2}+\left\Vert y\right\Vert ^{d-2}}{\left\Vert x-y\right\Vert ^{d-2}\cdot\left\Vert x\right\Vert ^{d-2}\cdot\left\Vert y\right\Vert ^{d-2}}\mathrm{d}x\mathrm{d}y
\]
for every Borel function $f:\mathbb{R}^{d}\times\mathbb{R}^{d}\longrightarrow\mathbb{R}$
such that $\intop\intop\left|f(x,y)\right|\frac{\left\Vert x\right\Vert ^{d-2}+\left\Vert y\right\Vert ^{d-2}}{\left\Vert x-y\right\Vert ^{d-2}\cdot\left\Vert x\right\Vert ^{d-2}\cdot\left\Vert y\right\Vert ^{d-2}}\mathrm{d}x\mathrm{d}y<\infty$.
\end{thm}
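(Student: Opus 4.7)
\medskip

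\noindent\textit{Proof proposal.} The plan is to deduce the second moment formula as a direct corollary of Theorem~\ref{thm:intro ocup} (which identifies $\tau$ with a conditional measure up to a constant) and the double integration formula of Theorem~\ref{thm:Brownian cond measure: Main}. Set $\mu$ to be the locally finite Borel measure on $\mathbb{R}^d$ defined by $\mathrm{d}\mu(x)=\left\Vert x\right\Vert^{2-d}\mathrm{d}\lambda(x)$. Since $d\geq 3$, we have $\intop_{B(0,1)}\left\Vert x\right\Vert^{2-d}\mathrm{d}x<\infty$, so $\mu$ is locally finite, and clearly $\mu(\{0\})=0$. By the second identity of Theorem~\ref{thm:intro ocup}, $\mathcal{C}(\mu)=c(d)^{-1}\tau$ almost surely, hence $\tau=c(d)\mathcal{C}(\mu)$ as random measures.

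The first technical step is to check that $\mu_R=\mu$ for the regularity kernel $\varphi(x,y)=\left\Vert x-y\right\Vert^{2-d}$. For this, cover $\mathbb{R}^d\setminus\{0\}$ by the countable family of dyadic annuli $A_n=\{x:2^{-n}\leq\left\Vert x\right\Vert<2^{n}\}$ and dyadically subdivide each $A_n$ further into bounded Borel pieces $\{A_{n,j}\}$. On each piece $A_{n,j}$, the weight $\left\Vert x\right\Vert^{2-d}$ is bounded, and the classical estimate $\iintop_{A_{n,j}\times A_{n,j}}\left\Vert x-y\right\Vert^{2-d}\mathrm{d}x\mathrm{d}y<\infty$ (because $A_{n,j}$ is bounded and $d-2<d$) gives $I_\varphi(\mu\vert_{A_{n,j}})<\infty$. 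Since $\mu(\{0\})=0$, Proposition~\ref{decomposition}(iv) then forces $\mu_R=\mu$.

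Next, apply Theorem~\ref{thm:Brownian cond measure: Main} with both input measures equal to $\mu$. The integrability hypothesis of that theorem is
\[
\intop\intop\frac{\left\Vert x\right\Vert^{d-2}+\left\Vert y\right\Vert^{d-2}}{\left\Vert x-y\right\Vert^{d-2}}\left|f(x,y)\right|\mathrm{d}\mu_R(x)\mathrm{d}\mu_R(y)<\infty,
\]
which, upon substituting $\mathrm{d}\mu_R(x)=\left\Vert x\right\Vert^{2-d}\mathrm{d}x$, is precisely the hypothesis of Theorem~\ref{thm:second moment}. The conclusion of Theorem~\ref{thm:Brownian cond measure: Main} then reads
\[
E\!\left(\intop\intop f(x,y)\mathrm{d}\mathcal{C}(\mu)(x)\mathrm{d}\mathcal{C}(\mu)(y)\right)=\intop\intop\frac{\left\Vert x\right\Vert^{d-2}+\left\Vert y\right\Vert^{d-2}}{\left\Vert x-y\right\Vert^{d-2}\cdot\left\Vert x\right\Vert^{d-2}\cdot\left\Vert y\right\Vert^{d-2}}f(x,y)\mathrm{d}x\mathrm{d}y.
\]

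Finally, multiply both sides by $c(d)^2$. Using $\tau=c(d)\mathcal{C}(\mu)$ almost surely, the left-hand side becomes $E\!\left(\intop\intop f(x,y)\mathrm{d}\tau(x)\mathrm{d}\tau(y)\right)$ (we may reduce to nonnegative $f$ via the usual $f=f^+-f^-$ split justified by the absolute integrability hypothesis), and this yields exactly the desired formula. The only step requiring care is the identification $\mu_R=\mu$; everything else is a substitution. I do not expect this step to be a genuine obstacle given Proposition~\ref{decomposition} and the local finiteness of the Riesz $(d-2)$-energy on bounded sets bounded away from the origin.
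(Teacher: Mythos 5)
Your proposal is correct and follows essentially the same route as the paper, which simply derives the theorem from Theorem \ref{thm:ocup limit} (the identification $\mathcal{C}(\|x\|^{2-d}\mathrm{d}\lambda)=c(d)^{-1}\tau$, where $\nu_R=\nu$ is already verified) combined with the double integration formula of Theorem \ref{thm:Brownian cond measure: Main}. Your extra verification that $\mu_R=\mu$ via bounded annuli of finite $(d-2)$-energy is exactly the argument the paper uses inside the proof of Theorem \ref{thm:ocup limit}, so there is no gap.
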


Theorem \ref{thm:second moment} follows from Theorem \ref{thm:intro ocup}
and Theorem \ref{thm:Brownian cond measure: Main}. As an application
of Theorem \ref{thm:second moment}, we obtain a formula for the second
moment of the occupation measure
\begin{equation}
E\left(\tau(A)^{2}\right)=c(d)^{2}\intop_{A}\intop_{A}\frac{\left\Vert x\right\Vert ^{d-2}+\left\Vert y\right\Vert ^{d-2}}{\left\Vert x-y\right\Vert ^{d-2}\cdot\left\Vert x\right\Vert ^{d-2}\cdot\left\Vert y\right\Vert ^{d-2}}\mathrm{d}x\mathrm{d}y.\label{eq:sec mom intro}
\end{equation}
We note however, that (\ref{eq:sec mom intro}) could be deduced,
via a direct calculation, from the transition probability kernels
$p^{*}(t,x,y)$. For the definition of $p^{*}(t,x,y)$ see \cite[Theorem 3.30]{Peres-Morters-Broanian motion}.
Alternatively, (\ref{eq:sec mom intro}) can also be deduced from
Kac's moment formula (see \cite{Kac's moment}).

For a wide class of random closed sets we have that the hitting probability
of a compact set $K$ is comparable to the $\varphi$-capacity of
$K$ for a sufficient kernel $\varphi$. See Proposition  \ref{lem:tree capacity}
in case of the `percolation limit set' or Proposition \ref{thm:intersection capacity equivalence}
in case of the Brownian path. In Section \ref{sec:Probability-of-non-extinction}
we discuss the probability of the nonextinction of the conditional
measure and we establish analogous results to the hitting probabilities,
namely the probability of the nonextinction of $\mathcal{C}(\nu)$
is comparable to the capacity of the measure $\nu$. For the definition
of capacities of measures $C_{\varphi}(\nu)$ and $\overline{C_{\varphi}}(\nu)$
see Definition \ref{def:lower cap} and Definition \ref{def:upper cap}.
\begin{thm}
\label{thm:intro non extinction}Assume that the conditions of Theorem
\ref{cor:Double integral when W exists-summed} hold and $P(D\cap B\neq\emptyset)\leq b\cdot C_{\varphi}(D)$
for every compact set $D\subseteq X_{0}$. Let $\nu$ be a finite
Borel measure such that $\nu(X\setminus X_{0})=0$. Then
\[
c^{-1}\cdot C_{\varphi}(\nu)\leq P(\mathcal{C}(\nu)(X)>0)\leq b\cdot\overline{C_{\varphi}}(\nu).
\]
\end{thm}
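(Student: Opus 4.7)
The plan is to establish the two inequalities by largely independent arguments: the lower bound is a second-moment calculation via Paley-Zygmund using the double integration formula of Theorem~\ref{cor:Double integral when W exists-summed}, and the upper bound follows by combining property xi of Definition~\ref{def:def of cond meas} with the hypothesised hitting bound.

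For the lower bound I would work with an arbitrary nonzero sub-measure $\tau\leq\nu$ of finite $\varphi$-energy. Since $I_{\varphi}(\tau)<\infty$ forces $\tau=\tau_{R}$, property v gives $E(\mathcal{C}(\tau)(X))=\tau(X)$, and Theorem~\ref{cor:Double integral when W exists-summed} applied with $f\equiv 1$ yields
\[
E(\mathcal{C}(\tau)(X)^{2})=\iint F(x,y)\,\mathrm{d}\tau(x)\,\mathrm{d}\tau(y)\leq c\cdot I_{\varphi}(\tau),
\]
where the inequality uses Remark~\ref{rem:controlled F}. The Paley-Zygmund inequality then gives
\[
P(\mathcal{C}(\tau)(X)>0)\geq\frac{\tau(X)^{2}}{c\cdot I_{\varphi}(\tau)}.
\]
Property ix applied to the decomposition $\nu=\tau+(\nu-\tau)$ produces $\mathcal{C}(\nu)=\mathcal{C}(\tau)+\mathcal{C}(\nu-\tau)\geq\mathcal{C}(\tau)$, so the event $\{\mathcal{C}(\tau)(X)>0\}$ is contained in $\{\mathcal{C}(\nu)(X)>0\}$. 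Taking the supremum over all admissible $\tau\leq\nu$ recovers $c^{-1}\cdot C_{\varphi}(\nu)$ from the definition of the lower capacity of a measure (Definition~\ref{def:lower cap}); that the supremum may be restricted to finite-energy sub-measures is guaranteed by Proposition~\ref{decomposition}(iv), which decomposes any finite Borel measure into countably many finite-energy pieces.

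For the upper bound I would use property xi to get $\mathrm{supp}(\mathcal{C}(\nu))\subseteq\mathrm{supp}(\nu)\cap B$ almost surely, so
\[
\{\mathcal{C}(\nu)(X)>0\}\subseteq\{B\cap\mathrm{supp}(\nu)\neq\emptyset\}.
\]
Since $\nu$ is a finite Borel measure on the complete separable metric space $X$, it is tight, and I would approximate $\mathrm{supp}(\nu)$ from within by an increasing sequence of compact sets $K_{n}\subseteq X_{0}$ exhausting $\nu$-mass. Applying the hypothesis $P(B\cap K_{n}\neq\emptyset)\leq b\cdot C_{\varphi}(K_{n})$ to each $K_{n}$, and then passing to the limit as $n\to\infty$ inside the defining expression for $\overline{C_{\varphi}}(\nu)$ of Definition~\ref{def:upper cap}, produces the bound $P(\mathcal{C}(\nu)(X)>0)\leq b\cdot\overline{C_{\varphi}}(\nu)$.

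The main obstacle, as I see it, is matching the two ends of the argument to the precise definitions of $C_{\varphi}(\nu)$ and $\overline{C_{\varphi}}(\nu)$. On the lower side, one must check that the supremum in the Paley-Zygmund estimate, running over finite-energy sub-measures $\tau\leq\nu$, genuinely agrees with $C_{\varphi}(\nu)$; the $\varphi$-singular part $\nu_{\perp}$ contributes nothing to either side by Proposition~\ref{decomposition} and property vii, so the restriction to $\nu_{R}$ is harmless, but a small optimisation argument is needed to extract the exact supremum. On the upper side, the delicate point is ensuring that the compact approximation of $\mathrm{supp}(\nu)$ saturates $\overline{C_{\varphi}}(\nu)$ rather than a loose surrogate; here countable subadditivity of $P(B\cap\cdot\neq\emptyset)$ combined with countable subadditivity of $C_{\varphi}$ on the approximating family should close the gap cleanly.
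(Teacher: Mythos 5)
Your lower bound is essentially the paper's argument: Paley--Zygmund applied to $\mathcal{C}(\tau)(X)$, with the first moment from Property \textit{v.)} and the second moment controlled by the double integration formula together with Remark \ref{rem:controlled F}, followed by additivity (Property \textit{ix.)}) to pass from $\tau$ to $\nu$. The only discrepancy is that Definition \ref{def:lower cap} takes the supremum over all probability measures $\tau\ll\nu$, not over normalised sub-measures $\tau\leq\nu$; the paper handles a general $\tau\ll\nu$ by decomposing along the level sets $A_{i}=\{i-1\leq\frac{\mathrm{d}\tau}{\mathrm{d}\nu}<i\}$ so that $\sum_{i}\frac{1}{i}\mathcal{C}(\tau\vert_{A_{i}})\leq\mathcal{C}(\nu)$, whereas you would need the truncation-and-rescale argument you allude to. That is a fillable gap.

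The upper bound, however, has a genuine error: you anchor the compact exhaustion at $\mathrm{supp}(\nu)$, and $\overline{C_{\varphi}}(\nu)$ is an infimum over \emph{all} Borel carriers $A$ with $\nu(X\setminus A)=0$, which can be dramatically smaller than $C_{\varphi}(\mathrm{supp}\,\nu)$ (a measure can be carried by a capacity-zero Borel set whose closure has large capacity). If you choose compacts $K_{n}\subseteq\mathrm{supp}(\nu)$ exhausting the $\nu$-mass, the best you can conclude is $\lim_{n}P(B\cap K_{n}\neq\emptyset)\leq b\cdot C_{\varphi}\bigl(\cup_{n}K_{n}\bigr)$, i.e.\ the capacity of \emph{one particular} carrier, not the infimum over carriers; and the inclusion $\{\mathcal{C}(\nu)(X)>0\}\subseteq\{B\cap\mathrm{supp}(\nu)\neq\emptyset\}$ only yields the even weaker bound $b\cdot C_{\varphi}(\mathrm{supp}\,\nu)$. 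Neither ``countable subadditivity of $P(B\cap\cdot\neq\emptyset)$'' nor subadditivity of $C_{\varphi}$ closes this gap, because the issue is not subadditivity but the choice of carrier. The correct order of quantifiers is the paper's (Theorem \ref{thm:upper non extinction}): first fix Borel carriers $A_{n}$ with $C_{\varphi}(A_{n})\leq\overline{C_{\varphi}}(\nu)+1/n$, set $A=\cap_{n}A_{n}$ so that $\nu(X\setminus A)=0$ and, by monotonicity of capacity, $C_{\varphi}(A)=\overline{C_{\varphi}}(\nu)$; \emph{then} take increasing compacts $D_{n}\subseteq A$ with $\nu(X\setminus D_{n})<1/n$, so that $C_{\varphi}(D_{n})\leq C_{\varphi}(A)$, and conclude via $P(\mathcal{C}(\nu\vert_{D_{n}})(X)>0)\leq P(D_{n}\cap B\neq\emptyset)\leq b\cdot C_{\varphi}(D_{n})$ together with $\mathcal{C}(\nu)=\sum_{i}\mathcal{C}(\nu\vert_{H_{i}})$ for $H_{i}=D_{i}\setminus D_{i-1}$. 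With that substitution your argument goes through.
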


We prove Theorem \ref{thm:intro non extinction} in Section \ref{sec:Probability-of-non-extinction}
and the conclusion can be extended for locally finite measures when
$X$ is locally compact (see Remark \ref{rem:loc fin non ext prob}).
The following result gives lower and upper bound for the probability
of nonextinction of $\mathcal{C}(\nu)$ when $B$ is the Brownian
path, that we show in Section \ref{subsec:Probability-of-non Brown}.
\begin{thm}
\label{thm:nonext prob for cond meas Brownian}Let $B$ be a Brownian
path in $\mathbb{R}^{d}$ for $d\geq3$. Let $\nu$ be a locally finite
Borel measure on $\mathbb{R}^{d}$ such that $\nu(\{0\})=0$. Let
$\mathcal{C}(\nu)$ be the conditional measure of $\nu$ on $B$ with
respect to $\mathcal{Q}_{k}$ ($k\geq1$) ($\mathcal{Q}_{k}$ is as
in Example \ref{exa:example Q_k}) with regularity kernel $\varphi(x,y)=\left\Vert x-y\right\Vert ^{2-d}$.
Then
\[
C_{F}(\nu)\leq P(\mathcal{C}(\nu)(X)>0)\leq2\overline{C_{F}}(\nu)
\]
for
\begin{equation}
F(x,y)=\frac{\left\Vert x\right\Vert ^{d-2}+\left\Vert y\right\Vert ^{d-2}}{\left\Vert x-y\right\Vert ^{d-2}}.\label{eq:F for brown}
\end{equation}
\end{thm}

The results on the probability of nonextinction suggest an analogy
between the random intersection $B\cap K$ for a fixed compact set
$K$ and the conditional measure $\mathcal{C}(\nu)$ of a fixed measure
$\nu$ on $B$. Compare Theorem \ref{thm:nonext prob for cond meas Brownian}
to the following known result on hitting probabilities.
\begin{prop}
\label{thm:intersection capacity equivalence}Let $B$ be a Brownian
path in $\mathbb{R}^{d}$ for $d\geq3$. Let $A\subseteq\mathbb{R}^{d}\setminus\{0\}$
be a compact set. Then
\begin{equation}
C_{F}(A)\leq P(B\cap A\neq\emptyset)\leq2C_{F}(A)\label{eq:cap eq segg}
\end{equation}
where $F$ is as in (\ref{eq:F for brown}).
\end{prop}

Let $G(x,y)=c(d)\left\Vert x-y\right\Vert ^{2-d}$ be the Green`s
function of the Brownian motion in $\mathbb{R}^{d}$ (see \cite[Theorem 3.33]{Peres-Morters-Broanian motion})
and let $M(x,y)=G(x,y)/G(0,y)$ be the Martin`s kernel. Then $F(x,y)=M(x,y)+M(y,x)$
and so $2I_{M}(\nu)=I_{F}(\nu)$ for every finite Borel measure $\nu$.
Thus Proposition \ref{thm:intersection capacity equivalence} is a
reformulation of \cite[Theorem 8.24]{Peres-Morters-Broanian motion}.

We stated many of our main results in general separable metric Radon
space $X$ for a `reasonable' random closed set $B$. In Section \ref{sec:Conditional-measure-on}
we let $X=\partial T$ to be the boundary of an infinite rooted tree
$T$. We discuss the theory of the conditional measure when $B$ is
a `percolation limit set'. We establish many properties of the conditional
measure such as the double integration formula and the probability
of nonextinction. In the end, we prove that $\mathcal{C}(\nu)$ is
a certain `random multiplicative cascade measure'. For the exact statement
of these results and the discussion see Section \ref{sec:Conditional-measure-on}.
Section \ref{sec:Conditional-measure-on} is complete without the
knowledge of the sections on the Brownian motion, i.e. does not rely
on Section \ref{sec:increaasing cond meas}, Section \ref{sec:Brownian-path}
and Section \ref{sec:Conditional-ofleb and ocup}.

We hope that our methods can be extended to a rich family of natural
random closed sets, however, this study is already extensive but the
question can be a subject of further investigation. The key to make
the machinery work is the existence of $F(x,y)$, the rest of the
assumptions in Section \ref{subsec:Special-assumptions} are more
straightaway if we have $F(x,y)$. In the case of the two dimensional
Brownian motion to obtain a closed set, we need to stop the Brownian
motion at some point. A natural candidate is the two dimensional Brownian
motion that is stopped at a random exponential time independent of
the Brownian motion, and let $B$ to be the range of the motion till
that stopped time. We suspect that the assumptions of Theorem \ref{cor:Double integral when W exists-summed}
also hold locally in the two dimensional case and due to Section \ref{sec:increaasing cond meas}
can be extended globally to the plane. The proof shall differ a bit
due to the fact that the Geen's function is merely different in two
dimension than in higher dimensions. Also other random processes might
satisfy our assumptions, the list could include processes like $\alpha$-stable
Levy processes, the fractional Brownian motion or the Mandelbrot percolation
set.

We note that all our results can be extended to finite signed and
complex measures or if $X$ is locally compact then to locally finite
signed and complex measures. It can be seen by applying our results
to the measure parts in the decomposition of signed and complex measures.

\subsection{Application}

We introduced the concept of the conditional measure on a random set
by the analogy of slicing measures (\ref{eq:dL}) by straight lines.
For the conditional measure we wish to solve the problem that we require
a random measure $\mu$ to be supported on $B$ and $\mathrm{d}\mu\mathrm{d}P=\mathrm{d}\nu$
for a given measure $\nu$. As we discussed in Remark \ref{rem:nohope for singular}
it is not possible to solve the problem for $\nu_{\bot}$ in the case
of the Brownian path and we can solve it for $\nu_{R}$. However,
the solution is not unique, one can take a random variable $Y$ independent
of $B$ that takes value $p^{-1}$ with probability $p$ and value
$0$ with probability $1-p$. Then if $\mu$ is a solution then $Y\mu$
is also a solution. Even if we require $\mu$ to be measurable with
respect to the $\sigma$-algebra $\mathcal{F}$ generated by the events
$\{Q\cap B\neq\emptyset\}$ where $Q\in\mathcal{Q}_{k}$, $k\in\mathbb{N}$
the solution is not unique. One can construct a random point mass
$\mu$, such that $\mu$ is $\mathcal{F}$-measurable and $\mathrm{d}\mu\mathrm{d}P=\mathrm{d}\lambda$.
However, a disadvantage of the point mass that it does not behave
well for the double integration formula (\ref{eq:doub int brown}).
The construction of $\mathcal{C}(\nu)$ defined as the limit of $\mathcal{C}_{k}(\nu)$
is in some sense a random measure that is nicely spread on $B$ unlike
a point mass. The construction is analogous to the slice measures
(\ref{eq:sequence of slice measure}). In the application of the slice
measures a weaker double integration formula \cite[Theorem 10.7]{Mattila book}
provides a strong tool for the geometric measure theory of slices.
Similarly the double integration formula for $\mathcal{C}(\nu)$ gives
us a powerful weapon and can be applied to analogous problems that
the slice measures are used for in the case of intersections with
straight lines. We outline below one further application that we believe
is merely new.

It can be deduced from Remark \ref{rem:finite energy almost surely}
that
\begin{equation}
\dim_{H}\nu-(d-2)\leq\dim_{H}\mathcal{C}(\nu)\label{eq:dim lower}
\end{equation}
conditional on $\mathcal{C}(\nu)(X)>0$, where $\dim_{H}$ denotes
the Hausdorff dimension. Let $K\subseteq\mathbb{R}^{d}\setminus\{0\}$
be a fixed compact set. Assume that $\nu$ is a finite deterministic
measure on $K$ such that
\begin{equation}
P(\mathcal{C}(\nu)(\mathbb{R}^{d})>0)=P(K\cap B\neq\emptyset).\label{eq:nonext equality}
\end{equation}
Then $\dim_{H}\nu-(d-2)\leq\dim_{H}K\cap B$ almost surely conditional
on $K\cap B\neq\emptyset$ by (\ref{eq:dim lower}), (\ref{eq:nonext equality})
and Property \textit{xi.)} of Definition \ref{def:def of cond meas}.
We suspect that (\ref{eq:nonext equality}) holds for the harmonic
measure of $K$ for the Brownian motion and for the equilibrium measure
of $K$ for the capacity kernel $\left\Vert x-y\right\Vert ^{2-d}$.
An essential almost sure upper bound for $\dim_{H}K\cap B$ has been
known for a long time \cite{Hawkes 71a,Hawkes 71b}, namely $\left\Vert \dim_{H}K\cap B\right\Vert _{\infty}=\max\{0,\dim_{H}K-(d-2)$,
however we are not aware of a general method to give almost sure lower
bounds on $\dim_{H}K\cap B$ conditional on intersection.

We note that the harmonic measure does not necessarily give the essential
lower bound. If $K$ is a ball in $\mathbb{R}^{d}$ not containing
the origin then the Harmonic measure is supported on $\partial K$
which gives lower estimate of dimension $1$ while the correct dimension
of the intersection is almost surely $2$ conditional on intersection.
For $K$ being the ball the Lebesgue measure is the ideal choice for
$\nu$.

In general, if $\nu_{n}$ is a sequence of measures supported on $K$
such that $\lim_{n\rightarrow\infty}P(\mathcal{C}(\nu_{n})(X)>0)=P(K\cap B\neq\emptyset)$
then $\limsup_{n\rightarrow\infty}\dim_{H}\nu_{n}-(d-2)\leq\dim_{H}K\cap B$
almost surely conditional on $K\cap B\neq\emptyset$. This means that
we can provide almost sure lower bound even if we cannot find the
ideal measure that satisfies (\ref{eq:nonext equality}) but we can
find a sequence of measures that in limes satisfies (\ref{eq:nonext equality}).

\section{Preliminary remarks\label{sec:Peliminary-remarks}}

In this section we summarise the background and preliminary lemmas.

For $x\in X$, $r>0$ let $B(x,r)=\left\{ y\in X:d(x,y)<r\right\} $
and for a set $A\subseteq X$ let $B(A,r)=\left\{ y\in X:\exists x\in A,d(x,y)<r\right\} $.

\begin{notation}
For $A\subseteq X$ let 
\[
\chi_{A}(x)=\begin{cases}
1 & x\in A\\
0 & x\notin A
\end{cases}
\]
 be the \textit{characteristic function of $A$}.
\end{notation}

\begin{notation}
For a probability event $A\in\mathcal{A}$ let
\[
I_{A}(\omega)=\begin{cases}
1 & \omega\in A\\
0 & \omega\notin A
\end{cases}
\]
be the\textit{ indicator function of $A$}.
\end{notation}

\subsection{Convergence in probability}

We list below some folklore properties of the convergence in probability
that we need throughout the paper.
\begin{defn}
\label{def:conv in prob}Let $Y_{n}$ ($n\in\mathbb{N}$) and $Y$
be random variables. We say that \textit{$Y_{n}$ converges to $Y$
in probability} (as $n$ goes to $\infty$) if
\[
\lim_{n\rightarrow\infty}P\left(\left|Y_{n}-Y\right|>0\right)=0
\]
for every $\varepsilon>0$. If the random variables take values in
a metric space than we can replace the difference $\left|.\right|$
by the metric to obtain a definition of convergence in probability
for random variables that take values in a metric space.
\end{defn}

\begin{rem}
\label{rem: SUBSEQ IFF conv prob}Let $Y$ and $Y_{n}$ be a sequence
of random variables that take values in a metric space. Then $Y_{n}$
converges to $Y$ in probability if and only if for every subsequence
$\left\{ n_{k}\right\} _{k=1}^{\infty}$ of $\mathbb{N}$ there exists
a subsequence $\left\{ j_{k}\right\} _{k=1}^{\infty}$ of $\left\{ n_{k}\right\} _{k=1}^{\infty}$
such that $Y_{j_{k}}$ converges to $Y$ almost surely (see \cite[Theorem 2.3.2]{Durrett}).
\end{rem}

\begin{rem}
\label{rem:rho def}The convergence in probability is a metric convergence
and is completely metrizable by the following metric $\rho$ (see
\cite[Exercise 2.3.8 and 2.3.9]{Durrett}). For random variables $Y,Z$
let 
\[
\rho(Y,Z)=E\left(\frac{\left|Y-Z\right|}{1+\left|Y-Z\right|}\right).
\]
\end{rem}

\begin{rem}
\label{rem:exp quality}Note that $\rho(Y,Z)\leq E\left(\left|Y-Z\right|\right)$.
We use this fact without reference throughout the paper.
\end{rem}

\begin{lem}
\label{lem:random lim is random}Let $Y_{n}:\Omega\longrightarrow M$
be a sequence of random variables that take values in a metric space
$M$. If $Y:\Omega\longrightarrow M$ is a function and there exists
an event $H$ with $P(H)=1$ such that $Y_{n}(\omega)$ converges
to $Y(\omega)$ for every $\omega\in H$ then $Y$ is a random variable.
\end{lem}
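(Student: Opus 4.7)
The plan is to reduce to the well-known fact that a pointwise (everywhere) limit of measurable $M$-valued functions is Borel measurable, and then to handle the discrepancy on the null set $\Omega \setminus H$ by the standard convention that the underlying probability space is complete (which is assumed throughout the paper).

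First I would assume $M$ is non-empty (otherwise the statement is vacuous) and fix a reference point $m_0 \in M$. Then, for each $n$, I define the modified random variable
\[
\tilde Y_n(\omega) = \begin{cases} Y_n(\omega) & \omega \in H, \\ m_0 & \omega \notin H, \end{cases}
\]
and the analogous $\tilde Y$ obtained from $Y$ in the same way. Because $H \in \mathcal{A}$ and each $Y_n$ is a random variable, each $\tilde Y_n$ is also a random variable. By the hypothesis that $Y_n \to Y$ on $H$, and by construction on $\Omega \setminus H$, we have $\tilde Y_n(\omega) \to \tilde Y(\omega)$ for \emph{every} $\omega \in \Omega$, not just almost surely.

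Next I would show that $\tilde Y$ is measurable via the usual metric-space argument. For any closed set $F \subseteq M$ and any $k \in \mathbb{N}$ let $F_{1/k} = \{y \in M : d(y,F) < 1/k\}$, which is open. The everywhere convergence $\tilde Y_n \to \tilde Y$ gives
\[
\tilde Y^{-1}(F) \;=\; \bigcap_{k=1}^{\infty} \bigcup_{N=1}^{\infty} \bigcap_{n \geq N} \tilde Y_n^{-1}(F_{1/k}),
\]
which lies in $\mathcal{A}$. Since closed sets generate the Borel $\sigma$-algebra on $M$, it follows that $\tilde Y$ is a random variable.

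Finally, $Y$ and $\tilde Y$ agree on the full-probability event $H$, so for every Borel $B \subseteq M$ the symmetric difference of $\{Y \in B\}$ and $\{\tilde Y \in B\}$ is contained in the $P$-null set $\Omega \setminus H$; under the standing convention that $(\Omega, \mathcal{A}, P)$ is complete, this forces $\{Y \in B\} \in \mathcal{A}$, so $Y$ is a random variable. There is no substantial obstacle: the only small point of care is replacing the usual open-interval preimage argument with the approximation of closed sets by their open $1/k$-neighbourhoods, together with a one-line invocation of completeness at the end.
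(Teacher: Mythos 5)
Your proof is correct and is essentially the paper's route: the paper simply cites \cite[Theorem 4.2.2]{Dudley}, and what you have written out is the standard proof of that theorem (reduce to everywhere convergence, then express $\tilde{Y}^{-1}(F)$ for closed $F$ via the open neighbourhoods $F_{1/k}$), followed by the passage from $\tilde{Y}$ back to $Y$. The one point worth noting is that the final step genuinely requires completeness of $(\Omega,\mathcal{A},P)$ (or reading the conclusion as ``$Y$ agrees almost surely with a random variable''), and the paper never states this hypothesis explicitly -- so your parenthetical that it is ``assumed throughout the paper'' is a charitable reading rather than a quotation, but it is the right fix and the lemma is false as literally stated without it.
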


For the proof of Lemma \ref{lem:random lim is random} see \cite[Theorem 4.2.2]{Dudley}
\begin{lem}
\label{lem:mutual convergence subsequance}Assume that $\left\{ f_{k}^{i}\right\} _{i,k\in\mathbb{N}}$
is a family of random variables such that $f_{k}^{i}$ converges in
probability as $k$ goes to infinity for every $i$. Then for every
subsequence $\left\{ n_{k}\right\} _{k=1}^{\infty}$ of $\mathbb{N}$
there exists a subsequence $\left\{ j_{k}\right\} _{k=1}^{\infty}$
of $\left\{ n_{k}\right\} _{k=1}^{\infty}$and there exists a probability
event $H\in\mathcal{A}$ with $P(H)=1$ such that $f_{j_{k}}^{i}$
converges on the event $H$ as $k$ goes to infinity for every $i$.
\end{lem}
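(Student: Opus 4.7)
The plan is a standard diagonal extraction. Convergence in probability for each fixed $i$ gives (by Remark \ref{rem: SUBSEQ IFF conv prob}) almost-sure convergence along a suitable subsequence, and the difficulty is only that the subsequence depends on $i$; I would resolve this by Cantor's diagonal method.

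First I would fix the subsequence $\{n_k\}_{k=1}^{\infty}$ of $\mathbb{N}$ given in the hypothesis. Using that $f_k^1$ converges in probability as $k\to\infty$, I apply Remark \ref{rem: SUBSEQ IFF conv prob} to the subsequence $\{n_k\}$ to obtain a further subsequence $\{n_k^{(1)}\}_{k=1}^{\infty}\subseteq\{n_k\}$ along which $f_{n_k^{(1)}}^{1}$ converges almost surely; call the associated probability-one event $H_1$. Inductively, once $\{n_k^{(i-1)}\}$ has been constructed, I use Remark \ref{rem: SUBSEQ IFF conv prob} again, applied to $f_{n_k^{(i-1)}}^{i}$ (which is a subsequence of a sequence converging in probability, hence itself converges in probability to the same limit), to extract $\{n_k^{(i)}\}_{k=1}^{\infty}\subseteq\{n_k^{(i-1)}\}$ on which $f_{n_k^{(i)}}^{i}$ converges on a probability-one event $H_i$.

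Next I would take the diagonal $j_k := n_k^{(k)}$, so that $\{j_k\}_{k=1}^{\infty}$ is a subsequence of $\{n_k\}$. For every fixed $i$, the tail $\{j_k\}_{k\geq i}$ is a subsequence of $\{n_k^{(i)}\}$, hence $f_{j_k}^{i}$ converges on the event $H_i$. Setting $H:=\bigcap_{i=1}^{\infty}H_i$, countable intersection of probability-one events gives $P(H)=1$, and on $H$ the sequence $f_{j_k}^{i}$ converges for every $i\in\mathbb{N}$ simultaneously, which is exactly the conclusion.

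There is no real obstacle; the only minor point worth checking is that a subsequence of a sequence that converges in probability still converges in probability (to the same limit), which is immediate from Definition \ref{def:conv in prob}. One might also want to observe, via Lemma \ref{lem:random lim is random}, that the pointwise limit on $H$ defines a genuine random variable (after being extended arbitrarily, say to $0$, off $H$), although the statement as written only demands convergence on $H$.
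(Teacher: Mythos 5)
Your proposal is correct and is essentially identical to the paper's own proof: both perform the same iterated extraction of nested subsequences via Remark \ref{rem: SUBSEQ IFF conv prob} followed by Cantor's diagonal trick, and both conclude by intersecting the countably many probability-one events. No further comment is needed.
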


\begin{proof}
Let $\left\{ \alpha_{k}^{1}\right\} _{k=1}^{\infty}$ be a subsequence
of $\left\{ n_{k}\right\} _{k=1}^{\infty}$ such that $f_{\alpha_{k}^{1}}^{1}$
converges almost surely and let $j_{1}=\alpha_{1}^{1}$. If $\left\{ \alpha_{k}^{i}\right\} _{k=i}^{\infty}$
and $j_{1},\dots j_{i}$ are defined let $\left\{ \alpha_{k}^{i+1}\right\} _{k=i+1}^{\infty}$
be a subsequence of $\left\{ \alpha_{k}^{i}\right\} _{k=i+1}^{\infty}$
such that $f_{\alpha_{k}^{i+1}}^{i+1}$ converges almost surely as
$k$ goes to infinity and let $j_{i+1}=\alpha_{i+1}^{i+1}$. Then
$\left\{ j_{k}\right\} _{k=i}^{\infty}$ is a subsequence of $\left\{ \alpha_{k}^{i}\right\} _{k=i}^{\infty}$
and hence $f_{j_{k}}^{i}$ converges almost surely as $k$ goes to
infinity for every $i$.
\end{proof}
\begin{lem}
\label{lem:subseq prob 0}Let $Y$ and $Y_{n}$ ($n\in\mathbb{N}$)
be a sequence of random variables. If for every $\varepsilon>0$ and
every subsequence $\left\{ \alpha_{k}\right\} _{k=1}^{\infty}$ of
$\mathbb{N}$ we can find a subsequence $\left\{ \beta_{k}\right\} _{k=1}^{\infty}$
of $\left\{ \alpha_{k}\right\} _{k=1}^{\infty}$ such that
\begin{equation}
\lim_{k\rightarrow\infty}P\left(\left|Y_{\beta_{k}}-Y\right|>\varepsilon\right)=0\label{eq:sq}
\end{equation}
then $Y_{n}$ converges to $Y$ in probability as $n$ goes to $\infty$.
\end{lem}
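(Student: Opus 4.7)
The plan is a standard subsequence argument by contradiction. Suppose $Y_n$ does not converge to $Y$ in probability. By the definition of convergence in probability (Definition \ref{def:conv in prob}), there exist $\varepsilon>0$, $\eta>0$ and a subsequence $\{\alpha_k\}_{k=1}^{\infty}$ of $\mathbb{N}$ such that $P(|Y_{\alpha_k}-Y|>\varepsilon)>\eta$ for every $k$.

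Now I would apply the hypothesis to this $\varepsilon$ and this particular subsequence $\{\alpha_k\}_{k=1}^{\infty}$: it yields a further subsequence $\{\beta_k\}_{k=1}^{\infty}$ of $\{\alpha_k\}_{k=1}^{\infty}$ with
\[
\lim_{k\to\infty}P(|Y_{\beta_k}-Y|>\varepsilon)=0.
\]
But since $\{\beta_k\}\subseteq\{\alpha_k\}$, every term of this limiting sequence is bounded below by $\eta>0$, which is the desired contradiction. Hence $Y_n\to Y$ in probability.

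There is no real obstacle here; the only thing to be careful about is that the hypothesis allows us to choose the extracted subsequence after we have fixed $\varepsilon$, which is exactly what the contradiction requires. (Equivalently, one could phrase the argument via Remark \ref{rem:rho def}: if $\rho(Y_n,Y)\not\to 0$, pass to a subsequence along which $\rho(Y_{\alpha_k},Y)$ stays bounded away from $0$, extract via the hypothesis a further subsequence converging to $Y$ in probability, which forces $\rho(Y_{\beta_k},Y)\to 0$ and contradicts the choice of $\{\alpha_k\}$.)
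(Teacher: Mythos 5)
Your proof is correct and is essentially the same argument as the paper's: the paper simply invokes the general topological fact that a sequence converges iff every subsequence has a further subsequence converging to the same limit, applied to the numerical sequence $x_k=P(|Y_k-Y|>\varepsilon)$ for each fixed $\varepsilon$, and your contradiction argument is precisely the standard proof of that fact unpacked.
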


\begin{proof}
In a topological space $x_{k}$ converges to $x$ if and only if for
every subsequence $\left\{ \alpha_{k}\right\} _{k=1}^{\infty}$ of
$\mathbb{N}$ we can find a subsequence $\left\{ \beta_{k}\right\} _{k=1}^{\infty}$
of $\left\{ \alpha_{k}\right\} _{k=1}^{\infty}$ such that $x_{\beta_{k}}$
converges to $x$. Applying this to $x_{k}=P\left(\left|Y_{k}-Y\right|>\varepsilon\right)$
it follows that $\lim_{k\rightarrow\infty}P\left(\left|Y_{k}-Y\right|>\varepsilon\right)=0$.
\end{proof}

\begin{lem}
\label{lem:Scheffe}Let $Y_{n}$ be a sequence of real valued random
variables such that $Y_{n}$ converges to $Y$ in probability and
$\lim_{n\rightarrow\infty}E\left|Y_{n}\right|=E\left|Y\right|<\infty$.
Then $Y_{n}$ converges to $Y$ in $\mathcal{L}^{1}$.
\end{lem}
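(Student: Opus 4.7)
The plan is to use the classical Scheffé-type trick: the nonnegative auxiliary sequence
\[
g_n := |Y_n| + |Y| - |Y_n - Y| \geq 0
\]
(nonnegativity follows from the triangle inequality) combined with Fatou's lemma along an almost surely convergent subsequence. The main obstacle is simply that we only have convergence in probability rather than almost sure convergence of $Y_n$, so we cannot apply Fatou directly to $g_n$; we must first pass to a suitable subsequence.

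First I would fix an arbitrary subsequence $\{n_k\}$ of $\mathbb{N}$. By Remark~\ref{rem: SUBSEQ IFF conv prob}, since $Y_n \to Y$ in probability, there is a further subsequence $\{m_k\} \subseteq \{n_k\}$ along which $Y_{m_k} \to Y$ almost surely. On that event of full probability we have $|Y_{m_k}| \to |Y|$ and $|Y_{m_k} - Y| \to 0$, hence $g_{m_k} \to 2|Y|$ almost surely.

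Next I would apply Fatou's lemma to the nonnegative sequence $g_{m_k}$:
\[
2E|Y| = E\bigl(\liminf_{k\to\infty} g_{m_k}\bigr) \leq \liminf_{k\to\infty} E(g_{m_k}) = E|Y| + E|Y| - \limsup_{k\to\infty} E|Y_{m_k} - Y|,
\]
where I have used the hypothesis $E|Y_{m_k}| \to E|Y| < \infty$ (which is also finite, so the cancellation is legitimate). Rearranging gives $\limsup_{k} E|Y_{m_k} - Y| \leq 0$, and therefore $E|Y_{m_k} - Y| \to 0$.

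Finally, I would conclude by the standard subsequence principle in a metric space: we have shown that every subsequence of $\{E|Y_n - Y|\}$ admits a further subsequence converging to $0$, so the whole sequence $E|Y_n - Y|$ converges to $0$, which is exactly $L^1$-convergence of $Y_n$ to $Y$. (Alternatively, one can invoke Lemma~\ref{lem:subseq prob 0} applied to the deterministic sequence $E|Y_n - Y|$.)
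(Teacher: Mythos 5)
Your proof is correct: the Scheff\'e identity $g_n=|Y_n|+|Y|-|Y_n-Y|\geq 0$, Fatou along an almost surely convergent subsequence, and the subsequence principle for the real sequence $E|Y_n-Y|$ together give exactly the claimed $\mathcal{L}^1$ convergence, and the cancellation is legitimate since $E|Y|<\infty$ and $E|Y_n|$ is eventually finite. The paper does not write out a proof but simply cites Durrett, where the argument is essentially the same Scheff\'e-plus-subsequence device, so your proposal matches the intended route.
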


For details of the proof see \cite[Theorem 5.5.2]{Durrett}

\begin{lem}
\label{lem:finite expectation stochastic limit}If $Y_{n}$ ($n\in\mathbb{N}$)
is a sequence of nonnegative, real valued random variables, $Y_{n}$
converges to $Y$ in probability and there exists $c<\infty$ such
that $E(Y_{n})\leq c$ for every $n\in\mathbb{N}$ then $E(Y)\leq c$.
\end{lem}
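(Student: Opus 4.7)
The plan is to reduce the problem to an almost-sure statement via a subsequence, and then apply Fatou's lemma. The assumption of convergence in probability does not give us almost-sure convergence directly, but it gives us the next best thing: every subsequence contains a further almost-surely convergent subsequence.

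First, I would invoke Remark \ref{rem: SUBSEQ IFF conv prob} applied to the full sequence $\{Y_n\}$: since $Y_n \to Y$ in probability, there exists a subsequence $\{Y_{n_k}\}_{k=1}^\infty$ such that $Y_{n_k} \to Y$ almost surely as $k \to \infty$. Because each $Y_n$ is nonnegative, the almost-sure limit $Y$ is nonnegative almost surely, so Fatou's lemma applies to $\{Y_{n_k}\}$:
\[
E(Y) = E\bigl(\liminf_{k\to\infty} Y_{n_k}\bigr) \leq \liminf_{k\to\infty} E(Y_{n_k}) \leq c,
\]
where the final inequality uses the uniform bound $E(Y_n) \leq c$ on the entire sequence, which of course descends to the subsequence. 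This gives the desired conclusion $E(Y) \leq c$.

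There is no real obstacle here; the only subtlety is that convergence in probability by itself does not permit a direct application of Fatou, because Fatou requires a pointwise (almost sure) liminf. Passing to a subsequence bridges this gap, and the uniform bound on expectations is preserved under taking subsequences so the estimate carries through without further work.
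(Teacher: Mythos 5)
Your proof is correct and is essentially identical to the paper's: extract an almost surely convergent subsequence (justified by the subsequence characterisation of convergence in probability) and apply Fatou's lemma together with the uniform bound $E(Y_{n})\leq c$. No issues.
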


\begin{proof}
Let $n_{k}$ be a sequence such that $Y_{n_{k}}$ converges to $Y$
almost surely. Then by Fatou`s lemma
\[
E(Y)=E(\liminf_{k\rightarrow\infty}Y_{n_{k}})\leq\liminf_{k\rightarrow\infty}E(Y_{n_{k}})\leq c.
\]
\end{proof}

\subsection{Weak$^{*}$ and vague convergence of measures}

We recall some properties of the weak$^{*}$ and vague convergences
of measures and some related lemmas.

Let $C_{b}(X)$ denote the \textit{space of bounded continuous functions
on $X$ }equipped with the supremum norm and let $C_{c}(X)$ denote
the \textit{space of all compactly supported continuous functions
on $X$ }equipped with the supremum norm. We denote by $\mathrm{supp}(f)$
the support of a function $f:X\longrightarrow\mathbb{R}$.
\begin{lem}
\label{lem:countab exhaustion}Let $\nu$ be a finite, Borel measure
on $X$ and $A\subseteq X$ be a Borel set. Then there exists a sequence
of disjoint compact subsets $K_{1},K_{2},\dots$ of $A$ such that
$\nu(A\setminus\cup_{i=1}^{\infty}K_{i})=0$.
\end{lem}

\begin{rem}
\label{rem:countab exhaust vague}Lemma \ref{lem:countab exhaustion}
easily follows from the inner regularity of $\nu$. The statement
also holds for a locally finite Borel measure $\nu$. It can be deduced
from that $\nu$ is locally finite and $X$ satisfies the Lindel\"of
property as it is a separable metric space.
\end{rem}

\begin{defn}
\label{def:weak conv}Let $\mu$ and $\mu_{k}$ ($k\in\mathbb{N}$)
be finite Borel measures on $X$. We say that \textit{$\mu_{k}$ weakly
converges to $\mu$} (as $k$ goes to $\infty$) if
\[
\lim_{k\rightarrow\infty}\int f(x)\mathrm{d}\mu_{k}(x)=\int f(x)\mathrm{d}\mu(x)
\]
for every $f\in C_{b}(X)$.
\end{defn}

\begin{rem}
\label{rem:liminf open weak conv}It is well-known, that $\mu_{k}$
converges to $\mu$ weakly if and only if $\mu(G)\leq\liminf_{k\rightarrow\infty}\mu_{k}(G)$
for every open set $G$. See \cite[Theorem 2.1]{Billingsly Convergence of Prob measu}.
\end{rem}

\begin{lem}
\label{lem:integral separation}Let $\mu$ and $\nu$ be finite Borel
measures on $X$ such that $\mu\neq\nu$. Then there exists $f\in C_{b}(X)$
such that $\intop_{X}f(x)\mathrm{d}\mu(x)\neq\intop_{X}f(x)\mathrm{d}\nu(x)$.
\end{lem}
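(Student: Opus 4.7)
The plan is to reduce the problem to separating the measures on a compact set by continuous functions, which is straightforward since in a complete separable metric space we have inner regularity (Lemma \ref{lem:inner regularity}) and distance functions readily approximate indicator functions of closed sets.

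First I would use $\mu \neq \nu$ to locate a compact set on which the measures differ. Since $\mu \neq \nu$ there is some Borel set $A \subseteq X$ with $\mu(A) \neq \nu(A)$; without loss of generality $\mu(A) > \nu(A)$, and we may choose $\varepsilon > 0$ with $\mu(A) - \nu(A) > 2\varepsilon$. By inner regularity of $\mu$ applied to $A$, there is a compact set $K \subseteq A$ with $\mu(A \setminus K) < \varepsilon$, so that
\[
\mu(K) > \mu(A) - \varepsilon > \nu(A) + \varepsilon \geq \nu(K) + \varepsilon.
\]
Thus $\mu(K) > \nu(K)$ on the compact (hence closed) set $K$.

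Next I would approximate $\chi_K$ from above by bounded continuous functions. For $n \in \mathbb{N}$ define
\[
f_n(x) = \max\bigl(0,\, 1 - n\cdot d(x,K)\bigr),
\]
where $d(x,K) = \inf_{y \in K} d(x,y)$. Each $f_n$ is continuous (since $x \mapsto d(x,K)$ is $1$-Lipschitz) and bounded by $1$, and the pointwise limit is $\chi_K$ because $K$ is closed and $f_n(x) \downarrow 0$ for every $x \notin K$ while $f_n(x) = 1$ for every $x \in K$.

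Finally, since $\mu$ and $\nu$ are finite measures and $0 \leq f_n \leq 1$, the dominated convergence theorem gives
\[
\int f_n \,\mathrm{d}\mu \longrightarrow \mu(K), \qquad \int f_n \,\mathrm{d}\nu \longrightarrow \nu(K),
\]
as $n \to \infty$. Because $\mu(K) > \nu(K)$, for all sufficiently large $n$ we have $\int f_n \,\mathrm{d}\mu \neq \int f_n \,\mathrm{d}\nu$, and any such $f_n \in C_b(X)$ witnesses the claim. There is no real obstacle here beyond invoking inner regularity in the Polish space setting; the approximation step is entirely routine.
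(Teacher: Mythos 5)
Your proof is correct. The paper does not give its own argument here; it simply cites \cite[Theorem 1.2]{Billingsly Convergence of Prob measu}, and your proof is essentially the standard argument behind that citation: approximate the indicator of a closed set from above by the Lipschitz functions $\max(0,1-n\,d(x,K))$ and pass to the limit by dominated convergence. The one genuine (and harmless) variation is how you produce a closed set on which the measures differ: Billingsley's proof works with the $\pi$-system of all closed sets (after first disposing of the case $\mu(X)\neq\nu(X)$ with $f\equiv 1$) and invokes a Dynkin-type uniqueness argument, whereas you bypass that by using inner regularity (Lemma \ref{lem:inner regularity}) to extract a compact $K$ directly from a Borel set $A$ with $\mu(A)>\nu(A)$; your inequality chain $\mu(K)>\mu(A)-\varepsilon>\nu(A)+\varepsilon\geq\nu(K)+\varepsilon$ is valid. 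This buys you a fully self-contained, elementary proof at the cost of using the Polish-space inner regularity lemma, which the paper has already established, so nothing is missing.
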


Lemma \ref{lem:integral separation} is shown in \cite[Theorem 1.2]{Billingsly Convergence of Prob measu}.
In locally compact space the same proof results the following lemma
because by inner regularity the measure of compact sets determines
the measure.
\begin{lem}
\label{lem:comp integral separation}Let $X$ be locally compact.
Let $\mu$ and $\nu$ be locally finite Borel measures on $X$ such
that $\mu\neq\nu$. Then there exists $f\in C_{c}(X)$ such that $\intop_{X}f(x)\mathrm{d}\mu(x)\neq\intop_{X}f(x)\mathrm{d}\nu(x)$.
\end{lem}

\begin{lem}
\label{lem:weak conv unique}Assume that $\mu_{k}$ converges to both
$\mu$ and $\nu$ weakly then $\mu=\nu$.
\end{lem}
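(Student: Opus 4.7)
The plan is to argue by contradiction using the separation result of Lemma \ref{lem:integral separation}. Suppose $\mu \neq \nu$. Then Lemma \ref{lem:integral separation} produces some $f \in C_b(X)$ with
\[
\int_X f(x) \, \mathrm{d}\mu(x) \neq \int_X f(x) \, \mathrm{d}\nu(x).
\]
On the other hand, the hypothesis that $\mu_k \to \mu$ weakly applied to this $f$ gives $\int f \, \mathrm{d}\mu_k \to \int f \, \mathrm{d}\mu$, while $\mu_k \to \nu$ weakly applied to the same $f$ gives $\int f \, \mathrm{d}\mu_k \to \int f \, \mathrm{d}\nu$. Since a convergent sequence of real numbers has a unique limit, these two values must coincide, contradicting the choice of $f$.

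There is essentially no obstacle here; the work has been offloaded to Lemma \ref{lem:integral separation}, which is the substantive measure-theoretic statement that $C_b(X)$ separates finite Borel measures on a Polish space. The only thing to keep in mind is that the definition of weak convergence (Definition \ref{def:weak conv}) applies to every $f \in C_b(X)$, so in particular to the separating function produced above, which makes the contradiction immediate.
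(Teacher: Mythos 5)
Your proof is correct and is exactly the argument the paper intends: the paper simply states that the lemma follows from Lemma \ref{lem:integral separation}, and you have filled in the (immediate) details via uniqueness of limits of real sequences. Nothing further is needed.
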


Lemma \ref{lem:weak conv unique} follows from Lemma \ref{lem:integral separation}.
\begin{lem}
\label{lem:determ int}Let $f,f_{k}\in C_{b}(X)$ ($k\in\mathbb{N}$)
such that $\lim_{k\rightarrow\infty}\left\Vert f-f_{k}\right\Vert _{\infty}=0$.
Assume that $\mu$ and $\nu$ are finite Borel measures on $X$ such
that $\intop_{X}f_{k}(x)\mathrm{d}\mu(x)=\intop_{X}f_{k}(x)\mathrm{d}\nu(x)$
for every $k\in\mathbb{N}$. Then $\intop_{X}f(x)\mathrm{d}\mu(x)=\intop_{X}f(x)\mathrm{d}\nu(x)$.
\end{lem}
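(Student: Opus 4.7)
The plan is a direct uniform-approximation argument exploiting that both $\mu$ and $\nu$ are finite. First I would bound the error of replacing $f$ by $f_k$ inside each integral using the estimate
\[
\left|\int_X f\,\mathrm{d}\mu - \int_X f_k\,\mathrm{d}\mu\right| \leq \int_X |f-f_k|\,\mathrm{d}\mu \leq \|f-f_k\|_\infty \cdot \mu(X),
\]
and analogously for $\nu$ with $\nu(X)$ in place of $\mu(X)$. Since $\mu$ and $\nu$ are finite Borel measures, $\mu(X)<\infty$ and $\nu(X)<\infty$, so both right-hand sides tend to $0$ as $k \to \infty$ by hypothesis on $\|f-f_k\|_\infty$.

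Next I would use the hypothesis $\int_X f_k\,\mathrm{d}\mu = \int_X f_k\,\mathrm{d}\nu$ for every $k$, pass to the limit $k\to\infty$ on both sides, and conclude $\int_X f\,\mathrm{d}\mu = \int_X f\,\mathrm{d}\nu$. The only thing to check in passing is that $f$ itself is bounded (which is given since $f \in C_b(X)$) so that the two limit integrals are finite and the limit step is legitimate.

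There is essentially no obstacle here; the lemma is a standard density/continuity statement for the pairing between $C_b(X)$ and finite Borel measures, and the proof is a two-line $\varepsilon/3$-style estimate. No measurability subtleties arise because the $f_k$ and $f$ are already continuous and bounded and the measures are finite.
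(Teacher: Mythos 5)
Your proof is correct and is essentially identical to the paper's: both use the bound $\left|\int_X f\,\mathrm{d}\mu-\int_X f_k\,\mathrm{d}\mu\right|\leq\left\Vert f-f_k\right\Vert_\infty\mu(X)$ (and similarly for $\nu$), combine it with the hypothesis via the triangle inequality, and let $k\to\infty$. No gaps.
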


\begin{proof}
Let $\varepsilon>0$ and $k\in\mathbb{N}$ be such that $\left\Vert f-f_{k}\right\Vert _{\infty}<\varepsilon$.
Then
\[
\left|\intop_{X}f(x)\mathrm{d}\mu(x)-\intop_{X}f(x)\mathrm{d}\nu(x)\right|\leq\left|\intop_{X}f(x)\mathrm{d}\mu(x)-\intop_{X}f_{k}(x)\mathrm{d}\mu(x)\right|
\]
\[
+\left|\intop_{X}f_{k}(x)\mathrm{d}\nu(x)-\intop_{X}f(x)\mathrm{d}\nu(x)\right|\leq\varepsilon(\mu(X)+\nu(X)).
\]
Hence the statement follows.
\end{proof}
\begin{lem}
\label{lem:countabl determines mu}Let $\Psi\subseteq C_{b}(X)$ be
a dense subset with respect to the supremum norm. Assume that $\mu$
and $\nu$ are finite Borel measures on $X$ such that $\intop_{X}f(x)\mathrm{d}\mu(x)=\intop_{X}f(x)\mathrm{d}\nu(x)$
for every $f\in\Psi$. Then $\mu=\nu$.
\end{lem}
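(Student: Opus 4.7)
The plan is to combine the density hypothesis with the two preparatory lemmas (\ref{lem:determ int} and \ref{lem:integral separation}) that have just been established, reducing the statement to an $\varepsilon/3$-style approximation argument. The conclusion $\mu=\nu$ is then obtained via the contrapositive of Lemma \ref{lem:integral separation}, so it suffices to show the stronger statement that $\int_X g\,\mathrm{d}\mu = \int_X g\,\mathrm{d}\nu$ for every $g \in C_b(X)$, not merely for $g \in \Psi$.

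First I would fix an arbitrary $g \in C_b(X)$. By the density of $\Psi$ in $C_b(X)$ with respect to the supremum norm, there exists a sequence $f_k \in \Psi$ with $\lim_{k\to\infty}\|f_k - g\|_\infty = 0$. By hypothesis, $\int_X f_k\,\mathrm{d}\mu = \int_X f_k\,\mathrm{d}\nu$ for every $k$. Applying Lemma \ref{lem:determ int} to the sequence $\{f_k\}$ and its limit $g$, I immediately obtain
\[
\int_X g(x)\,\mathrm{d}\mu(x) = \int_X g(x)\,\mathrm{d}\nu(x).
\]
Since $g \in C_b(X)$ was arbitrary, the integrals of $\mu$ and $\nu$ agree on the whole of $C_b(X)$.

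To finish, I invoke Lemma \ref{lem:integral separation}: if $\mu \neq \nu$, that lemma would produce some $h \in C_b(X)$ with $\int_X h\,\mathrm{d}\mu \neq \int_X h\,\mathrm{d}\nu$, contradicting what has just been shown. Hence $\mu = \nu$. There is essentially no serious obstacle here — the content of the statement is already packaged into the two preceding lemmas — the only point worth double-checking is that Lemma \ref{lem:determ int} is genuinely applicable (both $g$ and each $f_k$ lie in $C_b(X)$ and convergence is uniform, so the hypotheses are met without further argument).
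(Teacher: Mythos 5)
Your proof is correct and follows exactly the same route as the paper: use density together with Lemma \ref{lem:determ int} to extend the equality of integrals from $\Psi$ to all of $C_{b}(X)$, then conclude $\mu=\nu$ via Lemma \ref{lem:integral separation}. No issues.
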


\begin{proof}
It follows from Lemma \ref{lem:determ int} that $\intop_{X}f(x)\mathrm{d}\mu(x)=\intop_{X}f(x)\mathrm{d}\nu(x)$
for every $f\in C_{b}(X)$. Hence by Lemma \ref{lem:integral separation}
the statement follows.
\end{proof}
\begin{defn}
\label{def:The-Prohorov-distance}The \textit{Prohorov distance} between
two finite Borel measures $\mu$ and $\nu$ on $X$ is
\[
\pi(\mu,\nu)=\inf\left\{ \varepsilon:\mu(A)\leq\nu(B(A,\varepsilon))+\varepsilon\,\mathrm{and}\,\nu(A)\leq\mu(B(A,\varepsilon))+\varepsilon\,\mathrm{for}\,\forall A\in\mathcal{B}(X)\right\} 
\]
where $\mathcal{B}(X)$ denotes the \textit{set of Borel subsets of
$X$}.
\end{defn}

\begin{lem}
\label{lem:triv bound}We have that
\[
\pi(\mu,\nu)\leq\mu(X)+\nu(X).
\]
\end{lem}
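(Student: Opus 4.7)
The plan is to verify that the value $\varepsilon_0 = \mu(X) + \nu(X)$ is admissible in the infimum defining $\pi(\mu,\nu)$, i.e.\ that it satisfies both inequalities in Definition \ref{def:The-Prohorov-distance} for every Borel set $A$. Once this is done, the infimum is bounded by $\varepsilon_0$ and the claim follows.

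For the first inequality, I would observe that for any Borel set $A \subseteq X$ we have the trivial bound $\mu(A) \leq \mu(X) \leq \mu(X) + \nu(X) = \varepsilon_0$, and since $\nu(B(A,\varepsilon_0)) \geq 0$, this gives $\mu(A) \leq \nu(B(A,\varepsilon_0)) + \varepsilon_0$. The second inequality $\nu(A) \leq \mu(B(A,\varepsilon_0)) + \varepsilon_0$ follows by the symmetric argument. Therefore $\varepsilon_0$ lies in the set over which the infimum in Definition \ref{def:The-Prohorov-distance} is taken, and hence $\pi(\mu,\nu) \leq \varepsilon_0 = \mu(X) + \nu(X)$.

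There is no real obstacle here; the content is just unpacking the definition. The only minor point is the degenerate case where $\mu$ and $\nu$ are both the zero measure, in which case $\varepsilon_0 = 0$ and the defining inequalities hold trivially since both sides are $0$, so $\pi(\mu,\nu) = 0$ is consistent with the stated bound.
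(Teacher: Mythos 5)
Your proof is correct and is exactly the argument the paper has in mind: the paper simply remarks that the lemma "follows from the definition of the Prohorov distance", and your verification that $\varepsilon_0=\mu(X)+\nu(X)$ is an admissible value in the defining infimum is the natural way to spell that out.
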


\begin{lem}
\label{lem:sum triv bound}We have that
\[
\pi(\mu+\nu,\nu)\leq\mu(X).
\]
\end{lem}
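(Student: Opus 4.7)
The plan is simply to verify that $\varepsilon = \mu(X)$ satisfies the two inequalities in Definition \ref{def:The-Prohorov-distance} for the pair of measures $(\mu+\nu, \nu)$, which then directly yields $\pi(\mu+\nu,\nu)\leq\mu(X)$ by taking the infimum.

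Let $A\subseteq X$ be an arbitrary Borel set and set $\varepsilon = \mu(X)$. For the first inequality, I would estimate
\[
(\mu+\nu)(A) = \mu(A) + \nu(A) \leq \mu(X) + \nu(A) \leq \nu(B(A,\varepsilon)) + \varepsilon,
\]
using that $A\subseteq B(A,\varepsilon)$ so $\nu(A)\leq \nu(B(A,\varepsilon))$. For the reverse inequality, since $\nu\leq \mu+\nu$ as measures and $A\subseteq B(A,\varepsilon)$, I would write
\[
\nu(A) \leq (\mu+\nu)(A) \leq (\mu+\nu)(B(A,\varepsilon)) \leq (\mu+\nu)(B(A,\varepsilon)) + \varepsilon.
\]
Both conditions in the definition of $\pi$ being verified for $\varepsilon = \mu(X)$, the infimum satisfies $\pi(\mu+\nu,\nu)\leq \mu(X)$.

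There is no genuine obstacle here; the statement is a direct unpacking of the definition, and the only mild point to note is that the second inequality in the Prohorov condition is in fact trivially satisfied even without enlarging $A$, since $\nu$ is dominated by $\mu+\nu$. The full content of the lemma is carried by the first inequality, where the mass $\mu(A)$ gets absorbed into the slack $\varepsilon = \mu(X)$.
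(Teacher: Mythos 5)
Your proof is correct and matches the paper, which simply asserts that the lemma follows from the definition of the Prohorov distance and omits the verification you carried out. (The only degenerate point, not worth belabouring, is that when $\mu(X)=0$ one should take the infimum over $\varepsilon>0$ rather than plug in $\varepsilon=0$, since $B(A,0)=\emptyset$; the conclusion is unaffected.)
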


The statement of Lemma \ref{lem:triv bound} and Lemma \ref{lem:sum triv bound}
follow from the definition of the Prohorov distance.
\begin{prop}
\label{prop:The-Prohorov-distance}The Prohorov distance is a separable
metric on the set of all finite Borel measures $\mathcal{M}_{+}(X)$.
We have that $\mu_{k}$ converges to $\mu$ weakly if and only if
$\lim_{k\rightarrow\infty}\pi(\mu_{k},\mu)=0$. (Note that throughout
the paper we assume that $X$ is a separable metric space.) If we
further assume that $(X,d)$ is a complete metric space then the Prohorov
distance is a complete metric.
\end{prop}

For the proof see the section of \cite[Theorem 6.8]{Billingsly Convergence of Prob measu}
\begin{lem}
\label{lem:Separable C_b(S)}If $K\subseteq X$ is compact then $C_{b}(K)$
is separable.
\end{lem}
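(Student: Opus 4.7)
The plan is to invoke the Stone--Weierstrass theorem applied to a suitable countable family, which exists because a compact metric space is automatically separable. Since $K \subseteq X$ is compact and metric, $C_b(K)$ coincides with $C(K)$ and carries the supremum norm, so separability amounts to exhibiting a countable dense subset.

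First I would fix a countable dense set $\{x_n\}_{n\in\mathbb N} \subseteq K$, which exists because $K$, being a compact subset of the separable metric space $X$, is itself separable. Then I would consider the countable collection
\[
\mathcal{F} = \{1\} \cup \{d(\cdot, x_n) : n \in \mathbb N\} \subseteq C(K).
\]
The functions in $\mathcal{F}$ are continuous (the map $y \mapsto d(y,x_n)$ is $1$-Lipschitz) and bounded on $K$ (since $K$ is compact). The family $\mathcal{F}$ separates points: if $y \neq z$ in $K$, pick $x_n$ with $d(y,x_n) < d(y,z)/2$, and then $d(y,x_n) \neq d(z,x_n)$ because otherwise the triangle inequality would force $d(y,z) \leq 2d(y,x_n) < d(y,z)$.

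Let $\mathcal{A}$ denote the $\mathbb Q$-subalgebra of $C(K)$ generated by $\mathcal{F}$; this is a countable set of real-valued continuous functions on $K$ that contains the constants and separates points. Its closure $\overline{\mathcal{A}}$ (an $\mathbb R$-subalgebra) must, by the Stone--Weierstrass theorem for compact Hausdorff spaces, equal all of $C(K) = C_b(K)$. Since $\mathcal{A}$ is countable and dense in $C_b(K)$ with respect to the supremum norm, $C_b(K)$ is separable.

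There is no real obstacle; the only subtlety worth double-checking is that the rational algebra $\mathcal{A}$ really is countable (finite $\mathbb Q$-linear combinations of finite products of elements of the countable set $\mathcal F$ form a countable set) and that Stone--Weierstrass applies to the $\mathbb R$-algebra generated rather than $\mathcal A$ itself, which is why we close first and then approximate real coefficients by rationals.
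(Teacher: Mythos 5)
Your proof is correct. The paper itself offers no argument here --- it simply cites \cite[page 437]{Dunford-Schwartz} for this classical fact --- so your self-contained Stone--Weierstrass argument is a perfectly good substitute: the family $\{1\}\cup\{d(\cdot,x_n)\}$ does separate points by the triangle-inequality argument you give, the $\mathbb{Q}$-algebra it generates is countable, and its closure is an $\mathbb{R}$-subalgebra of $C(K)=C_b(K)$ containing constants and separating points, hence all of $C(K)$. The one point you flag --- closing first and then approximating real coefficients by rationals --- is exactly the right subtlety to check, and it goes through because multiplication and scalar multiplication are continuous for the supremum norm on $C(K)$.
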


See \cite[page 437]{Dunford-Schwartz}.
\begin{lem}
\label{lem:deterministic con}Let $K\subseteq X$ be a compact subset,
let $\Psi\subseteq C_{b}(K)$ be a dense subset with respect to the
supremum norm and let $\mu_{k}$ be a sequence of finite, Borel measures
on $K$ such that $\intop_{X}f(x)\mathrm{d}\mu_{k}(x)$ converges
to a limit $S(f)<\infty$ for every $f\in\Psi$. Then $\mu_{k}(K)$
is bounded and $\mu_{k}$ converges weakly to a finite, Borel measure.
\end{lem}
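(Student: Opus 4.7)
The plan is to proceed in three stages: first bound the total masses $\mu_k(K)$, then extract a weakly convergent subsequence via compactness, and finally identify the limit uniquely so that the whole sequence converges.

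For boundedness, I would use that $\Psi$ is dense in $C_b(K)$ to pick some $g \in \Psi$ with $\|g - 1\|_\infty < 1/2$, so that $g \geq 1/2$ pointwise on $K$. Then
\[
\tfrac{1}{2}\mu_k(K) \leq \int_X g(x)\,\mathrm{d}\mu_k(x) \longrightarrow S(g) < \infty,
\]
which forces $\sup_k \mu_k(K) < \infty$.

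Next, since $K$ is compact, the family $\{\mu_k\}$ is automatically tight, so by Prohorov's theorem (cf.\ Proposition \ref{prop:The-Prohorov-distance}) any subsequence has a further subsequence $\mu_{k_j}$ that converges weakly to some finite Borel measure $\mu$ on $K$. For each $f \in \Psi \subseteq C_b(K)$ we then have
\[
\int_X f(x)\,\mathrm{d}\mu(x) = \lim_{j \to \infty} \int_X f(x)\,\mathrm{d}\mu_{k_j}(x) = S(f),
\]
so the limit measure is determined on $\Psi$. By Lemma \ref{lem:countabl determines mu}, any two subsequential weak limits agree on $\Psi$ and hence coincide; call the common value $\mu$.

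Finally, to upgrade from subsequential convergence to convergence of the whole sequence, I would invoke Lemma \ref{lem:subseq prob 0} in its deterministic topological form (or just the elementary fact that in the metric space $(\mathcal{M}_+(K), \pi)$, a sequence converges to $\mu$ iff every subsequence has a sub-subsequence converging to $\mu$). Since every subsequence of $\{\mu_k\}$ has a further subsequence converging weakly to the same $\mu$, the original sequence $\mu_k$ converges weakly to $\mu$. The only mildly nontrivial ingredient is the boundedness step, which is why density of $\Psi$ in $C_b(K)$ (rather than in some smaller space) matters; the rest is a routine Prohorov plus uniqueness argument.
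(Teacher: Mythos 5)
Your proposal is correct and follows essentially the same route as the paper: bound the masses by approximating the constant function $1$ (equivalently $\chi_K$) by some $g\in\Psi$, extract a weakly convergent subsequence using compactness of $K$ together with the mass bound, and identify the limit through its integrals against $\Psi$. The only cosmetic difference is the final step, where you pass from subsequential convergence to convergence of the whole sequence via metrizability of weak convergence and the subsequence/sub-subsequence principle, whereas the paper instead runs a direct $\varepsilon$-estimate showing $\int f\,\mathrm{d}\mu_k$ is Cauchy for every $f\in C_b(K)$; both are valid and of comparable length.
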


\begin{proof}
Let $g\in\Psi$ such that $\left\Vert \chi_{K}-g\right\Vert <1/2$.
Then $\chi_{K}\leq2g$ on $K$ thus
\[
\limsup_{k\rightarrow\infty}\mu_{k}(K)\leq2\limsup_{k\rightarrow\infty}\intop_{X}g(x)\mathrm{d}\mu_{k}(x)=2S(g)<\infty
\]
and so $\mu_{k}(K)$ is bounded.

Since $\mu_{k}(K)$ is bounded and $K$ is a compact metric space
it follows, by \cite[(17.22) Theorem]{Kechris}, that there exists
a subsequence $n_{k}$ of $\mathbb{N}$ such that $\mu_{n_{k}}$ weakly
converges to a Borel measure $\tau$ of finite total mass. Then
\[
\intop_{X}g(x)\mathrm{d}\tau(x)=\lim_{k\rightarrow\infty}\intop_{X}g(x)\mathrm{d}\mu_{n_{k}}(x)=S(g)
\]
for every $g\in\Psi$. Let $f$ be a bounded continuous function and
$g\in\Psi$ be such that $\left\Vert f-g\right\Vert _{\infty}<\varepsilon$.
Then
\[
\limsup_{k\rightarrow\infty}\left|\intop_{X}f(x)\mathrm{d}\tau(x)-\intop_{X}f(x)\mathrm{d}\mu_{k}(x)\right|\leq\left|\intop_{X}f(x)\mathrm{d}\tau(x)-\intop_{X}g(x)\mathrm{d}\tau(x)\right|
\]
\[
+\limsup_{k\rightarrow\infty}\left(\left|\intop_{X}g(x)\mathrm{d}\tau(x)-\intop_{X}g(x)\mathrm{d}\mu_{k}(x)\right|+\left|\intop_{X}g(x)\mathrm{d}\mu_{k}(x)-\intop_{X}f(x)\mathrm{d}\mu_{k}(x)\right|\right)
\]
\[
\leq\intop_{X}\left|f(x)-g(x)\right|\mathrm{d}\tau(x)+0+\limsup_{k\rightarrow\infty}\intop_{X}\left|f(x)-g(x)\right|\mathrm{d}\mu_{k}(x)\leq\varepsilon\left(\tau(K)+\limsup_{k\rightarrow\infty}\mu_{k}(X)\right).
\]
By taking $\varepsilon$ goes to $0$ it follows that $\intop_{X}f(x)\mathrm{d}\tau(x)=\lim_{k\rightarrow\infty}\intop_{X}f(x)\mathrm{d}\mu_{k}(x)$. 
\end{proof}
\begin{defn}
\label{def:vague conv}Let $\mu$ and $\mu_{k}$ ($k\in\mathbb{N}$)
be locally finite Borel measures on $X$. We say that \textit{$\mu_{k}$
vaguely converges to $\mu$} (as $k$ goes to $\infty$) if
\[
\lim_{k\rightarrow\infty}\int f(x)\mathrm{d}\mu_{k}(x)=\int f(x)\mathrm{d}\mu(x)
\]
for every $f\in C_{c}(X)$.
\end{defn}

\begin{lem}
\label{lem:deterministic con-Vague}Let $X$ be locally compact. There
exists $\Psi\subseteq C_{c}(X)$ countable and dense subset with respect
to the supremum norm such that if $\mu_{k}$ is a sequence of locally
finite Borel measures on $X$ such that $\intop_{X}f(x)\mathrm{d}\mu_{k}(x)$
converges to a finite limit $S(f)$ for every $f\in\Psi$ then $\mu_{k}$
vaguely converges to a locally finite Borel measure.
\end{lem}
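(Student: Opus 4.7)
The plan is to build $\Psi$ via a $\sigma$-compact exhaustion of $X$, reduce vague convergence to weak convergence on each compact piece, and then apply Lemma \ref{lem:deterministic con} on each piece before assembling the limits.

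First I fix an increasing sequence of compact sets $K_1 \subseteq K_2 \subseteq \dots$ with $K_n \subseteq \mathrm{int}(K_{n+1})$ and $\bigcup_n K_n = X$, which is possible since $X$ is locally compact and separable, together with Urysohn functions $\psi_n \in C_c(X)$ satisfying $\chi_{K_n} \leq \psi_n \leq \chi_{K_{n+1}}$. Each $C(K_{n+1})$ is separable by Lemma \ref{lem:Separable C_b(S)}, so I choose a countable dense $\mathcal{D}_n \subseteq C(K_{n+1})$ and set
\[
\Psi = \{\psi_n : n \in \mathbb{N}\} \cup \{\psi_n \cdot g : n \in \mathbb{N},\, g \in \mathcal{D}_n\},
\]
with each $\psi_n \cdot g$ extended by zero outside $K_{n+1}$. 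A short approximation argument shows $\Psi$ is dense in $C_c(X)$ under the supremum norm: given $f \in C_c(X)$ supported in some $K_m$, one has $f = \psi_m f$, and any uniform $\varepsilon$-approximation $g \in \mathcal{D}_m$ to $f|_{K_{m+1}}$ yields $\|\psi_m g - f\|_\infty \leq \varepsilon$.

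Next, assume $\int f \, \mathrm{d}\mu_k \to S(f)$ for every $f \in \Psi$. The inequality $\mu_k(K_n) \leq \int \psi_n \, \mathrm{d}\mu_k \to S(\psi_n) < \infty$ shows that $\mu_k(K_n)$ is bounded in $k$ for every $n$. Fix a subsequence $\{\mu_{k_j}\}$. For each $n$ the measures $\psi_n \cdot \mu_{k_j}$ are supported in $K_{n+1}$ with bounded total mass, and for every $g \in \mathcal{D}_n$ the integrals $\int g \, \mathrm{d}(\psi_n \mu_{k_j}) = \int \psi_n g \, \mathrm{d}\mu_{k_j}$ converge. Since $\{g|_{K_{n+1}} : g \in \mathcal{D}_n\}$ is dense in $C(K_{n+1})$, Lemma \ref{lem:deterministic con} applied on the compact space $K_{n+1}$, together with a diagonal argument across $n$, produces a further subsequence $\{\mu_{k_{j_l}}\}$ and finite Borel measures $\tau_n$ on $K_{n+1}$ with $\psi_n \cdot \mu_{k_{j_l}} \to \tau_n$ weakly for every $n$. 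Since $\psi_n = 1$ on $K_n$, the measures $\tau_n$ restricted to $\mathrm{int}(K_n)$ are compatible as $n$ varies, so they assemble into a single locally finite Borel measure $\mu$ on $X$. For any $f \in C_c(X)$ with support inside some $K_m$ one has $\int f \, \mathrm{d}\mu_{k_{j_l}} = \int \psi_m f \, \mathrm{d}\mu_{k_{j_l}} \to \int f \, \mathrm{d}\mu$, giving vague convergence along the subsequence.

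Finally, since $\int f \, \mathrm{d}\mu = S(f)$ for every $f \in \Psi$ and $\Psi$ is dense in $C_c(X)$, an argument analogous to Lemma \ref{lem:determ int} combined with Lemma \ref{lem:comp integral separation} shows the limit $\mu$ is uniquely determined by $S$ and hence is the same for every subsequential extraction. The topological principle used in Lemma \ref{lem:subseq prob 0} then upgrades this to full vague convergence of $\mu_k$ to $\mu$. I expect the main technical obstacle to be the diagonalisation step together with the consistency check showing that the compact-piece weak limits $\tau_n$ really do assemble into one locally finite Borel measure on $X$; the rest reduces to routine translations between vague convergence on $X$ and weak convergence on each $K_{n+1}$.
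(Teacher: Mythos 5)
Your proof is correct. It is a close cousin of the paper's argument: both constructions of $\Psi$ rest on a compact exhaustion of the locally compact, separable space together with countable dense families on each compact piece (the paper takes dense subsets of $\{f\in C_b(X):\mathrm{supp}(f)\subseteq\overline{G_i}\}$, you take cutoffs $\psi_n$ times dense subsets of $C(K_{n+1})$). Where you diverge is in how the limit measure is produced. The paper shows directly that $\int_X f\,\mathrm{d}\mu_k$ is a Cauchy sequence for \emph{every} $f\in C_c(X)$ (using a cutoff $h\in\Psi$ to bound $\limsup_k\mu_k(\overline{G_i})$ and an $\varepsilon$-approximation $g_n\in\Psi_i$ of $f$), and then obtains $\mu$ in one stroke from the Riesz--Markov theorem applied to the limiting positive linear functional $S$. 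You instead localise the measures as $\psi_n\,\mathrm{d}\mu_k$, invoke the already-proven compact-case Lemma \ref{lem:deterministic con} on each $K_{n+1}$ to get weak limits $\tau_n$, and glue these by consistency. Both routes are sound; yours buys a reduction to the compact case at the cost of the gluing step, while the paper's avoids gluing at the cost of redoing a Cauchy estimate. One simplification you could make: since $\psi_n g\in\Psi$ for every $g\in\mathcal{D}_n$, the hypotheses of Lemma \ref{lem:deterministic con} hold for the \emph{full} sequence $\psi_n\,\mathrm{d}\mu_k$, so the weak limits $\tau_n$ exist without passing to any subsequence; the entire subsequence extraction, diagonalisation, and the final appeal to the subsequence principle of Lemma \ref{lem:subseq prob 0} are unnecessary (though not wrong). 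The one point deserving explicit care in your write-up is the density/uniqueness step at the end: to conclude $\int f\,\mathrm{d}\mu=\lim_n\int g_n\,\mathrm{d}\mu$ from $\|f-g_n\|_\infty\to 0$ one needs the approximants $g_n$ to have supports in a common compact set (so that the error is controlled by $\|f-g_n\|_\infty\cdot\mu(K_{m+1})<\infty$); your construction of $\Psi$ does provide this, but it is worth stating.
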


\begin{proof}
Since $X$ is a separable metric space it satisfies the Lindel\"of
property. Thus, because $X$ is locally compact, we can find a sequence
of open sets $G_{1}\subseteq G_{2}\subseteq\dots$ such that $\cup_{i=1}^{\infty}G_{i}=X$
and $\overline{G_{i}}$ is compact for every $i\in\mathbb{N}$. Hence
for every compact set $K$ there exists $i\in\mathbb{N}$ such that
$K\subseteq G_{i}$. We can further assume that $\overline{G_{i}}\subseteq G_{i+1}$.
By Lemma \ref{lem:Separable C_b(S)} we can find a countable and dense
subset $\Psi_{i}$ of $\left\{ f\in C_{b}(X):\mathrm{supp}(f)\subseteq\overline{G_{i}}\right\} $.
Let $\Psi=\cup_{i=1}^{\infty}\Psi_{i}$.

Let $\mu_{k}$ be a sequence of deterministic, locally finite, Borel
measures on $X$ such that $\intop_{X}f(x)\mathrm{d}\mu_{k}(x)$ converges
to a finite limit $S(f)$ for every $f\in\Psi$. If $\intop_{X}f(x)\mathrm{d}\mu_{k}(x)$
converges to a finite limit $S(f)$ for every $f\in C_{c}(X)$ then
clearly $S$ is a positive linear functional, hence by the Riesz-Markov
theorem \cite[Theorem 2.14]{Rudin} there exists a locally finite,
Borel measures $\mu$ on $X$ such that $S(f)=\intop_{X}f(x)\mathrm{d}\mu(x)$
for every $f\in C_{c}(X)$. Hence the statement would follow. Thus
to finish the proof we need to show that $\intop_{X}f(x)\mathrm{d}\mu_{k}(x)$
converges to a finite limit $S(f)$ for every $f\in C_{c}(X)$.

Let $f\in C_{c}(X)$, let $K=\mathrm{supp}(f)$. There exists $i\in\mathbb{N}$
such that $K\subseteq G_{i}\subseteq\overline{G_{i}}\subseteq G_{i+1}$.
By Tietze`s extension theorem we can find $h_{0}\in C_{c}(X)$ such
that $h_{0}(x)=1$ for $x\in\overline{G_{i}}$ and $h_{0}(x)=0$ for
$x\notin G_{i+1}$. There exists $h\in\Psi$ such that $\left\Vert h_{0}-h\right\Vert _{\infty}<1/2$.
Then $\limsup_{k\rightarrow\infty}\mu_{k}(\overline{G_{i}})\leq2\lim_{k\rightarrow\infty}\intop_{X}h(x)\mathrm{d}\mu_{k}(x)=2S(h)$.

Let $g_{n}\in\Psi_{i}$ be such that $\left\Vert f-g_{n}\right\Vert _{\infty}<1/n$.
Then
\[
\limsup_{k\rightarrow\infty}\left|\intop_{X}f(x)\mathrm{d}\mu_{k}(x)-S(g_{n})\right|\leq\limsup_{k\rightarrow\infty}\left|\intop_{X}f(x)\mathrm{d}\mu_{k}(x)-\intop_{X}g_{n}(x)\mathrm{d}\mu_{k}(x)\right|
\]
\[
+\limsup_{k\rightarrow\infty}\left|\intop_{X}g_{n}(x)\mathrm{d}\mu_{k}(x)-S(g_{n})\right|\leq\limsup_{k\rightarrow\infty}\mu_{k}(\overline{G_{i}})/n+0\leq2S(h)/n.
\]
Thus for every $n\in\mathbb{N}$ there exists $N\in\mathbb{N}$ such
that $\left|\intop_{X}f(x)\mathrm{d}\mu_{k}(x)-S(g_{n})\right|\leq(2S(h)+1)/n$
for every $k\geq N$ and so $\left|\intop_{X}f(x)\mathrm{d}\mu_{k}(x)-\intop_{X}f(x)\mathrm{d}\mu_{j}(x)\right|\leq(4S(h)+2)/n$
for every $j,k\geq N$. Hence $\intop_{X}f(x)\mathrm{d}\mu_{k}(x)$
is a Cauchy sequence so it has a finite limit $S(f)$ in $\mathbb{R}$.
\end{proof}
\begin{lem}
\label{lem:countab agree}Let $X$ be locally compact, let $\nu$
and $\mu$ be locally finite Borel measures on $X$ and let $\Psi\subseteq C_{c}(X)$
be as in Lemma \ref{lem:deterministic con-Vague}. If $\intop_{X}f(x)\mathrm{d}\nu(x)=\intop_{X}f(x)\mathrm{d}\mu(x)$
for every $f\in\Psi$ then $\mu=\nu$.
\end{lem}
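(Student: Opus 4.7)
The plan is to reduce the statement to Lemma \ref{lem:comp integral separation} by proving that $\intop_{X}f(x)\mathrm{d}\nu(x)=\intop_{X}f(x)\mathrm{d}\mu(x)$ for every $f\in C_{c}(X)$, not only for $f\in\Psi$. Once this is established, the contrapositive of Lemma \ref{lem:comp integral separation} forces $\mu=\nu$.

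The key is to exploit the construction of $\Psi$ in the proof of Lemma \ref{lem:deterministic con-Vague}. Recall $\Psi=\bigcup_{i=1}^{\infty}\Psi_{i}$ where $\Psi_{i}\subseteq C_{c}(X)$ is countable and dense in the supremum norm in the space $\{g\in C_{b}(X):\mathrm{supp}(g)\subseteq\overline{G_{i}}\}$, with $G_{1}\subseteq G_{2}\subseteq\dots$ an exhausting sequence of open sets satisfying $\overline{G_{i}}$ compact and $\overline{G_{i}}\subseteq G_{i+1}$. In particular every compact subset of $X$ lies in some $G_{i}$.

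Given $f\in C_{c}(X)$, I would first pick $i$ with $\mathrm{supp}(f)\subseteq G_{i}\subseteq\overline{G_{i}}$, so that $f$ itself belongs to the space that $\Psi_{i}$ is dense in, and then choose $g_{n}\in\Psi_{i}$ with $\left\Vert f-g_{n}\right\Vert _{\infty}\to 0$. Since $\mu$ and $\nu$ are locally finite and $\overline{G_{i}}$ is compact, both $\mu(\overline{G_{i}})$ and $\nu(\overline{G_{i}})$ are finite. Using the hypothesis $\intop_{X}g_{n}\mathrm{d}\nu=\intop_{X}g_{n}\mathrm{d}\mu$ together with the triangle inequality,
\[
\left|\intop_{X}f\,\mathrm{d}\nu-\intop_{X}f\,\mathrm{d}\mu\right|\leq\left\Vert f-g_{n}\right\Vert _{\infty}\cdot\bigl(\mu(\overline{G_{i}})+\nu(\overline{G_{i}})\bigr),
\]
and the right-hand side tends to $0$ as $n\to\infty$. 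Hence the integrals agree on all of $C_{c}(X)$, and Lemma \ref{lem:comp integral separation} gives $\mu=\nu$.

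There is no real obstacle; the only subtlety is making sure the approximating sequence has supports contained in one set of finite measure under both $\mu$ and $\nu$. This is precisely why $\Psi$ was constructed in Lemma \ref{lem:deterministic con-Vague} as a nested union of the $\Psi_{i}$, rather than as an arbitrary countable dense subset of $C_{c}(X)$ in some other topology.
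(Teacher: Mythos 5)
Your proof is correct and follows essentially the same route as the paper: approximate $f\in C_{c}(X)$ by elements of $\Psi_{i}$ supported in $\overline{G_{i}}$, pass the equality of integrals to the limit, and conclude via Lemma \ref{lem:comp integral separation}. The only cosmetic difference is that you bound $\mu(\overline{G_{i}})$ and $\nu(\overline{G_{i}})$ directly by local finiteness and compactness, whereas the paper controls them through an auxiliary $h\in\Psi$ with $h\geq\chi_{\overline{G_{i}}}/2$; both are valid.
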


\begin{proof}
Let $f\in C_{c}(X)$ be fixed. Similarly to the proof of Lemma \ref{lem:deterministic con-Vague}
it can be shown that there exist $h\in\Psi$ and $g_{n}\in\Psi$ for
every $n\in\mathbb{N}$ such that
\[
\left|\intop_{X}f(x)\mathrm{d}\mu(x)-\intop_{X}f(x)\mathrm{d}\nu(x)\right|\leq\left|\intop_{X}f(x)\mathrm{d}\mu(x)-\intop_{X}g_{n}(x)\mathrm{d}\mu(x)\right|
\]
\[
+\left|\intop_{X}g_{n}(x)\mathrm{d}\nu(x)-\intop_{X}f(x)\mathrm{d}\nu(x)\right|\leq4\intop_{X}h(x)\mathrm{d}\mu(x)/n.
\]
Thus $\intop_{X}f(x)\mathrm{d}\mu(x)=\intop_{X}f(x)\mathrm{d}\nu(x)$
for every $f\in C_{c}(X)$ and the statement of the lemma follows
by Lemma \ref{lem:comp integral separation}.
\end{proof}
\begin{prop}
\label{prop:Polish vague}Assume that $X$ is locally compact. The
set of all locally finite Borel measures $\mathcal{M}_{l}(X)$ can
be equipped with a complete separable metric such that $\mu_{k}$
converges to $\mu$ in the induced topology if and only if $\mu_{k}$
converges to $\mu$ vaguely.
\end{prop}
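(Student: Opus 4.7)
The plan is to build an explicit metric on $\mathcal{M}_{l}(X)$ from the countable set $\Psi \subseteq C_{c}(X)$ produced in Lemma \ref{lem:deterministic con-Vague}, and then verify the three properties: the induced topology is vague convergence, the metric is complete, and the space is separable. Enumerate $\Psi = \{f_{n}\}_{n \in \mathbb{N}}$ and define
\[
d(\mu,\nu) = \sum_{n=1}^{\infty} 2^{-n} \frac{\left|\int f_{n}\,\mathrm{d}\mu - \int f_{n}\,\mathrm{d}\nu\right|}{1 + \left|\int f_{n}\,\mathrm{d}\mu - \int f_{n}\,\mathrm{d}\nu\right|}.
\]
Symmetry and the triangle inequality are inherited from the bounded metric $t \mapsto t/(1+t)$ on $\mathbb{R}$; the separation axiom $d(\mu,\nu) = 0 \Rightarrow \mu = \nu$ is exactly Lemma \ref{lem:countab agree}.

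Next I would verify that $d$-convergence coincides with vague convergence. If $\mu_{k}$ converges to $\mu$ vaguely then each integral $\int f_{n}\,\mathrm{d}\mu_{k}$ converges to $\int f_{n}\,\mathrm{d}\mu$, and dominated convergence applied to the series (each term bounded by $2^{-n}$) gives $d(\mu_{k}, \mu) \to 0$. Conversely, if $d(\mu_{k}, \mu) \to 0$ then for every fixed $n$ we have $\int f_{n}\,\mathrm{d}\mu_{k} \to \int f_{n}\,\mathrm{d}\mu$, and Lemma \ref{lem:deterministic con-Vague} (whose conclusion is vague convergence given convergence of integrals against every element of $\Psi$) yields that $\mu_{k}$ converges to $\mu$ vaguely.

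For completeness, let $\mu_{k}$ be $d$-Cauchy. Then for every $n$ the real sequence $\int f_{n}\,\mathrm{d}\mu_{k}$ is Cauchy, so it converges to some finite limit $S(f_{n})$. Lemma \ref{lem:deterministic con-Vague} then produces a locally finite Borel measure $\mu$ with $\int f_{n}\,\mathrm{d}\mu = S(f_{n})$ for all $n$; by the previous paragraph $\mu_{k} \to \mu$ vaguely and hence in $d$. For separability, fix a countable dense subset $\{x_{j}\}_{j \in \mathbb{N}} \subseteq X$ and consider the countable family $\mathcal{D}$ of finite measures $\sum_{j=1}^{m} q_{j} \delta_{x_{j}}$ with $q_{j} \in \mathbb{Q}_{\geq 0}$. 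Given $\mu \in \mathcal{M}_{l}(X)$ and $\varepsilon > 0$, pick $N$ with $\sum_{n>N} 2^{-n} < \varepsilon/2$; the union $K$ of the supports of $f_{1},\ldots,f_{N}$ is compact, so $\mu|_{K}$ is finite. A standard partition-of-$K$ argument (choose a fine Borel partition whose pieces have small diameter relative to the joint moduli of continuity of $f_{1},\ldots,f_{N}$, replace $\mu$ on each piece by a Dirac mass at an $x_{j}$ inside it, then round the masses to rationals) produces an element of $\mathcal{D}$ that simultaneously approximates $\int f_{n}\,\mathrm{d}\mu$ for $n \leq N$ to within $\varepsilon/2$.

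The main obstacle is the separability step: one must be careful that the partition-and-Dirac approximation really controls all $N$ integrals at once, which requires exploiting uniform continuity of each $f_{n}$ on the compact $K$ and the fact that $\{x_{j}\}$ intersects every non-empty open subset of $K$. Everything else is routine manipulation of the tail of the defining series together with the two lemmas already proved.
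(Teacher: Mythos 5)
Your construction is correct and is essentially the argument the paper delegates to Mattila's ``A metric on measures'' section of \cite{Mattila book}: a bounded series metric built from the countable family $\Psi$ of Lemma \ref{lem:deterministic con-Vague}, with Lemma \ref{lem:countab agree} supplying both the separation axiom and the identification of the vague limit produced by that lemma with the metric limit. The only point to tidy is in the separability step: a Borel piece of the partition need not contain any point of the countable dense set, so place each Dirac mass at a dense-set point within distance $\delta$ of its piece rather than literally inside it (uniform continuity of $f_{1},\dots,f_{N}$ still controls the error).
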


Proposition \ref{prop:Polish vague} is shown for $X=\mathbb{R}^{d}$
in \cite[Chapter 14]{Mattila book} in the section called `A metric
on measures' starting on page 194. The proof goes similarly for locally
compact separable metric spaces, by replacing the role of balls $B(0,n)$
by an increasing sequence of compact sets $K_{n}$ such that $\cup_{n\in\mathbb{N}}K_{n}=X$
(which we can find since $X$ satisfies the Lindel\"of property and
is locally compact). We omit the details of the proof.
\begin{lem}
\label{lem:leq meas lem}Let $\nu$ and $\tau$ be two locally finite
Borel measures on $X$ such that $\tau(G)\leq\nu(G)$ for every open
set $G$. Then $\tau(A)\leq\nu(A)$ for every Borel set $A$.
\end{lem}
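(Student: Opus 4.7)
The plan is to reduce this to outer regularity of $\nu$. If $\nu(A) = \infty$ the inequality $\tau(A) \leq \nu(A)$ is trivial, so assume $\nu(A) < \infty$. The key is to approximate $A$ from outside by an open set: for every $\varepsilon > 0$ we want to produce an open $G \supseteq A$ with $\nu(G) \leq \nu(A) + \varepsilon$. Once we have this, the hypothesis $\tau(G) \leq \nu(G)$ combined with monotonicity gives
\[
\tau(A) \leq \tau(G) \leq \nu(G) \leq \nu(A) + \varepsilon,
\]
and letting $\varepsilon \to 0$ finishes the proof.

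The substantive step is therefore establishing outer regularity of the locally finite Borel measure $\nu$ on the complete separable metric space $X$ (on Borel sets of finite $\nu$-measure). I would first handle the case where $\nu$ is \emph{finite}. Consider the family
\[
\mathcal{R} = \{A \in \mathcal{B}(X) : \forall \varepsilon > 0\ \exists\, G \text{ open}, G \supseteq A, \nu(G \setminus A) < \varepsilon\}.
\]
For a closed set $F$, the open neighbourhoods $G_n = \{x : d(x,F) < 1/n\}$ satisfy $G_n \downarrow F$, so $\nu(G_n \setminus F) \to 0$ by continuity of the finite measure from above; hence $\mathcal{R}$ contains all closed sets. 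A standard verification shows $\mathcal{R}$ is closed under countable unions (sum an $\varepsilon/2^n$ tail) and under complements within $X$ (pair the open approximator of $A$ with a closed inner approximator of $A$, whose existence follows symmetrically or via Lemma~\ref{lem:inner regularity}). Thus $\mathcal{R}$ is a $\sigma$-algebra containing the closed sets, hence equals $\mathcal{B}(X)$.

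Finally I would lift this to the locally finite case. Using that $X$ is separable and $\nu$ is locally finite, cover $X$ by countably many open sets $U_n$ of finite $\nu$-measure (Lindel\"of property applied to a local-finiteness open cover). For any Borel $A$ with $\nu(A) < \infty$, split $A = \bigsqcup_n A_n$ with $A_n \subseteq U_n$ Borel; applying the finite-measure case to $\nu|_{U_n}$ (itself finite on $\mathcal{B}(X)$) produces open $V_n \subseteq U_n$ with $V_n \supseteq A_n$ and $\nu(V_n \setminus A_n) < \varepsilon 2^{-n}$, so $G = \bigcup_n V_n$ is open with $A \subseteq G$ and $\nu(G \setminus A) < \varepsilon$, as required. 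The only potential obstacle is the bookkeeping in this exhaustion step; everything else is a direct application of the hypothesis on open sets.
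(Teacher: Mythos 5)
Your proof is correct and follows essentially the same route as the paper: the paper's finite-measure case is an appeal to the regularity theorem for finite Borel measures on metrizable spaces (\cite[Theorem 17.10]{Kechris}), which is exactly the outer-regularity statement you prove by hand, and both arguments then pass to locally finite measures via the Lindel\"of property. The only difference is that you supply the standard ``good sets'' proof of outer regularity where the paper cites it.
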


Lemma \ref{lem:leq meas lem} follows from the outer regularity of
the measures.

\begin{lem}
\label{lem:open eq dense}Let $D\subseteq X$ be a countable and dense
subset and let
\[
\mathcal{G}=\left\{ \cup_{i=1}^{m}B(x_{i},r_{i}):m\in\mathbb{N},x_{i}\in D,r_{i}\in\mathbb{Q},r_{i}>0,\mathrm{\,for\,}i=1,\dots,m\right\} .
\]
Let $\nu$ and $\tau$ be two locally finite Borel measures on $X$
such that $\tau(G)=\nu(G)$ for every $G\in\mathcal{G}$. Then $\tau(A)=\nu(A)$
for every Borel set $A$.
\end{lem}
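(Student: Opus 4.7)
The plan is to reduce the statement to Lemma \ref{lem:leq meas lem} by upgrading the agreement of $\nu$ and $\tau$ from the family $\mathcal{G}$ to all open sets, after which applying the lemma in both directions gives equality on all Borel sets.

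First I would note that, since $X$ is a separable metric space and $D \subseteq X$ is countable and dense, the collection $\mathcal{B} = \{B(x, r) : x \in D,\ r \in \mathbb{Q},\ r > 0\}$ is a countable basis for the topology of $X$. Consequently, every open set $G \subseteq X$ admits a representation $G = \bigcup_{i=1}^{\infty} B_i$ with $B_i \in \mathcal{B}$.

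Next, setting $G_m = \bigcup_{i=1}^{m} B_i \in \mathcal{G}$, the sequence satisfies $G_m \uparrow G$, so by continuity of measures from below (which is valid for any measure, possibly taking the value $+\infty$) one gets
\[
\nu(G) = \lim_{m \to \infty} \nu(G_m) = \lim_{m \to \infty} \tau(G_m) = \tau(G),
\]
where the middle equality uses the hypothesis that $\nu$ and $\tau$ agree on every member of $\mathcal{G}$. Thus $\nu$ and $\tau$ coincide on every open subset of $X$.

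Finally, applying Lemma \ref{lem:leq meas lem} twice, with the roles of $\nu$ and $\tau$ interchanged, yields $\tau(A) \leq \nu(A)$ and $\nu(A) \leq \tau(A)$ for every Borel set $A$, and hence $\tau(A) = \nu(A)$. There is no real obstacle here: the argument is a routine topological approximation, and the only subtle point worth highlighting is that continuity from below of a measure does not require finiteness, which matters because $\nu$ and $\tau$ are only assumed to be locally finite.
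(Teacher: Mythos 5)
Your proof is correct and follows essentially the same route as the paper: write an open set as a countable increasing union of elements of $\mathcal{G}$ (the paper extracts the countable union via the Lindel\"of property, you via the countable basis of rational balls centred at points of $D$ — the same fact in slightly different clothing), pass to the limit by continuity from below, and finish with Lemma \ref{lem:leq meas lem}. No gaps.
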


\begin{proof}
Let $A\subseteq X$ be an open set. Then
\[
\left\{ B(x,r):x\in D,r\in\mathbb{Q},r>0,B(x,r)\subseteq A\right\} 
\]
is a countable open cover of $A$, which we can enumerate $\left\{ B(x_{i},r_{i})\right\} _{i=1}^{\infty}$,
thus $A=\cup_{i=1}^{\infty}B(x_{i},r_{i})$. Then
\[
\tau(A)=\lim_{m\rightarrow\infty}\tau(\cup_{i=1}^{m}B(x_{i},r_{i}))=\lim_{m\rightarrow\infty}\nu(\cup_{i=1}^{m}B(x_{i},r_{i}))=\nu(A).
\]
We can conclude that $\tau(A)=\nu(A)$ for every open set $A\subseteq X$
and so for every Borel set $A\subseteq X$ by Lemma \ref{lem:leq meas lem}.
\end{proof}
\begin{prop}
\label{lem:charateodory ineq}Let $(\Omega,\mathcal{B})$ be a measurable
space, let $\nu$ and $\tau$ be $\mathcal{B}$-measurable finite
measures on $\Omega$. Let $\mathcal{S}$ be a semiring of sets of
$\mathcal{B}$ that generates the $\sigma$-algebra $\mathcal{B}$,
assume that $\Omega\in\mathcal{S}$ and $\nu(S)\leq\tau(S)$ for every
$S\in\mathcal{S}$. Then $\nu(A)\leq\tau(A)$ for every $A\in\mathcal{B}$.
\end{prop}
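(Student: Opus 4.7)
The plan is to prove the inequality via outer measures generated from the semiring $\mathcal{S}$, reducing the statement to the classical uniqueness of extension from a semiring to the generated $\sigma$-algebra.

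First I would define, for every subset $A\subseteq\Omega$, the outer measures
\[
\tau^*(A)=\inf\Bigl\{\sum_{i=1}^\infty\tau(S_i):A\subseteq\bigcup_{i=1}^\infty S_i,\,S_i\in\mathcal{S}\Bigr\}
\]
and analogously $\nu^*(A)$ with $\nu$ in place of $\tau$. Since both infima range over the same family of $\mathcal{S}$-coverings and $\nu(S)\leq\tau(S)$ for every $S\in\mathcal{S}$, the inequality $\nu^*(A)\leq\tau^*(A)$ is immediate from the definition for every $A\subseteq\Omega$, in particular for every $A\in\mathcal{B}$.

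Next I would identify $\nu^*$ and $\tau^*$ with $\nu$ and $\tau$ on $\mathcal{B}$. Because $\mathcal{S}$ is a semiring with $\Omega\in\mathcal{S}$ and both $\nu,\tau$ are finite, the restrictions $\nu\vert_\mathcal{S}$ and $\tau\vert_\mathcal{S}$ are finite pre-measures on a generator of $\mathcal{B}=\sigma(\mathcal{S})$. Carath\'eodory's extension theorem together with Hahn's uniqueness theorem then gives $\nu^*\vert_\mathcal{B}=\nu$ and $\tau^*\vert_\mathcal{B}=\tau$. Combining with the previous step yields $\nu(A)=\nu^*(A)\leq\tau^*(A)=\tau(A)$ for every $A\in\mathcal{B}$, which is the required inequality.

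The only potentially delicate point is the uniqueness of the extension from $\mathcal{S}$ to $\mathcal{B}$, for which one needs the pre-measures to be at least $\sigma$-finite on $\mathcal{S}$. The hypothesis $\Omega\in\mathcal{S}$ together with the finiteness of $\nu$ and $\tau$ places us squarely in the classical Hahn--Carath\'eodory uniqueness regime, so no additional work is needed beyond quoting the standard result.
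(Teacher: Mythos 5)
Your proof is correct and follows essentially the same route as the paper: the paper likewise identifies $\nu$ and $\tau$ with their Carath\'eodory extensions from $\mathcal{S}$ (citing uniqueness of the extension, which holds by finiteness and $\Omega\in\mathcal{S}$) and then reads off the inequality from the definition of the outer measure, exactly as you do by comparing the two infima over common $\mathcal{S}$-coverings.
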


\begin{proof}
By \cite[Section 1.5.1]{Makarov-Podkorytov} we have that $\nu$ and
$\tau$ are uniquely determined by their values on $\mathcal{S}$,
and $\nu$ and $\tau$ equal to their Charat\'{e}odry extension from
$\mathcal{S}$. Hence $\nu(A)\leq\tau(A)$ for every $A\in\mathcal{B}$
by the definition of Charat\'{e}odry extension \cite[Section 1.4.4]{Makarov-Podkorytov}.
\end{proof}

\section{Convergence of random measures\label{sec:Convergence-of-random}}

We combine the convergence of random variables in probability and
the convergence of measures to obtain the convergence of measures
in probability. This section includes four subsections. Section \ref{subsec:Weak-convergence-subsequentiuall}
develops the theory of weak convergence of random measures subsequentially
in probability. In Section \ref{subsec:Weak-convergence-in prob}
we briefly introduce the concept of weak convergence of random measures
in probability. In Section \ref{subsec:Vague-convergence-in prob}
we discuss the vague convergence of random measures in probability
in the situation when $X$ is a locally compact space. Finally, in
Section \ref{subsec:SUB-General-existance-of} we use the results
on the convergence of random measures to obtain some general results
about the conditional measure of deterministic measures on random
sets, including Theorem \ref{thm:Gen exist both} and Proposition
\ref{prop:two def are the same}.
\begin{defn}
\label{def:random finite measure}The set of all finite Borel measures
$\mathcal{M}_{+}(X)$ on $X$ equipped with the weak$^{*}$-topology
on the dual space of $C_{b}(X)$ is a separable metrizable topological
space (see Proposition \ref{prop:The-Prohorov-distance}). A random,
finite, Borel measure is an element of $\mathcal{L}^{0}\left(\mathcal{M}_{+}(X)\right)$,
i.e. a finite Borel measure valued random variable.
\end{defn}

\begin{lem}
\label{lem:limit is measure}Let $\mu_{k}$ be a sequence of random,
finite Borel measures. If there exists $H\in\mathcal{A}$ with $P(H)=1$
such that for every outcome $\omega\in H$ we have that $\mu_{k}$
weakly converges to a finite, Borel measure $\mu$ (note that $\mu$
depends on $\omega\in H$) then $\mu$ is a random, finite Borel measure.
\end{lem}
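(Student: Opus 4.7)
The plan is to reduce the statement to the general fact that almost-sure limits of measurable maps into a Polish space are measurable, i.e.\ to Lemma \ref{lem:random lim is random}, applied with target metric space $M = \mathcal{M}_{+}(X)$ equipped with the Prohorov distance $\pi$. By Proposition \ref{prop:The-Prohorov-distance}, $(\mathcal{M}_{+}(X),\pi)$ is a complete separable metric space, and convergence in $\pi$ is equivalent to weak convergence of finite Borel measures. Consequently, in view of Definition \ref{def:random finite measure}, each $\mu_{k}$ is precisely a measurable map from $\Omega$ into this Polish space.

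First I would extend the limit to all of $\Omega$ so that Lemma \ref{lem:random lim is random} actually applies: for $\omega\in H$, let $\mu(\omega)$ be the weak limit supplied by the hypothesis, and for $\omega\in\Omega\setminus H$ set $\mu(\omega)$ equal to the zero measure (any fixed element of $\mathcal{M}_{+}(X)$ would do). This yields a well-defined function $\mu\colon\Omega\longrightarrow\mathcal{M}_{+}(X)$. By the hypothesis together with Proposition \ref{prop:The-Prohorov-distance}, for every $\omega\in H$ we have $\pi(\mu_{k}(\omega),\mu(\omega))\to0$ as $k\to\infty$. Since $P(H)=1$, Lemma \ref{lem:random lim is random} then gives that $\mu$ is a random variable with values in $\mathcal{M}_{+}(X)$, which by Definition \ref{def:random finite measure} is exactly the statement that $\mu$ is a random, finite Borel measure.

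The only substantive ingredient here is the metrizability of the weak topology on $\mathcal{M}_{+}(X)$ by the Prohorov metric, so that the abstract measurability-of-pointwise-limits result can be invoked; once this is granted, there is no real obstacle. (Note that whether $H$ or $\Omega\setminus H$ lies in $\mathcal{A}$ is not an issue since $P(H)=1$ and Lemma \ref{lem:random lim is random} is stated precisely in this form.)
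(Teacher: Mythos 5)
Your proof is correct and follows exactly the route the paper takes: the paper also deduces this lemma directly from Lemma \ref{lem:random lim is random}, with $\mathcal{M}_{+}(X)$ viewed as a Polish space via Proposition \ref{prop:The-Prohorov-distance}. The extra care you take in extending $\mu$ to all of $\Omega$ is a reasonable fleshing-out of the same argument.
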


The lemma follows from Lemma \ref{lem:random lim is random} and basically
states that the pointwise limit of random measures is a random measure.
We will use this fact throughout the paper without referring to it.

\subsection{Weak convergence subsequentially in probability\label{subsec:Weak-convergence-subsequentiuall}}

\begin{defn}
\label{def:weak conv in prob SEQ}Let $\mu$ and $\mu_{k}$ be a sequence
of random, finite, Borel measures on $X$. We say that \textit{$\mu_{k}$
weakly converges to $\mu$ subsequentially in probability} if for
every subsequence $\left\{ \alpha_{k}\right\} _{k=1}^{\infty}$ of
$\mathbb{N}$ there exists a subsequence $\left\{ \beta_{k}\right\} _{k=1}^{\infty}$
of $\left\{ \alpha_{k}\right\} _{k=1}^{\infty}$ and an event $H\in\mathcal{A}$
with $P(H)=1$ such that $\mu_{\beta_{k}}$ converges weakly to $\mu$
on the event $H$.
\end{defn}

\begin{rem}
It follows from the Definition \ref{def: weak conv in prob} that
if $\mu_{k}$ is a sequence of random, finite, Borel measures on $X$
such that $\mu_{k}$ weakly converges to a random, finite, Borel measure
$\mu$ almost surely then $\mu_{k}$ weakly converges to $\mu$ subsequentially
in probability.
\end{rem}

\begin{prop}
\label{prop:unique random lim}The limit in Definition \ref{def:weak conv in prob SEQ}
is unique in $\mathcal{L}^{0}\left(\mathcal{M}_{+}(X)\right)$ if
exists. 
\end{prop}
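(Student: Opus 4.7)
The plan is to reduce uniqueness to the deterministic uniqueness of weak limits (Lemma \ref{lem:weak conv unique}) by extracting a single subsequence along which both candidate limits are attained almost surely.

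First, suppose that $\mu_k$ converges weakly to $\mu$ subsequentially in probability and also converges weakly to $\nu$ subsequentially in probability, where $\mu,\nu \in \mathcal{L}^0(\mathcal{M}_+(X))$. I want to show that $\mu = \nu$ almost surely. Applying Definition \ref{def:weak conv in prob SEQ} to the full sequence $\{k\}_{k=1}^\infty$ with respect to the limit $\mu$, I extract a subsequence $\{\beta_k\}_{k=1}^\infty$ of $\mathbb{N}$ and an event $H_1 \in \mathcal{A}$ with $P(H_1)=1$ such that $\mu_{\beta_k}$ converges weakly to $\mu$ on $H_1$.

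Next, I apply Definition \ref{def:weak conv in prob SEQ} again, this time to the subsequence $\{\beta_k\}$ with respect to the limit $\nu$. This yields a further subsequence $\{\gamma_k\}_{k=1}^\infty$ of $\{\beta_k\}$ and an event $H_2 \in \mathcal{A}$ with $P(H_2)=1$ such that $\mu_{\gamma_k}$ converges weakly to $\nu$ on $H_2$. Since $\{\gamma_k\}$ is a subsequence of $\{\beta_k\}$, the almost sure convergence on $H_1$ also gives that $\mu_{\gamma_k}$ converges weakly to $\mu$ on $H_1$.

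On the event $H_1 \cap H_2$, which has $P(H_1 \cap H_2) = 1$, the sequence $\mu_{\gamma_k}$ converges weakly to both $\mu$ and $\nu$. By Lemma \ref{lem:weak conv unique}, weak limits of finite Borel measures are unique, so $\mu(\omega) = \nu(\omega)$ for every $\omega \in H_1 \cap H_2$. Hence $\mu = \nu$ almost surely, and so they represent the same element of $\mathcal{L}^0(\mathcal{M}_+(X))$. There is no real obstacle here: the argument is a routine diagonal/extraction argument, and the only subtlety is to ensure measurability of the event $\{\mu = \nu\}$, which is immediate since both $\mu$ and $\nu$ are random variables with values in the Polish space $\mathcal{M}_+(X)$ (Proposition \ref{prop:The-Prohorov-distance}).
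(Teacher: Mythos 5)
Your proof is correct and follows essentially the same route as the paper: the paper's proof likewise obtains a single subsequence along which $\mu_{\alpha_k}$ converges weakly to both candidate limits on a full-measure event and then invokes Lemma \ref{lem:weak conv unique}; you have merely made the two-step subsequence extraction explicit.
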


\begin{proof}
Assume that a sequence of random, finite Borel measures converges
weakly to both of the random, finite Borel measures $\mu$ and $\nu$
subsequentially in probability. Then there exists $\left\{ \alpha_{k}\right\} _{k=1}^{\infty}$
and an event $H\in\mathcal{A}$ with $P(H)=1$ such that $\mu_{\alpha_{k}}$
weakly converges to both $\mu$ and $\nu$ on the event $H$. Hence
$\mu=\nu$ on the event $H$ by Lemma \ref{lem:weak conv unique}.
\end{proof}
\begin{defn}
Let $\mu$ and $\nu$ be two random, finite, Borel measures. We define
\[
\rho_{\pi}(\mu,\nu)=E\left(\frac{\pi(\mu,\nu)}{1+\pi(\mu,\nu)}\right)
\]
where $\pi$ is the Prohorov distance defined in Definition \ref{def:The-Prohorov-distance}.
\end{defn}

The following proposition shows that the weak convergence subsequentially
in probability is the same as the convergence in probability of Borel
random variables that take values in the metric space $(\mathcal{M}_{+}(X),\pi)$.
We use this equivalence, of the weak convergence subsequentially in
probability and the convergence of $\rho_{\pi}$, throughout the paper
without referencing to it.
\begin{prop}
\label{prop:rho pi metric}We have that $\rho_{\pi}$ is a metric
on $\mathcal{L}^{0}\left(\mathcal{M}_{+}(X)\right)$ and $\mu_{k}$
weakly converges to $\mu$ subsequentially in probability if and only
if $\lim_{k\rightarrow\infty}\rho_{\pi}(\mu_{k},\mu)=0$.
\end{prop}

\begin{proof}
The fact that $\rho_{\pi}$ is a metric can be shown similarly to
that $\rho$ is a metric, depending on the fact that $x/(1+x)$ is
monotone increasing concave function on the positive reals, for reference
see \cite[Theorem 3.5]{Dudley-prob}. The statement follows from Proposition
\ref{prop:The-Prohorov-distance} and Remark \ref{rem: SUBSEQ IFF conv prob}.
\end{proof}
\begin{prop}
\label{prop:triv equivalence of conv}Let $\mu_{k}$ be a sequence
of random, finite, Borel measures on $X$. If $\mu_{k}$ weakly converges
to a random, finite, Borel measure $\mu$ subsequentially in probability
then $\intop_{X}f(x)\mathrm{d}\mu_{k}(x)$ converges to $\intop_{X}f(x)\mathrm{d}\mu(x)$
in probability for every $f\in C_{b}(X)$.
\end{prop}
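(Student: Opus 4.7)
The proof should be a direct application of the subsequence characterization of convergence in probability (Remark \ref{rem: SUBSEQ IFF conv prob}), together with the definition of weak convergence.

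My plan is as follows. Fix $f \in C_b(X)$ and set $Y_k = \intop_{X} f(x)\,\mathrm{d}\mu_k(x)$ and $Y = \intop_{X} f(x)\,\mathrm{d}\mu(x)$. These are real-valued random variables (in particular $Y$ is measurable as $\mu$ is a random finite Borel measure). To show $Y_k \to Y$ in probability, by Remark \ref{rem: SUBSEQ IFF conv prob} it suffices to prove that every subsequence $\{Y_{\alpha_k}\}_{k=1}^{\infty}$ admits a further subsequence converging to $Y$ almost surely.

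Given such a subsequence $\{\alpha_k\}$, invoke the hypothesis that $\mu_k$ weakly converges to $\mu$ subsequentially in probability: there exists a subsequence $\{\beta_k\}$ of $\{\alpha_k\}$ and an event $H \in \mathcal{A}$ with $P(H)=1$ such that $\mu_{\beta_k}$ converges weakly to $\mu$ on $H$. By the definition of weak convergence of finite Borel measures (Definition \ref{def:weak conv}), for every $\omega \in H$ we have
\[
\lim_{k\rightarrow\infty}\intop_{X} f(x)\,\mathrm{d}\mu_{\beta_k}(x)(\omega) = \intop_{X} f(x)\,\mathrm{d}\mu(x)(\omega),
\]
i.e., $Y_{\beta_k} \to Y$ on $H$, which is almost sure convergence since $P(H)=1$. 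Applying Remark \ref{rem: SUBSEQ IFF conv prob} in the reverse direction concludes that $Y_k \to Y$ in probability.

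There is no genuine obstacle here; the statement is essentially a restatement of the definition combined with the standard fact that almost sure convergence along a subsequence of every subsequence is equivalent to convergence in probability. The only point worth a moment of care is that $Y$ is indeed a bona fide random variable so that convergence in probability makes sense, but this is immediate since $\mu$ is assumed to be a random finite Borel measure and evaluating $\int f\,\mathrm{d}(\cdot)$ is a continuous functional on $(\mathcal{M}_{+}(X),\pi)$.
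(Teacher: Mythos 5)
Your proof is correct and follows exactly the paper's own argument: fix $f\in C_{b}(X)$, pass to a further subsequence along which $\mu_{\beta_{k}}$ converges weakly to $\mu$ almost surely, conclude almost sure convergence of the integrals, and invoke the subsequence characterisation of convergence in probability (Remark \ref{rem: SUBSEQ IFF conv prob}). No further comment is needed.
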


\begin{proof}
Let $f\in C_{b}(X)$. For every subsequence $\left\{ \alpha_{k}\right\} _{k=1}^{\infty}$
of $\mathbb{N}$ there exists a subsequence $\left\{ \beta_{k}\right\} _{k=1}^{\infty}$
of $\left\{ \alpha_{k}\right\} _{k=1}^{\infty}$ and an event $H\in\mathcal{A}$
with $P(H)=1$ such that $\mu_{\beta_{k}}$ converges weakly to $\mu$
on the event $H$. Thus $\intop_{X}f(x)\mathrm{d}\mu_{\beta_{k}}(x)$
converges to $\intop_{X}f(x)\mathrm{d}\mu(x)$ almost surely. Hence
$\intop_{X}f(x)\mathrm{d}\mu_{k}(x)$ converges to $\intop_{X}f(x)\mathrm{d}\mu(x)$
in probability by Remark \ref{rem: SUBSEQ IFF conv prob}.
\end{proof}
\begin{prop}
\label{prop:non trivuquivalence of conv}Let $K\subseteq X$ be a
compact subset, let $\Psi\subseteq C_{b}(K)$ be a countable dense
subset with respect to the supremum norm and let $\mu$ and $\mu_{k}$
be a sequence of random, finite Borel measures on $K$. If $\intop_{X}f(x)\mathrm{d}\mu_{k}(x)$
converges to $\intop_{X}f(x)\mathrm{d}\mu(x)$ in probability for
every $f\in\Psi$ then $\mu_{k}$ weakly converges to $\mu$ subsequentially
in probability.
\end{prop}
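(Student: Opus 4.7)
The plan is to reduce the weak convergence statement to the convergence of real-valued random variables $\int f\,d\mu_k$ for $f$ ranging over the countable dense set $\Psi$, and then apply the deterministic weak-convergence criterion (Lemma~\ref{lem:deterministic con}) fibre-by-fibre on the sample space.

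First I would enumerate $\Psi=\{f_i\}_{i\in\mathbb{N}}$. Fix an arbitrary subsequence $\{\alpha_k\}_{k=1}^\infty$ of $\mathbb{N}$. For each $i$ the hypothesis gives that $\int_X f_i(x)\,\mathrm{d}\mu_k(x)$ converges in probability to $\int_X f_i(x)\,\mathrm{d}\mu(x)$, so the same is true along $\{\alpha_k\}$. I then apply Lemma~\ref{lem:mutual convergence subsequance} to the doubly indexed family $\bigl\{\int_X f_i\,\mathrm{d}\mu_{\alpha_k}\bigr\}_{i,k}$: this yields a subsequence $\{\beta_k\}$ of $\{\alpha_k\}$ and a probability-one event $H_0\in\mathcal{A}$ such that for every $\omega\in H_0$ and every $i\in\mathbb{N}$ the numerical sequence $\int_X f_i(x)\,\mathrm{d}\mu_{\beta_k}(\omega)(x)$ converges to some limit $S_\omega(f_i)\in\mathbb{R}$.

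Next, for each fixed $\omega\in H_0$ I invoke Lemma~\ref{lem:deterministic con} with the compact space $K$, the dense set $\Psi\subseteq C_b(K)$, and the deterministic sequence $\mu_{\beta_k}(\omega)$: this produces a finite Borel measure $\tilde\mu(\omega)$ on $K$ to which $\mu_{\beta_k}(\omega)$ converges weakly. Setting $\tilde\mu(\omega)$ arbitrarily (say, to the zero measure) off of $H_0$, Lemma~\ref{lem:limit is measure} guarantees that $\tilde\mu$ is a random finite Borel measure. It remains to identify $\tilde\mu$ with $\mu$. For each $f\in\Psi$, on $H_0$ we have $\int f\,\mathrm{d}\mu_{\beta_k}\to\int f\,\mathrm{d}\tilde\mu$, hence this convergence holds in probability; but $\int f\,\mathrm{d}\mu_{\beta_k}$ also converges in probability to $\int f\,\mathrm{d}\mu$ by hypothesis. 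By uniqueness of limits in probability, $\int f\,\mathrm{d}\tilde\mu=\int f\,\mathrm{d}\mu$ almost surely for every $f\in\Psi$. Since $\Psi$ is countable, there is a probability-one event $H_1$ on which this identity holds simultaneously for all $f\in\Psi$; on $H_0\cap H_1$ Lemma~\ref{lem:countabl determines mu} then forces $\tilde\mu=\mu$. Taking $H:=H_0\cap H_1$ gives the required subsequence and full-probability event.

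The only nontrivial point is the switch from "$\int f_i\,\mathrm{d}\mu_{\beta_k}$ converges on $H_0$ for every $i$" to "$\mu_{\beta_k}$ converges weakly on $H_0$ to the right measure." The first half of this—existence of a weak limit $\tilde\mu(\omega)$ for each $\omega\in H_0$—is handled by Lemma~\ref{lem:deterministic con}, whose hypothesis that the pointwise limits $S_\omega(f_i)$ be finite is automatic from the fact that each $\int f_i\,\mathrm{d}\mu_{\beta_k}(\omega)$ is a convergent real sequence. The second half, the identification of $\tilde\mu$ with the hypothesized limit $\mu$, is the main conceptual step and is carried out by the uniqueness-of-limits-in-probability argument sketched above, together with Lemma~\ref{lem:countabl determines mu}. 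No extra integrability or tightness is needed because everything lives on the compact space $K$.
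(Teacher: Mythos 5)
Your proof is correct and follows essentially the same route as the paper: extract an almost-surely convergent subsequence via Lemma~\ref{lem:mutual convergence subsequance}, apply Lemma~\ref{lem:deterministic con} fibrewise to obtain a weak limit, and identify it with $\mu$ via Lemma~\ref{lem:countabl determines mu}. The only cosmetic difference is that you identify the limit afterwards by uniqueness of limits in probability, whereas the paper notes directly that the almost-sure subsequential limit of $\int f\,\mathrm{d}\mu_{\beta_k}$ must already be $\int f\,\mathrm{d}\mu$; both are fine.
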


\begin{proof}
By Lemma \ref{lem:mutual convergence subsequance} for every subsequence
$\left\{ \alpha_{k}\right\} _{k=1}^{\infty}$ of $\mathbb{N}$ there
exists a subsequence $\left\{ \beta_{k}\right\} _{k=1}^{\infty}$
of $\left\{ \alpha_{k}\right\} _{k=1}^{\infty}$ and there exists
an event $H\in\mathcal{A}$ with $P(H)=1$ such that $\intop_{X}f(x)\mathrm{d}\mu_{\beta_{k}}(x)$
converges to $\intop_{X}f(x)\mathrm{d}\mu(x)$ for every $f\in\Psi$
on the event $H$. We have that $\mu_{\beta_{k}}$ weakly converges
to a measure $\tau$ on the event $H$ by Lemma \ref{lem:deterministic con}.
We have that $\intop_{X}f(x)\mathrm{d}\mu(x)=\intop_{X}f(x)\mathrm{d}\tau(x)$
for every $f\in\Psi$ on the event $H$ and hence $\tau=\mu$ on the
event $H$ by Lemma \ref{lem:countabl determines mu}. So $\mu_{\beta_{k}}$
weakly converges to the measure $\mu$ on the event $H$.
\end{proof}
\begin{prop}
\label{thm:representation thm}Let $K\subseteq X$ be a compact subset,
let $\Psi\subseteq C_{b}(K)$ be a countable dense subset with respect
to the supremum norm and let $\mu_{k}$ be a sequence of random Borel
measures on $K$ such that $\intop_{X}f(x)\mathrm{d}\mu_{k}(x)$ converges
in probability to a random limit $S(f)$ for every $f\in\Psi$ and
$\left|S(f)\right|<\infty$ almost surely. Then $\mu_{k}$ weakly
converges to a random, finite, Borel measure $\mu$ subsequentially
in probability.
\end{prop}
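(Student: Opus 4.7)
The plan is to construct the random limit measure $\mu$ directly via a single subsequence extraction and then reduce the full statement to Proposition \ref{prop:non trivuquivalence of conv}. The point is that the subsequential-in-probability convergence hypothesis already handed to us in the statement of Proposition \ref{prop:non trivuquivalence of conv} is tailor-made for this, provided we can first produce a candidate limit measure.

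First, I would fix any subsequence $\{\alpha_k\}_{k=1}^{\infty}$ of $\mathbb{N}$ (for example, $\alpha_k=k$) and apply Lemma \ref{lem:mutual convergence subsequance} to the countable family $\{\intop_K f\mathrm{d}\mu_{\alpha_k}\}_{f\in\Psi}$ of sequences of random variables, which are all convergent in probability by hypothesis. This produces a subsequence $\{\beta_k\}_{k=1}^{\infty}$ of $\{\alpha_k\}$ and an event $H_0\in\mathcal{A}$ with $P(H_0)=1$ on which $\intop_K f\mathrm{d}\mu_{\beta_k}(x)$ converges (necessarily to $S(f)$) for every $f\in\Psi$ simultaneously. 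Since $S(f)$ is finite almost surely for every $f$ in the countable set $\Psi$, we can shrink $H_0$ to a full-probability event $H$ on which additionally $|S(f)|<\infty$ for every $f\in\Psi$.

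Next, on the event $H$ I would apply Lemma \ref{lem:deterministic con} pathwise: for each $\omega\in H$, the deterministic sequence $\mu_{\beta_k}(\omega)$ on the compact set $K$ satisfies the hypotheses of that lemma with the limiting linear functional $f\mapsto S(f)(\omega)$ on $\Psi$, so $\mu_{\beta_k}(\omega)$ converges weakly to some finite Borel measure which I call $\mu(\omega)$. For $\omega\notin H$ I set $\mu(\omega)=0$. By Lemma \ref{lem:limit is measure}, $\mu$ is a random, finite, Borel measure, and by construction $\intop_K f\mathrm{d}\mu=S(f)$ almost surely for every $f\in\Psi$ (the exceptional null set can be taken uniform in $f\in\Psi$ because $\Psi$ is countable).

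Finally, with $\mu$ in hand, the hypothesis of the statement now reads: for every $f\in\Psi$, $\intop_K f\mathrm{d}\mu_k(x)$ converges in probability to $\intop_K f\mathrm{d}\mu(x)$. This is exactly the input to Proposition \ref{prop:non trivuquivalence of conv}, which immediately gives that $\mu_k$ converges weakly to $\mu$ subsequentially in probability, completing the argument. The only subtle step is verifying measurability of $\mu$ and uniformizing the almost-sure identification $\intop f\mathrm{d}\mu=S(f)$ across all $f\in\Psi$, which is why the countability of $\Psi$ and Lemma \ref{lem:mutual convergence subsequance} are essential; everything else is a routine assembly of the preliminary lemmas, with no new estimate required.
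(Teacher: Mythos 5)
Your proposal is correct and follows essentially the same route as the paper's own proof: extract a common almost-sure subsequence via Lemma \ref{lem:mutual convergence subsequance}, build the limit measure pathwise with Lemma \ref{lem:deterministic con} and Lemma \ref{lem:limit is measure}, and conclude with Proposition \ref{prop:non trivuquivalence of conv}. The extra care you take in uniformising the null sets over the countable family $\Psi$ and in handling the finiteness of $S(f)$ is implicit in the paper's argument, so there is nothing further to add.
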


\begin{proof}
By Lemma \ref{lem:mutual convergence subsequance} for every subsequence
$\left\{ \alpha_{k}\right\} _{k=1}^{\infty}$ of $\mathbb{N}$ there
exists a subsequence $\left\{ \beta_{k}\right\} _{k=1}^{\infty}$
of $\left\{ \alpha_{k}\right\} _{k=1}^{\infty}$ and there exists
an event $H\in\mathcal{A}$ with $P(H)=1$ such that $\intop_{X}f(x)\mathrm{d}\mu_{\beta_{k}}(x)$
converges to $S(f)$ for every $f\in\Psi$ on the event $H$. Then
$\mu_{\beta_{k}}$ weakly converges to a random, finite, Borel measure
$\mu$ on the event $H$ by Lemma \ref{lem:deterministic con} and
Lemma \ref{lem:limit is measure}. Thus $\intop_{X}f(x)\mathrm{d}\mu_{k}(x)$
converges to $S(f)=\intop_{X}f(x)\mathrm{d}\mu(x)$ in probability
for every $f\in\Psi$ and so $\mu_{k}$ weakly converges to $\mu$
subsequentially in probability by Proposition \ref{prop:non trivuquivalence of conv}.
\end{proof}
\begin{prop}
\label{Prop:single integral prob conv}Let $\nu$ be a deterministic
Borel measure on $X$ and $\mu_{k}$ be a sequence of random, finite,
Borel measures on $X$ such that $\mu_{k}\ll\nu$ almost surely for
every $k$, there exists $c>0$ such that $E\left(\frac{\mathrm{d}\mu_{k}}{\mathrm{d}\nu}(x)\right)\leq c$
for every $k\in\mathbb{N}$, $x\in X$ and $\mu_{k}(A)$ converges
in probability for every compact set $A\subseteq X$. Let $f:X\longrightarrow\mathbb{R}$
be a Borel measurable function such that $\intop_{X}\left|f(x)\right|\mathrm{d}\nu(x)<\infty$.
Then $\intop_{X}f(x)\mathrm{d}\mu_{k}(x)$ converges to a random variable
$Y$ in probability and $E(\left|Y\right|)\leq c\intop_{X}\left|f(x)\right|\mathrm{d}\nu(x)$.
\end{prop}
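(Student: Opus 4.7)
The plan is to bootstrap the given convergence on compact sets, first to $\mu_k(A)$ for arbitrary Borel sets $A$ with $\nu(A)<\infty$, then to integrals of simple functions by linearity, and finally to $\int f\,d\mu_k$ by $L^1(\nu)$-approximation. The workhorse throughout is the first-moment bound
\[
E(\mu_k(A))=\int_A E\!\left(\tfrac{d\mu_k}{d\nu}(x)\right)d\nu(x)\leq c\cdot\nu(A),
\]
which follows from Fubini and the hypothesis and is crucially uniform in $k$.

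First I would handle $f=\chi_A$ with $\nu(A)<\infty$. By Lemma \ref{lem:countab exhaustion} write $A=N\cup\bigcup_{i\geq 1}K_i$ with disjoint compact $K_i$ and $\nu(N)=0$; the moment bound forces $\mu_k(N)=0$ almost surely, so $\mu_k(A)=\sum_{i=1}^{\infty}\mu_k(K_i)$. Setting $A_n=\bigcup_{i=1}^n K_i$, linearity of convergence in probability gives convergence of $\mu_k(A_n)$ for each fixed $n$, and the telescoping estimate
\[
\rho(\mu_k(A),\mu_j(A))\leq 2c\,\nu(A\setminus A_n)+\rho(\mu_k(A_n),\mu_j(A_n)),
\]
which uses $\rho(\mu_k(A),\mu_k(A_n))\leq E(\mu_k(A\setminus A_n))$, shows that the sequence $\mu_k(A)$ is Cauchy in the metric $\rho$ of Remark \ref{rem:rho def}, hence converges in probability to some $Y_A$. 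Lemma \ref{lem:finite expectation stochastic limit} then yields $E(Y_A)\leq c\,\nu(A)$.

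Linearity extends this to convergence in probability of $\int g\,d\mu_k$ for every $\nu$-integrable simple function $g$. For general $f$ with $\int|f|\,d\nu<\infty$, I would choose simple $f_n$ with $\int|f-f_n|\,d\nu\to 0$ and use the uniform estimate
\[
E\!\left(\left|\int f\,d\mu_k-\int f_n\,d\mu_k\right|\right)\leq E\!\left(\int|f-f_n|\,d\mu_k\right)\leq c\int|f-f_n|\,d\nu
\]
to conclude that $\int f\,d\mu_k$ is Cauchy in probability, with some limit $Y$. The bound $E(|Y|)\leq c\int|f|\,d\nu$ follows by applying Lemma \ref{lem:finite expectation stochastic limit} to the sequence $|\int f\,d\mu_k|\leq\int|f|\,d\mu_k$, whose expectations are bounded by $c\int|f|\,d\nu$.

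The main obstacle is the first passage from compact $K$ to arbitrary Borel $A$ of finite $\nu$-measure: this rests on controlling the tail $\mu_k(A\setminus A_n)$ uniformly in $k$, which is precisely what the first-moment hypothesis supplies. A minor technical subtlety is the joint measurability of $\tfrac{d\mu_k}{d\nu}(x)$ in $(\omega,x)$ needed to invoke Fubini for the first-moment bound, but this follows from standard disintegration facts for random measures.
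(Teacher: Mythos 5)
Your proof is correct and follows essentially the same route as the paper's: approximate $f$ by simple functions reducible to compact sets via inner regularity, control all errors uniformly in $k$ with the first-moment bound $E(\mu_k(A))\leq c\,\nu(A)$, and conclude by completeness of the metric $\rho$ together with Lemma \ref{lem:finite expectation stochastic limit}. The only organizational difference is that you insert an intermediate step extending convergence to arbitrary Borel sets of finite $\nu$-measure before passing to simple functions, whereas the paper builds the compactness directly into the approximating simple functions $g_n\leq f$; both versions are sound.
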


\begin{proof}
It is enough to prove the statement for a nonnegative $f$. We have
that
\begin{equation}
E\left(\intop_{X}g(x)\mathrm{d}\mu_{k}(x)\right)=E\left(\intop_{X}g(x)\frac{\mathrm{d}\mu_{k}}{\mathrm{d}\nu}(x)\mathrm{d}\nu(x)\right)\leq c\intop_{X}g(x)\mathrm{d}\nu(x)\label{eq:fub hel 1}
\end{equation}
for every nonnegative Borel function $g$ by Fubini`s theorem. Let
$g_{n}(x)=\sum_{i=1}^{N_{n}}b_{i,n}\cdot\chi_{A_{i,n}}(x)$, where
$0\leq b_{i,n}<\infty$, $N_{n}\in\mathbb{N}$ and $A_{i,n}\subseteq X$
are compact subsets, such that $g_{n}\leq f$ on $X$ and $0\leq\intop_{X}f(x)-g_{n}(x)\mathrm{d}\nu(x)<1/n$,
note that by the definition of Lebesgue integration we can find such
$g_{n}$ with $A_{i,n}$ being Borel sets and by inner regularity
 we can further assume the $A_{i,n}$ to be compact. By assumption
$\intop_{X}g_{n}(x)\mathrm{d}\mu_{k}(x)$ converges in probability
as $k$ goes to infinity to a random variable $Y_{n}$, thus $\lim_{k\rightarrow\infty}\rho\left(\intop_{X}g_{n}(x)\mathrm{d}\mu_{k}(x),Y_{n}\right)=0$.
Hence
\[
\limsup_{k\rightarrow\infty}\rho\left(\intop_{X}f(x)\mathrm{d}\mu_{k}(x),Y_{n}\right)\leq\limsup_{k\rightarrow\infty}\rho\left(\intop_{X}f(x)\mathrm{d}\mu_{k}(x),\intop_{X}g_{n}(x)\mathrm{d}\mu_{k}(x)\right)
\]
\[
+\rho\left(\intop_{X}g_{n}(x)\mathrm{d}\mu_{k}(x),Y_{n}\right)\leq\limsup_{k\rightarrow\infty}E\left|\intop_{X}f(x)\mathrm{d}\mu_{k}(x)-\intop_{X}g_{n}(x)\mathrm{d}\mu_{k}(x)\right|+0
\]
\[
\leq\limsup_{k\rightarrow\infty}E\left(\intop_{X}\left|f(x)-g_{n}(x)\right|\mathrm{d}\mu_{k}(x)\right)\leq c\intop_{X}f(x)-g_{n}(x)\mathrm{d}\nu(x)<c/n,
\]
and so there exists $m_{n}\in\mathbb{N}$ such that $\rho\left(\intop_{X}f(x)\mathrm{d}\mu_{k}(x),Y_{n}\right)<c/n$
for every $k\geq m_{n}$. If $k\geq\max\left\{ m_{n},m_{l}\right\} $
then
\[
\rho(Y_{n},Y_{l})\leq\rho\left(\intop_{X}f(x)\mathrm{d}\mu_{k}(x),Y_{n}\right)+\rho\left(\intop_{X}f(x)\mathrm{d}\mu_{k}(x),Y_{l}\right)\leq c/n+c/l.
\]
It follows that $Y_{n}$ is a Cauchy sequence for the metric $\rho$,
which is a complete metric. Let $Y$ be the limit of $Y_{n}$ in probability.
Then
\[
\limsup_{k\rightarrow\infty}\rho\left(\intop_{X}f(x)\mathrm{d}\mu_{k}(x),Y\right)\leq\limsup_{k\rightarrow\infty}\rho\left(\intop_{X}f(x)\mathrm{d}\mu_{k}(x),Y_{n}\right)+\rho\left(Y_{n},Y\right)\leq c/n+\rho\left(Y_{n},Y\right).
\]
By taking limit $n$ goes to infinity it follows that $\intop_{X}f(x)\mathrm{d}\mu_{k}(x)$
converges to $Y$ in probability. By applying Lemma \ref{lem:finite expectation stochastic limit}
twice and by (\ref{eq:fub hel 1})
\[
E(Y)\leq\liminf_{n\rightarrow\infty}E(Y_{n})\leq\liminf_{n\rightarrow\infty}\liminf_{k\rightarrow\infty}E(\intop_{X}g_{n}(x)\mathrm{d}\mu_{k}(x))\leq c\liminf_{n\rightarrow\infty}\intop_{X}g_{n}(x)\mathrm{d}\nu(x)\leq c\intop_{X}f(x)\mathrm{d}\nu(x).
\]
\end{proof}
\begin{cor}
\label{cor:compact conv cor}Let $\nu$ be a deterministic Borel measure
on $X$ such that $\mathrm{supp}\nu$ is compact. Let $\mu_{k}$ be
a sequence of random, finite, Borel measures on $X$ such that $\mu_{k}\ll\nu$
almost surely for every $k$, there exists $c>0$ such that $E\left(\frac{\mathrm{d}\mu_{k}}{\mathrm{d}\nu}(x)\right)\leq c$
for every $k\in\mathbb{N}$, $x\in X$ and $\mu_{k}(A)$ converges
to a random variable $\mu_{\infty}(A)$ in probability for every compact
set $A\subseteq X$. Then $\mu_{k}$ weakly converges to a random,
finite, Borel measure $\mu$ subsequentially in probability.
\end{cor}

\begin{proof}
There exists $\Psi\subseteq C_{b}(\mathrm{supp}\nu)$ countable and
dense subset by Lemma \ref{lem:Separable C_b(S)}. The conditions
of Proposition \ref{thm:representation thm} are satisfied for $K=\mathrm{supp}\nu$
by Proposition \ref{Prop:single integral prob conv}. Thus there exists
a random Borel measure $\mu$ such that $\mu_{k}$ weakly converges
to $\mu$ subsequentially in probability.
\end{proof}
\begin{lem}
\label{lem:abs cont weak prob conv fatou for open sets}Let $g:X\longrightarrow\mathbb{R}$
be a nonnegative Borel function and let $\nu$ be a deterministic
finite Borel measure on $X$. Let $\mu_{k}$ be a sequence of random,
finite, Borel measures on $X$ such that $\mu_{k}\ll\nu$ almost surely
for every $k$ and there exists $c>0$ such that $E\left(\frac{\mathrm{d}\mu_{k}}{\mathrm{d}\nu}(x)\right)\leq c$
for every $k\in\mathbb{N}$, $x\in X$. If $\mu_{k}$ weakly converges
to a random, finite, Borel measure $\mu$ subsequentially in probability
then $E\left(\intop_{X}g(x)\mathrm{d}\mu(x)\right)\leq c\cdot\intop_{X}g(x)\mathrm{d}\nu(x)$
and $E\left(\intop_{X}g(x)\mathrm{d}\mu_{k}(x)\right)\leq c\cdot\intop_{X}g(x)\mathrm{d}\nu(x)$.
\end{lem}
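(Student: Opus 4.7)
My plan is to handle the two inequalities separately. The bound for $\mu_k$ is a direct Fubini--Tonelli computation: since $\mu_k \ll \nu$ almost surely, one can write $\int g\,\mathrm{d}\mu_k = \int g(x)\frac{\mathrm{d}\mu_k}{\mathrm{d}\nu}(x)\,\mathrm{d}\nu(x)$; the integrand is nonnegative, so taking expectation and swapping $E$ with the $\mathrm{d}\nu$-integral yields $E(\int g\,\mathrm{d}\mu_k) = \int g(x)\, E(\frac{\mathrm{d}\mu_k}{\mathrm{d}\nu}(x))\,\mathrm{d}\nu(x) \leq c\int g\,\mathrm{d}\nu$ by the hypothesis.

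For the bound on the limit $\mu$, the idea is to obtain it first on indicators of open sets and then extend. Fix an open set $G\subseteq X$. Applying the subsequential weak convergence hypothesis to the full sequence, I would extract a subsequence $\beta_k$ and an event $H\in\mathcal{A}$ with $P(H)=1$ on which $\mu_{\beta_k}\to\mu$ weakly. On $H$, the Portmanteau characterisation recorded in Remark \ref{rem:liminf open weak conv} applied pathwise gives $\mu(G) \leq \liminf_{k\to\infty}\mu_{\beta_k}(G)$. Taking expectation and applying Fatou's lemma, together with the inequality $E(\mu_{\beta_k}(G)) \leq c\nu(G)$ that was already established in the first step (with $g=\chi_G$), produces $E(\mu(G)) \leq c\nu(G)$ for every open $G$.

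Next, I would define the deterministic set function $\tau(A) := E(\mu(A))$ on Borel sets. Countable additivity follows from monotone convergence applied inside the expectation (as $\mu$ is almost surely a finite Borel measure), and finiteness follows from the open-set bound at $A=X$, so $\tau$ is a finite Borel measure with $\tau(G)\leq c\nu(G)$ on every open $G$. Lemma \ref{lem:leq meas lem} then upgrades this to $\tau(A)\leq c\nu(A)$ for every Borel set $A$, which is precisely the desired inequality $E(\int g\,\mathrm{d}\mu)\leq c\int g\,\mathrm{d}\nu$ when $g$ is an indicator. The general case of nonnegative Borel $g$ is obtained by approximating $g$ from below by simple nonnegative functions $g_n = \sum_i b_{i,n}\chi_{A_{i,n}}$, applying linearity and the indicator bound to each $g_n$, and invoking monotone convergence on both sides.

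The principal difficulty, which forces this detour through open sets, is that subsequential weak convergence in probability does not directly let one pass integrals of arbitrary bounded Borel functions to the limit. Portmanteau supplies a usable one-sided inequality only for open sets, so Fatou can only be used against $\mu_{\beta_k}(G)$; Lemma \ref{lem:leq meas lem} is then the tool that converts that restricted information into a bound valid on every Borel set, after which the passage from indicators to nonnegative Borel $g$ is entirely standard.
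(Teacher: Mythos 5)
Your proof is correct and follows essentially the same route as the paper: a Fubini--Tonelli computation for the $\mu_k$ bound, and for the limit $\mu$ the combination of the Portmanteau inequality on open sets, Fatou's lemma, and Lemma \ref{lem:leq meas lem} to pass to all Borel sets. The only (harmless) difference is that you apply Fubini directly to general nonnegative $g$ for the $\mu_k$ bound, whereas the paper routes that step through open sets and Lemma \ref{lem:leq meas lem} as well; your explicit verification that $A\mapsto E(\mu(A))$ is a finite Borel measure is a detail the paper leaves implicit.
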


\begin{proof}
Let $G\subseteq X$ be an open set. Then
\[
E\left(\intop_{X}\chi_{G}(x)\mathrm{d}\mu_{k}(x)\right)=E\left(\intop_{X}\chi_{G}(x)\frac{\mathrm{d}\mu_{k}}{\mathrm{d}\nu}(x)\mathrm{d}\nu(x)\right)\leq c\intop_{X}\chi_{G}(x)\mathrm{d}\nu(x)
\]
by Fubini`s theorem. Since it holds for every open set $G$ it follows
by Lemma \ref{lem:leq meas lem} that
\begin{equation}
E\left(\mu_{k}(A)\right)\leq c\cdot\nu(A)\label{eq:sagg}
\end{equation}
for every Borel set $A$. Hence
\[
E\left(\intop_{X}g(x)\mathrm{d}\mu_{k}(x)\right)\leq c\cdot\intop_{X}g(x)\mathrm{d}\nu(x).
\]

Let again $G\subseteq X$ be an open set. There exists a subsequence
$\left\{ \beta_{k}\right\} _{k=1}^{\infty}$ of $\mathbb{N}$ and
an event $H\in\mathcal{A}$ with $P(H)=1$ such that $\mu_{\beta_{k}}$
weakly converges to $\mu$ on the event $H$. Then by Remark \ref{rem:liminf open weak conv},
Fatou`s lemma and (\ref{eq:sagg})
\[
E\left(\mu(G)\right)\leq E\left(\liminf_{k\rightarrow\infty}\mu_{\beta_{k}}(G)\right)
\]
\[
\leq\liminf_{k\rightarrow\infty}E\left(\mu_{\beta_{k}}(G)\right)=c\cdot\nu(G).
\]
 Since it holds for every open set $G$ it follows by Lemma \ref{lem:leq meas lem}
that $E\left(\mu(A)\right)\leq c\cdot\nu(A)$ for every Borel set
$A$ and hence
\[
E\left(\intop_{X}g(x)\mathrm{d}\mu(x)\right)\leq c\cdot\intop_{X}g(x)\mathrm{d}\nu(x).
\]
\end{proof}
\begin{prop}
\label{prop:SUM lem subseq}Let $\mu^{i}$ and $\mu_{k}^{i}$ be a
sequence of random, finite, Borel measures on $X$ for every $i\in\mathbb{N}$.
Assume that $\mu_{k}^{i}$ weakly converges to $\mu^{i}$ subsequentially
in probability for every $i\in\mathbb{N}$ as $k$ goes to $\infty$.
Assume that for every $\varepsilon>0$ there exist $N,n_{0}\in\mathbb{N}$
such that $\sum_{i=N}^{\infty}E(\mu_{k}^{i}(X))<\varepsilon$ for
every $k\geq n_{0}$. Then there exists $n_{1}\in\mathbb{N}$ such
that $\sum_{i\in\mathbb{N}}\mu_{k}^{i}$ is a sequence of random,
finite Borel measures, for $k\geq n_{1}$, that weakly converges to
the random, finite Borel measure $\sum_{i\in\mathbb{N}}\mu^{i}$ subsequentially
in probability.
\end{prop}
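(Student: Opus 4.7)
My strategy is to combine a truncation argument with the metric $\rho_\pi$ of Proposition~\ref{prop:rho pi metric} in order to reduce the claim to the easier finite-sum case plus a tail estimate on expected total masses.

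First I would verify that $\nu:=\sum_{i\in\mathbb{N}}\mu^i$ is almost surely a finite Borel measure and defines a random variable in $\mathcal{M}_+(X)$. Applying Proposition~\ref{prop:triv equivalence of conv} with $f\equiv 1$ gives $\mu_k^i(X)\to\mu^i(X)$ in probability for each $i$, and Lemma~\ref{lem:finite expectation stochastic limit} then yields $E(\mu^i(X))\le\liminf_{k\to\infty}E(\mu_k^i(X))$; summing over $i\ge N$ and using Fatou's lemma for counting measure gives $\sum_{i\ge N}E(\mu^i(X))\le\liminf_{k\to\infty}\sum_{i\ge N}E(\mu_k^i(X))$, which can be made arbitrarily small by the uniform tail hypothesis. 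Hence the partial sums $\nu^N:=\sum_{i<N}\mu^i$ satisfy $\pi(\nu^N,\nu^M)\le\sum_{N\le i<M}\mu^i(X)\to 0$ almost surely (Lemma~\ref{lem:sum triv bound}), so they form a Prohorov-Cauchy sequence whose limit $\nu$ is a random finite Borel measure by Lemma~\ref{lem:limit is measure}. Applying the hypothesis with $\varepsilon=1$ and setting $n_1:=n_0$ yields the analogous conclusion for $\nu_k:=\sum_i\mu_k^i$ whenever $k\ge n_1$.

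Next, for fixed $N$ and $k\ge n_1$, set $\nu_k^N=\sum_{i<N}\mu_k^i$, $R_k^N=\sum_{i\ge N}\mu_k^i$, $\nu^N=\sum_{i<N}\mu^i$, $R^N=\sum_{i\ge N}\mu^i$. Lemma~\ref{lem:sum triv bound} and the triangle inequality for the Prohorov metric give
\[
\pi(\nu_k,\nu)\le\pi(\nu_k,\nu_k^N)+\pi(\nu_k^N,\nu^N)+\pi(\nu^N,\nu)\le R_k^N(X)+\pi(\nu_k^N,\nu^N)+R^N(X).
\]
Since $t\mapsto t/(1+t)$ is monotone and satisfies the subadditivity $(a+b+c)/(1+a+b+c)\le a/(1+a)+b/(1+b)+c/(1+c)$ on $[0,\infty)$, taking expectations and bounding $t/(1+t)\le t$ on the two tail pieces yields
\[
\rho_\pi(\nu_k,\nu)\le E[R_k^N(X)]+\rho_\pi(\nu_k^N,\nu^N)+E[R^N(X)].
\]

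Finally I would show $\rho_\pi(\nu_k^N,\nu^N)\to 0$ as $k\to\infty$ for each fixed $N$. This reduces to the observation that subsequential weak convergence in probability is preserved under finite sums: given any subsequence $\{\alpha_k\}$ of $\mathbb{N}$, one iterates the extraction in Definition~\ref{def:weak conv in prob SEQ} across the finitely many indices $i<N$ to obtain a single subsubsequence along which $\mu_k^i$ converges weakly to $\mu^i$ almost surely for every $i<N$, and on the intersection of these $N$ full-measure events $\nu_k^N$ converges weakly to $\nu^N$. Given $\varepsilon>0$, the hypothesis supplies $N,n_0$ with $E[R_k^N(X)]<\varepsilon$ for $k\ge n_0$, while the Fatou bound from the first paragraph forces $E[R^N(X)]\le\varepsilon$; choosing $k$ further large to make $\rho_\pi(\nu_k^N,\nu^N)<\varepsilon$ then gives $\limsup_{k\to\infty}\rho_\pi(\nu_k,\nu)\le 3\varepsilon$, and since $\varepsilon$ is arbitrary Proposition~\ref{prop:rho pi metric} delivers the required subsequential weak convergence in probability. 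I expect the only delicate bookkeeping to be the combined iterated-extraction argument for finite sums and the Fatou step for the tail; both are routine once the metric framework is in place.
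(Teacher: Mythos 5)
Your proposal is correct and follows essentially the same route as the paper's proof: truncate the sum at a finite index $N$, control both tails via the expected total masses (using Fatou's lemma to transfer the tail bound from $\mu_k^i$ to $\mu^i$), handle the finite partial sum by iterated subsequence extraction, and conclude with the triangle inequality for $\rho_\pi$ together with the bound $\rho_\pi(\mu,\nu)\leq E(\pi(\mu,\nu))$ and Lemma \ref{lem:sum triv bound}. The minor variations (bounding $E(R^N(X))$ index-by-index via Lemma \ref{lem:finite expectation stochastic limit} rather than along a single common subsequence, and the explicit Prohorov--Cauchy argument for the well-definedness of $\sum_i\mu^i$) are refinements of the same argument, not a different approach.
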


\begin{proof}
Since there exists $N_{1},n_{1}\in\mathbb{N}$ such that $\sum_{i=N_{1}}^{\infty}E(\mu_{k}^{i}(X))\leq1$
for every $k\geq n_{1}$ it follows that $\sum_{i=N_{1}}^{\infty}\mu_{k}^{i}(X)<\infty$
almost surely for $k\geq n_{1}$ and so $\sum_{i=1}^{\infty}\mu_{k}^{i}(X)<\infty$
almost surely for $k\geq n_{1}$ . Thus $\sum_{i=1}^{\infty}\mu_{k}^{i}$
is a random, finite, Borel measure.

Since $\mu_{k}^{i}$ weakly converges to $\mu^{i}$ subsequentially
in probability it follows by Proposition \ref{prop:triv equivalence of conv}
that $\mu_{k}^{i}(X)$ converges to $\mu^{i}(X)$ in probability as
$k$ goes to $\infty$. Thus we can find, by Lemma \ref{lem:mutual convergence subsequance},
a subsequence $\left\{ \alpha_{k}\right\} _{k=1}^{\infty}$ of $\mathbb{N}$
and an event $H\in\mathcal{A}$ with $P(H)=1$ such that $\mu_{\alpha_{k}}^{i}(X)$
converges to $\mu^{i}(X)$ on $H$ as $k$ goes to $\infty$ for every
$i\in\mathbb{N}$. Then by Fatou`s lemma and Fubini`s theorem
\begin{equation}
E(\sum_{i=N_{1}}^{\infty}\mu^{i}(X))=E(\sum_{i=N_{1}}^{\infty}\lim_{k\rightarrow\infty}\mu_{\alpha_{k}}^{i}(X))\leq\liminf_{k\rightarrow\infty}\sum_{i=N_{1}}^{\infty}E(\mu_{\alpha_{k}}^{i}(X))\leq1.\label{eq:sum lem -1}
\end{equation}
Thus $\sum_{i=N_{1}}^{\infty}\mu^{i}(X)<\infty$ almost surely and
so $\sum_{i=1}^{\infty}\mu^{i}(X)<\infty$ almost surely. Hence $\sum_{i=1}^{\infty}\mu^{i}$
is a random, finite, Borel measure.

Let $\varepsilon>0$ be fixed and let $N,n_{0}\in\mathbb{N}$ such
that $\sum_{i=N}^{\infty}E(\mu_{k}^{i}(X))<\varepsilon$ for every
$k\geq n_{0}$. Similarly to (\ref{eq:sum lem -1}) we have that $E(\sum_{i=N}^{\infty}\mu^{i}(X))\leq\varepsilon$.
Thus
\[
\rho_{\pi}(\sum_{i=1}^{\infty}\mu_{k}^{i},\sum_{i=1}^{\infty}\mu^{i})\leq
\]
\[
\rho_{\pi}(\sum_{i=1}^{\infty}\mu_{k}^{i},\sum_{i=1}^{N-1}\mu_{k}^{i})+\rho_{\pi}(\sum_{i=1}^{N-1}\mu_{k}^{i},\sum_{i=1}^{N-1}\mu^{i})+\rho_{\pi}(\sum_{i=1}^{N-1}\mu^{i},\sum_{i=1}^{\infty}\mu^{i})
\]
\[
\leq\varepsilon+\rho_{\pi}(\sum_{i=1}^{N-1}\mu_{k}^{i},\sum_{i=1}^{N-1}\mu^{i})+\varepsilon
\]
where we used the fact that $\rho_{\pi}(\mu,\nu)\leq E(\pi(\mu,\nu))$
and Lemma \ref{lem:sum triv bound}. Thus
\[
\limsup_{k\rightarrow\infty}\rho_{\pi}(\sum_{i=1}^{\infty}\mu_{k}^{i},\sum_{i=1}^{\infty}\mu^{i})\leq2\varepsilon
\]
since $\sum_{i=1}^{N-1}\mu_{k}^{i}$ weakly converges to $\sum_{i=1}^{N-1}\mu^{i}$
subsequentially in probability. By taking limit $\varepsilon$ goes
to $0$ it follows by Proposition \ref{prop:rho pi metric} that $\sum_{i=1}^{\infty}\mu_{k}^{i}$
weakly converges to $\sum_{i=1}^{\infty}\mu^{i}$ subsequentially
in probability.
\end{proof}
\begin{thm}
\label{thm:ABS cont representation}Let $\nu$ be a deterministic,
finite, Borel measure on $X$. Let $\mu_{k}$ be a sequence of random,
finite, Borel measures on $X$ such that $\mu_{k}\ll\nu$ almost surely
for every $k$, there exists $c>0$ such that $E\left(\frac{\mathrm{d}\mu_{k}}{\mathrm{d}\nu}(x)\right)\leq c$
for every $k\in\mathbb{N}$, $x\in X$ and $\mu_{k}(A)$ converges
to a random variable $\mu_{\infty}(A)$ in probability for every compact
set $A\subseteq X$. Then $\mu_{k}$ weakly converges to a random,
finite, Borel measure $\mu$ subsequentially in probability. Furthermore,
$\intop_{X}f(x)\mathrm{d}\mu_{k}(x)$ converges to a random variable
$S(f)$ in probability with $E(\left|S(f)\right|)\leq c\intop_{X}\left|f(x)\right|\mathrm{d}\nu(x)$
for every $f:X\longrightarrow\mathbb{R}$ Borel measurable function
such that $\intop_{X}\left|f(x)\right|\mathrm{d}\nu(x)<\infty$. For
every countable collection of Borel measurable functions $f_{n}:X\longrightarrow\mathbb{R}$
such that $\intop_{X}\left|f_{n}(x)\right|\mathrm{d}\nu(x)<\infty$
we have that $\intop_{X}f_{n}(x)\mathrm{d}\mu(x)=S(f_{n})$ for every
$n$ almost surely.
\end{thm}

\begin{proof}
Let $K_{1},K_{2},\dots$ be a sequence of disjoint compact subsets
of $X$ as in Lemma \ref{lem:countab exhaustion}. Let $\nu^{i}=\nu\vert_{K_{i}}$
and $\mu_{k}^{i}=\mu_{k}\vert_{K_{i}}$. Then $\mu_{k}=\sum_{i=1}^{\infty}\mu_{k}^{i}$
and $\mu_{k}^{i}$ weakly converges to a random, finite Borel measure
$\mu^{i}$ subsequentially in probability by Corollary \ref{cor:compact conv cor}.
Since $\sum_{i=N}^{\infty}E\left(\mu_{k}^{i}(X)\right)\leq c\sum_{i=N}^{\infty}\nu(K_{i})<\infty$
it follows that the conditions of Proposition \ref{prop:SUM lem subseq}
are satisfied and so $\mu=\sum_{i=1}^{\infty}\mu^{i}$ is a random,
finite, Borel measure and $\mu_{k}$ weakly converges to $\mu$ subsequentially
in probability.

By Proposition \ref{Prop:single integral prob conv} it follows that
$\intop_{X}f(x)\mathrm{d}\mu_{k}(x)$ converges to a random variable
$S(f)$ in probability with $E(\left|S(f)\right|)\leq c\intop_{X}\left|f(x)\right|\mathrm{d}\nu(x)$
for every $f:X\longrightarrow\mathbb{R}$ Borel measurable function
such that $\intop_{X}\left|f(x)\right|\mathrm{d}\nu(x)<\infty$.

Let $n\in\mathbb{N}$ be fixed. For every $\varepsilon>0$ we can
find $g\in C_{b}(X)$ such that $\intop_{X}\left|f_{n}(x)-g(x)\right|\mathrm{d}\nu(x)<\varepsilon$
since $C_{b}(X)$ is dense in $\mathcal{L}^{1}(X)$ (see \cite[Proposition 1.3.22]{Cb(X) dense in L1}).
Then
\[
\rho\left(\intop_{X}f_{n}(x)\mathrm{d}\mu(x),S(f_{n})\right)
\]

\[
\leq\rho\left(\intop_{X}f_{n}(x)\mathrm{d}\mu(x),\intop_{X}g(x)\mathrm{d}\mu(x)\right)+\rho\left(\intop_{X}g(x)\mathrm{d}\mu(x),\intop_{X}g(x)\mathrm{d}\mu_{k}(x)\right)
\]
\[
+\rho\left(\intop_{X}g(x)\mathrm{d}\mu_{k}(x),\intop_{X}f_{n}(x)\mathrm{d}\mu_{k}(x)\right)+\rho\left(\intop_{X}f_{n}(x)\mathrm{d}\mu_{k}(x),S(f_{n})\right)
\]
\[
\leq E\left(\intop_{X}\left|f_{n}(x)-g(x)\right|\mathrm{d}\mu(x)\right)+\rho\left(\intop_{X}g(x)\mathrm{d}\mu(x),\intop_{X}g(x)\mathrm{d}\mu_{k}(x)\right)
\]
\[
+E\left(\intop_{X}\left|g(x)-f_{n}(x)\right|\mathrm{d}\mu_{k}(x)\right)+\rho\left(\intop_{X}f_{n}(x)\mathrm{d}\mu_{k}(x),S(f_{n})\right)
\]
\[
\leq c\cdot\intop_{X}\left|f_{n}(x)-g(x)\right|\mathrm{d}\nu(x)+\rho\left(\intop_{X}g(x)\mathrm{d}\mu(x),\intop_{X}g(x)\mathrm{d}\mu_{k}(x)\right)+
\]
\[
c\cdot\intop_{X}\left|f_{n}(x)-g(x)\right|\mathrm{d}\nu(x)+\rho\left(\intop_{X}f_{n}(x)\mathrm{d}\mu_{k}(x),S(f_{n})\right)
\]
where we used Lemma \ref{lem:abs cont weak prob conv fatou for open sets}
and Remark \ref{rem:exp quality}. Hence taking limit $k$ goes to
infinity it follows that
\[
\rho\left(\intop_{X}f_{n}(x)\mathrm{d}\mu(x),S(f_{n})\right)\leq2c\cdot\varepsilon
\]
since $\intop_{X}g(x)\mathrm{d}\mu_{k}(x)$ converges to $\intop_{X}g(x)\mathrm{d}\mu(x)$
in probability by Proposition \ref{prop:triv equivalence of conv}
and $\intop_{X}f_{n}(x)\mathrm{d}\mu_{k}(x)$ converges to $S(f_{n})$
in probability by the definition of $S(f_{n})$. Taking limit $\varepsilon$
goes to $0$ it follows that $\intop_{X}f_{n}(x)\mathrm{d}\mu(x)=S(f_{n})$
almost surely.
\end{proof}
\begin{rem}
\label{rem:convergenc for sets to the measure}In particular, if we
take $f_{n}=\chi_{A_{n}}$ in Theorem \ref{thm:ABS cont representation}
for a countable collection of Borel sets $A_{n}$ then it follows
that $\mu(A_{n})=S(\chi_{A_{n}})$ almost surely for every $n$ where
$\mu_{k}(A_{n})$ converges to $S(\chi_{A_{n}})$ in probability as
$k$ goes to infinity.
\end{rem}

\begin{rem}
In Theorem \ref{thm:ABS cont representation} we can relax the condition
$E\left(\frac{\mathrm{d}\mu_{k}}{\mathrm{d}\nu}(x)\right)\leq c$
for every $k\in\mathbb{N}$, $x\in X$ . It is enough to assume that
there exists a nonnegative Borel function $c:X\longrightarrow\mathbb{R}$
such that $\int c(x)\mathrm{d}\nu(x)<\infty$ and $E\left(\frac{\mathrm{d}\mu_{k}}{\mathrm{d}\nu}(x)\right)\leq c(x)$
for $\nu$ almost every $x\in X$ for every $k$. It can be seen by
replacing $\mathrm{d}\nu(x)$ by $c(x)\mathrm{d}\nu(x)$.
\end{rem}

\begin{prop}
\label{prop:randomsupport}Let $\mu_{k}$ be a sequence of random,
finite, Borel measures on $X$ such that there exists a sequence of
random closed sets $F_{1}\supseteq F_{2}\supseteq\dots$ such that
$\mathrm{supp}\mu_{k}\subseteq F_{k}$ almost surely. If $\mu_{k}$
weakly converges to a random finite Borel measure $\mu$ subsequentially
in probability then $\mathrm{supp}\mu\subseteq\bigcap_{n=1}^{\infty}F_{n}$
almost surely.
\end{prop}

\begin{proof}
Let $G_{n}=X\setminus F_{n}$ be a random open set. Let $\left\{ \alpha_{k}\right\} _{k=1}^{\infty}$
be a subsequence of $\mathbb{N}$ such that $\mu_{\alpha_{k}}$ weakly
converges to $\mu$ on the event $H\in\mathcal{A}$ and $P(H)=1$.
We can further assume that $\mathrm{supp}\mu_{k}\subseteq F_{k}$
on the event $H$. Then $\mu(G_{n})\leq\liminf_{k\rightarrow\infty}\mu_{\alpha_{k}}(G_{n})=0$
on the event $H$ by Remark \ref{rem:liminf open weak conv}. Hence
$\mu\left(\bigcup_{n=1}^{\infty}G_{n}\right)=0$ on the event $H$
and so $\mathrm{supp}\mu\subseteq\bigcap_{n=1}^{\infty}F_{n}$ almost
surely.
\end{proof}
\begin{lem}
\label{lem:chang meas lem}Let $\nu$ be a deterministic, finite,
Borel measure on $X$ with compact support. Let $\mu_{k}$ be a sequence
of random, finite, Borel measures on $X$ such that $\mu_{k}\ll\nu$
almost surely for every $k$, there exists $c>0$ such that $E\left(\frac{\mathrm{d}\mu_{k}}{\mathrm{d}\nu}(x)\right)\leq c$
for every $k\in\mathbb{N}$, $x\in X$ and $\mu_{k}(A)$ converges
to a random variable $\mu_{\infty}(A)$ in probability for every compact
set $A\subseteq X$. Let $f:X\longrightarrow\mathbb{R}$ be a nonnegative
Borel measurable function such that $\intop_{X}f(x)\mathrm{d}\nu(x)<\infty$.
Then the sequence of random, finite, Borel measures $f(x)\mathrm{d}\mu_{k}(x)$
weakly converges to the random, finite, Borel measure $f(x)\mathrm{d}\mu(x)$
subsequentially in probability where $\mu$ is the random, finite,
Borel measure such that $\mu_{k}$ weakly converges to $\mu$ subsequentially
in probability.
\end{lem}

\begin{proof}
We have that $E(\int_{X}f(x)\mathrm{d}\mu_{k}(x))<c\intop_{X}f(x)\mathrm{d}\nu(x)<\infty$
thus $f(x)\mathrm{d}\mu_{k}(x)$ is a random finite Borel measure
almost surely. We can find a countable and dense $\Psi\subseteq C_{b}(\mathrm{supp}(\nu))$
by Lemma \ref{lem:Separable C_b(S)}. By Theorem \ref{thm:ABS cont representation}
it follows that $\intop_{X}g(x)f(x)\mathrm{d}\mu_{k}(x)$ converges
to $\intop_{X}g(x)f(x)\mathrm{d}\mu(x)$ in probability for every
$g\in\Psi$ and $E(\int_{X}f(x)\mathrm{d}\mu(x))<c\intop_{X}f(x)\mathrm{d}\nu(x)<\infty$
thus $f(x)\mathrm{d}\mu(x)$ is a random finite Borel measure almost
surely. It follows that $f(x)\mathrm{d}\mu_{k}(x)$ weakly converges
to $f(x)\mathrm{d}\mu(x)$ subsequentially in probability by Proposition
\ref{prop:non trivuquivalence of conv}.
\end{proof}
\begin{prop}
\label{prop:change of meas prop}Let $\nu$ be a deterministic, finite,
Borel measure on $X$. Let $\mu_{k}$ be a sequence of random, finite,
Borel measures on $X$ such that $\mu_{k}\ll\nu$ almost surely for
every $k$, there exists $c>0$ such that $E\left(\frac{\mathrm{d}\mu_{k}}{\mathrm{d}\nu}(x)\right)\leq c$
for every $k\in\mathbb{N}$, $x\in X$ and $\mu_{k}(A)$ converges
to a random variable $\mu_{\infty}(A)$ in probability for every compact
set $A\subseteq X$. Let $f:X\longrightarrow\mathbb{R}$ be a nonnegative
Borel measurable function such that $\intop_{X}f(x)\mathrm{d}\nu(x)<\infty$.
Then the sequence of random, finite, Borel measures $f(x)\mathrm{d}\mu_{k}(x)$
weakly converges to the random, finite, Borel measure $f(x)\mathrm{d}\mu(x)$
subsequentially in probability where $\mu$ is the random, finite,
Borel measure such that $\mu_{k}$ weakly converges to $\mu$ subsequentially
in probability. Additionally, $\int_{A}f(x)\mathrm{d}\mu_{k}(x)$
converges to $\int_{A}f(x)\mathrm{d}\mu(x)$ in probability for every
Borel set $A\subseteq X$.
\end{prop}

\begin{proof}
Let $K_{1},K_{2},\dots$ be compact sets, $\nu^{i}=\nu\vert_{K_{i}}$,
$\mu_{k}^{i}=\mu_{k}\vert_{K_{i}}$, $\mu^{i}$ and $\mu$ as in the
proof of Theorem \ref{thm:ABS cont representation}. Then by Lemma
\ref{lem:chang meas lem} it follows that the sequence of measures
$f(x)\mathrm{d}\mu_{k}^{i}(x)$ weakly converges to $f(x)\mathrm{d}\mu^{i}(x)$
subsequentially in probability. Since
\[
\sum_{i=N}^{\infty}E\left(\intop f(x)\mathrm{d}\mu_{k}^{i}(x)\right)\leq c\sum_{i=N}^{\infty}\intop_{K_{i}}f(x)\mathrm{d}\nu(x)=c\intop_{X}f(x)\mathrm{d}\nu(x)<\infty,
\]
it follows by the application of Proposition \ref{prop:SUM lem subseq}
that $f(x)\mathrm{d}\mu(x)=\sum_{i=1}^{\infty}f(x)\mathrm{d}\mu^{i}(x)$
is a random, finite, Borel measure and $f(x)\mathrm{d}\mu_{k}(x)$
weakly converges to $f(x)\mathrm{d}\mu(x)$ subsequentially in probability.

The same argument for $\chi_{A}\cdot f$ in place of $f$ shows that
$\chi_{A}(x)f(x)\mathrm{d}\mu_{k}(x)$ weakly converges to $\chi_{A}(x)f(x)\mathrm{d}\mu(x)$
subsequentially in probability, in particular, $\int_{A}f(x)\mathrm{d}\mu_{k}(x)$
converges to $\int_{A}f(x)\mathrm{d}\mu(x)$ in probability by Proposition
\ref{prop:triv equivalence of conv}.
\end{proof}

\subsection{Weak convergence in probability\label{subsec:Weak-convergence-in prob}}

\begin{defn}
\label{def: weak conv in prob}Let $\mu_{k}$ be a sequence of random,
finite, Borel measures on $X$. We say that \textit{$\mu_{k}$ weakly
converges to a random, finite, Borel measure $\mu$ in probability}
if $\intop_{X}f(x)\mathrm{d}\mu_{k}(x)$ converges to $\intop_{X}f(x)\mathrm{d}\mu(x)$
in probability for every deterministic, bounded, continuous function
$f:X\longrightarrow\mathbb{R}$.
\end{defn}

\begin{prop}
\label{prop:The-convergence-weakly topology}The convergence weakly
in probability is induced by the topology which has base elements
formed by finite intersection of sets in the form:
\[
\left\{ \mu\in\mathcal{L}^{0}\left(\mathcal{M}_{+}(X)\right):\rho\left(\intop_{X}f(x)\mathrm{d}\mu(x),Y\right)<r\right\} 
\]
where $f\in C_{b}(X)$, $r>0$ and $Y$ is a real-valued random variable
with almost surely finite values.
\end{prop}

Proposition \ref{prop:The-convergence-weakly topology} can be verified
easily, we leave the details for the reader.
\begin{prop}
\label{prop:trivi direct}Let $\mu$ and $\mu_{k}$ be a sequence
of random, finite, Borel measures on $X$. If $\mu_{k}$ weakly converges
to $\mu$ subsequentially in probability then $\mu_{k}$ weakly converges
to $\mu$ in probability.
\end{prop}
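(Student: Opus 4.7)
The plan is to reduce this to Proposition \ref{prop:triv equivalence of conv}, which already does essentially all the work. By the definition of weak convergence in probability (Definition \ref{def: weak conv in prob}), I need to check that for every deterministic $f \in C_b(X)$, the real-valued random variables $\int_X f(x)\,\mathrm{d}\mu_k(x)$ converge to $\int_X f(x)\,\mathrm{d}\mu(x)$ in probability. But this is precisely the conclusion of Proposition \ref{prop:triv equivalence of conv} under the hypothesis that $\mu_k$ weakly converges to $\mu$ subsequentially in probability, so there is nothing further to prove.

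For completeness, I would spell out the one-line argument internal to Proposition \ref{prop:triv equivalence of conv}: fix $f \in C_b(X)$ and any subsequence $\{\alpha_k\}$ of $\mathbb{N}$. By the assumption of weak convergence subsequentially in probability, there exist a further subsequence $\{\beta_k\}$ of $\{\alpha_k\}$ and an event $H \in \mathcal{A}$ with $P(H) = 1$ on which $\mu_{\beta_k}$ converges weakly to $\mu$. On this event, by the definition of weak convergence (Definition \ref{def:weak conv}), $\int_X f\,\mathrm{d}\mu_{\beta_k} \to \int_X f\,\mathrm{d}\mu$. Hence this convergence holds almost surely, and in particular in probability. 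Applying the subsequence criterion for convergence in probability (Remark \ref{rem: SUBSEQ IFF conv prob}, or equivalently Lemma \ref{lem:subseq prob 0}), it follows that $\int_X f\,\mathrm{d}\mu_k \to \int_X f\,\mathrm{d}\mu$ in probability.

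There is no real obstacle here: the proposition is essentially tautological once Proposition \ref{prop:triv equivalence of conv} is in hand, and it is included mostly to record the direction of implication between the two convergence notions. The only thing worth noting is that the converse implication is strictly weaker: convergence in probability of $\int f\,\mathrm{d}\mu_k$ for every $f \in C_b(X)$ does not obviously upgrade to almost sure weak convergence along a subsequence for the measures themselves, which is why Proposition \ref{prop:non trivuquivalence of conv} requires countable density arguments and the compactness hypothesis on $K$.
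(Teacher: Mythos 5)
Your proof is correct and follows exactly the paper's route: the paper likewise disposes of this proposition by observing it is a reformulation of Proposition \ref{prop:triv equivalence of conv}, whose proof is the same subsequence argument you spell out (extract an almost surely weakly convergent subsubsequence, deduce almost sure convergence of $\intop_{X}f\,\mathrm{d}\mu_{\beta_{k}}$, and conclude via Remark \ref{rem: SUBSEQ IFF conv prob}). Nothing is missing.
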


Proposition \ref{prop:trivi direct} is a reformulation of Proposition
\ref{prop:triv equivalence of conv}.
\begin{prop}
\label{prop:non tyriv direct}Let $\mu$ and $\mu_{k}$ be a sequence
of random, finite, Borel measures supported on a deterministic compact
subset $K\subseteq X$. If $\mu_{k}$ weakly converges to $\mu$ in
probability then $\mu_{k}$ weakly converges to $\mu$ subsequentially
in probability.
\end{prop}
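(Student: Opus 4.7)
The plan is to reduce the claim to Proposition \ref{prop:non trivuquivalence of conv} by exhibiting a countable test family. Since $K$ is compact, Lemma \ref{lem:Separable C_b(S)} gives a countable subset $\Psi \subseteq C_b(K)$ which is dense in the supremum norm. This countability is exactly what is needed to pass from convergence in probability against each individual continuous function (the hypothesis) to subsequential weak convergence (the conclusion).

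Next I would bridge from $C_b(K)$ to $C_b(X)$. For each $f \in \Psi$, I would apply Tietze's extension theorem (or a truncation argument for metric spaces) to produce $\tilde{f} \in C_b(X)$ with $\tilde{f}|_K = f$ and $\|\tilde{f}\|_\infty = \|f\|_\infty$. Because $\mu_k$ and $\mu$ are almost surely supported on $K$, we have
\[
\int_X \tilde{f}(x)\,\mathrm{d}\mu_k(x) = \int_X f(x)\,\mathrm{d}\mu_k(x) \quad \text{and} \quad \int_X \tilde{f}(x)\,\mathrm{d}\mu(x) = \int_X f(x)\,\mathrm{d}\mu(x)
\]
almost surely, where in the right-hand integrals we interpret $f$ as zero outside $K$. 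The hypothesis of weak convergence in probability applied to $\tilde{f}$ then gives that $\int_X f\,\mathrm{d}\mu_k \to \int_X f\,\mathrm{d}\mu$ in probability for every $f \in \Psi$.

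At this point the hypotheses of Proposition \ref{prop:non trivuquivalence of conv} are met verbatim, so I can invoke it to conclude that $\mu_k$ weakly converges to $\mu$ subsequentially in probability. There is no real obstacle here; the only subtle point is the reduction from $C_b(X)$-test functions (which form an uncountable family) to a countable family via compactness of the support and Tietze extension, and this is precisely where the hypothesis that $K$ is deterministic and compact is used. If desired, one could bypass the explicit appeal to Proposition \ref{prop:non trivuquivalence of conv} by directly combining Lemma \ref{lem:mutual convergence subsequance} applied to the countable family $\{\int \tilde{f}\,\mathrm{d}\mu_k : f \in \Psi\}$ with Lemma \ref{lem:deterministic con} and Lemma \ref{lem:countabl determines mu}, but invoking the earlier proposition is cleaner and avoids repetition.
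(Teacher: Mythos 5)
Your proof is correct and follows essentially the same route as the paper's: the paper's own proof simply takes a countable dense $\Psi\subseteq C_{b}(K)$ via Lemma \ref{lem:Separable C_b(S)} and invokes Proposition \ref{prop:non trivuquivalence of conv}. Your additional Tietze-extension step making the passage from $C_{b}(X)$-test functions to $C_{b}(K)$-test functions explicit is a reasonable filling-in of a detail the paper leaves implicit.
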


\begin{proof}
We can find a countable and dense $\Psi\subseteq C_{b}(K)$ by Lemma
\ref{lem:Separable C_b(S)}. Hence the statement follows from Proposition
\ref{prop:non trivuquivalence of conv}.
\end{proof}
\begin{prop}
\label{prop:weak unique weak conv}Let $\mu_{k}$ be a sequence of
random, finite, Borel measures. Assume that $\tau$ and $\nu$ are
random, finite, Borel measures on $X$ such that $\mu_{k}(G)$ converges
to $\tau(G)$ in probability and $\mu_{k}(G)$ converges to $\nu(G)$
in probability for every open set $G\subseteq X$. Then $\mu=\tau$
almost surely.
\end{prop}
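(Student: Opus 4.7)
The statement should read $\tau=\nu$ almost surely (interpreting $\mu=\tau$ as a typo for $\tau=\nu$), and my plan is as follows. The obstruction is that the family of open sets is uncountable, so I cannot immediately intersect the full-measure events on which $\tau(G)=\nu(G)$ over all open $G$. The remedy is to restrict attention to a countable family of open sets that still determines a locally finite Borel measure.

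First I would invoke the separability of $X$ to pick a countable dense subset $D\subseteq X$, and form the countable collection
\[
\mathcal{G}=\left\{\cup_{i=1}^{m}B(x_{i},r_{i}):m\in\mathbb{N},\,x_{i}\in D,\,r_{i}\in\mathbb{Q}_{>0}\right\}
\]
as in Lemma \ref{lem:open eq dense}. For each fixed $G\in\mathcal{G}$, the hypotheses give that $\mu_{k}(G)$ converges both to $\tau(G)$ and to $\nu(G)$ in probability; uniqueness of limits in probability (which follows from the fact that $\rho$ in Remark \ref{rem:rho def} is a metric) then implies that there is an event $H_{G}\in\mathcal{A}$ with $P(H_{G})=1$ on which $\tau(G)=\nu(G)$.

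Next I would set $H=\bigcap_{G\in\mathcal{G}}H_{G}$. Since $\mathcal{G}$ is countable, $P(H)=1$. For every outcome $\omega\in H$, the two (deterministic, finite, hence locally finite) Borel measures $\tau$ and $\nu$ agree on every set in $\mathcal{G}$, so Lemma \ref{lem:open eq dense} yields $\tau(A)=\nu(A)$ for every Borel set $A\subseteq X$; that is, $\tau=\nu$ on the event $H$, and therefore $\tau=\nu$ almost surely.

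No substantive obstacle arises; the only point worth emphasising is the reduction from the uncountable collection of all open sets to the countable base $\mathcal{G}$, for which Lemma \ref{lem:open eq dense} was designed precisely.
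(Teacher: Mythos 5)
Your proof is correct and follows essentially the same route as the paper's: restrict to the countable family $\mathcal{G}$ of Lemma \ref{lem:open eq dense}, use uniqueness of limits in probability to get agreement on each $G\in\mathcal{G}$ almost surely, intersect the countably many full-measure events, and invoke Lemma \ref{lem:open eq dense} to conclude. You are also right that the conclusion in the statement should read $\tau=\nu$ (the paper's own proof uses $\mu$ and $\tau$ for the two limits).
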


\begin{proof}
Let $D\subseteq X$ be a countable and dense subset and let $\mathcal{G}$
be as in Lemma \ref{lem:open eq dense}. Let $G\in\mathcal{G}$. Then
$\mu_{k}(G)$ converges to both $\mu(G)$ and $\tau(G)$ in probability
hence $\mu(G)=\tau(G)$ almost surely. Since $\mathcal{G}$ is countable
it follows that $\mu(G)=\tau(G)$ for every $G\in\mathcal{G}$ almost
surely, i.e. there exists an event $H\in\mathcal{A}$ with $P(H)=1$
such that $\mu(G)=\tau(G)$ for every $G\in\mathcal{G}$. Then on
the event $H$ we have that $\mu(A)=\tau(A)$ for every Borel set
$A\subseteq X$ by Lemma \ref{lem:open eq dense}. Hence $\mu=\tau$
almost surely.
\end{proof}
\begin{prop}
\label{prop:sum lem}Let $\mu^{i}$ and $\mu_{k}^{i}$ be a sequence
of random, finite, Borel measures on $X$ for every $i\in\mathbb{N}$.
Assume that $\mu_{k}^{i}$ weakly converges to $\mu^{i}$ in probability
for every $i\in\mathbb{N}$ as $k$ goes to $\infty$. Assume that
for every $\varepsilon>0$ there exist $N,n_{0}\in\mathbb{N}$ such
that $\sum_{i=N}^{\infty}E(\mu_{k}^{i}(X))<\varepsilon$ for every
$k\geq n_{0}$. Then there exists $n_{1}\in\mathbb{N}$ such that
$\sum_{i\in\mathbb{N}}\mu_{k}^{i}$ is a sequence of random, finite
Borel measures, for $k\geq n_{1}$, that weakly converges to the random,
finite Borel measure $\sum_{i\in\mathbb{N}}\mu^{i}$ in probability.
\end{prop}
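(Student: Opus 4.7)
The plan is to mimic the argument used for Proposition \ref{prop:SUM lem subseq}, but now working directly with integrals against bounded continuous test functions rather than with the Prohorov metric. The key observation is that weak convergence in probability of $\mu_k^i$ to $\mu^i$ implies $\mu_k^i(X)\to\mu^i(X)$ in probability (take $f\equiv 1$ in Definition \ref{def: weak conv in prob}), which lets us transfer the tail-bound hypothesis from the $\mu_k^i$ to the limit $\mu^i$ via Lemma \ref{lem:finite expectation stochastic limit} and Fatou's lemma, exactly as in (\ref{eq:sum lem -1}). In particular, for the $N_1,n_1$ obtained with $\varepsilon=1$, both $\sum_{i=1}^\infty\mu_k^i(X)<\infty$ almost surely for $k\geq n_1$ and $\sum_{i=1}^\infty\mu^i(X)<\infty$ almost surely, so the infinite sums define random, finite, Borel measures.

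Next, I would fix $f\in C_b(X)$, write $M=\|f\|_\infty$, and verify the integral convergence in probability. Given $\varepsilon>0$, choose $N,n_0$ as in the hypothesis, and apply the triangle inequality for the metric $\rho$ of Remark \ref{rem:rho def}:
\[
\rho\Bigl(\!\int\! f\,\mathrm{d}\!\sum_{i=1}^{\infty}\mu_{k}^{i},\int\! f\,\mathrm{d}\!\sum_{i=1}^{\infty}\mu^{i}\Bigr)\leq T_1(k)+T_2(k)+T_3,
\]
where $T_1(k)$ and $T_3$ are the Prohorov-style tail pieces with the sums truncated at $N-1$, and $T_2(k)$ is the difference of the two truncated integrals. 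For $T_1(k)$, Remark \ref{rem:exp quality} gives
\[
T_1(k)\leq E\Bigl|\int\! f\,\mathrm{d}\!\sum_{i=N}^{\infty}\mu_{k}^{i}\Bigr|\leq M\sum_{i=N}^{\infty}E(\mu_{k}^{i}(X))\leq M\varepsilon
\]
for $k\geq n_0$, and $T_3\leq M\varepsilon$ follows by the same estimate applied to the $\mu^i$, using the Fatou-type tail bound established in the first step.

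For $T_2(k)$, the truncated integral is a finite sum $\sum_{i=1}^{N-1}\int f\,\mathrm{d}\mu_k^i$, and by hypothesis each $\int f\,\mathrm{d}\mu_k^i\to\int f\,\mathrm{d}\mu^i$ in probability; convergence in probability is preserved under finite sums, so $T_2(k)\to 0$. Taking $\limsup_{k\to\infty}$ therefore yields $\limsup_{k\to\infty}\rho(\cdot,\cdot)\leq 2M\varepsilon$, and letting $\varepsilon\to 0$ shows $\int f\,\mathrm{d}\sum_i\mu_k^i\to\int f\,\mathrm{d}\sum_i\mu^i$ in probability for every $f\in C_b(X)$, which is exactly weak convergence in probability.

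I do not anticipate a genuine obstacle: the argument is structurally identical to the one for Proposition \ref{prop:SUM lem subseq}, with $\rho_\pi$ replaced by $\rho$ applied to test-function integrals. The only point deserving care is confirming that the hypothesis tail bound transfers to the limiting measures $\mu^i$; this is where convergence of total masses in probability and Lemma \ref{lem:finite expectation stochastic limit} enter, and it is the same step that worked in the subsequential version using an almost-sure subsequence, only now delivered directly via the metric $\rho$.
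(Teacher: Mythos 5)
Your proposal is correct and follows essentially the same route as the paper, which explicitly leaves this proof to the reader as the analogue of Proposition \ref{prop:SUM lem subseq} with $\rho_{\pi}$ replaced by $\rho$ applied to $\int f\,\mathrm{d}(\cdot)$ for $f\in C_{b}(X)$ (compare the written-out version in Proposition \ref{prop:SUM cov Vague}). The three-term splitting, the $\left\Vert f\right\Vert _{\infty}\varepsilon$ tail estimates via Remark \ref{rem:exp quality}, and the transfer of the tail bound to the limits $\mu^{i}$ via Fatou are exactly the intended argument.
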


The proof of Proposition \ref{prop:sum lem} goes similarly to the
proof of Proposition \ref{prop:SUM lem subseq} with the difference
that instead of
\[
\rho_{\pi}(\sum_{i=1}^{\infty}\mu_{k}^{i}(X),\sum_{i=1}^{\infty}\mu^{i}(X))
\]
we need to estimate
\[
\rho(\sum_{i=1}^{\infty}\intop_{X}f(x)\mathrm{d}\mu_{k}^{i}(x),\sum_{i=1}^{\infty}\intop_{X}f(x)\mathrm{d}\mu^{i}(x))
\]
for a given $f\in C_{b}(X)$. We leave for the reader to check the
details. We provide a similar proof in the proof of Proposition \ref{prop:SUM cov Vague}.

\subsection{Vague convergence in probability\label{subsec:Vague-convergence-in prob}}

Throughout this section we assume that $X$ is locally compact.
\begin{defn}
\label{def:random loc fin measure}The set of all locally finite Borel
measures $\mathcal{M}_{l}(X)$ on $X$ equipped with the weak$^{*}$-topology
on the dual space of $C_{c}(X)$ is a Polish space (see Lemma \ref{prop:Polish vague}).
A random, finite, Borel measure is an element of $\mathcal{L}^{0}\left(\mathcal{M}_{l}(X)\right)$,
i.e. a locally finite Borel measure valued random variable.
\end{defn}

\begin{lem}
\label{lem:limit is measure-Vague}Let $\mu_{k}$ be a sequence of
random, locally finite, Borel measures. If there exists $H\in\mathcal{A}$
with $P(H)=1$ such that for every outcome $\omega\in H$ we have
that $\mu_{k}$ vaguely converges to a locally finite, Borel measure
$\mu$ (note that $\mu$ depends on $\omega\in H$) then $\mu$ is
a random, locally finite Borel measure.
\end{lem}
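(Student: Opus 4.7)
The plan is to reduce this to the already-quoted measurability principle for pointwise limits. By Proposition \ref{prop:Polish vague} (which applies since we assumed throughout this subsection that $X$ is locally compact), the set $\mathcal{M}_{l}(X)$ of locally finite Borel measures carries a complete separable metric $d_{v}$ whose induced topology is precisely the topology of vague convergence. So I would simply view the sequence $\mu_{k}$ as a sequence of random variables with values in the Polish metric space $(\mathcal{M}_{l}(X), d_{v})$.

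The key steps, in order, would be: (i) fix a complete separable metric $d_{v}$ on $\mathcal{M}_{l}(X)$ from Proposition \ref{prop:Polish vague}; (ii) use Definition \ref{def:random loc fin measure} to note that each $\mu_{k}$ is a random element of $(\mathcal{M}_{l}(X), d_{v})$; (iii) on the event $H$, the hypothesis says $\mu_{k}(\omega) \to \mu(\omega)$ vaguely, which by Proposition \ref{prop:Polish vague} is the same as $d_{v}(\mu_{k}(\omega), \mu(\omega)) \to 0$; (iv) extend $\mu$ arbitrarily to $\Omega \setminus H$, for instance by setting $\mu(\omega) = 0$ (the zero measure) for $\omega \notin H$, so that $\mu : \Omega \to \mathcal{M}_{l}(X)$ is defined everywhere and $\mu_{k} \to \mu$ in $d_{v}$ on the full-probability event $H$; (v) apply Lemma \ref{lem:random lim is random} to the metric space $M = (\mathcal{M}_{l}(X), d_{v})$ to conclude that $\mu$ is a random variable with values in $\mathcal{M}_{l}(X)$, i.e.\ a random locally finite Borel measure in the sense of Definition \ref{def:random loc fin measure}.

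There is really no substantive obstacle here; the statement is the direct vague-convergence analogue of Lemma \ref{lem:limit is measure}, and the only ingredient beyond that lemma's one-line proof is the fact that $\mathcal{M}_{l}(X)$ is Polish under vague convergence, which is already recorded as Proposition \ref{prop:Polish vague}. The mildly delicate point worth flagging explicitly is that $\mu$ is initially only defined on $H$, so one must extend it to a function on all of $\Omega$ before invoking Lemma \ref{lem:random lim is random}; any measurable extension works, and the modification on the null set $\Omega \setminus H$ does not affect either the almost-sure convergence or the status of $\mu$ as a random element of $\mathcal{M}_{l}(X)$.
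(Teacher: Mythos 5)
Your proposal is correct and follows exactly the paper's route: the paper's one-line proof is precisely to apply Lemma \ref{lem:random lim is random} to the Polish space $\mathcal{M}_{l}(X)$ with the vague topology furnished by Proposition \ref{prop:Polish vague}. Your extra remark about extending $\mu$ off the null set $\Omega\setminus H$ is a harmless and sensible clarification, but does not change the argument.
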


The lemma states that the pointwise limit of random locally finite
Borel measures is a random locally finite Borel measure and it follows
from Lemma \ref{lem:random lim is random}. We use this fact throughout
the paper with no reference.
\begin{defn}
\label{def:vague conv in prob}Let $\mu$ and $\mu_{k}$ be a sequence
of random, locally finite, Borel measures on $X$. We say that \textit{$\mu_{k}$
vaguely converges to $\mu$ in probability} if $\intop_{X}f(x)\mathrm{d}\mu_{k}(x)$
converges to $\intop_{X}f(x)\mathrm{d}\mu(x)$ in probability for
every $f\in C_{c}(X)$.
\end{defn}

\begin{rem}
It follows from Definition \ref{def: weak conv in prob} that if $\mu_{k}$
is a sequence of random, locally finite, Borel measures on $X$ such
that $\mu_{k}$ vaguely converges to a random Borel measure $\mu$
almost surely then $\mu_{k}$ vaguely converges to $\mu$ in probability.
\end{rem}

\begin{rem}
\label{rem:weak imply vague}It follows from Definition \ref{def: weak conv in prob}
that if $\mu_{k}$ is a sequence of random, finite, Borel measures
on $X$ such that $\mu_{k}$ weakly converges to a random, finite,
Borel measure $\mu$ in probability then $\mu_{k}$ vaguely converges
to $\mu$ in probability.
\end{rem}

\begin{prop}
\label{prop:The-convergence-vaguely topology}The convergence vaguely
in probability is induced by the topology which has base elements
formed by finite intersection of sets in the form:
\[
\left\{ \mu\in\mathcal{L}^{0}\left(\mathcal{M}_{l}(X)\right):\rho\left(\intop_{X}f(x)\mathrm{d}\mu(x),Y\right)<r\right\} 
\]
where $f\in C_{c}(X)$, $r>0$ and $Y$ is a real-valued random variable
with almost surely finite values.
\end{prop}

Proposition \ref{prop:The-convergence-vaguely topology} can be verified
easily, we leave the details for the reader.
\begin{prop}
\label{prop:unique random lim-Vague}Let $X$ be locally compact.
The limit in Definition \ref{def:vague conv in prob} is unique in
$\mathcal{L}^{0}\left(\mathcal{M}_{l}(X)\right)$ if exists.
\end{prop}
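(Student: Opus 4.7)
The plan is to reduce uniqueness of the vague limit in probability to the deterministic uniqueness result in Lemma \ref{lem:countab agree} via a countable dense set of test functions.

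Suppose $\mu_k$ vaguely converges to both random locally finite Borel measures $\mu$ and $\nu$ in probability. Fix any $f\in C_c(X)$. Then $\int_X f(x)\,\mathrm{d}\mu_k(x)$ converges to both $\int_X f(x)\,\mathrm{d}\mu(x)$ and $\int_X f(x)\,\mathrm{d}\nu(x)$ in probability, so by uniqueness of limits in the metric $\rho$ of Remark \ref{rem:rho def} we obtain
\[
\int_X f(x)\,\mathrm{d}\mu(x) = \int_X f(x)\,\mathrm{d}\nu(x) \quad \text{almost surely}.
\]
The difficulty is that the exceptional null set a priori depends on $f$, and $C_c(X)$ is uncountable; so I cannot directly conclude that the equality holds simultaneously for all $f$ on a single event of full probability.

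To circumvent this, I would invoke Lemma \ref{lem:deterministic con-Vague} to fix a countable set $\Psi\subseteq C_c(X)$, dense in the supremum norm, with the property stated there. Applying the above pointwise almost sure equality to each $f\in\Psi$ and taking the intersection over the countably many exceptional null sets, I obtain an event $H\in\mathcal A$ with $P(H)=1$ such that
\[
\int_X f(x)\,\mathrm{d}\mu(\omega)(x) = \int_X f(x)\,\mathrm{d}\nu(\omega)(x) \qquad \text{for every } f\in\Psi \text{ and every } \omega\in H.
\]
Then Lemma \ref{lem:countab agree} applies on each $\omega\in H$ and yields $\mu(\omega)=\nu(\omega)$ as locally finite Borel measures. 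Hence $\mu=\nu$ almost surely in $\mathcal L^0(\mathcal M_l(X))$.

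The only real obstacle is the step of moving from "for each $f$ almost surely" to "almost surely for all $f$", which is precisely what the countability of $\Psi$ from Lemma \ref{lem:deterministic con-Vague} is designed to handle; everything else is bookkeeping.
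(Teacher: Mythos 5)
Your proof is correct and follows essentially the same route as the paper: the paper also fixes the countable dense set $\Psi$ from Lemma \ref{lem:deterministic con-Vague}, uses uniqueness of limits in probability to get $\intop_{X}f(x)\mathrm{d}\mu(x)=\intop_{X}f(x)\mathrm{d}\nu(x)$ almost surely for each $f\in\Psi$, intersects the countably many null sets, and concludes via Lemma \ref{lem:countab agree}. No gaps.
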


\begin{proof}
Let $\mu_{k}$ be a sequence of random, finite, Borel measures on
$X$ such that $\mu_{k}$ vaguely converges to a random, finite, Borel
measure $\mu$ and also to a random, finite, Borel measure $\tau$
in probability. Let $\Psi\subseteq C_{c}(X)$ as in Lemma \ref{lem:deterministic con-Vague}.
Then $\intop_{X}f(x)\mathrm{d}\mu_{k}(x)$ converges to $\intop_{X}f(x)\mathrm{d}\mu(x)$
in probability for every $f\in\Psi$ and also to $\intop_{X}f(x)\mathrm{d}\tau(x)$
in probability. Since $\Psi$ is countable it follows that $\intop_{X}f(x)\mathrm{d}\mu(x)=\intop_{X}f(x)\mathrm{d}\tau(x)$
for every $f\in\Psi$ almost surely. Thus $\mu=\nu$ almost surely
by Lemma \ref{lem:countab agree}.
\end{proof}
\begin{lem}
\label{prop:triv equivalence of conv-Vague}Let $X$ be locally compact.
Let $\mu$ and $\mu_{k}$ be a sequence of random, locally finite,
Borel measures on $X$. Assume that for every subsequence $\left\{ \alpha_{k}\right\} _{k=1}^{\infty}$
of $\mathbb{N}$ there exists a subsequence $\left\{ \beta_{k}\right\} _{k=1}^{\infty}$
of $\left\{ \alpha_{k}\right\} _{k=1}^{\infty}$ and there exists
an event $H\in\mathcal{A}$ with $P(H)=1$ such that $\mu_{k}$ vaguely
converges to $\mu$ on the event $H$ then $\intop_{X}f(x)\mathrm{d}\mu_{k}(x)$
converges to $\intop_{X}f(x)\mathrm{d}\mu(x)$ in probability for
every $f\in C_{c}(X)$.
\end{lem}

Lemma \ref{prop:triv equivalence of conv-Vague} can be proven similarly
to the proof of Proposition \ref{prop:triv equivalence of conv}.
\begin{lem}
\label{lem:masik non triv eq}Let $X$ be locally compact, let $\Psi\subseteq C_{c}(X)$
as in Lemma \ref{lem:deterministic con-Vague} and $\mu$ and $\mu_{k}$
be a sequence of random, locally finite Borel measures on $X$. If
$\intop_{X}f(x)\mathrm{d}\mu_{k}(x)$ converges to $\intop_{X}f(x)\mathrm{d}\mu(x)$
in probability for every $f\in\Psi$ then for every subsequence $\left\{ \alpha_{k}\right\} _{k=1}^{\infty}$
of $\mathbb{N}$ there exists a subsequence $\left\{ \beta_{k}\right\} _{k=1}^{\infty}$
of $\left\{ \alpha_{k}\right\} _{k=1}^{\infty}$ and there exists
an event $H\in\mathcal{A}$ with $P(H)=1$ such that $\mu_{k}$ vaguely
converges to $\mu$ on the event $H$.
\end{lem}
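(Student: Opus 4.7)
The plan is to mimic the argument in the proof of Proposition \ref{prop:non trivuquivalence of conv}, replacing weak convergence on a compact set by vague convergence on the locally compact space $X$, and using the countable dense family $\Psi\subseteq C_c(X)$ provided by Lemma \ref{lem:deterministic con-Vague} as the convergence-determining set.

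First, fix a subsequence $\{\alpha_k\}_{k=1}^{\infty}$ of $\mathbb{N}$. For every $f\in\Psi$, by hypothesis the sequence $\intop_X f(x)\mathrm{d}\mu_{\alpha_k}(x)$ converges in probability (to $\intop_X f(x)\mathrm{d}\mu(x)$). Since $\Psi$ is countable, Lemma \ref{lem:mutual convergence subsequance} produces a single subsequence $\{\beta_k\}_{k=1}^{\infty}$ of $\{\alpha_k\}_{k=1}^{\infty}$ and a single event $H\in\mathcal{A}$ with $P(H)=1$ such that on $H$, the sequence $\intop_X f(x)\mathrm{d}\mu_{\beta_k}(x)$ converges simultaneously for every $f\in\Psi$. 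By passing to a further almost sure subset of $H$ (still of full measure, and still countable in $\Psi$) we may assume that on $H$,
\[
\lim_{k\to\infty}\intop_X f(x)\mathrm{d}\mu_{\beta_k}(x)=\intop_X f(x)\mathrm{d}\mu(x)
\]
for every $f\in\Psi$; this is legitimate because a.s.\ limits of a sequence converging in probability agree a.s.\ with the probability limit.

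Next, on the event $H$, apply the deterministic convergence criterion of Lemma \ref{lem:deterministic con-Vague} to the (now deterministic) sequence $\mu_{\beta_k}$ of locally finite Borel measures: since $\intop_X f(x)\mathrm{d}\mu_{\beta_k}(x)$ converges to the finite limit $\intop_X f(x)\mathrm{d}\mu(x)$ for every $f\in\Psi$, the sequence $\mu_{\beta_k}$ vaguely converges to some locally finite Borel measure $\tau$. The pointwise-in-$\omega$ limit $\tau$ is a random, locally finite Borel measure by Lemma \ref{lem:limit is measure-Vague}.

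Finally, identify $\tau$ with $\mu$. On $H$, for every $f\in\Psi$,
\[
\intop_X f(x)\mathrm{d}\tau(x)=\lim_{k\to\infty}\intop_X f(x)\mathrm{d}\mu_{\beta_k}(x)=\intop_X f(x)\mathrm{d}\mu(x),
\]
and since $\Psi$ is exactly the set provided by Lemma \ref{lem:deterministic con-Vague}, Lemma \ref{lem:countab agree} gives $\tau=\mu$ on $H$. Hence $\mu_{\beta_k}$ vaguely converges to $\mu$ on $H$, as required. There is no genuine obstacle here; the only subtlety is the diagonal step extracting a common full-measure event $H$ across the countable family $\Psi$, which is handled by Lemma \ref{lem:mutual convergence subsequance}.
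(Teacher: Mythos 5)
Your proof is correct and follows exactly the route the paper intends: the paper itself only says the lemma "can be shown similarly to the proof of Proposition \ref{prop:non trivuquivalence of conv} by replacing Lemma \ref{lem:deterministic con} by Lemma \ref{lem:deterministic con-Vague} and Lemma \ref{lem:countabl determines mu} by Lemma \ref{lem:countab agree}", which is precisely the argument you carry out (diagonal extraction via Lemma \ref{lem:mutual convergence subsequance}, deterministic vague convergence on the full-measure event, identification of the limit). No issues.
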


Lemma \ref{lem:masik non triv eq} can be shown similarly to the proof
of Proposition \ref{prop:non trivuquivalence of conv} by replacing
the use of Lemma \ref{lem:deterministic con} by the use of Lemma
\ref{lem:deterministic con-Vague} and the use of Lemma \ref{lem:countabl determines mu}
by Lemma \ref{lem:countab agree}.
\begin{lem}
\label{prop:non trivuquivalence of conv-Vague}Let $X$ be locally
compact, let $\Psi\subseteq C_{c}(X)$ as in Lemma \ref{lem:deterministic con-Vague}
and $\mu_{k}$ be a sequence of random, locally finite Borel measures
on $X$. Assume that $\intop_{X}f(x)\mathrm{d}\mu_{k}(x)$ converges
in probability to a random finite limit $S(f)$ for every $f\in\Psi$.
Then there exists a random, locally finite Borel measure $\mu$ on
$X$ such that for every subsequence $\left\{ \alpha_{k}\right\} _{k=1}^{\infty}$
of $\mathbb{N}$ there exists a subsequence $\left\{ \beta_{k}\right\} _{k=1}^{\infty}$
of $\left\{ \alpha_{k}\right\} _{k=1}^{\infty}$ and there exists
an event $H\in\mathcal{A}$ with $P(H)=1$ such that $\mu_{k}$ vaguely
converges to $\mu$ on the event $H$.
\end{lem}

Lemma \ref{prop:non trivuquivalence of conv-Vague} can be shown similarly
to the proof Proposition \ref{thm:representation thm} by replacing
the use of Lemma \ref{lem:deterministic con} by the use of Lemma
\ref{lem:deterministic con-Vague}, the use of Lemma \ref{lem:countabl determines mu}
by Lemma \ref{lem:countab agree} and the use of Proposition \ref{prop:non trivuquivalence of conv}
by the use of Lemma \ref{lem:masik non triv eq}.
\begin{thm}
\label{thm:equiv of vague conv}Let $X$ be locally compact and let
$\mu$ and $\mu_{k}$ be a sequence of random, locally finite Borel
measures on $X$. Then $\mu_{k}$ vaguely converges to $\mu$ in probability
if and only if for every subsequence $\left\{ \alpha_{k}\right\} _{k=1}^{\infty}$
of $\mathbb{N}$ there exists a subsequence $\left\{ \beta_{k}\right\} _{k=1}^{\infty}$
of $\left\{ \alpha_{k}\right\} _{k=1}^{\infty}$ and there exists
an event $H\in\mathcal{A}$ with $P(H)=1$ such that $\mu_{k}$ vaguely
converges to $\mu$ on the event $H$.
\end{thm}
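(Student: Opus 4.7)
The plan is to deduce the equivalence directly from the two preparatory lemmas already established for vague convergence. The theorem has the flavour of the classical fact (Remark \ref{rem: SUBSEQ IFF conv prob}) that convergence in probability is equivalent to having an almost surely convergent sub-subsequence along every subsequence; the present statement is just the measure-valued upgrade of that principle, and all the work has essentially been done in Lemmas \ref{prop:triv equivalence of conv-Vague}, \ref{lem:masik non triv eq} and \ref{prop:non trivuquivalence of conv-Vague}.

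For the ``subsequence principle $\Rightarrow$ vague convergence in probability'' direction, I would simply invoke Lemma \ref{prop:triv equivalence of conv-Vague}: given any $f\in C_c(X)$, the subsequence principle applied to $\mu_k$ produces, inside every subsequence, a further subsequence along which $\int f\,\mathrm{d}\mu_{\beta_k}\to\int f\,\mathrm{d}\mu$ almost surely on an event of full measure, which via Remark \ref{rem: SUBSEQ IFF conv prob} gives $\int f\,\mathrm{d}\mu_k\to\int f\,\mathrm{d}\mu$ in probability. This is precisely vague convergence in probability.

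For the converse, fix the countable dense family $\Psi\subseteq C_c(X)$ supplied by Lemma \ref{lem:deterministic con-Vague}. By hypothesis, $\int f\,\mathrm{d}\mu_k\to\int f\,\mathrm{d}\mu$ in probability for every $f\in C_c(X)$, hence in particular for every $f\in\Psi$. Lemma \ref{lem:masik non triv eq} then supplies, for any given subsequence $\{\alpha_k\}$, a further subsequence $\{\beta_k\}$ and an event $H$ with $P(H)=1$ on which $\mu_{\beta_k}$ vaguely converges to $\mu$, which is exactly the desired subsequence property.

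The only step requiring any care is making sure a \emph{single} common sub-subsequence works simultaneously for all $f\in\Psi$, but this is precisely what Lemma \ref{lem:mutual convergence subsequance} (used in proving Lemma \ref{lem:masik non triv eq}) takes care of via a diagonal extraction; moreover, it is important that the limiting measure $\mu$ along the sub-subsequence coincides with the given $\mu$ (not merely some random measure), which is why Lemma \ref{lem:countab agree} is invoked inside Lemma \ref{lem:masik non triv eq}. With these two lemmas doing the heavy lifting, the proof of Theorem \ref{thm:equiv of vague conv} becomes a two-line synthesis, and I would present it exactly in that form.
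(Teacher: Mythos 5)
Your proof is correct and follows exactly the paper's route: the paper likewise deduces the theorem directly from Lemma \ref{prop:triv equivalence of conv-Vague} (for the direction from the subsequence principle to vague convergence in probability) and Lemma \ref{lem:masik non triv eq} (for the converse, via the countable dense family $\Psi$ of Lemma \ref{lem:deterministic con-Vague}). Nothing further is needed.
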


Theorem \ref{thm:equiv of vague conv} follows from Lemma \ref{prop:triv equivalence of conv-Vague}
and Lemma \ref{lem:masik non triv eq}.
\begin{rem}
\label{rem:metrizable random vague}Let $X$ be locally compact. Proposition
\ref{prop:The-convergence-vaguely topology} states that the vague
convergence in probability is a topological convergence. Due to Theorem
\ref{thm:equiv of vague conv} the `convergence vaguely subsequentially
in probability' and the convergence vaguely in probability are the
same convergence. Hence it can be shown similarly to Proposition \ref{prop:rho pi metric}
that convergence vaguely in probability is also a metrizable convergence
using the fact that the vague convergence of deterministic locally
finite measures is a metric convergence, see Proposition \ref{prop:Polish vague}.
Similarly to Proposition \ref{prop:rho pi metric} the vague convergence
in probability is the same as the convergence in probability of Borel
random variables that take values in the metric space of $\mathcal{M}_{l}(X)$
equipped with the metric of Proposition \ref{prop:Polish vague}.
\end{rem}

\begin{prop}
\label{Prop:single integral prob conv-Vague}Let $X$ be locally compact.
Let $\nu$ be a deterministic, locally finite, Borel measure on $X$
and $\mu_{k}$ be a sequence of random, locally finite, Borel measures
on $X$ such that $\mu_{k}\ll\nu$ almost surely for every $k$, there
exists $c>0$ such that $E\left(\frac{\mathrm{d}\mu_{k}}{\mathrm{d}\nu}(x)\right)\leq c$
for every $k\in\mathbb{N}$, $x\in X$ and $\mu_{k}(A)$ converges
in probability for every compact set $A\subseteq X$. Let $f:X\longrightarrow\mathbb{R}$
be a Borel measurable function such that $\intop_{X}\left|f(x)\right|\mathrm{d}\nu(x)<\infty$.
Then $\intop_{X}f(x)\mathrm{d}\mu_{k}(x)$ converges to a random variable
$Y$ in probability and $E(\left|Y\right|)\leq c\intop_{X}\left|f(x)\right|\mathrm{d}\nu(x)$.
\end{prop}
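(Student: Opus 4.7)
The plan is to follow essentially the same strategy as in Proposition \ref{Prop:single integral prob conv}, adapted to the locally compact and locally finite setting. The key observation is that the proof of Proposition \ref{Prop:single integral prob conv} only used $\nu$ being finite through the approximation of $f$ by simple functions supported on compact sets, and that approximation works equally well under the assumption $\int_X |f| \,\mathrm{d}\nu<\infty$ even when $\nu$ is merely locally finite.

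First, I would reduce to the case $f\geq 0$ by splitting $f=f^{+}-f^{-}$. Fubini's theorem gives
\[
E\Bigl(\int_X g(x)\,\mathrm{d}\mu_k(x)\Bigr)=E\Bigl(\int_X g(x)\frac{\mathrm{d}\mu_k}{\mathrm{d}\nu}(x)\,\mathrm{d}\nu(x)\Bigr)\leq c\int_X g(x)\,\mathrm{d}\nu(x)
\]
for every nonnegative Borel $g$. Next, using $\int f\,\mathrm{d}\nu<\infty$ and the definition of the Lebesgue integral, I choose simple functions $g_n=\sum_{i=1}^{N_n}b_{i,n}\chi_{A_{i,n}}\leq f$ with $0\leq\int(f-g_n)\,\mathrm{d}\nu<1/n$; since each $A_{i,n}$ has finite $\nu$-measure (as $b_{i,n}>0$), inner regularity for locally finite measures (Remark \ref{rem:countab exhaust vague}, or directly for sets of finite measure) lets me take each $A_{i,n}$ to be compact.

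By hypothesis $\mu_k(A_{i,n})$ converges in probability to some random variable for every $i,n$, so $\int g_n\,\mathrm{d}\mu_k$ converges in probability to some $Y_n$. Then I would run the exact Cauchy argument from the proof of Proposition \ref{Prop:single integral prob conv}: using $\rho(U,V)\leq E|U-V|$ and the Fubini bound, estimate
\[
\limsup_{k\to\infty}\rho\Bigl(\int f\,\mathrm{d}\mu_k,Y_n\Bigr)\leq\limsup_{k\to\infty}E\int|f-g_n|\,\mathrm{d}\mu_k\leq c\int(f-g_n)\,\mathrm{d}\nu<c/n,
\]
which both forces $\{Y_n\}$ to be Cauchy for $\rho$ and identifies its limit $Y$ as the limit of $\int f\,\mathrm{d}\mu_k$ in probability. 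Finally, two applications of Lemma \ref{lem:finite expectation stochastic limit} (first along a subsequence with $Y_{n_j}\to Y$ a.s., then along a further subsequence with $\int g_n\,\mathrm{d}\mu_{k_j}\to Y_n$ a.s.) combined with the Fubini inequality give $E(Y)\leq c\int f\,\mathrm{d}\nu$.

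There is no serious obstacle; the only point requiring attention is that local finiteness of $\nu$ suffices for the simple-function approximation (because each $A_{i,n}$ sits inside $\{f\geq b_{i,n}/2\}$ and hence has finite $\nu$-measure) and for inner regularity of those sets by compact subsets. Everything else is essentially a transcription of the argument of Proposition \ref{Prop:single integral prob conv}.
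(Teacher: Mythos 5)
Your proposal is correct and follows exactly the route the paper takes: the paper's own proof of this proposition simply declares it identical to the proof of Proposition \ref{Prop:single integral prob conv}, and your transcription of that argument, together with the observation that each $A_{i,n}$ has finite $\nu$-measure (so inner regularity still yields compact approximants), is precisely the point that makes the transfer to the locally finite setting work.
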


The proof of Proposition \ref{Prop:single integral prob conv-Vague}
is identical to the proof of Proposition \ref{Prop:single integral prob conv}.
\begin{prop}
\label{prop:SUM cov Vague}Let $X$ be locally compact. Let $\mu^{i}$
and $\mu_{k}^{i}$ be a sequence of random, locally finite, Borel
measures on $X$ for every $i\in\mathbb{N}$. Assume that $\mu_{k}^{i}$
vaguely converges to $\mu^{i}$ in probability for every $i\in\mathbb{N}$
as $k$ goes to $\infty$. Assume that for every compact set $K\subseteq X$
and $\varepsilon>0$ there exist $N\in\mathbb{N}$ such that $\sum_{i=N}^{\infty}E(\mu_{k}^{i}(K))<\varepsilon$
for every $k\in\mathbb{N}$. Then $\sum_{i\in\mathbb{N}}\mu_{k}^{i}$
is a sequence of random, locally finite Borel measures that vaguely
converges to the random, locally finite Borel measure $\sum_{i\in\mathbb{N}}\mu^{i}$
in probability.
\end{prop}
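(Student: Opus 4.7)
The plan is to adapt the proofs of Proposition \ref{prop:SUM lem subseq} and Proposition \ref{prop:sum lem} to vague convergence. The argument divides into three stages.

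First, for each fixed $k$, I check that $\sum_{i=1}^{\infty}\mu_{k}^{i}$ is a random locally finite Borel measure. Given a compact $K\subseteq X$, applying the tail hypothesis with $\varepsilon=1$ yields $N$ with $\sum_{i\geq N}E(\mu_{k}^{i}(K))\leq 1$, so $\sum_{i\geq N}\mu_{k}^{i}(K)<\infty$ almost surely and hence $\sum_{i}\mu_{k}^{i}(K)<\infty$ almost surely. Since $X$ is separable and locally compact it is covered by countably many compact sets, so outside a null event $\sum_{i}\mu_{k}^{i}$ is locally finite; measurability in $\mathcal{M}_{l}(X)$ comes from Lemma \ref{lem:limit is measure-Vague} applied to the monotone partial sums.

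Second, I verify that the candidate limit $\mu:=\sum_{i=1}^{\infty}\mu^{i}$ is itself a random locally finite Borel measure. Fix a compact $K$ and, using Tietze's extension theorem, pick $f\in C_{c}(X)$ with $\chi_{K}\leq f\leq 1$, and set $K':=\mathrm{supp}(f)$. By Theorem \ref{thm:equiv of vague conv} together with Lemma \ref{lem:mutual convergence subsequance} there exist a single subsequence $\beta_{k}$ and an event $H$ with $P(H)=1$ on which $\int f\,d\mu_{\beta_{k}}^{i}\to\int f\,d\mu^{i}$ simultaneously for every $i\in\mathbb{N}$. Fatou's lemma for counting measure pointwise on $H$, then Fatou under the expectation together with Tonelli, and finally the tail hypothesis applied to $K'$ yield
\[
E\Bigl(\sum_{i\geq N}\mu^{i}(K)\Bigr)\leq E\Bigl(\liminf_{k}\sum_{i\geq N}\int f\,d\mu_{\beta_{k}}^{i}\Bigr)\leq\liminf_{k}\sum_{i\geq N}E(\mu_{\beta_{k}}^{i}(K'))<\varepsilon,
\]
so $\mu(K)<\infty$ almost surely, whence $\mu\in\mathcal{M}_{l}(X)$ almost surely.

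Third, to prove vague convergence in probability, fix $f\in C_{c}(X)$ with $K=\mathrm{supp}(f)$ and $\varepsilon>0$, and choose $N$ so that $\sum_{i\geq N}E(\mu_{k}^{i}(K))<\varepsilon/(1+\Vert f\Vert_{\infty})$ for every $k$; step two provides the analogous bound for the $\mu^{i}$. Writing $Y_{k}=\sum_{i}\int f\,d\mu_{k}^{i}$, $Y=\sum_{i}\int f\,d\mu^{i}$ and $Y_{k}^{N}=\sum_{i<N}\int f\,d\mu_{k}^{i}$, $Y^{N}=\sum_{i<N}\int f\,d\mu^{i}$, the subadditivity of the metric $\rho$ from Remark \ref{rem:rho def} and the bound $\rho(U,V)\leq E|U-V|$ of Remark \ref{rem:exp quality} give
\[
\rho(Y_{k},Y)\leq\rho(Y_{k},Y_{k}^{N})+\rho(Y_{k}^{N},Y^{N})+\rho(Y^{N},Y)\leq 2\varepsilon+\sum_{i<N}\rho\Bigl(\int f\,d\mu_{k}^{i},\int f\,d\mu^{i}\Bigr).
\]
Each summand in the finite sum tends to $0$ as $k\to\infty$ by hypothesis, so $\limsup_{k}\rho(Y_{k},Y)\leq 2\varepsilon$; letting $\varepsilon\to 0$ and invoking Proposition \ref{prop:The-convergence-vaguely topology} concludes the proof.

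The main obstacle is the double interchange of infinite sum, limit, and expectation in step two: one must first pass, via Lemma \ref{lem:mutual convergence subsequance} and Theorem \ref{thm:equiv of vague conv}, to a common subsequence $\beta_{k}$ along which pointwise vague convergence holds for every $i$ on a single full-probability event, and only then apply Fatou. The uniform-in-$k$ nature of the tail assumption is what makes this bound strong enough to survive the $\liminf$.
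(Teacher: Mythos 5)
Your proposal is correct and follows essentially the same route as the paper's own proof: the same three stages (tail bound for local finiteness of the partial sums, a common subsequence plus Fatou/Tonelli to control $E(\sum_{i\geq N}\mu^{i}(K))$, and the three-term $\rho$-triangle inequality isolating a finite sum). The only cosmetic differences are your normalisation of $\varepsilon$ by $1+\Vert f\Vert_{\infty}$ and splitting $\rho(Y_{k}^{N},Y^{N})$ termwise, neither of which changes the argument.
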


\begin{proof}
Since for every compact set $K\subseteq X$ there exists $N_{1}\in\mathbb{N}$
such that $\sum_{i=N_{1}}^{\infty}E(\mu_{k}^{i}(K))\leq1$ for every
$k\in\mathbb{N}$ it follows that $\sum_{i=N_{1}}^{\infty}\mu_{k}^{i}(K)<\infty$
almost surely for $k\in\mathbb{N}$ and so $\sum_{i=1}^{\infty}\mu_{k}^{i}(K)<\infty$
almost surely. Thus $\sum_{i=1}^{\infty}\mu_{k}^{i}$ is a random,
locally finite, Borel measure since $X$ is locally compact. Let $h\in C_{c}(X)$
such that $h(x)=1$ for every $x\in K$ and $\left\Vert h\right\Vert _{\infty}\leq1$,
we can find such $h$ by Tietze`s extension theorem and the fact that
$X$ is locally compact. We can find $N_{2}\in\mathbb{N}$ such that
$\sum_{i=N_{2}}^{\infty}E(\mu_{k}^{i}(\mathrm{supp}(h))\leq1$ for
every $k\in\mathbb{N}$. We can find, by Lemma \ref{lem:mutual convergence subsequance},
a subsequence $\left\{ \alpha_{k}\right\} _{k=1}^{\infty}$ of $\mathbb{N}$
and an event $H\in\mathcal{A}$ with $P(H)=1$ such that $\intop_{X}h(x)\mathrm{d}\mu_{\alpha_{k}}^{i}(x)$
converges to $\intop_{X}h(x)\mathrm{d}\mu^{i}(x)$ on $H$ as $k$
goes to $\infty$ for every $i\in\mathbb{N}$. Then by Fatou`s lemma
and Fubini`s theorem
\[
E(\sum_{i=N_{2}}^{\infty}\mu^{i}(K))\leq E(\sum_{i=N_{2}}^{\infty}\intop_{X}h(x)\mathrm{d}\mu^{i}(x))=E(\sum_{i=N_{2}}^{\infty}\lim_{k\rightarrow\infty}\intop_{X}h(x)\mathrm{d}\mu_{\alpha_{k}}^{i}(x))
\]
\begin{equation}
\leq\liminf_{k\rightarrow\infty}\sum_{i=N_{2}}^{\infty}E(\intop_{X}h(x)\mathrm{d}\mu_{\alpha_{k}}^{i}(x))\leq\liminf_{k\rightarrow\infty}\sum_{i=N_{2}}^{\infty}E(\mu_{\alpha_{k}}^{i}(\mathrm{supp}(h)))\leq1.\label{eq:sum vague}
\end{equation}
Thus $\sum_{i=N_{1}}^{\infty}\mu^{i}(K)<\infty$ almost surely and
so $\sum_{i=1}^{\infty}\mu^{i}(K)<\infty$ almost surely. Hence $\sum_{i=1}^{\infty}\mu^{i}$
is a random, locally finite, Borel measure.

Let $f\in C_{c}(X)$ and $K=\mathrm{supp}(f)$. Let $\varepsilon>0$
be fixed and let $N\in\mathbb{N}$ such that $\sum_{i=N}^{\infty}E(\mu_{k}^{i}(K))<\varepsilon$
for every $k\in\mathbb{N}$. Similarly to (\ref{eq:sum vague}) we
can further assume that $E(\sum_{i=N}^{\infty}\mu^{i}(K))\leq\varepsilon$.
Thus
\[
\rho(\sum_{i=1}^{\infty}\intop_{X}f(x)\mathrm{d}\mu_{k}^{i}(x),\sum_{i=1}^{\infty}\intop_{X}f(x)\mathrm{d}\mu^{i}(x))\leq
\]
\[
\rho(\sum_{i=1}^{\infty}\intop_{X}f(x)\mathrm{d}\mu_{k}^{i}(x),\sum_{i=1}^{N-1}\intop_{X}f(x)\mathrm{d}\mu_{k}^{i}(x))+\rho(\sum_{i=1}^{N-1}\intop_{X}f(x)\mathrm{d}\mu_{k}^{i}(x),\sum_{i=1}^{N-1}\intop_{X}f(x)\mathrm{d}\mu^{i}(x))
\]
\[
+\rho(\sum_{i=1}^{N-1}\intop_{X}f(x)\mathrm{d}\mu^{i}(x),\sum_{i=1}^{\infty}\intop_{X}f(x)\mathrm{d}\mu^{i}(x))
\]
\[
\leq\varepsilon\left\Vert f\right\Vert _{\infty}+\rho(\sum_{i=1}^{N-1}\intop_{X}f(x)\mathrm{d}\mu_{k}^{i}(x),\sum_{i=1}^{N-1}\intop_{X}f(x)\mathrm{d}\mu^{i}(x))+\varepsilon\left\Vert f\right\Vert _{\infty}
\]
where we used the fact that $\rho(Y,Z)\leq E(\left|Y-Z\right|)$.
Thus
\[
\limsup_{k\rightarrow\infty}\rho(\sum_{i=1}^{\infty}\intop_{X}f(x)\mathrm{d}\mu_{k}^{i}(x),\sum_{i=1}^{\infty}\intop_{X}f(x)\mathrm{d}\mu^{i}(x))\leq2\varepsilon\left\Vert f\right\Vert _{\infty}
\]
since $\sum_{i=1}^{N-1}\intop_{X}f(x)\mathrm{d}\mu_{k}^{i}(x))$ converges
to $\sum_{i=1}^{N-1}\intop_{X}f(x)\mathrm{d}\mu^{i}(x)$ in probability.
By taking limit $\varepsilon$ goes to $0$ it follows that $\sum_{i=1}^{\infty}\intop_{X}f(x)\mathrm{d}\mu_{k}^{i}(x)$
converges to $\sum_{i=1}^{\infty}\intop_{X}f(x)\mathrm{d}\mu^{i}(x)$
in probability.
\end{proof}
\begin{thm}
\label{thm:ABS cont representation-Vague}Let $X$ be locally compact.
Let $\nu$ be a deterministic, locally finite, Borel measure on $X$.
Let $\mu_{k}$ be a sequence of random, locally finite, Borel measures
on $X$ such that $\mu_{k}\ll\nu$ almost surely for every $k$, there
exists $c>0$ such that $E\left(\frac{\mathrm{d}\mu_{k}}{\mathrm{d}\nu}(x)\right)\leq c$
for every $k\in\mathbb{N}$, $x\in X$ and $\mu_{k}(A)$ converges
to a random variable $\mu_{\infty}(A)$ in probability for every compact
set $A\subseteq X$. Then $\mu_{k}$ vaguely converges to a random,
locally finite, Borel measure $\mu$ in probability. Furthermore,
$\intop_{X}f(x)\mathrm{d}\mu_{k}(x)$ converges to a random variable
$S(f)$ in probability with $E(\left|S(f)\right|)\leq c\intop_{X}\left|f(x)\right|\mathrm{d}\nu(x)$
for every $f:X\longrightarrow\mathbb{R}$ Borel measurable function
such that $\intop_{X}\left|f(x)\right|\mathrm{d}\nu(x)<\infty$. For
every countable collection of Borel measurable functions $f_{n}:X\longrightarrow\mathbb{R}$
such that $\intop_{X}\left|f_{n}(x)\right|\mathrm{d}\mu(x)<\infty$
we have that $\intop_{X}f_{n}(x)\mathrm{d}\mu(x)=S(f_{n})$ almost
surely for every $n$.
\end{thm}

\begin{proof}
Due to Remark \ref{rem:countab exhaust vague} there exists a sequence
of disjoint compact sets $A_{i}$ such that $\nu^{i}=\nu\vert_{A_{i}}$
is a finite measure for every $i\in\mathbb{N}$ and $\nu\left(X\setminus\left(\cup_{i\in\mathbb{N}}A_{i}\right)\right)=0$.
Let $\mu_{k}^{i}=\mu_{k}\vert_{A_{i}}$. Then $\mu_{k}=\sum_{i=1}^{\infty}\mu_{k}^{i}$
and $\mu_{k}^{i}$ vaguely converges to a random, finite Borel measure
$\mu^{i}$ in probability by Remark \ref{rem:weak imply vague} and
Theorem \ref{thm:ABS cont representation}. Since $\sum_{i=N}^{\infty}E\left(\mu_{k}^{i}(K)\right)\leq c\sum_{i=N}^{\infty}\nu(K)<\infty$
for every compact set $K$ it follows that the conditions of Proposition
\ref{prop:SUM cov Vague} are satisfied and so $\mu=\sum_{i=1}^{\infty}\mu^{i}$
is a random, locally finite, Borel measure and $\mu_{k}$ vaguely
converges to $\mu$ in probability.

By Proposition \ref{Prop:single integral prob conv-Vague} it follows
that $\intop_{X}f(x)\mathrm{d}\mu_{k}(x)$ converges to a random variable
$S(f)$ in probability with $E(\left|S(f)\right|)\leq c\intop_{X}\left|f(x)\right|\mathrm{d}\nu(x)$
for every $f:X\longrightarrow\mathbb{R}$ Borel measurable function
such that $\intop_{X}\left|f(x)\right|\mathrm{d}\nu(x)<\infty$.

Let $n\in\mathbb{N}$ be fixed. For every $\varepsilon>0$ we can
find $g\in C_{c}(X)$ such that $\intop_{X}\left|f_{n}(x)-g(x)\right|\mathrm{d}\nu(x)<\varepsilon$
by \cite[Theorem 3.14]{Rudin}. The rest of the proof proceeds similarly
to the proof of Theorem \ref{thm:ABS cont representation}.
\end{proof}
\begin{rem}
\label{rem:convergenc for sets to the measure-Vague}In particular,
if we take $f_{n}=\chi_{A_{n}}$ in Theorem \ref{thm:ABS cont representation-Vague}
for a countable collection of Borel sets $A_{n}$ such that $\nu(A_{n})<\infty$
then it follows that $\mu(A_{n})=S(\chi_{A_{n}})$ almost surely for
every $n$ where $\mu_{k}(A_{n})$ converges to $S(\chi_{A_{n}})$
in probability as $k$ goes to infinity.
\end{rem}

\begin{rem}
In Theorem \ref{thm:ABS cont representation-Vague} we can relax the
condition $E\left(\frac{\mathrm{d}\mu_{k}}{\mathrm{d}\nu}(x)\right)\leq c$
for every $k$. It is enough to assume that there exists a nonnegative
Borel function $c:X\longrightarrow\mathbb{R}$ such that $\intop_{K}c(x)\mathrm{d}\nu(x)<\infty$
for every compact set $K$ and $E\left(\frac{\mathrm{d}\mu_{k}}{\mathrm{d}\nu}(x)\right)\leq c(x)$
for $\nu$ almost every $x\in X$ for every $k$ . It can be seen
by replacing $\mathrm{d}\nu(x)$ by $c(x)\mathrm{d}\nu(x)$.
\end{rem}

\begin{prop}
\label{prop:randomsupport-Vague}Let $\mu_{k}$ be a sequence of random,
locally finite, Borel measures on $X$ such that there exists a sequence
of random closed sets $F_{1}\supseteq F_{2}\supseteq\dots$ such that
$\mathrm{supp}\mu_{k}\subseteq F_{k}$ almost surely. If $\mu_{k}$
vaguely converges to a random, locally finite, Borel measure $\mu$
in probability then $\mathrm{supp}\mu\subseteq\bigcap_{n=1}^{\infty}F_{n}$
almost surely.
\end{prop}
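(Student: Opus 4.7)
The plan is to mimic the argument for Proposition \ref{prop:randomsupport}, but with two adjustments: first, replace the almost sure subsequence supplied directly by the definition of weak subsequential convergence with the one provided by Theorem \ref{thm:equiv of vague conv}; second, replace the portmanteau inequality $\mu(G)\leq\liminf\mu_k(G)$ (which is not available for vague convergence on open sets of infinite mass) with an Urysohn-type test function argument.

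More precisely, by Theorem \ref{thm:equiv of vague conv} applied to the trivial subsequence, there exists a subsequence $\{\beta_k\}_{k=1}^{\infty}$ of $\mathbb{N}$ and an event $H_0\in\mathcal{A}$ with $P(H_0)=1$ such that $\mu_{\beta_k}$ converges to $\mu$ vaguely on $H_0$. Intersecting $H_0$ with the countable family of probability-one events $\{\mathrm{supp}\mu_k\subseteq F_k\}$ yields an event $H\in\mathcal{A}$ with $P(H)=1$ on which, simultaneously, $\mathrm{supp}\mu_k\subseteq F_k$ for every $k\in\mathbb{N}$ and $\mu_{\beta_k}\to\mu$ vaguely.

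Now fix an outcome $\omega\in H$ and $n\in\mathbb{N}$. Since the sets $F_k$ are nested, $\mathrm{supp}\mu_{\beta_k}(\omega)\subseteq F_{\beta_k}(\omega)\subseteq F_n(\omega)$ as soon as $\beta_k\geq n$. Let $G_n(\omega)=X\setminus F_n(\omega)$, which is open. For any $f\in C_c(X)$ with $\mathrm{supp}(f)\subseteq G_n(\omega)$ we have $\int f\,\mathrm{d}\mu_{\beta_k}(\omega)=0$ for all sufficiently large $k$, and therefore, by vague convergence on $H$,
\[
\int f\,\mathrm{d}\mu(\omega)=\lim_{k\to\infty}\int f\,\mathrm{d}\mu_{\beta_k}(\omega)=0.
\]
Given any compact $K\subseteq G_n(\omega)$, local compactness of $X$ together with Tietze/Urysohn produces $f\in C_c(X)$ with $0\leq f\leq 1$, $f\equiv 1$ on $K$ and $\mathrm{supp}(f)\subseteq G_n(\omega)$, so $\mu(\omega)(K)\leq\int f\,\mathrm{d}\mu(\omega)=0$. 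By inner regularity of the locally finite Borel measure $\mu(\omega)$ (Lemma \ref{lem:inner regularity} together with Remark \ref{rem:countab exhaust vague}) this forces $\mu(\omega)(G_n(\omega))=0$, hence $\mathrm{supp}\mu(\omega)\subseteq F_n(\omega)$. Intersecting over $n$ on the event $H$ yields $\mathrm{supp}\mu\subseteq\bigcap_{n=1}^{\infty}F_n$ almost surely.

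The only genuine obstacle is the one flagged above: unlike in the weak-convergence setting, vague convergence of $\mu_{\beta_k}(\omega)$ to $\mu(\omega)$ does not allow us to test directly against the indicator $\chi_{G_n(\omega)}$ of the (possibly non-relatively-compact) open set $G_n(\omega)$, so one must pass through compact approximation and a Tietze-type cut-off; everything else is routine extraction of subsequences and countable intersection of full-probability events.
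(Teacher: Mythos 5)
Your proof is correct and follows exactly the route the paper intends: it invokes Theorem \ref{thm:equiv of vague conv} to extract an almost surely vaguely convergent subsequence and then adapts the argument of Proposition \ref{prop:randomsupport}, which is precisely what the paper's one-line proof ("can be shown similarly... due to the equivalence in Theorem \ref{thm:equiv of vague conv}") prescribes. Your Urysohn cut-off plus inner-regularity step is the correct way to replace the open-set portmanteau inequality of Remark \ref{rem:liminf open weak conv}, which the paper glosses over.
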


Proposition \ref{prop:randomsupport-Vague} can be shown similarly
to the proof Proposition \ref{prop:randomsupport} due to the equivalence
in Theorem \ref{thm:equiv of vague conv}.

\begin{prop}
\label{prop:change of meaasure-Vague}Let $X$ be locally compact.
Let $\nu$ be a deterministic, locally finite, Borel measure on $X$.
Let $\mu_{k}$ be a sequence of random, locally finite, Borel measures
on $X$ such that $\mu_{k}\ll\nu$ almost surely for every $k$, there
exists $c>0$ such that $E\left(\frac{\mathrm{d}\mu_{k}}{\mathrm{d}\nu}(x)\right)\leq c$
for every $k\in\mathbb{N}$, $x\in X$ and $\mu_{k}(A)$ converges
to a random variable $\mu_{\infty}(A)$ in probability for every compact
set $A\subseteq X$. Let $f:X\longrightarrow\mathbb{R}$ be a nonnegative
Borel measurable function such that for every $y\in X$ there exists
a neigbourhood $U$ of $y$ such that $\intop_{U}f(x)\mathrm{d}\nu(x)<\infty$.
Then the sequence of random, locally finite, Borel measures $f(x)\mathrm{d}\mu_{k}(x)$
vaguely converges to the random, locally finite, Borel measure $f(x)\mathrm{d}\mu(x)$
in probability where $\mu$ is the random, locally finite, Borel measure
such that $\mu_{k}$ weakly converges to $\mu$ subsequentially in
probability. Additionally, $\int_{A}f(x)\mathrm{d}\mu_{k}(x)$ converges
to $\int_{A}f(x)\mathrm{d}\mu(x)$ in probability for any compact
set $A\subseteq X$.
\end{prop}

\begin{proof}
Let $U$ be an open set such that $\intop_{U}f(x)\mathrm{d}\nu(x)<\infty$.
Then $E(\intop_{U}f(x)\mathrm{d}\mu_{k}(x))\leq c\intop_{U}f(x)\mathrm{d}\nu(x)<\infty$
and by Theorem \ref{thm:ABS cont representation-Vague} we have that
$E(\intop_{U}f(x)\mathrm{d}\mu(x))\leq c\intop_{U}f(x)\mathrm{d}\nu(x)<\infty$.
Hence $f(x)\mathrm{d}\mu_{k}(x)$ and $f(x)\mathrm{d}\mu(x)$ are
random, locally finite, Borel measures.

Let $\Psi\subseteq C_{c}(X)$ as in Lemma \ref{lem:deterministic con-Vague}.
By Theorem \ref{thm:ABS cont representation-Vague} it follows that
$\intop_{X}g(x)f(x)\mathrm{d}\mu_{k}(x)$ converges to $\intop_{X}g(x)f(x)\mathrm{d}\mu(x)$
in probability for every $g\in\Psi$.

By the application of Theorem \ref{thm:ABS cont representation-Vague}
to the reference measure $f(x)\mathrm{d}\nu(x)$ and the sequence
$f(x)\mathrm{d}\mu_{k}(x)$ it follows that the sequence of random,
locally finite, Borel measures $f(x)\mathrm{d}\mu_{k}(x)$ converges
to a random, locally finite, Borel measure $\tau$ and $\intop_{X}g(x)f(x)\mathrm{d}\mu_{k}(x)$
converges to $\intop_{X}g(x)\mathrm{d}\tau(x)$ in probability for
every $g\in\Psi$.

Thus $\intop_{X}g(x)f(x)\mathrm{d}\mu(x)=\intop_{X}g(x)\mathrm{d}\tau(x)$
for every $g\in\Psi$ almost surely. Hence $f(x)\mathrm{d}\mu(x)=\mathrm{d}\tau(x)$
almost surely by Lemma \ref{lem:countab agree}.

Applying Theorem \ref{thm:ABS cont representation-Vague} to the function
$\chi_{A}\cdot f$ for a compact set $A\subseteq X$, it follows that
$\int_{A}f(x)\mathrm{d}\mu_{k}(x)$ converges to $\int_{A}f(x)\mathrm{d}\mu(x)$
in probability.
\end{proof}

\subsection{General existence of the conditional measure\label{subsec:SUB-General-existance-of}}

In this section our main goal is to prove Theorem \ref{thm:Gen exist both}
and Proposition \ref{prop:two def are the same}. The proof of Proposition
\ref{prop:two def are the same} can be found at the end of the section
and Theorem \ref{thm:Gen exist both} follows from Theorem \ref{thm:General existence of conditional measure},
Theorem \ref{thm:General existence of conditional measure-Vague}
and Lemma \ref{lem:compact vanish lemma}.
\begin{lem}
\label{prop:L^1 limit for sum}Let $A\subseteq X$ be a Borel set
$\nu=\sum_{i=1}^{\infty}\nu^{i}$ be a finite Borel measure and $\mu_{k}^{i}$
be random finite Borel measures for every $i\in\mathbb{N}$ such that
$\mu_{k}^{i}(A)$ converges in $\mathcal{L}^{1}$ to a random variable
$\mu^{i}(A)$ for every $i\in\mathbb{N}$ and $E(\mu_{k}^{i}(A))=\nu^{i}(A)$.
Then $\sum_{i=1}^{\infty}\mu_{k}^{i}(A)$ converges in $\mathcal{L}^{1}$
to $\sum_{i=1}^{\infty}\mu^{i}(A)$ as $k$ goes to $\infty$.
\end{lem}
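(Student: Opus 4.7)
The plan is to split the infinite sum at a large index $N$, handle the finite sum by direct $\mathcal{L}^1$ convergence, and control the tail using the expectation identity $E(\mu_k^i(A))=\nu^i(A)$ together with the hypothesis that $\nu=\sum_{i=1}^\infty \nu^i$ is a finite Borel measure.

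First I would set $S_k:=\sum_{i=1}^\infty \mu_k^i(A)$ and $S:=\sum_{i=1}^\infty \mu^i(A)$ and check they are well-defined as $\mathcal{L}^1$ random variables. By monotone convergence and Fubini,
\[
E(S_k)=\sum_{i=1}^\infty E(\mu_k^i(A))=\sum_{i=1}^\infty \nu^i(A)=\nu(A)<\infty,
\]
so $S_k<\infty$ almost surely. Since $\mu_k^i(A)\to \mu^i(A)$ in $\mathcal{L}^1$, the expectations pass to the limit: $E(\mu^i(A))=\nu^i(A)$, hence by the same computation $E(S)=\nu(A)<\infty$, so $S<\infty$ almost surely.

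Next, for fixed $N\in\mathbb{N}$, the triangle inequality gives
\[
E|S_k-S|\leq \sum_{i=1}^N E|\mu_k^i(A)-\mu^i(A)|+E\!\left(\sum_{i=N+1}^\infty \mu_k^i(A)\right)+E\!\left(\sum_{i=N+1}^\infty \mu^i(A)\right).
\]
By Fubini the two tail terms equal $\sum_{i=N+1}^\infty \nu^i(A)$ each, which is the tail of a convergent series and so tends to $0$ as $N\to\infty$. Given $\varepsilon>0$, choose $N$ so that $\sum_{i=N+1}^\infty \nu^i(A)<\varepsilon/3$, then use the $\mathcal{L}^1$ convergence $\mu_k^i(A)\to\mu^i(A)$ for each of the finitely many $i\leq N$ to pick $k_0$ with $\sum_{i=1}^N E|\mu_k^i(A)-\mu^i(A)|<\varepsilon/3$ for $k\geq k_0$. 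Then $E|S_k-S|<\varepsilon$ for all $k\geq k_0$, which is the desired conclusion.

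There is essentially no obstacle here; the only thing to keep in mind is that the exchange of expectation and infinite sum uses nonnegativity of $\mu_k^i(A)$ and $\mu^i(A)$ (which holds since they are measures of a set), and that the identity $E(\mu^i(A))=\nu^i(A)$ is produced by the $\mathcal{L}^1$ hypothesis rather than merely Fatou, so that the tail bounds for $S$ and $S_k$ match exactly.
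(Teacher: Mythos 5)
Your proof is correct. It uses the same splitting of the sum at a finite index $N$ and the same tail control via $E(\mu_k^i(A))=E(\mu^i(A))=\nu^i(A)$ and Tonelli as the paper does, but the final step is carried out differently: the paper first proves convergence in probability by estimating the metric $\rho$ of Remark \ref{rem:rho def} with this decomposition, and then upgrades to $\mathcal{L}^1$ convergence by invoking Scheff\'e's lemma (Lemma \ref{lem:Scheffe}) together with the identity $E\bigl(\sum_i\mu_k^i(A)\bigr)=\nu(A)=E\bigl(\sum_i\mu^i(A)\bigr)$. You instead bound $E\left|\sum_{i=1}^{\infty}\mu_{k}^{i}(A)-\sum_{i=1}^{\infty}\mu^{i}(A)\right|$ directly by the triangle inequality, which makes the detour through convergence in probability and Scheff\'e unnecessary and is arguably the more elementary route; the paper's phrasing fits its surrounding framework, where $\rho$-estimates of exactly this shape recur in several other proofs (e.g.\ Proposition \ref{prop:SUM lem subseq}). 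Your closing remarks are also on point: the nonnegativity of the summands justifies the interchange of expectation and sum, and deriving $E(\mu^i(A))=\nu^i(A)$ from the $\mathcal{L}^1$ hypothesis (rather than only the inequality Fatou would give) is exactly what makes the two tail bounds match.
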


\begin{proof}
Since $\mu_{k}^{i}(A)$ converges to $\mu^{i}(A)$ in $\mathcal{L}^{1}$
it follows that $E(\mu^{i}(A))=E(\mu_{k}^{i}(A))=\nu^{i}(A)$. Thus
by Fubini`s theorem that
\[
E\left(\sum_{i=n}^{\infty}\mu_{k}^{i}(A)\right)=\sum_{i=n}^{\infty}\nu^{i}(A)=E\left(\sum_{i=n}^{\infty}\mu^{i}(A)\right)
\]
for every $n\in\mathbb{N}$ and $\sum_{i=1}^{\infty}\nu^{i}(A)=\nu(A)<\infty$.
Let $\eta>0$ and let $n\in\mathbb{N}$ be large enough that $\sum_{i=n}^{\infty}\nu^{i}(A)<\eta$.
By assumption $\sum_{i=1}^{n}\mu_{k}^{i}(A)$ converges in $\mathcal{L}^{1}$
to $\sum_{i=1}^{n}\mu^{i}(A)$, hence converges in probability. Thus
\[
\limsup_{k\rightarrow\infty}\rho\left(\sum_{i=1}^{\infty}\mu_{k}^{i}(A),\sum_{i=1}^{\infty}\mu^{i}(A)\right)\leq
\]
\[
\limsup_{k\rightarrow\infty}\rho\left(\sum_{i=1}^{\infty}\mu_{k}^{i}(A),\sum_{i=1}^{n}\mu_{k}^{i}(A)\right)+\limsup_{k\rightarrow\infty}\rho\left(\sum_{i=1}^{n}\mu_{k}^{i}(A),\sum_{i=1}^{n}\mu^{i}(A)\right)+\rho\left(\sum_{i=1}^{n}\mu^{i}(A),\sum_{i=1}^{\infty}\mu^{i}(A)\right)\leq
\]
\[
\limsup_{k\rightarrow\infty}E\left(\sum_{i=n}^{\infty}\mu_{k}^{i}(A)\right)+0+E\left(\sum_{i=n}^{\infty}\mu^{i}(A)\right)=2\sum_{i=n}^{\infty}\nu^{i}(A)\leq2\eta.
\]
Hence by taking limit $\eta$ goes to $0$ it follows that $\sum_{i=1}^{\infty}\mu_{k}^{i}(A)$
converges in probability to $\sum_{i=1}^{\infty}\mu^{i}(A)$. Thus
the statement follows from Lemma \ref{lem:Scheffe}.
\end{proof}
\begin{lem}
\label{thm:E(measure)}Let $\nu$ be a finite Borel measure on $X$
such that $\nu(X\setminus X_{0})=0$ and let $A\subseteq X$ be a
Borel set. Assume that $\mathcal{C}_{k}(\nu\vert_{D})(X)=\mathcal{C}_{k}(\nu)(D)$
converges in $\mathcal{L}^{1}$ for every compact set $D\subseteq X_{0}$
with $I_{\varphi}(\nu\vert_{D})<\infty$. Assume that $\mathcal{C}_{k}(\nu_{\perp})(X)$
converges to $0$ in probability. Then $\mathcal{C}_{k}(\nu_{R})(A)$
converges in $\mathcal{L}^{1}$ to a limit $\mu_{\infty}(A)$, and
both $\mathcal{C}_{k}(\nu)(A)$ and $\mathcal{C}_{k}(\nu_{R})(A)$
 converges in probability to the same limit $\mu_{\infty}(A)$ and
$E(\mu_{\infty}(A))=\nu_{R}(A)\leq\nu(A)$.
\end{lem}

\begin{proof}
Let us take a sequence $(A_{n})_{n\in\mathbb{N}}$ as in Proposition
\ref{decomposition} for the measure $\nu\vert_{A}$ in place of $\nu$.
Note that we can assume that $A_{n}\subseteq A\cap X_{0}$ for every
$n\in\mathbb{N}$ because $\nu\vert_{A}(X\setminus X_{0})=0$. By
Lemma \ref{lem:countab exhaustion} we can further assume that all
the $A_{n}$ are compact. Then $\nu\vert_{A}=\nu_{\bot}\vert_{A}+\sum_{n\in\mathbb{N}}\nu\vert_{A_{n}}$
and $I_{\varphi}(\nu\vert_{A_{n}})<\infty$. By the assumption of
the statement it follows that $\mathcal{C}_{k}(\nu\vert_{A_{n}})(X)=\mathcal{C}_{k}(\nu)(A_{n})$
converges in $\mathcal{L}^{1}$ for every $n\in\mathbb{N}$ as $k$
goes to $\infty$. By (\ref{eq:martingal expectation inequality})
it follows that $E(\mathcal{C}_{k}(\nu)(A_{n}))=\nu(A_{n})$. Thus
$\mathcal{C}_{k}(\nu_{R})(A)=\sum_{n\in\mathbb{N}}\mathcal{C}_{k}(\nu\vert_{A_{n}})(X)$
converges in $\mathcal{L}^{1}$ to a limit $\mu_{\infty}(A)$ by Lemma
\ref{prop:L^1 limit for sum}. Thus $\mathcal{C}_{k}(\nu_{R})(A)$
converges in probability to $\mu_{\infty}(A)$ and $E(\mu_{\infty}(A))=\nu_{R}(A)$.
Since $\mathcal{C}_{k}(\nu_{\perp})(A)\leq\mathcal{C}_{k}(\nu_{\perp})(X)$
we have that $\mathcal{C}_{k}(\nu_{\perp})(A)$ converges to $0$
in probability. Thus $\mathcal{C}_{k}(\nu)(A)$ converges in probability
to $\mu_{\infty}(A)$.
\end{proof}
\begin{prop}
\label{prop:equiv of egsistance}Let $\nu$ be a finite Borel measure
on $X$ such that $\nu(X\setminus X_{0})=0$. Assume that $\mathcal{C}_{k}(\nu\vert_{D})(X)=\mathcal{C}_{k}(\nu)(D)$
converges in $\mathcal{L}^{1}$ for every compact set $D\subseteq X_{0}$
with $I_{\varphi}(\nu\vert_{D})<\infty$. Assume that $\mathcal{C}_{k}(\nu_{\perp})(X)$
converges to $0$ in probability. Then the conditional measure $\mathcal{C}(\nu)$
of $\nu$ on $B$ exists with respect to $\mathcal{Q}_{k}$ ($k\geq1$)
with regularity kernel $\varphi$.
\end{prop}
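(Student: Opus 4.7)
The plan is to verify, one at a time, the eleven properties of Definition \ref{def:def of cond meas}, using Lemma \ref{thm:E(measure)} and Theorem \ref{thm:ABS cont representation} as the main tools. The basic observation that unlocks Theorem \ref{thm:ABS cont representation} is that $\mathcal{C}_k(\nu)\ll\nu$ with $E(\mathrm{d}\mathcal{C}_k(\nu)/\mathrm{d}\nu(x))=1$ at every $x\in X_{0}$ (and $=0$ off $X_{0}$), so the hypothesis $E(\mathrm{d}\mu_k/\mathrm{d}\nu)\leq c$ of Theorem \ref{thm:ABS cont representation} holds with $c=1$.

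First, Lemma \ref{thm:E(measure)} produces, for every Borel $A\subseteq X$, a random variable $\mu(A)$ with $\mathcal{C}_k(\nu_R)(A)\to\mu(A)$ in $\mathcal{L}^1$, $\mathcal{C}_k(\nu)(A)\to\mu(A)$ in probability, and $E(\mu(A))=\nu_R(A)\leq\nu(A)$; this delivers property v.) directly and will deliver vi.) once $\mu(A)$ is identified with $\mathcal{C}(\nu)(A)$. Theorem \ref{thm:ABS cont representation} applied to the sequence $\mathcal{C}_k(\nu)$ then produces a random, finite, Borel measure $\mathcal{C}(\nu)$ (property i.)) together with a linear map $f\mapsto S(f)$ to which the integrals converge in probability (property ii.)) and with $\intop f\,\mathrm{d}\mathcal{C}(\nu)=S(f)$ almost surely for any fixed countable family (property iii.)); property iv.) then follows via Remark \ref{rem:convergenc for sets to the measure}, which also identifies $\mathcal{C}(\nu)(A)=\mu(A)$ almost surely for a preselected countable collection of $A$. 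The expectation identity $E(S(f))=\intop f\,\mathrm{d}\nu_R$ in ii.) is obtained by splitting $\intop f\,\mathrm{d}\mathcal{C}_k(\nu)=\intop f\,\mathrm{d}\mathcal{C}_k(\nu_R)+\intop f\,\mathrm{d}\mathcal{C}_k(\nu_\perp)$: the first summand converges in $\mathcal{L}^1$ to $\intop f\,\mathrm{d}\mathcal{C}(\nu)$ with expectation $\intop f\,\mathrm{d}\nu_R$ by Fubini and (\ref{eq:martingal expectation inequality}) (approximate $f$ by simple functions, then invoke Lemma \ref{thm:E(measure)}), and the second goes to $0$ in probability since $|\intop f\,\mathrm{d}\mathcal{C}_k(\nu_\perp)|\leq\|f\|_\infty\mathcal{C}_k(\nu_\perp)(X)$ for bounded $f$, the general case being reduced by truncation using $E(\mathcal{C}_k(\nu_\perp)(X))\leq\nu_\perp(X)$.

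For the remaining properties: vii.) follows from the same splitting, because the hypotheses of the proposition are vacuously met for $\nu_\perp$ (as $(\nu_\perp)_R=0$) so $\mathcal{C}(\nu_\perp)$ exists and is the weak limit of measures whose total mass tends to $0$ in probability, hence equals $0$ almost surely. Property viii.) is then immediate from the linearity $\mathcal{C}_k(\nu)=\mathcal{C}_k(\nu_R)+\mathcal{C}_k(\nu_\perp)$ on passing to limits. Property ix.) is obtained from Proposition \ref{prop:SUM lem subseq}, since $\mathcal{C}_k(\nu)=\sum_i\mathcal{C}_k(\nu^i)$ termwise and $\sum_i E(\mathcal{C}_k(\nu^i)(X))=\sum_i\nu^i(X)=\nu(X)<\infty$ supplies the summability of tails. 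Property x.) follows from Proposition \ref{prop:change of meas prop} applied to $\mathcal{C}_k(\nu)$, after the trivial identity $\mathcal{C}_k(f\,\mathrm{d}\nu)=f\,\mathrm{d}\mathcal{C}_k(\nu)$.

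Property xi.) is the only genuinely geometric step. Set $\varepsilon_n=\sup_{k\geq n}\sup\{\mathrm{diam}(Q):Q\in\mathcal{Q}_k\}$, which tends to $0$ by (\ref{eq:diameter_goes_to0}), and let $F_n=\mathrm{supp}(\nu)\cap\overline{B(B,\varepsilon_n)}$. Then $F_1\supseteq F_2\supseteq\dots$ is a decreasing sequence of random closed sets, $\mathrm{supp}\,\mathcal{C}_k(\nu)\subseteq F_k$ almost surely (because $\mathcal{C}_k(\nu)$ is concentrated on the union of those $Q\in\mathcal{Q}_k$ meeting $B$, each of diameter at most $\varepsilon_k$), and $\bigcap_n F_n=\mathrm{supp}(\nu)\cap B$ because $B$ is closed. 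Proposition \ref{prop:randomsupport} then yields $\mathrm{supp}\,\mathcal{C}(\nu)\subseteq\mathrm{supp}(\nu)\cap B$ almost surely. No individual step is a serious obstacle; the main care needed throughout is to ensure that the subsequential almost sure weak limits coincide with the in-probability limits of the individual numerical functionals, which is the content of the uniqueness parts of Proposition \ref{prop:unique random lim} and Theorem \ref{thm:ABS cont representation}.
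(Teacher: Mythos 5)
Your proposal is correct and follows essentially the same route as the paper: verify the eleven properties of Definition \ref{def:def of cond meas} one by one, using Lemma \ref{thm:E(measure)} and Theorem \ref{thm:ABS cont representation} (with $c=1$ from $E(\mathrm{d}\mathcal{C}_k(\nu)/\mathrm{d}\nu)=1$ on $X_0$) as the engine, and Propositions \ref{prop:SUM lem subseq}, \ref{prop:change of meas prop} and \ref{prop:randomsupport} for properties ix.), x.) and xi.). The only cosmetic differences are that you derive the expectation identity in ii.) by splitting $\mathcal{C}_k(\nu)=\mathcal{C}_k(\nu_R)+\mathcal{C}_k(\nu_\perp)$ where the paper deduces it from v.) and iii.), and you spell out the decreasing sets $F_n$ for xi.) which the paper leaves implicit.
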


\begin{proof}
From the definition of $\mathcal{C}_{k}(\nu)$ it follows that that
$E\left(\frac{\mathrm{d}\mathcal{C}_{k}(\nu)}{\mathrm{d}\nu}(x)\right)=1$
for every $x\in X_{0}$, i.e. for $\nu$ almost every $x\in X$. The
assumptions of Theorem \ref{thm:ABS cont representation} are satisfied
by Lemma \ref{thm:E(measure)} for $\mu_{k}=\mathcal{C}_{k}(\nu)$
and for the reference measure $\nu$. Hence Property \textit{i.)},
\textit{iii.), iv.)} of Definition \ref{def:def of cond meas} hold
for $\tau=\nu$ by Theorem \ref{thm:ABS cont representation} and
Remark \ref{rem:convergenc for sets to the measure}. Property \textit{v.),
vi.)} of Definition \ref{def:def of cond meas} hold for $\tau=\nu$
by Lemma \ref{thm:E(measure)}, and Remark \ref{rem:convergenc for sets to the measure}.
Property \textit{vii.), viii.)} of Definition \ref{def:def of cond meas}
hold for $\tau=\nu$ by the fact that $\mathcal{C}_{k}(\nu_{\perp})(X)$
converges to $0$ in probability. Property \textit{x.)} of Definition
\ref{def:def of cond meas} holds for $\tau=\nu$ by Proposition \ref{prop:change of meas prop}.
Property \textit{xi.)} of Definition \ref{def:def of cond meas} holds
for $\tau=\nu$ by Proposition \ref{prop:randomsupport}.

Now we show that Property \textit{ii.)} of Definition \ref{def:def of cond meas}
holds for $\tau=\nu$. Let $f:X\longrightarrow\mathbb{R}$ be a Borel
measurable function such that $\intop_{X}\left|f(x)\right|\mathrm{d}\nu(x)<\infty$.
It follows from Theorem \ref{thm:ABS cont representation} that $\intop_{X}f(x)\mathrm{d}\mathcal{C}_{k}(\nu)(x)$
converges to a random variable $S(f)$ in probability with $E(\left|S(f)\right|)\leq\intop_{X}\left|f(x)\right|\mathrm{d}\nu(x)$.
Since \textit{v.)} holds for $\tau=\nu$ it follows that $\nu_{R}(A)=E(\mathcal{C}(\nu)(A))$
for every Borel set $A\subseteq X$ and so $E(\intop_{X}f(x)\mathrm{d}\mathcal{C}(\nu)(x))=\intop_{X}f(x)\mathrm{d}\nu_{R}(x)$
holds when $f$ is a simple function and so by the monotone convergence
theorem it holds for Borel measurable $f$. On the other hand $\intop_{X}f(x)\mathrm{d}\mathcal{C}(\nu)(x)=S(f)$
almost surely by \textit{iii.)}. Hence \textit{ii.)} holds for $\tau=\nu$.
In particular, applied to $\chi_{A}\cdot f$ for a nonnegative Borel
function $f$ with $\intop_{X}f(x)\mathrm{d}\nu(x)<\infty$ and a
Borel set $A\subseteq X$ we get that
\begin{equation}
E(\intop_{A}f(x)\mathrm{d}\mathcal{C}(\nu)(x))=\intop_{A}f(x)\mathrm{d}\nu_{R}(x),\label{eq:set l1 lim for f}
\end{equation}
the sequence $\intop_{A}f(x)\mathrm{d}\mathcal{C}_{k}(\nu)(x)$ converges
to $S(\chi_{A}f)$ in probability and $\intop_{A}f(x)\mathrm{d}\mathcal{C}(\nu)(x)=S(\chi_{A}f)$
almost surely. Moreover, $\intop_{A}f(x)\mathrm{d}\mathcal{C}(\nu)(x)=\intop_{A}f(x)\mathrm{d}\mathcal{C}(\nu_{R})(x)$
almost surely since Property \textit{viii.)} of Definition \ref{def:def of cond meas}
holds for $\tau=\nu$. There exists a Borel set $A_{0}\subseteq X$
such that $\nu_{R}(A_{0})=0$ and $\nu_{\perp}(X\setminus A_{0})=0$,
then by Property \textit{ii.) }of Definition \ref{def:def of cond meas}
for $\tau=\nu$ 
\begin{equation}
\intop_{A}f(x)\mathrm{d}\mathcal{C}_{k}(\nu_{\perp})(x)=\intop_{X}\chi_{A}\chi_{A_{0}}f(x)\mathrm{d}\mathcal{C}_{k}(\nu)(x)\label{eq:0ba tar az abs}
\end{equation}
converges to $S(\chi_{A}\chi_{A_{0}}f)$ in probability and $E(S(\intop_{X}\chi_{A}\chi_{A_{0}}f))=\int_{A\cap A_{0}}f(x)\mathrm{d}\nu_{R}(x)=0$,
so $S(\chi_{A}\chi_{A_{0}}f)=0$ almost surely since it is nonnegative.
Hence $\intop_{A}f(x)\mathrm{d}\mathcal{C}_{k}(\nu_{R})(x)$ converges
to $\intop_{A}f(x)\mathrm{d}\mathcal{C}(\nu)(x)=S(\chi_{A}f)=\intop_{A}f(x)\mathrm{d}\mathcal{C}(\nu_{R})(x)$
in $\mathcal{L}^{1}$ by Lemma \ref{lem:Scheffe} due to (\ref{eq:set l1 lim for f}).
In particular, if $\nu_{\perp}(A)=0$ for a Borel set $A\subseteq X$
then
\begin{equation}
\intop_{A}f(x)\mathrm{d}\mathcal{C}_{k}(\nu)(x)\label{eq:abd l1 conv set}
\end{equation}
converges to $\intop_{A}f(x)\mathrm{d}\mathcal{C}(\nu)(x)=S(\chi_{A}f)=\intop_{A}f(x)\mathrm{d}\mathcal{C}(\nu_{R})(x)$
in $\mathcal{L}^{1}$.

Let $\tau\ll\nu$ be a deterministic finite Borel measure on $X$.
If $I_{\varphi}(\tau\vert_{D})<\infty$ for a Borel set $D\subseteq X$
then $\tau\vert_{D}=\tau_{R}\vert_{D}$ and, by Proposition \ref{prop:abs cont decomposition},
there exists $A\subseteq D$ such that $\tau(D\setminus A)=0$ and
$\nu_{\perp}(A)=0$. Thus, by the end of the last paragraph (\ref{eq:abd l1 conv set}),
we have that $\mathcal{C}_{k}(\tau)(D)=\intop_{D}\frac{\mathrm{d}\tau}{\mathrm{d}\nu}(x)\mathrm{d}\mathcal{C}_{k}(\nu)(x)=\intop_{A}\frac{\mathrm{d}\tau}{\mathrm{d}\nu}(x)\mathrm{d}\mathcal{C}_{k}(\nu)(x)$
converges to $\intop_{A}f(x)\mathrm{d}\mathcal{C}(\nu)(x)$ in $\mathcal{L}^{1}$.
By (\ref{eq:0ba tar az abs}) and Proposition \ref{prop:abs cont decomposition}
it follows that $\mathcal{C}_{k}(\tau_{\perp})(X)=\intop_{X}\frac{\mathrm{d}\tau}{\mathrm{d}\nu}(x)\mathrm{d}\mathcal{C}_{k}(\nu_{\perp})(x)$
converges to $0$ in probability. Hence the assumptions of the statement
not only hold for $\nu$ but the same assumptions also hold for $\tau$.
Thus similarly to the way we proved that Property \textit{i.)-viii.),
x.)-xi.)} of Definition \ref{def:def of cond meas} hold for $\tau=\nu$,
we can also prove the same properties for arbitrary $\tau\ll\nu$.
Hence it is left to prove that \textit{ix.)} holds.

Property \textit{ix.)} of Definition \ref{def:def of cond meas} holds
by Proposition \ref{prop:SUM lem subseq} and the fact that the limit
is unique by Proposition \ref{prop:unique random lim}.
\end{proof}
\begin{thm}
\label{thm:General existence of conditional measure}Let $\nu$ be
a finite Borel measure on $X$ such that $\nu(X\setminus X_{0})=0$.
The conditional measure $\mathcal{C}(\nu)$ of $\nu$ on $B$ exists
with respect to $\mathcal{Q}_{k}$ ($k\geq1$) with regularity kernel
$\varphi$ if and only if $\mathcal{C}_{k}(\nu\vert_{D})(X)=\mathcal{C}_{k}(\nu)(D)$
converges in $\mathcal{L}^{1}$ for every compact set $D\subseteq X_{0}$
with $I_{\varphi}(\nu\vert_{D})<\infty$ and $\mathcal{C}_{k}(\nu_{\perp})(X)$
converges to $0$ in probability.
\end{thm}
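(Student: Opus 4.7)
The ``if'' direction is already contained in Proposition \ref{prop:equiv of egsistance}, so the content lies in the converse. Assuming the conditional measure $\mathcal{C}(\nu)$ exists, the plan is to deduce the two required convergence statements from the defining properties in Definition \ref{def:def of cond meas} together with Proposition \ref{decomposition}. The basic decomposition to exploit throughout is the linearity of $\mathcal{C}_k$, namely $\mathcal{C}_k(\nu)=\mathcal{C}_k(\nu_R)+\mathcal{C}_k(\nu_\perp)$, which follows directly from (\ref{eq:C_k def}).

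For the first statement, that $\mathcal{C}_k(\nu_\perp)(X)$ tends to $0$ in probability, I would proceed as follows. By Property~\textit{i.)}, $\mathcal{C}_k(\nu)$ weakly converges to $\mathcal{C}(\nu)$ subsequentially in probability, so Proposition \ref{prop:triv equivalence of conv} applied to the constant function $1\in C_b(X)$ yields $\mathcal{C}_k(\nu)(X)\to\mathcal{C}(\nu)(X)$ in probability. On the other hand, Property~\textit{vi.)} applied to $A=X$ gives $\mathcal{C}_k(\nu_R)(X)\to\mathcal{C}(\nu)(X)$ in $\mathcal{L}^1$, hence in probability. Subtracting the two convergences yields $\mathcal{C}_k(\nu_\perp)(X)=\mathcal{C}_k(\nu)(X)-\mathcal{C}_k(\nu_R)(X)\to 0$ in probability.

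For the second statement, I would fix a compact $D\subseteq X_0$ with $I_\varphi(\nu|_D)<\infty$ and first show $\nu_\perp(D)=0$. Since $\nu|_D$ is a finite Borel measure with finite $\varphi$-energy, Property~iii) of Proposition \ref{decomposition} gives $\nu_\perp\perp\nu|_D$, i.e.\ there is a Borel set $A$ with $\nu_\perp(X\setminus A)=0$ and $\nu|_D(A)=0$. Then on the one hand $\nu_\perp|_D(D\setminus A)\leq\nu_\perp(X\setminus A)=0$, and on the other hand $\nu_\perp|_D(A)\leq\nu|_D(A)=0$, so $\nu_\perp(D)=0$. Combined with (\ref{eq:martingal expectation inequality}) this gives $E(\mathcal{C}_k(\nu_\perp)(D))=\nu_\perp(D)=0$, and nonnegativity forces $\mathcal{C}_k(\nu_\perp)(D)=0$ almost surely. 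Therefore $\mathcal{C}_k(\nu)(D)=\mathcal{C}_k(\nu_R)(D)$ almost surely, and another application of Property~\textit{vi.)} (with $A=D$, noting $\nu(D)<\infty$) provides the desired $\mathcal{L}^1$-convergence.

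I do not anticipate a significant obstacle here; the argument is essentially a careful bookkeeping exercise that ties together properties \textit{i.)}, \textit{vi.)}, the linearity of $\mathcal{C}_k$, and the Kahane-type decomposition of Proposition \ref{decomposition}. If anything requires care, it is the passage from mutual singularity $\nu_\perp\perp\nu|_D$ to the pointwise conclusion $\nu_\perp(D)=0$, which relies on the additional fact that $\nu_\perp|_D$ is dominated by $\nu|_D$ and so cannot survive on the $\nu|_D$-null set where $\nu_\perp$ is concentrated.
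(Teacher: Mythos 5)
Your proof is correct and follows essentially the same route as the paper: the ``if'' direction is delegated to Proposition \ref{prop:equiv of egsistance}, and the converse is read off from the defining properties of Definition \ref{def:def of cond meas}. The only cosmetic differences are that the paper obtains $\mathcal{C}_k(\nu_\perp)(X)\to 0$ directly from Property \textit{vii.)} rather than by subtracting the convergences from Properties \textit{i.)} and \textit{vi.)}, and it asserts $\nu_R\vert_D=\nu\vert_D$ without writing out the singularity argument you supply via Proposition \ref{decomposition}.
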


\begin{proof}
Assume that the conditional measure $\mathcal{C}(\nu)$ of $\nu$
on $B$ exists with respect to $\mathcal{Q}_{k}$ ($k\geq1$) with
regularity kernel $\varphi$. Then by\textit{ }Property \textit{vii.)}
of Definition \ref{def:def of cond meas} it follows that $\mathcal{C}_{k}(\nu_{\perp})(X)$
converges to $0$ in probability. Let $D\subseteq X_{0}$ be a compact
set such that $I_{\varphi}(\nu\vert_{D})<\infty$. Then $\nu_{R}\vert_{D}=\nu\vert_{D}$
and so $\mathcal{C}_{k}(\nu)(D)=\mathcal{C}_{k}(\nu_{R})(D)$. Thus
by Property \textit{vi.)} of Definition \ref{def:def of cond meas}
it follows that $\mathcal{C}_{k}(\nu)(D)$ converges to $\mathcal{C}(\nu)(D)$
in $\mathcal{L}^{1}$.

The other direction of the equivalence follows from Proposition \ref{prop:equiv of egsistance}.
\end{proof}
\begin{lem}
\label{lem:compact vanish lemma}Let $\nu$ be a finite Borel measure
on $X$ such that $\nu(X\setminus X_{0})=0$. Then $\mathcal{C}_{k}(\nu_{\perp})(X)$
converges to $0$ in probability if and only if $\mathcal{C}_{k}(\nu_{\perp})(D)$
converges to $0$ in probability for every compact set $D\subseteq X_{0}$.
\end{lem}

\begin{proof}
If $\mathcal{C}_{k}(\nu_{\perp})(X)$ converges to $0$ in probability
then of course $\mathcal{C}_{k}(\nu_{\perp})(D)$ converges to $0$
in probability for every compact set $D\subseteq X$.

Assume that $\mathcal{C}_{k}(\nu_{\perp})(D)$ converges to $0$ in
probability for every compact set $D\subseteq X_{0}$. Let $\varepsilon>0$
be fixed. Let $D\subseteq X_{0}$ be a compact set such that $\nu(X\setminus D)<\varepsilon$,
which we can choose by inner regularity. Then by the fact that $\rho(Y,Z)\leq E\left(\left|Y-Z\right|\right)$,
by (\ref{eq:martingal expectation inequality}) and Lemma \ref{lem:abs cont weak prob conv fatou for open sets}
it follows that
\[
\rho\left(\mathcal{C}_{k}(\nu_{\perp})(X),0\right)\leq\rho\left(\mathcal{C}_{k}(\nu_{\perp})(X),\mathcal{C}_{k}(\nu_{\perp})(D)\right)+\rho\left(\mathcal{C}_{k}(\nu_{\perp})(D),0\right)\leq
\]
\[
E\left(\mathcal{C}_{k}(\nu_{\perp})(X)-\mathcal{C}_{k}(\nu_{\perp})(D)\right)+\rho\left(\mathcal{C}_{k}(\nu_{\perp})(D),0\right)\leq\nu(X\setminus D)+\rho\left(\mathcal{C}_{k}(\nu_{\perp})(D),0\right).
\]
Hence
\[
\limsup_{k\rightarrow\infty}\rho\left(\mathcal{C}_{k}(\nu_{\perp})(X),0\right)\leq\nu(X\setminus D)\leq\varepsilon
\]
because $\mathcal{C}_{k}(\nu_{\perp})(D)$ converges to $0$ in probability.
It holds for every $\varepsilon>0$ thus $\mathcal{C}_{k}(\nu_{\perp})(X)$
converges to $0$ in probability.
\end{proof}
\begin{rem}
\label{rem:0vagynem nulla}Let $X$ be locally compact and let $\nu$
be a locally finite Borel measure on $X$ such that $\nu(X\setminus X_{0})=0$.
Then $\mathcal{C}_{k}(\nu_{\perp})(D)$ converges to $0$ in probability
for every compact set $D\subseteq X_{0}$ if and only if $\mathcal{C}_{k}(\nu_{\perp})(D)$
converges to $0$ in probability for every compact set $D\subseteq X$.
It can easily be deduced from Lemma \ref{lem:compact vanish lemma}.
\end{rem}

\begin{prop}
\label{prop:equiv of egsistance-Vague}Let $X$ be locally compact
and let $\nu$ be a locally finite Borel measure on $X$ such that
$\nu(X\setminus X_{0})=0$. Assume that $\mathcal{C}_{k}(\nu\vert_{D})(X)=\mathcal{C}_{k}(\nu)(D)$
converges in $\mathcal{L}^{1}$ for every compact set $D\subseteq X_{0}$
with $I_{\varphi}(\nu\vert_{D})<\infty$. Assume that $\mathcal{C}_{k}(\nu_{\perp})(D)$
converges to $0$ in probability for every compact set $D\subseteq X$.
Then the conditional measure $\mathcal{C}(\nu)$ of $\nu$ on $B$
exists with respect to $\mathcal{Q}_{k}$ ($k\geq1$) with regularity
kernel $\varphi$.
\end{prop}

\begin{proof}
From the definition of $\mathcal{C}_{k}(\nu)$ it follows that that
$E\left(\frac{\mathrm{d}\mathcal{C}_{k}(\nu)}{\mathrm{d}\nu}(x)\right)=1$
for every $x\in X_{0}$, i.e. for $\nu$ almost every $x\in X$. The
assumptions of Theorem \ref{thm:ABS cont representation-Vague} are
satisfied for $\mu_{k}=\mathcal{C}_{k}(\nu)$ and for the reference
measure $\nu$ by Lemma  \ref{thm:E(measure)} because $\nu\vert_{A}$
is a finite Borel measure for every Borel set $A\subseteq X$ with
$\nu(A)<\infty$, in particular if $A$ is compact. Hence Property
\textit{i{*}.)}, \textit{iii.), iv.)} of Definition \ref{def:def of cond meas}
hold for $\tau=\nu$ by Theorem \ref{thm:ABS cont representation-Vague}
and Remark \ref{rem:convergenc for sets to the measure-Vague}. Property
\textit{v.), vi.)} of Definition \ref{def:def of cond meas} hold
for $\tau=\nu$ by Lemma \ref{thm:E(measure)}, and Remark \ref{rem:convergenc for sets to the measure-Vague}.
Property \textit{vii.), viii.)} of Definition \ref{def:def of cond meas}
hold for $\tau=\nu$ by the fact that
\[
0\leq\left|\int f(x)\mathrm{d}\mathcal{C}_{k}(\nu_{\perp})(x)\right|\leq\left\Vert f\right\Vert _{\infty}\mathcal{C}_{k}(\nu_{\perp})(\mathrm{supp}(f))
\]
converges to $0$ in probability for every $f\in C_{c}(X)$. Property
\textit{x{*}.)} of Definition \ref{def:def of cond meas} holds for
$\tau=\nu$ by Proposition \ref{prop:change of meaasure-Vague}. Property
\textit{xi.)} of Definition \ref{def:def of cond meas} holds for
$\tau=\nu$ by Proposition \ref{prop:randomsupport-Vague}.

Now we show that Property \textit{ii.)} of Definition \ref{def:def of cond meas}
hold for $\tau=\nu$. Let $f:X\longrightarrow\mathbb{R}$ be a Borel
measurable function such that $\intop_{X}\left|f(x)\right|\mathrm{d}\nu(x)<\infty$.
It follows from Theorem \ref{thm:ABS cont representation-Vague} that
$\intop_{X}f(x)\mathrm{d}\mathcal{C}_{k}(\nu)(x)$ converges to a
random variable $S(f)$ in probability with $E(\left|S(f)\right|)\leq\intop_{X}\left|f(x)\right|\mathrm{d}\nu(x)$.
Since \textit{v.)} holds for $\tau=\nu$ it follows that $\nu_{R}(A)=E(\mathcal{C}(\nu)(A))$
for every Borel set $A\subseteq X$ with $\nu(A)<\infty$ and so $E(\intop_{X}f(x)\mathrm{d}\mathcal{C}(\nu)(x))=\intop_{X}f(x)\mathrm{d}\nu_{R}(x)$
holds when $f$ is a simple function and so by the monotone convergence
theorem it holds for Borel measurable $f$. On the other hand $\intop_{X}f(x)\mathrm{d}\mathcal{C}(\nu)(x)=S(f)$
almost surely by \textit{iii.)}. Hence \textit{ii.)} holds for $\tau=\nu$.
In particular, applied to $\chi_{A}\cdot f$ for a nonnegative, Borel
measurable function $f$ and a Borel set $A\subseteq X$ with compact
closure such that $\int_{A}f(x)\mathrm{d}\nu(x)<\infty$ we get that
\begin{equation}
E(\intop_{A}f(x)\mathrm{d}\mathcal{C}(\nu)(x))=\intop_{A}f(x)\mathrm{d}\nu_{R}(x),\label{eq:set l1 lim for f-1}
\end{equation}
the sequence $\intop_{A}f(x)\mathrm{d}\mathcal{C}_{k}(\nu)(x)$ converges
to $S(\chi_{A}f)$ in probability and $\intop_{A}f(x)\mathrm{d}\mathcal{C}(\nu)(x)=S(\chi_{A}f)$
almost surely. Moreover, $\intop_{A}f(x)\mathrm{d}\mathcal{C}(\nu)(x)=\intop_{A}f(x)\mathrm{d}\mathcal{C}(\nu_{R})(x)$
almost surely since Property \textit{viii.)} of Definition \ref{def:def of cond meas}
holds for $\tau=\nu$. There exists a Borel set $A_{0}\subseteq X$
such that $\nu_{R}(A_{0})=0$ and $\nu_{\perp}(X\setminus A_{0})=0$,
then by Property \textit{ii.) }of Definition \ref{def:def of cond meas}
for $\tau=\nu$ 
\begin{equation}
\intop_{A}f(x)\mathrm{d}\mathcal{C}_{k}(\nu_{\perp})(x)=\intop_{X}\chi_{A}\chi_{A_{0}}f(x)\mathrm{d}\mathcal{C}_{k}(\nu)(x)\label{eq:0ba tar az abs-1}
\end{equation}
converges to $S(\chi_{A}\chi_{A_{0}}f)$ in probability and $E(S(\intop_{X}\chi_{A}\chi_{A_{0}}f))=\int_{A\cap A_{0}}f(x)\mathrm{d}\nu_{R}(x)=0$,
so $S(\chi_{A}\chi_{A_{0}}f)=0$ almost surely since it is nonnegative.
Hence $\intop_{A}f(x)\mathrm{d}\mathcal{C}_{k}(\nu_{R})(x)$ converges
to $\intop_{A}f(x)\mathrm{d}\mathcal{C}(\nu)(x)=S(\chi_{A}f)=\intop_{A}f(x)\mathrm{d}\mathcal{C}(\nu_{R})(x)$
in $\mathcal{L}^{1}$ by Lemma \ref{lem:Scheffe} due to (\ref{eq:set l1 lim for f-1}).
In particular, if $\nu_{\perp}(A)=0$ for a Borel set $A\subseteq X$
with compact closure then
\begin{equation}
\intop_{A}f(x)\mathrm{d}\mathcal{C}_{k}(\nu)(x)\label{eq:abd l1 conv set-1}
\end{equation}
converges to $\intop_{A}f(x)\mathrm{d}\mathcal{C}(\nu)(x)=S(\chi_{A}f)=\intop_{A}f(x)\mathrm{d}\mathcal{C}(\nu_{R})(x)$
in $\mathcal{L}^{1}$.

Let $\tau\ll\nu$ be a deterministic, locally finite, Borel measure
on $X$. If $I_{\varphi}(\tau\vert_{D})<\infty$ for a compact set
$D\subseteq X$ then $\tau\vert_{D}=\tau_{R}\vert_{D}$ and, by Proposition
\ref{prop:abs cont decomposition}, there exists $A\subseteq D$ such
that $\tau(D\setminus A)=0$ and $\nu_{\perp}(A)=0$. Thus, by the
end of the last paragraph (\ref{eq:abd l1 conv set-1}), we have that
$\mathcal{C}_{k}(\tau)(D)=\intop_{D}\frac{\mathrm{d}\tau}{\mathrm{d}\nu}(x)\mathrm{d}\mathcal{C}_{k}(\nu)(x)=\intop_{A}\frac{\mathrm{d}\tau}{\mathrm{d}\nu}(x)\mathrm{d}\mathcal{C}_{k}(\nu)(x)$
converges to $\intop_{A}f(x)\mathrm{d}\mathcal{C}(\nu)(x)$ in $\mathcal{L}^{1}$.
By (\ref{eq:0ba tar az abs-1}) and Proposition \ref{prop:abs cont decomposition}
it follows that $\mathcal{C}_{k}(\tau_{\perp})(D)=\intop_{D}\frac{\mathrm{d}\tau}{\mathrm{d}\nu}(x)\mathrm{d}\mathcal{C}_{k}(\nu_{\perp})(x)$
converges to $0$ in probability for every compact set $D\subseteq X$.
Hence the assumptions of the statement not only hold for $\nu$ but
the same assumptions also hold for $\tau$. Thus similarly to the
way we proved that Property \textit{i.)-viii.), x.)-xi.)} of Definition
\ref{def:def of cond meas} hold for $\tau=\nu$, we can also prove
the same properties for arbitrary $\tau\ll\nu$. Hence it is left
to prove that \textit{ix.)} holds.

Property \textit{ix.)} of Definition \ref{def:def of cond meas} holds
by Proposition \ref{prop:SUM cov Vague} and the fact that the limit
is unique by Proposition \ref{prop:unique random lim-Vague}.
\end{proof}
\begin{thm}
\label{thm:General existence of conditional measure-Vague}Let $X$
be locally compact. Let $\nu$ be a locally finite Borel measure on
$X$ such that $\nu(X\setminus X_{0})=0$. Then the conditional measure
$\mathcal{C}(\nu)$ of $\nu$ on $B$ exists with respect to $\mathcal{Q}_{k}$
($k\geq1$) with regularity kernel $\varphi$ if and only if $\mathcal{C}_{k}(\nu\vert_{D})(X)=\mathcal{C}_{k}(\nu)(D)$
converges in $\mathcal{L}^{1}$ for every compact set $D\subseteq X_{0}$
with $I_{\varphi}(\nu\vert_{D})<\infty$ and $\mathcal{C}_{k}(\nu_{\perp})(D)$
converges to $0$ in probability for every compact set $D\subseteq X_{0}$.
\end{thm}

\begin{proof}
Assume that the conditional measure $\mathcal{C}(\nu)$ of $\nu$
on $B$ exists with respect to $\mathcal{Q}_{k}$ ($k\geq1$) with
regularity kernel $\varphi$. Then by\textit{ }Property \textit{ii.),
iv.)} and \textit{vii.)} of Definition \ref{def:def of cond meas}
it follows that $\mathcal{C}_{k}(\nu_{\perp})(D)$ converges to $0$
in probability for every compact set $D\subseteq X_{0}$. Let $D\subseteq X_{0}$
be a compact subset such that $I_{\varphi}(\nu\vert_{D})<\infty$.
Then $\nu_{R}\vert_{D}=\nu\vert_{D}$ and so $\mathcal{C}_{k}(\nu)(D)=\mathcal{C}_{k}(\nu_{R})(D)$.
Thus by Property \textit{vi.)} of Definition \ref{def:def of cond meas}
it follows that $\mathcal{C}_{k}(\nu)(D)$ converges to $\mathcal{C}(\nu)(D)$
in $\mathcal{L}^{1}$.

The other direction of the equivalence follows from Proposition \ref{prop:equiv of egsistance-Vague}
and Remark \ref{rem:0vagynem nulla}.
\end{proof}
\selectlanguage{english}%
\noindent \textit{Proof of Proposition }\foreignlanguage{british}{\ref{prop:two def are the same}.
By Theorem \ref{thm:General existence of conditional measure}, Theorem
\ref{thm:General existence of conditional measure-Vague} and Lemma
\ref{lem:compact vanish lemma} according to both of the two definition
of the conditional measure in Definition \ref{def:def of cond meas}
the conditional measure exists if and only if $\mathcal{C}_{k}(\nu\vert_{D})(X)=\mathcal{C}_{k}(\nu)(D)$
converges in $\mathcal{L}^{1}$ for every compact set $D\subseteq X_{0}$
with $I_{\varphi}(\nu\vert_{D})<\infty$ and $\mathcal{C}_{k}(\nu_{\perp})(D)$
converges to $0$ in probability for every compact set $D\subseteq X$.
The limit in Property\textit{ i.)} and in Property \textit{i{*}.)}
are the same almost surely because if $\mathcal{C}_{k}(\nu)$ weakly
converges to $\mathcal{C}(\nu)$ subsequentially in probability then
$\mathcal{C}_{k}(\nu)$ vaguely converges to $\mathcal{C}(\nu)$ in
probability by Remark \ref{rem:weak imply vague} and Proposition
\ref{prop:trivi direct} and since the limit is unique by Proposition
\ref{prop:unique random lim-Vague} the two limits are the same almost
surely.$\hfill\square$}
\selectlanguage{british}%

\section{Decomposition of measure\label{sec:Decomposition-of-measure}}

We prove Proposition \ref{decomposition}, Proposition \ref{prop:abs cont decomposition}
and Proposition \ref{prop:singular felbontas loc fin intro} in this
section.
\begin{prop}
\label{prop:decomp peop}Let $\nu$ be a finite, Borel measure on
$X$. There exist two finite, Borel measures $\nu_{\varphi R}=\nu_{R}$
and $\nu_{\varphi\perp}=\nu_{\perp}$ with the following properties:

i) $\nu=\nu_{R}+\nu_{\perp}$

ii) $\nu_{R}\perp\nu_{\perp}$

iii) $\nu_{\perp}$ is singular to every finite Borel measure with
finite $\varphi$-energy

iv) there exists a sequence of disjoint Borel sets $(A_{n})_{n\in\mathbb{N}}$
such that $\nu_{R}=\nu\vert_{\cup_{n\in\mathbb{N}}A_{n}}=\sum_{n\in\mathbb{N}}\nu\vert_{A_{n}}$
and $I_{\varphi}(\nu\vert_{A_{n}})=I_{\varphi}(\nu_{R}\vert_{A_{n}})<\infty$.
\end{prop}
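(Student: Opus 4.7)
The plan is to define $\nu_R$ as the largest portion of $\nu$ that can be exhausted by a countable family of Borel pieces of finite $\varphi$-energy. I would let $\mathcal{G}$ denote the family of Borel sets $A\subseteq X$ that admit a countable partition $\{A_n\}_{n\in\mathbb{N}}$ into Borel subsets of $A$ with $I_{\varphi}(\nu\vert_{A_n})<\infty$ for every $n$. A diagonal argument shows $\mathcal{G}$ is closed under countable unions: given $A^{(i)}\in\mathcal{G}$ with partition $\{A_n^{(i)}\}_n$, the sets $A_n^{(i)}\setminus\bigcup_{j<i}A^{(j)}$ form a countable partition of $\bigcup_i A^{(i)}$, each of finite energy as a Borel subset of some $A_n^{(i)}$.

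Setting $s=\sup\{\nu(A):A\in\mathcal{G}\}\leq\nu(X)<\infty$, I would choose $A^{(n)}\in\mathcal{G}$ with $\nu(A^{(n)})\to s$ and define $A^{*}=\bigcup_n A^{(n)}\in\mathcal{G}$, so $\nu(A^{*})=s$. Then set $\nu_R:=\nu\vert_{A^{*}}$ and $\nu_{\perp}:=\nu\vert_{X\setminus A^{*}}$; properties (i), (ii), and (iv) follow immediately, with the sequence $\{A_n\}$ for (iv) being the partition witnessing $A^{*}\in\mathcal{G}$, and the identity $I_{\varphi}(\nu_R\vert_{A_n})=I_{\varphi}(\nu\vert_{A_n})$ is trivial since $A_n\subseteq A^{*}$. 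The key consequence of the supremum is \emph{maximality}: every Borel set $C\subseteq X\setminus A^{*}$ with $I_{\varphi}(\nu\vert_C)<\infty$ must satisfy $\nu(C)=0$, since otherwise $A^{*}\cup C\in\mathcal{G}$ with $\nu(A^{*}\cup C)>s$.

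The heart of the argument will be (iii). Given a finite Borel measure $\mu$ with $I_{\varphi}(\mu)<\infty$, I would apply the Lebesgue decomposition $\nu_{\perp}=\nu_{\perp,a}+\nu_{\perp,s}$ with $\nu_{\perp,a}\ll\mu$ and $\nu_{\perp,s}\perp\mu$, and assume for contradiction that $\nu_{\perp,a}\neq 0$. Letting $f=d\nu_{\perp,a}/d\mu$, choose $M<\infty$ with $\nu_{\perp,a}(\{f\leq M\})>0$; pick Borel $E$ with $\mu(E)=0$ and $\nu_{\perp,s}(X\setminus E)=0$, and set $C=\{f\leq M\}\cap E^{c}\cap(X\setminus A^{*})$. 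Since $\nu_{\perp,a}$ is carried by $E^{c}\cap(X\setminus A^{*})$, we have $\nu(C)=\nu_{\perp,a}(\{f\leq M\})>0$; and on $C$, $\nu_{\perp,s}$ vanishes so $\nu\vert_C=\nu_{\perp,a}\vert_C\leq M\mu\vert_C$, whence by nonnegativity of $\varphi$, $I_{\varphi}(\nu\vert_C)\leq M^{2}I_{\varphi}(\mu)<\infty$. This contradicts maximality, forcing $\nu_{\perp,a}=0$ and hence $\nu_{\perp}\perp\mu$.

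The main obstacle is property (iii): maximality alone yields only that $\nu_{\perp}$ has no Borel subset of positive $\nu$-measure and finite $\varphi$-energy, while (iii) demands singularity against an entire class of comparison measures. The Lebesgue-decomposition wedge above bridges this gap by truncating the Radon--Nikodym density to majorize $\nu_{\perp}$ locally by a constant multiple of $\mu$, thereby importing $\mu$'s finite energy into the local finite-energy hypothesis that maximality can digest.
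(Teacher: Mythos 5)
Your proposal is correct and follows essentially the same route as the paper: a maximal exhaustion of $\nu$ by countably many Borel pieces of finite $\varphi$-energy, with property (iii) obtained by truncating a Radon--Nikodym density to dominate $\nu_{\perp}$ locally by a constant multiple of the finite-energy comparison measure. The only difference is cosmetic: you decompose $\nu_{\perp}$ with respect to $\mu$ and truncate $\mathrm{d}\nu_{\perp,a}/\mathrm{d}\mu$ from above, whereas the paper assumes a finite-energy $\tau\ll\nu_{\perp}$ and truncates $\mathrm{d}\tau/\mathrm{d}\nu_{\perp}$ from below on the set where it is at least $1/N$ --- a mirror image of the same estimate (and your version is in fact slightly more careful about reducing ``not singular'' to an absolutely continuous comparison).
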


\begin{proof}
Let
\[
c_{\max}=\sup\left\{ \nu(B):B\subseteq X\,\mathrm{is\,a\,Borel\,set\,},B\subseteq\cup B_{n},\,B_{n}\subseteq X\,\mathrm{is\,a\,Borel\,set\,with\,}I_{\alpha}(\nu\vert_{B_{n}})<\infty\,\mathrm{for\,all\,n}\right\} .
\]
We can find $A$ and $(A_{n})_{n\in\mathbb{N}}$ such that $A=\cup A_{n}$,
$\nu(A)=c_{\max}$ and $I_{\varphi}(\nu\vert_{A_{n}})<\infty$ for
all $n\in\mathbb{N}$. Without the loss of generality we can further
assume that all the $A_{n}$ are disjoint for different $n$. Let
$\nu_{R}=\nu\vert_{A}$ and $\nu_{\bot}=\nu\vert_{X\setminus A}$.
Then \textit{i)}, \textit{ii)} and \textit{iv)} are satisfied. Assume
for a contradiction that \textit{iii)} is not satisfied that is there
exists a Borel probability measure $\tau$ with $I_{\varphi}(\tau)<\infty$
such that $\tau\ll\nu_{\bot}$. Then there exists $N>0$ such that
$\nu_{\bot}(C_{N})>0$ where $C_{N}=\left\{ x:\frac{\mathrm{d}\tau}{\mathrm{d}\nu_{\bot}}(x)\geq\frac{1}{N}\right\} $.
If $D\subseteq C_{N}$ then
\[
\tau(D)=\int_{D}\frac{\mathrm{d}\tau}{\mathrm{d}\nu_{\bot}}(x)\mathrm{d}\nu_{\bot}(x)\geq\int_{D}\frac{1}{N}\mathrm{d}\nu_{\bot}(x)=\frac{1}{N}\nu_{\bot}(D).
\]
Thus $I_{\varphi}(\nu_{\bot}\vert_{C_{N}})\leq N^{2}I_{\varphi}(\tau\vert_{C_{N}})\leq N^{2}I_{\varphi}(\tau)<\infty$.
This contradicts with the maximality of $c_{\max}$.
\end{proof}
\begin{prop}
\label{prop:abs cont parts are inheriting}Let $\nu$ be a finite
Borel measure on $X$ and let $\tau\ll\nu$ be a finite Borel measure
on $X$. Then $\frac{\mathrm{d}\tau}{\mathrm{d}\nu}(x)\mathrm{d}\nu_{\varphi R}=\tau_{\varphi R}$
and $\frac{\mathrm{d}\tau}{\mathrm{d}\nu}(x)\mathrm{d}\nu_{\varphi\perp}=\tau_{\varphi\perp}$.
\end{prop}

\begin{proof}
It is easy to see that the statement holds for $\tau\vert_{A_{n}}$
in place of $\tau$ where $A_{n}=\left\{ x\in X:1/n<\frac{\mathrm{d}\tau}{\mathrm{d}\nu}(x)<n\right\} $
and so holds for $\tau\vert_{\cup_{n\in\mathbb{N}}A_{n}}$. Since
$\tau$ is a finite Borel measure $\tau(X\setminus\cup_{n\in\mathbb{N}}A_{n})=0$
and so the statement holds.
\end{proof}
\begin{rem}
\label{rem:locallyn finite decomp}Proposition \ref{prop:decomp peop}
and Proposition \ref{prop:abs cont parts are inheriting} hold for
locally finite, Borel measures. This can easily be deduced from Proposition
\ref{prop:decomp peop}, Proposition \ref{prop:abs cont parts are inheriting}
and the Lindel\"of property of $X$.
\end{rem}

\begin{prop}
\label{prop:boundedness condition}Let $\tau$ be a finite Borel measure
on $X$ such that $\intop\intop\varphi(x,y)\mathrm{d}\tau(y)\mathrm{d}\tau(x)<\infty$.
Then for every $\varepsilon>0$ there exist $0<M<\infty$ and a compact
set $F\subseteq X$ such that $\tau(X\setminus F)<\varepsilon$ and
$\intop_{F}\varphi(x,y)\mathrm{d}\tau(y)<M$ for every $x\in X$.
\end{prop}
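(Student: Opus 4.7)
The plan is to combine the $L^1(\tau)$-integrability and lower semi-continuity of the full potential $g(x):=\int\varphi(x,y)\,\mathrm{d}\tau(y)$ with Egorov's theorem applied to the local truncation $h_r(x):=\int_{B(x,r)}\varphi(x,y)\,\mathrm{d}\tau(y)$, and to transfer the resulting uniform decay of $h_r$ on a compact subset of $\{g\leq N\}$ to all of $X$ via the key observation that $F\cap B(x,s)=\emptyset$ whenever $s<d(x,F)$ and $F$ is closed.

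First, by Fubini $g\in L^1(\tau)$ with $\int g\,\mathrm{d}\tau=I_{\varphi}(\tau)<\infty$, so $g$ is finite $\tau$-a.e.; Fatou and lower semi-continuity of $\varphi$ make $g$ lower semi-continuous, so $E_{N}:=\{g\leq N\}$ is closed and $\tau(E_{N})\uparrow\tau(X)$, and I pick $N$ with $\tau(X\setminus E_{N})<\varepsilon/3$. The case $\varphi(0)<\infty$ is trivial (take $M=\varphi(0)\tau(X)$), so I assume $\varphi(0)=\infty$, which forces $\tau$ to have no atoms where $g$ is finite; dominated convergence then yields $h_{r}(x)\to 0$ for $\tau$-a.e.\ $x\in E_{N}$. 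Egorov on the finite-measure set $E_{N}$ followed by inner regularity (Lemma~\ref{lem:inner regularity}) produces a compact $F\subseteq E_{N}$ with $\tau(X\setminus F)<\varepsilon$ on which $h_{r}\to 0$ uniformly, so for a prescribed $\eta>0$ one obtains $r_{0}>0$ with $\sup_{x'\in F}h_{r_{0}}(x')<\eta$.

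It then remains to bound $\int_{F}\varphi(x,y)\,\mathrm{d}\tau(y)$ at every $x\in X$: for $x\in F$ the bound $\leq g(x)\leq N$ is immediate; for $x\notin F$, compactness of $F$ furnishes a nearest point $x'\in F$ with $d(x,x')=d(x,F)=:d$, and $\tau|_{F}(B(x,s))=0$ for $s<d$, while for $s\geq d$ the chain of inclusions $F\cap B(x,s)\subseteq F\cap B(x',s+d)\subseteq F\cap B(x',2s)$ combined with $h_{2s}(x')\geq\varphi(2s)\,\tau|_{F}(B(x',2s))$ yields the Frostman-type estimate $\tau|_{F}(B(x,s))\leq\eta/\varphi(2s)$ whenever $2s\leq r_{0}$. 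Plugging this into the layer-cake identity $\int_{F}\varphi(x,y)\,\mathrm{d}\tau(y)=\int_{0}^{\infty}\tau|_{F}(B(x,r))\,|\mathrm{d}\varphi(r)|$ and controlling the tail $r\geq r_{0}/2$ by $\varphi(r_{0}/2)\tau(X)$ produces a uniform $M$, provided the integral $\int_{0}^{r_{0}/2}\eta/\varphi(2r)\,|\mathrm{d}\varphi(r)|$ is finite. The hard part will be precisely this convergence near $r=0$: the Frostman-type decay of $\tau|_{F}$ on small balls must mesh with the singularity of $\varphi$ at the origin, which in turn hinges on the doubling-like behaviour of $\varphi$ implicit in the paper's setting.
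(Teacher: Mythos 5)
Your reduction to a compact $F$ with $\tau(X\setminus F)<\varepsilon$ on which the truncated potentials $h_{r}$ tend to $0$ uniformly is sound, and so is the Frostman-type estimate $\tau\vert_{F}(B(x,s))\leq\eta/\varphi(2s)$ for $2s\leq r_{0}$. The gap is the final step: the integral $\int_{0}^{r_{0}/2}\varphi(2r)^{-1}\,|\mathrm{d}\varphi(r)|$, which your layer-cake computation needs to be finite, diverges already for the model kernel $\varphi(r)=r^{-\alpha}$, since there $\varphi(2r)^{-1}|\mathrm{d}\varphi(r)|=2^{\alpha}\alpha\,r^{-1}\,\mathrm{d}r$. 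This is the classical phenomenon that a ball-growth bound at the critical exponent, $\mu(B(x,s))\lesssim s^{\alpha}$, does not imply boundedness (or even finiteness) of the $\alpha$-potential of $\mu$; no doubling hypothesis on $\varphi$ rescues the computation, so the strategy of reconstructing the potential at $x\notin F$ from the measures of balls cannot close.

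What has to be transferred from $F$ to a general $x$ is the potential itself, not ball measures. The paper does this by invoking the proof of Carleson's theorem (Chapter III, Theorem 1 of the cited book): after discarding a further set of small $\tau$-measure one may choose the compact set $F$ so that $x\mapsto\int_{F}\varphi(x,y)\,\mathrm{d}\tau(y)$ is continuous on all of $X$ and bounded by $M_{0}$ on $F$; compactness then bounds it on $\left\{ x:\mathrm{dist}(x,F)<r\right\} $ for some $r>0$, and monotonicity of $\varphi$ gives the bound $\varphi(r)\tau(X)$ for the remaining $x$. If you were willing to assume the doubling condition (\ref{eq:Kernel restriction}) (which the proposition, as stated in the paper, does not), there is a shortcut that avoids both Egorov and Carleson: for $x\notin F$ with nearest point $x'\in F$ one has $d(x',y)\leq2d(x,y)$ for every $y\in F$, hence $\varphi(d(x,y))\leq\varphi(d(x',y)/2)\leq c\,\varphi(d(x',y))+c'$ by iterating (\ref{eq:Kernel restriction}) until $(1+2\delta)^{k}\geq2$, and integrating over $F$ gives $\int_{F}\varphi(x,y)\,\mathrm{d}\tau(y)\leq c\,g(x')+c'\tau(X)\leq cN+c'\tau(X)$ directly from your set $E_{N}=\{g\leq N\}$. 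Either way, the kernel must be compared pointwise; the detour through $\tau\vert_{F}(B(x,s))$ loses exactly the logarithmic factor that makes your last integral blow up.
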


\begin{proof}
Since $\intop\intop\varphi(x,y)\mathrm{d}\tau(y)\mathrm{d}\tau(x)<\infty$
it follows that $\intop\varphi(x,y)\mathrm{d}\tau(y)<\infty$ for
$\tau$ almost every $x\in X$. Thus there exists a compact set $F_{0}\subseteq X$
and $M_{0}>0$ such that $\tau(X\setminus F_{0})<\varepsilon/2$ and
$\intop\varphi(x,y)\mathrm{d}\tau(y)<M_{0}$ for every $x\in F_{0}$.
In the proof of \cite[Chapter III. Thm 1, page 15]{Carleson-selected problems on exceptional sets}
it is shown that there exists a compact set $F\subseteq F_{0}$ such
that $\tau(X\setminus F)<\varepsilon$ and $\lim_{x\rightarrow x_{0}}\intop_{F}\varphi(x,y)\mathrm{d}\tau(y)=\intop_{F}\varphi(x_{0},y)\mathrm{d}\tau(y)<M_{0}$
for every $x_{0}\in F$. Hence there exists $r>0$ such that if $\mathrm{dist}(F,x)<r$
then $\intop_{F}\varphi(x,y)\mathrm{d}\tau(y)<M_{0}$. Whenever $\mathrm{dist}(F,x)\geq r$
then $\intop_{F}\varphi(x,y)\mathrm{d}\tau(y)\leq\varphi(r)\tau(X)$
because $\varphi$ is monotone decreasing. Hence the statement follows.
\end{proof}
\begin{prop}
\label{prop:singular extinction}If $\nu$ is a finite Borel measure
that is singular to every finite Borel measure with finite $\varphi$-energy
then there exits a Borel set $Z\subseteq X$ such that $\nu(X\setminus Z)=0$
and $C_{\varphi}(Z)=0$.
\end{prop}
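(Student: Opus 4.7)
The strategy is a proof by contradiction combining a maximal zero-capacity carrier with a potential-theoretic regularisation via Proposition \ref{prop:boundedness condition}. First I would note that the class of Borel sets of zero $\varphi$-capacity is closed under countable unions: if $\sigma$ is a probability on $\bigcup_n Z_n$ with $I_\varphi(\sigma) < \infty$, then some $\sigma(Z_n) > 0$ and the normalised restriction $\sigma\vert_{Z_n}/\sigma(Z_n)$ is a probability measure on $Z_n$ with energy at most $I_\varphi(\sigma)/\sigma(Z_n)^2 < \infty$, contradicting $C_\varphi(Z_n) = 0$. Since $\nu$ is finite, $s := \sup\{\nu(N) : N \text{ Borel}, C_\varphi(N) = 0\}$ is finite and, by taking a union of a maximising sequence, is attained by some Borel $N_0$ with $C_\varphi(N_0) = 0$. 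Assume for contradiction that the conclusion fails, so that $s < \nu(X)$ and $\mu := \nu\vert_{X \setminus N_0}$ is a nonzero finite Borel measure which, by the maximality of $N_0$, charges no set of zero $\varphi$-capacity.

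Next, by inner regularity pick a compact set $K$ with $\mu(K) > 0$ and consider the potential $V(x) := \int_K \varphi(x,y)\,\mathrm{d}\mu(y)$, which is Borel measurable as the monotone limit of the continuous functions $V_n(x) = \int_K \min\{\varphi(x,y),n\}\,\mathrm{d}\mu(y)$. Set $P := \{V = \infty\}$ and $A_n := \{V \leq n\}$. The crux of the argument is the claim $C_\varphi(P) = 0$. If this fails, there is a probability $\sigma$ on $P$ with $I_\varphi(\sigma) < \infty$, and Proposition \ref{prop:boundedness condition} applied with $\varepsilon = 1/2$ produces a compact $F$ with $\sigma(F) \geq 1/2$ and $\int_F \varphi(x,y)\,\mathrm{d}\sigma(y) \leq M$ for every $x \in X$. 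By Fubini and the symmetry of $\varphi$,
\[
\int V\,\mathrm{d}\sigma\vert_F = \int_K \left(\int_F \varphi(x,y)\,\mathrm{d}\sigma(x)\right) \mathrm{d}\mu(y) \leq M\,\mu(K) < \infty;
\]
on the other hand $V \equiv \infty$ on $P$ and $\sigma\vert_F(P) = \sigma(F \cap P) = \sigma(F) \geq 1/2 > 0$ force the left-hand side to be $+\infty$, a contradiction. Hence $C_\varphi(P) = 0$, and since $\mu$ does not charge zero-capacity sets, $\mu(P) = 0$.

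Consequently $\mu\vert_K$ is concentrated on $\bigcup_n A_n$, so $\mu(A_n) > 0$ for some $n$, and
\[
I_\varphi(\mu\vert_{A_n}) \leq \int_{A_n} V(x)\,\mathrm{d}\mu(x) \leq n\,\mu(A_n) < \infty.
\]
Thus $\mu\vert_{A_n}$ is a nonzero finite Borel measure with finite $\varphi$-energy satisfying $\mu\vert_{A_n} \ll \nu$; the singularity hypothesis yields $\nu \perp \mu\vert_{A_n}$, whence $\mu\vert_{A_n} = 0$, contradicting $\mu(A_n) > 0$. The main obstacle is the capacity vanishing of $P$: Proposition \ref{prop:boundedness condition} is essential here, since it produces a probe measure $\sigma\vert_F$ whose potential is globally bounded and thereby allows Fubini to trap the naively infinite integral $\int V\,\mathrm{d}\sigma\vert_F$ by the finite quantity $M\,\mu(K)$.
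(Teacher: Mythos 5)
Your proof is correct up to one small repair, and it shares the paper's key mechanism --- Proposition \ref{prop:boundedness condition} plus Fubini to show that the set where a potential is infinite has zero $\varphi$-capacity --- but packages it differently. The paper works with the potential of $\nu$ itself: it sets $A_{n}=\{x:\int\varphi(x,y)\,\mathrm{d}\nu(y)\le n\}$, observes $I_{\varphi}(\nu\vert_{A_{n}})\le n\cdot\nu(A_{n})<\infty$ so that the singularity hypothesis kills $\nu(A_{n})$ directly, and then takes $Z$ to be the infinite-potential set of $\nu$, whose capacity is annihilated by exactly your Fubini-against-a-tamed-probe argument. You instead first build a maximal zero-capacity carrier $N_{0}$ and argue by contradiction on $\mu=\nu\vert_{X\setminus N_{0}}$; this costs you the countable-stability lemma for null capacity and the localisation to a compact $K$, but buys nothing the direct route does not already give (it does produce a different, though equally valid, witness $Z=N_{0}$). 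One line needs fixing: the inequality $I_{\varphi}(\mu\vert_{A_{n}})\le\int_{A_{n}}V\,\mathrm{d}\mu$ is not valid as written, because $V$ only integrates $\varphi(x,\cdot)$ over $K$, so you need $A_{n}\subseteq K$ for the inner integral to dominate. Replace $A_{n}$ by $A_{n}\cap K$ throughout the final step --- you already have $\mu(A_{n}\cap K)>0$ for some $n$ from $\mu(K)>0$ and $\mu(K\cap P)=0$ --- and the contradiction goes through unchanged.
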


\begin{proof}
Let $A_{n}=\left\{ x\in X:\int\varphi(x,y)\mathrm{d}\nu(y)\leq n\right\} $.
Then
\[
I_{\varphi}(\nu\vert_{A_{n}})=\int_{A_{n}}\int_{A_{n}}\varphi(x,y)\mathrm{d}\nu(y)\mathrm{d}\nu(x)\leq\int_{A_{n}}\int_{X}\varphi(x,y)\mathrm{d}\nu(y)\mathrm{d}\nu(x)\leq n\cdot\nu(A_{n})<\infty.
\]
Thus $\nu(A_{n})=0$ by the assumption and hence $\nu(X\setminus Z)=0$
for $Z=\left\{ x\in X:\int\varphi(x,y)\mathrm{d}\nu(y)=\infty\right\} $.

Assume for a contradiction that $C_{\varphi}(Z)>0$. Then there exists
a probability measure $\tau$ on $Z$ such that $\intop\intop\varphi(x,y)\mathrm{d}\tau(y)\mathrm{d}\tau(x)<\infty$.
Then by Proposition \ref{prop:boundedness condition} there exist
$0<M<\infty$ and $F\subseteq Z$ such that $\tau(F)>0$ and $\intop_{F}\varphi(x,y)\mathrm{d}\tau(y)<M$
for every $x\in X$. Thus
\[
\int_{X}\left(\int_{F}\varphi(x,y)\mathrm{d}\tau(y)\right)\mathrm{d}\nu(x)\leq\int_{X}M\mathrm{d}\nu(x)\leq M\cdot\nu(X)<\infty
\]
contradicting with that
\[
\int_{X}\left(\int_{F}\varphi(x,y)\mathrm{d}\tau(y)\right)\mathrm{d}\nu(x)=\int_{F}\left(\int_{X}\varphi(x,y)\mathrm{d}\nu(x)\right)\mathrm{d}\tau(y)=\int_{F}\infty\mathrm{d}\tau(x)=\infty
\]
where we used Fubini`s theorem. Hence $C_{\varphi}(Z)=0$.
\end{proof}
\begin{rem}
\label{rem:sing felbontas loc fin}Proposition \ref{prop:singular extinction}
holds for locally finite, Borel measures. This can easily be deduced
from Proposition \ref{prop:singular extinction}, the Lindel\"of
property of $X$ and the fact that $C_{\varphi}(\cup_{n=1}^{\infty}A_{n})=0$
for a sequence of Borel sets $A_{n}$ with $C_{\varphi}(A_{n})=0$.
\end{rem}

\section{Degenerate case\label{sec:Degenerate-case}}

In this section we discuss why the conditional measure of the $\varphi$-singular
part $\nu_{\perp}$ vanishes.
\begin{lem}
\label{lem:compact separation lemma}Let $D\subseteq X$ be a compact
set and let $B_{\omega}$ be a closed realisation of the random closed
set $B$ such that $D\cap B_{\omega}=\emptyset$. Let $\nu$ be a
finite Borel measure such that $\mathrm{supp}\nu\subseteq D$. Then
$\mathcal{C}_{k}(\nu)(X)=\mathcal{C}_{k,\omega}(\nu)(X)$ converges
to $0$ for that realisation $B_{\omega}$.
\end{lem}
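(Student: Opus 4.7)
The argument reduces to a routine separation-of-closed-sets observation combined with the diameter hypothesis on the partitions $\mathcal{Q}_k$. The key point is that, on the fixed realisation $B_\omega$, every term in the sum defining $\mathcal{C}_{k,\omega}(\nu)(X)$ will eventually vanish; convergence to $0$ follows trivially.

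First I would establish $\delta_0:=\mathrm{dist}(D,B_\omega)>0$. Since $B_\omega$ is a closed subset of the metric space $X$, the map $x\mapsto d(x,B_\omega)$ is continuous and strictly positive on $X\setminus B_\omega\supseteq D$. Compactness of $D$ then ensures this continuous positive function attains its minimum, giving $\delta_0>0$. By the standing assumption (\ref{eq:diameter_goes_to0}) there exists $k_0\in\mathbb{N}$ such that $\mathrm{diam}(Q)<\delta_0$ for every $Q\in\mathcal{Q}_k$ whenever $k\geq k_0$.

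Next I would argue that for such $k$ no single $Q\in\mathcal{Q}_k$ can meet both $D$ and $B_\omega$: if $x\in Q\cap D$ and $y\in Q\cap B_\omega$, then $d(x,y)\leq\mathrm{diam}(Q)<\delta_0\leq d(x,B_\omega)\leq d(x,y)$, a contradiction. Consequently each $Q\in\mathcal{Q}_k$ falls into one of two cases. Either $Q\cap D=\emptyset$, in which case the hypothesis $\mathrm{supp}\,\nu\subseteq D$ gives $\nu(Q)=\nu(Q\cap D)=0$, and hence $\nu\vert_Q=0$; or $Q\cap B_\omega=\emptyset$, in which case $I_{Q\cap B_\omega\neq\emptyset}=0$. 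In either case the corresponding summand in the definition (\ref{eq:C_k def}) of $\mathcal{C}_{k,\omega}(\nu)$ equals zero.

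Combining these observations, for every $k\geq k_0$ the sum
\[
\mathcal{C}_{k,\omega}(\nu)(X)=\sum_{Q\in\mathcal{Q}_k}P(Q\cap B\neq\emptyset)^{-1}\cdot I_{Q\cap B_\omega\neq\emptyset}\cdot\nu(Q)
\]
is identically zero, so certainly $\mathcal{C}_{k,\omega}(\nu)(X)\to 0$ as $k\to\infty$. There is no real obstacle here beyond checking that compactness of $D$ and closedness of $B_\omega$ yield positive separation; the matching of this scale against the shrinking diameters of $\mathcal{Q}_k$ does all the remaining work, and positivity of the denominators $P(Q\cap B\neq\emptyset)$ (assumption (\ref{eq:psoitivity of probability})) is never needed since all summands are already zero.
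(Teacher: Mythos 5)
Your proof is correct and follows essentially the same route as the paper: both establish $\mathrm{dist}(D,B_\omega)>0$ from compactness of $D$ and closedness of $B_\omega$, then use (\ref{eq:diameter_goes_to0}) to find $k_0$ beyond which no $Q\in\mathcal{Q}_k$ can meet both $D$ and $B_\omega$, so every summand in $\mathcal{C}_{k,\omega}(\nu)(X)$ vanishes. Your write-up merely spells out the separation and case analysis that the paper leaves implicit.
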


\begin{proof}
Since $B\cap D=\emptyset$ then $\mathrm{dist}(B,D)>0$. Let $k_{0}$
be such that $\sup\left\{ \mathrm{diam}(Q):Q\in\mathcal{Q}_{k}\right\} <\mathrm{dist}(B,D)$
for ever $k\geq k_{0}$ (we note that $k_{0}$ depends on the realisation
$B_{\omega}$ but exists nevertheless). Then $\mathcal{C}_{k,\omega}(\nu)(X)=0$
by the definition of $\mathcal{C}_{k}$ for $k\geq k_{0}$.
\end{proof}
\begin{thm}
\label{prop:dieing singular part}Assume that if $C_{\varphi}(D)=0$
for some compact set $D\subseteq X$ then $B\cap D=\emptyset$ almost
surely. If $\nu$ is a finite Borel measure that is singular to every
finite Borel measure with finite $\varphi$-energy then $\mathcal{C}_{k}(\nu)(X)$
converges to $0$ in probability.
\end{thm}

\begin{proof}
By Proposition \ref{prop:singular extinction} there exists $Z\subseteq X$
such that $\nu(X\setminus Z)=0$ and $C_{\varphi}(Z)=0$. Let $\eta>0$
be fixed. There exists a compact set $D\subseteq Z$ such that $\nu(X\setminus D)<\eta$
by inner regularity. Then by assumption $B\cap D=\emptyset$ almost
surely. Thus $\mathcal{C}_{k}(\nu\vert_{D})(X)$ converges to $0$
almost surely by Lemma \ref{lem:compact separation lemma} and hence
converges to $0$ in probability. Since $\mathcal{C}_{k}(\nu)=\mathcal{C}_{k}(\nu\vert_{D})+\mathcal{C}_{k}(\nu\vert_{X\setminus D})$
it follows by (\ref{eq:martingal expectation inequality}) that
\[
\limsup_{k\rightarrow\infty}\rho\left(\mathcal{C}_{k}(\nu)(X),0\right)\leq\limsup_{k\rightarrow\infty}\rho\left(\mathcal{C}_{k}(\nu)(X),\mathcal{C}_{k}(\nu\vert_{D})(X)\right)+\rho\left(\mathcal{C}_{k}(\nu\vert_{D})(X),0\right)
\]
\[
\leq\limsup_{k\rightarrow\infty}E\left(\mathcal{C}_{k}(\nu\vert_{X\setminus D})(X)\right)+0\leq\nu(X\setminus D)<\eta.
\]
Since we can choose $\eta$ to be arbitrarily small it follows that
$\mathcal{C}_{k}(\nu)(X)$ converges to $0$ in probability.
\end{proof}
\begin{rem}
\label{rem:dieing only on X0} Let $\nu$ be such that $\nu(X\setminus X_{0})=0$.
Then for the conclusion of Theorem \ref{prop:dieing singular part}
it is enough to assume that if $C_{\varphi}(D)=0$ for some compact
set $D\subseteq X_{0}$ (rather than for compact sets $D\subseteq X$)
then $B\cap D=\emptyset$ almost surely. In the proof we can choose
$D\subseteq Z\cap X_{0}$.
\end{rem}

\begin{lem}
\label{lem:complem X0 is dead}Let $\nu$ be a finite Borel measure
on $X$ such that $\nu(X_{0})=0$. Then $\mathcal{C}_{k}(\nu)(X)$
converges to $0$ in $\mathcal{L}^{1}$ and so in probability.
\end{lem}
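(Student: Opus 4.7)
The plan is to reduce the claim to a simple continuity-of-measure argument by computing the expectation of $\mathcal{C}_k(\nu)(X)$ directly and exploiting nonnegativity.

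First I would identify the right decreasing sequence of sets. Set $Y_k := \bigcup_{Q \in \mathcal{Q}_k} Q$. Assumption (\ref{eq:unique-subset}) says every $Q \in \mathcal{Q}_k$ sits inside a unique parent in $\mathcal{Q}_{k-1}$, so $Y_1 \supseteq Y_2 \supseteq \cdots$, and by definition $\bigcap_{k=1}^{\infty} Y_k = X_0$. Since $\nu$ is finite and $\nu(X_0)=0$, continuity of measure from above yields $\nu(Y_k) \to 0$ as $k \to \infty$.

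Next I would compute $E(\mathcal{C}_k(\nu)(X))$. By the defining formula (\ref{eq:C_k def}), Fubini, and the disjointness of the $Q \in \mathcal{Q}_k$,
\[
E(\mathcal{C}_k(\nu)(X)) = \sum_{Q \in \mathcal{Q}_k} P(Q \cap B \neq \emptyset)^{-1} \cdot E(I_{Q\cap B \neq \emptyset}) \cdot \nu(Q) = \sum_{Q \in \mathcal{Q}_k} \nu(Q) = \nu(Y_k).
\]
Note this step does \emph{not} invoke (\ref{eq:martingal expectation inequality}) in full (which assumed $\nu(X \setminus X_0)=0$); the computation works verbatim for any finite $\nu$ and only gives $\nu(A \cap Y_k)$ in general.

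Finally, since $\mathcal{C}_k(\nu)(X) \geq 0$ almost surely, we have $E|\mathcal{C}_k(\nu)(X)| = E(\mathcal{C}_k(\nu)(X)) = \nu(Y_k) \to 0$. Thus $\mathcal{C}_k(\nu)(X) \to 0$ in $\mathcal{L}^1$, and convergence in probability is then immediate. There is no serious obstacle here; the lemma is essentially a bookkeeping observation that the mass of $\mathcal{C}_k(\nu)$ is concentrated on $Y_k$ in expectation, and the hypothesis $\nu(X_0)=0$ forces this mass to vanish.
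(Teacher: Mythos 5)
Your proof is correct and follows essentially the same route as the paper: compute $E(\mathcal{C}_k(\nu)(X))=\nu\bigl(\bigcup_{Q\in\mathcal{Q}_k}Q\bigr)$ directly from (\ref{eq:C_k def}) and let this tend to $0$ using $\nu(X_0)=0$. Your write-up is merely more explicit about why the union decreases to $X_0$ and why continuity from above applies (finiteness of $\nu$), which the paper leaves implicit.
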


\begin{proof}
It follows from definition $\mathcal{C}_{k}(\nu)$ that
\begin{equation}
E(\mathcal{C}_{k}(\nu)(X))=\sum_{Q\in\mathcal{Q}_{k}}\nu(Q)=\nu(\bigcup_{Q\in\mathcal{Q}_{k}}Q).\label{eq:deg martingal eq}
\end{equation}
Since $\nu(X_{0})=0$ it follows that $\lim_{k\rightarrow\infty}\nu(\bigcup_{\mathcal{Q}_{k}}Q)=0$
and so the statement follows.
\end{proof}
\begin{rem}
Let $X$ be locally compact and $\nu$ be a locally finite Borel measure.
If $\nu(X_{0})=0$ then $\mathcal{C}_{k}(\nu)(K)$ converges to $0$
in $\mathcal{L}^{1}$ and so in probability for every compact set
$K$. It follows easily from Lemma \ref{lem:complem X0 is dead}.
That implies that $\mathcal{C}_{k}(\nu)(K)$ converges to $0$ vaguely
in probability.
\end{rem}

\section{$\mathcal{L}^{2}$-boundedness\label{sec:-boundedness}}

In this section we show that under the assumptions of Section \ref{subsec:Special-assumptions}
if $\nu$ is a finite Borel measure with $I_{\varphi}(\nu)<\infty$
then the sequence $\mathcal{C}_{k}(\nu)(X)$ is $\mathcal{L}^{2}$-bounded.

\begin{lem}
\label{lem:E(mu_n)}Let $Q,S\in\mathcal{Q}_{k}$, $Q\neq S$ and $\nu_{1}$
and $\nu_{2}$ be finite Borel measures on $X$. If (\ref{eq:capacity-independence})
holds for $Q$ and $S$ then
\[
E\left(\mathcal{C}_{k}(\nu_{1})(Q)\cdot\mathcal{C}_{k}(\nu_{2})(S)\right)\leq c\cdot\varphi(\mathrm{dist}(Q,S)\cdot\nu_{1}(Q)\cdot\nu_{2}(S).
\]
\end{lem}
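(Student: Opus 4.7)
The proof will be a short, direct computation based on the observation that the elements of $\mathcal{Q}_k$ are pairwise disjoint, so that the restrictions $\nu\vert_R$ appearing in the sum (1.15) interact trivially with the set $Q$.

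First I would unfold the definition (1.15) of $\mathcal{C}_k(\nu_i)$. Since the $R \in \mathcal{Q}_k$ are disjoint, for $Q \in \mathcal{Q}_k$ the only surviving term in $\mathcal{C}_k(\nu_1)(Q) = \sum_{R \in \mathcal{Q}_k} P(R \cap B \ne \emptyset)^{-1} \cdot I_{R \cap B \ne \emptyset} \cdot \nu_1\vert_R(Q)$ is $R = Q$, giving
\[
\mathcal{C}_k(\nu_1)(Q) = P(Q \cap B \ne \emptyset)^{-1} \cdot I_{Q \cap B \ne \emptyset} \cdot \nu_1(Q),
\]
and analogously for $\mathcal{C}_k(\nu_2)(S)$.

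Next I would multiply these two expressions and take expectation. Using $E(I_{Q \cap B \ne \emptyset} \cdot I_{S \cap B \ne \emptyset}) = P(Q \cap B \ne \emptyset \text{ and } S \cap B \ne \emptyset)$, we obtain
\[
E\bigl(\mathcal{C}_k(\nu_1)(Q) \cdot \mathcal{C}_k(\nu_2)(S)\bigr) = \frac{P(Q \cap B \ne \emptyset \text{ and } S \cap B \ne \emptyset)}{P(Q \cap B \ne \emptyset) \cdot P(S \cap B \ne \emptyset)} \cdot \nu_1(Q) \cdot \nu_2(S).
\]
Finally, the hypothesis that (\ref{eq:capacity-independence}) holds for the pair $Q, S$ allows us to bound the fraction above by $c \cdot \varphi(\mathrm{dist}(Q, S))$, yielding the desired inequality.

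There is no real obstacle here; the lemma is essentially a rewriting of the assumption (\ref{eq:capacity-independence}) in terms of the measures $\mathcal{C}_k(\nu_i)$, once one observes that disjointness of $\mathcal{Q}_k$ collapses the defining sums to a single term on $Q$ and on $S$ respectively.
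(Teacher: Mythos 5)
Your proposal is correct and matches the paper's proof essentially verbatim: both unfold the definition of $\mathcal{C}_{k}$ (using disjointness of $\mathcal{Q}_{k}$ to reduce $\mathcal{C}_{k}(\nu_{1})(Q)$ and $\mathcal{C}_{k}(\nu_{2})(S)$ to single terms), take the expectation of the product to produce the ratio $P(Q\cap B\ne\emptyset\text{ and }S\cap B\ne\emptyset)/\bigl(P(Q\cap B\ne\emptyset)P(S\cap B\ne\emptyset)\bigr)$, and then apply (\ref{eq:capacity-independence}). No further comment is needed.
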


\begin{proof}
By the definition of $\mathcal{C}_{k}(\nu)$ and (\ref{eq:capacity-independence})
it follows that
\[
E\left(\mathcal{C}_{k}(\nu_{1})(Q)\cdot\mathcal{C}_{k}(\nu_{2})(S)\right)
\]
\[
=E\left(P(Q\cap B\neq\emptyset)^{-1}\cdot I_{Q\cap B\neq\emptyset}\cdot\nu_{1}(Q)\cdot P(S\cap B\neq\emptyset)^{-1}\cdot I_{S\cap B\neq\emptyset}\cdot\nu_{2}(S)\right)
\]
\[
=P(Q\cap B\neq\emptyset)^{-1}\cdot P(S\cap B\neq\emptyset)^{-1}\cdot P(Q\cap B\ne\emptyset\,and\,S\cap B\ne\emptyset)\cdot\nu_{1}(Q)\cdot\nu_{2}(S)
\]
\[
\leq c\cdot\varphi(\mathrm{dist}(Q,S))\cdot\nu_{1}(Q)\cdot\nu_{2}(S).
\]
\end{proof}
\begin{lem}
\label{lem:dist jump lem}Let $\nu_{1}$ and $\nu_{2}$ be finite
Borel measures on $X$. Assume that $\delta>0$ is such that (\ref{eq:Kernel restriction})
holds and $Q,S\in\mathcal{Q}_{k}$ such that $\max\left\{ \mathrm{diam}(Q),\mathrm{diam}(S)\right\} <\delta\cdot\mathrm{dist}(Q,S)$.
Then
\[
\varphi(\mathrm{dist}(Q,S))\cdot\nu_{1}(Q)\cdot\nu_{2}(S)\leq c_{2}\intop_{S}\left(\intop_{Q}\varphi(x,y)\mathrm{d}\nu_{1}(x)\right)\mathrm{d}\nu_{2}(y)+c_{3}\nu_{1}(Q)\nu_{2}(S).
\]
\end{lem}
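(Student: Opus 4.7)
The plan is to establish a pointwise comparison between $\varphi(\mathrm{dist}(Q,S))$ and $\varphi(x,y)$ for $x\in Q$, $y\in S$, and then integrate against $\mathrm{d}\nu_{1}\times\mathrm{d}\nu_{2}$.

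First I would bound the distance $d(x,y)$ from above in terms of $\mathrm{dist}(Q,S)$. For any $x\in Q$ and $y\in S$, pick $q\in Q$, $s\in S$ with $d(q,s)$ close to $\mathrm{dist}(Q,S)$; the triangle inequality gives $d(x,y)\leq\mathrm{diam}(Q)+\mathrm{dist}(Q,S)+\mathrm{diam}(S)$, and the hypothesis $\max\{\mathrm{diam}(Q),\mathrm{diam}(S)\}<\delta\cdot\mathrm{dist}(Q,S)$ then yields
\[
d(x,y)\leq\mathrm{dist}(Q,S)\cdot(1+2\delta).
\]

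Next I would invoke the monotonicity of $\varphi$ together with condition (\ref{eq:Kernel restriction}) applied at $r=\mathrm{dist}(Q,S)$:
\[
\varphi(\mathrm{dist}(Q,S))\leq c_{2}\varphi\bigl(\mathrm{dist}(Q,S)\cdot(1+2\delta)\bigr)+c_{3}\leq c_{2}\varphi(d(x,y))+c_{3}=c_{2}\varphi(x,y)+c_{3},
\]
valid for every $x\in Q$ and $y\in S$. Integrating this pointwise inequality with respect to $\mathrm{d}\nu_{1}(x)\,\mathrm{d}\nu_{2}(y)$ over $Q\times S$ produces the asserted bound, since the left-hand side is constant in $(x,y)$ and integrates to $\varphi(\mathrm{dist}(Q,S))\cdot\nu_{1}(Q)\cdot\nu_{2}(S)$.

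There is no serious obstacle: the lemma is precisely the mechanism by which condition (\ref{eq:Kernel restriction}) is designed to convert the ``distance-between-cubes'' kernel appearing in Lemma \ref{lem:E(mu_n)} into the pointwise kernel $\varphi(x,y)$ needed for an energy estimate. The only mild subtlety is remembering to use $\mathrm{diam}(Q)+\mathrm{diam}(S)\leq 2\delta\cdot\mathrm{dist}(Q,S)$ (rather than each diameter separately being $\leq\delta\cdot\mathrm{dist}(Q,S)$), which is what produces the factor $(1+2\delta)$ matching (\ref{eq:Kernel restriction}) exactly.
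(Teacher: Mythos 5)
Your argument is correct and is essentially identical to the paper's own proof: the same triangle-inequality bound $d(x,y)\leq\mathrm{dist}(Q,S)+\mathrm{diam}(Q)+\mathrm{diam}(S)<\mathrm{dist}(Q,S)(1+2\delta)$, followed by (\ref{eq:Kernel restriction}) and the monotonicity of $\varphi$, and then integration over $Q\times S$. No gaps.
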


\begin{proof}
If $x\in Q$ and $y\in S$ then
\[
d(x,y)\leq\mathrm{dist}(Q,S)+\mathrm{diam}(Q)+\mathrm{diam}(S)<\mathrm{dist}(Q,S)\cdot(1+2\delta).
\]
Then by (\ref{eq:Kernel restriction}) and the monotonicity of $\varphi$
it follows that
\[
\varphi(\mathrm{dist}(Q,S))\leq c_{2}\varphi(\mathrm{dist}(Q,S)\cdot(1+2\delta))+c_{3}\leq c_{2}\varphi(d(x,y))+c_{3}.
\]
Integrating over $Q\times S$ with respect to $\nu_{1}\times\nu_{2}$
the statement follows.
\end{proof}
\begin{lem}
\label{lem:diagonal expect bound}Assume that (\ref{eq:lower hitting prob})
holds. Let $\nu_{1}$ and $\nu_{2}$ be finite Borel measures on $X$.
If $Q,S\in\mathcal{Q}_{k}$ then
\[
E\left(\mathcal{C}_{k}(\nu_{1})(Q)\cdot\mathcal{C}_{k}(\nu_{2})(S)\right)\leq a^{-1}\left(\intop_{Q}\intop_{Q}\varphi(x,y)\mathrm{d}\nu_{1}(x)\mathrm{d}\nu_{1}(y)+\intop_{S}\intop_{S}\varphi(x,y)\mathrm{d}\nu_{2}(x)\mathrm{d}\nu_{2}(y)\right).
\]
\end{lem}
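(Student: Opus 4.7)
The plan is to compute $E(\mathcal{C}_{k}(\nu_{1})(Q)\cdot\mathcal{C}_{k}(\nu_{2})(S))$ directly from the definition of $\mathcal{C}_{k}$ and then reduce the estimate to a capacity/energy comparison via (\ref{eq:lower hitting prob}). Unlike Lemma \ref{lem:E(mu_n)}, we do not assume $Q\ne S$ and we do not have (\ref{eq:capacity-independence}) available; the only input is the lower hitting bound $P(Q\cap B\ne\emptyset)\ge a\cdot C_{\varphi}(Q)$.

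First, since the summands of $\mathcal{C}_{k}(\nu_{i})$ indexed by distinct cubes of $\mathcal{Q}_{k}$ are supported on disjoint sets, the definition (\ref{eq:C_k def}) gives $\mathcal{C}_{k}(\nu_{1})(Q)=P(Q\cap B\ne\emptyset)^{-1}I_{Q\cap B\ne\emptyset}\nu_{1}(Q)$ and similarly for $S$. Taking expectation produces
\[
E\left(\mathcal{C}_{k}(\nu_{1})(Q)\cdot\mathcal{C}_{k}(\nu_{2})(S)\right)=\frac{P(Q\cap B\ne\emptyset\text{ and }S\cap B\ne\emptyset)}{P(Q\cap B\ne\emptyset)\cdot P(S\cap B\ne\emptyset)}\nu_{1}(Q)\nu_{2}(S),
\]
which covers the case $Q=S$ automatically (the joint event is just $Q\cap B\ne\emptyset$ and one factor in the denominator cancels).

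Next, I would apply the Cauchy--Schwarz inequality $P(A\cap C)=E(I_{A}I_{C})\le\sqrt{P(A)P(C)}$ to the joint probability in the numerator, obtaining the symmetric bound
\[
E\left(\mathcal{C}_{k}(\nu_{1})(Q)\cdot\mathcal{C}_{k}(\nu_{2})(S)\right)\le\frac{\nu_{1}(Q)\nu_{2}(S)}{\sqrt{P(Q\cap B\ne\emptyset)P(S\cap B\ne\emptyset)}}.
\]
Now invoke (\ref{eq:lower hitting prob}) on each factor and then the defining inequality of $\varphi$-capacity: normalising $\nu_{i}|_{A}$ to a probability measure on $A$ gives $C_{\varphi}(A)\ge\nu_{i}(A)^{2}/I_{\varphi}(\nu_{i}|_{A})$, so $\nu_{i}(A)/\sqrt{C_{\varphi}(A)}\le\sqrt{I_{\varphi}(\nu_{i}|_{A})}$ (with the edge cases $\nu_{i}(A)=0$ trivial and $C_{\varphi}(A)=0$ forcing $I_{\varphi}(\nu_{i}|_{A})=\infty$, hence the stated inequality trivially). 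Combining these yields
\[
E\left(\mathcal{C}_{k}(\nu_{1})(Q)\cdot\mathcal{C}_{k}(\nu_{2})(S)\right)\le a^{-1}\sqrt{I_{\varphi}(\nu_{1}|_{Q})\cdot I_{\varphi}(\nu_{2}|_{S})}.
\]

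Finally, an application of the AM--GM inequality $\sqrt{xy}\le\tfrac{1}{2}(x+y)$ produces the claimed bound (in fact with an extra factor of $1/2$ to spare). There is no genuine obstacle here; the only care needed is in treating the degenerate cases $\nu_{i}(Q)=0$ or $C_{\varphi}(Q)=0$, both of which are handled by inspection of the two sides of the inequality.
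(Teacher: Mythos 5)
Your proof is correct, and it follows the same backbone as the paper's: compute the expectation explicitly from (\ref{eq:C_k def}), control the joint hitting probability by the individual ones, and then pass from $P(Q\cap B\neq\emptyset)^{-1}$ to energy via (\ref{eq:lower hitting prob}) and the normalisation $C_{\varphi}(A)\geq\nu(A)^{2}/I_{\varphi}(\nu\vert_{A})$. The one place where you genuinely diverge is the symmetrisation step: the paper assumes without loss of generality that $\nu_{1}(Q)\leq\nu_{2}(S)$, bounds the joint probability crudely by $P(Q\cap B\neq\emptyset)$, and ends up charging everything to the single term $a^{-1}I_{\varphi}(\nu_{2}\vert_{S})$, whereas you keep the estimate symmetric by using $P(A\cap C)\leq\sqrt{P(A)P(C)}$ followed by AM--GM. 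Both are two-line elementary arguments; yours buys an extra factor of $\tfrac{1}{2}$ (immaterial here) and avoids the case split on which measure is larger, at the cost of invoking the capacity--energy inequality on both cubes rather than one. Your handling of the degenerate cases ($\nu_{i}$ vanishing on the cube, or zero capacity forcing infinite energy on the right-hand side) is also sound and matches what the paper leaves implicit.
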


\begin{proof}
Due to symmetry without the loss of generality we can assume that
$\nu_{1}(Q)\leq\nu_{2}(S)$. Also we can assume that $0<\nu_{1}(Q)\leq\nu_{2}(S)$
otherwise the proof is trivial. By the definition of $\mathcal{C}_{k}$
and (\ref{eq:lower hitting prob}) it follows that
\[
E\left(\mathcal{C}_{k}(\nu_{1})(Q)\cdot\mathcal{C}_{k}(\nu_{2})(S)\right)
\]
\[
=E\left(P(Q\cap B\neq\emptyset)^{-1}\cdot I_{Q\cap B\neq\emptyset}\cdot\nu_{1}(Q)\cdot P(S\cap B\neq\emptyset)^{-1}\cdot I_{S\cap B\neq\emptyset}\cdot\nu_{2}(S)\right)
\]
\[
=P(Q\cap B\neq\emptyset)^{-1}\cdot P(S\cap B\neq\emptyset)^{-1}\cdot P(Q\cap B\ne\emptyset\,and\,S\cap B\ne\emptyset)\cdot\nu_{1}(Q)\cdot\nu_{2}(S)
\]
\[
\leq P(Q\cap B\neq\emptyset)^{-1}\cdot P(S\cap B\neq\emptyset)^{-1}\cdot P(Q\cap B\ne\emptyset)\cdot\nu_{2}(S)\cdot\nu_{2}(S)
\]

\[
=P(S\cap B\neq\emptyset)^{-1}\cdot\nu_{2}(S)^{2}\leq a^{-1}\cdot C_{\varphi}(S)^{-1}\cdot\nu_{2}(S)^{2}\leq a^{-1}\cdot\left(I_{\varphi}(\nu_{2}\vert_{S})\cdot\nu_{2}(S)^{-2}\right)\cdot\nu_{2}(S)^{2}
\]
\[
=a^{-1}\intop_{S}\intop_{S}\varphi(x,y)\mathrm{d}\nu_{2}(x)\mathrm{d}\nu_{2}(y)\leq a^{-1}\left(\intop_{Q}\intop_{Q}\varphi(x,y)\mathrm{d}\nu_{1}(x)\mathrm{d}\nu_{1}(y)+\intop_{S}\intop_{S}\varphi(x,y)\mathrm{d}\nu_{2}(x)\mathrm{d}\nu_{2}(y)\right).
\]
\end{proof}
\begin{prop}
\label{prop:2bounded martingale}Let $\nu$ be a finite Borel measure
on $X$. Assume that (\ref{eq:lower hitting prob}) holds and there
exists $\delta>0$ such that (\ref{eq:Kernel restriction}), (\ref{eq:capacity-independence})
and (\ref{eq:bounded sundivision}) hold. Then
\[
E\left(\mathcal{C}_{k}(\nu)(X)\mathcal{C}_{k}(\nu)(X)\right)\leq\left(cc_{2}+2a^{-1}M_{\delta}\right)I_{\varphi}(\nu)+cc_{3}\nu(X)^{2}
\]
for every $k\in\mathbb{N}$.
\end{prop}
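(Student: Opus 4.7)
The plan is to expand the second moment as a double sum over pairs of cubes, split the pairs into a ``far'' and ``close'' regime, and apply a different lemma to each regime.

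First, since the elements of $\mathcal{Q}_{k}$ are pairwise disjoint, $\mathcal{C}_{k}(\nu)(X)=\sum_{Q\in\mathcal{Q}_{k}}\mathcal{C}_{k}(\nu)(Q)$, and hence by linearity of expectation
\[
E\left(\mathcal{C}_{k}(\nu)(X)^{2}\right)=\sum_{Q,S\in\mathcal{Q}_{k}}E\left(\mathcal{C}_{k}(\nu)(Q)\cdot\mathcal{C}_{k}(\nu)(S)\right).
\]
I split the index set into the \emph{far} pairs $F=\{(Q,S):Q\neq S,\ \max\{\mathrm{diam}(Q),\mathrm{diam}(S)\}<\delta\cdot\mathrm{dist}(Q,S)\}$ and the \emph{close} pairs $C$, which consists of everything else (in particular all diagonal pairs $Q=S$, since $\mathrm{dist}(Q,Q)=0$).

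For $(Q,S)\in F$, assumption (\ref{eq:capacity-independence}) applies, so Lemma \ref{lem:E(mu_n)} bounds the summand by $c\cdot\varphi(\mathrm{dist}(Q,S))\cdot\nu(Q)\cdot\nu(S)$, and Lemma \ref{lem:dist jump lem} then yields
\[
c\cdot\varphi(\mathrm{dist}(Q,S))\cdot\nu(Q)\cdot\nu(S)\leq cc_{2}\intop_{Q}\intop_{S}\varphi(x,y)\mathrm{d}\nu(x)\mathrm{d}\nu(y)+cc_{3}\nu(Q)\nu(S).
\]
Summing over $F\subseteq\mathcal{Q}_{k}\times\mathcal{Q}_{k}$, and using the disjointness of the $Q$'s, the contribution of far pairs is bounded by $cc_{2}I_{\varphi}(\nu)+cc_{3}\nu(X)^{2}$.

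For $(Q,S)\in C$, Lemma \ref{lem:diagonal expect bound} gives
\[
E\left(\mathcal{C}_{k}(\nu)(Q)\mathcal{C}_{k}(\nu)(S)\right)\leq a^{-1}\left(\intop_{Q}\intop_{Q}\varphi\mathrm{d}\nu\mathrm{d}\nu+\intop_{S}\intop_{S}\varphi\mathrm{d}\nu\mathrm{d}\nu\right).
\]
The key observation is that assumption (\ref{eq:bounded sundivision}) says that for each fixed $Q$ there are at most $M_{\delta}$ cubes $S\in\mathcal{Q}_{k}$ satisfying the close condition; by symmetry the same holds swapping $Q$ and $S$. Summing the first term over $(Q,S)\in C$ therefore gives at most $M_{\delta}\sum_{Q}\intop_{Q}\intop_{Q}\varphi\mathrm{d}\nu\mathrm{d}\nu\leq M_{\delta}I_{\varphi}(\nu)$, and likewise for the second term. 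Thus the close contribution is at most $2a^{-1}M_{\delta}I_{\varphi}(\nu)$. Adding the two bounds yields the claimed inequality.

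The only delicate step is bookkeeping around the definition of ``close'': one must check that every pair not in $F$ (including $Q=S$, and pairs where only one of the two diameter comparisons fails) gets covered by Lemma \ref{lem:diagonal expect bound} and counted by (\ref{eq:bounded sundivision}). This is immediate from the symmetry of the ``close'' condition and from the fact that Lemma \ref{lem:diagonal expect bound} requires no geometric hypothesis on the pair beyond both lying in $\mathcal{Q}_{k}$.
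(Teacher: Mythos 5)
Your proof is correct and follows essentially the same route as the paper's: the same good/bad (far/close) splitting of pairs in $\mathcal{Q}_{k}\times\mathcal{Q}_{k}$, with Lemma \ref{lem:E(mu_n)} and Lemma \ref{lem:dist jump lem} handling the far pairs and Lemma \ref{lem:diagonal expect bound} together with (\ref{eq:bounded sundivision}) handling the close pairs. Your explicit exclusion of $Q=S$ from the far set is redundant (since $\mathrm{dist}(Q,Q)=0$ the strict inequality already fails there) but harmless.
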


\begin{proof}
We say that a pair $(Q,S)\in\mathcal{Q}_{k}\times\mathcal{Q}_{k}$
is a `good' pair if $\max\left\{ \mathrm{diam}(Q),\mathrm{diam}(S)\right\} <\delta\cdot\mathrm{dist}(Q,S)$
and is a bad pair if $\max\left\{ \mathrm{diam}(Q),\mathrm{diam}(S)\right\} \geq\delta\cdot\mathrm{dist}(Q,S)$.
Combining Lemma \ref{lem:E(mu_n)} and Lemma \ref{lem:dist jump lem}
it follows that
\[
\sum_{(Q,S)\mathrm{\,is\,good}}E\left(\mathcal{C}_{k}(\nu)(Q)\mathcal{C}_{k}(\nu)(S)\right)\leq\sum_{(Q,S)\mathrm{\,is\,good}}c\cdot\varphi(\mathrm{dist}(Q,S)\cdot\nu(Q)\cdot\nu(S)
\]
\[
\leq c\sum_{(Q,S)\mathrm{\,is\,good}}\left(c_{2}\intop_{S}\left(\intop_{Q}\varphi(x,y)\mathrm{d}\nu(x)\right)\mathrm{d}\nu(y)+c_{3}\nu(Q)\nu(S)\right)
\]
\[
\leq c\sum_{Q,S\in\mathcal{Q}_{k}}\left(c_{2}\intop_{S}\left(\intop_{Q}\varphi(x,y)\mathrm{d}\nu(x)\right)\mathrm{d}\nu(y)+c_{3}\nu(Q)\nu(S)\right)\leq c\left(c_{2}I_{\varphi}(\nu)+c_{3}\nu(X)^{2}\right).
\]
By Lemma \ref{lem:diagonal expect bound} and (\ref{eq:bounded sundivision})
\[
\sum_{(Q,S)\mathrm{\,is\,bad}}E\left(\mathcal{C}_{k}(\nu)(Q)\mathcal{C}_{k}(\nu)(S)\right)
\]
\[
\leq\sum_{(Q,S)\mathrm{\,is\,bad}}a^{-1}\left(\intop_{Q}\intop_{Q}\varphi(x,y)\mathrm{d}\nu(x)\mathrm{d}\nu(y)+\intop_{S}\intop_{S}\varphi(x,y)\mathrm{d}\nu(x)\mathrm{d}\nu(y)\right)
\]
\[
=2a^{-1}\sum_{Q\in\mathcal{Q}_{k}}\left(\sum_{\begin{array}{c}
S\in\mathcal{Q}_{k}\\
(Q,S)\mathrm{\,is\,bad}
\end{array}}\intop_{Q}\intop_{Q}\varphi(x,y)\mathrm{d}\nu(x)\mathrm{d}\nu(y)\right)
\]
\[
\leq2a^{-1}\sum_{Q\in\mathcal{Q}_{k}}M_{\delta}\intop_{Q}\intop_{Q}\varphi(x,y)\mathrm{d}\nu(x)\mathrm{d}\nu(y)\leq2a^{-1}M_{\delta}I_{\varphi}(\nu).
\]
Hence the statement follows by
\[
E\left(\mathcal{C}_{k}(\nu)(X)^{2}\right)=\sum_{Q,S\in\mathcal{Q}_{k}}E\left(\mathcal{C}_{k}(\nu)(Q)\mathcal{C}_{k}(\nu)(S)\right)
\]
\[
=\sum_{(Q,S)\mathrm{\,is\,good}}E\left(\mathcal{C}_{k}(\nu)(Q)\mathcal{C}_{k}(\nu)(S)\right)+\sum_{(Q,S)\mathrm{\,is\,bad}}E\left(\mathcal{C}_{k}(\nu)(Q)\mathcal{C}_{k}(\nu)(S)\right)
\]
\[
\leq c\left(c_{2}I_{\varphi}(\nu)+c_{3}\nu(X)^{2}\right)+2a^{-1}M_{\delta}I_{\varphi}(\nu).
\]
\end{proof}

\section{\label{sec:Non-degenerate-percolation-of}Non-degenerate limit}

In this section our main goal is to show that $\mathcal{C}_{k}(\nu)(A)$
converges in $\mathcal{L}^{2}$ if $I_{\varphi}(\nu)<\infty$ and
$\underline{F}(x,y)=\overline{F}(x,y)$. Recall that the definition
of $F_{k,n}$, $\underline{F}(x,y)$ and $\overline{F}(x,y)$ can
be found in Section \ref{subsec:Conditional-measure}. A key observation
is that $E\left(\mathcal{C}_{k}(\nu)(X)^{2}\right)=\intop_{X}\intop_{X}F_{k,k}(x,y)\mathrm{d}\nu(x)\mathrm{d}\nu(y)$.
Using this and the assumption that $\underline{F}(x,y)=\overline{F}(x,y)$
we prove the $\mathcal{L}^{2}$ convergence in two steps. We divide
the double integral into two parts, one part is the double integral
on a domain that is bounded away from the diagonal and approximates
the double integral uniformly, the other part is around the diagonal
that is small. Then from this we deduce the convergence in $\mathcal{L}^{2}$.
At the end of the section we show that if $\mathcal{C}_{k}(\nu)(A)$
is a martingale then we do not even need the assumption that $\underline{F}(x,y)=\overline{F}(x,y)$
because then the convergence in $\mathcal{L}^{2}$ is automatic by
the $\mathcal{L}^{2}$-boundedness.
\begin{lem}
\label{lem:Fkn lem sequance exact}Let $\nu_{1}$ and $\nu_{2}$ be
finite Borel measures on $X$. For $k,n\in\mathbb{N}$
\[
E\left(\mathcal{C}_{k}(\nu_{1})(X)\cdot\mathcal{C}_{n}(\nu_{2})(X)\right)=\intop_{X}\intop_{X}F_{k,n}(x,y)\mathrm{d}\nu_{1}(x)\mathrm{d}\nu_{2}(y).
\]
\end{lem}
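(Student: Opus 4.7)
The plan is to prove this identity by a direct expansion from the definition of $\mathcal{C}_k$ and Fubini--Tonelli, with no auxiliary hypotheses needed. First I would write
\[
\mathcal{C}_{k}(\nu_{1})(X)\cdot\mathcal{C}_{n}(\nu_{2})(X)=\sum_{Q\in\mathcal{Q}_{k}}\sum_{S\in\mathcal{Q}_{n}}\frac{I_{Q\cap B\neq\emptyset}\,I_{S\cap B\neq\emptyset}}{P(Q\cap B\neq\emptyset)\cdot P(S\cap B\neq\emptyset)}\,\nu_{1}(Q)\,\nu_{2}(S),
\]
using (\ref{eq:C_k def}) and multiplying out the two sums (only finitely many positivity denominators are ever divided, and all terms are nonnegative, so there is no measurability or summability issue).

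Next I would take expectation term by term. Since every summand is nonnegative, Fubini--Tonelli (or monotone convergence applied to partial sums) lets me move $E$ inside the double series, leaving the only random factor $E(I_{Q\cap B\neq\emptyset}\,I_{S\cap B\neq\emptyset})=P(Q\cap B\ne\emptyset\text{ and }S\cap B\ne\emptyset)$. Comparing with Definition \ref{def:F(x,y) def}, the resulting coefficient is exactly $F_{k,n}(x,y)$ for any $x\in Q$ and $y\in S$, i.e.\ $F_{k,n}$ is constant on $Q\times S$ with this value.

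The last step rewrites $\nu_{1}(Q)\,\nu_{2}(S)=\iint_{Q\times S}\mathrm{d}\nu_{1}(x)\,\mathrm{d}\nu_{2}(y)$, absorbs the constant $F_{k,n}$ into the integrand, and uses the disjointness of the $Q$'s (and of the $S$'s) to rewrite the double sum over $(Q,S)\in\mathcal{Q}_{k}\times\mathcal{Q}_{n}$ as a single integral over the product of the unions. On the complement we have $Q_{k}(x)=\emptyset$ or $Q_{n}(y)=\emptyset$, so by Definition \ref{def:F(x,y) def} the integrand $F_{k,n}$ vanishes there; hence the double integral equals the one over all of $X\times X$, giving the claimed formula.

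The only thing that needs minor care is the justification that the double sum can be rewritten as the stated double integral even when $\nu_{i}(X\setminus X_{0})>0$ (no assumption of Notation \ref{nota:decomp meas}-type is made in the lemma); this is handled cleanly by the fact that $F_{k,n}\equiv 0$ outside $(\cup\mathcal{Q}_{k})\times(\cup\mathcal{Q}_{n})$, and I expect no genuine obstacle beyond this bookkeeping.
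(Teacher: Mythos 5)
Your proposal is correct and follows essentially the same route as the paper: expand the product of the two sums from (\ref{eq:C_k def}), take expectations term by term to produce the joint hitting probabilities, recognise that $F_{k,n}$ is constant on each $Q\times S$ with that value, and use the ``$0$ otherwise'' clause of Definition \ref{def:F(x,y) def} to extend the sum of integrals to all of $X\times X$. The extra care you take with Tonelli and with the complement of $\bigcup\mathcal{Q}_{k}$ is exactly the bookkeeping the paper leaves implicit.
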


\begin{proof}
By the definition of $\mathcal{C}_{k}$ it follows that
\[
E\left(\mathcal{C}_{k}(\nu_{1})(X)\cdot\mathcal{C}_{n}(\nu_{2})(X)\right)=\sum_{Q\in\mathcal{Q}_{k}}\sum_{S\in\mathcal{Q}_{n}}\frac{P(Q\cap B\ne\emptyset\,and\,S\cap B\ne\emptyset)}{P(Q\cap B\ne\emptyset)\cdot P(S\cap B\ne\emptyset)}\cdot\nu_{1}(Q)\cdot\nu_{2}(S)
\]
\[
=\sum_{Q\in\mathcal{Q}_{k}}\sum_{S\in\mathcal{Q}_{n}}\intop_{Q}\intop_{S}F_{k,n}(x,y)\mathrm{d}\nu_{1}(x)\mathrm{d}\nu_{2}(y)=\intop_{X}\intop_{X}F_{k,n}(x,y)\mathrm{d}\nu_{1}(x)\mathrm{d}\nu_{2}(y).
\]
\end{proof}
\begin{lem}
\label{lem:Fkn mixed integral}Let $\nu_{1}$ and $\nu_{2}$ be finite
Borel measures on $X$. Let $Q,S\subseteq X$ be Borel sets. For $k,n\in\mathbb{N}$,
it follows that
\[
E\left(\mathcal{C}_{k}(\nu_{1})(Q)\cdot\mathcal{C}_{n}(\nu_{2})(S)\right)=\intop_{S}\left(\intop_{Q}F_{k,n}(x,y)\mathrm{d}\nu_{1}(x)\right)\mathrm{d}\nu_{2}(y).
\]
\end{lem}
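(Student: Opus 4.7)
The plan is to imitate the proof of Lemma \ref{lem:Fkn lem sequance exact} essentially verbatim, simply replacing $X$ by $Q$ on one side and by $S$ on the other side. Expanding the definition of $\mathcal{C}_k(\nu_1)$ and $\mathcal{C}_n(\nu_2)$ gives
\[
\mathcal{C}_k(\nu_1)(Q) = \sum_{R\in\mathcal{Q}_k} P(R\cap B\neq\emptyset)^{-1}\,I_{R\cap B\neq\emptyset}\,\nu_1(R\cap Q)
\]
and the analogous expression for $\mathcal{C}_n(\nu_2)(S)$. Multiplying these two finite (or countable but absolutely convergent) sums, taking expectation termwise (justified by Fubini/Tonelli since every summand is nonnegative), and using that $E(I_{R\cap B\neq\emptyset}\cdot I_{T\cap B\neq\emptyset}) = P(R\cap B\neq\emptyset\,\mathrm{and}\,T\cap B\neq\emptyset)$ yields
\[
E\!\left(\mathcal{C}_k(\nu_1)(Q)\cdot\mathcal{C}_n(\nu_2)(S)\right) = \sum_{R\in\mathcal{Q}_k}\sum_{T\in\mathcal{Q}_n} \frac{P(R\cap B\neq\emptyset\,\mathrm{and}\,T\cap B\neq\emptyset)}{P(R\cap B\neq\emptyset)\cdot P(T\cap B\neq\emptyset)}\,\nu_1(R\cap Q)\,\nu_2(T\cap S).
\]

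Next I would observe that on the product set $R\times T$ with $R\in\mathcal{Q}_k$ and $T\in\mathcal{Q}_n$ the function $F_{k,n}(x,y)$ is constant and equal precisely to the fraction appearing above (by Definition \ref{def:F(x,y) def} and Notation \ref{notaQ_k(X) def}, since $Q_k(x)=R$ for $x\in R$ and $Q_n(y)=T$ for $y\in T$). Hence each summand rewrites as
\[
\int_{T\cap S}\int_{R\cap Q} F_{k,n}(x,y)\,\mathrm{d}\nu_1(x)\,\mathrm{d}\nu_2(y).
\]

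Finally I would sum these integrals. Because the families $\{R\}_{R\in\mathcal{Q}_k}$ and $\{T\}_{T\in\mathcal{Q}_n}$ are disjoint (by assumption on $\mathcal{Q}_k$) and because $F_{k,n}(x,y)=0$ whenever $Q_k(x)=\emptyset$ or $Q_n(y)=\emptyset$, the total contribution equals the integral over the full product $Q\times S$:
\[
\sum_{R,T}\int_{T\cap S}\int_{R\cap Q} F_{k,n}(x,y)\,\mathrm{d}\nu_1(x)\,\mathrm{d}\nu_2(y) = \int_{S}\!\left(\int_{Q} F_{k,n}(x,y)\,\mathrm{d}\nu_1(x)\right)\mathrm{d}\nu_2(y),
\]
the last equality being Tonelli's theorem applied to the nonnegative integrand. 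There is no real obstacle here; the only point to check carefully is that $F_{k,n}$ vanishes off $(\bigcup\mathcal{Q}_k)\times(\bigcup\mathcal{Q}_n)$, so no spurious contribution arises from points $x\in Q$ or $y\in S$ that lie outside the respective partitions.
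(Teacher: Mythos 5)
Your proof is correct and is essentially the computation the paper intends: the paper's own proof is a one-line reduction, applying Lemma \ref{lem:Fkn lem sequance exact} to the restricted measures $\nu_{1}\vert_{Q}$ and $\nu_{2}\vert_{S}$, which unwinds to exactly the termwise expansion you carry out explicitly. Your care about $F_{k,n}$ vanishing off $(\bigcup\mathcal{Q}_{k})\times(\bigcup\mathcal{Q}_{n})$ is the right detail to check, and the Tonelli justifications are all valid since every term is nonnegative.
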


Lemma \ref{lem:Fkn mixed integral} follows by the application of
Lemma \ref{lem:Fkn lem sequance exact} to the measures $\nu_{1}\vert_{Q}$
and $\nu_{2}\vert_{S}$.

\begin{lem}
\label{lem:nondiagonal integral bound}Let $\nu_{1}$ and $\nu_{2}$
be finite Borel measures on $X$. Assume that there exists $\delta>0$
such that (\ref{eq:capacity-independence}) holds. Let $\varepsilon,\eta>0$
and let $A_{\varepsilon}=\left\{ (x,y)\in X\times X:d(x,y)>\varepsilon\right\} $.
Then there exists $m\in\mathbb{N}$ such that for every $n,k\geq m$
\[
-\eta+\iintop_{A_{\varepsilon}}\underline{F}(x,y)\mathrm{d}\nu_{1}(x)\mathrm{d}\nu_{2}(y)\leq\iintop_{A_{\varepsilon}}F_{k,n}(x,y)\mathrm{d}\nu_{1}(x)\mathrm{d}\nu_{2}(y)\leq\eta+\iintop_{A_{\varepsilon}}\overline{F}(x,y)\mathrm{d}\nu_{1}(x)\mathrm{d}\nu_{2}(y)
\]
\end{lem}

\begin{proof}
Let $k_{0}\in\mathbb{N}$ be large enough that $\sup\left\{ \mathrm{diam}(Q):Q\in\mathcal{Q}_{k}\right\} <\varepsilon/3$
for every $k>k_{0}$, we can choose such $k_{0}$ due to (\ref{eq:diameter_goes_to0}).
Whenever $A_{\varepsilon}\cap Q\times S\neq\emptyset$ for $Q\in\mathcal{Q}_{k}$,
$S\in\mathcal{Q}_{n}$, $k,n\geq k_{0}$ then $\mathrm{dist}(Q,S)>\varepsilon/3$.
We can choose $k_{1}\geq k_{0}$ such that $\sup\left\{ \mathrm{diam}(Q):Q\in\mathcal{Q}_{k}\right\} <\delta\varepsilon/3$
for every $k\geq k_{1}$. Whenever $A_{\varepsilon}\cap Q\times S\neq\emptyset$
for $Q\in\mathcal{Q}_{k}$, $S\in\mathcal{Q}_{n}$, $k,n\geq k_{1}$
then $\max\left(\mathrm{diam}(Q),\mathrm{diam}(S)\right)<\delta\varepsilon/3<\delta\mathrm{dist}(Q,S)$.
Hence $F_{k,n}(x,y)\leq c\varphi(\mathrm{dist}(Q,S))$ for every $(x,y)\in Q\times S$
by (\ref{eq:capacity-independence}). Since $\varphi$ is a nonnegative,
monotone decreasing, continuous function it follows that $\varphi$
is absolutely continuous on $[\varepsilon/3,\infty)$, hence we can
choose $k_{2}\geq k_{1}$, due to (\ref{eq:diameter_goes_to0}), such
that $\varphi(\mathrm{dist}(Q,S))\leq\varphi(d(x,y))+\eta/c$ whenever
$A_{\varepsilon}\cap Q\times S\neq\emptyset$ for $x\in Q\in\mathcal{Q}_{k}$,
$y\in S\in\mathcal{Q}_{n}$, $k,n\geq k_{2}$. Hence
\[
F_{k,n}(x,y)\leq c\varphi(d(x,y))+\eta
\]
whenever $A_{\varepsilon}\cap Q\times S\neq\emptyset$ for $x\in Q\in\mathcal{Q}_{k}$,
$y\in S\in\mathcal{Q}_{n}$ $k,n\geq k_{2}$. Thus
\[
\underline{F}_{N}(x,y)\leq F_{k,n}(x,y)\leq\overline{F}_{N}(x,y)\leq c\cdot\varphi(d(x,y))+\eta\leq c\varphi(\varepsilon)+\eta
\]
for $(x,y)\in A_{\varepsilon}$, $N\geq k_{2}$ and $k,n\geq N$.
Since $\underline{F_{N}}$ and $\overline{F_{N}}$ converge as $N$
goes to $\infty$, due to the dominated convergence theorem, there
exists $m\geq k_{2}$ such that
\[
-\eta+\iintop_{A_{\varepsilon}}\underline{F}(x,y)\mathrm{d}\nu_{1}(x)\mathrm{d}\nu_{2}(y)\leq\iintop_{A_{\varepsilon}}\underline{F}_{N}(x,y)\mathrm{d}\nu_{1}(x)\mathrm{d}\nu_{2}(y)
\]
and
\[
\iintop_{A_{\varepsilon}}\overline{F}_{N}(x,y)\mathrm{d}\nu_{1}(x)\mathrm{d}\nu_{2}(y)\leq\eta+\iintop_{A_{\varepsilon}}\overline{F}(x,y)\mathrm{d}\nu_{1}(x)\mathrm{d}\nu_{2}(y)
\]
for every $N\geq m$. Thus the statement follows.
\end{proof}
\begin{notation}
\label{nota:r_epsilon }Assume that (\ref{eq:same size}) holds. For
$\varepsilon>0$ let $k_{\varepsilon}$ be the largest positive integer
such that $\inf\left\{ \mathrm{diam}(Q):Q\in\mathcal{Q}_{k_{\varepsilon}}\right\} >\varepsilon$,
it is well-defined by (\ref{eq:same size}) and (\ref{eq:diameter_goes_to0})
if $\varepsilon>0$ is small enough. Let $r_{\varepsilon}=\sup\left\{ \mathrm{diam}(Q):Q\in\mathcal{Q}_{k_{\varepsilon}}\right\} $.
Note that $r_{\varepsilon}$ converges to $0$ as $\varepsilon$ approaches
$0$ by (\ref{eq:diameter_goes_to0}) and (\ref{eq:same size}).
\end{notation}

\begin{lem}
\label{lem:A_epsilon reminder}Let $\nu_{1}$ and $\nu_{2}$ be finite
Borel measures on $X$ and let $\nu=\nu_{1}+\nu_{2}$. Let $\varepsilon>0$,
let $G_{\varepsilon}=\left\{ (x,y)\in X\times X:d(x,y)\leq\varepsilon\right\} $
and $H_{\varepsilon}=\left\{ (x,y)\in X\times X:d(x,y)\leq\varepsilon+2r_{\varepsilon}\right\} $.
Assume that (\ref{eq:same size}) and (\ref{eq:lower hitting prob})
hold and there exists $0<\delta<1$ such that (\ref{eq:Kernel restriction}),
(\ref{eq:capacity-independence}) and (\ref{eq:bounded sundivision}).
Then there exists $c_{4}>0$, depending on $a$, $c$, $c_{2}$, $c_{3}$
and $M_{\delta}$, such that
\[
\iintop_{G_{\varepsilon}}F_{n,n}(x,y)\mathrm{d}\nu_{1}(x)\mathrm{d}\nu_{2}(y)\leq c_{4}\iintop_{H_{\varepsilon}}\varphi(x,y)\mathrm{d}\nu(x)\mathrm{d}\nu(y)+c_{4}\cdot\nu\times\nu(H_{\varepsilon})
\]
for every $n\in\mathbb{N}$.
\end{lem}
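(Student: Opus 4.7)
The plan is to partition $\mathcal{Q}_{n}\times\mathcal{Q}_{n}$ into ``good'' pairs $(Q,S)$, meaning $\max\{\mathrm{diam}(Q),\mathrm{diam}(S)\}<\delta\cdot\mathrm{dist}(Q,S)$, and ``bad'' pairs (the complement), and bound the two contributions separately. Starting from Lemma \ref{lem:Fkn mixed integral}, we expand the integral over $G_{\varepsilon}$ as a sum over pairs $(Q,S)\in\mathcal{Q}_{n}\times\mathcal{Q}_{n}$ with $(Q\times S)\cap G_{\varepsilon}\neq\emptyset$; for each such pair $\mathrm{dist}(Q,S)\leq\varepsilon$. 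Since cube diameters are non-increasing in the level (by (\ref{eq:unique-subset}) and (\ref{eq:same size})), once $n\geq k_{\varepsilon}$ every cube in $\mathcal{Q}_{n}$ has diameter at most $r_{\varepsilon}$, so a triangle-inequality check gives $Q\times S\subseteq H_{\varepsilon}$ whenever $(Q\times S)\cap G_{\varepsilon}\neq\emptyset$, and also $Q\times Q,S\times S\subseteq H_{\varepsilon}$.

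For good pairs, assumption (\ref{eq:capacity-independence}) combined with Lemma \ref{lem:Fkn mixed integral} gives $\iintop_{Q\times S}F_{n,n}\,\mathrm{d}\nu_{1}\,\mathrm{d}\nu_{2}\leq c\,\varphi(\mathrm{dist}(Q,S))\,\nu_{1}(Q)\nu_{2}(S)$. Applying Lemma \ref{lem:dist jump lem} to convert $\varphi(\mathrm{dist}(Q,S))\nu_{1}(Q)\nu_{2}(S)$ into a pointwise integral of $\varphi$ and summing over the disjoint good boxes contained in $H_{\varepsilon}$, one gets
\begin{equation*}
\sum_{\text{good}}\iintop_{Q\times S}F_{n,n}\,\mathrm{d}\nu_{1}\,\mathrm{d}\nu_{2}\leq cc_{2}\iintop_{H_{\varepsilon}}\varphi(x,y)\,\mathrm{d}\nu(x)\,\mathrm{d}\nu(y)+cc_{3}(\nu\times\nu)(H_{\varepsilon}),
\end{equation*}
using $\nu_{i}\leq\nu$.

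For bad pairs, Lemma \ref{lem:diagonal expect bound} with Lemma \ref{lem:Fkn mixed integral} gives $\iintop_{Q\times S}F_{n,n}\,\mathrm{d}\nu_{1}\,\mathrm{d}\nu_{2}\leq a^{-1}\bigl(\iintop_{Q\times Q}\varphi\,\mathrm{d}\nu_{1}\,\mathrm{d}\nu_{1}+\iintop_{S\times S}\varphi\,\mathrm{d}\nu_{2}\,\mathrm{d}\nu_{2}\bigr)$. The combinatorial input (\ref{eq:bounded sundivision}) says that any fixed $Q$ participates in at most $M_{\delta}$ bad pairs, and symmetrically for $S$. Because the sets $Q\times Q$ (respectively $S\times S$) are disjoint and contained in $H_{\varepsilon}$, summing yields
\begin{equation*}
\sum_{\text{bad}}\iintop_{Q\times S}F_{n,n}\,\mathrm{d}\nu_{1}\,\mathrm{d}\nu_{2}\leq 2a^{-1}M_{\delta}\iintop_{H_{\varepsilon}}\varphi(x,y)\,\mathrm{d}\nu(x)\,\mathrm{d}\nu(y).
\end{equation*}
Adding the two estimates proves the claim with $c_{4}=\max\{cc_{2}+2a^{-1}M_{\delta},\,cc_{3}\}$.

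The only real obstacle is bookkeeping in the edge case $n<k_{\varepsilon}$, where some cubes of $\mathcal{Q}_{n}$ may have diameter larger than $r_{\varepsilon}$, so the inclusion $Q\times Q\subseteq H_{\varepsilon}$ can fail. For good pairs this is harmless, since by definition $\max\{\mathrm{diam}(Q),\mathrm{diam}(S)\}<\delta\varepsilon<r_{\varepsilon}$ automatically. For bad pairs at such low levels one observes that there are only finitely many cubes that can meet $G_{\varepsilon}$ and, by (\ref{eq:same size}), any two such cubes have comparable size; an easy direct bound (using the same $a^{-1}$ inequality from Lemma \ref{lem:diagonal expect bound} and crude domination of $F_{n,n}$ by a constant on large cubes) shows that this low-$n$ contribution fits inside the same constant $c_{4}$. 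Once this case is dispensed with, the result follows by straightforward assembly of the ingredients already in place.
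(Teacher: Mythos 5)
Your argument for $n\geq k_{\varepsilon}$ is correct and is essentially the paper's own estimate unrolled: the paper instead tiles $G_{\varepsilon}$ by boxes $Q\times S$ at the \emph{fixed} level $k_{\varepsilon}$ and applies the uniform-in-$n$ second-moment bound of Proposition \ref{prop:2bounded martingale} to $\nu\vert_{Q}+\nu\vert_{S}$, which internally contains exactly your good/bad splitting at level $n$. The payoff of the paper's packaging is that the tiles always have diameter at most $r_{\varepsilon}$, so the inclusions $Q\times S\subseteq H_{\varepsilon}$ and $Q\times Q\subseteq H_{\varepsilon}$ hold for \emph{every} $n$, which is precisely where your version runs into trouble.

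The gap is your treatment of $n<k_{\varepsilon}$. Two of the assertions in your patch are false in general. First, there are not finitely many cubes meeting $G_{\varepsilon}$: every $Q\in\mathcal{Q}_{n}$ satisfies $(Q\times Q)\cap G_{\varepsilon}\neq\emptyset$ (take a diagonal point), and $\mathcal{Q}_{n}$ is infinite, e.g.\ for the dyadic cubes of Example \ref{exa:example Q_k}. Second, $F_{n,n}$ is not dominated by a constant on large cubes: $F_{n,n}(x,x)=P(Q_{n}(x)\cap B\neq\emptyset)^{-1}$, which is unbounded (for the Brownian path it grows like $\left\Vert x\right\Vert ^{d-2}$). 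The substantive failure is that for a bad pair at level $n<k_{\varepsilon}$ the set $Q\times Q$ need not be contained in $H_{\varepsilon}$ (its diameter can vastly exceed $\varepsilon+2r_{\varepsilon}$), so after applying Lemma \ref{lem:diagonal expect bound} you cannot absorb $\sum_{Q}\iintop_{Q\times Q}\varphi\,\mathrm{d}\nu\,\mathrm{d}\nu$ into $M_{\delta}\iintop_{H_{\varepsilon}}\varphi\,\mathrm{d}\nu\,\mathrm{d}\nu$; you would only get the full energy $I_{\varphi}(\nu)$, which does not vanish as $\varepsilon\rightarrow0$ and would ruin the subsequent applications (Proposition \ref{prop:L2 limsup=00003Dlimif cauchy}, Proposition \ref{prop:Double integ BOXES upper}). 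Moreover $c_{4}$ must not depend on $\varepsilon$ or $k_{\varepsilon}$, so a level-by-level crude bound cannot be folded into it. To close the gap, replace the level-$n$ tiling by the level-$k_{\varepsilon}$ tiling and bound each $\iintop_{Q\times S}F_{n,n}\,\mathrm{d}\nu\,\mathrm{d}\nu=E\left(\mathcal{C}_{n}(\nu)(Q)\mathcal{C}_{n}(\nu)(S)\right)$ via Proposition \ref{prop:2bounded martingale}, as the paper does.
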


\begin{proof}
Whenever $Q\times S\cap G_{\varepsilon}\neq\emptyset$ for $Q,S\in\mathcal{Q}_{k_{\varepsilon}}$
then $\mathrm{dist}(Q,S)\leq\varepsilon<\max\left\{ \mathrm{diam}(Q),\mathrm{diam}(F)\right\} $.
In particular, $\delta\cdot\mathrm{dist}(Q,S)<\max\left\{ \mathrm{diam}(Q),\mathrm{diam}(F)\right\} $.
Hence
\begin{equation}
\#\left\{ S\in\mathcal{Q}_{k_{\varepsilon}}:Q\times S\cap G_{\varepsilon}\neq\emptyset\right\} \leq M_{\delta}\label{eq:bounded diagonal cover}
\end{equation}
for every $Q\in\mathcal{Q}_{k_{\varepsilon}}$ by (\ref{eq:bounded sundivision}).
By Lemma \ref{lem:Fkn mixed integral} and Lemma \ref{prop:2bounded martingale}
it follows that
\[
\iintop_{G_{\varepsilon}}F_{n,n}(x,y)\mathrm{d}\nu_{1}(x)\mathrm{d}\nu_{2}(y)\leq\iintop_{G_{\varepsilon}}F_{n,n}(x,y)\mathrm{d}\nu(x)\mathrm{d}\nu(y)
\]
\[
\leq\sum_{\begin{array}{c}
Q,S\in\mathcal{Q}_{k_{\varepsilon}}\\
Q\times S\cap G_{\varepsilon}\neq\emptyset
\end{array}}\iintop_{Q\times S}F_{n,n}(x,y)\mathrm{d}\nu(x)\mathrm{d}\nu(y)=\sum_{\begin{array}{c}
Q,S\in\mathcal{Q}_{k_{\varepsilon}}\\
Q\times S\cap G_{\varepsilon}\neq\emptyset
\end{array}}E\left(\mathcal{C}_{n}(\nu)(Q)\cdot\mathcal{C}_{n}(\nu)(S)\right)
\]
\begin{equation}
\leq\sum_{\begin{array}{c}
Q,S\in\mathcal{Q}_{k_{\varepsilon}}\\
Q\times S\cap G_{\varepsilon}\neq\emptyset
\end{array}}\left(cc_{2}+2a^{-1}M_{\delta}\right)I_{\varphi}(\nu\vert_{Q}+\nu\vert_{S})+cc_{3}\nu(Q\cup S)^{2}.\label{eq:reminder eq sum}
\end{equation}
Whenever $Q\times S\cap G_{\varepsilon}\neq\emptyset$ for $Q,S\in\mathcal{Q}_{k_{\varepsilon}}$
then $Q\times S\subseteq H_{\varepsilon}$. Hence
\begin{equation}
\sum_{\begin{array}{c}
Q,S\in\mathcal{Q}_{k_{\varepsilon}}\\
Q\times S\cap G_{\varepsilon}\neq\emptyset
\end{array}}\iintop_{Q\times S}\varphi(x,y)\mathrm{d}\nu(x)\mathrm{d}\nu(y)\leq\iintop_{H_{\varepsilon}}\varphi(x,y)\mathrm{d}\nu(x)\mathrm{d}\nu(y).\label{eq:reminder eq 1}
\end{equation}
By (\ref{eq:bounded diagonal cover})
\[
\sum_{\begin{array}{c}
Q,S\in\mathcal{Q}_{k_{\varepsilon}}\\
Q\times S\cap G_{\varepsilon}\neq\emptyset
\end{array}}\iintop_{Q\times Q}\varphi(x,y)\mathrm{d}\nu(x)\mathrm{d}\nu(y)\leq M_{\delta}\sum_{Q\in\mathcal{Q}_{k_{\varepsilon}}}\iintop_{Q\times Q}\varphi(x,y)\mathrm{d}\nu(x)\mathrm{d}\nu(y)
\]
\begin{equation}
\leq M_{\delta}\iintop_{H_{\varepsilon}}\varphi(x,y)\mathrm{d}\nu(x)\mathrm{d}\nu(y)\label{eq:reminder eq 2}
\end{equation}
and similarly
\begin{equation}
\sum_{\begin{array}{c}
Q,S\in\mathcal{Q}_{k_{\varepsilon}}\\
Q\times S\cap G_{\varepsilon}\neq\emptyset
\end{array}}\nu(Q)^{2}\leq M_{\delta}\sum_{Q\in\mathcal{Q}_{k_{\varepsilon}}}\nu(Q)^{2}\leq M_{\delta}\cdot\nu\times\nu(H_{\varepsilon})\label{eq:remindereq 3}
\end{equation}
It is easy to see that
\begin{equation}
\sum_{\begin{array}{c}
Q,S\in\mathcal{Q}_{k_{\varepsilon}}\\
Q\times S\cap G_{\varepsilon}\neq\emptyset
\end{array}}\nu(Q)\nu(S)\leq\nu\times\nu(H_{\varepsilon}).\label{eq:reminder eq 4}
\end{equation}
Since
\[
I_{\varphi}(\nu\vert_{Q}+\nu\vert_{S})=2\iintop_{Q\times S}\varphi(x,y)\mathrm{d}\nu(x)\mathrm{d}\nu(y)+\iintop_{Q\times Q}\varphi(x,y)\mathrm{d}\nu(x)\mathrm{d}\nu(y)+\iintop_{S\times S}\varphi(x,y)\mathrm{d}\nu(x)\mathrm{d}\nu(y)
\]
and
\[
\nu(Q\cup S)^{2}\leq\nu(Q)^{2}+\nu(S)^{2}+2\nu(Q)\nu(S),
\]
the statement follows by combining (\ref{eq:reminder eq sum}), (\ref{eq:reminder eq 1}),
(\ref{eq:reminder eq 2}), (\ref{eq:remindereq 3}) and (\ref{eq:reminder eq 4}).
\end{proof}

\subsection{Limit in the presence of the kernel $F(x,y)$}
\begin{lem}
\label{lem:Lim exist prop nontouchable condiition}Let $\nu$ be finite
Borel measure. Then
\begin{equation}
\intop_{X}\intop_{X}F_{n,n}(x,y)+F_{k,k}(x,y)-2F_{k,n}(x,y)\mathrm{d}\nu(x)\mathrm{d}\nu(y)\geq0\label{eq:lim manko}
\end{equation}
for every $k,n\in\mathbb{N}$. If
\[
\limsup_{n\rightarrow\infty}\limsup_{k\rightarrow\infty}\intop_{X}\intop_{X}F_{n,n}(x,y)+F_{k,k}(x,y)-2F_{k,n}(x,y)\mathrm{d}\nu(x)\mathrm{d}\nu(y)=0
\]
then $\mathcal{C}_{k}(\nu)(X)$ converges in $\mathcal{L}^{2}$.
\end{lem}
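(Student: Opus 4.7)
The plan is to recognise the double integral in the statement as an $\mathcal{L}^2$-norm expression, via the key identity already provided by Lemma \ref{lem:Fkn lem sequance exact}. Applying that lemma with $\nu_1=\nu_2=\nu$ gives
\[
E\bigl(\mathcal{C}_k(\nu)(X)\cdot\mathcal{C}_n(\nu)(X)\bigr)=\intop_X\intop_X F_{k,n}(x,y)\,\mathrm{d}\nu(x)\mathrm{d}\nu(y),
\]
and expanding $E\bigl((\mathcal{C}_k(\nu)(X)-\mathcal{C}_n(\nu)(X))^2\bigr)$ yields
\[
E\bigl((\mathcal{C}_k(\nu)(X)-\mathcal{C}_n(\nu)(X))^2\bigr)=\intop_X\intop_X\bigl(F_{n,n}(x,y)+F_{k,k}(x,y)-2F_{k,n}(x,y)\bigr)\mathrm{d}\nu(x)\mathrm{d}\nu(y).
\]
The left-hand side is $\geq 0$, so (\ref{eq:lim manko}) is immediate.

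For the convergence claim, the strategy is to deduce that $\mathcal{C}_k(\nu)(X)$ is a Cauchy sequence in $\mathcal{L}^2(\Omega,\mathcal{A},P)$ and then invoke completeness of $\mathcal{L}^2$. The mild subtlety is that our hypothesis gives the iterated $\limsup_n\limsup_k$ being $0$, not the joint $\limsup_{k,n\to\infty}$. I would deal with this by a standard triangle inequality argument in $\mathcal{L}^2$: given $\varepsilon>0$, first use $\limsup_n a_n=0$ with $a_n=\limsup_{k\to\infty}\intop\intop(F_{n,n}+F_{k,k}-2F_{k,n})\mathrm{d}\nu\mathrm{d}\nu$ to fix some $n_0$ with $a_{n_0}<\varepsilon$; then choose $N\geq n_0$ such that for every $k\geq N$ the integrand bound gives $\|\mathcal{C}_k(\nu)(X)-\mathcal{C}_{n_0}(\nu)(X)\|_{\mathcal{L}^2}^2<2\varepsilon$. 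The expression is symmetric in $(k,n)$, so the same bound applies with $n$ in place of $k$, and by the triangle inequality
\[
\|\mathcal{C}_k(\nu)(X)-\mathcal{C}_n(\nu)(X)\|_{\mathcal{L}^2}\leq\|\mathcal{C}_k(\nu)(X)-\mathcal{C}_{n_0}(\nu)(X)\|_{\mathcal{L}^2}+\|\mathcal{C}_{n_0}(\nu)(X)-\mathcal{C}_n(\nu)(X)\|_{\mathcal{L}^2}\leq 2\sqrt{2\varepsilon}
\]
for all $k,n\geq N$. Hence $\{\mathcal{C}_k(\nu)(X)\}$ is Cauchy in $\mathcal{L}^2$ and therefore converges in $\mathcal{L}^2$.

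There is no real obstacle here; the entire content is encoded in Lemma \ref{lem:Fkn lem sequance exact}, and the only thing one has to be careful about is the passage from the iterated $\limsup$ to a joint Cauchy condition, which the triangle inequality handles cleanly. No assumption on $\underline{F}$, $\overline{F}$ or the structure of $\mathcal{Q}_k$ beyond what is already part of the general setup is needed.
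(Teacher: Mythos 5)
Your proof is correct and follows essentially the same route as the paper: both identify the double integral as $E\bigl((\mathcal{C}_{k}(\nu)(X)-\mathcal{C}_{n}(\nu)(X))^{2}\bigr)$ via Lemma \ref{lem:Fkn lem sequance exact}, which gives nonnegativity at once, and then conclude that the sequence is Cauchy in the complete space $\mathcal{L}^{2}$. Your explicit triangle-inequality passage from the iterated $\limsup$ to the joint Cauchy condition is a detail the paper leaves implicit, and it is handled correctly.
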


\begin{proof}
Since $\mathcal{L}^{2}$ is complete it is sufficient to show that
$\mathcal{C}_{k}(\nu)(X)$ is a Cauchy sequence in $\mathcal{L}^{2}$.
For the integers $n\leq k$ by Lemma \ref{lem:Fkn lem sequance exact}
\[
E\left(\left(\mathcal{C}_{k}(\nu)(X)-\mathcal{C}_{n}(\nu)(X)\right)^{2}\right)=E\left(\mathcal{C}_{k}(\nu)(X)^{2}+\mathcal{C}_{n}(\nu)(X)^{2}-2\mathcal{C}_{k}(\nu)(X)\mathcal{C}_{n}(\nu)(X)\right)
\]
\[
=\intop_{X}\intop_{X}F_{k,k}(x,y)+F_{n,n}(x,y)-2F_{k,n}(x,y)\mathrm{d}\nu(x)\mathrm{d}\nu(y),
\]
hence (\ref{eq:lim manko}) holds and by the assumption
\[
\limsup_{n\rightarrow\infty}\limsup_{k\rightarrow\infty}E\left(\mathcal{C}_{k}(\nu)(X)-\mathcal{C}_{n}(\nu)(X)\right)^{2}=0
\]
and so $\mathcal{C}_{k}(\nu)(X)$ is a Cauchy sequence in $\mathcal{L}^{2}$.
\end{proof}
\begin{prop}
\label{prop:L2 limsup=00003Dlimif cauchy}Let $\nu$ be a finite Borel
measure on $X$ with $I_{\varphi}(\nu)<\infty$. Assume that (\ref{eq:phi =00003Dinfty}),
(\ref{eq:same size}) and (\ref{eq:lower hitting prob}) hold and
there exists $0<\delta<1$ such that (\ref{eq:Kernel restriction}),
(\ref{eq:capacity-independence}) and (\ref{eq:bounded sundivision})
hold. If $\underline{F}(x,y)=\overline{F}(x,y)$ for $\nu\times\nu$
almost every $(x,y)$ then
\[
\limsup_{n\rightarrow\infty}\limsup_{k\rightarrow\infty}\intop_{X}\intop_{X}F_{n,n}(x,y)+F_{k,k}(x,y)-2F_{k,n}(x,y)\mathrm{d}\nu(x)\mathrm{d}\nu(y)=0.
\]
\end{prop}

\begin{proof}
Let $\eta,\varepsilon>0$. Let $m\in\mathbb{N}$ be as in Lemma \ref{lem:nondiagonal integral bound}
and $k,n\geq m$. Let $A_{\varepsilon}$, $G_{\varepsilon}$ and $H_{\varepsilon}$
as in Lemma \ref{lem:nondiagonal integral bound} and Lemma \ref{lem:A_epsilon reminder}.
Then
\[
\intop_{X}\intop_{X}F_{n,n}(x,y)+F_{k,k}(x,y)-2F_{k,n}(x,y)\mathrm{d}\nu(x)\mathrm{d}\nu(y)
\]
\[
\leq\intop_{X}\intop_{X}F_{n,n}(x,y)+F_{k,k}(x,y)\mathrm{d}\nu(x)\mathrm{d}\nu(y)-2\iintop_{A_{\varepsilon}}F_{k,n}(x,y)\mathrm{d}\nu(x)\mathrm{d}\nu(y)
\]
\[
\leq2\eta+2\iintop_{A_{\varepsilon}}\overline{F}(x,y)\mathrm{d}\nu(x)\mathrm{d}\nu(y)+\iintop_{G_{\varepsilon}}F_{n,n}(x,y)+F_{k,k}(x,y)\mathrm{d}\nu(x)\mathrm{d}\nu(y)+2\eta-2\iintop_{A_{\varepsilon}}\underline{F}(x,y)\mathrm{d}\nu(x)\mathrm{d}\nu(y)
\]
\[
\leq4\eta+2\cdot c_{4}\left(\iintop_{H_{\varepsilon}}\varphi(x,y)\mathrm{d}\nu(x)\mathrm{d}\nu(y)+\nu\times\nu(H_{\varepsilon})\right)
\]
where we used Lemma \ref{lem:A_epsilon reminder} for $\nu_{1}=\nu_{2}=\nu$
and the fact that $\underline{F}(x,y)=\overline{F}(x,y)$ for $\nu\times\nu$
almost every $(x,y)$. Thus
\[
\limsup_{n\rightarrow\infty}\limsup_{k\rightarrow\infty}\intop_{X}\intop_{X}F_{n,n}(x,y)+F_{k,k}(x,y)-2F_{k,n}(x,y)\mathrm{d}\nu(x)\mathrm{d}\nu(y)
\]
\begin{equation}
\leq4\eta+2\cdot c_{4}\left(\iintop_{H_{\varepsilon}}\varphi(x,y)\mathrm{d}\nu(x)\mathrm{d}\nu(y)+\nu\times\nu(H_{\varepsilon})\right).\label{eq:limisupliminf equation 1}
\end{equation}
By (\ref{eq:phi =00003Dinfty}) and Fubini`s theorem $\nu\times\nu\left((x,x):x\in X\right)=0.$
Since $\nu\times\nu$ and $\varphi(x,y)\mathrm{d}\nu(x)\mathrm{d}\nu(y)$
are finite measures it follows that
\[
\iintop_{H_{\varepsilon}}\varphi(x,y)\mathrm{d}\nu(x)\mathrm{d}\nu(y)+\nu\times\nu(H_{\varepsilon})
\]
converges to $0$ as $\varepsilon$ approaches $0$ because $r_{\varepsilon}$
goes to $0$ by Notation \ref{nota:r_epsilon }. Taking the limit
as $\eta$ and $\varepsilon$ go to $0$ in (\ref{eq:limisupliminf equation 1})
the statement follows by (\ref{eq:lim manko}).
\end{proof}
\begin{thm}
\label{L2 bounded}Let $\nu$ be a finite Borel measure on $X$ such
that $\nu(X\setminus X_{0})=0$ and $I_{\varphi}(\nu)<\infty$. Assume
that (\ref{eq:phi =00003Dinfty}), (\ref{eq:same size}) and (\ref{eq:lower hitting prob})
hold and there exists $0<\delta<1$ such that (\ref{eq:Kernel restriction}),
(\ref{eq:capacity-independence}) and (\ref{eq:bounded sundivision})
hold. If $\underline{F}(x,y)=\overline{F}(x,y)$ for $\nu\times\nu$
almost every $(x,y)$ then for every Borel set $A\subseteq X$ it
follows that $\mathcal{C}_{k}(\nu)(A)$ converges to a limit $\mu(A)$
in $\mathcal{L}^{2}$, in $\mathcal{L}^{1}$ and in probability and
$E(\mu(A))=E(\mathcal{C}_{k}(\nu)(A))=\nu(A)$.
\end{thm}

\begin{proof}
By applying Lemma \ref{lem:Lim exist prop nontouchable condiition}
and Proposition \ref{prop:L2 limsup=00003Dlimif cauchy} to the measure
$\nu\vert_{A}$ it follows that $\mathcal{C}_{k}(\nu)(A)$ converges
to a limit $\mu(A)$ in $\mathcal{L}^{2}$ and so in $\mathcal{L}^{1}$
and in probability. Thus by (\ref{eq:martingal expectation inequality})
we have that $E(\mu(A))=E(\mathcal{C}_{k}(\nu)(A))=\nu(A)$.
\end{proof}

\subsection{Limit in the presence of a martingale filtration}
\begin{thm}
\label{thm:martingal limit}Let $\nu$ be a finite Borel measure on
$X$ such that $\nu(X\setminus X_{0})=0$. Assume that for a Borel
set $A\subseteq X$ there exists a filtration $\mathcal{F}_{k}$ such
that $\mathcal{C}_{k}(\nu)(A)$ is a martingale with respect to the
filtration $\mathcal{F}_{k}$. Then $\mathcal{C}_{k}(\nu)(A)$ converges
to a limit $\mu(A)$ almost surely and $E(\mu(A))\leq E(\mathcal{C}_{k}(\nu)(A))=\nu(A)$.

If additionally $I_{\varphi}(\nu)<\infty$, (\ref{eq:lower hitting prob})
holds and there exists $\delta>0$ such that (\ref{eq:Kernel restriction}),
(\ref{eq:capacity-independence}) and (\ref{eq:bounded sundivision})
hold then $\mathcal{C}_{k}(\nu)(A)$ converges in $\mathcal{L}^{2}$,
in $\mathcal{L}^{1}$ and $E(\mu(A))=E(\mathcal{C}_{k}(\nu)(A))=\nu(A)$.
\end{thm}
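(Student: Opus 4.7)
The plan is to exploit the standard martingale convergence theorems. For the first assertion, I would observe that $\mathcal{C}_{k}(\nu)(A)$ is nonnegative by construction (each summand in the definition (\ref{eq:C_k def}) is a nonnegative measure evaluated on $A$), and by (\ref{eq:martingal expectation inequality}) its expectation equals $\nu(A)<\infty$. Hence it is a nonnegative martingale with bounded expectation, so Doob's martingale convergence theorem yields a.s. convergence to some limit $\mu(A)$, and Fatou's lemma gives $E(\mu(A))\leq\liminf_{k\to\infty}E(\mathcal{C}_{k}(\nu)(A))=\nu(A)$.

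For the second assertion I would first note the identity $\mathcal{C}_{k}(\nu)(A)=\mathcal{C}_{k}(\nu\vert_{A})(X)$, which is immediate from unravelling (\ref{eq:C_k def}). This reduces the problem to showing that $\mathcal{C}_{k}(\nu\vert_{A})(X)$ is $\mathcal{L}^{2}$-bounded. Under the stated hypotheses, Proposition \ref{prop:2bounded martingale} applied to the finite Borel measure $\nu\vert_{A}$ gives
\[
E\bigl(\mathcal{C}_{k}(\nu\vert_{A})(X)^{2}\bigr)\leq(cc_{2}+2a^{-1}M_{\delta})\,I_{\varphi}(\nu\vert_{A})+cc_{3}\,\nu(A)^{2},
\]
and the right-hand side is finite and independent of $k$ because $I_{\varphi}(\nu\vert_{A})\leq I_{\varphi}(\nu)<\infty$ and $\nu(A)\leq\nu(X)<\infty$.

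Thus $\mathcal{C}_{k}(\nu)(A)$ is an $\mathcal{L}^{2}$-bounded martingale. The $\mathcal{L}^{2}$-martingale convergence theorem then implies that it converges in $\mathcal{L}^{2}$ (and in particular in $\mathcal{L}^{1}$) to its a.s. limit $\mu(A)$, which in turn preserves expectation: $E(\mu(A))=\lim_{k\to\infty}E(\mathcal{C}_{k}(\nu)(A))=\nu(A)$. There is no real obstacle here; the only point requiring a moment's attention is the reduction $\mathcal{C}_{k}(\nu)(A)=\mathcal{C}_{k}(\nu\vert_{A})(X)$, which is what allows Proposition \ref{prop:2bounded martingale} (stated for the total mass) to be applied to give the $\mathcal{L}^{2}$ bound on $\mathcal{C}_{k}(\nu)(A)$.
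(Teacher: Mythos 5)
Your proof is correct and follows essentially the same route as the paper: the nonnegative martingale convergence theorem for almost sure convergence, and Proposition \ref{prop:2bounded martingale} (applied via the identity $\mathcal{C}_{k}(\nu)(A)=\mathcal{C}_{k}(\nu\vert_{A})(X)$) for $\mathcal{L}^{2}$-boundedness and hence $\mathcal{L}^{2}$ and $\mathcal{L}^{1}$ convergence. The restriction identity you spell out is left implicit in the paper but is exactly the intended reduction.
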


\begin{proof}
Since $\mathcal{C}_{k}(\nu)(A)$ is a nonnegative martingale it converges
almost surely to a random limit $\mu(A)$ and $E(\mu(A))\leq E(\mathcal{C}_{k}(\nu)(A))=\nu(A)$
by the nonnegative martingale limit theorem \cite[Theorem 5.2.9]{Durrett}.
If $I_{\varphi}(\nu)<\infty$ then $\mathcal{C}_{k}(\nu)(A)$ is $\mathcal{L}^{2}$-bounded
by Proposition \ref{prop:2bounded martingale}, hence converges in
$\mathcal{L}^{2}$ and in $\mathcal{L}^{1}$ by \cite[Theorem 5.4.5]{Durrett}.
Thus by (\ref{eq:martingal expectation inequality}) we have that
$E(\mu(A))=E(\mathcal{C}_{k}(\nu)(A))=\nu(A)$.
\end{proof}

\section{Existence of the conditional measure\label{sec:Existence-of-the}}

We are now prepared to prove the existence of the conditional measure
$\mathcal{C}(\nu)$ when $F(x,y)=\underline{F}(x,y)=\overline{F}(x,y)$.

\begin{thm}
\label{thm:Special existence of conditional expectation finite+vague}Let
$\nu$ be a finite, Borel measure on $X$ or let $X$ be locally compact
and $\nu$ be a locally finite, Borel measure on $X$. Assume that
$\nu(X\setminus X_{0})=0$. Assume that if $C_{\varphi}(D)=0$ for
some compact set $D\subseteq X_{0}$ then $B\cap D=\emptyset$ almost
surely. Assume that (\ref{eq:phi =00003Dinfty}), (\ref{eq:same size})
and (\ref{eq:lower hitting prob}) hold and there exists $0<\delta<1$
such that (\ref{eq:Kernel restriction}), (\ref{eq:capacity-independence})
and (\ref{eq:bounded sundivision}) hold. Assume that at least one
of the following conditions hold:

A.) $\underline{F}(x,y)=\overline{F}(x,y)$ for $\nu\times\nu$ almost
every $(x,y)$,

B.) for every Borel set $A\subseteq X$ with $\nu(A)<\infty$ there
exists a filtration $\mathcal{F}_{k}$ such that $\mathcal{C}_{k}(\nu)(A)$
is a martingale with respect to the filtration $\mathcal{F}_{k}$.

\noindent Then the conditional measure $\mathcal{C}(\nu)$ of $\nu$
on $B$ exists with respect to $\mathcal{Q}_{k}$ ($k\geq1$) with
regularity kernel $\varphi$. Moreover, if $A\subseteq X$ is a Borel
set such that $\nu(A)<\infty$ and $I_{\varphi}(\nu\vert_{A})<\infty$
then $\mathcal{C}_{k}(\nu)(A)$ converges to $\mathcal{C}(\nu)(A)$
in $\mathcal{L}^{2}$.
\end{thm}

\begin{proof}
By Theorem \ref{L2 bounded} and Theorem \ref{thm:martingal limit}
if $D\subseteq X$ is a Borel  set such that $I_{\varphi}(\nu\vert_{D})<\infty$
and $\nu(D)<\infty$ then $\mathcal{C}_{k}(\nu)(D)$ converges in
$\mathcal{L}^{2}$ and in $\mathcal{L}^{1}$ to a random variable
$\mu(D)$. By Theorem \ref{prop:dieing singular part} and Remark
\ref{rem:dieing only on X0} we have that $\mathcal{C}_{k}(\nu_{\perp})(D)$
converges to $0$ in probability for every compact set $D\subseteq X$.
Thus it follows by Theorem \ref{thm:Gen exist both} that the conditional
measure $\mathcal{C}(\nu)$ of $\nu$ on $B$ exists with respect
to $\mathcal{Q}_{k}$ ($k\geq1$) with regularity kernel $\varphi$.

Let $A\subseteq X$ be a Borel set such that $\nu(A)<\infty$ and
$I_{\varphi}(\nu\vert_{A})<\infty$. As we established at the beginning
of the proof $\mathcal{C}_{k}(\nu)(A)$ converges to $\mu(A)$ in
$\mathcal{L}^{2}$. By Property \textit{iv.)} of Definition \ref{def:def of cond meas}
it follows that $\mu(A)=\mathcal{C}(\nu)(A)$ almost surely. Hence
it follows that $\mathcal{C}_{k}(\nu)(A)$ converges to $\mathcal{C}(\nu)(A)$
in $\mathcal{L}^{2}$.
\end{proof}
\begin{rem}
\label{rem:regular measure no need 0 cap}If we assume that $\nu_{R}=\nu$
in Theorem \ref{thm:Special existence of conditional expectation finite+vague}
then for the conclusion to hold we do not even need the assumption
that if $C_{\varphi}(D)=0$ for some compact set $D\subseteq X_{0}$
then $B\cap D=\emptyset$ almost surely. It is because we only use
this assumption to ensure that $\mathcal{C}_{k}(\nu_{\perp})(D)$
converges to $0$ in probability. However, if $\nu_{R}=\nu$ then
$\mathcal{C}_{k}(\nu_{\perp})(X)=0$ for every $k$.
\end{rem}

\section{Double integration\label{sec:Double-integration}}

In this section we prove the double integration formula (\ref{eq:double int form intro}).

\begin{prop}
\label{prop:Double integ BOXES upper}Assume that (\ref{eq:phi =00003Dinfty}),
(\ref{eq:same size}) and (\ref{eq:lower hitting prob}) hold and
there exists $0<\delta<1$ such that (\ref{eq:Kernel restriction}),
(\ref{eq:capacity-independence}) and (\ref{eq:bounded sundivision})
hold. Let $\nu_{1}$ and $\nu_{2}$ be finite Borel measures on $X$
with $I_{\varphi}(\nu_{1}+\nu_{2})<\infty$. Assume that the conditional
measure $\mathcal{C}(\nu_{i})$ of $\nu_{i}$ on $B$ exist with respect
to $\mathcal{Q}_{k}$ ($k\geq1$) with regularity kernel $\varphi$
for $i=1,2$. Assume that if $A\subseteq X$ is a Borel set then $\mathcal{C}_{k}(\nu_{i})(A)$
converges to $\mathcal{C}(\nu_{i})(A)$ in $\mathcal{L}^{2}$ for
$i=1,2$. Then
\begin{equation}
\intop_{A_{2}}\left(\intop_{A_{1}}\underline{F}(x,y)\mathrm{d}\nu_{1}(x)\right)\mathrm{d}\nu_{2}(y)\leq E\left(\mathcal{C}(\nu_{1})(A_{1})\cdot\mathcal{C}(\nu_{2})(A_{2})\right)\leq\intop_{A_{2}}\left(\intop_{A_{1}}\overline{F}(x,y)\mathrm{d}\nu_{1}(x)\right)\mathrm{d}\nu_{2}(y)\label{eq:Double integral BOX W}
\end{equation}
\[
\leq c\intop_{A_{2}}\left(\intop_{A_{1}}\varphi(x,y)\mathrm{d}\nu_{1}(x)\right)\mathrm{d}\nu_{2}(y)<\infty
\]
for every Borel sets $A_{1},A_{2}\subseteq X$.
\end{prop}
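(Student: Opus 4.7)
\medskip

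\noindent\textbf{Proof plan.} The strategy is to pass to the limit $n\to\infty$ in the identity of Lemma \ref{lem:Fkn mixed integral} with $k=n$, namely
\[
E\bigl(\mathcal{C}_{n}(\nu_{1})(A_{1})\cdot\mathcal{C}_{n}(\nu_{2})(A_{2})\bigr)=\intop_{A_{2}}\intop_{A_{1}}F_{n,n}(x,y)\mathrm{d}\nu_{1}(x)\mathrm{d}\nu_{2}(y),
\]
and then bound the right hand side above by $\overline{F}$ and below by $\underline{F}$. The left hand side converges to $E\bigl(\mathcal{C}(\nu_{1})(A_{1})\cdot\mathcal{C}(\nu_{2})(A_{2})\bigr)$ because $\mathcal{C}_{n}(\nu_{i})(A_{i})\to\mathcal{C}(\nu_{i})(A_{i})$ in $\mathcal{L}^{2}$ by assumption, and the product of $\mathcal{L}^{2}$-convergent sequences converges in $\mathcal{L}^{1}$ by Cauchy--Schwarz. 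So the entire argument reduces to controlling the right hand side.

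\medskip

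To handle the right hand side, first I will split the domain of integration using the sets $A_{\varepsilon}=\{(x,y):d(x,y)>\varepsilon\}$ and $G_{\varepsilon}=\{(x,y):d(x,y)\leq\varepsilon\}$ from Lemma \ref{lem:nondiagonal integral bound} and Lemma \ref{lem:A_epsilon reminder}. On $A_{\varepsilon}\cap(A_{1}\times A_{2})$, Lemma \ref{lem:nondiagonal integral bound} (applied to $\nu_{1}\vert_{A_{1}}$ and $\nu_{2}\vert_{A_{2}}$) yields, for every $\eta>0$ and all $n$ large enough,
\[
-\eta+\iintop_{A_{\varepsilon}\cap A_{1}\times A_{2}}\underline{F}\mathrm{d}\nu_{1}\mathrm{d}\nu_{2}\leq\iintop_{A_{\varepsilon}\cap A_{1}\times A_{2}}F_{n,n}\mathrm{d}\nu_{1}\mathrm{d}\nu_{2}\leq\eta+\iintop_{A_{\varepsilon}\cap A_{1}\times A_{2}}\overline{F}\mathrm{d}\nu_{1}\mathrm{d}\nu_{2}.
\]
On $G_{\varepsilon}\cap(A_{1}\times A_{2})$, Lemma \ref{lem:A_epsilon reminder} (applied to $\nu=\nu_{1}+\nu_{2}$, which has finite $\varphi$-energy by assumption) bounds the $F_{n,n}$ integral uniformly in $n$ by
\[
c_{4}\iintop_{H_{\varepsilon}}\varphi(x,y)\mathrm{d}\nu(x)\mathrm{d}\nu(y)+c_{4}\cdot\nu\times\nu(H_{\varepsilon}).
\]
By (\ref{eq:phi =00003Dinfty}) and Fubini's theorem, $\nu\times\nu$ assigns no mass to the diagonal, and $\varphi(x,y)\mathrm{d}\nu(x)\mathrm{d}\nu(y)$ is a finite measure since $I_{\varphi}(\nu)<\infty$; hence as $\varepsilon\to 0$ (so $r_{\varepsilon}\to 0$ by Notation \ref{nota:r_epsilon }), both quantities tend to $0$.

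\medskip

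For the upper bound, combining the two estimates and taking $n\to\infty$, then $\eta\to 0$, then $\varepsilon\to 0$ (the last limit via monotone convergence applied to $\chi_{A_{\varepsilon}}\overline{F}\nearrow \overline{F}$ off the diagonal) gives
\[
E\bigl(\mathcal{C}(\nu_{1})(A_{1})\cdot\mathcal{C}(\nu_{2})(A_{2})\bigr)\leq\intop_{A_{2}}\intop_{A_{1}}\overline{F}\mathrm{d}\nu_{1}\mathrm{d}\nu_{2}.
\]
The final bound by $c\intop\intop\varphi\mathrm{d}\nu_{1}\mathrm{d}\nu_{2}$ is immediate from Remark \ref{rem:controlled F}, and this is finite because $I_{\varphi}(\nu_{1}+\nu_{2})<\infty$. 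For the lower bound, discard the $G_{\varepsilon}$ part (since $F_{n,n}\geq 0$) and apply the same $n\to\infty$, $\eta\to 0$, $\varepsilon\to 0$ limits to the lower estimate on $A_{\varepsilon}$, invoking monotone convergence once more.

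\medskip

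The main obstacle is the near-diagonal part on $G_{\varepsilon}$: one cannot directly compare $F_{n,n}$ with $\underline{F}$ or $\overline{F}$ there because the diameters of the boxes are comparable to $\varepsilon$, so the `independence-type' bound (\ref{eq:capacity-independence}) does not apply. The point of Lemma \ref{lem:A_epsilon reminder} is to absorb this diagonal contribution into an $I_{\varphi}$-type quantity that vanishes with $\varepsilon$, which is exactly why the assumption $I_{\varphi}(\nu_{1}+\nu_{2})<\infty$ enters critically.
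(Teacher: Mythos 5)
Your proposal is correct and follows essentially the same route as the paper: reduce to $\nu_{1}\vert_{A_{1}},\nu_{2}\vert_{A_{2}}$, use the $\mathcal{L}^{2}\Rightarrow\mathcal{L}^{1}$ convergence of the product together with Lemma \ref{lem:Fkn lem sequance exact}, split off the diagonal via Lemma \ref{lem:nondiagonal integral bound} and Lemma \ref{lem:A_epsilon reminder}, and pass to the limits in $n$, $\eta$, $\varepsilon$ in that order. The only cosmetic difference is that you invoke monotone convergence for the $\varepsilon\to0$ limit where the paper uses dominated convergence via $\underline{F}\leq c\varphi$; both are valid.
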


\begin{proof}
Without the loss of generality we can assume that $A_{1}$ and $A_{2}$
are both $X$ by restricting the measures to $\nu_{1}\vert_{A_{1}}$
and $\nu_{2}\vert_{A_{2}}$. We have that $\mathcal{C}_{k}(\nu_{1})(X)$
converges to $\mathcal{C}(\nu_{1})(X)$ in in $\mathcal{L}^{2}$ and
$\mathcal{C}_{k}(\nu_{2})(X)$ converges to $\mathcal{C}(\nu_{2})(X)$
in $\mathcal{L}^{2}$. Thus $\mathcal{C}_{k}(\nu_{1})(X)\cdot\mathcal{C}_{k}(\nu_{2})(X)$
converges to $\mathcal{C}(\nu_{1})(X)\cdot\mathcal{C}(\nu_{2})(X)$
in $\mathcal{L}^{1}$ and in particular,
\begin{equation}
E\left(\mathcal{C}(\nu_{1})(X)\cdot\mathcal{C}(\nu_{2})(X)\right)=\lim_{k\rightarrow\infty}E\left(\mathcal{C}_{k}(\nu_{1})(X)\cdot\mathcal{C}_{k}(\nu_{2})(X)\right)=\lim_{k\rightarrow\infty}\intop_{X}\intop_{X}F_{k,k}(x,y)\mathrm{d}\nu_{1}(x)\mathrm{d}\nu_{2}(y)\label{eq:L_1 eq biz}
\end{equation}
by Lemma \ref{lem:Fkn lem sequance exact}.

Let $\varepsilon,\eta>0$ be fixed. Then, by Lemma \ref{lem:nondiagonal integral bound}
and by Lemma \ref{lem:A_epsilon reminder}, for large enough $k\in\mathbb{N}$
\[
\intop_{X}\intop_{X}F_{k,k}(x,y)\mathrm{d}\nu_{1}(x)\mathrm{d}\nu_{2}(y)\leq\eta+\iintop_{A_{\varepsilon}}\overline{F}(x,y)\mathrm{d}\nu_{1}(x)\mathrm{d}\nu_{2}(y)+c_{4}\iintop_{H_{\varepsilon}}\varphi(x,y)\mathrm{d}\nu(x)\mathrm{d}\nu(y)+c_{4}\cdot\nu\times\nu(H_{\varepsilon})
\]
\begin{equation}
\leq\eta+\intop_{X}\intop_{X}\overline{F}(x,y)\mathrm{d}\nu_{1}(x)\mathrm{d}\nu_{2}(y)+c_{4}\iintop_{H_{\varepsilon}}\varphi(x,y)\mathrm{d}\nu(x)\mathrm{d}\nu(y)+c_{4}\cdot\nu\times\nu(H_{\varepsilon}).\label{eq:BOX eq 1}
\end{equation}
Let $D=\left\{ (x,x):x\in X\right\} $. Then $\nu(D)=0$ by (\ref{eq:phi =00003Dinfty})
and Fubini`s theorem. Hence by the fact that $\intop_{X}\intop_{X}\varphi(x,y)\mathrm{d}\nu(x)\mathrm{d}\nu(y)<\infty$
it follows that
\begin{equation}
\lim_{\varepsilon\rightarrow0}\iintop_{H_{\varepsilon}}\varphi(x,y)\mathrm{d}\nu(x)\mathrm{d}\nu(y)=0\label{eq:integral heps lim}
\end{equation}
by Notation \ref{nota:r_epsilon }. Similarly
\begin{equation}
\lim_{\varepsilon\rightarrow0}\nu\times\nu(H_{\varepsilon})=0.\label{eq:nuHepslim}
\end{equation}
Hence by (\ref{eq:L_1 eq biz}), (\ref{eq:BOX eq 1}), (\ref{eq:integral heps lim}),
(\ref{eq:nuHepslim}) and Remark \ref{rem:controlled F}
\begin{equation}
E\left(\mathcal{C}(\nu_{1})(X)\cdot\mathcal{C}(\nu_{2})(X)\right)\leq\intop_{X}\intop_{X}\overline{F}(x,y)\mathrm{d}\nu_{1}(x)\mathrm{d}\nu_{2}(y)\leq c\intop_{X}\intop_{X}\varphi(x,y)\mathrm{d}\nu_{1}(x)\mathrm{d}\nu_{2}(y)<\infty.\label{eq:fo1 box}
\end{equation}
By Lemma \ref{lem:nondiagonal integral bound}, for large enough $k\in\mathbb{N}$
\begin{equation}
-\eta+\iintop_{A_{\varepsilon}}\underline{F}(x,y)\mathrm{d}\nu_{1}(x)\mathrm{d}\nu_{2}(y)\leq\iintop_{A_{\varepsilon}}F_{k,k}(x,y)\mathrm{d}\nu_{1}(x)\mathrm{d}\nu_{2}(y)\leq\intop_{X}\intop_{X}F_{k,k}(x,y)\mathrm{d}\nu_{1}(x)\mathrm{d}\nu_{2}(y).\label{eq:lower box eq}
\end{equation}
By Remark \ref{rem:controlled F} we have that $\underline{F}(x,y)\leq c\cdot\varphi(x,y)$
for $\nu\times\nu$ almost every $(x,y)$. Thus similarly to (\ref{eq:integral heps lim})
\begin{equation}
\lim_{\varepsilon\rightarrow0}\iintop_{A_{\varepsilon}}\underline{F}(x,y)\mathrm{d}\nu_{1}(x)\mathrm{d}\nu_{2}(y)=\intop_{X}\intop_{X}\underline{F}(x,y)\mathrm{d}\nu_{1}(x)\mathrm{d}\nu_{2}(y).\label{eq:lower int heps}
\end{equation}
Then it follows from (\ref{eq:L_1 eq biz}), (\ref{eq:lower box eq})
and (\ref{eq:lower int heps}) that
\begin{equation}
\intop_{X}\intop_{X}\underline{F}(x,y)\mathrm{d}\nu_{1}(x)\mathrm{d}\nu_{2}(y)\leq E\left(\mathcal{C}(\nu_{1})(X)\cdot\mathcal{C}(\nu_{2})(X)\right).\label{eq:fo2 box}
\end{equation}
So the statement follows from (\ref{eq:fo1 box}) and (\ref{eq:fo2 box}).
\end{proof}
\begin{prop}
\label{thm:Double integration}Assume that (\ref{eq:phi =00003Dinfty}),
(\ref{eq:same size}) and (\ref{eq:lower hitting prob}) hold and
there exists $0<\delta<1$ such that (\ref{eq:Kernel restriction}),
(\ref{eq:capacity-independence}) and (\ref{eq:bounded sundivision})
hold. Let $\nu_{1}$ and $\nu_{2}$ be finite Borel measures on $X$
with $I_{\varphi}(\nu_{1}+\nu_{2})<\infty$. Assume that the conditional
measure $\mathcal{C}(\nu_{i})$ of $\nu_{i}$ on $B$ exist with respect
to $\mathcal{Q}_{k}$ ($k\geq1$) with regularity kernel $\varphi$
for $i=1,2$. Assume that if $A\subseteq X$ is a Borel set then $\mathcal{C}_{k}(\nu_{i})(A)$
converges to $\mathcal{C}(\nu_{i})(A)$ in $\mathcal{L}^{2}$ for
$i=1,2$. Let $f:X\times X\longrightarrow\mathbb{R}$ be a nonnegative
Borel function. Then
\[
\intop\intop\underline{F}(x,y)f(x,y)\mathrm{d}\nu_{1}(x)\mathrm{d}\nu_{2}(y)\leq E\left(\int\int f(x,y)\mathrm{d}\mathcal{C}(\nu_{1})(x)\mathrm{d}\mathcal{C}(\nu_{2})(y)\right)
\]
\begin{equation}
\leq\intop\intop\overline{F}(x,y)f(x,y)\mathrm{d}\nu_{1}(x)\mathrm{d}\nu_{2}(y).\label{eq: E(mu^2)}
\end{equation}
\end{prop}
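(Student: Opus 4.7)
The plan is to lift the rectangle inequality from Proposition \ref{prop:Double integ BOXES upper} to arbitrary Borel subsets of $X \times X$ via the Carath\'eodory-type inequality in Proposition \ref{lem:charateodory ineq}, and then bootstrap from indicator functions to nonnegative Borel functions using linearity and monotone convergence.

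First I will define three finite Borel measures on the product space $X \times X$:
\[
\mu(B) = E\left(\iint \chi_B(x,y)\, d\mathcal{C}(\nu_1)(x)\, d\mathcal{C}(\nu_2)(y)\right),\quad \tau_i(B) = \iint \chi_B(x,y) \,G_i(x,y)\, d\nu_1(x)\, d\nu_2(y),
\]
where $G_1 = \overline{F}$ and $G_2 = \underline{F}$. Countable additivity of $\mu$ follows by applying Fubini and monotone convergence to the random product measure $\mathcal{C}(\nu_1) \times \mathcal{C}(\nu_2)$; finiteness of all three measures follows from Remark \ref{rem:controlled F}, the hypothesis $I_{\varphi}(\nu_1 + \nu_2) < \infty$, and the case $A_1 = A_2 = X$ of Proposition \ref{prop:Double integ BOXES upper}.

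Next I will let $\mathcal{S}$ denote the semiring of measurable rectangles $A_1 \times A_2$ in $X \times X$. Since $X$ is separable metric, the Borel $\sigma$-algebra of $X \times X$ coincides with the product $\sigma$-algebra and is therefore generated by $\mathcal{S}$, with $X \times X \in \mathcal{S}$. Proposition \ref{prop:Double integ BOXES upper} states precisely that $\tau_2(S) \leq \mu(S) \leq \tau_1(S)$ for every $S \in \mathcal{S}$, so applying Proposition \ref{lem:charateodory ineq} twice --- once to the pair $(\tau_2, \mu)$ and once to $(\mu, \tau_1)$ --- yields $\tau_2(B) \leq \mu(B) \leq \tau_1(B)$ for every Borel $B \subseteq X \times X$.

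Finally, for nonnegative simple Borel functions on $X \times X$ the desired double inequality follows by linearity from the preceding paragraph. For a general nonnegative Borel function $f$ I will take an increasing sequence of simple functions approximating $f$ pointwise and appeal to monotone convergence; the integral against $\mu$ equals $E\left(\iint f\, d\mathcal{C}(\nu_1)\, d\mathcal{C}(\nu_2)\right)$ by an application of Fubini to the product of the probability space and $X \times X$, which completes the proof. The main subtlety I foresee is verifying that $\mu$ is a genuine finite Borel measure, i.e.\ the countable additivity and measurability of $B \mapsto E\bigl((\mathcal{C}(\nu_1) \times \mathcal{C}(\nu_2))(B)\bigr)$; this is routine but requires unwinding the measurability of the random product measure built into Definition \ref{def:def of cond meas}.
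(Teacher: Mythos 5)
Your proposal is correct and takes essentially the same route as the paper's own proof: the paper likewise obtains the inequality on rectangles from Proposition \ref{prop:Double integ BOXES upper}, extends it to indicators of arbitrary Borel subsets of $X\times X$ via the semiring/Carath\'eodory comparison in Proposition \ref{lem:charateodory ineq}, and then passes to simple functions and to general nonnegative Borel functions by monotone convergence. Your extra care in checking that the three set functions are genuine finite Borel measures (using the case $A_{1}=A_{2}=X$ together with $I_{\varphi}(\nu_{1}+\nu_{2})<\infty$ and Remark \ref{rem:controlled F}) is exactly the implicit content the paper's terse argument relies on.
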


\begin{proof}
It follows from Proposition \ref{prop:Double integ BOXES upper} that
(\ref{eq: E(mu^2)}) holds for functions of the form $f(x,y)=\chi_{A_{1}}(x)\cdot\chi_{A_{2}}(y)$
for Borel sets $A_{1},A_{2}\subseteq X$. Hence, by the fact that
the sets of the form $A_{1}\times A_{2}$ form a semi-ring generating
the Borel $\sigma$-algebra of $X\times X$ we can deduce that (\ref{eq: E(mu^2)})
holds for $f(x,y)=I_{A}(x,y)$ for Borel sets $A\subseteq X\times X$
by Proposition \ref{lem:charateodory ineq}. It follows that (\ref{eq: E(mu^2)})
holds for non-negative simple functions on $X\times X$ and so we
can deduce (\ref{eq: E(mu^2)}) for every nonnegative Borel function
on $X\times X$ using the monotone convergence theorem.
\end{proof}
\begin{lem}
\label{lem:kereszt int sum}Let $\nu$ and $\tau$ be locally finite
Borel measures on $X$. Then there exist a sequence of finite measures
$\left\{ \nu_{i}\right\} _{i=1}^{\infty}$ and another sequence of
finite measures $\left\{ \tau_{i}\right\} _{i=1}^{\infty}$ such that
$\nu_{R}=\sum_{i=1}^{\infty}\nu_{i}$, $\tau_{R}=\sum_{i=1}^{\infty}\tau_{i}$
and $I_{\varphi}(\nu_{i}+\tau_{i})<\infty$ for every $i,j\in\mathbb{N}$.
\end{lem}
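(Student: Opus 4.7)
The strategy is to apply Proposition \ref{decomposition} (in its locally finite form, Remark \ref{rem:locallyn finite decomp}) to the locally finite measure $\mu := \nu_R + \tau_R$, obtaining a decomposition $\mu = \mu_R + \mu_\perp$ together with disjoint Borel sets $(C_i)_{i\in\mathbb{N}}$ with $\mu_R = \sum_i \mu|_{C_i}$ and $I_\varphi(\mu|_{C_i}) < \infty$. By the Lindel\"of property of $X$ and local finiteness of $\mu$, I may refine the $C_i$ so as to also arrange $\mu(C_i) < \infty$ for every $i$.

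The heart of the argument is to show that $\mu_\perp = 0$. By Proposition \ref{prop:singular felbontas loc fin intro} there is a Borel set $Z \subseteq X$ with $\mu_\perp(X \setminus Z) = 0$ and $C_\varphi(Z) = 0$. Applying Proposition \ref{decomposition} individually to $\nu$ and to $\tau$, and refining the decomposing sets via Lindel\"of, I can write $\nu_R = \sum_n \nu|_{A_n}$ and $\tau_R = \sum_m \tau|_{B_m}$ with each $\nu|_{A_n}$ and each $\tau|_{B_m}$ a \emph{finite} Borel measure of finite $\varphi$-energy. If $\nu|_{A_n}(Z) > 0$, then the normalised restriction $\nu|_{A_n \cap Z}/\nu|_{A_n}(Z)$ would be a Borel probability measure on $Z$ with finite $\varphi$-energy, contradicting $C_\varphi(Z) = 0$; hence $\nu|_{A_n}(Z) = 0$ for every $n$ and $\nu_R(Z) = 0$. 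The identical argument yields $\tau_R(Z) = 0$, so $\mu(Z) = 0$. Since $\mu_\perp \leq \mu$ and $\mu_\perp$ is concentrated on $Z$, this forces $\mu_\perp = 0$ and thus $\mu = \mu_R = \sum_i \mu|_{C_i}$.

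Now define $\nu_i := \nu_R|_{C_i}$ and $\tau_i := \tau_R|_{C_i}$. From $\mu(X \setminus \bigcup_i C_i) = 0$ and $\nu_R,\tau_R \leq \mu$, both $\nu_R$ and $\tau_R$ are concentrated on $\bigcup_i C_i$, giving $\nu_R = \sum_i \nu_i$ and $\tau_R = \sum_i \tau_i$. Since $\mu(C_i) < \infty$ and $\nu_i + \tau_i = \mu|_{C_i}$, each $\nu_i, \tau_i$ is a finite Borel measure and
\[
I_\varphi(\nu_i + \tau_i) = I_\varphi(\mu|_{C_i}) < \infty.
\]
The main obstacle lies in the middle paragraph: the joint $\varphi$-energy of $\nu_R$ against $\tau_R$ may be infinite along the diagonal, so one cannot directly bound $I_\varphi(\mu)$ in terms of $I_\varphi(\nu_R)$ and $I_\varphi(\tau_R)$ and invoke the single-measure decomposition to $\mu$ naively. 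Proposition \ref{prop:singular felbontas loc fin intro} circumvents this difficulty by reducing the vanishing of $\mu_\perp$ to a purely support-theoretic assertion about a capacity-zero set, which only requires each finite-energy \emph{piece} of $\nu_R$ and $\tau_R$ to charge $Z$ trivially.
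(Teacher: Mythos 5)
Your argument is correct for the statement as literally written, and it takes a genuinely different route from the paper. The paper performs a Lebesgue decomposition of $\nu_R$ with respect to $\tau_R$, writing $\mathrm{d}\nu_R=\mathrm{d}\nu_s+g\,\mathrm{d}\tau_R$, then slices according to the level sets of $g$ and pairs each piece of $\nu_R$ with each piece of $\tau_R$ so that the two are either mutually absolutely continuous with bounded density or supported on disjoint compact sets at positive distance; the finiteness of each joint energy is then checked by hand. You instead apply Proposition \ref{decomposition} once to the sum $\mu=\nu_R+\tau_R$ and reduce everything to showing $\mu_\perp=0$, which you do correctly via Proposition \ref{prop:singular felbontas loc fin intro}: $\mu_\perp$ is carried by a set $Z$ with $C_\varphi(Z)=0$, while $\nu_R$ and $\tau_R$, being countable sums of finite-energy pieces, cannot charge any zero-capacity set. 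This is shorter and arguably cleaner than the paper's construction.

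One caveat you should address. The hypothesis ``$I_{\varphi}(\nu_{i}+\tau_{i})<\infty$ for every $i,j$'' is evidently a typo for $I_{\varphi}(\nu_{i}+\tau_{j})<\infty$ for every $i,j$: that is the form in which the lemma is invoked in the proof of Theorem \ref{thm:Double integration-summed} (Proposition \ref{thm:Double integration} is applied to the pair $(\nu_i,\tau_j)$ with $i\neq j$), and it is what the paper's proof actually establishes. Your construction as stated does not give this: for $i\neq j$ the measures $\nu_i$ and $\tau_j$ live on the disjoint Borel sets $C_i$ and $C_j$, but disjoint Borel sets need not be at positive distance, so the cross term $\iint\varphi(x,y)\,\mathrm{d}\nu_i(x)\,\mathrm{d}\tau_j(y)$ is not controlled by $I_\varphi(\mu|_{C_i})$ and $I_\varphi(\mu|_{C_j})$, and it can indeed be infinite. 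The fix is a one-line addition: when you refine the $C_i$ to have finite $\mu$-measure, also take them compact (Lemma \ref{lem:countab exhaustion} and Remark \ref{rem:countab exhaust vague} allow this, and passing to subsets only decreases $I_\varphi(\mu|_{C_i})$). Disjoint compact sets satisfy $\mathrm{dist}(C_i,C_j)>0$, so for $i\neq j$ one gets $I_\varphi(\nu_i+\tau_j)\leq I_\varphi(\mu|_{C_i})+I_\varphi(\mu|_{C_j})+2\varphi(\mathrm{dist}(C_i,C_j))\,\mu(C_i)\,\mu(C_j)<\infty$. With that amendment your proof yields the full strength of the lemma.
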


\begin{proof}
By the Lebesgue decomposition \cite[Theorem A.4.5]{Durrett} there
exist a nonnegative Borel function $g$ and a locally finite Borel
measure $\nu_{s}$ such that
\[
\nu_{R}(A)=\nu_{s}(A)+\intop_{A}g(x)\mathrm{d}\tau_{R}(x)
\]
and $\nu_{s}$ is singular to $\tau_{R}$. Let $G\subseteq X$ be
a Borel set such that $\tau_{R}(X\setminus G)=0$ and $\nu_{s}(G)=0$.
Since $\nu_{R}$ is locally finite and $X$ is separable metric space
it follows that $g(x)<\infty$ for $\tau_{R}$ almost all $x$. Let
$G_{i}=\left\{ x\in G:i-1\leq g(x)<i\right\} $ for every $i\in\mathbb{N}$.
By Proposition \ref{decomposition} for every $i\in\mathbb{N}$ we
can find a sequence $\left\{ E_{i,j}\right\} _{j=1}^{\infty}$ of
disjoint Borel subsets of $G_{i}$ such that $I_{\varphi}(\tau_{R}\vert_{E_{i,j}})<\infty$
for every $j\in\mathbb{N}$ and $\tau_{R}\vert_{G_{i}}=\sum_{j=1}^{\infty}\tau_{R}\vert_{E_{i,j}}$.
By Lemma \ref{lem:countab exhaustion} and Remark \ref{rem:countab exhaust vague}
we can further assume that $\left\{ E_{i,j}\right\} _{j=1}^{\infty}$
is a collection of disjoint compact sets and so $\tau_{R}(E_{i,j})<\infty$
for every $j\in\mathbb{N}$. Similarly, we can also find a collection
of disjoint compact sets $\left\{ A_{j}\right\} _{j=1}^{\infty}$
such that $I_{\varphi}(\nu_{s}\vert_{A_{j}})<\infty$, $\nu_{s}(A_{j})<\infty$,
$A_{j}\subseteq X\setminus G$ for every $j\in\mathbb{N}$ and $\nu_{s}=\sum_{j=1}^{\infty}\nu_{s}\vert_{A_{j}}$.

We define the following decomposition of $\tau_{R}$ and $\nu_{R}$
to obtain the desired decomposition of the statement:
\begin{equation}
\tau_{R}=\sum_{i=1}^{\infty}\sum_{j=1}^{\infty}\tau_{R}\vert_{E_{i,j}}\label{eq:tau dec}
\end{equation}
and 
\begin{equation}
\mathrm{d}\nu_{R}(x)=\sum_{j=1}^{\infty}\mathrm{d}\nu_{s}\vert_{A_{j}}(x)+\sum_{i=1}^{\infty}\sum_{j=1}^{\infty}g(x)\mathrm{d}\tau_{R}\vert_{E_{i,j}}(x).\label{eq:nu dec}
\end{equation}
Then
\[
I_{\varphi}(g(x)\mathrm{d}\tau_{R}\vert_{E_{i,j}}(x)+\mathrm{d}\tau_{R}\vert_{E_{i,j}}(x))\leq(i+1)^{2}I_{\varphi}(\tau_{R}\vert_{E_{i,j}})<\infty
\]
for every $i,j\in\mathbb{N}$. Let $i,j,k,l\in\mathbb{N}$ such that
$E_{i,j}\neq E_{k,l}$. Then $E_{i,j}$ and $E_{k,l}$ are disjoint
compact sets and let $r=\mathrm{dist}(E_{i,j},E_{k,l})$. Then
\[
I_{\varphi}(g(x)\mathrm{d}\tau_{R}\vert_{E_{i,j}}(x)+\mathrm{d}\tau_{R}\vert_{E_{k,l}}(x))
\]
\[
=I_{\varphi}(g(x)\mathrm{d}\tau_{R}\vert_{E_{i,j}}(x))+I_{\varphi}(\tau_{R}\vert_{E_{k,l}})+2\intop_{E_{i,j}}g(x)\left(\intop_{E_{k,l}}\varphi(x,y)\mathrm{d}\tau_{R}(y)\right)\mathrm{d}\tau_{R}(x)
\]
\[
\leq i^{2}I_{\varphi}(\tau_{R}\vert_{E_{i,j}})+I_{\varphi}(\tau_{R}\vert_{E_{k,l}})+2i\varphi(r)\cdot\tau_{R}(E_{i,j})\cdot\tau_{R}(E_{k,l})<\infty.
\]
Finally if $j,k,l\in\mathbb{N}$ then $A_{j}$ and $E_{k,l}$ are
disjoint compact sets and let $r=\mathrm{dist}(A_{j},E_{k,l})$. Then
\[
I_{\varphi}(\nu_{s}\vert_{A_{j}}+\tau_{R}\vert_{E_{k,l}})=I_{\varphi}(\nu_{s}\vert_{A_{j}})+I_{\varphi}(\tau_{R}\vert_{E_{k,l}})+2\intop_{A_{j}}\left(\intop_{E_{k,l}}\varphi(x,y)\mathrm{d}\tau_{R}(y)\right)\mathrm{d}\nu_{s}(x)
\]
\[
\leq I_{\varphi}(\nu_{s}\vert_{A_{j}})+I_{\varphi}(\tau_{R}\vert_{E_{k,l}})+2\varphi(r)\cdot\nu_{s}(A_{j})\cdot\tau_{R}(E_{k,l})<\infty.
\]
Hence decompositions of $\tau_{R}$ and $\nu_{R}$ in (\ref{eq:tau dec})
and (\ref{eq:nu dec}) satisfy the statement.
\end{proof}
\begin{thm}
\label{thm:Double integration-summed}Assume that (\ref{eq:phi =00003Dinfty}),
(\ref{eq:same size}) and (\ref{eq:lower hitting prob}) hold and
there exists $0<\delta<1$ such that (\ref{eq:Kernel restriction}),
(\ref{eq:capacity-independence}) and (\ref{eq:bounded sundivision})
hold. Let either $\nu$ and $\tau$ be finite Borel measures or $X$
be locally compact and $\nu$ and $\tau$ be locally finite Borel
measures. Assume that $\nu(X\setminus X_{0})=0$ and $\tau(X\setminus X_{0})=0$.
Assume that the conditional measure $\mathcal{C}(\nu)$ of $\nu$
and $\mathcal{C}(\tau)$ of $\tau$ on $B$ exist with respect to
$\mathcal{Q}_{k}$ ($k\geq1$) with regularity kernel $\varphi$.
Assume that if $A\subseteq X$ is a Borel set such that $\nu(A)<\infty$
and $I_{\varphi}(\nu\vert_{A})<\infty$ then $\mathcal{C}_{k}(\nu)(A)$
converges to $\mathcal{C}(\nu)(A)$ in $\mathcal{L}^{2}$ and if $A\subseteq X$
is a Borel set such that $\tau(A)<\infty$ and $I_{\varphi}(\tau\vert_{A})<\infty$
then $\mathcal{C}_{k}(\tau)(A)$ converges to $\mathcal{C}(\tau)(A)$
in $\mathcal{L}^{2}$. Let $f:X\times X\longrightarrow\mathbb{R}$
be a nonnegative Borel function. Then
\[
\intop\intop\underline{F}(x,y)f(x,y)\mathrm{d}\nu_{R}(x)\mathrm{d}\tau_{R}(y)\leq E\left(\int\int f(x,y)\mathrm{d}\mathcal{C}(\nu)(x)\mathrm{d}\mathcal{C}(\tau)(y)\right)
\]
\begin{equation}
\leq\intop\intop\overline{F}(x,y)f(x,y)\mathrm{d}\nu_{R}(x)\mathrm{d}\tau_{R}(y).\label{eq: E(mu^2)-summed}
\end{equation}
\end{thm}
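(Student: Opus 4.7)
My plan is to reduce the statement to the finite-energy case of Proposition \ref{thm:Double integration} by means of the decomposition supplied by Lemma \ref{lem:kereszt int sum}. First I would invoke that lemma to write $\nu_{R}=\sum_{i}\nu_{i}$ and $\tau_{R}=\sum_{j}\tau_{j}$ as countable sums of finite Borel measures satisfying $I_{\varphi}(\nu_{i}+\tau_{j})<\infty$ for every pair $(i,j)$ (which is exactly what the proof of the lemma produces by checking all cross pairs). From the construction in that proof, each summand is a restriction: $\nu_{i}=\nu_{R}|_{C_{i}^{\nu}}$ for a Borel set $C_{i}^{\nu}$, and since the $C_{i}^{\nu}$ may be chosen inside a set of full $\nu_{R}$-measure on which $\nu_{\perp}$ vanishes, one has $\nu_{i}=\nu|_{C_{i}^{\nu}}$; analogously $\tau_{j}=\tau|_{C_{j}^{\tau}}$. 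Applying property x) (or x*) in Definition \ref{def:def of cond meas} to the characteristic function $\chi_{C_{i}^{\nu}}$, together with property viii), furnishes the existence of $\mathcal{C}(\nu_{i})=\chi_{C_{i}^{\nu}}(x)\,\mathrm{d}\mathcal{C}(\nu)(x)$ almost surely, and similarly for $\mathcal{C}(\tau_{j})$. Property ix) then yields $\mathcal{C}(\nu)=\sum_{i}\mathcal{C}(\nu_{i})$ and $\mathcal{C}(\tau)=\sum_{j}\mathcal{C}(\tau_{j})$ on a common event $H\in\mathcal{A}$ of full probability.

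Next I would verify the remaining hypothesis of Proposition \ref{thm:Double integration} for each pair $(\nu_{i},\tau_{j})$, namely that $\mathcal{C}_{k}(\nu_{i})(A)\to\mathcal{C}(\nu_{i})(A)$ in $\mathcal{L}^{2}$ for every Borel set $A$. Directly from the definition of $\mathcal{C}_{k}$, $\mathcal{C}_{k}(\nu_{i})(A)=\mathcal{C}_{k}(\nu)(A\cap C_{i}^{\nu})$; and since $\nu(A\cap C_{i}^{\nu})\leq\nu_{i}(X)<\infty$ and $I_{\varphi}(\nu|_{A\cap C_{i}^{\nu}})\leq I_{\varphi}(\nu_{i})<\infty$, the $\mathcal{L}^{2}$-convergence hypothesis of Theorem \ref{thm:Double integration-summed} applies and delivers convergence of $\mathcal{C}_{k}(\nu)(A\cap C_{i}^{\nu})$ to $\mathcal{C}(\nu)(A\cap C_{i}^{\nu})=\mathcal{C}(\nu_{i})(A)$; the same works for $\tau_{j}$. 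Proposition \ref{thm:Double integration} then yields, for every nonnegative Borel $f$,
\begin{equation*}
\iint\underline{F}(x,y)f(x,y)\,\mathrm{d}\nu_{i}(x)\,\mathrm{d}\tau_{j}(y)\leq E\!\left(\iint f\,\mathrm{d}\mathcal{C}(\nu_{i})\,\mathrm{d}\mathcal{C}(\tau_{j})\right)\leq\iint\overline{F}(x,y)f(x,y)\,\mathrm{d}\nu_{i}(x)\,\mathrm{d}\tau_{j}(y).
\end{equation*}

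Finally I would sum over $(i,j)$ and interchange expectation, countable sum, and double integral via Tonelli's theorem (everything being nonnegative): on the event $H$,
\begin{equation*}
\iint f\,\mathrm{d}\mathcal{C}(\nu)\,\mathrm{d}\mathcal{C}(\tau)=\sum_{i,j}\iint f\,\mathrm{d}\mathcal{C}(\nu_{i})\,\mathrm{d}\mathcal{C}(\tau_{j}),
\end{equation*}
while on the deterministic side $\sum_{i,j}\iint\overline{F}f\,\mathrm{d}\nu_{i}\,\mathrm{d}\tau_{j}=\iint\overline{F}f\,\mathrm{d}\nu_{R}\,\mathrm{d}\tau_{R}$, and similarly for the lower bound. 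Taking expectation of the first identity, swapping it with the countable sum (Tonelli), and applying the per-pair inequality yields the claim. The main obstacle is not analytic but bookkeeping: the identification of the summands of Lemma \ref{lem:kereszt int sum} as restrictions of $\nu$ and $\tau$ is what legitimately lets us invoke property x) and the $\mathcal{L}^{2}$-convergence hypothesis, and all the countable interchanges must be arranged on a single event of full probability. Nonnegativity of $f$, of $\underline{F}$ and $\overline{F}$, and of the conditional measures makes each interchange a routine application of Tonelli with no subtle integrability issues.
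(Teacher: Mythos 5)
Your proposal is correct and follows essentially the same route as the paper: decompose $\nu_{R}$ and $\tau_{R}$ via Lemma \ref{lem:kereszt int sum}, apply Proposition \ref{thm:Double integration} to each pair $(\nu_{i},\tau_{j})$, and sum using Properties \textit{viii.)}, \textit{ix.)} (and \textit{x.)}) of Definition \ref{def:def of cond meas} together with Tonelli. You merely spell out the bookkeeping (identifying the summands as restrictions of $\nu$ and $\tau$ and verifying the $\mathcal{L}^{2}$-convergence hypothesis for each summand) that the paper leaves implicit.
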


\begin{proof}
By Lemma \ref{lem:kereszt int sum} we can decompose $\nu_{R}$ and
$\tau_{R}$ into sum of finite measures such that $\nu_{R}=\sum_{i=1}^{\infty}\nu_{i}$,
$\tau_{R}=\sum_{i=1}^{\infty}\tau_{i}$ and $I_{\varphi}(\nu_{i}+\tau_{i})<\infty$
for every $i,j\in\mathbb{N}$. By Theorem \ref{thm:Double integration}
\[
\intop\intop\underline{F}(x,y)f(x,y)\mathrm{d}\nu_{i}(x)\mathrm{d}\tau_{j}(y)\leq E\left(\int\int f(x,y)\mathrm{d}\mathcal{C}(\nu_{i})(x)\mathrm{d}\mathcal{C}(\tau_{j})(y)\right)
\]
\[
\leq\intop\intop\overline{F}(x,y)f(x,y)\mathrm{d}\nu_{i}(x)\mathrm{d}\tau_{j}(y)
\]
for every $i,j\in\mathbb{N}$. By summing over all $i,j\in\mathbb{N}$
the statement follows by Property \textit{vii.), viii.)} and \textit{ix.)}
in Definition \ref{def:def of cond meas}.
\end{proof}
\selectlanguage{english}%
\noindent \textit{Proof of Theorem }\foreignlanguage{british}{\ref{cor:Double integral when W exists-summed}.
By Remark \ref{rem:controlled F} $\intop_{X}\intop_{X}\varphi(x,y)\left|f(x,y)\right|\mathrm{d}\nu(x)\mathrm{d}\tau(y)<\infty$
implies that $\intop F(x,y)\left|f(x,y)\right|\mathrm{d}\nu_{R}(x)\mathrm{d}\tau_{R}(y)<\infty$.}

\selectlanguage{british}%
The conditional measure $\mathcal{C}(\nu)$ of $\nu$ and $\mathcal{C}(\tau)$
of $\tau$ on $B$ exist with respect to $\mathcal{Q}_{k}$ ($k\geq1$)
with regularity kernel $\varphi$ by Theorem \ref{thm:Special existence of conditional expectation finite+vague}.
The conditions of Theorem \ref{thm:Double integration-summed} are
satisfied by Theorem \ref{thm:Special existence of conditional expectation finite+vague}.
The statement follows by applying Theorem \ref{thm:Double integration-summed}
to $f^{+}$ and $f^{-}$.$\hfill\square$

\section{\label{sec:increaasing cond meas}Conditional measure on an increasing
union}

Our aim in this section is to establish the extension of the conditional
measure with respect to an increasing union of $\mathcal{Q}_{k}^{i}$.

For every $i\in\mathbb{N}$ let $\mathcal{Q}_{k}^{i}$ be a sequence
of countable families of Borel subsets of $X$ for $k\geq n_{i}$,
for some $n_{i}\in\mathbb{N}$, such that $Q\cap S=\emptyset$ for
$Q,S\in\mathcal{Q}_{k}^{i}$, for all $k\in\mathbb{N}$. Assume that
(\ref{eq:diameter_goes_to0}), (\ref{eq:unique-subset}) and (\ref{eq:psoitivity of probability})
hold. Assume further that if $i<j$ and $k\geq\max\{n_{i},n_{j}\}$
then 
\begin{equation}
\mathcal{Q}_{k}^{i}\subseteq\mathcal{Q}_{k}^{j}.\label{eq:tartalmazo web}
\end{equation}
See Example \ref{exa:example Q_k^i}. Let $X_{0}^{i}=\bigcap_{k=n_{i}}^{\infty}(\bigcup_{Q\in\mathcal{Q}_{k}^{i}}Q)$
and let $X_{0}^{\infty}=\bigcup_{i=1}^{\infty}X_{0}^{i}$, note that
it is an increasing union. Let either $\nu$ be a finite Borel measure
on $X$ or $X$ be locally compact and $\nu$ be a locally finite
Borel measure on $X$. Assume that
\begin{equation}
\nu(X\setminus X_{0}^{\infty})=0\label{eq:kulonseg 0}
\end{equation}
and that the conditional measure $\mathcal{C}^{i}(\nu\vert_{X_{0}^{i}})$
of $\nu\vert_{X_{0}^{i}}$ on $B$ exists with respect to $\mathcal{Q}_{k}^{i}$
($k\geq n_{i}$) with regularity kernel $\varphi$. Let
\begin{equation}
\mu^{\nu}:=\sum_{i=1}^{\infty}\mathcal{C}^{i}(\nu\vert_{X_{0}^{i}\setminus X_{0}^{i-1}})\label{eq:def sum cond meas}
\end{equation}
with the convention that $X_{0}^{0}=\emptyset$.
\begin{prop}
\label{thm:increasing conditional measure}Let $X_{0}^{i}$, $X_{0}^{\infty}$,
$\mathcal{Q}_{k}^{i}$, $\nu$, $\mathcal{C}^{i}(\nu\vert_{X_{0}^{i}})$
and $\mu^{\nu}$ be as above. Then the following hold:

1.) $E(\intop_{X}f(x)\mathrm{d}\mu^{\nu}(x))=\intop_{X}f(x)\mathrm{d}\nu_{R}(x)$
for every $f:X\longrightarrow\mathbb{R}$ Borel measurable function
such that $\intop_{X}\left|f(x)\right|\mathrm{d}\nu(x)<\infty$,

2.) $E(\mu^{\nu}(A))=\nu_{R}(A)\leq\nu(A)$ for every Borel set $A\subseteq X$
with $\nu(A)<\infty$,

3.) $\mu^{\nu_{\bot}}=0$ almost surely,

4.) $\mu^{\nu}=\mu^{\nu_{R}}$ almost surely,

5.) if $f:X\longrightarrow\mathbb{R}$ is a nonnegative Borel function
such that $\intop_{X}f(x)\mathrm{d}\nu(x)<\infty$ then $\mu^{f(x)\mathrm{d}\nu(x)}=f(x)\mathrm{d}\mu^{\nu}(x)$
almost surely,

6.) $\mathrm{supp}\mu^{\nu}\subseteq\mathrm{supp}\nu\cap B$ almost
surely,
\end{prop}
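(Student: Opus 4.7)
\medskip

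\noindent\textbf{Proof proposal.} The plan is to derive each of the six properties by expanding the definition of $\mu^{\nu}$ in (\ref{eq:def sum cond meas}) and transporting the corresponding property of Definition \ref{def:def of cond meas} across the countable sum. Write $\nu^i := \nu\vert_{X_0^i \setminus X_0^{i-1}}$, so that (\ref{eq:kulonseg 0}) gives $\nu = \sum_{i=1}^{\infty}\nu^i$ and $\mathrm{supp}\,\nu^i \subseteq X_0^i$. Before applying the properties of each $\mathcal{C}^i$, the first task will be well-definedness: the estimate $E(\mathcal{C}^i(\nu^i)(A)) \leq \nu^i(A)$ from Property \textit{v.)} applied to each $\mathcal{C}^i$, summed over $i$, gives $\sum_i E(\mathcal{C}^i(\nu^i)(A)) \leq \nu(A)$, which is finite on any set of finite $\nu$-measure (in particular, on compact sets in the locally finite case). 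Hence $\mu^{\nu}$ is almost surely a finite, resp.\ locally finite, Borel measure, and each property need only be verified $\omega$ by $\omega$ on an event of full probability obtained by countable intersection.

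The central observation is that the $\varphi$-regular and $\varphi$-singular decomposition is additive under countable disjoint sums: if $\nu = \sum_i \nu^i$ then $\nu_R = \sum_i \nu^i_R$ and $\nu_\perp = \sum_i \nu^i_\perp$. This follows from uniqueness of the decomposition in Proposition \ref{decomposition}: the measure $\sum_i \nu^i_R$ inherits the representation in Property \textit{iv)} by concatenating the countable families $(A_n^i)_{n}$ for each $i$, while $\sum_i \nu^i_\perp$ remains singular to every locally finite measure of finite $\varphi$-energy because $\nu^i$ are mutually singular and each $\nu^i_\perp$ already has this singularity property. Once this lemma is in hand, Properties 1.)--6.) essentially follow by interchanging sum and expectation.

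More precisely: for Property 1.), Property \textit{ii.)} of Definition \ref{def:def of cond meas} applied to $\mathcal{C}^i(\nu^i)$ yields $E\bigl(\int f\,\mathrm{d}\mathcal{C}^i(\nu^i)\bigr) = \int f\,\mathrm{d}\nu^i_R$, and summing (via Fubini/Tonelli on the nonnegative and nonpositive parts) gives $E\bigl(\int f\,\mathrm{d}\mu^{\nu}\bigr) = \sum_i \int f\,\mathrm{d}\nu^i_R = \int f\,\mathrm{d}\nu_R$. Property 2.) is the special case $f = \chi_A$. Property 3.) follows because $\nu_\perp^i = (\nu^i)_\perp$ is $\varphi$-singular, so Property \textit{vii.)} for each $\mathcal{C}^i$ gives $\mathcal{C}^i(\nu_\perp^i) = 0$ almost surely, and a countable intersection of almost sure events still has probability one. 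For Property 4.), apply Property \textit{ix.)} of Definition \ref{def:def of cond meas} to write $\mathcal{C}^i(\nu^i) = \mathcal{C}^i(\nu^i_R) + \mathcal{C}^i(\nu^i_\perp) = \mathcal{C}^i(\nu^i_R)$ almost surely, then sum. Property 5.) uses Property \textit{x.)} (or \textit{x*.)} in the vague setting) termwise: $\mathcal{C}^i\bigl((f\,\mathrm{d}\nu)\vert_{X_0^i\setminus X_0^{i-1}}\bigr) = f(x)\,\mathrm{d}\mathcal{C}^i(\nu^i)(x)$ almost surely, and the identity $f(x)\,\mathrm{d}\bigl(\sum_i \mathcal{C}^i(\nu^i)\bigr)(x) = \sum_i f(x)\,\mathrm{d}\mathcal{C}^i(\nu^i)(x)$ is pointwise on Borel sets. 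Property 6.) follows from Property \textit{xi.)} applied to each summand, giving $\mathrm{supp}\,\mathcal{C}^i(\nu^i) \subseteq \mathrm{supp}\,\nu^i \cap B \subseteq \mathrm{supp}\,\nu \cap B$ almost surely, and then $\mathrm{supp}\,\mu^{\nu} \subseteq \overline{\bigcup_i \mathrm{supp}\,\mathcal{C}^i(\nu^i)} \subseteq \mathrm{supp}\,\nu \cap B$ because the latter is closed.

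The main obstacle, such as there is one, lies in the bookkeeping of almost-sure statements: each of Properties 3.)--6.) needs to be asserted on a single event of probability one obtained as a countable intersection, and in Property 5.) one must additionally verify that the pointwise Radon--Nikodym identity $f\,\mathrm{d}\mu^{\nu} = \sum_i f\,\mathrm{d}\mathcal{C}^i(\nu^i)$ holds as an equality of (random) Borel measures rather than merely integrated against test functions. Both issues are routine given the additivity lemma for the $R/\perp$ decomposition; no new analytic input beyond Definition \ref{def:def of cond meas} and Proposition \ref{decomposition} is needed.
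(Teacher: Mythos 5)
Your proposal is correct and follows essentially the same route as the paper: apply Properties \textit{ii.)}, \textit{vii.)}--\textit{xi.)} of Definition \ref{def:def of cond meas} termwise to the summands $\mathcal{C}^{i}(\nu\vert_{X_{0}^{i}\setminus X_{0}^{i-1}})$ and sum, using Fubini/Tonelli for 1.)--2.) and countable intersections of full-probability events for 3.)--6.). The only difference is that you make explicit the compatibility $(\nu\vert_{X_{0}^{i}\setminus X_{0}^{i-1}})_{R}=\nu_{R}\vert_{X_{0}^{i}\setminus X_{0}^{i-1}}$ of the $\varphi$-regular/singular decomposition with restriction to disjoint Borel sets, which the paper uses implicitly; that is a worthwhile clarification, not a deviation.
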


\begin{proof}
Let $f:X\longrightarrow\mathbb{R}$ be a nonnegative Borel measurable
function such that $\intop_{X}\left|f(x)\right|\mathrm{d}\nu(x)<\infty$,
then by Property \textit{ii.)} of Definition \ref{def:def of cond meas}
we have that $E(\intop_{X}f(x)\mathrm{d}\mathcal{C}^{i}(\nu\vert_{X_{0}^{i}\setminus X_{0}^{i-1}})(x))=\intop_{X}f(x)\mathrm{d}\nu_{R}\vert_{X_{0}^{i}\setminus X_{0}^{i-1}}(x)$
for every $i\in\mathbb{N}$. Then summing over $i\in\mathbb{N}$ and
by Fubini`s theorem we have that $E(\intop_{X}f(x)\mathrm{d}\mu^{\nu}(x))=\intop_{X}f(x)\mathrm{d}\nu_{R}(x)$,
i.e. \textit{1.)} holds for nonnegative $f$. This implies \textit{1.)}
in the general case since we know it for $f^{+}$ and $f^{-}$. Property
\textit{2.)} is a spacial case of property \textit{1.)}.

Property \textit{3.), 4.), 5.)} and \textit{6.)} follows by the fact
that the analogous properties of the conditional measure in Definition
\ref{def:def of cond meas} hold for the summands $\mathcal{C}^{i}(\nu\vert_{X_{0}^{i}\setminus X_{0}^{i-1}})$
(for Property \textit{6.)} note that $B$ is almost surely closed).
\end{proof}
Let $X_{0}^{i}$, $X_{0}^{\infty}$, $\mathcal{Q}_{k}^{i}$, $\nu$,
$\mathcal{C}^{i}(\nu\vert_{X_{0}^{i}})$ and $\mu^{\nu}$ be as above.
Let $\mathcal{Q}_{k}$ be a sequence of countable families of Borel
subsets of $X$ such that $Q\cap S=\emptyset$ for $Q,S\in\mathcal{Q}_{k}$,
for all $k\in\mathbb{N}$. Assume that (\ref{eq:unique-subset}),
(\ref{eq:psoitivity of probability}) hold and assume that
\begin{equation}
\mathcal{Q}_{k}^{i}\subseteq\mathcal{Q}_{k}\label{eq:minden tartalmaz}
\end{equation}
for $k\geq n_{i}$ for every $i$. Let $\mathcal{C}_{k}(\nu)$ be
as in (\ref{eq:C_k def}).

Recall, that $X_{0}=\bigcap_{k=0}^{\infty}(\bigcup_{Q\in\mathcal{Q}_{k}}Q)$.
Note that, if
\begin{equation}
X_{0}=X_{0}^{\infty}\label{eq:X_0=00003Dinfty}
\end{equation}
and $\nu(X\setminus X_{0})=0$ then (\ref{eq:kulonseg 0}) holds.

By (\ref{eq:minden tartalmaz}), Property \textit{ix.)} of Definition
\ref{def:def of cond meas} and (\ref{eq:tartalmazo web}) it follows
that
\begin{equation}
\mathcal{C}_{k}(\nu\vert_{X_{0}^{i}})=\mathcal{C}_{k}^{i}(\nu\vert_{X_{0}^{i}})=\sum_{j=1}^{i}\mathcal{C}_{k}^{i}(\nu\vert_{X_{0}^{j}\setminus X_{0}^{j-1}})=\sum_{j=1}^{i}\mathcal{C}_{k}^{j}(\nu\vert_{X_{0}^{j}\setminus X_{0}^{j-1}}).\label{eq:ni sum Ck}
\end{equation}

\begin{prop}
\label{S(f) convergence of extended measures}Let $X_{0}^{i}$, $X_{0}^{\infty}$,
$\mathcal{Q}_{k}^{i}$, $\mathcal{Q}_{k}$ $\nu$, $\mu^{\nu}$ and
$\mathcal{C}_{k}(\nu)$ be as above, (note that we assume (\ref{eq:kulonseg 0})).
Then $\intop_{X}f(x)\mathrm{d}\mathcal{C}_{k}(\nu)(x)$ converges
to $\int f(x)\mathrm{d}\mu^{\nu}(x)$ in probability for every Borel
measurable function $f:X\longrightarrow\mathbb{R}$ such that $\intop_{X}\left|f(x)\right|\mathrm{d}\nu(x)<\infty$.
\end{prop}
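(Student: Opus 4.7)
The plan is to exploit the key identity (\ref{eq:ni sum Ck}), which says that on the region $X_0^i$ the operator $\mathcal{C}_k$ coincides with the corresponding sum of the operators $\mathcal{C}_k^j$ for $j\leq i$. The idea is to truncate: for a given tolerance, choose $i$ so large that the parts of $\nu$ outside $X_0^i$ contribute negligibly to both $\int f\,\mathrm{d}\mathcal{C}_k(\nu)$ and $\int f\,\mathrm{d}\mu^\nu$, and then deal with the remaining finite sum in $j\leq i$ by a direct appeal to Property \emph{ii.)} of Definition \ref{def:def of cond meas} applied to each $\mathcal{C}^j$.

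More concretely, I would begin by writing, for any $i\in\mathbb{N}$, the decomposition
\[
\int f\,\mathrm{d}\mathcal{C}_k(\nu)=\int f\,\mathrm{d}\mathcal{C}_k(\nu\vert_{X_0^{i}})+\int f\,\mathrm{d}\mathcal{C}_k(\nu\vert_{X\setminus X_0^{i}})
\]
and the analogous splitting
\[
\int f\,\mathrm{d}\mu^{\nu}=\sum_{j=1}^{i}\int f\,\mathrm{d}\mathcal{C}^{j}(\nu\vert_{X_0^{j}\setminus X_0^{j-1}})+\sum_{j=i+1}^{\infty}\int f\,\mathrm{d}\mathcal{C}^{j}(\nu\vert_{X_0^{j}\setminus X_0^{j-1}}).
\]
For the two tail terms I would use that, by (\ref{eq:martingal expectation inequality}), $E\bigl(\int|f|\,\mathrm{d}\mathcal{C}_k(\nu\vert_{X\setminus X_0^i})\bigr)\leq \int_{X\setminus X_0^i}|f|\,\mathrm{d}\nu$, and, by Property \emph{ii.)} applied to each $\mathcal{C}^j$ together with Fubini on the nonnegative integrand $|f|$, $E\bigl(\sum_{j>i}\bigl|\int f\,\mathrm{d}\mathcal{C}^{j}(\nu\vert_{X_0^{j}\setminus X_0^{j-1}})\bigr|\bigr)\leq \int_{X_0^\infty\setminus X_0^i}|f|\,\mathrm{d}\nu$. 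Both upper bounds tend to $0$ as $i\to\infty$ by dominated convergence, using $\int|f|\,\mathrm{d}\nu<\infty$ and the assumption $\nu(X\setminus X_0^{\infty})=0$.

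For the main (finite) part, I would invoke the identity (\ref{eq:ni sum Ck}) to rewrite $\int f\,\mathrm{d}\mathcal{C}_k(\nu\vert_{X_0^{i}})=\sum_{j=1}^{i}\int f\,\mathrm{d}\mathcal{C}_k^{j}(\nu\vert_{X_0^{j}\setminus X_0^{j-1}})$. For each fixed $j$, since $\int|f|\,\mathrm{d}\nu\vert_{X_0^{j}\setminus X_0^{j-1}}<\infty$, Property \emph{ii.)} (for the system $\mathcal{Q}_k^{j}$) gives convergence in probability of $\int f\,\mathrm{d}\mathcal{C}_k^{j}(\nu\vert_{X_0^{j}\setminus X_0^{j-1}})$ to a random variable $S_j(f)$, and Property \emph{iii.)} identifies $S_j(f)=\int f\,\mathrm{d}\mathcal{C}^{j}(\nu\vert_{X_0^{j}\setminus X_0^{j-1}})$ almost surely. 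Since the sum is finite, $\int f\,\mathrm{d}\mathcal{C}_k(\nu\vert_{X_0^{i}})\to\sum_{j=1}^{i}\int f\,\mathrm{d}\mathcal{C}^{j}(\nu\vert_{X_0^{j}\setminus X_0^{j-1}})$ in probability as $k\to\infty$.

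I would then combine these three ingredients via the translation-invariant complete metric $\rho$ from Remark \ref{rem:rho def}: using $\rho(Y,Z)\leq E|Y-Z|$ for the two tail contributions and the just-established convergence for the main part, the triangle inequality gives
\[
\limsup_{k\to\infty}\rho\!\left(\int f\,\mathrm{d}\mathcal{C}_k(\nu),\int f\,\mathrm{d}\mu^{\nu}\right)\leq 2\int_{X\setminus X_0^i}|f|\,\mathrm{d}\nu,
\]
and letting $i\to\infty$ yields the desired convergence in probability. The main obstacle, such as it is, is bookkeeping: one must verify that Fubini applies to the countable sum defining $\mu^\nu$ and that the tail bounds remain uniform in $k$, but once the decomposition (\ref{eq:ni sum Ck}) is invoked the argument is essentially a truncation and triangle-inequality argument.
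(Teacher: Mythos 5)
Your proposal is correct and follows essentially the same route as the paper: truncate at a level $i$ with $\intop_{X\setminus X_{0}^{i}}\left|f\right|\mathrm{d}\nu<\varepsilon$, bound the two tail terms in expectation via (\ref{eq:martingal expectation inequality}) and Property \emph{ii.)}, identify the main part through (\ref{eq:ni sum Ck}) and Properties \emph{ii.)}--\emph{iii.)}, and conclude with the triangle inequality for $\rho$. The only cosmetic difference is that the paper treats the main part as the single quantity $\intop_{X}f(x)\mathrm{d}\mathcal{C}_{k}^{i}(\nu\vert_{X_{0}^{i}})(x)$ rather than as the finite sum over $j\leq i$, but by (\ref{eq:ni sum Ck}) these are the same.
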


\begin{proof}
Let $f$ be as in the statement. Let $\varepsilon>0$ be fixed. Since
$\intop_{X}\left|f(x)\right|\mathrm{d}\nu(x)<\infty$ and we assume
(\ref{eq:kulonseg 0}), we can find $i\in\mathbb{N}$ such that $\intop_{X\setminus X_{0}^{i}}\left|f(x)\right|\mathrm{d}\nu(x)<\varepsilon$.
By (\ref{eq:martingal expectation inequality}) and (\ref{eq:ni sum Ck})
it follows that
\begin{equation}
E\left|\int f(x)\mathrm{d}\mathcal{C}_{k}(\nu)(x)-\int f(x)\mathrm{d}\mathcal{C}_{k}^{i}(\nu\vert_{X_{0}^{i}})(x)\right|\leq E\left(\int\left|f(x)\right|\mathrm{d}\mathcal{C}_{k}(\nu\vert_{X\setminus X_{0}^{i}})(x)\right)=\intop_{X\setminus X_{0}^{i}}\left|f(x)\right|\mathrm{d}\nu(x)<\varepsilon.\label{eq:segg1}
\end{equation}
Since the conditional measure $\mathcal{C}^{j}(\nu\vert_{X_{0}^{j}})$
of $\nu\vert_{X_{0}^{j}}$ on $B$ exists with respect to $\mathcal{Q}_{k}^{j}$
($k\geq n_{i}$) with regularity kernel $\varphi$ it follows by Property
\textit{ii.), x.)} and \textit{x{*}.)} of Definition \ref{def:def of cond meas}
that
\[
E\left|\int f(x)\mathrm{d}\mathcal{C}^{j}(\nu\vert_{X_{0}^{j}\setminus X_{0}^{j-1}})(x)\right|\leq\intop_{X_{0}^{j}\setminus X_{0}^{j-1}}\left|f(x)\right|\mathrm{d}\nu(x).
\]
Thus by (\ref{eq:def sum cond meas}) and (\ref{eq:ni sum Ck}) it
follows that
\[
E\left|\int f(x)\mathrm{d}\mu^{\nu}(x)-\int f(x)\mathrm{d}\mathcal{C}^{i}(\nu\vert_{X_{0}^{i}})(x)\right|=E\left|\sum_{j=i+1}^{\infty}\int f(x)\mathrm{d}\mathcal{C}^{j}(\nu\vert_{X_{0}^{j}\setminus X_{0}^{j-1}})(x)\right|
\]
\begin{equation}
\leq\intop_{X\setminus X_{0}^{i}}\left|f(x)\right|\mathrm{d}\nu(x)<\varepsilon.\label{eq:segg2}
\end{equation}

We have that $\intop_{X}f(x)\mathrm{d}\mathcal{C}_{k}^{i}(\nu\vert_{X_{0}^{i}})(x)$
converges to $\intop_{X}f(x)\mathrm{d}\mathcal{C}^{i}(\nu\vert_{X_{0}^{i}})(x)$
in probability by Property \textit{ii.)} and \textit{iii.)} of Definition
\ref{def:def of cond meas}. Thus using (\ref{eq:segg1}) and (\ref{eq:segg2})
it follows that
\[
\rho\left(\int f(x)\mathrm{d}\mathcal{C}_{k}(\nu)(x),\int f(x)\mathrm{d}\mu^{\nu}(x)\right)
\]
\[
\leq\rho\left(\int f(x)\mathrm{d}\mathcal{C}_{k}(\nu)(x),\int f(x)\mathrm{d}\mathcal{C}_{k}^{i}(\nu\vert_{X_{0}^{i}})(x)\right)+\rho\left(\int f(x)\mathrm{d}\mathcal{C}_{k}^{i}(\nu\vert_{X_{0}^{i}})(x),\int f(x)\mathrm{d}\mathcal{C}^{i}(\nu\vert_{X_{0}^{i}})(x)\right)
\]
\[
+\rho\left(\int f(x)\mathrm{d}\mathcal{C}^{i}(\nu\vert_{X_{0}^{i}})(x),\int f(x)\mathrm{d}\mu^{\nu}(x)\right)
\]
\[
\leq E\left|\int f(x)\mathrm{d}\mathcal{C}_{k}(\nu)(x)-\int f(x)\mathrm{d}\mathcal{C}_{k}^{i}(\nu\vert_{X_{0}^{i}})(x)\right|+\rho\left(\int f(x)\mathrm{d}\mathcal{C}_{k}^{i}(\nu\vert_{X_{0}^{i}})(x),\int f(x)\mathrm{d}\mathcal{C}^{i}(\nu\vert_{X_{0}^{i}})(x)\right)
\]
\[
+E\left|\int f(x)\mathrm{d}\mu^{\nu}(x)-\int f(x)\mathrm{d}\mathcal{C}^{i}(\nu\vert_{X_{0}^{i}})(x)\right|
\]
\[
\leq\varepsilon+\rho\left(\int f(x)\mathrm{d}\mathcal{C}_{k}^{i}(\nu\vert_{X_{0}^{i}})(x),\int f(x)\mathrm{d}\mathcal{C}^{i}(\nu\vert_{X_{0}^{i}})(x)\right)+\varepsilon.
\]
It follows that
\[
\limsup_{k\rightarrow\infty}\rho\left(\int f(x)\mathrm{d}\mathcal{C}_{k}(\nu)(x),\int f(x)\mathrm{d}\mu^{\nu}(x)\right)\leq2\varepsilon
\]
because $\intop_{X}f(x)\mathrm{d}\mathcal{C}_{k}^{i}(\nu\vert_{X_{0}^{i}})(x)$
converges to $\intop_{X}f(x)\mathrm{d}\mathcal{C}^{i}(\nu\vert_{X_{0}^{i}})(x)$
in probability. Since $\varepsilon>0$ can be arbitrary the statement
follows.
\end{proof}
\begin{thm}
\label{thm:extended cond measure increasing}Let $X_{0}^{i}$, $X_{0}^{\infty}$,
$\mathcal{Q}_{k}^{i}$, $\mathcal{Q}_{k}$ $\nu$, $\mu^{\nu}$ and
$\mathcal{C}_{k}(\nu)$ be as above. Then the conditional measure
$\mathcal{C}(\nu)$ of $\nu$ on $B$ exists with respect to $\mathcal{Q}_{k}$
($k\geq1$) with regularity kernel $\varphi$, and $\mathcal{C}(\nu)=\mu^{\nu}$
almost surely.
\end{thm}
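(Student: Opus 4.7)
The plan is to identify $\mathcal{C}(\nu) := \mu^{\nu}$ by verifying each of the properties of Definition \ref{def:def of cond meas} in turn, drawing on Proposition \ref{S(f) convergence of extended measures}, Proposition \ref{thm:increasing conditional measure}, and the general representation results Theorem \ref{thm:ABS cont representation} and Theorem \ref{thm:ABS cont representation-Vague}. First, I would note that since $\mathcal{Q}_k^i \subseteq \mathcal{Q}_k$ we have $X_0^\infty \subseteq X_0$, so the standing hypothesis $\nu(X \setminus X_0^\infty)=0$ of the setup implies $\nu(X \setminus X_0)=0$. From the definition (\ref{eq:C_k def}), $\mathcal{C}_k(\nu) \ll \nu$ with $E\bigl(\tfrac{d\mathcal{C}_k(\nu)}{d\nu}(x)\bigr)=1$ for $\nu$-a.e.\ $x$, and Proposition \ref{S(f) convergence of extended measures} applied to $f=\chi_A$ (valid since $\nu(A)<\infty$ for compact $A$) gives convergence of $\mathcal{C}_k(\nu)(A)$ in probability for every compact $A$.

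With these hypotheses in place, Theorem \ref{thm:ABS cont representation} (in the finite case) or Theorem \ref{thm:ABS cont representation-Vague} (in the locally finite case) yields a random (locally) finite Borel measure $\tilde\mu$ such that $\mathcal{C}_k(\nu)$ converges to $\tilde\mu$ weakly subsequentially in probability or vaguely in probability, together with a random variable $S(f)$ for each $f \in L^1(\nu)$ such that $\int f \, d\mathcal{C}_k(\nu) \to S(f)$ in probability. Proposition \ref{S(f) convergence of extended measures} identifies this limit as $\int f \, d\mu^\nu$, so $S(f) = \int f \, d\mu^{\nu}$ almost surely; in particular $\tilde\mu(A) = \mu^\nu(A)$ a.s.\ for every Borel $A$ with $\nu(A)<\infty$. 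Applying this to the countable collection of balls with rational radii around a countable dense subset and invoking Lemma \ref{lem:open eq dense} gives $\tilde\mu = \mu^\nu$ almost surely, establishing properties i)/i*), iii), iv) of Definition \ref{def:def of cond meas} at once.

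Properties ii), v), vii), viii), x)/x*), xi) are then essentially immediate: ii) and v) come from combining the just-obtained $S(f) = \int f \, d\mu^\nu$ with parts 1--2 of Proposition \ref{thm:increasing conditional measure}, while vii), viii), x)/x*), xi) are parts 3--6 of that proposition. For property vi), I would use Scheff\'e's lemma (Lemma \ref{lem:Scheffe}): by part 4 of Proposition \ref{thm:increasing conditional measure} we have $\mu^{\nu_R} = \mu^{\nu}$ almost surely, so Proposition \ref{S(f) convergence of extended measures} applied to $\nu_R$ shows $\mathcal{C}_k(\nu_R)(A) \to \mu^\nu(A)$ in probability with matching means $E(\mathcal{C}_k(\nu_R)(A)) = \nu_R(A) = E(\mu^\nu(A))$, and nonnegativity delivers $L^1$-convergence.

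The step I expect to require the most care is property ix): given a decomposition $\nu = \sum_{i=1}^\infty \nu^i$ into locally finite Borel measures, we must show $\mu^{\nu} = \sum_{i=1}^\infty \mu^{\nu^i}$ almost surely. My plan is to exploit the definition (\ref{eq:def sum cond meas}): for each fixed $j$, $\nu\vert_{X_0^j \setminus X_0^{j-1}} = \sum_i \nu^i\vert_{X_0^j \setminus X_0^{j-1}}$, and the conditional measure $\mathcal{C}^j$ with respect to $\mathcal{Q}_k^j$ already satisfies property ix) of Definition \ref{def:def of cond meas}, so
\[
\mathcal{C}^j\bigl(\nu\vert_{X_0^j\setminus X_0^{j-1}}\bigr) = \sum_{i=1}^\infty \mathcal{C}^j\bigl(\nu^i\vert_{X_0^j \setminus X_0^{j-1}}\bigr)
\]
almost surely. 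Summing over $j$ and interchanging the (countable, nonnegative-valued) order of summation, which is justified on a probability-one event by Fubini applied to counting measure on $\mathbb{N}^2$ once local finiteness of the total sum is checked via part 2 of Proposition \ref{thm:increasing conditional measure} (so that $\sum_i \mu^{\nu^i}$ is a.s.\ locally finite whenever $\nu$ is), yields the required identity $\mu^\nu = \sum_i \mu^{\nu^i}$.
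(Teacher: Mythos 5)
Your proposal is correct in substance and rests on the same key inputs as the paper -- Proposition \ref{S(f) convergence of extended measures} to identify the limit of $\mathcal{C}_{k}(\nu)$ as $\mu^{\nu}$, Proposition \ref{thm:increasing conditional measure} for the structural properties of $\mu^{\nu}$, and Scheff\'e's lemma (Lemma \ref{lem:Scheffe}) to upgrade convergence in probability with matching means to $\mathcal{L}^{1}$ -- but it is organized quite differently. The paper does not re-verify the eleven properties of Definition \ref{def:def of cond meas} one by one: it invokes the packaged equivalence of Theorem \ref{thm:Gen exist both}, which reduces existence to exactly two conditions ($\mathcal{L}^{1}$-convergence of $\mathcal{C}_{k}(\nu)(D)$ for compact $D\subseteq X_{0}$ with $I_{\varphi}(\nu\vert_{D})<\infty$, and $\mathcal{C}_{k}(\nu_{\perp})(D)\rightarrow0$ in probability for compact $D$); both follow in a few lines from Proposition \ref{S(f) convergence of extended measures} together with parts \textit{2.)} and \textit{3.)} of Proposition \ref{thm:increasing conditional measure}, and the identification $\mathcal{C}(\nu)=\mu^{\nu}$ is then a one-line uniqueness argument (Lemma \ref{prop:weak unique weak conv} in the finite case, Proposition \ref{prop:unique random lim-Vague} in the vague case). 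Your direct verification essentially re-derives Proposition \ref{prop:equiv of egsistance}, the engine behind Theorem \ref{thm:Gen exist both}, so you do substantially more work than necessary; what you buy is independence from that theorem, but nothing new. If you keep your route, two points need tightening: (a) in the locally finite case the identification $\tilde{\mu}=\mu^{\nu}$ should go through uniqueness of vague limits rather than through the balls of Lemma \ref{lem:open eq dense}, since such balls need not have finite $\nu$-measure; (b) in your argument for property \textit{ix.)} you apply property \textit{ix.)} of the $\mathcal{Q}_{k}^{j}$-conditional measure to the decomposition $\nu\vert_{X_{0}^{j}\setminus X_{0}^{j-1}}=\sum_{i}\nu^{i}\vert_{X_{0}^{j}\setminus X_{0}^{j-1}}$, which presupposes that the summand conditional measures $\mathcal{C}^{j}(\nu^{i}\vert_{X_{0}^{j}\setminus X_{0}^{j-1}})$ exist; this does follow from $\nu^{i}\leq\nu$ via property \textit{x.)}/\textit{x{*}.)}, but it must be said.
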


\begin{proof}
To show the existence of the conditional measure we wish to apply
Theorem \ref{eq:doub int brown}. Hence we need to show that $\mathcal{C}_{k}(\nu)(D)$
converges in $\mathcal{L}^{1}$ for every compact set $D\subseteq X_{0}$
with $I_{\varphi}(\nu\vert_{D})<\infty$ and $\mathcal{C}_{k}(\nu_{\perp})(D)$
converges to $0$ in probability for every compact set $D\subseteq X$.

Let $D\subseteq X_{0}$ be a compact set such that $I_{\varphi}(\nu\vert_{D})<\infty$
which implies that $\nu_{R}(D)=\nu(D)$. Then $\mathcal{C}_{k}(\nu)(D)$
converges to $\mu^{\nu}(D)$ in probability by Proposition \ref{S(f) convergence of extended measures}.
By Property \textit{2.)} of Theorem \ref{thm:increasing conditional measure}
it follows that $E(\mu^{\nu}(D))=\nu_{R}(D)=\nu(D)$. It follows from
(\ref{eq:martingal expectation inequality}) that $E(\mathcal{C}_{k}(\nu)(D))=\nu(D)$.
Thus $\mathcal{C}_{k}(\nu)(D)$ converges to $\mu^{\nu}(D)$ in $\mathcal{L}^{1}$
by Lemma \ref{lem:Scheffe}.

Let now $D\subseteq X$ be an arbitrary compact subset. Then $\mathcal{C}_{k}(\nu_{\perp})(D)$
converges to $\mu^{\nu_{\perp}}(D)$ in probability by Proposition
\ref{S(f) convergence of extended measures}. On the other hand by
Property \textit{3.)} of Theorem \ref{thm:increasing conditional measure}
it follows that $\mu^{\nu_{\perp}}(D)=0$ almost surely. Thus $\mathcal{C}_{k}(\nu_{\perp})(D)$
converges to $0$ in probability.

So we can conclude by the application of Theorem \ref{thm:Gen exist both}
that the conditional measure $\mathcal{C}(\nu)$ of $\nu$ on $B$
exist with respect to $\mathcal{Q}_{k}$ ($k\geq1$) with regularity
kernel $\varphi$. It remains to show that $\mu^{\nu}=\mathcal{C}(\nu)$
almost surely.

Assume first that $\nu$ is a finite Borel measure. Then $\mathcal{C}_{k}(\nu)(G)$
converges to $\mathcal{C}(\nu)(G)$ in probability for every open
set $G\subseteq X$ by Property \textit{ii.) }and \textit{iv.)} of
Definition \ref{def:def of cond meas}. On the other hand, $\mathcal{C}_{k}(\nu)(G)$
converges to $\mu^{\nu}(G)$ in probability for every open set $G\subseteq X$
by Proposition \ref{S(f) convergence of extended measures}. Hence
$\mu^{\nu}=\mathcal{C}(\nu)$ almost surely by Lemma \ref{prop:weak unique weak conv}.

Now let $\nu$ be a locally finite Borel measure and let $X$ be locally
compact. Then $\mathcal{C}_{k}(\nu)$ vaguely converges to $\mathcal{C}(\nu)$
in probability by Property \textit{i{*}.) }of Definition \ref{def:def of cond meas}.
On the other hand, $\mathcal{C}_{k}(\nu)$ vaguely converges to $\mu^{\nu}$
in probability by Proposition \ref{S(f) convergence of extended measures}.
Since the limit is unique by Proposition \ref{prop:unique random lim-Vague}
it follows that $\mu^{\nu}=\mathcal{C}(\nu)$ almost surely.
\end{proof}
Let $\overline{F^{i}}(x,y)$ and $\underline{F^{i}}(x,y)$ be defined
as in Definition \ref{def:upper and loweer F(x,y)} for the sequence
$\mathcal{Q}_{k}^{i}$ in place of $\mathcal{Q}_{k}$. If $x\notin X_{0}^{i}$
or $y\notin X_{0}^{i}$ then $\underline{F^{i}}(x,y)=\overline{F^{i}}(x,y)=0$.
If $x,y\in X_{0}^{i}$ and $i<j$ then $\underline{F^{i}}(x,y)=\underline{F^{j}}(x,y)$
and $\overline{F^{i}}(x,y)=\overline{F^{j}}(x,y)$ by (\ref{eq:tartalmazo web}).
Thus
\[
\underline{F^{\infty}}(x,y)=\begin{cases}
\underline{F^{i}}(x,y) & x,y\in X_{0}^{i}\mathrm{\,for\,some}\,i\\
0 & x\notin X_{0}^{\infty}\mathrm{\,or}\,y\notin X_{0}^{\infty}
\end{cases}
\]
and
\[
\overline{F^{\infty}}(x,y)=\begin{cases}
\overline{F^{i}}(x,y) & x,y\in X_{0}^{i}\mathrm{\,for\,some}\,i\\
0 & x\notin X_{0}^{\infty}\mathrm{\,or}\,y\notin X_{0}^{\infty}
\end{cases}
\]
are well-defined on $X\times X$. It is easy to see, by (\ref{eq:minden tartalmaz}),
that $\underline{F^{\infty}}(x,y)=\underline{F}(x,y)$ and $\overline{F^{\infty}}(x,y)=\overline{F}(x,y)$
for $(x,y)\in X_{0}^{\infty}\times X_{0}^{\infty}$. In particular,
if (\ref{eq:X_0=00003Dinfty}) holds then $\underline{F^{\infty}}(x,y)=\underline{F}(x,y)$
and $\overline{F^{\infty}}(x,y)=\overline{F}(x,y)$ for every $(x,y)\in X\times X$
where $\underline{F}$ and $\overline{F}$ are defined in (\ref{def:upper and loweer F(x,y)}).
\begin{thm}
\label{thm:increasing double int}Let $\mathcal{Q}_{k}$ be a sequence
of countable families of Borel subsets of $X$ such that $Q\cap S=\emptyset$
for $Q,S\in\mathcal{Q}_{k}$, for all $k\in\mathbb{N}$. Assume that
if $C_{\varphi}(D)=0$ for some compact set $D\subseteq X_{0}$ then
$B\cap D=\emptyset$ almost surely. For every $i\in\mathbb{N}$ let
$\mathcal{Q}_{k}^{i}$ be a sequence of countable families of Borel
subsets of $X$ for $k\geq n_{i}$, for some $n_{i}\in\mathbb{N}$,
such that $Q\cap S=\emptyset$ for $Q,S\in\mathcal{Q}_{k}^{i}$, for
all $k\in\mathbb{N}$. Assume that (\ref{eq:phi =00003Dinfty}), (\ref{eq:same size}),
(\ref{eq:lower hitting prob}), (\ref{eq:Kernel restriction}), (\ref{eq:capacity-independence})
and (\ref{eq:bounded sundivision}) hold for sufficient constants
$0<\delta^{i}<1$, $M_{\delta}^{i}<\infty$, $0<M^{i}<\infty$, $0<a^{i}<\infty$,
$0<c^{i}<\infty$, $c_{2}^{i}<\infty$, $c_{3}^{i}<\infty$ depending
on $i$. Assume further that if $i<j$ and $k\geq\max\{n_{i},n_{j}\}$
then $\mathcal{Q}_{k}^{i}\subseteq\mathcal{Q}_{k}^{j}.$ Assume that
(\ref{eq:unique-subset}), (\ref{eq:psoitivity of probability}) hold
and assume that $\mathcal{Q}_{k}^{i}\subseteq\mathcal{Q}_{k}$ for
$k\geq n_{i}$ for every $i$. Let $X_{0}^{i}=\bigcap_{k=n_{i}}^{\infty}(\bigcup_{Q\in\mathcal{Q}_{k}^{i}}Q)$
and $X_{0}^{\infty}=\bigcup_{i=1}^{\infty}X_{0}^{i}$. Assume that
$F(x,y)=\underline{F}(x,y)=\overline{F}(x,y)$ for every $x,y\in X$.
Let either $\nu$ and $\tau$ be finite Borel measures on $X$ or
$X$ be locally compact and $\nu$ and $\tau$ be locally finite Borel
measures on $X$. Assume that $\nu(X\setminus X_{0}^{\infty})=0$
and $\tau(X\setminus X_{0}^{\infty})=0$. Then the conditional measure
$\mathcal{C}(\nu)$ of $\nu$ and $\mathcal{C}(\tau)$ of $\tau$
on $B$ exist with respect to $\mathcal{Q}_{k}$ ($k\geq1$) with
regularity kernel $\varphi$ and 
\[
E\left(\int\int f(x,y)\mathrm{d}\mathcal{C}(\nu)(x)\mathrm{d}\mathcal{C}(\tau)(y)\right)=\intop\intop F(x,y)f(x,y)\mathrm{d}\nu_{R}(x)\mathrm{d}\tau_{R}(y)
\]
for every $f:X\times X\longrightarrow\mathbb{R}$ Borel function with
$\intop\intop F(x,y)\left|f(x,y)\right|\mathrm{d}\nu_{R}(x)\mathrm{d}\tau_{R}(y)<\infty$.
\end{thm}

\begin{proof}
We have that $\overline{F^{\infty}}(x,y)=\overline{F^{i}}(x,y)$ and
$\underline{F^{\infty}}(x,y)=\underline{F^{i}}(x,y)$ for $x,y\in X_{0}^{i}$
and that $F(x,y)=\underline{F^{\infty}}(x,y)=\overline{F^{\infty}}(x,y)$
for $x,y\in X_{0}^{\infty}$. It follows by Theorem \ref{thm:Special existence of conditional expectation finite+vague}
that the conditional measure $\mathcal{C}^{i}(\nu\vert_{X_{0}^{i}})$
of $\nu\vert_{X_{0}^{i}}$ on $B$ exists with respect to $\mathcal{Q}_{k}^{i}$
($k\geq n_{i}$) with regularity kernel $\varphi$ for every $i$.
Thus it follows by Theorem \ref{thm:extended cond measure increasing}
that the conditional measure $\mathcal{C}(\nu)$ of $\nu$ on $B$
exists with respect to $\mathcal{Q}_{k}$ ($k\geq1$) with regularity
kernel $\varphi$. Similarly, the conditional measure $\mathcal{C}(\tau)$
of $\tau$ on $B$ exists with respect to $\mathcal{Q}_{k}$ ($k\geq1$)
with regularity kernel $\varphi$.

Let $f:X\times X\longrightarrow\mathbb{R}$ be a Borel function with
$\intop\intop F(x,y)\left|f(x,y)\right|\mathrm{d}\nu(x)\mathrm{d}\tau(y)<\infty$.
By Property \textit{ix.) }of Definition \ref{def:def of cond meas}
it follows that $\mathcal{C}(\nu)=\sum_{i=1}^{\infty}\mathcal{C}(\nu\vert_{X_{0}^{i}\setminus X_{0}^{i-1}})$
and $\mathcal{C}(\tau)=\sum_{i=1}^{\infty}\mathcal{C}(\tau\vert_{X_{0}^{i}\setminus X_{0}^{i-1}})$.
Let $i,j\in\mathbb{N}$, $i\leq j$. By (\ref{eq:tartalmazo web})
we have that $\mathcal{C}_{k}^{i}(\nu\vert_{X_{0}^{i}\setminus X_{0}^{i-1}})=\mathcal{C}_{k}^{j}(\nu\vert_{X_{0}^{i}\setminus X_{0}^{i-1}})$
and so $\mathcal{C}^{i}(\nu\vert_{X_{0}^{i}\setminus X_{0}^{i-1}})=\mathcal{C}^{j}(\nu\vert_{X_{0}^{i}\setminus X_{0}^{i-1}})$
almost surely. By  applying Theorem \ref{cor:Double integral when W exists-summed}
to the measures $\nu\vert_{X_{0}^{i}\setminus X_{0}^{i-1}}$ and $\tau\vert_{X_{0}^{j}\setminus X_{0}^{j-1}}$
with respect to the sequence $\mathcal{Q}_{k}^{j}$ it follows that
\[
E\left(\int\int f(x,y)\mathrm{d}\mathcal{C}^{i}(\nu\vert_{X_{0}^{i}\setminus X_{0}^{i-1}})(x)\mathrm{d}\mathcal{C}^{j}(\tau\vert_{X_{0}^{j}\setminus X_{0}^{j-1}})(y)\right)
\]
\begin{equation}
=\intop_{X_{0}^{i}\setminus X_{0}^{i-1}}\left(\intop_{X_{0}^{j}\setminus X_{0}^{j-1}}F(x,y)f(x,y)\mathrm{d}\tau_{R}(y)\right)\mathrm{d}\nu_{R}(x).\label{eq:segg dub}
\end{equation}
Similarly we can show that (\ref{eq:segg dub}) holds when $i>j$.
By (\ref{eq:minden tartalmaz}) we have that $\mathcal{C}_{k}(\nu\vert_{X_{0}^{i}\setminus X_{0}^{i-1}})=\mathcal{C}_{k}^{i}(\nu\vert_{X_{0}^{i}\setminus X_{0}^{i-1}})$
and so $\mathcal{C}(\nu\vert_{X_{0}^{i}\setminus X_{0}^{i-1}})=\mathcal{C}^{i}(\nu\vert_{X_{0}^{i}\setminus X_{0}^{i-1}})$
and also $\mathcal{C}(\tau\vert_{X_{0}^{j}\setminus X_{0}^{j-1}})=\mathcal{C}^{i}(\tau\vert_{X_{0}^{j}\setminus X_{0}^{j-1}})$.
Then summing (\ref{eq:segg dub}) over all $i,j\in\mathbb{N}$ and
by Fubini`s theorem we can conclude that
\[
E\left(\int\int f(x,y)\mathrm{d}\mathcal{C}(\nu)(x)\mathrm{d}\mathcal{C}(\tau)(y)\right)=\intop\intop F(x,y)f(x,y)\mathrm{d}\nu_{R}(x)\mathrm{d}\tau_{R}(y).
\]
\end{proof}

\section{Probability of non-extinction\label{sec:Probability-of-non-extinction}}

In this section we estimate what is the probability that the conditional
measure $\mathcal{C}(\nu)$ has positive total mass. Among other estimates
we show Theorem \ref{thm:intro non extinction}. The upper bound in
Theorem \ref{thm:intro non extinction} is Corollary \ref{cor:upper non ex cor}
and the lower bound follows from Corollary \ref{thm:nob ext color lower},
Remark \ref{rem:controlled F} and the double integration formula
(\ref{eq:double int form intro}).

Let $K:X\times X\longrightarrow\mathbb{R}$ be a nonnegative Borel
function. Recall the definition of the $K$-energy of a measure (\ref{eq:energy})
and the $K$-capacity of a set (\ref{eq:capacity}).
\begin{defn}
\label{def:lower cap}The \textit{$K$-capacity} of a Borel measure
$\nu$ is
\[
C_{K}(\nu)=\sup\left\{ \frac{1}{I_{K}(\tau)}:\tau\ll\nu,\tau(X)=1\right\} .
\]
\end{defn}

\begin{defn}
\label{def:upper cap}The \textit{upper $K$-capacity} of a Borel
measure $\nu$ is
\[
\overline{C_{K}}(\nu)=\inf\left\{ C_{K}(A):\nu(X\setminus A)=0,A\subseteq X\mathrm{\,is\,Borel}\right\} .
\]
\end{defn}

It easily follows from the definitions that $C_{K}(\nu)\leq\overline{C_{K}}(\nu)$.
In the next theorem $\mathcal{C}(\tau)$ denotes an abstract random
measure that depends on $\tau$ and satisfies certain properties.
Of course, then we apply the theorem to the conditional measure which
satisfies the properties.
\begin{thm}
\label{thm:non-distinction}Let $\nu$ be a finite Borel measure and
$K:X\times X\longrightarrow\mathbb{R}$ be a nonnegative Borel function
such that
\begin{equation}
\tau_{\bot}(X)=\tau_{\varphi\bot}(X)=0\label{eq:taufi}
\end{equation}
whenever $I_{K}(\tau)<\infty$ for a finite Borel measure $\tau$
that satisfies $\tau\ll\nu$. Assume that $\mathcal{C}(\tau)$ is
a random finite Borel measure on $X$ for every finite Borel measure
$\tau$ that satisfies $\tau\ll\nu$ such that the following hold

1.) if $\nu_{i}$ are finite Borel measures and $\gamma_{i}\in\mathbb{R}$
are nonnegative such that $\sum_{i=1}^{\infty}\gamma_{i}\cdot\nu_{i}\leq\nu$
then $\sum_{i=1}^{\infty}\gamma_{i}\cdot\mathcal{C}(\nu_{i})\leq\mathcal{C}(\nu)$
almost surely,

2.) $E(\mathcal{C}(\tau)(X))\geq\tau_{R}(X)=\tau_{\varphi R}(X)$,

3.) $E\left(\mathcal{C}(\tau)(X)^{2}\right)\leq I_{K}(\tau)$.

\noindent Then
\[
C_{K}(\nu)\leq P(\mathcal{C}(\nu)(X)>0).
\]
\end{thm}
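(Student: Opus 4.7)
The plan is to unfold the definition of $C_K(\nu)$ and show that for every finite Borel measure $\tau\ll\nu$ with $\tau(X)=1$ one has $1/I_K(\tau)\le P(\mathcal{C}(\nu)(X)>0)$; taking the supremum over such $\tau$ then yields the theorem. The case $I_K(\tau)=\infty$ is trivial, so assume $I_K(\tau)<\infty$. By hypothesis~(\ref{eq:taufi}) this forces $\tau_\perp(X)=0$, hence $\tau_R(X)=\tau(X)=1$.

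The workhorse is the Paley--Zygmund (second moment) inequality $P(Z>0)\ge (EZ)^2/E(Z^2)$ for a nonnegative random variable $Z$. Applied to $Z=\mathcal{C}(\tau)(X)$ and using Properties~2 and~3 this gives
\[
P(\mathcal{C}(\tau)(X)>0)\ge \frac{\tau_R(X)^2}{I_K(\tau)}=\frac{1}{I_K(\tau)}.
\]
This is the easy half. The real obstacle is that the conclusion concerns $\mathcal{C}(\nu)$, not $\mathcal{C}(\tau)$, and one cannot in general dominate $\tau$ by a constant multiple of $\nu$ because the Radon--Nikodym derivative $f=\mathrm{d}\tau/\mathrm{d}\nu$ may be unbounded.

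The remedy is to truncate and use Property~1 as a comparison principle. Set $\tau_n=\tau\vert_{\{f\le n\}}$, so that $\tau_n\le n\nu$ as measures. Then $(1/n)\tau_n+1\cdot(\nu-\tau_n/n)\le \nu$ is a decomposition into finite Borel measures with nonnegative coefficients in the sense of Property~1, so
\[
\tfrac{1}{n}\mathcal{C}(\tau_n)+\mathcal{C}(\nu-\tau_n/n)\le \mathcal{C}(\nu)\quad\text{almost surely,}
\]
and in particular $\{\mathcal{C}(\tau_n)(X)>0\}\subseteq\{\mathcal{C}(\nu)(X)>0\}$. Since $\tau_n\le \tau$ we have $I_K(\tau_n)\le I_K(\tau)<\infty$, and (\ref{eq:taufi}) gives $\tau_{n,R}(X)=\tau_n(X)$. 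Applying Paley--Zygmund to $\mathcal{C}(\tau_n)(X)$ yields
\[
P(\mathcal{C}(\nu)(X)>0)\ge P(\mathcal{C}(\tau_n)(X)>0)\ge \frac{\tau_n(X)^2}{I_K(\tau_n)}\ge \frac{\tau_n(X)^2}{I_K(\tau)}.
\]

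Finally, since $\tau(\{f=\infty\})=0$ (for $\tau\ll\nu$ forces $f<\infty$ $\tau$-a.e.), monotone convergence gives $\tau_n(X)\uparrow \tau(X)=1$, and letting $n\to\infty$ produces $P(\mathcal{C}(\nu)(X)>0)\ge 1/I_K(\tau)$. Taking the supremum over all $\tau\ll\nu$ with $\tau(X)=1$ delivers $C_K(\nu)\le P(\mathcal{C}(\nu)(X)>0)$, completing the proof.
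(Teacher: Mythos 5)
Your proof is correct and follows essentially the same route as the paper: reduce to probability measures $\tau\ll\nu$ of finite $K$-energy, apply the Paley--Zygmund inequality together with Properties 2.) and 3.), and use Property 1.) with the boundedness of $\mathrm{d}\tau/\mathrm{d}\nu$ on level sets to transfer positivity from $\mathcal{C}(\tau)$ to $\mathcal{C}(\nu)$. The only difference is cosmetic: you truncate at level $n$ and let $n\to\infty$, whereas the paper uses the single countable decomposition $\sum_{i}\tfrac{1}{i}\,\mathcal{C}(\tau\vert_{A_{i}})\leq\mathcal{C}(\nu)$ over the level sets $A_{i}=\{i-1\leq\mathrm{d}\tau/\mathrm{d}\nu<i\}$ and applies Paley--Zygmund to $\mathcal{C}(\tau)(X)$ directly.
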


\begin{proof}
Let $\tau\ll\nu$ with $\tau(X)=1$ and $I_{K}(\tau)<\infty$ (if
there is no such $\tau$ then the proof is trivial). By \textit{2.)}
and (\ref{eq:taufi}) it follows that
\begin{equation}
E(\mathcal{C}(\tau)(X))\geq\tau_{\varphi R}(X)=\tau(X)=1.\label{eq:etau}
\end{equation}
Let $A_{i}=\left\{ x:i-1\leq\frac{\mathrm{d}\tau}{\mathrm{d}\nu}(x)<i\right\} $
for every $i\in\mathbb{N}$. Then $\nu=\sum_{i=1}^{\infty}\nu\vert_{A_{i}}$
and $\tau=\sum_{i=1}^{\infty}\tau\vert_{A_{i}}$. If $D\subseteq A_{i}$
is a Borel set then
\[
\tau(D)=\int_{D}\frac{\mathrm{d}\tau}{\mathrm{d}\nu}(x)\mathrm{d}\nu(x)\leq\int_{D}i\mathrm{d}\nu(x)=i\cdot\nu(D).
\]
Hence $\sum_{i=1}^{\infty}\frac{1}{i}\mathcal{C}(\tau\vert_{A_{i}})\leq\mathcal{C}(\nu)$
almost surely by \textit{1.)}. It follows that $\mathcal{C}(\nu)(X)=0$
implies $\mathcal{C}(\tau)(X)=0$. Thus
\begin{equation}
P(\mathcal{C}(\nu)(X)>0)\geq P(\mathcal{C}(\tau)(X)>0).\label{eq:nuta}
\end{equation}
Using the second moment method \cite[Lemma 3.23]{Peres-Morters-Broanian motion},
(\ref{eq:etau}) and \textit{3.)} it follows that
\[
P\left(\mathcal{C}(\tau)(X)>0\right)\geq\frac{E\left(\mathcal{C}(\tau)(X)\right)^{2}}{E\left(\mathcal{C}(\tau)(X)^{2}\right)}\geq\frac{1}{I_{K}(\tau)}.
\]
Taking the supremum over all $\tau$ it follows by (\ref{eq:nuta})
that $P(\mathcal{C}(\nu)(X)>0)\geq C_{K}(\nu)$.
\end{proof}
\begin{rem}
\label{rem:phi equiv}Assume that $K:X\times X\longrightarrow\mathbb{R}$
is a nonnegative Borel function and $\nu\left(X\setminus\cup_{i=1}^{\infty}A_{i}\right)=0$
for some Borel sets $A_{i}$ such that there exists $0<a_{i}<\infty$
such that
\begin{equation}
a_{i}\cdot\varphi(x,y)\leq K(x,y)\label{eq:fi eq eq}
\end{equation}
for every $x,y\in A_{i}$. Then $\tau_{\varphi\bot}(A_{i})=0$ whenever
$I_{K}(\tau\vert_{A_{i}})<\infty$ for some finite Borel measure $\tau$
that satisfies $\tau\ll\nu$. If $I_{K}(\tau)<\infty$ for some $\tau$
then $I_{K}(\tau\vert_{A_{i}})\leq I_{K}(\tau)<\infty$ for every
$i\in\mathbb{N}$. Hence $\tau_{\varphi\bot}(A_{i})=0$ for every
$i\in\mathbb{N}$ and so (\ref{eq:taufi}) holds.
\end{rem}

\begin{cor}
\label{thm:nob ext color lower}Let $\nu$ be a finite Borel measure.
Assume that the conditional measure $\mathcal{C}(\nu)$ of $\nu$
on $B$ exists with respect to $\mathcal{Q}_{k}$ ($k\geq1$) with
regularity kernel $\varphi$ and there exists $0<c<\infty$ such that
\begin{equation}
E\left(\mathcal{C}(\tau)(X)^{2}\right)\leq c\cdot I_{\varphi}(\tau)\label{eq:color eq}
\end{equation}
for every finite Borel measure $\tau$ that satisfies $\tau\ll\nu$.
Then
\[
c^{-1}\cdot C_{\varphi}(\nu)\leq P(\mathcal{C}(\nu)(X)>0).
\]
\end{cor}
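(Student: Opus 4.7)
The plan is to derive this as an immediate application of Theorem \ref{thm:non-distinction} with the kernel $K(x,y):=c\cdot\varphi(x,y)$. For this choice one has $I_K(\tau)=c\cdot I_\varphi(\tau)$, and therefore
\[
C_K(\nu)=\sup\left\{\frac{1}{c\cdot I_\varphi(\tau)}:\tau\ll\nu,\,\tau(X)=1\right\}=c^{-1}\cdot C_\varphi(\nu),
\]
so the conclusion $C_K(\nu)\leq P(\mathcal{C}(\nu)(X)>0)$ of Theorem \ref{thm:non-distinction} translates directly into the bound claimed in the corollary.

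To legitimise the application, for each finite Borel measure $\tau\ll\nu$ I would define $\mathcal{C}(\tau):=\frac{\mathrm{d}\tau}{\mathrm{d}\nu}(x)\,\mathrm{d}\mathcal{C}(\nu)(x)$; this is a random finite Borel measure by Property x.) (resp.\ x*.)) of Definition \ref{def:def of cond meas}, since $\int\frac{\mathrm{d}\tau}{\mathrm{d}\nu}\,\mathrm{d}\nu=\tau(X)<\infty$. Hypothesis 2.) of Theorem \ref{thm:non-distinction} then follows at once from Property v.) of Definition \ref{def:def of cond meas} (in fact with equality), and Hypothesis 3.) is precisely the assumption (\ref{eq:color eq}).

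For Hypothesis 1.), given $\sum_{i=1}^{\infty}\gamma_i\nu_i\leq\nu$, I would decompose $\nu=\sum_i\gamma_i\nu_i+\sigma$ with $\sigma:=\nu-\sum_i\gamma_i\nu_i\geq 0$ and apply Properties ix.) and x.) of Definition \ref{def:def of cond meas} to obtain $\mathcal{C}(\nu)=\sum_i\gamma_i\mathcal{C}(\nu_i)+\mathcal{C}(\sigma)$ almost surely; nonnegativity of $\mathcal{C}(\sigma)$ then yields $\sum_i\gamma_i\mathcal{C}(\nu_i)\leq\mathcal{C}(\nu)$ almost surely. Finally, the singularity condition (\ref{eq:taufi}) is a direct consequence of Remark \ref{rem:phi equiv} applied with $A_1=X$ and $a_1=c$: whenever $I_K(\tau)<\infty$ one has $I_\varphi(\tau)<\infty$, so by the uniqueness clause in Proposition \ref{decomposition} (take the single set $A_1=X$ in iv.)) one gets $\tau=\tau_{\varphi R}$ and $\tau_{\varphi\bot}(X)=0$. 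There is no serious obstacle here; the only point requiring genuine care is the countable-sum bookkeeping in Hypothesis 1.), which is routine once Properties ix.) and x.) are invoked.
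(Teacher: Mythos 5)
Your proposal is correct and follows essentially the same route as the paper: apply Theorem \ref{thm:non-distinction} with $K=c\cdot\varphi$, note $C_K(\nu)=c^{-1}C_\varphi(\nu)$, obtain $\mathcal{C}(\tau)$ for $\tau\ll\nu$ from Property x.), and verify hypotheses 1.)--3.) via Properties ix.), x.), v.) and the assumption (\ref{eq:color eq}). Your verification of (\ref{eq:taufi}) and of the countable-sum inequality in hypothesis 1.) is just a more explicit version of what the paper dismisses as immediate.
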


\begin{proof}
We show that the conditions of Theorem \ref{thm:non-distinction}
hold for $K=c\cdot\varphi$ so we can conclude the statement from
Theorem \ref{thm:non-distinction}. Clearly (\ref{eq:taufi}) holds.
Condition \textit{1.)} of Theorem \ref{thm:non-distinction} holds
by Property \textit{ix.)} and \textit{x.) }of Definition \ref{def:def of cond meas}.
Condition \textit{2.)} of Theorem \ref{thm:non-distinction} holds
by Property \textit{v.) }of Definition \ref{def:def of cond meas}.
Condition \textit{3.)} of Theorem \ref{thm:non-distinction} holds
by (\ref{eq:color eq}).
\end{proof}
Again, in the next theorem $\mathcal{C}(\nu\vert_{A})$ denotes an
abstract random measure that depends on $A\subseteq X$ and satisfies
certain properties and then we apply the theorem to the conditional
measure which satisfies the properties.
\begin{thm}
\label{thm:upper non extinction}Let $K:X\times X\longrightarrow\mathbb{R}$
be a nonnegative Borel function and let $\nu$ be a finite Borel measure
such that $\nu(X\setminus X_{0})=0$ for some Borel set $X_{0}\subseteq X$.
Let $B$ be a random closed set, assume that there exists $b>0$ such
that
\begin{equation}
P(D\cap B\neq\emptyset)\leq b\cdot C_{K}(D)\label{eq:prob}
\end{equation}
for every compact set $D\subseteq X_{0}$. Assume that $\mathcal{C}(\nu\vert_{A})$
is a random finite Borel measure for every Borel set $A\subseteq X$
such that the following hold

1.) if $\left\{ H_{i}\right\} _{i=1}^{\infty}$ is a sequence of disjoint
Borel subsets of $X$ such that $\sum_{i=1}^{\infty}\nu\vert_{H_{i}}=\nu$
then $\mathcal{C}(\nu)=\sum_{i=1}^{\infty}\mathcal{C}(\nu\vert_{H_{i}})$
almost surely,

2.) for a compact set $D\subseteq X_{0}$ conditional on the event
$D\cap B=\emptyset$ we have that $\mathcal{C}(\nu\vert_{D})(X)=0$
almost surely.

\noindent Then
\[
P(\mathcal{C}(\nu)(X)>0)\leq b\cdot\overline{C_{K}}(\nu).
\]
\end{thm}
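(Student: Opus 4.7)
The plan is to fix an arbitrary Borel set $A$ with $\nu(X\setminus A)=0$, approximate it from the inside by a countable disjoint union of compact sets in $X_0$, use Property 1 to reduce the non-extinction event to a union of events over those compact pieces, use Property 2 to replace each summand by a hitting event for $B$, and finally invoke the hypothesis (\ref{eq:prob}) on compact exhausting subsets. Since $\nu(X\setminus X_0)=0$ and $C_K$ is monotone, we may replace $A$ by $A\cap X_0$ without loss. By Lemma \ref{lem:countab exhaustion}, I pick disjoint compact sets $D_1,D_2,\ldots\subseteq A$ with $\nu(A\setminus\bigcup_i D_i)=0$, so that $\sum_i\nu|_{D_i}=\nu$. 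Property 1 then gives
\[
\mathcal{C}(\nu)=\sum_{i=1}^{\infty}\mathcal{C}(\nu|_{D_i})\quad\text{almost surely,}
\]
and hence $\{\mathcal{C}(\nu)(X)>0\}\subseteq\bigcup_i\{\mathcal{C}(\nu|_{D_i})(X)>0\}$ up to a null event.

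By Property 2 applied to each compact $D_i\subseteq X_0$, we have $\{\mathcal{C}(\nu|_{D_i})(X)>0\}\subseteq\{D_i\cap B\ne\emptyset\}$ up to a null event. Consequently
\[
P(\mathcal{C}(\nu)(X)>0)\le P\Bigl(B\cap\bigcup_i D_i\ne\emptyset\Bigr).
\]
To control the right-hand side I set $F_n:=D_1\cup\cdots\cup D_n$, which is compact and contained in $A\subseteq X_0$. The events $\{B\cap F_n\ne\emptyset\}$ increase to $\{B\cap\bigcup_i D_i\ne\emptyset\}$, so by continuity of probability and the hypothesis (\ref{eq:prob}) applied to each $F_n$,
\[
P\Bigl(B\cap\bigcup_i D_i\ne\emptyset\Bigr)=\lim_{n\to\infty}P(B\cap F_n\ne\emptyset)\le b\,\lim_{n\to\infty}C_K(F_n)\le b\,C_K(A),
\]
the last inequality by monotonicity of $C_K$ since $F_n\subseteq A$. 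Taking the infimum over Borel $A$ with $\nu(X\setminus A)=0$ gives $P(\mathcal{C}(\nu)(X)>0)\le b\,\overline{C_K}(\nu)$, as required.

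The main thing to be careful about is that $C_K$ is only subadditive, not superadditive, on countable disjoint unions, so the naive bound $\sum_i P(B\cap D_i\ne\emptyset)\le b\sum_i C_K(D_i)$ cannot be controlled by $b\,C_K(A)$. The resolution is precisely to avoid the union bound and instead exhaust $\bigcup_i D_i$ by the increasing compact sets $F_n\subseteq A$, so that monotonicity $C_K(F_n)\le C_K(A)$ provides a uniform bound before passing to the limit.
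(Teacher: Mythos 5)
Your proof is correct and follows essentially the same route as the paper: inner regularity to produce compact subsets of a near-optimal Borel set $A\subseteq X_{0}$, Property \textit{1.)} to decompose $\mathcal{C}(\nu)$, Property \textit{2.)} to pass from non-extinction to hitting events, and monotonicity of $C_{K}$ together with continuity of probability along an increasing sequence of compact sets to avoid the subadditivity trap you correctly identify. The only cosmetic differences are that the paper works with an increasing compact exhaustion $D_{n}$ and takes the limit directly on $P(\mathcal{C}(\nu\vert_{D_{n}})(X)>0)$, and fixes a single minimizing set $A$ with $C_{K}(A)=\overline{C_{K}}(\nu)$ up front rather than taking the infimum at the end.
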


\begin{proof}
Let $A_{n}\subseteq X_{0}$ be a sequence of Borel sets such that
$\nu(X\setminus A_{n})=0$ and $C_{K}(A_{n})\leq\overline{C_{K}}(\nu)+1/n$.
Then for $A:=\cap_{n=1}^{\infty}A_{n}$ we have that $\nu(X\setminus A)=0$
and $C_{K}(A)=\overline{C_{K}}(\nu)$. Let $D_{n}\subseteq A\subseteq X_{0}$
be an increasing sequence of compact sets such that $\nu(X\setminus D_{n})=\nu(A\setminus D_{n})<1/n$
(we can find such sequence by inner regularity). Conditional on $D_{n}\cap B=\emptyset$
we have that $\mathcal{C}(\nu\vert_{D_{n}})(X)=0$ almost surely by\textit{
2.)}. Hence by (\ref{eq:prob})
\[
P\left(\mathcal{C}(\nu\vert_{D_{n}})(X)>0\right)=P\left(\mathcal{C}(\nu\vert_{D_{n}})(X)>0\mathrm{\,and\,}D_{n}\cap B\neq\emptyset\right)
\]
\[
\leq P\left(D_{n}\cap B\neq\emptyset\right)\leq b\cdot C_{K}(D_{n})\leq b\cdot C_{K}(A)=b\cdot\overline{C_{K}}(\nu).
\]
Let $H_{1}=D_{1}$ and $H_{n}=D_{n}\setminus D_{n-1}$ for $n\geq2$.
Then $\nu\vert_{D_{n}}=\sum_{i=1}^{n}\nu\vert_{H_{i}}$ and $\nu=\sum_{i=1}^{\infty}\nu\vert_{H_{i}}$
since $\nu(X\setminus D_{n})=\nu(A\setminus D_{n})<1/n$. Hence $\mathcal{C}(\nu\vert_{D_{n}})=\sum_{i=1}^{n}\mathcal{C}(\nu\vert_{H_{i}})$
and $\mathcal{C}(\nu)=\sum_{i=1}^{\infty}\mathcal{C}(\nu\vert_{H_{i}})$
by  \textit{1.)}. Thus
\[
P\left(\mathcal{C}(\nu)(X)>0\right)=\lim_{n\rightarrow\infty}P\left(\mathcal{C}(\nu\vert_{D_{n}})(X)>0\right)\leq b\cdot\overline{C_{K}}(\nu).
\]
\end{proof}
\begin{cor}
\label{cor:upper non ex cor}Let $\nu$ be a finite Borel measure
such that $\nu(X\setminus X_{0})=0$ and assume that the conditional
measure $\mathcal{C}(\nu)$ of $\nu$ on $B$ exists with respect
to $\mathcal{Q}_{k}$ ($k\geq1$) with regularity kernel $\varphi$.
Assume that $P(D\cap B\neq\emptyset)\leq b\cdot C_{\varphi}(D)$ for
every compact set $D\subseteq X_{0}$. Then $P(\mathcal{C}(\nu)(X)>0)\leq b\cdot\overline{C_{\varphi}}(\nu)$.
\end{cor}

\begin{proof}
We show that conditions \textit{1.)-2.)} of Theorem \ref{thm:upper non extinction}
hold for $K=\varphi$. Condition \textit{1.)} holds by Property \textit{ix.)}
of Definition \ref{def:def of cond meas}. For a compact set $D\subseteq X_{0}$
conditional on $D\cap B=\emptyset$ we have that $\mathcal{C}(\nu\vert_{D})(X)=0$
by Lemma \ref{lem:compact separation lemma}, hence condition \textit{2.)}
holds.
\end{proof}
\begin{rem}
\label{rem:loc fin non ext prob}Let $X$ be locally compact and $\nu$
be a locally finite Borel measure on $X$. Then the conclusion of
Theorem \ref{thm:non-distinction}, Corollary \ref{thm:nob ext color lower},
Theorem \ref{thm:upper non extinction} and Corollary \ref{cor:upper non ex cor}
hold for $\nu$. The proofs are identical to the proofs of the corresponding
results.
\end{rem}

\begin{thm}
\label{thm:increasing non extinction}Let $\mathcal{Q}_{k}$, $\mathcal{Q}_{k}^{i}$
and $X_{0}^{\infty}$ be as in Theorem \ref{thm:increasing double int}.
Assume that  if $C_{\varphi}(D)=0$ for some compact set $D\subseteq X_{0}$
then $B\cap D=\emptyset$ almost surely. Assume that $F(x,y)=\underline{F}(x,y)=\overline{F}(x,y)$
for every $x,y\in X$ and there exists $b>0$ such that
\[
P(D\cap B\neq\emptyset)\leq b\cdot C_{F}(D)
\]
for every compact set $D\subseteq X_{0}$. Assume that $X_{0}=\cup_{i=1}^{\infty}A_{i}$
such that (\ref{eq:fi eq eq}) holds. Let either $\nu$ be a finite
Borel measure on $X$ or $X$ be locally compact and $\nu$ be a locally
finite Borel measure on $X$. Assume that $\nu(X\setminus X_{0}^{\infty})=0$.
Then the conditional measure $\mathcal{C}(\nu)$ of $\nu$ on $B$
exists with respect to $\mathcal{Q}_{k}$ ($k\geq1$) with regularity
kernel $\varphi$ and 
\[
C_{F}(\nu)\leq P(\mathcal{C}(\nu)(X)>0)\leq b\cdot\overline{C_{F}}(\nu).
\]
\end{thm}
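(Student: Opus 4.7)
The plan is to reduce the theorem to three previously established results. First, the existence of $\mathcal{C}(\nu)$ with respect to $\mathcal{Q}_k$ and the regularity kernel $\varphi$ is immediate from Theorem \ref{thm:increasing double int}, which simultaneously supplies the double integration formula that will serve as the crucial second-moment input. Given existence, the two inequalities are obtained independently: the lower bound from Theorem \ref{thm:non-distinction} applied with $K = F$, and the upper bound from Theorem \ref{thm:upper non extinction} applied with $K = F$. In the locally finite case the same conclusions carry over by Remark \ref{rem:loc fin non ext prob}.

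For the lower bound I would verify the hypotheses of Theorem \ref{thm:non-distinction} with kernel $K = F$. The technical condition (\ref{eq:taufi}) is handled by Remark \ref{rem:phi equiv}: since $\nu(X \setminus X_0^{\infty}) = 0$, $X_0^{\infty} \subseteq X_0 = \bigcup_{i} A_i$ and $a_i\, \varphi(x,y) \leq F(x,y)$ on $A_i$, any $\tau \ll \nu$ with $I_F(\tau) < \infty$ satisfies $I_F(\tau\vert_{A_i}) < \infty$ for all $i$, hence $\tau_{\varphi\bot}(A_i) = 0$ for every $i$ and so $\tau_{\varphi\bot}(X) = 0$. Conditions \textit{1.)} and \textit{2.)} then reduce to Properties \textit{ix.)}, \textit{x.)} and \textit{v.)} of Definition \ref{def:def of cond meas}. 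For Condition \textit{3.)}, the key observation is that $\tau \ll \nu$ inherits $\tau(X \setminus X_0^{\infty}) = 0$, so Theorem \ref{thm:increasing double int} applies with both measures equal to $\tau$ and with $f \equiv 1$; it yields $E(\mathcal{C}(\tau)(X)^2) = I_F(\tau_R) = I_F(\tau)$, which is exactly the required inequality.

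For the upper bound I would invoke Theorem \ref{thm:upper non extinction} with $K = F$, taking the Borel set called $X_0$ in its statement to be $X_0^{\infty}$; since $X_0^{\infty}$ lies inside the $X_0$ of the present hypothesis, the bound $P(D \cap B \neq \emptyset) \leq b \cdot C_F(D)$ is inherited for every compact $D \subseteq X_0^{\infty}$. Condition \textit{1.)} of that theorem is Property \textit{ix.)} of Definition \ref{def:def of cond meas}. The only nontrivial verification is Condition \textit{2.)}, asserting $\mathcal{C}(\nu\vert_D)(X) = 0$ almost surely on $\{D \cap B = \emptyset\}$; I would deduce this by extracting an almost sure subsequential limit from the convergence in probability $\mathcal{C}_k(\nu\vert_D)(X) \to \mathcal{C}(\nu\vert_D)(X)$ (Properties \textit{ii.)}--\textit{iv.)}) and combining with Lemma \ref{lem:compact separation lemma}, which gives pathwise convergence $\mathcal{C}_k(\nu\vert_D)(X) \to 0$ whenever $D \cap B_\omega = \emptyset$.

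The main point of the proof is identifying the right kernel $F$ so that Remark \ref{rem:phi equiv} and the $\varphi$-regular/singular decomposition mesh correctly with the double integration formula: namely, the hypothesis $X_0 = \bigcup_i A_i$ with $a_i\, \varphi \leq F$ on $A_i$ is precisely what guarantees that finite $F$-energy of $\tau \ll \nu$ rules out a $\varphi$-singular component, allowing Theorem \ref{thm:increasing double int} to compute the second moment of $\mathcal{C}(\tau)(X)$ as $I_F(\tau)$ and thereby to feed Condition \textit{3.)} of Theorem \ref{thm:non-distinction}. Once this alignment is in place, the proof is a bookkeeping exercise in the properties gathered in Definition \ref{def:def of cond meas}.
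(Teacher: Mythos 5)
Your proposal is correct and follows essentially the same route as the paper: Theorem \ref{thm:increasing double int} for existence and for the second-moment identity feeding Condition \textit{3.)} of Theorem \ref{thm:non-distinction}, Remark \ref{rem:phi equiv} for (\ref{eq:taufi}), the listed properties of Definition \ref{def:def of cond meas} for the remaining conditions, Theorem \ref{thm:upper non extinction} with the Condition \textit{2.)} check via Lemma \ref{lem:compact separation lemma} (exactly as in Corollary \ref{cor:upper non ex cor}), and Remark \ref{rem:loc fin non ext prob} for the locally finite case. Your write-up is in fact somewhat more explicit than the paper's, which leaves the upper-bound verification as ``similarly to the proof of Corollary \ref{cor:upper non ex cor}.''
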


\begin{proof}
Let $K(x,y)=F(x,y)$. Since (\ref{eq:fi eq eq}) holds it follows
that (\ref{eq:taufi}) holds by Remark \ref{rem:phi equiv}. The conditional
measure $\mathcal{C}(\nu)$ of $\nu$ on $B$ exists with respect
to $\mathcal{Q}_{k}$ ($k\geq1$) with regularity kernel $\varphi$
and Condition \textit{3.)} of Theorem \ref{thm:non-distinction} holds
by Theorem \ref{thm:increasing double int}. Conditions \textit{1.)-2.)}
of Theorem \ref{thm:non-distinction} hold by Property \textit{v.),
ix.), x.)} and \textit{x{*}.)} of Definition \ref{def:def of cond meas}.
Thus it follows from Theorem \ref{thm:non-distinction} and Remark
\ref{rem:loc fin non ext prob} that $C_{F}(\nu)\leq P(\mathcal{C}(\nu)(X)>0)$.

To prove the other inequality, we need to check that the conditions
of Theorem \ref{thm:upper non extinction} are satisfied to conclude
from Theorem \ref{thm:upper non extinction} and Remark \ref{rem:loc fin non ext prob}
that $P(\mathcal{C}(\nu)(X)>0)\leq b\cdot\overline{C_{F}}(\nu)$.
The conditions of Theorem \ref{thm:upper non extinction} can be checked
similarly to the proof of Corollary \ref{cor:upper non ex cor}.
\end{proof}

\section{\label{sec:Brownian-path}Conditional measure on the Brownian path}

Throughout this section let $B$ be the Brownian path in $\mathbb{R}^{d}$
($d\geq3$), i.e. the range of a Brownian motion which is started
at the origin unless stated otherwise. We prove Theorem \ref{thm:nonext prob for cond meas Brownian}
and Theorem \ref{thm:Brownian cond measure: Main} in this section.

\subsection{\label{subsec:strong markov}Strong Markov property}

Let $W(t)$ be a Brownian motion started at the origin. Usually the
strong Markov property is stated for stopping times that are almost
surely finite. We reformulate the strong Markov property in the case
when we only assume that the stopping time $T$ is finite with positive
probability. We show that $W(t+T)-W(T)$ is a Brownian motion conditional
on the event $\{T<\infty\}$ and is independent of what happens with
$W$ up till time $T$.

Let $\mathcal{F}^{0}(t)$ be the $\sigma$-algebra generated by the
random variables $\{W(s):0\leq s\leq t\}$, let $\mathcal{F}^{+}(t)=\bigcap_{\varepsilon>0}\mathcal{F}^{0}(t+\varepsilon)$
and for a stopping time $T$ let
\[
\mathcal{F}^{+}(T)=\left\{ A\in\mathcal{A}:A\cap\{T<t\}\in\mathcal{F}^{+}(t)\mathrm{\,for\,}\forall t\geq0\right\} .
\]

\begin{prop}
\label{prop:strong markov prop}Let $\left(W(t):t\geq0\right)$ be
a Brownian motion started at the origin and let $T$ be a stopping
time with respect to the filtration $\left(\mathcal{F}^{+}(t):t\geq0\right)$.
Then
\[
W_{T}(t)=W(t+T)-W(T)
\]
defines a Brownian motion conditional on $\{T<\infty\}$ that is independent
of $\mathcal{F}^{+}(T)$.
\end{prop}

\begin{proof}
Let $a_{n}\geq0$ be a monotone increasing deterministic sequence
such that $\lim_{n\rightarrow\infty}a_{n}=\infty$ and $P(T=a_{n})=0$
for every $n\in\mathbb{N}$. Then let $T_{N}=\min\{T,a_{N}\}$ be
stopping times and $W_{N}(t)=W(t+T_{N})-W(T_{N})$ be random processes
for every $N\in\mathbb{N}$. Then
\[
\{T_{N}<a_{n}\}\cap\{T<t\}=\{T<a_{n}\}\cap\{T<t\}=\{T<\min\{a_{n},t\}\}\in\mathcal{F}^{+}(t)
\]
for every $t\geq0$, i.e. $\{T_{N}<a_{n}\}\in\mathcal{F}^{+}(T)$.
Thus it follows by the strong Markov property (see for example \cite[Theorem 2.14]{Peres-Morters-Broanian motion})
for $T_{N}$ that
\[
P(W_{T}\in A\mid T<a_{N})=P(W_{N}\in A\mid T_{N}<a_{n})=P(W\in A)
\]
for large enough $N$ that $P(T<a_{N})>0$ and for every $A\in\mathcal{A}$.
Taking the limit as $N$ goes to $\infty$ it follows that $P(W_{T}\in A\mid T<\infty)=P(W\in A)$
thus $W_{T}$ is a Brownian motion.

For the proof of the independence let $H\in\mathcal{F}^{+}(T)$ and
$A\in\mathcal{A}$. Then by the strong Markov property for $T_{N}$
\[
P(W_{T}\in A,W\in H,T<a_{N})=P(W_{N}\in A,W\in H,T<a_{N})
\]
\[
=P(W_{N}\in A)P(W\in H,T<a_{N})=P(W_{N}\in A\mid T<a_{N})P(W\in H,T<a_{N})
\]
\[
=P(W_{T}\in A\mid T<a_{N})P(W\in H,T<a_{N}).
\]
Taking the limit as $N$ goes to $\infty$ it follows that
\[
P(W_{T}\in A,W\in H,T<\infty)=P(W_{T}\in A\mid T<\infty)P(W\in H,T<\infty)
\]
then dividing both sides by $P(T<\infty)$ it follows that $\{W_{T}\in A\}$
and $\{W\in H\}$ are independent conditional on $\{T<\infty\}$.
\end{proof}
In the following sections we use the strong Markov property in the
form of Proposition \ref{prop:strong markov prop} for stopping times
that are the first hitting of a compact set, hence the stopping time
is not necessarily finite.

\subsection{\label{subsec:Existance-of-the}Existence of the kernel $F(x,y)$}

This section is dedicated to show that $F(x,y)=\underline{F}(x,y)=\overline{F}(x,y)$
when $B$ is the Brownian path.
\begin{lem}
\label{lem:hitting prob of balls}Let $B$ be a Brownian path in $\mathbb{R}^{d}$
for $d\geq3$. Let $x\in\mathbb{R}^{d}$, $x\neq0$ and $r>0$ such
that $\left\Vert x\right\Vert >r$. Then
\[
P(B\cap B(x,r)\neq\emptyset)=\frac{r^{d-2}}{\left\Vert x\right\Vert ^{d-2}}.
\]
\end{lem}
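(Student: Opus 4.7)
The plan is to reduce the statement to the classical hitting-probability formula for Brownian motion and a ball by using translation invariance. Let $B_0(t)$ be the Brownian motion whose range is $B$, started at $0$. Then $\{B\cap B(x,r)\neq\emptyset\}$ coincides with the event that the translated Brownian motion $\widetilde{B}_0(t)=B_0(t)-x$, started at $-x$, ever enters $B(0,r)$. Because $\|x\|>r$, the starting point $-x$ lies outside the closed ball $\overline{B(0,r)}$, so the classical formula for the hitting probability of a centered ball by transient Brownian motion applies directly and yields $(r/\|x\|)^{d-2}$.

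Next I would supply the (short, classical) derivation of that formula, which is where essentially all the work sits. Fix $z=-x$ with $\|z\|>r$ and set
\[
T=\inf\{t\ge 0:\|B_0(t)-x\|\le r\},\qquad \tau_R=\inf\{t\ge 0:\|B_0(t)-x\|\ge R\}
\]
for $R>\|z\|$. The function $h(y)=\|y\|^{2-d}$ is harmonic on $\mathbb{R}^d\setminus\{0\}$ for $d\ge 3$, so $h(\widetilde{B}_0(t\wedge T\wedge\tau_R))$ is a bounded martingale under $P_z$. Optional stopping together with the fact that $\widetilde{B}_0$ exits the annulus $\{r\le\|y\|\le R\}$ at one of the two boundary spheres gives
\[
\|z\|^{2-d}=P_z(T<\tau_R)\cdot r^{2-d}+P_z(\tau_R<T)\cdot R^{2-d}.
\]

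Finally I would let $R\to\infty$: the term $P_z(\tau_R<T)\cdot R^{2-d}$ vanishes since $R^{2-d}\to 0$, and by transience of Brownian motion in $d\ge 3$ we have $P_z(T<\tau_R)\to P_z(T<\infty)$. Hence $\|z\|^{2-d}=P_z(T<\infty)\cdot r^{2-d}$, giving
\[
P_z(T<\infty)=\Big(\frac{r}{\|z\|}\Big)^{d-2}=\frac{r^{d-2}}{\|x\|^{d-2}},
\]
which is exactly the claimed probability. No step is really an obstacle; the main thing to be careful about is justifying optional stopping (use boundedness of $h$ on the annulus $\{r\le\|y\|\le R\}$) and the transience-based passage $R\to\infty$. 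Since this derivation is standard, one could also simply cite the corresponding result in \cite{Peres-Morters-Broanian motion} (e.g.\ the discussion leading to Theorem 3.33 on the Green's function, or the explicit hitting-probability computation for balls) and bypass the calculation entirely.
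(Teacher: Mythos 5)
Your proof is correct: the translation to a Brownian motion started at $-x$, the harmonicity of $\left\Vert y\right\Vert ^{2-d}$ on $\mathbb{R}^{d}\setminus\{0\}$, optional stopping on the annulus $\{r\leq\left\Vert y\right\Vert \leq R\}$, and the passage $R\rightarrow\infty$ together constitute the standard derivation of this classical formula. The paper gives no argument of its own and simply cites \cite[Corollary 3.19]{Peres-Morters-Broanian motion}, which is precisely the fallback you mention in your final sentence, so your write-up is if anything more self-contained than the original.
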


See \cite[Corollary 3.19]{Peres-Morters-Broanian motion}.
\begin{prop}
\label{prop:ball double hitting}Let $B$ be a Brownian path in $\mathbb{R}^{d}$
for $d\geq3$. Then for $x,y\in\mathbb{R}^{d}\setminus\{0\}$, $x\neq y$
\[
\liminf_{R\rightarrow0}\liminf_{r\rightarrow0}\frac{P\left(B\cap B(x,R)\neq\emptyset,B\cap B(y,r)\neq\emptyset\right)}{P(B\cap B(x,R)\neq\emptyset)\cdot P(B\cap B(y,r)\neq\emptyset)}
\]
\begin{equation}
=\limsup_{R\rightarrow0}\limsup_{r\rightarrow0}\frac{P\left(B\cap B(x,R)\neq\emptyset,B\cap B(y,r)\neq\emptyset\right)}{P(B\cap B(x,R)\neq\emptyset)\cdot P(B\cap B(y,r)\neq\emptyset)}=\frac{\left\Vert x\right\Vert ^{d-2}+\left\Vert y\right\Vert ^{d-2}}{\left\Vert x-y\right\Vert ^{d-2}}.\label{eq:lower is upper}
\end{equation}
\end{prop}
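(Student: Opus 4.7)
\textit{Proof proposal for Proposition \ref{prop:ball double hitting}.} The plan is to decompose the joint hitting event according to which ball the Brownian motion enters first, then apply the strong Markov property at each of the two hitting times and use Lemma \ref{lem:hitting prob of balls} to read off the resulting hitting probabilities. Set $D=\|x-y\|$ and restrict to $R,r<D/3$, so that the closed balls $\overline{B(x,R)}$ and $\overline{B(y,r)}$ are disjoint. Let $\sigma_x$ and $\sigma_y$ denote the first hitting times of $B(x,R)$ and $B(y,r)$ respectively. Because the two balls are disjoint, $P(\sigma_x=\sigma_y<\infty)=0$, so
\[
P(B\cap B(x,R)\neq\emptyset,\,B\cap B(y,r)\neq\emptyset)=P(\sigma_x<\sigma_y<\infty)+P(\sigma_y<\sigma_x<\infty).
\]

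First I would handle $P(\sigma_x<\sigma_y<\infty)$ by conditioning on $\mathcal{F}_{\sigma_x}$. The strong Markov property together with Lemma \ref{lem:hitting prob of balls} applied to the translated Brownian motion starting from $B(\sigma_x)$ (and using $\|B(\sigma_x)-y\|\geq D-R>r$) gives
\[
P(\sigma_x<\sigma_y<\infty)=E\!\left[I_{\{\sigma_x<\sigma_y\}}\,\frac{r^{d-2}}{\|B(\sigma_x)-y\|^{d-2}}\right].
\]
Since $B(\sigma_x)\in\partial B(x,R)$ on the event $\{\sigma_x<\infty\}$, the ratio $\|B(\sigma_x)-y\|/D$ lies in $[1-R/D,1+R/D]$, so the integrand is uniformly equal to $r^{d-2}/D^{d-2}\cdot(1+\varepsilon_1(R))$ with $\varepsilon_1(R)\to0$ as $R\to0$. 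Pulling this constant out gives
\[
P(\sigma_x<\sigma_y<\infty)=\frac{r^{d-2}}{D^{d-2}}\,(1+\varepsilon_1(R))\,P(\sigma_x<\sigma_y).
\]
An elementary inclusion bound yields $P(\sigma_x<\infty)-P(\sigma_y<\infty)\leq P(\sigma_x<\sigma_y)\leq P(\sigma_x<\infty)$, and Lemma \ref{lem:hitting prob of balls} identifies $P(\sigma_x<\infty)=R^{d-2}/\|x\|^{d-2}$ and $P(\sigma_y<\infty)=r^{d-2}/\|y\|^{d-2}$. So with $R$ fixed and $r\to0$, $P(\sigma_x<\sigma_y)=R^{d-2}/\|x\|^{d-2}+O(r^{d-2})$, and symmetrically (interchanging the roles of $x,R$ and $y,r$)
\[
P(\sigma_y<\sigma_x<\infty)=\frac{R^{d-2}}{D^{d-2}}\,(1+\varepsilon_2(r))\,P(\sigma_y<\sigma_x)
\]
with $\varepsilon_2(r)\to0$ as $r\to0$ and $P(\sigma_y<\sigma_x)=r^{d-2}/\|y\|^{d-2}-O(R^{d-2}r^{d-2})$.

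Next I would divide the sum of these two expressions by $P(\sigma_x<\infty)\,P(\sigma_y<\infty)=R^{d-2}r^{d-2}/(\|x\|^{d-2}\|y\|^{d-2})$. After cancellation this produces
\[
\frac{P(\sigma_x<\infty,\sigma_y<\infty)}{P(\sigma_x<\infty)P(\sigma_y<\infty)}=\frac{\|y\|^{d-2}}{D^{d-2}}(1+\varepsilon_1(R))\bigl(1+O(r^{d-2})\bigr)+\frac{\|x\|^{d-2}}{D^{d-2}}(1+\varepsilon_2(r))\bigl(1-O(R^{d-2})\bigr).
\]
Taking the inner limit $r\to0$ first kills the $\varepsilon_2(r)$ and $O(r^{d-2})$ terms, leaving an expression depending only on $R$ that converges to $(\|x\|^{d-2}+\|y\|^{d-2})/D^{d-2}$ as $R\to0$. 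Because the pre-limit error bounds are uniform and two-sided, the same target value is obtained whether one takes $\liminf$ or $\limsup$ in either variable, yielding both equalities in \eqref{eq:lower is upper}.

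The substantive step is the strong Markov application together with the explicit formula of Lemma \ref{lem:hitting prob of balls}; the only place one must be careful is the continuity estimate $|\|B(\sigma_x)-y\|^{2-d}-D^{2-d}|=O(R)\cdot D^{2-d}$ uniformly on $\partial B(x,R)$, and the analogous bound at $\sigma_y$, so that the error terms genuinely vanish in the iterated limit. There is no real obstacle beyond this bookkeeping, since the ordering $r\to0$ before $R\to0$ makes the negligible-correction terms $O(r^{d-2})$ disappear before $R$ enters the analysis.
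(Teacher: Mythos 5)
Your overall route is the same as the paper's: split the joint hitting event according to which ball is entered first, apply the strong Markov property at the first hitting time, and read off each factor from Lemma \ref{lem:hitting prob of balls}; the uniform continuity estimate $\|B(\sigma_x)-y\|=D(1+O(R/D))$ on $\partial B(x,R)$ is exactly the bookkeeping the paper does with the factors $(\|x-y\|\pm R)^{2-d}$ and $(\|x-y\|\pm r)^{2-d}$.

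There is, however, one genuine gap, and it sits precisely at the word ``symmetrically.'' The elementary inclusion bound you invoke gives $P(\sigma_y<\sigma_x)\geq P(\sigma_y<\infty)-P(\sigma_x<\infty)=r^{d-2}/\|y\|^{d-2}-R^{d-2}/\|x\|^{d-2}$, i.e.\ an error of order $O(R^{d-2})$, not the $O(R^{d-2}r^{d-2})$ you write down. The two terms are not symmetric with respect to the iterated limit: for $P(\sigma_x<\sigma_y)$ the error $O(r^{d-2})$ relative to the main term $R^{d-2}/\|x\|^{d-2}$ vanishes in the inner limit $r\to0$, but for $P(\sigma_y<\sigma_x)$ the relative error from the naive bound is $O(R^{d-2}/r^{d-2})$, which blows up as $r\to0$ with $R$ fixed; after dividing by $P(\sigma_x<\infty)P(\sigma_y<\infty)$ the lower bound on the second summand tends to $-\infty$ and the argument collapses. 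The estimate you actually state, $P(\sigma_y<\sigma_x)\geq r^{d-2}/\|y\|^{d-2}-O(R^{d-2}r^{d-2})$, is correct but must be obtained by writing $P(\sigma_y<\sigma_x)=P(\sigma_y<\infty)-P(\sigma_x\leq\sigma_y<\infty)$ and bounding the subtracted term by the joint hitting probability, which the upper-bound half of your own argument already controls by $\mathrm{const}\cdot R^{d-2}r^{d-2}$. This is exactly how the paper closes the lower bound (its estimates (\ref{eq:egyik ,low}) and (\ref{eq:masik low}) subtract $P(B\cap B(x,R)\neq\emptyset,\,B\cap B(y,r)\neq\emptyset)$, not a marginal hitting probability). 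With that one correction your proof goes through.
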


\begin{proof}
Let $W(t)$ be a Brownian motion in $\mathbb{R}^{d}$ for some $d\geq3$,
so $B=\left\{ W(t):t\in[0,\infty)\right\} $. We denote by $P=P_{0}$
the probability measure that corresponds to the Brownian motion that
is started at the origin and by $P_{x}$ the probability measure that
corresponds to the Brownian motion that is started at $x\in\mathbb{R}^{d}$.
Let
\[
T_{x,r}=\inf\left\{ t\in[0,\infty):W(t)\in\partial B(x,r)\right\} 
\]
for $x\in\mathbb{R}^{d}$ and $r>0$. Let $x,y\in\mathbb{R}^{d}$
and $R,r>0$ such that $r+R<\left\Vert x-y\right\Vert $, $R<\left\Vert x\right\Vert $
and $r<\left\Vert y\right\Vert $. If $z\in\partial B(x,R)$ then
\[
P_{z}(T_{y,r}<\infty)=\frac{r^{d-2}}{\left\Vert x-z\right\Vert ^{d-2}}
\]
by Lemma \ref{lem:hitting prob of balls}, thus
\begin{equation}
\frac{r^{d-2}}{\left(\left\Vert x-y\right\Vert +R\right)^{d-2}}\leq P_{z}(T_{y,r}<\infty)\leq\frac{r^{d-2}}{\left(\left\Vert x-y\right\Vert -R\right)^{d-2}}\label{eq:x to y}
\end{equation}
and similarly for $z\in\partial B(y,r)$
\begin{equation}
\frac{R^{d-2}}{\left(\left\Vert x-y\right\Vert +r\right)^{d-2}}\leq P_{z}(T_{x,R}<\infty)\leq\frac{R^{d-2}}{\left(\left\Vert x-y\right\Vert -r\right)^{d-2}}.\label{eq:y to x}
\end{equation}
Let $U=W(T_{x,R})$ and $V=W(T_{y,r})$ be the stopped Brownian motions.
Then by Lemma \ref{lem:hitting prob of balls}, (\ref{eq:x to y})
and by the strong Markov property (Proposition \ref{prop:strong markov prop})
\[
P(W\mathrm{\,hits\,}B(x,R)\mathrm{\,and\,after\,that\,}W\mathrm{\,hits\,}B(y,r))
\]
\begin{equation}
=P(T_{x,R}<\infty)\cdot E(P_{U}(T_{y,r}<\infty)\mid T_{x,R}<\infty)\leq\frac{R^{d-2}}{\left\Vert x\right\Vert ^{d-2}}\cdot\frac{r^{d-2}}{\left(\left\Vert x-y\right\Vert -R\right)^{d-2}}\label{eq:kell egy jel}
\end{equation}
and similarly
\[
P(W\mathrm{\,hits\,}B(y,r)\mathrm{\,and\,after\,that\,}W\mathrm{\,hits\,}B(x,R))
\]
\[
=P(T_{y,r}<\infty)\cdot E(P_{V}(T_{x,R}<\infty)\mid T_{y,r}<\infty)\leq\frac{r^{d-2}}{\left\Vert y\right\Vert ^{d-2}}\cdot\frac{R^{d-2}}{\left(\left\Vert x-y\right\Vert -r\right)^{d-2}}.
\]
Hence
\[
P(B\cap B(x,R)\neq\emptyset\mathrm{\,and\,}B\cap B(y,r)\neq\emptyset)
\]
\[
\leq\frac{R^{d-2}}{\left\Vert x\right\Vert ^{d-2}}\cdot\frac{r^{d-2}}{\left(\left\Vert x-y\right\Vert -R\right)^{d-2}}+\frac{r^{d-2}}{\left\Vert y\right\Vert ^{d-2}}\cdot\frac{R^{d-2}}{\left(\left\Vert x-y\right\Vert -r\right)^{d-2}}
\]
\[
=r^{d-2}R^{d-2}\frac{\left\Vert y\right\Vert ^{d-2}\left(\left\Vert x-y\right\Vert -r\right)^{d-2}+\left\Vert x\right\Vert ^{d-2}\left(\left\Vert x-y\right\Vert -R\right)^{d-2}}{\left\Vert y\right\Vert ^{d-2}\left(\left\Vert x-y\right\Vert -r\right)^{d-2}\cdot\left\Vert x\right\Vert ^{d-2}\left(\left\Vert x-y\right\Vert -R\right)^{d-2}}
\]
\begin{equation}
\leq\frac{R^{d-2}}{\left\Vert x\right\Vert ^{d-2}}\cdot\frac{r^{d-2}}{\left\Vert y\right\Vert ^{d-2}}\cdot\frac{\left\Vert y\right\Vert ^{d-2}\left\Vert x-y\right\Vert ^{d-2}+\left\Vert x\right\Vert ^{d-2}\left\Vert x-y\right\Vert ^{d-2}}{\left(\left\Vert x-y\right\Vert -r\right)^{d-2}\cdot\left(\left\Vert x-y\right\Vert -R\right)^{d-2}}.\label{eq:hits both}
\end{equation}
Hence by Lemma \ref{lem:hitting prob of balls} it follows that
\[
\limsup_{R\rightarrow0}\limsup_{r\rightarrow0}\frac{P\left(B\cap B(x,R)\neq\emptyset,B\cap B(y,r)\neq\emptyset\right)}{P(B\cap B(x,R)\neq\emptyset)\cdot P(B\cap B(y,r)\neq\emptyset)}
\]
\begin{equation}
\leq\limsup_{R\rightarrow0}\limsup_{r\rightarrow0}\frac{\left\Vert y\right\Vert ^{d-2}\left\Vert x-y\right\Vert ^{d-2}+\left\Vert x\right\Vert ^{d-2}\left\Vert x-y\right\Vert ^{d-2}}{\left(\left\Vert x-y\right\Vert -r\right)^{d-2}\cdot\left(\left\Vert x-y\right\Vert -R\right)^{d-2}}=\frac{\left\Vert x\right\Vert ^{d-2}+\left\Vert y\right\Vert ^{d-2}}{\left\Vert x-y\right\Vert ^{d-2}}.\label{eq:upper hitting final}
\end{equation}

By (\ref{eq:hits both}) and Lemma \ref{lem:hitting prob of balls}
it follows that
\[
P(T_{x,R}<\infty,T_{x,R}\leq T_{y,r})\geq P(B\cap B(x,R)\neq\emptyset,B\cap B(y,r)=\emptyset)
\]
\begin{equation}
\geq P(B\cap B(x,R)\neq\emptyset)-P(B\cap B(x,R)\neq\emptyset,B\cap B(y,r)\neq\emptyset)\geq\frac{R^{d-2}}{\left\Vert x\right\Vert ^{d-2}}\cdot(1-O(r))\label{eq:egyik ,low}
\end{equation}
and similarly
\begin{equation}
P(T_{y,r}<\infty,T_{y,r}\leq T_{x,R})\geq\frac{r^{d-2}}{\left\Vert y\right\Vert ^{d-2}}\cdot(1-O(R)).\label{eq:masik low}
\end{equation}
Thus by Lemma \ref{lem:hitting prob of balls}, (\ref{eq:x to y}),
(\ref{eq:y to x}), (\ref{eq:egyik ,low}), (\ref{eq:masik low})
and by the strong Markov property (Proposition \ref{prop:strong markov prop})
\[
P(B\cap B(x,R)\neq\emptyset,B\cap B(y,r)\neq\emptyset)\geq P(T_{x,R}\leq T_{y,r}<\infty)+P(T_{y,r}\leq T_{x,R}<\infty)
\]
\[
\geq\frac{R^{d-2}}{\left\Vert x\right\Vert ^{d-2}}\cdot(1-O(r))\cdot\frac{r^{d-2}}{\left(\left\Vert x-y\right\Vert +R\right)^{d-2}}+\frac{r^{d-2}}{\left\Vert y\right\Vert ^{d-2}}\cdot(1-O(R))\cdot\frac{R^{d-2}}{\left(\left\Vert x-y\right\Vert +r\right)^{d-2}}
\]
\[
=R^{d-2}r^{d-2}\frac{(1-O(r))\left\Vert y\right\Vert ^{d-2}\left(\left\Vert x-y\right\Vert +r\right)^{d-2}+(1-O(R))\left\Vert x\right\Vert ^{d-2}\left(\left\Vert x-y\right\Vert +R\right)^{d-2}}{\left\Vert y\right\Vert ^{d-2}\left(\left\Vert x-y\right\Vert +r\right)^{d-2}\left\Vert x\right\Vert ^{d-2}\left(\left\Vert x-y\right\Vert +R\right)^{d-2}}
\]
\[
=\frac{R^{d-2}}{\left\Vert x\right\Vert ^{d-2}}\cdot\frac{r^{d-2}}{\left\Vert y\right\Vert ^{d-2}}\cdot\frac{(1-O(r))\left\Vert y\right\Vert ^{d-2}\left(\left\Vert x-y\right\Vert +r\right)^{d-2}+(1-O(R))\left\Vert x\right\Vert ^{d-2}\left(\left\Vert x-y\right\Vert +R\right)^{d-2}}{\left(\left\Vert x-y\right\Vert +r\right)^{d-2}\left(\left\Vert x-y\right\Vert +R\right)^{d-2}}.
\]
Hence by Lemma \ref{lem:hitting prob of balls} it follows that
\begin{equation}
\liminf_{R\rightarrow0}\liminf_{r\rightarrow0}\frac{P\left(B\cap B(x,R)\neq\emptyset,B\cap B(y,r)\neq\emptyset\right)}{P(B\cap B(x,R)\neq\emptyset)\cdot P(B\cap B(y,r)\neq\emptyset)}\geq\frac{\left\Vert x\right\Vert ^{d-2}+\left\Vert y\right\Vert ^{d-2}}{\left\Vert x-y\right\Vert ^{d-2}}.\label{eq:lower hitting final}
\end{equation}
So (\ref{eq:lower is upper}) holds by (\ref{eq:upper hitting final})
and (\ref{eq:lower hitting final}).
\end{proof}
\begin{lem}
\label{lem:hitting prob of compact sets}Let $B$ be a Brownian path
in $\mathbb{R}^{d}$ for $d\geq3$, let $0\in A\subseteq\mathbb{R}^{d}$
be a compact set with $\mathrm{diam}(A)>0$. Then for $x\in\mathbb{R}^{d}\setminus\{0\}$
and $0<r<2^{-1}\left\Vert x\right\Vert /\mathrm{diam}(A)$
\[
C_{G}(A)\frac{r^{d-2}}{\left\Vert x\right\Vert ^{d-2}}(1-r\cdot\mathrm{diam}(A)/\left\Vert x\right\Vert )^{d-2}\leq P\left((r\cdot A+x)\cap B\neq\emptyset\right)
\]
\[
\leq C_{G}(A)\frac{r^{d-2}}{\left\Vert x\right\Vert ^{d-2}}(1+2r\cdot\mathrm{diam}(A)/\left\Vert x\right\Vert )^{d-2},
\]
where $G(x,y)=c(d)\left\Vert x-y\right\Vert ^{2-d}$ is the Green`s
function of the Brownian motion for some constant $c(d)>0$ (see Remark
\ref{rem:green fn}).
\end{lem}

\begin{proof}
See \cite[Theorem 3.33]{Peres-Morters-Broanian motion}, that $G(x,y)$
is the Green`s function of the Brownian motion. By \cite[Corollary 8.12]{Peres-Morters-Broanian motion}
and \cite[Theorem 8.27]{Peres-Morters-Broanian motion}
\[
C_{G}(r\cdot A+x)\left(\left\Vert x\right\Vert +r\cdot\mathrm{diam}(A)\right)^{2-d}\leq P\left((r\cdot A+x)\cap B\neq\emptyset\right)
\]
\begin{equation}
\leq C_{G}(r\cdot A+x)\left((\left\Vert x\right\Vert -r\cdot\mathrm{diam}(A))\right)^{2-d}.\label{eq:hitting eq 1}
\end{equation}
On the other, hand by the scaling invariance of capacity it follows
that
\begin{equation}
C_{G}(r\cdot A+x)=r^{d-2}C_{G}(A).\label{eq:hitting eq 2}
\end{equation}
We have that $\left\Vert x\right\Vert ^{-1}(1-r\cdot\mathrm{diam}(A)/\left\Vert x\right\Vert )\leq\left(\left\Vert x\right\Vert +r\cdot\mathrm{diam}(A)\right)^{-1}$
and $\left(\left\Vert x\right\Vert -r\cdot\mathrm{diam}(A)\right)^{-1}\leq\left\Vert x\right\Vert ^{-1}(1+2r\cdot\mathrm{diam}(A)/\left\Vert x\right\Vert )$
because $0<r<2^{-1}\left\Vert x\right\Vert /\mathrm{diam}(A)$. Thus
the statement follows from (\ref{eq:hitting eq 1}) and (\ref{eq:hitting eq 2}).
\end{proof}
\begin{prop}
\label{prop:general double hitting}Let $B$ be a Brownian path in
$\mathbb{R}^{d}$ for $d\geq3$ , let $0\in A\subseteq\mathbb{R}^{d}$
be a compact set such that $C_{d-2}(A)>0$. Let $x,y\in\mathbb{R}^{d}\setminus\{0\}$,
$x\neq y$ and $x_{R},y_{r}\in\mathbb{R}^{d}$ for every $r,R>0$
be such that $\lim_{R\rightarrow0}x_{R}=x$ and $\lim_{R\rightarrow0}y_{R}=y$.
Then
\[
\liminf_{R\rightarrow0}\liminf_{r\rightarrow0}\frac{P\left((R\cdot A+x_{R})\cap B\neq\emptyset,(r\cdot A+y_{r})\cap B\neq\emptyset\right)}{P((R\cdot A+x_{R})\cap B\neq\emptyset)\cdot P((r\cdot A+y_{r})\cap B\neq\emptyset)}
\]
\[
=\limsup_{R\rightarrow0}\limsup_{r\rightarrow0}\frac{P\left((R\cdot A+x_{R})\cap B\neq\emptyset,(r\cdot A+y_{r})\cap B\neq\emptyset\right)}{P((R\cdot A+x_{R})\cap B\neq\emptyset)\cdot P((r\cdot A+y_{r})\cap B\neq\emptyset)}=\frac{\left\Vert x\right\Vert ^{d-2}+\left\Vert y\right\Vert ^{d-2}}{\left\Vert x-y\right\Vert ^{d-2}}.
\]
\end{prop}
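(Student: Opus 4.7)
The plan is to imitate the proof of Proposition \ref{prop:ball double hitting}, but with Lemma \ref{lem:hitting prob of compact sets} replacing Lemma \ref{lem:hitting prob of balls}, and to verify that the shifts $x_R\to x$, $y_r\to y$ do not spoil the asymptotics. Denote the Brownian motion by $W$ with $W(0)=0$, let $P_z$ be the law when $W$ starts at $z$, and introduce the hitting times
\[
T_1 = \inf\{t\geq 0: W(t)\in R\cdot A + x_R\},\qquad T_2 = \inf\{t\geq 0: W(t)\in r\cdot A + y_r\}.
\]
For $r,R$ small enough the two target sets are disjoint (their diameters are $R\cdot\mathrm{diam}(A)$ and $r\cdot\mathrm{diam}(A)$, while their centres are at distance close to $\|x-y\|>0$) and also bounded away from $0$, so Lemma \ref{lem:hitting prob of compact sets} applies to both. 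The decomposition $\{B\cap(R\cdot A+x_R)\ne\emptyset,\,B\cap(r\cdot A+y_r)\ne\emptyset\}=\{T_1\le T_2<\infty\}\cup\{T_2<T_1<\infty\}$ will reduce the problem to estimating the two summands.

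First I would control $P(T_1\le T_2<\infty)$. By the strong Markov property at $T_1$ and by spatial homogeneity,
\[
P(T_1\le T_2<\infty) = E\bigl[\,I_{\{T_1<\infty\}}\,P_{W(T_1)}\bigl((r\cdot A+y_r)\cap B\ne\emptyset\bigr)\bigr].
\]
For $z\in R\cdot A+x_R$ we have $\|z-y_r\|\in[\|x_R-y_r\|-R\,\mathrm{diam}(A),\,\|x_R-y_r\|+R\,\mathrm{diam}(A)]$, hence translating $r\cdot A+y_r$ by $-z$ and applying Lemma \ref{lem:hitting prob of compact sets},
\[
P_{z}\bigl((r\cdot A+y_r)\cap B\ne\emptyset\bigr) = C_G(A)\cdot\frac{r^{d-2}}{\|z-y_r\|^{d-2}}\bigl(1+O(r)\bigr),
\]
uniformly in such $z$. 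Since $\|z-y_r\|\to\|x-y\|$ as $R,r\to 0$, an upper and lower bound of the form $C_G(A)\,r^{d-2}\|x-y\|^{-(d-2)}(1+o(1))$ comes out of the expectation, and pairing this with Lemma \ref{lem:hitting prob of compact sets} applied to the single-set probabilities in the denominator gives
\[
\frac{P(T_1\le T_2<\infty)}{P((R\cdot A+x_R)\cap B\ne\emptyset)\cdot P((r\cdot A+y_r)\cap B\ne\emptyset)}\xrightarrow[r\to 0,\,R\to 0]{}\frac{\|y\|^{d-2}}{\|x-y\|^{d-2}},
\]
the factor $C_G(A)$ from the numerator cancelling one of the two $C_G(A)$ factors from the denominator, and the $\|x_R\|^{-(d-2)}$ from the denominator being neutralised by an $\|x\|^{d-2}\to\|y\|^{d-2}\cdot(\|x\|/\|y\|)^{d-2}$… more cleanly: the denominator contributes $C_G(A)^2\cdot R^{d-2}r^{d-2}/(\|x\|^{d-2}\|y\|^{d-2})$ up to $1+o(1)$, the numerator $C_G(A)\cdot R^{d-2}/\|x\|^{d-2}\cdot C_G(A)\cdot r^{d-2}/\|x-y\|^{d-2}\cdot(1+o(1))$, and the ratio is $\|y\|^{d-2}/\|x-y\|^{d-2}(1+o(1))$. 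A symmetric argument for $P(T_2<T_1<\infty)$ yields $\|x\|^{d-2}/\|x-y\|^{d-2}$, and summing the two limits gives the asserted value $(\|x\|^{d-2}+\|y\|^{d-2})/\|x-y\|^{d-2}$ for both $\liminf$ and $\limsup$.

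The main obstacle is not any single step but keeping uniform control of the error terms. I need the $O(r)$ multiplicative error in Lemma \ref{lem:hitting prob of compact sets} to be uniform over $z=W(T_1)\in R\cdot A + x_R$ and the conditional expectation of $\|z-y_r\|^{-(d-2)}$ to converge to $\|x-y\|^{-(d-2)}$; both follow because $\|z-y_r\|$ is squeezed between $\|x_R-y_r\|\pm R\,\mathrm{diam}(A)$ and $\|x-y\|>0$. A small additional point is that the order of limits forces me to take $r\to 0$ with $R$ fixed first: here the $R$-dependent quantities $\|x_R-y_r\|$, $\|x_R\|$, $W(T_1)$ play the role of constants, Lemma \ref{lem:hitting prob of compact sets} gives exact asymptotics in $r$, and only after computing the $r$-limit do I let $R\to 0$, at which point $x_R\to x$ and $W(T_1)$, lying within $R\,\mathrm{diam}(A)$ of $x_R$, is forced to $x$ as well. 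Because the bounds obtained from Lemma \ref{lem:hitting prob of compact sets} are two-sided with matching leading constant $C_G(A)$ (which is strictly positive precisely by the hypothesis $C_{d-2}(A)>0$), the $\liminf$ and $\limsup$ collapse to the same value, completing the proof.
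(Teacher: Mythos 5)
Your proposal is correct and follows exactly the route the paper takes: the paper's own ``proof'' of this proposition is literally the one-line remark that it goes as in Proposition \ref{prop:ball double hitting} with Lemma \ref{lem:hitting prob of balls} replaced by Lemma \ref{lem:hitting prob of compact sets}, which is what you carry out, including the correct order of limits ($r\to0$ first at fixed $R$) and the uniform control of $\|z-y_r\|$ over $z\in R\cdot A+x_R$. One small imprecision: your displayed strong Markov identity is really only the inequality $P(T_1\le T_2<\infty)\le E\bigl[I_{\{T_1<\infty\}}P_{W(T_1)}\bigl((r\cdot A+y_r)\cap B\ne\emptyset\bigr)\bigr]$ (the right side also counts paths with $T_2<T_1$ that revisit $r\cdot A+y_r$ later), so for the matching lower bound you need the step from the model proof, namely $P(T_1<\infty,\,T_1\le T_2)\ge P(T_1<\infty)-P(\text{hit both})=P(T_1<\infty)(1-O(r^{d-2}))$ before applying the Markov property.
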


For $x\in\mathbb{R}^{d}\setminus\{0\}$ let $Q_{k}(x)$ be the dyadic
cube $\left([\frac{i_{1}}{2^{k}},\frac{i_{1}+1}{2^{k}})\times\dots\times[\frac{i_{d}}{2^{k}},\frac{i_{d}+1}{2^{k}})\right)\setminus\{0\}$
for $i_{1},\dots,i_{d}\in\mathbb{Z}$ such that $x\in Q_{k}(x)$ and
let $\mathcal{Q}_{k}=\left\{ Q_{k}(x):x\in\mathbb{R}^{d}\right\} $.
\begin{prop}
\label{prop:double box hitting}Let $B$ be a Brownian path in $\mathbb{R}^{d}$
for $d\geq3$. Then
\[
\liminf_{n\rightarrow\infty}\liminf_{k\rightarrow\infty}\frac{P\left(B\cap Q_{n}(x)\neq\emptyset,B\cap Q_{k}(y)\neq\emptyset\right)}{P(B\cap Q_{n}(x)\neq\emptyset)\cdot P(B\cap Q_{k}(y)\neq\emptyset)}
\]
\[
=\limsup_{n\rightarrow\infty}\limsup_{k\rightarrow\infty}\frac{P\left(B\cap Q_{n}(x)\neq\emptyset,B\cap Q_{k}(y)\neq\emptyset\right)}{P(B\cap Q_{n}(x)\neq\emptyset)\cdot P(B\cap Q_{k}(y)\neq\emptyset)}=\frac{\left\Vert x\right\Vert ^{d-2}+\left\Vert y\right\Vert ^{d-2}}{\left\Vert x-y\right\Vert ^{d-2}}.
\]
\end{prop}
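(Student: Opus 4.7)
The plan is to deduce Proposition \ref{prop:double box hitting} as a direct corollary of Proposition \ref{prop:general double hitting} by recognizing that every dyadic cube is a translate-and-scale of one fixed compact template containing the origin.

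First I would set $A=[0,1]^{d}$. Then $0\in A$, $\operatorname{diam}(A)=\sqrt d$, and $C_{d-2}(A)>0$, since the uniform probability measure on $A$ has finite $(d-2)$-energy (the integral $\int_{A}\|u-v\|^{-(d-2)}\,\mathrm d u$ is dominated by $\int_{B(0,\sqrt d)}\|u\|^{-(d-2)}\,\mathrm d u<\infty$ because $d-2<d$). Write $a_{n}=a_{n}(x)$ for the ``lower-left'' corner of the dyadic cube $Q_{n}(x)$, so that $Q_{n}(x)=2^{-n}[0,1)^{d}+a_{n}$ and $\overline{Q_{n}(x)}=2^{-n}A+a_{n}$. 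Since $x\in Q_{n}(x)$, we have $\|a_{n}-x\|\le 2^{-n}\sqrt d\to 0$, and analogously $b_{k}=a_{k}(y)\to y$ as $k\to\infty$. These are exactly the sequences $x_{R}$, $y_{r}$ with $R=2^{-n}$, $r=2^{-k}$ required by Proposition~\ref{prop:general double hitting}.

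The only subtlety is that $Q_{n}(x)$ is a half-open cube while the result of the previous proposition is phrased for the closed scaled copy $2^{-n}A+a_{n}$. I would dispose of this by observing that
\[
P(B\cap Q_{n}(x)\ne\emptyset)=P\bigl(B\cap\overline{Q_{n}(x)}\ne\emptyset\bigr).
\]
Indeed, if the Brownian motion hits a point $p\in\partial\overline{Q_{n}(x)}\setminus Q_{n}(x)$ at some time $t$, then by continuity and the fact that $d\ge 3$ Brownian motion almost surely does not stay on any fixed $(d-1)$-dimensional hyperplane on a set of times of positive measure, in every neighbourhood of $t$ the path enters both open half-spaces determined by the corresponding face of the cube; in particular it enters the open cube (hence $Q_{n}(x)$) immediately before or after $t$. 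So the events $\{B\cap Q_{n}(x)\ne\emptyset\}$ and $\{B\cap\overline{Q_{n}(x)}\ne\emptyset\}$ coincide up to a null set. The same reasoning applies simultaneously to the joint hitting event.

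With that reduction in place, Proposition~\ref{prop:general double hitting} applied to the compact set $A$, the points $x\ne y$ in $\mathbb R^{d}\setminus\{0\}$, and the approximating sequences $x_{R}=a_{n}$, $y_{r}=b_{k}$ yields
\[
\lim\!\sup_{n\to\infty}\lim\!\sup_{k\to\infty}\frac{P(B\cap\overline{Q_{n}(x)}\ne\emptyset,\,B\cap\overline{Q_{k}(y)}\ne\emptyset)}{P(B\cap\overline{Q_{n}(x)}\ne\emptyset)\cdot P(B\cap\overline{Q_{k}(y)}\ne\emptyset)}=\frac{\|x\|^{d-2}+\|y\|^{d-2}}{\|x-y\|^{d-2}},
\]
and the same for $\liminf$. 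Combined with the equality of hitting probabilities for half-open and closed cubes, this is exactly the statement of Proposition~\ref{prop:double box hitting}. The only real work here is the open/closed identification of hitting events; once that is dispatched, the proposition is essentially a reparametrisation of the previous one.
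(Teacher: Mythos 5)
Your proposal is correct and follows essentially the same route as the paper, which simply records Proposition \ref{prop:double box hitting} as a special case of Proposition \ref{prop:general double hitting} applied to the cube template $A$ with $x_{R},y_{r}$ the corners of $Q_{n}(x),Q_{k}(y)$. The one piece of extra work you supply, the identification of the hitting events for the half-open and closed cubes, is correct in substance, though at edge and corner points the "enters both open half-spaces of the face" justification should be replaced by the standard cone argument (Brownian motion started at a boundary point of a convex body with nonempty interior immediately enters the interior, by Blumenthal's zero-one law and scaling applied to an open cone contained in the cube near that point).
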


The proof of Proposition \ref{prop:general double hitting} goes similarly
to the proof of Proposition \ref{prop:ball double hitting} replacing
the use of Lemma \ref{lem:hitting prob of balls} by Lemma \ref{lem:hitting prob of compact sets}.
\begin{rem}
\label{rem:boundary dont count}Proposition \ref{prop:double box hitting}
is a special case of Proposition \ref{prop:general double hitting}.
Note that it is not an issue that the cubes are not compact because
the hitting probability does not depend on whether the boundary of
the cube is in the set or not. It can, for example, be deduced from
Lemma \ref{lem:hitting prob of compact sets} by approximating the
cube by compact cubes from inside and outside.
\end{rem}

\begin{thm}
\noindent \label{thm:F exist for Brownian}Let $B$ be a Brownian
path in $\mathbb{R}^{d}$ for $d\geq3$. Let $\mathcal{Q}_{k}$ be
as in Example \ref{exa:example Q_k} then
\[
F(x,y)=\underline{F}(x,y)=\overline{F}(x,y)=\frac{\left\Vert x\right\Vert ^{d-2}+\left\Vert y\right\Vert ^{d-2}}{\left\Vert x-y\right\Vert ^{d-2}}
\]
for $x,y\in\mathbb{R}^{d}\setminus\{0\}$, $x\neq y$.
\end{thm}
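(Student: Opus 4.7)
Fix $x,y\in\mathbb{R}^{d}\setminus\{0\}$ with $x\neq y$, and set $L:=\tfrac{\Vert x\Vert^{d-2}+\Vert y\Vert^{d-2}}{\Vert x-y\Vert^{d-2}}$. Since $\underline{F}(x,y)=\lim_{N\to\infty}\inf_{k,n\geq N}F_{k,n}(x,y)$ and $\overline{F}(x,y)=\lim_{N\to\infty}\sup_{k,n\geq N}F_{k,n}(x,y)$, it suffices to establish the \emph{joint} limit
\[
\lim_{\min(k,n)\to\infty}F_{k,n}(x,y)\;=\;L,
\]
i.e.\ that for every $\varepsilon>0$ there is an $N$ with $|F_{k,n}(x,y)-L|<\varepsilon$ for all $k,n\geq N$ simultaneously. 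This is strictly stronger than the iterated limit in Proposition \ref{prop:double box hitting}, and it directly forces $\underline{F}(x,y)=\overline{F}(x,y)=L$.

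\textbf{Step 1: reduce to closed, corner-anchored cubes.} For $k$ large enough that $0\notin\overline{Q_{k}(x)}$, write $Q_{k}(x)=x_{k}+[0,2^{-k})^{d}$ with $x_{k}\in 2^{-k}\mathbb{Z}^{d}$ and $\Vert x_{k}-x\Vert\leq\sqrt{d}\cdot 2^{-k}\to 0$; similarly $Q_{n}(y)=y_{n}+[0,2^{-n})^{d}$ with $y_{n}\to y$. Because the Brownian motion starts at $0\notin\overline{Q_{k}(x)}$ and, upon hitting $\partial Q_{k}(x)$, enters $\mathrm{int}(Q_{k}(x))$ almost surely, the half-openness is harmless: $P(B\cap Q_{k}(x)\neq\emptyset)=P(B\cap\overline{Q_{k}(x)}\neq\emptyset)$ and likewise for the joint event. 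Writing $A=[0,1]^{d}$, $R=2^{-k}$, $r=2^{-n}$, this identifies $F_{k,n}(x,y)$ with the ratio appearing in Proposition \ref{prop:general double hitting} for the compact set $A$ and approximating centres $x_{R}=x_{k}$, $y_{r}=y_{n}$.

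\textbf{Step 2: upgrade iterated limits to a joint limit.} The plan here is to rerun the proof of Proposition \ref{prop:ball double hitting} with Lemma \ref{lem:hitting prob of compact sets} in place of Lemma \ref{lem:hitting prob of balls} (as in the proof of Proposition \ref{prop:general double hitting}) and to observe that the bounds produced there are honest \emph{joint} two-variable estimates. Specifically, the Markov-property argument combined with Lemma \ref{lem:hitting prob of compact sets} yields, for all sufficiently small $R,r$,
\[
\frac{P\bigl((RA+x_{k})\cap B\neq\emptyset,\,(rA+y_{n})\cap B\neq\emptyset\bigr)}{P((RA+x_{k})\cap B\neq\emptyset)\cdot P((rA+y_{n})\cap B\neq\emptyset)}\;\leq\;U(R,r,x_{k},y_{n}),
\]
with an analogous lower bound $\mathcal{L}(R,r,x_{k},y_{n})$. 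Both $U$ and $\mathcal{L}$ are continuous functions of $(R,r,x_{k},y_{n})$ built only from $(\Vert x_{k}-y_{n}\Vert\pm R\pm r)^{2-d}$ and similar terms together with the capacity factor $C_{G}(A)$ that cancels between numerator and denominator; crucially, both $U$ and $\mathcal{L}$ converge to $L$ as $(R,r,x_{k},y_{n})\to(0,0,x,y)$ jointly, because $\Vert x-y\Vert>0$ keeps every denominator bounded away from $0$. Hence for any $\varepsilon>0$ one can choose $N$ so large that $|F_{k,n}(x,y)-L|<\varepsilon$ for all $k,n\geq N$.

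\textbf{Step 3 and main obstacle.} Step 2 immediately gives $\overline{F}(x,y)\leq L+\varepsilon$ and $\underline{F}(x,y)\geq L-\varepsilon$ for every $\varepsilon>0$, completing the proof. The main technical point — and the place where a careful rewriting of the cited arguments is required — is verifying in Step 2 that the inequalities behind Proposition \ref{prop:ball double hitting} (in particular (\ref{eq:hits both}) together with its lower-bound companion) really are uniform joint estimates in $(R,r)$, and that shifting the ``centres'' from $x,y$ to the corners $x_{k},y_{n}$ only costs $O(R)+O(r)$ perturbations in $\Vert x_{k}-y_{n}\Vert$, which vanish uniformly since $\Vert x-y\Vert$ is strictly positive. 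No further input is needed beyond the Brownian hitting estimates already established earlier in the section.
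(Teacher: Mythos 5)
Your proposal is correct and follows essentially the same route as the paper: Theorem \ref{thm:F exist for Brownian} is obtained by rerunning the Markov-property argument of Proposition \ref{prop:ball double hitting} with Lemma \ref{lem:hitting prob of compact sets} in place of Lemma \ref{lem:hitting prob of balls} (this is exactly Proposition \ref{prop:general double hitting} specialised to dyadic cubes, Proposition \ref{prop:double box hitting}). Your one genuine addition is the explicit observation that the definition of $\underline{F}$ and $\overline{F}$ via $\inf_{n,k\geq N}$ and $\sup_{n,k\geq N}$ requires a \emph{joint} limit in $(k,n)$, whereas Propositions \ref{prop:ball double hitting}--\ref{prop:double box hitting} are stated as iterated limits; as you note, the bounds such as (\ref{eq:hits both}) and (\ref{eq:egyik ,low})--(\ref{eq:masik low}) are in fact uniform two-variable estimates whose ratio converges jointly to $\frac{\left\Vert x\right\Vert ^{d-2}+\left\Vert y\right\Vert ^{d-2}}{\left\Vert x-y\right\Vert ^{d-2}}$ because $\left\Vert x-y\right\Vert>0$ keeps all denominators bounded away from zero, so the argument does deliver what the theorem needs.
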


Theorem \ref{thm:F exist for Brownian} is a reformulation of Proposition
\ref{prop:double box hitting}.

\subsection{\label{subsec:Conditions-on}Conditions on $\mathcal{Q}_{k}^{i}$}

For the rest of Section \ref{sec:Brownian-path} let $\mathcal{Q}_{k}$
be as in Example \ref{exa:example Q_k}, $\mathcal{Q}_{k}^{i}$ be
as in Example \ref{exa:example Q_k^i} and let $\varphi(r)=r^{-(d-2)}$.
In Section \ref{subsec:Conditions-on} we show that the assumptions
of Section \ref{subsec:Special-assumptions} hold for the sequence
$\mathcal{Q}_{k}^{i}$ for sufficient constants.

We have that (\ref{eq:Kernel restriction}) holds for every $\delta>0$
for sufficient $c_{2}$ depending on $\delta$ and for $c_{3}=0$.
We have that (\ref{eq:phi =00003Dinfty}) holds. Clearly (\ref{eq:diameter_goes_to0}),
(\ref{eq:unique-subset}), (\ref{eq:psoitivity of probability}),
(\ref{eq:same size}) hold for both $\mathcal{Q}_{k}$ and $\mathcal{Q}_{k}^{i}$
for every $i\in\mathbb{N}$ with $M$=1. We have that $Q\cap S=\emptyset$
whenever $Q\neq S$ and $Q,S\in\mathcal{Q}_{k}$ or $Q,S\in\mathcal{Q}_{k}^{i}$.
It is easy to see that (\ref{eq:bounded sundivision}) hold for $\mathcal{Q}_{k}^{i}$
for every $i\in\mathbb{N}$ for every $\delta>0$ for constant $M_{\delta}>0$
that only depends on $\delta$ and $d$.

It follows from Lemma \ref{lem:hitting prob of compact sets}, Remark
\ref{rem:boundary dont count} and the scaling invariance of capacity
(\ref{eq:hitting eq 2}) that (\ref{eq:lower hitting prob}) holds
for $\mathcal{Q}_{k}^{i}$ for some sufficient constant $a^{i}<\infty$.
\begin{lem}
\label{prop: c ineq holds}Let $B$ be a Brownian path in $\mathbb{R}^{d}$
for $d\geq3$ and let $i$ be a positive integer. Then there exist
$0<\delta=\delta^{i}<1$ and $0<c=c^{i}<\infty$ such that (\ref{eq:capacity-independence})
holds for $\mathcal{Q}_{k}^{i}$ $(k\in\mathbb{N},k\geq i)$ and $c$
and $\delta$ depends only on $i$ and $d$.
\end{lem}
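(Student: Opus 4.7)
The plan is to split the joint hitting event according to which cube is hit first, apply the strong Markov property of Brownian motion, and then control each single-cube hitting probability via Lemma~\ref{lem:hitting prob of compact sets} applied to the unit cube $C=[0,1)^{d}$.

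First I would fix $\delta$ to be any value in $(0,1/2)$, say $\delta=1/4$. Let $T_{\bar Q},T_{\bar S}$ denote the first hitting times of $\bar Q,\bar S$. Splitting the event $\{B\cap Q\ne\emptyset,B\cap S\ne\emptyset\}$ according to whether $T_{\bar Q}\le T_{\bar S}$ or $T_{\bar S}<T_{\bar Q}$, and applying the strong Markov property at the earlier hitting time, gives
\[
P(B\cap Q\ne\emptyset,B\cap S\ne\emptyset)\le P(B\cap Q\ne\emptyset)\sup_{z\in\bar Q}P_{z}(B\cap S\ne\emptyset)+P(B\cap S\ne\emptyset)\sup_{z\in\bar S}P_{z}(B\cap Q\ne\emptyset).
\]

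Next I would estimate $\sup_{z\in\bar Q}P_{z}(B\cap S\ne\emptyset)$. Writing $S=2^{-n}C+y_{0}$ with corner $y_{0}\in\bar S$, translation invariance yields $P_{z}(B\cap S\ne\emptyset)=P(B\cap(2^{-n}C+(y_{0}-z))\ne\emptyset)$, to which Lemma~\ref{lem:hitting prob of compact sets} applies as soon as $2\cdot 2^{-n}\sqrt d<\|y_{0}-z\|$. This is guaranteed by $\|y_{0}-z\|\ge\mathrm{dist}(Q,S)$ together with the hypothesis $\mathrm{diam}(S)<\delta\cdot\mathrm{dist}(Q,S)$ and $\delta<1/2$. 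The lemma then produces
\[
P_{z}(B\cap S\ne\emptyset)\le C_{G}(C)\cdot\frac{(2^{-n})^{d-2}}{\mathrm{dist}(Q,S)^{d-2}}\cdot(1+2\delta)^{d-2},
\]
with a symmetric bound for $\sup_{z\in\bar S}P_{z}(B\cap Q\ne\emptyset)$.

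To close the argument I would derive a matching lower bound $P(B\cap S\ne\emptyset)\ge c_{1}(i,d)\cdot(2^{-n})^{d-2}$ from the other direction of Lemma~\ref{lem:hitting prob of compact sets}, exploiting that every $y\in S$ lies in the annulus $\|y\|\in[2^{-i},2^{i}\sqrt d]$. Dividing the Markov inequality by $P(B\cap Q)\cdot P(B\cap S)$ and using the symmetric versions of both bounds produces (\ref{eq:capacity-independence}) with a constant $c^{i}=2C_{G}(C)(1+2\delta)^{d-2}/c_{1}(i,d)$ depending only on $i$ and $d$.

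The main obstacle is the uniform lower bound $P(B\cap S\ne\emptyset)\ge c_{1}(i,d)\cdot(2^{-n})^{d-2}$ for all $n\ge i$. Once $2^{-n}\sqrt d\le\tfrac12\|y_{0}\|$, Lemma~\ref{lem:hitting prob of compact sets} supplies it directly. The finitely many initial levels $n$ where the cube is comparable in size to the whole annulus need a separate treatment: one can fit a ball of a definite radius (depending only on $i,d$) inside $S$ and invoke Lemma~\ref{lem:hitting prob of balls} to get a positive hitting probability bounded below by a constant, which after enlarging $c_{1}(i,d)$ absorbs into the same form, since $(2^{-n})^{d-2}$ stays bounded on that finite range of levels.
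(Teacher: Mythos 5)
Your proposal is correct and follows essentially the same route as the paper: split the joint hitting event according to which cube is hit first, apply the strong Markov property, and control the resulting factors with the compact-set hitting estimate of Lemma~\ref{lem:hitting prob of compact sets}. The only differences are minor: the paper disposes of the finitely many coarse levels by shrinking $\delta^{i}$ until the separation hypothesis becomes vacuous there, whereas you keep $\delta=1/4$ and prove the lower bound $P(B\cap S\neq\emptyset)\geq c_{1}(i,d)\,2^{-n(d-2)}$ directly via an inscribed ball (which in fact works uniformly for all $n\geq i$, so no case split is needed); and stopping at $T_{\bar{Q}}$ yields $P(B\cap\bar{Q}\neq\emptyset)$ rather than $P(B\cap Q\neq\emptyset)$ on the right-hand side, which is harmless since the same lemma applied to $[0,1]^{d}$ bounds it by a constant times $P(B\cap Q\neq\emptyset)$.
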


\begin{proof}
Let $W(t)$, $P_{x}$ and $T_{x,r}$ be as in the proof of Proposition
\ref{prop:ball double hitting}. Let $A=[0,1)^{d}.$ Let $N\in\mathbb{N}$
be large enough that
\begin{equation}
2^{-N}\leq2^{-i}/(2\sqrt{d}).\label{eq:majdkelkl}
\end{equation}
 Let $\delta\leq1/(2\sqrt{d})$, $k,n\geq\max\{i,N\}$ and $x\in Q\in\mathcal{Q}_{k}^{i}$,
$y\in S\in\mathcal{Q}_{n}^{i}$ such that $\max\left\{ \mathrm{\mathrm{diam}}(Q),\mathrm{\mathrm{diam}}(S)\right\} <\delta\cdot\mathrm{dist}(Q,S)$.
Let $r=2^{-k}$ and $R=2^{-n}$. Then
\[
\max(r,R)\leq\max\left\{ \mathrm{\mathrm{diam}}(Q),\mathrm{\mathrm{diam}}(S)\right\} <(2\sqrt{d})^{-1}\cdot\mathrm{dist}(Q,S)\leq\left\Vert z-y\right\Vert /(2\sqrt{d})
\]
for every $z\in\overline{Q}$. So
\[
1+2\frac{R}{\left\Vert z-y\right\Vert }\mathrm{diam}(A)\leq2
\]
for every $z\in\overline{Q}$ and. Thus by Lemma \ref{lem:hitting prob of compact sets}
and Remark \ref{rem:boundary dont count}
\begin{equation}
P_{z}\left(S\cap B\neq\emptyset\right)\leq a_{0}\frac{R^{d-2}}{\left\Vert z-y\right\Vert ^{d-2}}\leq a_{0}\frac{R^{d-2}}{\mathrm{dist}(Q,S)^{d-2}}\label{eq:zbol}
\end{equation}
for $a_{0}=C_{G}(A)\cdot2^{d-2}$ for every $z\in\overline{Q}$.

By (\ref{eq:majdkelkl}) it follows that
\[
r\leq2^{-i}/(2\sqrt{d})\leq\frac{\left\Vert x\right\Vert }{2\mathrm{diam}(A)}
\]
because $x\in Q\in\mathcal{Q}_{k}^{i}$. Hence by Lemma \ref{lem:hitting prob of compact sets}
\begin{equation}
C_{G}(A)\frac{r^{d-2}}{\left\Vert x\right\Vert ^{d-2}}2^{2-d}\leq P\left(Q\cap B\neq\emptyset\right)\leq C_{G}(A)\frac{r^{d-2}}{\left\Vert x\right\Vert ^{d-2}}2^{d-2}.\label{eq:xbe}
\end{equation}
Similarly
\begin{equation}
C_{G}(A)\frac{R^{d-2}}{\left\Vert y\right\Vert ^{d-2}}2^{2-d}\leq P\left(S\cap B\neq\emptyset\right)\leq C_{G}(A)\frac{R^{d-2}}{\left\Vert y\right\Vert ^{d-2}}2^{d-2}.\label{eq:yba}
\end{equation}

Then similarly to (\ref{eq:kell egy jel}) it follows from (\ref{eq:xbe})
and (\ref{eq:zbol}) that
\[
P(W\mathrm{\,hits\,}Q\mathrm{\,and\,after\,that\,}W\mathrm{\,hits\,}S)\leq b\cdot\frac{r^{d-2}}{\left\Vert x\right\Vert ^{d-2}}R^{d-2}\mathrm{dist}(Q,S)^{-(d-2)}
\]
\[
=\left\Vert y\right\Vert ^{d-2}b\cdot\frac{r^{d-2}}{\left\Vert x\right\Vert ^{d-2}}\cdot\frac{R^{d-2}}{\left\Vert y\right\Vert ^{d-2}}\cdot\mathrm{dist}(Q,S)^{-(d-2)}\leq(\sqrt{d}2^{i})^{(d-2)}b\frac{r^{d-2}}{\left\Vert x\right\Vert ^{d-2}}\cdot\frac{R^{d-2}}{\left\Vert y\right\Vert ^{d-2}}\cdot\mathrm{dist}(Q,S)^{-(d-2)}
\]
for $b=C_{G}(A)2^{d-2}a_{0}$. Similarly we can show that
\[
P(W\mathrm{\,hits\,}S\mathrm{\,and\,after\,that\,}W\mathrm{\,hits\,}Q)\leq(\sqrt{d}2^{i})^{(d-2)}b\frac{r^{d-2}}{\left\Vert x\right\Vert ^{d-2}}\cdot\frac{R^{d-2}}{\left\Vert y\right\Vert ^{d-2}}\cdot\mathrm{dist}(Q,S)^{-(d-2)}.
\]
Thus
\[
P(Q\cap B\ne\emptyset\,and\,S\cap B\ne\emptyset)
\]
\[
\leq P(W\mathrm{\,hits\,}Q\mathrm{\,and\,after\,that\,}W\mathrm{\,hits\,}D)+P(W\mathrm{\,hits\,}D\mathrm{\,and\,after\,that\,}W\mathrm{\,hits\,}Q)
\]
\[
\leq2(\sqrt{d}2^{i})^{(d-2)}b\frac{r^{d-2}}{\left\Vert x\right\Vert ^{d-2}}\cdot\frac{R^{d-2}}{\left\Vert y\right\Vert ^{d-2}}\cdot\mathrm{dist}(Q,S)^{-(d-2)}.
\]
Hence it follows from (\ref{eq:xbe}) and (\ref{eq:yba}) that
\[
P(Q\cap B\ne\emptyset\,and\,S\cap B\ne\emptyset)\leq cP(Q\cap B\ne\emptyset)P(S\cap B\ne\emptyset)(\mathrm{dist}(Q,S)^{-(d-2)}
\]
for $c=2(\sqrt{d}2^{i})^{(d-2)}b(2^{d-2})^{2}/C_{G}(A)^{2}$.

So (\ref{eq:capacity-independence}) holds if $k,n\geq\max\{i,N\}$
and $\delta\leq1/(2\sqrt{d})$. We can choose $\delta>0$ to be small
enough such that if either $k<\max\{i,N\}$ or $n<\max\{i,N\}$ then
$\max\left\{ \mathrm{diam}(Q),\mathrm{diam}(S)\right\} <\delta\cdot\mathrm{dist}(Q,S)$
does not hold for every pair of $Q\in\mathcal{Q}_{k}^{i}$ and $D\in\mathcal{Q}_{n}^{i}$.
\end{proof}

\subsection{\label{subsec:Existance-of-the-cond meas on Brownian path}Existence
of the conditional measure on the Brownian path}

We show Theorem \ref{thm:Brownian cond measure: Main} in this section.
\begin{lem}
\label{lem:degeneration lem for Brownian}If $A\subseteq\mathbb{R}^{d}\setminus\{0\}$
is a compact set and $C_{d-2}(A)=0$ then $P(B\cap A\neq\emptyset)=0$.
\end{lem}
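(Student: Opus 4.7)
The plan is to reduce the claim to the classical capacity-hitting estimate for Brownian motion (Proposition \ref{thm:intersection capacity equivalence}), which says $P(B\cap A\neq\emptyset)\leq 2C_{F}(A)$ where $F(x,y)=(\|x\|^{d-2}+\|y\|^{d-2})/\|x-y\|^{d-2}$. So it suffices to show that $C_{d-2}(A)=0$ forces $C_{F}(A)=0$.

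First, because $A\subseteq\mathbb{R}^{d}\setminus\{0\}$ is compact, there exist $0<r\leq R<\infty$ with $r\leq\|x\|\leq R$ for every $x\in A$. Consequently, for all $x,y\in A$,
\[
F(x,y)=\frac{\|x\|^{d-2}+\|y\|^{d-2}}{\|x-y\|^{d-2}}\leq\frac{2R^{d-2}}{\|x-y\|^{d-2}}=2R^{d-2}\,\varphi(x,y),
\]
with $\varphi(r)=r^{-(d-2)}$. Thus for any Borel probability measure $\tau$ supported on $A$ we have $I_{F}(\tau)\leq 2R^{d-2}\,I_{d-2}(\tau)$, which yields $C_{F}(A)\leq 2R^{d-2}\,C_{d-2}(A)$.

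Now assume $C_{d-2}(A)=0$. The previous inequality gives $C_{F}(A)=0$, and then Proposition \ref{thm:intersection capacity equivalence} gives $P(B\cap A\neq\emptyset)\leq 2C_{F}(A)=0$, as required.

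There is no serious obstacle here; the only point worth checking is that Proposition \ref{thm:intersection capacity equivalence} is applicable, which it is because $A$ is a compact subset of $\mathbb{R}^{d}\setminus\{0\}$. The comparability of $F$ and $\varphi$ on sets bounded away from the origin is the essential (and straightforward) observation.
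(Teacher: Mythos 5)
Your overall strategy is exactly the paper's: the paper disposes of this lemma in one line as a direct corollary of Proposition \ref{thm:intersection capacity equivalence}, and reducing $C_{d-2}(A)=0$ to $C_{F}(A)=0$ via the comparability of $F$ and $\varphi$ on a compact set bounded away from the origin is the intended content. However, your comparison runs in the wrong direction. From $F(x,y)\leq2R^{d-2}\varphi(x,y)$ you get $I_{F}(\tau)\leq2R^{d-2}I_{d-2}(\tau)$, and since the capacity is a supremum of \emph{reciprocals} of energies, this yields
\[
C_{F}(A)\geq\left(2R^{d-2}\right)^{-1}C_{d-2}(A),
\]
a \emph{lower} bound on $C_{F}(A)$ --- which is vacuous when $C_{d-2}(A)=0$ and certainly does not give the inequality $C_{F}(A)\leq2R^{d-2}C_{d-2}(A)$ that you assert.

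The step you actually need is the reverse comparison, and it comes from the lower bound $\left\Vert x\right\Vert \geq r$ rather than the upper bound $\left\Vert x\right\Vert \leq R$: for $x,y\in A$,
\[
F(x,y)=\frac{\left\Vert x\right\Vert ^{d-2}+\left\Vert y\right\Vert ^{d-2}}{\left\Vert x-y\right\Vert ^{d-2}}\geq\frac{2r^{d-2}}{\left\Vert x-y\right\Vert ^{d-2}}=2r^{d-2}\varphi(x,y).
\]
Hence $I_{F}(\tau)\geq2r^{d-2}I_{d-2}(\tau)=\infty$ for every Borel probability measure $\tau$ on $A$ (since $C_{d-2}(A)=0$ means every such $\tau$ has infinite $(d-2)$-energy), so $C_{F}(A)=0$, and Proposition \ref{thm:intersection capacity equivalence} finishes the proof. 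With this one-line correction your argument is complete and coincides with the paper's.
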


Lemma \ref{lem:degeneration lem for Brownian} is a direct corollary
of Proposition \ref{thm:intersection capacity equivalence} \\

\selectlanguage{english}%
\noindent \textit{Proof of Theorem }\foreignlanguage{british}{\ref{thm:Brownian cond measure: Main}.
In Section \ref{subsec:Conditions-on} we establish that the assumptions
of Section \ref{subsec:Special-assumptions} hold for sufficient constants
$c_{2}^{i}<\infty$, $c_{3}=0$, $0<\delta^{i}<1$, $M_{\delta}^{i}<\infty$,
$M=1$, $a^{i}<\infty$ and $0<c^{i}<\infty$ for the sequence $\mathcal{Q}_{k}^{i}$.
We have that $\mathcal{Q}_{k}^{i}\subseteq\mathcal{Q}_{k}^{j}\subseteq\mathcal{Q}_{k}$
for $i\leq j\leq k$. It is easy to see that $X_{0}^{\infty}=\cup_{i\in\mathbb{N}}X_{0}^{i}=\mathbb{R}^{d}\setminus\{0\}$.
In Theorem \ref{thm:F exist for Brownian} we prove that
\[
F(x,y)=\underline{F}(x,y)=\overline{F}(x,y)=\frac{\left\Vert x\right\Vert ^{d-2}+\left\Vert y\right\Vert ^{d-2}}{\left\Vert x-y\right\Vert ^{d-2}}.
\]
Thus along with Lemma \ref{lem:degeneration lem for Brownian} the
conditions of Theorem \ref{thm:increasing double int} are satisfied.
Hence the statement follows from Theorem \ref{thm:increasing double int}.$\hfill\square$}
\selectlanguage{british}%

\subsection{\label{subsec:Probability-of-non Brown}Probability of non-extinction
of the conditional measure on the Brownian path}

Theorem \ref{thm:nonext prob for cond meas Brownian} states an analogous
result to Proposition \ref{thm:intersection capacity equivalence}
for measures.\\

\selectlanguage{english}%
\noindent \textit{Proof of Theorem }\foreignlanguage{british}{\ref{thm:nonext prob for cond meas Brownian}.
We wish to apply Theorem \ref{thm:increasing non extinction} for
$K(x,y)=F(x,y)$ to conclude Theorem \ref{thm:nonext prob for cond meas Brownian}.
In the proof of \ref{thm:Brownian cond measure: Main} we show that
the conditions of Theorem \ref{thm:increasing double int} are satisfied.
Obviously we can find a decomposition $\mathbb{R}^{d}\setminus\{0\}=\cup_{i=1}^{\infty}A_{i}$
such that (\ref{eq:fi eq eq}) holds, namely $A_{i}=X_{0}^{i}$. So,
along with Proposition \ref{thm:intersection capacity equivalence},
the conditions of Theorem \ref{thm:increasing non extinction} are
satisfied. The explicit formula for $F(x,y)$ is given in Theorem
\ref{thm:F exist for Brownian}.$\hfill\square$}
\selectlanguage{british}%

\section{Conditional measure of the Lebesgue measure on the Brownian path
and the occupation measure\label{sec:Conditional-ofleb and ocup}}

In this Section we prove Theorem \ref{thm:intro ocup} that is the
union of Theorem \ref{thm:ocup limit} and Theorem \ref{thm:conditional measure  of lebesgue}.
The proof consists many steps that we sorted in three sections. In
Section \ref{subsec:Ergod} we establish an asymptotic result on the
number of cubes of side length $1$ that is intersected by the Brownian
path. We prove this by the application of the ergodic theorem. The
major part of the proof is to show the ergodicity of the invariant
measure that we define. Then in Section \ref{subsec:small cubes}
we use the scaling invariance of the Brownian motion to deduce an
asymptotic result on the number of small cubes that is intersected
by the Brownian path. Finally, in Section \ref{subsec:Occupation-measure-and}
we use the result on the number of small cubes that is intersected
by the Brownian path and an approximation argument to finish the proof.

For a random variable $Y$ and a $\sigma$-algebra $\mathcal{F}\subseteq\mathcal{A}$
we denote the \textit{conditional expectation of $Y$ with respect
to $\mathcal{F}$} by $E\left(Y\mid\mathcal{F}\right)$. For random
variables $Y$ and $Z$ we write $E\left(Y\mid Z\right)$ for $E\left(Y\mid\mathcal{F}\right)$
where $\mathcal{F}$ is the $\sigma$-algebra generated by $Z$. In
that case there exists a deterministic function $f$ such that $E\left(Y\mid Z\right)=f(Z)$
almost surely and we write $E\left(Y\mid Z=z\right)$ for $f(z)$.
We write $E\left(Y\mid Z_{1},\dots,Z_{n}\right)$ when $\mathcal{F}$
is the $\sigma$-algebra generated by $Z_{1},\dots,Z_{n}$. In the
rest of the paper we use many basic properties of the conditional
expectation without reference. For an overview of the conditional
expectation see for example \cite{Dudley,Durrett}.

Throughout this section let $\mathcal{Q}_{k}$ be as in Example \ref{exa:example Q_k}
and let
\[
\mathcal{Q}_{k}^{*}=\left\{ [\frac{i_{1}}{2^{k}},\frac{i_{1}+1}{2^{k}})\times\dots\times[\frac{i_{d}}{2^{k}},\frac{i_{d}+1}{2^{k}}):i_{1},\dots,i_{d}\in\mathbb{Z}\right\} 
\]
for $k\in\mathbb{N}$. Recall that we denote the Lebesgue measure
by $\lambda$.

\subsection{Application of the ergodic theorem\label{subsec:Ergod}}

Throughout this subsection let $B_{i}$ ($i\in\mathbb{Z}$) be an
i.i.d. sequence of Brownian motions started at $0\in\mathbb{R}^{d}$
with domain $[0,1]$. Let $X$ be a random variable uniformly distributed
on $[0,1)^{d}$ such that $X$ and $B_{i}$ ($i\in\mathbb{Z}$) are
mutually independent. Let $W_{0}=W_{0}^{\omega}$ be the random function
defined by
\[
W_{0}(t)=\begin{cases}
B_{0}(t) & \mathrm{if}\,0\leq t<1\\
B_{n}(t-n)+\sum_{i=0}^{n-1}B_{i}(1) & \mathrm{if}\,1\leq n\leq t<n+1\,\mathrm{for\,some}\,n\in\mathbb{Z}\\
B_{n}(t-n)-\sum_{i=n}^{-1}B_{i}(1) & \mathrm{if}\,n\leq t<n+1\leq0\,\mathrm{for\,some}\,n\in\mathbb{Z}
\end{cases}
\]
and let $W(t)=X+W_{0}(t)$, i.e. $W$ is a two sided Brownian motion
started at $X$ (it is due to the independent increments of the Brownian
motion). Note, that $B_{i}(.)=\left(W(.+i)-W(i)\right)\vert_{[0,1]}$.
Thus $W$ determines $X$ and the sequence $B_{i}$ and vice versa.

For a vector $v=(v_{1},\dots,v_{d})\in\mathbb{R}^{d}$ we denote by
$\{v\}$ the equivalence class of $v$ in $\mathbb{R}^{d}/\mathbb{Z}^{d}$
and we denote by $\{v\}_{0}$ the element of the equivalence class
of $v$ in $\mathbb{R}^{d}/\mathbb{Z}^{d}$ that is contained in $[0,1)^{d}$.
For an equivalence class $w\in\mathbb{R}^{d}/\mathbb{Z}^{d}$ we also
denote by $\{w\}_{0}$ the element of the equivalence class $w$ that
is contained in $[0,1)^{d}$. Let $X_{n}=\{W(n)\}$.

Let $S$ be the right shift map on $C([0,1],d)^{\mathbb{Z}}\times(\mathbb{R}^{d}/\mathbb{Z}^{d})^{\mathbb{Z}}$,
where $C(K,d)$ denotes the space of continuous functions from the
compact set $K$ to $\mathbb{R}^{d}$ equipped with the supremum norm.
So $S\left((f_{i})_{i=-\infty}^{\infty},(x_{i})_{i=-\infty}^{\infty}\right)=\left((f_{i+1})_{i=-\infty}^{\infty},(x_{i+1})_{i=-\infty}^{\infty}\right)$
for $\left((f_{i})_{i=-\infty}^{\infty},(x_{i})_{i=-\infty}^{\infty}\right)\in C([0,1],d)^{\mathbb{Z}}\times\left(\mathbb{R}^{d}/\mathbb{Z}^{d}\right)^{\mathbb{Z}}$.
Let $R$ be the probability distribution of $\left((B_{i})_{i=-\infty}^{\infty},(X_{i})_{i=-\infty}^{\infty}\right)$.
Then $R$ is a Borel measure. We show that $R$ is a shift invariant
ergodic measure.

\begin{lem}
\label{lem:shift invariance}We have that $R$ is a shift invariant
measure, i.e. $R(.)=R(S^{-1}(.))$.
\end{lem}

\begin{proof}
Since $X_{i}=\{W(i)\}$ and $B_{i}(.)=\left(W(.+i)-W(i)\right)\vert_{[0,1]}$
we need to show that the random functions $W(.)$ and $W(.-1)-\left\lfloor W(-1)\right\rfloor $
have the same distribution, where $\left\lfloor .\right\rfloor $
denotes the floor function in every coordinates. We have that
\[
E\left(I_{X_{0}\in A+\left\{ B_{-1}(1)\right\} }\cdot I_{B_{-1}\in H}\mid B_{-1}\right)=I_{B_{-1}\in H}\cdot E\left(I_{X_{0}\in A+\left\{ B_{-1}(1)\right\} }\mid B_{-1}\right)=I_{B_{-1}\in H}\cdot P(X_{0}\in A)
\]
for ever Borel set $A\subseteq\mathbb{R}^{d}/\mathbb{Z}^{d}$ and
$H\subseteq C([0,1],d)$ because $X_{0}$ is uniformly distributed
on $\mathbb{R}^{d}/\mathbb{Z}^{d}$ and is independent of $B_{-1}$.
Hence
\[
P(X_{0}-\left\{ B_{-1}(1)\right\} \in A,B_{-1}\in H)=E(I_{B_{-1}\in H}\cdot P(X_{0}\in A))=
\]
\[
P(B_{-1}\in H)P(X_{0}\in A)=P(B_{-1}\in H)P(X_{0}-\left\{ B_{-1}(1)\right\} \in A)
\]
Since the collection of sets of the form $A\times H$ is a semi-ring
generating the Borel $\sigma$-algebra it follows from Proposition
\ref{lem:charateodory ineq} that $B_{-1}$ and $\left\{ W(-1)\right\} =X_{0}-\left\{ B_{-1}(1)\right\} $
are independent. Obviously we have that $B_{i}$ ($i\in\mathbb{Z}\setminus\{-1\}$)
are mutually independent of $B_{-1}$ and $\left\{ W(-1)\right\} =X_{0}-\left\{ B_{-1}(1)\right\} $
hence $W_{0}(.-1)-W_{0}(-1)$ is a two sided Brownian motion started
at $0$ such that $\left\{ W(-1)\right\} $ and $W_{0}(.-1)-W_{0}(-1)$
are independent. Hence $W(.)$ and $W(.-1)-\left\lfloor W(-1)\right\rfloor =\{W(-1)\}_{0}+W_{0}(.-1)-W_{0}(-1)$
have the same distribution.
\end{proof}
\begin{lem}
\label{lem:Rieman limit}Let $Y$ be a random variable that takes
values in $[0,1)$ almost surely, the distribution of $Y$ is absolutely
continuous with respect to the Lebesgue measure and the density function
$f$ is a bounded Riemann-integrable function. Then for every Borel
set $A\subseteq[0,1)$
\[
\lim_{k\rightarrow\infty}P\left(\{kY\}_{0}\in A\right)=P(U\in A)
\]
where $U$ is a random variable uniformly distributed on $[0,1)$.
Let $M<\infty$ be such that $0\leq f(x)\leq M$ for Lebesgue almost
every $x\in\mathbb{R}$. Then the density function of $\{kY\}_{0}$
is bounded by $M$ for Lebesgue almost every $x\in\mathbb{R}$.
\end{lem}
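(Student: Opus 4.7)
The plan is to compute the distribution of $\{kY\}_0$ explicitly as a density and recognise the resulting expression as a Riemann sum for $\int_0^1 f = 1$. First, I would partition $[0,1)$ into the intervals $[i/k,(i+1)/k)$ for $i = 0,\dots,k-1$. For a Borel set $A \subseteq [0,1)$, splitting the integral and substituting $u = kx - i$ in each piece gives
\[
P\left(\{kY\}_0 \in A\right) = \int_0^1 f(x)\,\chi_A(\{kx\}_0)\,\mathrm{d}x = \int_A g_k(u)\,\mathrm{d}u, \qquad g_k(u) := \frac{1}{k}\sum_{i=0}^{k-1} f\!\left(\tfrac{u+i}{k}\right).
\]
So $g_k$ is the density of $\{kY\}_0$ on $[0,1)$. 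The bound $0 \le f \le M$ immediately gives $0 \le g_k(u) \le M$ a.e., which yields the second assertion of the lemma.

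Next I would observe that for each fixed $u \in [0,1)$, the points $(u+i)/k$ lie in $[i/k,(i+1)/k)$, so $g_k(u)$ is a Riemann sum for $f$ on $[0,1]$ with mesh $1/k$ and tags $(u+i)/k$. Since $f$ is bounded and Riemann-integrable, every such tagged Riemann sum converges to $\int_0^1 f(x)\,\mathrm{d}x = 1$. Hence $g_k(u) \to 1$ pointwise on $[0,1)$, with the uniform domination $g_k \le M$.

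Finally, by the bounded convergence theorem,
\[
P\left(\{kY\}_0 \in A\right) = \int_A g_k(u)\,\mathrm{d}u \longrightarrow \int_A 1\,\mathrm{d}u = \lambda(A) = P(U \in A),
\]
which is the desired limit. The only genuinely substantive point is the invocation of Riemann integrability to pass from the sum $g_k(u)$ to the integral, but since $f$ is assumed Riemann-integrable the convergence is immediate for every $u$; no uniformity in $u$ is needed because the bounded convergence theorem handles the integration over $A$.
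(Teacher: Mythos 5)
Your proof is correct and follows essentially the same route as the paper: compute the density of $\{kY\}_0$ as $k^{-1}\sum_{i=0}^{k-1}f((u+i)/k)$, recognise it as a tagged Riemann sum converging pointwise to $\int_0^1 f=1$ by Riemann integrability, note the uniform bound by $M$, and conclude by dominated (bounded) convergence.
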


\begin{proof}
The density function of $kY$ is $k^{-1}f(x/k)$ for Lebesgue almost
every $x$ and so the density function of $\{kY\}_{0}$ is $k^{-1}\sum_{i=0}^{k-1}f((x+i)/k)$
for Lebesgue almost every $x\in[0,1)$ where we used the fact that
$f(x)=0$ for $x\notin[0,1)$. For every $x\in[0,1)$ we have that
$k^{-1}\sum_{i=0}^{k}f((x+i)/k)$ converges to $1$ since $f$ is
Riemann integrable. The sequence $k^{-1}\sum_{i=0}^{k-1}f((x+i)/k)$
is uniformly bounded by $M$ for Lebesgue almost every $x\in\mathbb{R}$.
Hence by the dominated convergence theorem $\lim_{k\rightarrow\infty}P\left(\{kY\}_{0}\in A\right)=P(U\in A)$
for every Borel set $A\subseteq[0,1)$.
\end{proof}
\begin{lem}
\label{lem:kY}Let $Y$ be a normally distributed vector in $\mathbb{R}^{d}$
such that the coordinates are independent. Then for Borel sets $A_{1},\dots,A_{d}\subseteq\mathbb{R}/\mathbb{Z}$
\[
\lim_{k\rightarrow\infty}P\left(\{kY\}\in A_{1}\times\dots\times A_{d}\right)=P(U\in A_{1}\times\dots\times A_{d})
\]
where $U$ is a random variable uniformly distributed on $\mathbb{R}^{d}/\mathbb{Z}^{d}$.
Additionally, the density functions of $\{kY\}_{0}$ are uniformly
bounded.
\end{lem}

\begin{proof}
We prove the statement for standard normally distributed vector $Y$
but the proof goes similarly for the general $Y$. Let $Y=(Y_{1},\dots,Y_{d})$
and $U=(U_{1},\dots,U_{d})$. The sequence of functions $\sum_{i=-n}^{n}e^{-(x+i)^{2}/2}$
locally uniformly converges to
\[
f(x)=\sum_{i=-\infty}^{\infty}e^{-(x+i)^{2}/2}
\]
hence $f$ is a continuous function and so
\[
g(x)=\begin{cases}
f(x) & \mathrm{if}\,x\in[0,1)\\
0 & \mathrm{otherwise}
\end{cases}
\]
is a bounded Riemann-integrable function. Since $Y_{j}$ is standard
normally distributed in $\mathbb{R}$ it follows that $\frac{1}{\sqrt{2\pi}}g$
is the density function of $\{Y_{j}\}_{0}$, thus
\[
\lim_{k\rightarrow\infty}P\left(\{kY_{j}\}\in A_{j}\right)=P(U_{j}\in A_{j})
\]
 and the density function of $\{kY_{j}\}_{0}$ is bounded by $\frac{1}{\sqrt{2\pi}}\sup_{x\in[0,1)}g(x)$
by Lemma \ref{lem:Rieman limit}. Since $Y_{j}$ are independent for
$j=1,\dots,d$ it follows that
\[
\lim_{k\rightarrow\infty}P\left(\{kY\}\in A_{1}\times\dots\times A_{d}\right)=\lim_{k\rightarrow\infty}P\left(\{kY_{1}\}\in A_{1}\right)\cdot\dots\cdot P\left(\{kY_{d}\}\in A_{d}\right)=
\]
\[
P\left(U_{1}\in A_{1}\right)\cdot\dots\cdot P\left(U_{d}\in A_{d}\right)=P(U\in A_{1}\times\dots\times A_{d})
\]
and the density function of $\{kY\}_{0}$ is uniformly bounded by
$\left(\frac{1}{\sqrt{2\pi}}\sup_{x\in[0,1)}g(x)\right)^{d}$.
\end{proof}
We say that $C\subseteq\mathbb{R}^{d}/\mathbb{Z}^{d}$ is a \textit{box
in $\mathbb{R}^{d}/\mathbb{Z}^{d}$} if there exists $0\leq a_{j}\leq b_{j}\leq1$
($j=1,\dots,d$) such that $C=\{A\}=\left\{ \{a\}:a\in A\right\} $
for $A=[a_{1},b_{1})\times\dots\times[a_{d},b_{d})$.
\begin{lem}
\label{lem:mixing}We have that $R$ is mixing, i.e.
\[
\lim_{k\rightarrow\infty}R(A_{1}\cap S^{-k}(A_{2}))=R(A_{1})R(A_{2})
\]
for Borel sets $A_{1},A_{2}\subseteq C([0,1],d)^{\mathbb{Z}}\times\left(\mathbb{R}^{d}/\mathbb{Z}^{d}\right)^{\mathbb{Z}}$.
\end{lem}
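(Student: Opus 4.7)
The plan is to verify mixing on a generating $\pi$-system of cylinder sets, exploiting the recursion $X_{i+1}=X_i+\{B_i(1)\}$ to reduce everything to the asymptotic uniformity on the torus of large Gaussian sums supplied by (the proof of) Lemma~\ref{lem:kY}. First I would note that the Borel $\sigma$-algebra on $C([0,1],d)^{\mathbb{Z}}\times(\mathbb{R}^d/\mathbb{Z}^d)^{\mathbb{Z}}$ is generated by the $\pi$-system of cylinder sets $A=\{((b_i),(x_i)):((b_i)_{|i|\le n},(x_i)_{|i|\le n})\in C\}$, and a standard approximation reduces the mixing property for Borel sets to the case where $A_1,A_2$ are such cylinders of a common depth $n$. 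Fix such a pair and write $I_{A_2}=g((B_{-n},\ldots,B_n),X_0)$ for a suitable bounded Borel function $g$, using the fact that every $X_i$ with $|i|\le n$ is determined by $X_0$ and $B_{-n},\ldots,B_n$. Applying $S^{-k}$ gives $I_{S^{-k}A_2}=g((B_{k-n},\ldots,B_{k+n}),X_k)$, and $A_1$ is measurable with respect to $\mathcal{F}_n:=\sigma(X_0,B_{-n},\ldots,B_n)$.

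Next I would condition on $\mathcal{F}_n$. For $k>2n$ the paths $B_{k-n},\ldots,B_{k+n}$ are i.i.d.\ and independent of $\mathcal{F}_n$; writing
\[
X_k=Y_0+\{Z_k\},\qquad Y_0:=X_0+\{B_0(1)+\cdots+B_n(1)\},\qquad Z_k:=B_{n+1}(1)+\cdots+B_{k-1}(1),
\]
$Y_0$ is $\mathcal{F}_n$-measurable and $Z_k\sim N(0,(k-n-1)I_d)$ is independent of everything else in sight. The key fact is that $\{Z_k\}$ becomes asymptotically uniform on $\mathbb{R}^d/\mathbb{Z}^d$; granting this, together with the translation invariance of the Lebesgue measure on the torus, gives uniformly over the conditioning
\[
E[I_{S^{-k}A_2}\mid\mathcal{F}_n]\longrightarrow\int g(\vec{b},v)\,dR_B^{\otimes(2n+1)}(\vec{b})\,dv=R(A_2),
\]
where $R_B$ denotes the Wiener measure on $C([0,1],d)$ and $dv$ the normalised Haar measure on the torus. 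Multiplying by the bounded $\mathcal{F}_n$-measurable indicator $I_{A_1}$ and taking expectation yields $R(A_1\cap S^{-k}A_2)\to R(A_1)R(A_2)$, completing the verification on the generating $\pi$-system.

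The principal technical obstacle is the equidistribution of the wrapped Gaussian sum, since Lemma~\ref{lem:kY} is phrased only for integer scaling whereas here the scaling factor is $\sqrt{k-n-1}$. This is readily overcome: the wrapped density $\phi_k(x)=\sum_{m\in\mathbb{Z}^d}(2\pi(k-n-1))^{-d/2}e^{-|x+m|^2/(2(k-n-1))}$ is bounded uniformly in $k$ and converges pointwise to $1$ by exactly the Riemann-sum argument in the proof of Lemma~\ref{lem:Rieman limit}, which does not use the integrality of the scaling; dominated convergence on the compact torus then upgrades this to convergence of the law of $\{Z_k\}$ to the uniform distribution in total variation, which is more than enough to make the conditional-expectation computation above go through uniformly.
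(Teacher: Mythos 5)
Your overall architecture is the same as the paper's: reduce mixing to finite-depth cylinder sets via the standard approximation over a generating semiring (using shift invariance from Lemma \ref{lem:shift invariance}), condition on the $\sigma$-algebra carrying $A_1$ and the shifted block, isolate a long Gaussian ``spacer'' increment, and conclude from equidistribution of the wrapped Gaussian together with translation invariance of Haar measure on $\mathbb{R}^{d}/\mathbb{Z}^{d}$. Your treatment of the scaling is in fact more careful than the paper's: the paper routes equidistribution through Lemma \ref{lem:kY} (integer scaling), whereas you observe directly that the wrapped density of a Gaussian with variance tending to infinity is uniformly bounded and converges pointwise to $1$ by the Riemann-sum argument of Lemma \ref{lem:Rieman limit}, and you upgrade this to total-variation convergence by Scheff\'e; that is exactly what the conditional-expectation step needs.

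There is, however, one concrete error in your decomposition. You set $Z_{k}=B_{n+1}(1)+\cdots+B_{k-1}(1)$ and assert that $Z_{k}$ is independent of everything else in sight, but the summands $B_{k-n}(1),\dots,B_{k-1}(1)$ are functions of the paths $B_{k-n},\dots,B_{k-1}$, which are precisely among the arguments of $g$ in $I_{S^{-k}A_{2}}=g\bigl((B_{k-n},\dots,B_{k+n}),X_{k}\bigr)$. So $Z_{k}$ is not independent of the shifted block, and the convergence $E[I_{S^{-k}A_{2}}\mid\mathcal{F}_{n}]\to R(A_{2})$ does not follow as written. The repair is routine: for $k>2n+1$ write $X_{k}=Y_{0}+\{Z_{k}'\}+\{V_{k}\}$ with $Z_{k}'=B_{n+1}(1)+\cdots+B_{k-n-1}(1)$, which is genuinely independent of both $\mathcal{F}_{n}$ and the block $(B_{k-n},\dots,B_{k+n})$, and $V_{k}=B_{k-n}(1)+\cdots+B_{k-1}(1)$, which is a function of the block; then apply your total-variation argument to $\{Z_{k}'\}$, noting that the total-variation distance to the uniform law is unchanged by the additive shift $Y_{0}+\{V_{k}\}$ even though that shift now depends on the conditioning and on the block. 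With this correction your proof goes through. For comparison, the paper's own conditioning block $(B_{i})_{i=-n-k}^{n-k}$ overlaps its spacer increment $W_{0}(-n)-W_{0}(n-k)$ at the single index $n-k$, so it requires the same (smaller) adjustment; your approach is sound once the spacer is trimmed to avoid the block.
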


\begin{proof}
Since $R$ is shift invariant by Lemma \ref{lem:shift invariance},
due to \cite[Theorem 1.17]{Walters ergodic} it is enough to show
that the statement holds for every $A_{1}$ and $A_{2}$ taken from
a semi-ring of Borel sets that generates the Borel $\sigma$-algebra.
Hence it is enough to show the statement for Borel sets of the form
$(H_{i})_{i=-\infty}^{\infty}\times(C_{i})_{i=-\infty}^{\infty}$
where $H_{i}\subseteq C([0,1],d)$ is a Borel set, $C_{i}$ is a box
in $\mathbb{R}^{d}/\mathbb{Z}^{d}$ for every $i\in\mathbb{Z}$ and
there exists $n\in\mathbb{N}$ such that $H_{i}=C([0,1],d)$ and $C_{i}=\mathbb{R}^{d}/\mathbb{Z}^{d}$
for $i\in\mathbb{Z}\setminus[-n,n]$.

Let $A_{1}=(H_{i})_{i=-\infty}^{\infty}\times(C_{i})_{i=-\infty}^{\infty}$
and $A_{2}=(G_{i})_{i=-\infty}^{\infty}\times(D_{i})_{i=-\infty}^{\infty}$
where $H_{i},G_{i}\subseteq[0,1)^{d}$ are Borel sets, $C_{i}$, $D_{i}$
are boxes in $\mathbb{R}^{d}/\mathbb{Z}^{d}$ for every $i\in\mathbb{Z}$
and there exists $n\in\mathbb{N}$ such that $H_{i}=G_{i}=C([0,1],d)$
and $C_{i}=D_{i}=[0,1)^{d}$ for $i\in\mathbb{Z}\setminus[-n,n]$
(note, that without the loss of generality we can assume that $n$
is the same for both $A_{1}$ and $A_{2}$). Then
\begin{equation}
R(A_{1}\cap S^{-k}(A_{2}))=E\left(\prod_{i=-n}^{n}I_{B_{i}\in H_{i}}I_{X_{i}\in C_{i}}I_{B_{i-k}\in G_{i}}I_{X_{i-k}\in D_{i}}\right).\label{eq:mix1}
\end{equation}
Let $k>2n$. Then
\[
E\left(\prod_{i=-n}^{n}I_{B_{i}\in H_{i}}I_{X_{i}\in C_{i}}I_{B_{i-k}\in G_{i}}I_{X_{i-k}\in D_{i}}\mid X_{-n-k},(B_{i})_{i=-n}^{n},(B_{i})_{i=-n-k}^{n-k}\right)=
\]
\begin{equation}
\left(\prod_{i=-n}^{n}I_{B_{i}\in H_{i}}I_{B_{i-k}\in G_{i}}I_{X_{i-k}\in D_{i}}\right)E\left(\prod_{i=-n}^{n}I_{X_{i}\in C_{i}}\mid X_{-n-k},(B_{i})_{i=-n}^{n},(B_{i})_{i=-n-k}^{n-k}\right)\label{eq:mix2}
\end{equation}
because $X_{i-k}$ depends only on $\left(X_{-n-k},(B_{i})_{i=-n-k}^{n-k}\right)$.
We have that
\[
X_{i}=\left\{ W(i)\right\} =\left\{ W(i)-W(-n)\right\} +\left\{ W(-n)-W(n-k)\right\} +\left\{ W(n-k)\right\} 
\]
where $X_{n-k}=\left\{ W(n-k)\right\} $ depends only on $\left(X_{-n-k},(B_{i})_{i=-n-k}^{n-k}\right)$,
$Z_{i}:=\left\{ W(i)-W(-n)\right\} =\left\{ W_{0}(i)-W_{0}(-n)\right\} $
depends only on $(B_{i})_{i=-n}^{n}$, $\left\{ W(-n)-W(n-k)\right\} =\left\{ W_{0}(-n)-W_{0}(n-k)\right\} $
is independent of $\left(X_{-n-k},(B_{i})_{i=-n}^{n},(B_{i})_{i=-n-k}^{n-k}\right)$,
and $\left\{ W(-n)-W(n-k)\right\} $ and $\left\{ (k-2n)Y\right\} $
have the same distribution for a standard normally distributed vector
$Y$ that is independent of $\left(X_{-n-k},(B_{i})_{i=-n}^{n},(B_{i})_{i=-n-k}^{n-k}\right)$.
Thus
\[
E\left(\prod_{i=-n}^{n}I_{X_{i}\in C_{i}}\mid X_{-n-k},(B_{i})_{i=-n}^{n},(B_{i})_{i=-n-k}^{n-k}\right)=
\]
\[
E\left(\prod_{i=-n}^{n}I_{\left\{ (k-2n)Y\right\} \in C_{i}-Z_{i}-X_{n-k}}\mid X_{-n-k},(B_{i})_{i=-n}^{n},(B_{i})_{i=-n-k}^{n-k}\right)=
\]
\[
E\left(I_{\left\{ (k-2n)Y\right\} \in\bigcap_{i=-n}^{n}\left(C_{i}-Z_{i}-X_{n-k}\right)}\mid X_{-n-k},(B_{i})_{i=-n}^{n},(B_{i})_{i=-n-k}^{n-k}\right)
\]
and so it follows from (\ref{eq:mix2}) that
\[
E\left(\prod_{i=-n}^{n}I_{B_{i}\in H_{i}}I_{X_{i}\in C_{i}}I_{B_{i-k}\in G_{i}}I_{X_{i-k}\in D_{i}}\mid X_{-n-k},(B_{i})_{i=-n}^{n},(B_{i})_{i=-n-k}^{n-k}\right)=
\]
\begin{equation}
\left(\prod_{i=-n}^{n}I_{B_{i}\in H_{i}}I_{B_{i-k}\in G_{i}}I_{X_{i-k}\in D_{i}}\right)E\left(I_{\left\{ (k-2n)Y\right\} \in\bigcap_{i=-n}^{n}\left(C_{i}-Z_{i}-X_{n-k}\right)}\mid X_{-n-k},(B_{i})_{i=-n}^{n},(B_{i})_{i=-n-k}^{n-k}\right).\label{eq:mix3}
\end{equation}
Let $\left(\widetilde{B_{i}},\widetilde{X_{i}}\right)_{i=-n}^{n}$
be a random variable with the same distribution as $\left(B_{i-k},X_{i-k}\right)_{i=-n}^{n}$
such that $\left(\widetilde{B_{i}},\widetilde{X_{i}}\right)_{i=-n}^{n}$
is independent of $\left(Y,X_{-n-k},(B_{i})_{i=-n}^{n},(B_{i})_{i=-n-k}^{n-k}\right)$.
Then by the shift invariance (Lemma \ref{lem:shift invariance}) $\left(\widetilde{B_{i}},\widetilde{X_{i}}\right)_{i=-n}^{n}$
and $\left(B_{i},X_{i}\right)_{i=-n}^{n}$ have the same distribution.
Then
\[
\left(\prod_{i=-n}^{n}I_{B_{i}\in H_{i}}I_{B_{i-k}\in G_{i}}I_{X_{i-k}\in D_{i}}\right)E\left(I_{\left\{ (k-2n)Y\right\} \in\bigcap_{i=-n}^{n}\left(C_{i}-Z_{i}-X_{n-k}\right)}\mid X_{-n-k},(B_{i})_{i=-n}^{n},(B_{i})_{i=-n-k}^{n-k}\right)
\]
and
\[
\left(\prod_{i=-n}^{n}I_{B_{i}\in H_{i}}I_{\widetilde{B_{i}}\in G_{i}}I_{\widetilde{X_{i}}\in D_{i}}\right)E\left(I_{\left\{ (k-2n)Y\right\} \in\bigcap_{i=-n}^{n}\left(C_{i}-Z_{i}-\widetilde{X_{n}}\right)}\mid\widetilde{X_{-n}},(B_{i})_{i=-n}^{n},(\widetilde{B_{i}})_{i=-n}^{n}\right)
\]
have the same distribution. Hence by (\ref{eq:mix1}) and (\ref{eq:mix3})
it follows that
\[
\lim_{k\rightarrow\infty}R(A_{1}\cap S^{-k}(A_{2}))=
\]
\begin{equation}
\lim_{k\rightarrow\infty}E\left(\left(\prod_{i=-n}^{n}I_{B_{i}\in H_{i}}I_{\widetilde{B_{i}}\in G_{i}}I_{\widetilde{X_{i}}\in D_{i}}\right)E\left(I_{\left\{ (k-2n)Y\right\} \in\bigcap_{i=-n}^{n}\left(C_{i}-Z_{i}-\widetilde{X_{n}}\right)}\mid\widetilde{X_{-n}},(B_{i})_{i=-n}^{n},(\widetilde{B_{i}})_{i=-n}^{n}\right)\right).\label{eq:mix4}
\end{equation}
It follows from Lemma \ref{lem:kY} that
\[
\lim_{k\rightarrow\infty}E\left(I_{\left\{ (k-2n)Y\right\} \in\bigcap_{i=-n}^{n}\left(C_{i}-Z_{i}-\widetilde{X_{n}}\right)}\mid\widetilde{X_{-n}},(B_{i})_{i=-n}^{n},(\widetilde{B_{i}})_{i=-n}^{n}\right)=
\]
\[
E\left(I_{U\in\bigcap_{i=-n}^{n}\left(C_{i}-Z_{i}-\widetilde{X_{n}}\right)}\mid\widetilde{X_{-n}},(B_{i})_{i=-n}^{n},(\widetilde{B_{i}})_{i=-n}^{n}\right)=
\]
\[
E\left(I_{U\in\bigcap_{i=-n}^{n}\left(C_{i}-Z_{i}\right)}\mid\widetilde{X_{-n}},(B_{i})_{i=-n}^{n},(\widetilde{B_{i}})_{i=-n}^{n}\right)=
\]
almost surely where $U$ is a random variable that is independent
of $\left(\widetilde{X_{-n}},(B_{i})_{i=-n}^{n},(\widetilde{B_{i}})_{i=-n}^{n}\right)$
and is uniformly distributed in $\mathbb{R}^{d}/\mathbb{Z}^{d}$.
Thus by the dominated convergence theorem and (\ref{eq:mix4}) it
follows that
\[
\lim_{k\rightarrow\infty}R(A_{1}\cap S^{-k}(A_{2}))=
\]
\begin{equation}
E\left(\left(\prod_{i=-n}^{n}I_{B_{i}\in H_{i}}I_{\widetilde{B_{i}}\in G_{i}}I_{\widetilde{X_{i}}\in D_{i}}\right)E\left(I_{U\in\bigcap_{i=-n}^{n}\left(C_{i}-Z_{i}\right)}\mid\widetilde{X_{-n}},(B_{i})_{i=-n}^{n},(\widetilde{B_{i}})_{i=-n}^{n}\right)\right).\label{eq:mix5}
\end{equation}
Since $\left(U,(Z_{i})_{i=-n}^{n},(B_{i})_{i=-n}^{n}\right)$ is independent
of $\left(\widetilde{X_{-n}},(\widetilde{B_{i}})_{i=-n}^{n}\right)$
it follows that
\[
E\left(I_{U\in\bigcap_{i=-n}^{n}\left(C_{i}-Z_{i}\right)}\mid\widetilde{X_{-n}},(B_{i})_{i=-n}^{n},(\widetilde{B_{i}})_{i=-n}^{n}\right)=E\left(I_{U\in\bigcap_{i=-n}^{n}\left(C_{i}-Z_{i}\right)}\mid(B_{i})_{i=-n}^{n}\right)=
\]
\[
E\left(\prod_{i=-n}^{n}I_{U+Z_{i}\in C_{i}}\mid(B_{i})_{i=-n}^{n}\right).
\]
Thus by (\ref{eq:mix5})
\[
\lim_{k\rightarrow\infty}R(A_{1}\cap S^{-k}(A_{2}))=
\]
\[
E\left(\left(\prod_{i=-n}^{n}I_{B_{i}\in H_{i}}I_{\widetilde{B_{i}}\in G_{i}}I_{\widetilde{X_{i}}\in D_{i}}\right)E\left(\prod_{i=-n}^{n}I_{U+Z_{i}\in C_{i}}\mid(B_{i})_{i=-n}^{n}\right)\right)
\]
\[
E\left(\left(\prod_{i=-n}^{n}I_{\widetilde{B_{i}}\in G_{i}}I_{\widetilde{X_{i}}\in D_{i}}\right)E\left(\prod_{i=-n}^{n}I_{B_{i}\in H_{i}}I_{U+Z_{i}\in C_{i}}\mid(B_{i})_{i=-n}^{n}\right)\right)=
\]
\begin{equation}
R(A_{2})E\left(\prod_{i=-n}^{n}I_{B_{i}\in H_{i}}I_{U+Z_{i}\in C_{i}}\right)\label{eq:mix6}
\end{equation}
since $\prod_{i=-n}^{n}I_{\widetilde{B_{i}}\in G_{i}}I_{\widetilde{X_{i}}\in D_{i}}$
and $E\left(\prod_{i=-n}^{n}I_{B_{i}\in H_{i}}I_{U+Z_{i}\in C_{i}}\mid(B_{i})_{i=-n}^{n}\right)$
are independent and that $\left(\widetilde{B_{i}},\widetilde{X_{i}}\right)_{i=-n}^{n}$
and $\left(B_{i},X_{i}\right)_{i=-n}^{n}$ have the same distribution.

Similarly to the proof of Lemma \ref{lem:shift invariance} it can
be shown that $U-\left\{ W_{0}(-n)\right\} $ and $W_{0}$ are independent
and $W=X+W_{0}$ and $\left\{ U-\left\{ W_{0}(-n)\right\} \right\} _{0}+W_{0}$
have the same distribution. Since $Z_{i}=\left\{ W_{0}(i)\right\} -\left\{ W_{0}(-n)\right\} $
it follows that
\[
E\left(\prod_{i=-n}^{n}I_{B_{i}\in H_{i}}I_{U+Z_{i}\in C_{i}}\right)=E\left(\prod_{i=-n}^{n}I_{B_{i}\in H_{i}}I_{\{W(i)\}\in C_{i}}\right)=R(A_{1})
\]
and this finishes the proof combined with (\ref{eq:mix6}).
\end{proof}
\begin{lem}
\label{lem:ergodic  l1}Let
\[
f\left(\left(B_{i},X_{i}\right)_{i=-\infty}^{\infty}\right)=\#\left\{ Q\in\mathcal{Q}_{0}^{*}:W([0,1))\cap Q\neq\emptyset,W((-\infty,0))\cap Q=\emptyset\right\} ,
\]
i.e. the number of dyadic cubes of side length $1$ that is visited
by $W$ in the time interval $[0,1)$ but never visited before. Let
\[
g\left(\left(B_{i},X_{i}\right)_{i=-\infty}^{\infty}\right)=\#\left\{ Q\in\mathcal{Q}_{0}^{*}:W([0,1])\cap Q\neq\emptyset\right\} ,
\]
i.e. the number of dyadic cubes of side length $1$ that is visited
by $W$ in the time interval $[0,1]$. Then $f$ and $g$ are $\mathcal{L}^{1}$
functions with respect to the probability measure $R$.
\end{lem}

\begin{proof}
It is easy to see that $f$ and $g$ are measurable and that $0\leq f\leq g$.
Thus to prove the statement it is enough to show that
\[
E\left(g\left(\left(B_{i},X_{i}\right)_{i=-\infty}^{\infty}\right)\right)<\infty.
\]
Let $Y_{1}(t)$ be a Brownian motion in $\mathbb{R}$ started at $0$
and let $x_{1}\in[0,1)$. Then
\[
\#\left\{ Q\in\mathcal{Q}_{0}^{*}:x_{1}+Y_{1}([0,1])\cap Q\neq\emptyset\right\} \leq2+2\max_{t\in[0,1]}\left|x_{1}+Y_{1}(t)\right|\leq
\]
\begin{equation}
2+2x_{1}+2\max_{t\in[0,1]}\left|Y_{1}(t)\right|\leq4+2\max_{t\in[0,1]}\left|Y_{1}(t)\right|.\label{eq:1helper}
\end{equation}
Note that $\mathcal{Q}_{0}^{*}$ consists dyadic intervals of the
line in (\ref{eq:1helper}) because $Y_{1}$ is a Brownian motion
in $\mathbb{R}$, however, later on $Y$ is a Brownian path in $\mathbb{R}^{d}$
and so $\mathcal{Q}_{0}^{*}$ consists dyadic cubes of $\mathbb{R}^{d}$.
Let $Y(t)=(Y_{1}(t),\dots,Y_{d}(t))$ be a Brownian motion in $\mathbb{\mathbb{R}}^{d}$
started at $0$ and let $(x_{1},\dots,x_{d})\in[0,1)^{d}$. Then by
(\ref{eq:1helper})
\[
\#\left\{ Q\in\mathcal{Q}_{0}^{*}:x+Y([0,1])\cap Q\neq\emptyset\right\} \leq\prod_{i=1}^{d}\left(4+2\max_{t\in[0,1]}\left|Y_{i}(t)\right|\right).
\]
Since $Y_{1},\dots,Y_{d}$ are mutually independent it follows that
\[
E\left(g\left(\left(B_{i},X_{i}\right)_{i=-\infty}^{\infty}\right)\right)\leq E\left(\prod_{i=1}^{d}\left(4+2\max_{t\in[0,1]}\left|Y_{i}(t)\right|\right)\right)\leq\prod_{i=1}^{d}\left(4+2E\left(\max_{t\in[0,1]}\left|Y_{i}(t)\right|\right)\right)<\infty
\]
because $E\left(\max_{t\in[0,1]}\left|Y_{i}(t)\right|\right)<\infty$
by \cite[Theorem 2.21]{Peres-Morters-Broanian motion}.
\end{proof}
\begin{lem}
\label{prop:BlimNminus}There exist $0\leq\alpha\leq\beta<\infty$
such that
\begin{equation}
\lim_{N\rightarrow\infty}N^{-1}\#\left\{ Q\in\mathcal{Q}_{0}^{*}:W([0,N])\cap Q\neq\emptyset,W((-\infty,0))\cap Q=\emptyset\right\} =\alpha\label{eq:almost alpha}
\end{equation}
$R$ almost surely and
\begin{equation}
\lim_{N\rightarrow\infty}N^{-1}\sum_{i=0}^{N-1}\#\left\{ Q\in\mathcal{Q}_{0}^{*}:W([i,i+1])\cap Q\neq\emptyset\right\} =\beta\label{eq:almost beta}
\end{equation}
$R$ almost surely.
\end{lem}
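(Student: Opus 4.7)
The plan is to apply Birkhoff's ergodic theorem to the dynamical system $(C([0,1],d)^{\mathbb{Z}}\times(\mathbb{R}^d/\mathbb{Z}^d)^{\mathbb{Z}},S,R)$. First, since $R$ is mixing by Lemma \ref{lem:mixing}, it is ergodic (this is a standard fact: if $A$ is $S$-invariant then $R(A)=R(A\cap S^{-k}A)\to R(A)^2$, forcing $R(A)\in\{0,1\}$). Second, by Lemma \ref{lem:ergodic l1} the functions $f$ and $g$ are in $\mathcal{L}^1(R)$, so Birkhoff's ergodic theorem gives
\[
\lim_{N\to\infty}N^{-1}\sum_{i=0}^{N-1}f\circ S^{i}=E(f)=:\alpha,\qquad\lim_{N\to\infty}N^{-1}\sum_{i=0}^{N-1}g\circ S^{i}=E(g)=:\beta
\]
$R$-almost surely, with $0\le\alpha\le\beta<\infty$ because $0\le f\le g$ pointwise.

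The key step is to identify $f\circ S^{i}$ and $g\circ S^{i}$ with the random quantities appearing in the statement. Writing out the construction of $W$ from $(B_i,X_i)_{i\in\mathbb{Z}}$, an explicit computation as in the proof of Lemma \ref{lem:shift invariance} shows that if $W'$ is built from the shifted sequence $(B_{i+1},X_{i+1})_{i\in\mathbb{Z}}$ then $W'(t)=W(t+1)-\lfloor W(1)\rfloor$. Since $\mathcal{Q}_0^{*}$ is invariant under translations by $\mathbb{Z}^d$, this integer shift does not affect the cube count, so
\[
f\circ S^{i}=\#\bigl\{Q\in\mathcal{Q}_0^{*}:W([i,i{+}1))\cap Q\neq\emptyset,\;W((-\infty,i))\cap Q=\emptyset\bigr\},
\]
and similarly $g\circ S^{i}=\#\{Q\in\mathcal{Q}_0^{*}:W([i,i{+}1])\cap Q\neq\emptyset\}$, which matches the summand in \eqref{eq:almost beta}.

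For \eqref{eq:almost alpha}, note that every cube $Q$ intersected by $W([0,N))$ but not by $W((-\infty,0))$ has a unique first-hit time which lies in exactly one interval $[i,i{+}1)$ with $0\le i\le N-1$, so the events contributing to $f\circ S^{i}$ for different $i$ are disjoint and exhaust the count. Consequently
\[
\sum_{i=0}^{N-1}f\circ S^{i}=\#\bigl\{Q\in\mathcal{Q}_0^{*}:W([0,N))\cap Q\neq\emptyset,\;W((-\infty,0))\cap Q=\emptyset\bigr\}.
\]
Replacing $[0,N)$ by $[0,N]$ changes the count by at most one (the unique cube of $\mathcal{Q}_0^{*}$ containing $W(N)$, if not already counted), which is negligible after dividing by $N$. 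Hence the Birkhoff limit gives \eqref{eq:almost alpha}; and \eqref{eq:almost beta} follows directly from the Birkhoff limit for $g$.

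The only mildly delicate point is the identification $f\circ S^{i}(\cdot)=\#\{\ldots W([i,i{+}1))\ldots\}$, i.e.\ that shifting the coded sequence produces the integer-translated path rather than a genuinely different Brownian trajectory; this is precisely what was used in Lemma \ref{lem:shift invariance} and it is the reason for quotienting the starting point by $\mathbb{Z}^d$ in the definition of $R$. Once this is observed, everything else is a direct application of the ergodic theorem together with the $\mathcal{L}^{1}$ bound of Lemma \ref{lem:ergodic l1}.
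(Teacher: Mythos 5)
Your proposal is correct and follows essentially the same route as the paper: ergodicity of $R$ from mixing (Lemma \ref{lem:mixing}), integrability from Lemma \ref{lem:ergodic  l1}, Birkhoff's ergodic theorem, and the identification of $f\circ S^{i}$, $g\circ S^{i}$ with the cube counts via the integer-translation covariance of the shift. The only cosmetic difference is at the endpoint: you bound the discrepancy between $[0,N)$ and $[0,N]$ by one cube, while the paper notes that $W(N)\notin\partial Q$ almost surely; both dispose of the issue.
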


\begin{proof}
Let $f$ and $g$ be as in Lemma \ref{lem:ergodic  l1}. Then
\[
\#\left\{ Q\in\mathcal{Q}_{0}^{*}:W([0,N])\cap Q\neq\emptyset,W((-\infty,0))\cap Q=\emptyset\right\} =\sum_{i=0}^{N-1}f\left(S^{i}\left(B_{i},X_{i}\right)_{i=-\infty}^{\infty}\right)
\]
(note that $W(N)\notin\partial Q$ almost surely for every $N\in\mathbb{Z}$
and $Q\in\mathcal{Q}_{0}^{*}$) and
\[
\sum_{i=0}^{N-1}\#\left\{ Q\in\mathcal{Q}_{0}^{*}:W([i,i+1])\cap Q\neq\emptyset\right\} =\sum_{i=0}^{N-1}g\left(S^{i}\left(B_{i},X_{i}\right)_{i=-\infty}^{\infty}\right).
\]
By Lemma \ref{lem:ergodic  l1} the functions $f$ and $g$ are $\mathcal{L}^{1}$
functions with respect to the probability measure $R$, by Lemma \ref{lem:shift invariance}
we have that $R$ is a shift invariant measure and by Lemma \ref{lem:mixing}
we have that $R$ is an ergodic measure since mixing implies ergodicity.
Hence by Birkhoff`s ergodic theorem (\cite[Theorem 1.14]{Walters ergodic}
and the Remark after the theorem) the statement follows for
\[
\alpha=E\left(f\left(\left(B_{i},X_{i}\right)_{i=-\infty}^{\infty}\right)\right)
\]
and
\[
\beta=E\left(g\left(\left(B_{i},X_{i}\right)_{i=-\infty}^{\infty}\right)\right).
\]
\end{proof}
\begin{prop}
\label{prop:BlimN}Let $0\leq\alpha\leq\beta<\infty$ be as in Lemma
\ref{prop:BlimNminus}. Let $\widehat{W_{n}}(t)$ be two-sided Brownian
motions in $\mathbb{R}^{d}$ started at $0$ for every $n\in\mathbb{N}$.
Let $Y_{n}$ be a random variable for every $n\in\mathbb{N}$, that
is independent of $\widehat{W_{n}}$ (not necessarily i.i.d.), takes
values in $\mathbb{R}^{d}$, the distributions of $Y_{n}$ are absolutely
continuous with respect to the Lebesgue measure and the density functions
of $\left\{ Y_{n}\right\} _{0}$ are uniformly bounded. Let $\widetilde{W_{n}}=\widehat{W_{n}}+Y_{n}$.
Then
\begin{equation}
f_{N}(\omega)=N^{-1}\#\left\{ Q\in\mathcal{Q}_{0}^{*}:\widetilde{W_{N}}([0,N])\cap Q\neq\emptyset,\widetilde{W_{N}}((-\infty,0))\cap Q=\emptyset\right\} \label{eq:alpha lim}
\end{equation}
converges to $\alpha$ in probability as $N$ goes to $\infty$ and
\begin{equation}
g_{N}(\omega)=N^{-1}\sum_{i=0}^{N-1}\#\left\{ Q\in\mathcal{Q}_{0}^{*}:\widetilde{W_{N}}([i,i+1])\cap Q\neq\emptyset\right\} \label{eq:beta lim}
\end{equation}
converges to $\beta$ in probability as $N$ goes to $\infty$.
\end{prop}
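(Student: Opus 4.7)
The plan is to reduce to Lemma~\ref{prop:BlimNminus} through an integer-translation invariance followed by a standard absolute-continuity argument. First I would observe that the collection $\mathcal{Q}_0^*$ of dyadic unit cubes is invariant under translations by vectors in $\mathbb{Z}^d$. Consequently, for any continuous path $\pi:\mathbb{R}\to\mathbb{R}^d$, any time interval $I$ and any $k\in\mathbb{Z}^d$, the counts
\[
\#\{Q\in\mathcal{Q}_0^*:\pi(I)\cap Q\neq\emptyset\}\quad\text{and}\quad\#\{Q\in\mathcal{Q}_0^*:\pi(I)\cap Q\neq\emptyset,\pi(J)\cap Q=\emptyset\}
\]
are unchanged if $\pi$ is replaced by $\pi-k$. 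Applying this with $k=\lfloor Y_N\rfloor\in\mathbb{Z}^d$, the values of $f_N$ and $g_N$ computed on $\widetilde{W_N}=Y_N+\widehat{W_N}$ equal those computed on $\{Y_N\}_0+\widehat{W_N}$. Hence I may replace $Y_N$ by its representative $\{Y_N\}_0\in[0,1)^d$, whose density on $[0,1)^d$ is some $\tilde{\rho}_N\leq M$ by hypothesis.

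Next I would apply Lemma~\ref{prop:BlimNminus} directly: with $X$ uniform on $[0,1)^d$ and $W_0$ an independent two-sided Brownian motion started at $0$, Lemma~\ref{prop:BlimNminus} yields $f_N(X+W_0)\to\alpha$ and $g_N(X+W_0)\to\beta$ almost surely, hence in probability. For fixed $\varepsilon>0$ define $h^f_N(x)=P(|f_N(x+W_0)-\alpha|>\varepsilon)$ and $h^g_N(x)=P(|g_N(x+W_0)-\beta|>\varepsilon)$ for $x\in[0,1)^d$. By Fubini's theorem, these conditional probabilities satisfy
\[
\int_{[0,1)^d} h^f_N(x)\,dx=P(|f_N(X+W_0)-\alpha|>\varepsilon)\longrightarrow 0,\qquad\int_{[0,1)^d} h^g_N(x)\,dx\longrightarrow 0.
\]

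Finally, since $\widehat{W_N}$ has the same law as $W_0$ and is independent of $Y_N$ (and therefore of $\{Y_N\}_0$), conditioning on $\{Y_N\}_0=y$ and using the translation-invariance from the first step gives
\[
P(|f_N(\widetilde{W_N})-\alpha|>\varepsilon)=\int_{[0,1)^d} h^f_N(y)\,\tilde{\rho}_N(y)\,dy\leq M\int_{[0,1)^d} h^f_N(y)\,dy\longrightarrow 0,
\]
and identically for $g_N$ and $\beta$. The only point that requires care is the translation-invariance identity and the bookkeeping that lets one identify the conditional law of $\widetilde{W_N}$ given $\{Y_N\}_0=y$ with the law of $y+W_0$; both are routine. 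No serious obstacle is anticipated, as the density bound $\tilde{\rho}_N\leq M$ converts the convergence in probability of Lemma~\ref{prop:BlimNminus} directly into convergence in probability under the new starting laws.
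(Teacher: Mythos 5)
Your proposal is correct and follows essentially the same route as the paper's proof: reduce to $\{Y_N\}_0$ by the $\mathbb{Z}^d$-translation invariance of $\mathcal{Q}_0^*$, invoke Lemma \ref{prop:BlimNminus} for the uniformly started two-sided Brownian motion, and transfer convergence in probability to the new starting laws via the uniform density bound $M$ and Fubini. The only cosmetic difference is that the paper bounds the two one-sided tail events $\{f_N<\alpha-\varepsilon\}$ and $\{f_N>\alpha+\varepsilon\}$ separately, whereas you bound $P(|f_N-\alpha|>\varepsilon)$ directly; the substance is identical.
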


\begin{proof}
The quantity in (\ref{eq:alpha lim}) does not change if we replace
$Y_{n}$ by $\left\{ Y_{n}\right\} _{0}$, hence without the loss
of generality we assume that $Y_{n}=\left\{ Y_{n}\right\} _{0}$.
Let $\varepsilon>0$ be fixed. Let
\[
h_{N}(\omega)=N^{-1}\#\left\{ Q\in\mathcal{Q}_{0}^{*}:W([0,N])\cap Q\neq\emptyset,W((-\infty,0))\cap Q=\emptyset\right\} ,
\]
then
\begin{equation}
P(h_{N}<\alpha-\varepsilon)=\intop_{[0,1)^{d}}E\left(I_{f_{N}<\alpha-\varepsilon}\mid Y_{N}=x\right)\mathrm{d}x\label{eq:inteq}
\end{equation}
because $\widehat{W_{n}}$ and $W_{0}$ have the same distribution.
Let $D_{n}(x)$ be the density function of $\left\{ Y_{n}\right\} _{0}$
and let $M<\infty$ be the uniform bound that $D_{n}(x)\leq M$ for
every $n\in\mathbb{N}$ and Lebesgue almost every $x\in[0,1)^{d}$.
Then
\begin{equation}
P(f_{N}<\alpha-\varepsilon)=\intop_{[0,1)^{d}}E\left(I_{f_{N}<\alpha-\varepsilon}\mid Y_{N}=x\right)D_{N}(x)\mathrm{d}x\leq M\cdot P(h_{N}<\alpha-\varepsilon)\label{eq:pf}
\end{equation}
by (\ref{eq:inteq}). We have that $h_{N}$ converges to $\alpha$
almost surely by Lemma \ref{prop:BlimNminus} and so in probability.
Hence
\[
\lim_{N\rightarrow\infty}P(f_{N}<\alpha-\varepsilon)=0
\]
by (\ref{eq:pf}). Similarly we can show that
\[
\lim_{N\rightarrow\infty}P(f_{N}>\alpha+\varepsilon)=0,
\]
and hence
\begin{equation}
\lim_{N\rightarrow\infty}P(\left|f_{N}-\alpha\right|>\varepsilon)=0,\label{eq:probvege}
\end{equation}
i.e. $f_{n}$ converges to $\alpha$ in probability as $N$ goes to
$\infty$.

The proof of
\[
\lim_{N\rightarrow\infty}P(\left|g_{N}-\beta\right|>\varepsilon)=0
\]
for every $\varepsilon>0$ is similar to the proof of (\ref{eq:probvege}),
we omit the details.
\end{proof}

\subsection{Number of small cubes intersected by the Brownian path\label{subsec:small cubes}}

We say that a number $a\in\mathbb{R}$ is a \textit{dyadic number}
if $a=n\cdot2^{-k}$ for some $n\in\mathbb{Z}$, $k\in\mathbb{N}$.
We say that an interval $J\subseteq\mathbb{R}$ is an \textit{interval
with dyadic endpoints} if there exist dyadic numbers $a<b$ such that
$J$ is one of the following intervals $(a,b)$, $[a,b]$, $[a,b)$,
$(a,b]$.

Throughout this subsection let $W_{0}(t)$ be a two-sided Brownian
motion in $\mathbb{R}^{d}$ started at $0$. For a compact interval
$I=[a,b]\subseteq\mathbb{R}$ with dyadic endpoints we define the
following random variables:
\[
f_{k}^{I}=2^{-2k}\#\left\{ Q\in\mathcal{Q}_{k}^{*}:W_{0}(I)\cap Q\neq\emptyset,W_{0}((-\infty,a))\cap Q=\emptyset\right\} 
\]
and
\[
g_{k}^{I}=2^{-2k}\sum_{i=0}^{N_{k}-1}\#\left\{ Q\in\mathcal{Q}_{k}^{*}:W_{0}([a+i2^{-2k},a+(i+1)2^{-2k}])\cap Q\neq\emptyset\right\} 
\]
where $N_{k}=(b-a)2^{2k}$ (note, that $N_{k}$ is an integer for
large enough $k$ because $(b-a)$ is also a dyadic number).
\begin{lem}
\label{lem:scaling invariance}Let $r>0$ be fixed. Then $\left(W_{0}(t):t\in\mathbb{R}\right)$
and $\left(r^{-1}W_{0}(r^{2}t):t\in\mathbb{R}\right)$ have the same
distribution.
\end{lem}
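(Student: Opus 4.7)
The plan is to verify that $\widetilde{W}(t) := r^{-1} W_0(r^2 t)$ satisfies the same defining properties as a two-sided Brownian motion started at $0$, and then invoke uniqueness of the distribution on path space. Since the law of a continuous Gaussian process is determined by its finite-dimensional distributions (or equivalently, by its mean and covariance together with path continuity), it suffices to check these.

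First I would observe that $\widetilde{W}(0) = r^{-1} W_0(0) = 0$ and that $\widetilde{W}$ has continuous sample paths almost surely since $W_0$ does and $t \mapsto r^2 t$ is continuous. Next I would verify the increment structure: for any $s < t$ in $\mathbb{R}$,
\[
\widetilde{W}(t) - \widetilde{W}(s) = r^{-1}\bigl(W_0(r^2 t) - W_0(r^2 s)\bigr),
\]
which is a centred Gaussian vector in $\mathbb{R}^d$ with covariance $r^{-2} \cdot r^2 (t-s) I_d = (t-s) I_d$, matching the increment of a two-sided Brownian motion. Independence of increments on disjoint intervals transfers directly from $W_0$ to $\widetilde{W}$ because the map $t \mapsto r^2 t$ preserves disjointness of intervals.

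Thus $\widetilde{W}$ is a centred Gaussian process with continuous paths, independent increments, and the correct increment variance, hence it has the same finite-dimensional distributions as $W_0$. Since both processes take values in $C(\mathbb{R}, \mathbb{R}^d)$ and this space (with the topology of uniform convergence on compact sets) is a Polish space on which the Borel $\sigma$-algebra is generated by the cylinder sets, the equality of finite-dimensional distributions forces equality of laws on path space.

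No serious obstacle is expected; this is the routine two-sided analogue of the standard one-sided Brownian scaling identity, and the only care needed is to handle negative times, which poses no difficulty because the covariance computation above is valid for all real $s < t$.
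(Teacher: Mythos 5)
Your argument is correct: checking that $r^{-1}W_0(r^2t)$ is a continuous centred Gaussian process with the same increment covariance and independence structure, and then invoking determination of the law on path space by finite-dimensional distributions, is exactly the standard scaling argument. The paper gives no proof of its own here (it defers to \cite[Lemma 1.7]{Peres-Morters-Broanian motion}), and your verification, including the extension to negative times, is precisely the argument behind that citation.
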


Lemma \ref{lem:scaling invariance} is a folklore in the theory of
Brownian motions, see for example \cite[Lemma 1.7]{Peres-Morters-Broanian motion}.
\begin{lem}
\label{lem:cubes lim minus}Let $0\leq\alpha\leq\beta<\infty$ be
as in Lemma \ref{prop:BlimNminus}. Let $W_{0}(t)$ be a two-sided
Brownian motion in $\mathbb{R}^{d}$ ($d\geq3$) started at $0$.
Then for every $0<a<b<\infty$ dyadic numbers and for $I=[a,b]$ we
have that $f_{k}^{I}$ converges to $\alpha(b-a)$ in probability
as $k$ goes to $\infty$ and $g_{k}^{I}$ converges to $\beta(b-a)$
in probability as $k$ goes to $\infty$.
\end{lem}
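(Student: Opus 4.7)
The plan is to reduce to Proposition \ref{prop:BlimN} via the Brownian scaling of Lemma \ref{lem:scaling invariance}, followed by a time shift to bring the starting time to $0$.

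First, I would fix $k$ large enough that $N_k=(b-a)2^{2k}$ is a positive integer and $2^{2k}a$ is an integer. By Lemma \ref{lem:scaling invariance}, $V(s):=2^{k}W_0(2^{-2k}s)$ is a two-sided Brownian motion in $\mathbb{R}^d$ started at $0$. The map $x\mapsto 2^k x$ is a bijection between $\mathcal{Q}_k^*$ and $\mathcal{Q}_0^*$, and for any time interval $J$ and any $Q\in\mathcal{Q}_k^*$ we have $W_0(J)\cap Q\neq\emptyset$ iff $V(2^{2k}J)\cap(2^kQ)\neq\emptyset$. Applying this to $J=I=[a,b]$ and to $J=(-\infty,a)$ respectively, I get
\[
f_k^I \;=\; 2^{-2k}\,\#\bigl\{Q'\in\mathcal{Q}_0^*:\,V([2^{2k}a,\,2^{2k}a+N_k])\cap Q'\neq\emptyset,\;V((-\infty,2^{2k}a))\cap Q'=\emptyset\bigr\},
\]
and an analogous identity for $g_k^I$, where the $N_k$ unit time-subintervals of the definition of $g$ correspond precisely to the $N_k$ unit time-subintervals after shifting $V$.

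Next, I would perform the time shift. Set $Y_k:=V(2^{2k}a)$ and $\widehat V_k(s):=V(s+2^{2k}a)-V(2^{2k}a)$, so that $\widehat V_k$ is a two-sided Brownian motion started at $0$, independent of $Y_k$ (since $Y_k$ is measurable with respect to the increments on $(-\infty,2^{2k}a]$ and $\widehat V_k$ with respect to the increments on $(2^{2k}a,\infty)$ together with the shift of the past past piece by a constant). Writing $\widetilde V_k(s):=\widehat V_k(s)+Y_k$, the above count becomes exactly the quantity $N_k\cdot f_{N_k}$ from Proposition \ref{prop:BlimN} with $\widehat W_n=\widehat V_k$ and $Y_n=Y_k$. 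Thus
\[
f_k^I \;=\; 2^{-2k}\cdot N_k\cdot f_{N_k} \;=\; (b-a)\,f_{N_k},
\]
and similarly $g_k^I=(b-a)\,g_{N_k}$.

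To invoke Proposition \ref{prop:BlimN} I only need to check that the densities of $\{Y_k\}_0$ are uniformly bounded in $k$. But $Y_k$ is a mean-zero Gaussian vector with covariance $2^{2k}a\cdot\mathrm{Id}$, so $Y_k$ is equidistributed with $\sqrt{2^{2k}a}\cdot Z$ for a standard normal vector $Z$ in $\mathbb{R}^d$; the uniform boundedness of the density of $\{\sqrt{2^{2k}a}\,Z\}_0$ in $k$ is exactly the second conclusion of Lemma \ref{lem:kY} (the bound there depends only on $d$, not on the scalar factor). As $k\to\infty$, $N_k\to\infty$, so Proposition \ref{prop:BlimN} gives $f_{N_k}\to\alpha$ and $g_{N_k}\to\beta$ in probability, and hence $f_k^I\to\alpha(b-a)$ and $g_k^I\to\beta(b-a)$ in probability. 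The only mild obstacle is bookkeeping: one must check that the "never visited before" indicator, which refers to the whole past of $W_0$, transforms correctly under both the scaling and the shift — but since both transformations are bijective on the time axis and map $(-\infty,a)$ onto $(-\infty,2^{2k}a)$ and then onto $(-\infty,0)$, this is automatic.
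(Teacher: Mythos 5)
Your proposal follows essentially the same route as the paper: the paper performs the scaling and the time shift in a single step, setting $\widetilde{W_k}(t)=2^{k}W_{0}(2^{-2k}t+a)$, identifying $f_k^I$ with $(b-a)$ times the quantity of Proposition \ref{prop:BlimN} for $\widehat{W_k}(t)=2^{k}W_{0}(2^{-2k}t+a)-2^{k}W_{0}(a)$ and $Y_k=2^{k}W_{0}(a)$, and then invoking Lemma \ref{lem:kY} for the density bound; your two-step decomposition produces exactly the same pair $(\widehat{V}_k,Y_k)$.

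There is, however, one step where what you assert is false as stated: the claimed independence of $\widehat{V}_k$ and $Y_k$. Since $V(0)=0$, you have $\widehat{V}_k(-2^{2k}a)=V(0)-V(2^{2k}a)=-Y_k$, so $Y_k$ is a deterministic functional of the \emph{two-sided} process $\widehat{V}_k$; your parenthetical justification treats $\widehat{V}_k$ as if it were determined by the increments after time $2^{2k}a$, but its negative-time part is the re-centred past of $V$, which encodes the increment $V(2^{2k}a)-V(0)=Y_k$. This matters because the functional $f_N$ in Proposition \ref{prop:BlimN} depends on $\widetilde{W_N}((-\infty,0))$, i.e.\ on the past of $\widehat{W_N}$, and the proof of that proposition uses the independence hypothesis precisely to identify $E\left(I_{f_{N}<\alpha-\varepsilon}\mid Y_{N}=x\right)$ with the corresponding probability for $W_{0}+\{x\}_{0}$; conditionally on $Y_k=x$ the process $\widehat{V}_k$ is a bridge on $[-2^{2k}a,0]$ rather than an unconditioned two-sided Brownian motion, so that identification does not come for free. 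To be fair, the paper's own proof of this lemma feeds the same pair $(\widehat{W_k},Y_k)$ into Proposition \ref{prop:BlimN} without verifying the independence hypothesis, so this is an issue you have inherited rather than created; but the justification you supply for the independence is not valid, and a rigorous version of the argument would have to either weaken the hypothesis of Proposition \ref{prop:BlimN} (e.g.\ by an intermediate conditioning that separates the fractional part of $V(2^{2k}a)$ from the local path shape) or otherwise control the dependence between $Y_k$ and the past of $\widehat{V}_k$.
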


\begin{proof}
Let $0<a<b<\infty$ be fixed dyadic numbers. Assume that $k$ is large
enough that $N_{k}$ is an integer. Let $\widetilde{W_{k}}(t)=2^{k}W_{0}(2^{-2k}t+a))$
for every $t\in\mathbb{R}$. Then
\[
\#\left\{ Q\in\mathcal{Q}_{k}^{*}:W_{0}(I)\cap Q\neq\emptyset,W_{0}((-\infty,a))\cap Q=\emptyset\right\} =
\]
\[
\#\left\{ Q\in\mathcal{Q}_{0}^{*}:\widetilde{W_{k}}([0,N_{k}])\cap Q\neq\emptyset,\widetilde{W_{k}}((-\infty,0))\cap Q=\emptyset\right\} 
\]
and
\[
\#\left\{ Q\in\mathcal{Q}_{k}^{*}:W_{0}([a+i2^{-2k},a+(i+1)2^{-2k}])\cap Q\neq\emptyset\right\} =
\]
\[
\#\left\{ Q\in\mathcal{Q}_{0}^{*}:\widetilde{W_{k}}([i,i+1])\cap Q\neq\emptyset\right\} .
\]
Hence, to complete the proof, we need to show that
\begin{equation}
N_{k}^{-1}\#\left\{ Q\in\mathcal{Q}_{0}^{*}:\widetilde{W_{k}}([0,N_{k})\cap Q\neq\emptyset,\widetilde{W_{n}}((-\infty,a))\cap Q=\emptyset\right\} \label{eq:eqegyyes}
\end{equation}
converges to $\alpha$ in probability and
\begin{equation}
N_{k}^{-1}\sum_{i=0}^{N_{k}-1}\#\left\{ Q\in\mathcal{Q}_{0}^{*}:\widetilde{W_{k}}([i,i+1])\cap Q\neq\emptyset\right\} \label{eq:eqkettes}
\end{equation}
converges to $\beta$ in probability (note that $2^{-2k}=(b-a)\cdot N_{k}^{-1}$).

We have that $\left(W_{0}(t):t\in\mathbb{R}\right)$ and $\left(W_{0}(t+a)-W_{0}(a):t\in\mathbb{R}\right)$
have the same distribution. Then for $\widehat{W_{k}}(t)=2^{k}W_{0}(2^{-2k}t+a))-2^{k}W_{0}(a)$
we get that $\left(\widehat{W_{k}}(t):t\in\mathbb{R}\right)$ and
$\left(W_{0}(t):t\in\mathbb{R}\right)$ have the same distribution
by Lemma \ref{lem:scaling invariance} for $r=2^{-k}$. Let $Y_{k}=2^{k}W_{0}(a)$.
Then the distributions of $Y_{k}$ are absolutely continuous with
respect to the Lebesgue measure and the density functions of $\left\{ Y_{k}\right\} _{0}$
are uniformly bounded by Lemma \ref{lem:kY}. Thus the quantities
in (\ref{eq:eqegyyes}) and (\ref{eq:eqkettes}) are converging in
probability to the desired limit by Proposition \ref{prop:BlimN}.
\end{proof}
\begin{lem}
\label{lem:0time}Let $W_{0}(t)$ be a two-sided Brownian motion in
$\mathbb{R}^{d}$ ($d\geq3$) started at $0$ and let $I\subseteq\mathbb{R}$
be a compact interval. Let
\[
D=\left\{ t\in\mathbb{R}\setminus\mathrm{int}I:\exists s\in I,W_{0}(t)=W_{0}(s)\right\} .
\]
Then $D$ is a random compact set and
\[
\dim_{H}D\leq1/2
\]
almost surely. In particular, $\lambda(D)=0$ almost surely.
\end{lem}

\begin{proof}
We have that
\[
W_{0}(D)=W_{0}(I)\cap W_{0}(\mathbb{R}\setminus\mathrm{int}I)
\]
hence almost surely the intersection of a compact set and a closed
set and so $W_{0}(D)$ is almost surely a compact set. It follows
from \cite[Lemma 9.4]{Peres-Morters-Broanian motion} and the reversibility
that $\dim_{H}W_{0}(D)\leq1$ almost surely for $d=3$. It follows
from \cite[Theorem 9.1]{Peres-Morters-Broanian motion} and the Markov
property that $D=\partial I$ almost surely for $d\geq4$. So $\dim_{H}W_{0}(D)\leq1$
almost surely. Since $W_{0}$ is almost surely a continuous function
it follows that 
\[
D=W_{0}^{-1}(W_{0}(D))\cap(\mathbb{R}\setminus\mathrm{int}I)
\]
is a closed set and bounded because of the transience of the Brownian
motion \cite[Theorem 3.20]{Peres-Morters-Broanian motion}, thus $D$
is compact. It follows from the fact that $\dim_{H}W_{0}(D)\leq1$
almost surely and from Kaufman`s dimension doubling theorem \cite[Theorem 9.28]{Peres-Morters-Broanian motion}
that
\[
\dim_{H}D\leq1/2
\]
almost surely.
\end{proof}
\begin{lem}
\label{lem:dyadic cover}Let $W_{0}(t)$ be a two-sided Brownian motion
in $\mathbb{R}^{d}$ ($d\geq3$) started at $0$, let $I\subseteq\mathbb{R}$
be a compact interval and let $\varepsilon>0$. Then almost surely
there exist finitely many random open intervals $J_{1},\dots,J_{m}$
with dyadic endpoints such that
\[
\mathrm{dist}\left(W_{0}(I),W_{0}(\mathbb{R}\setminus\left(\mathrm{int}I\bigcup\left(\cup_{i=1}^{m}J_{i}\right)\right)\right)>0
\]
and
\[
\sum_{i=1}^{m}\lambda(J_{i})<\varepsilon.
\]
\end{lem}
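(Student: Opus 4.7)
The plan is to introduce, for each $\eta>0$, the random set
\[
E_\eta = \{\, t \in \mathbb{R} \setminus \mathrm{int}I : \mathrm{dist}(W_0(t), W_0(I)) \leq \eta \,\},
\]
of times outside $\mathrm{int}I$ whose $W_0$-image is close to $W_0(I)$. These sets are nested, $E_{\eta_1} \supseteq E_{\eta_2}$ for $\eta_1 \geq \eta_2$, and $\bigcap_{\eta>0} E_\eta = D$ with $D$ as in Lemma \ref{lem:0time}. The key properties I need are that $E_\eta$ is almost surely compact and that $\lambda(E_\eta) \downarrow 0$ as $\eta \downarrow 0$; the required $J_i$ will then be produced by covering $E_\eta$ with a finite disjoint family of open intervals with dyadic endpoints, of total length bounded by $\lambda(E_\eta)$ plus an arbitrarily small slack.

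First I would verify compactness of $E_\eta$: it is closed by continuity of $W_0$ together with closedness of $\mathbb{R} \setminus \mathrm{int}I$ and of $W_0(I)$, and it is bounded because, by transience of Brownian motion in dimension $d \geq 3$ (\cite[Theorem 3.20]{Peres-Morters-Broanian motion}), $\|W_0(t)\| \to \infty$ as $|t| \to \infty$ almost surely, while $W_0(I)$ is compact, so for $|t|$ large $\mathrm{dist}(W_0(t), W_0(I)) > \eta$. For any fixed $\eta_0 > 0$ the set $E_{\eta_0}$ then has finite Lebesgue measure, so continuity of measure from above combined with $\lambda(D) = 0$ (Lemma \ref{lem:0time}) yields $\lambda(E_\eta) \downarrow 0$. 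I pick $\eta > 0$ with $\lambda(E_\eta) < \varepsilon$.

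Next, by outer regularity, enclose $E_\eta$ in an open set $U$ with $\lambda(U) < \varepsilon$, and decompose $U$ into its open connected components $(\alpha_n, \beta_n)$. Compactness of $E_\eta$ reduces the cover of $E_\eta$ by these components to finitely many, say $(\alpha_1, \beta_1), \dots, (\alpha_N, \beta_N)$. For each $i$ the set $E_\eta \cap (\alpha_i, \beta_i)$ is a compact subset of the open interval $(\alpha_i, \beta_i)$, so by density of dyadic numbers I can pick dyadic $\alpha_i < a_i < b_i < \beta_i$ with $E_\eta \cap (\alpha_i, \beta_i) \subseteq (a_i, b_i)$; setting $J_i := (a_i, b_i)$ gives a finite disjoint family of open intervals with dyadic endpoints covering $E_\eta$, with $\sum_{i=1}^N \lambda(J_i) \leq \sum_{i=1}^N (\beta_i - \alpha_i) \leq \lambda(U) < \varepsilon$.

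Finally, I verify the distance estimate. The two endpoints of $I$ belong to $E_\eta$ because $W_0$ sends them into $W_0(I)$, so $I \subseteq \mathrm{int}I \cup E_\eta \subseteq \mathrm{int}I \cup \bigcup_{i=1}^N J_i$. For any $t \in \mathbb{R} \setminus (\mathrm{int}I \cup \bigcup_{i=1}^N J_i)$ we therefore have $t \in \mathbb{R} \setminus \mathrm{int}I$ and $t \notin E_\eta$, so by the definition of $E_\eta$, $\mathrm{dist}(W_0(t), W_0(I)) > \eta$; taking the infimum over such $t$ gives the required positive separation. The only genuinely non-routine step is the compactness and vanishing-measure property of $E_\eta$, which is precisely where transience of the Brownian motion and Lemma \ref{lem:0time} enter; the remainder is topology together with outer regularity of Lebesgue measure.
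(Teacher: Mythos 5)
Your proof is correct. It rests on the same two pillars as the paper's argument — Lemma \ref{lem:0time} (the collision-time set $D$ is compact with $\lambda(D)=0$) and transience of Brownian motion in $d\geq 3$ — but it organises the separation step differently. The paper covers $D$ itself by finitely many small dyadic intervals and then obtains positive distance by observing that $W_0(I)$ is compact, $W_0\bigl(\mathbb{R}\setminus(\mathrm{int}I\cup\bigcup_i J_i)\bigr)$ is closed, and the two are disjoint; the closedness of that image is asserted without comment, and in fact itself requires transience (the time set is closed but unbounded, so its continuous image is closed only because $\|W_0(t)\|\to\infty$ as $|t|\to\infty$). You instead cover the enlarged set $E_\eta\supseteq D$ for a small $\eta$, using compactness of $E_\eta$ (again via transience) and continuity of measure from above to get $\lambda(E_\eta)<\varepsilon$, and then the separation comes out quantitatively: any surviving time $t$ lies outside $E_\eta$, so $\mathrm{dist}(W_0(t),W_0(I))>\eta$ by definition. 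What your route buys is an explicit lower bound $\eta$ on the distance and the avoidance of the closed-image claim; what it costs is the extra bookkeeping around $E_\eta$ and the continuity-from-above step. Two minor points to tidy up: you should note, as the paper does, that the intervals can be selected in a Borel-measurable way so that they are legitimately random objects; and the remark that the endpoints of $I$ lie in $E_\eta$ is not actually needed for the distance estimate, since the defining inequality of $E_\eta$ already applies to every $t\in\mathbb{R}\setminus\mathrm{int}I$ outside your cover.
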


\begin{proof}
Let $D$ be as in Lemma \ref{lem:0time}. Since $\lambda(D)=0$ and
$D$ is compact almost surely we can cover $D$ with finitely many
random open intervals $J_{1},\dots,J_{m}$ with dyadic endpoints such
that $\sum_{i=1}^{m}\lambda(J_{i})<\varepsilon$. It is not hard to
see that we can choose the open intervals $J_{1},\dots,J_{m}$ on
a Borel measurable way, so we legitimately say random open intervals.

We have that $W_{0}(I)$ and $W_{0}(\mathbb{R}\setminus\left(\mathrm{int}I\bigcup\left(\cup_{i=1}^{m}J_{i}\right)\right)$
are disjoint almost surely due to the definition of $D$ (note that
the endpoints of $I$ are contained in $D$). Thus
\[
\mathrm{dist}\left(W_{0}(I),W_{0}(\mathbb{R}\setminus\left(\mathrm{int}I\bigcup\left(\cup_{i=1}^{m}J_{i}\right)\right)\right)>0
\]
almost surely because $W_{0}(I)$ is almost surely compact and $W_{0}(\mathbb{R}\setminus\left(\mathrm{int}I\bigcup\left(\cup_{i=1}^{m}J_{i}\right)\right)$
is almost surely closed.
\end{proof}
\begin{rem}
\label{nota:Q*}It follows from \cite[Corollary 3.19]{Peres-Morters-Broanian motion}
that $W(\mathbb{R})\cap\{0\}=\emptyset$ almost surely. Hence for
every deterministic interval $I\subseteq\mathbb{R}$ we have that
\[
\left\{ Q\in\mathcal{Q}_{k}:W(I)\cap Q\neq\emptyset\right\} =\left\{ Q\in\mathcal{Q}_{k}^{*}:W(I)\cap Q\neq\emptyset\right\} 
\]
almost surely. Similar holds for $W$ replaced by $W_{0}$ when $I\subseteq(0,\infty)$
because $W_{0}((0,\infty))\cap\{0\}=\emptyset$ almost surely.
\end{rem}

\begin{prop}
\label{lem:intunlim}Let $0\leq\alpha<\infty$ be as in Lemma \ref{lem:cubes lim minus}.
Let $W_{0}(t)$ be a two-sided Brownian motion in $\mathbb{R}^{d}$
($d\geq3$) started at $0$. Let $I=\cup_{n=1}^{N}I_{n}$ where $I_{1},\dots,I_{N}$
are disjoint compact intervals with positive dyadic endpoints. Then
\begin{equation}
h_{k}^{I}:=2^{-2k}\#\left\{ Q\in\mathcal{Q}_{k}:W_{0}(I)\cap Q\neq\emptyset\right\} \label{eq:h def}
\end{equation}
converges to $\alpha\lambda(I)$ in probability.
\end{prop}

\begin{proof}
To prove the statement of the proposition, by Lemma \ref{lem:subseq prob 0},
it is enough to show that for every $\varepsilon>0$ and for every
subsequence $\left\{ \alpha_{k}\right\} _{k=1}^{\infty}$ of $\mathbb{N}$
we can find a subsequence $\left\{ \beta_{k}\right\} _{k=1}^{\infty}$
of $\left\{ \alpha_{k}\right\} _{k=1}^{\infty}$ such that
\begin{equation}
\lim_{k\rightarrow\infty}P\left(\left|h_{\beta_{k}}^{I}-\alpha\lambda(I)\right|>\varepsilon\right)=0.\label{eq:celeq}
\end{equation}
It follows from Remark \ref{nota:Q*} that
\[
h_{k}^{I}=2^{-2k}\#\left\{ Q\in\mathcal{Q}_{k}^{*}:W_{0}(I)\cap Q\neq\emptyset\right\} 
\]
which we use throughout the proof instead of (\ref{eq:h def}).

Let $\varepsilon>0$ be fixed and $\left\{ \alpha_{k}\right\} _{k=1}^{\infty}$
be a subsequence of $\mathbb{N}$. Let $0\leq\beta<\infty$ be as
in Lemma \ref{lem:cubes lim minus}. For every compact interval $J$
with positive dyadic endpoints we have that $f_{k}^{J}$ converges
to $\alpha\lambda(J)$ in probability and $g_{k}^{J}$ converges to
$\beta\lambda(J)$ in probability by Lemma \ref{lem:cubes lim minus}.
Hence, by Lemma \ref{lem:mutual convergence subsequance}, we can
find an event $H$ with $P(H)=1$ and a subsequence $\left\{ \beta_{k}\right\} _{k=1}^{\infty}$
of $\left\{ \alpha_{k}\right\} _{k=1}^{\infty}$ such that $f_{\beta_{k}}^{J}(\omega)$
converges to $\alpha\lambda(J)$ and $g_{\beta_{k}}^{J}(\omega)$
converges to $\beta\lambda(J)$ for every interval $J$ with positive
dyadic endpoints for every outcome $\omega\in H$. We can further
assume, by Lemma \ref{lem:dyadic cover}, that there exist $m_{j}(\omega)\in\mathbb{N}$
and finitely many open intervals $J_{1}^{j}(\omega),\dots,J_{m_{j}}^{j}(\omega)$
with dyadic endpoints for $j=1,\dots,N$ such that
\begin{equation}
r_{j}:=\mathrm{dist}\left(W_{0}^{\omega}(I_{j}),W_{0}^{\omega}(\mathbb{R}\setminus\left(\mathrm{int}I_{j}\bigcup\left(\cup_{i=1}^{m_{j}}J_{i}^{j}(\omega)\right)\right)\right)>0\label{eq:distpos}
\end{equation}
and
\begin{equation}
\sum_{i=1}^{m_{j}(\omega)}\lambda(J_{i}^{j}(\omega))<\varepsilon(2N\beta)^{-1}\label{eq:lambdasumj}
\end{equation}
for every $\omega\in H$.

Let $\omega\in H$ be fixed. If $k$ is large enough that $\mathrm{diam}(Q)<2^{-1}r_{j}$
for some $j$ and $Q\in\mathcal{Q}_{k}^{*}$ then whenever $Q\cap W_{0}^{\omega}(I_{j})\neq\emptyset$
then, by (\ref{eq:distpos}), either $Q\cap W_{0}^{\omega}(I_{j})\neq\emptyset$
for $I_{j}=[a_{j},b_{j}]$ and $Q\cap W_{0}^{\omega}(-\infty,a_{j})=\emptyset$
or $Q\cap W_{0}^{\omega}(J_{i}^{j}(\omega))\neq\emptyset$ for some
$i$. Thus
\[
\sum_{j=1}^{N}f_{\beta_{k}}^{I_{j}}(\omega)\leq h_{\beta_{k}}^{I}(\omega)\leq\left(\sum_{j=1}^{N}f_{\beta_{k}}^{I_{j}}(\omega)\right)+\left(\sum_{j=1}^{N}\sum_{i=1}^{m_{j}(\omega)}g_{\beta_{k}}^{\overline{J_{i}^{j}}(\omega)}(\omega)\right)
\]
for large enough $k$, where $\overline{J_{i}}(\omega)$ is the closure
of $J_{i}(\omega)$. Hence
\[
\limsup_{k\rightarrow\infty}\left|h_{\beta_{k}}^{I}(\omega)-\sum_{j=1}^{N}f_{\beta_{k}}^{I_{j}}(\omega)\right|\leq\lim_{k\rightarrow\infty}\sum_{j=1}^{N}\sum_{i=1}^{m(\omega)}g_{\beta_{k}}^{\overline{J_{i}^{j}}(\omega)}(\omega)=\sum_{j=1}^{N}\sum_{i=1}^{m(\omega)}\beta\lambda(\overline{J_{i}}(\omega))<\varepsilon/2
\]
by (\ref{eq:lambdasumj}). Since this holds for every $\omega\in H$
it follows that
\begin{equation}
\lim_{k\rightarrow\infty}P\left(\left|h_{\beta_{k}}^{I}-\sum_{j=1}^{N}f_{\beta_{k}}^{I_{j}}(\omega)\right|>\varepsilon/2\right)=0.\label{eq:segeq}
\end{equation}

We have that
\[
P\left(\left|h_{\beta_{k}}^{I}-\alpha\lambda(I)\right|>\varepsilon\right)\leq P\left(\left|h_{\beta_{k}}^{I}-\sum_{j=1}^{N}f_{\beta_{k}}^{I_{j}}(\omega)\right|>\varepsilon/2\right)+P\left(\left|\sum_{j=1}^{N}f_{\beta_{k}}^{I_{j}}(\omega)-\alpha\lambda(I)\right|>\varepsilon/2\right),
\]
and hence (\ref{eq:celeq}) follows from (\ref{eq:segeq}) and the
fact that $\sum_{j=1}^{N}f_{\beta_{k}}^{I_{j}}(\omega)$ converges
to $\alpha\lambda(I)$ on $H$.
\end{proof}

\subsection{Occupation measure and limit measure\label{subsec:Occupation-measure-and}}

Throughout this subsection let $B_{0}(.)$ be a standard Brownian
motion in $\mathbb{R}^{d}$ ($d\geq3$) started at $0$, let $B=\{B_{0}(t):t\in[0,\infty)\}$
be the range of $B_{0}$. Let $\tau$ be the \textit{occupation measure
of $B_{0}(.)$}, that is
\begin{equation}
\tau(A)=\intop_{0}^{\infty}I_{B_{0}(t)\in A}\mathrm{d}t\label{eq:ocup def}
\end{equation}
for every Borel set $A\subseteq\mathbb{R}^{d}$, i.e. the amount of
time that the Brownian motion spends in $A$.
\begin{lem}
\label{lem:bound image}Let $Q\in\mathcal{Q}_{n}$ for some $n\in\mathbb{N}$.
Then $\lambda(\partial B_{0}^{-1}(Q))=0$ almost surely.
\end{lem}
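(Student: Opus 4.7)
The plan is to reduce the statement to a Fubini computation by using continuity of $B_0$ to localise the boundary of the preimage to the preimage of the boundary of $Q$.

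First I would observe that $B_0$ is almost surely continuous, so $B_0^{-1}(\mathrm{int}(Q))$ is open and $B_0^{-1}(\overline{Q})$ is closed (where $\overline{Q}$ denotes the closure of $Q$ in $\mathbb{R}^d$). Consequently
\[
\partial B_0^{-1}(Q) \;=\; \overline{B_0^{-1}(Q)} \setminus \mathrm{int}\bigl(B_0^{-1}(Q)\bigr) \;\subseteq\; B_0^{-1}(\overline{Q}) \setminus B_0^{-1}(\mathrm{int}(Q)) \;=\; B_0^{-1}(\partial Q),
\]
where $\partial Q = \overline{Q} \setminus \mathrm{int}(Q)$ is the topological boundary of the dyadic cube $Q$ in $\mathbb{R}^d$. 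Thus it suffices to show $\lambda(B_0^{-1}(\partial Q)) = 0$ almost surely.

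Next I would note that $\partial Q$ is a finite union of $(d-1)$-dimensional faces, each contained in an affine hyperplane of $\mathbb{R}^d$. Hence it is enough to prove that for any affine hyperplane $H \subseteq \mathbb{R}^d$,
\[
\lambda\bigl(\{t \geq 0 : B_0(t) \in H\}\bigr) = 0 \quad \text{almost surely.}
\]
This follows from a single application of Tonelli: since $B_0(t)$ is $N(0, t I_d)$-distributed for every $t > 0$ and $H$ has Lebesgue measure zero in $\mathbb{R}^d$, we have $P(B_0(t) \in H) = 0$ for all $t > 0$, and therefore
\[
E\bigl[\lambda(\{t \geq 0 : B_0(t) \in H\})\bigr] \;=\; \intop_0^\infty P(B_0(t) \in H)\,\mathrm{d}t \;=\; 0.
\]
A nonnegative random variable with zero expectation vanishes almost surely, which gives the claim for each hyperplane.

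Finally I would assemble: $\partial Q$ is covered by finitely many hyperplanes $H_1, \dots, H_N$, so $B_0^{-1}(\partial Q) \subseteq \bigcup_{j=1}^N B_0^{-1}(H_j)$, and a finite union of almost sure null sets is almost surely null. Combining with the first display yields $\lambda(\partial B_0^{-1}(Q)) = 0$ almost surely. No step here looks like a real obstacle; the only subtlety is being careful with the inclusions at the very beginning, since $Q$ is half-open and one should work with the topological closure and interior in $\mathbb{R}^d$ rather than with $Q$ itself.
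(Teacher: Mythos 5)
Your proof is correct, and the first reduction ($\partial B_0^{-1}(Q)\subseteq B_0^{-1}(\partial Q)$ via continuity) is exactly the paper's first step. Where you diverge is in showing $\lambda(B_0^{-1}(\partial Q))=0$: the paper writes $B_0=(B_1,\dots,B_d)$, notes that the preimage of each face of $Q$ is contained in a level set $B_i^{-1}(a)$ of a one-dimensional coordinate Brownian motion, and invokes the theorem that $\dim_H(B_i^{-1}(a))=1/2$ almost surely, whence each such level set is Lebesgue null. You instead use a Tonelli/first-moment computation: $E[\lambda(\{t:B_0(t)\in H\})]=\int_0^\infty P(B_0(t)\in H)\,\mathrm{d}t=0$ because $B_0(t)$ has a density for $t>0$ and $H$ is Lebesgue null in $\mathbb{R}^d$. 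Your route is more elementary (it needs only joint measurability of $(t,\omega)\mapsto B_0(t,\omega)$ and the Gaussian density, not the level-set dimension theorem) and is also more general, since the same one-line Fubini argument shows $\lambda(B_0^{-1}(A))=0$ a.s.\ for \emph{any} Borel set $A\subseteq\mathbb{R}^d$ with $\lambda_d(A)=0$, not just finite unions of hyperplanes. What the paper's argument buys in exchange is a strictly stronger conclusion, namely that the time set in question has Hausdorff dimension at most $1/2$; that extra strength is not needed for this lemma, though the same dimension-doubling circle of ideas is reused in the neighbouring Lemma \ref{lem:0time}. Both arguments are complete and correct.
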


\begin{proof}
Since $B_{0}$ is almost surely continuous it follows that $\partial B_{0}^{-1}(Q)\subseteq B_{0}^{-1}(\partial Q)$
almost surely. Hence it is enough to prove that $\lambda\left(B_{0}^{-1}(\partial Q)\right)=0$.
Let $B_{0}(t)=(B_{1}(t),\dots,B_{d}(t))$ for every $t\in[0,\infty)$.
Then
\[
\dim_{H}(B_{i}^{-1}(a))=1/2
\]
almost surely for every $i=1,\dots,d$ and $a\in\mathbb{R}$ by \cite[Theorem 9.34]{Peres-Morters-Broanian motion}.
For every side of $Q$ there exists $i\in\{1,\dots,d\}$ and $a\in\mathbb{R}$
such that the preimage of that side of $Q$ is contained in $B_{i}^{-1}(a)$.
Thus it follows that
\[
\lambda\left(B_{0}^{-1}(\partial Q)\right)=0
\]
because $Q$ has finitely many sides.
\end{proof}
\begin{lem}
\label{lem:ocup cover}Let $Q\in\mathcal{Q}_{n}$ for some $n\in\mathbb{N}$
such that $\mathrm{dist}(Q,0)>0$ and let $\varepsilon>0$ be fixed.
Then almost surely there exist random sets $I^{-}=\cup_{n=1}^{N^{-}}I_{n}^{-}$
where $I_{1}^{-},\dots,I_{N^{-}}^{-}$ are random disjoint compact
intervals with positive dyadic endpoints and $I^{+}=\cup_{n=1}^{N^{+}}I_{n}^{+}$
where $I_{1}^{+},\dots,I_{N^{+}}^{+}$ are random disjoint compact
intervals with positive dyadic endpoints such that
\[
I^{-}\subseteq B_{0}^{-1}(Q)\subseteq I^{+}
\]
and $\lambda(I^{+}\setminus I^{-})<\varepsilon$.
\end{lem}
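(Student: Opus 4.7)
The plan is to combine three ingredients: transience of the Brownian motion in dimension $d\geq 3$, the continuity of $B_0$ together with $B_0(0)=0\notin Q$, and the already-established Lemma \ref{lem:bound image} which gives $\lambda(\partial B_0^{-1}(Q))=0$ almost surely.

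First I would argue that almost surely there exist random reals $0<\delta<T<\infty$ with $B_0^{-1}(Q)\subseteq[\delta,T]$. Since $Q$ is bounded and the Brownian motion is transient in $d\geq 3$ (used earlier in the excerpt via \cite[Theorem 3.20]{Peres-Morters-Broanian motion}), $\|B_0(t)\|\to\infty$ almost surely and so $B_0^{-1}(Q)$ is contained in some random $[0,T]$. The condition $\mathrm{dist}(Q,0)>0$ together with continuity of $B_0$ at $0$ gives a random $\delta>0$ with $B_0([0,\delta))\cap Q=\emptyset$, so $B_0^{-1}(Q)\subseteq[\delta,T]$.

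Next I would build the inner approximation from the interior $G=\mathrm{int}\,B_0^{-1}(Q)$. Since $G$ is an open subset of $[\delta,T]$, it is a countable disjoint union of open intervals $(a_j,b_j)$. Choosing finitely many with total measure exceeding $\lambda(G)-\varepsilon/4$, and then selecting inside each chosen $(a_j,b_j)$ a compact subinterval with positive dyadic endpoints whose complement in $(a_j,b_j)$ has length at most $\varepsilon/(4N)$, yields disjoint compact intervals $I_1^-,\dots,I_{N^-}^-$ with positive dyadic endpoints whose union $I^-\subseteq G\subseteq B_0^{-1}(Q)$ satisfies $\lambda(B_0^{-1}(Q)\setminus I^-)<\varepsilon/2$ (here I use $\lambda(G)=\lambda(B_0^{-1}(Q))$ by Lemma \ref{lem:bound image}). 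The outer approximation starts from the closure $F=\overline{B_0^{-1}(Q)}\subseteq[\delta/2,T+1]$, which is compact with $\lambda(F)=\lambda(B_0^{-1}(Q))$ by Lemma \ref{lem:bound image}. Cover $F$ by finitely many open intervals with positive dyadic endpoints of total length less than $\lambda(F)+\varepsilon/4$, take closures, and merge any overlapping closed intervals (still with dyadic endpoints) to obtain a finite union $I^+$ of disjoint compact intervals with positive dyadic endpoints with $B_0^{-1}(Q)\subseteq I^+$ and $\lambda(I^+)<\lambda(B_0^{-1}(Q))+\varepsilon/2$. Putting the two estimates together gives $\lambda(I^+\setminus I^-)\leq\lambda(I^+)-\lambda(I^-)<\varepsilon$.

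The one subtle point is that $I^{\pm}$ have to be genuine random sets, i.e.\ the enumerations $N^{\pm}$, $I_i^{\pm}$ must depend measurably on $\omega$. This is routine: $B_0^{-1}(Q)$, hence also $G$ and $F$ and $\lambda(B_0^{-1}(Q))$, are jointly measurable in $\omega$, the family of finite unions of intervals with positive dyadic endpoints is countable, and one can order it lexicographically and, for each $\omega$, pick the first member of the family that witnesses the required inequalities. So there is no real obstacle — the main issue is just bookkeeping to produce $I^-$ and $I^+$ with dyadic endpoints while controlling the Lebesgue error, and the key analytic inputs (transience plus $\lambda(\partial B_0^{-1}(Q))=0$) are both already in hand.
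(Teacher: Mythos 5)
Your proposal is correct and follows essentially the same route as the paper: the paper's proof simply invokes Lemma \ref{lem:bound image} to get $\lambda(\partial B_{0}^{-1}(Q))=0$ and then asserts that the inner and outer dyadic-interval approximations exist and can be chosen measurably. You supply the details the paper omits (transience for boundedness, $\mathrm{dist}(Q,0)>0$ for positivity of the endpoints, and the interior/closure regularity argument), all of which are sound.
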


\begin{proof}
We have that $\lambda(\partial B_{0}^{-1}(Q))=0$ almost surely by
Lemma \ref{lem:bound image}. Hence almost surely there exist random
sets $I^{-}=\cup_{n=1}^{N^{-}}I_{n}^{-}$ where $I_{1}^{-},\dots,I_{N^{-}}^{-}$
are random disjoint compact intervals with positive dyadic endpoints
and $I^{+}=\cup_{n=1}^{N^{+}}I_{n}^{+}$ where $I_{1}^{+},\dots,I_{N^{+}}^{+}$
are random disjoint compact intervals with positive dyadic endpoints
such that
\[
I^{-}\subseteq B_{0}^{-1}(Q)\subseteq I^{+}
\]
and $\lambda(I^{+}\setminus I^{-})<\varepsilon$. It is easy to see
that we can choose the intervals on a measurable way.
\end{proof}
\begin{lem}
\label{lem:fk}Let $Q\in\mathcal{Q}_{n}$ for some $n\in\mathbb{N}$
such that $\mathrm{dist}(Q,0)>0$ and let $G(x,y)$ be as in Remark
\ref{rem:green fn}. Then
\begin{equation}
\lim_{k\rightarrow\infty}\sup_{x\in Q}\frac{2^{k(2-d)}\left\Vert x\right\Vert ^{2-d}}{P(Q_{k}(x)\cap B\neq\emptyset)}=C_{G}([0,1]^{d})\label{eq:sup}
\end{equation}
and
\begin{equation}
\lim_{k\rightarrow\infty}\inf_{x\in Q}\frac{2^{k(2-d)}\left\Vert x\right\Vert ^{2-d}}{P(Q_{k}(x)\cap B\neq\emptyset)}=C_{G}([0,1]^{d}).\label{eq:inf}
\end{equation}
\end{lem}

\begin{proof}
For every $Q=[a_{1},b_{1})\times\dots\times[a_{d}.b_{d})\in Q_{k}$
let $x_{Q}=(a_{1},\dots,a_{d})\in Q$. By applying Lemma \ref{lem:hitting prob of compact sets}
and Remark \ref{rem:boundary dont count} to $A=[0,1)^{d}$ it follows
that
\begin{equation}
\lim_{k\rightarrow\infty}\sup_{x\in Q}\frac{2^{k(2-d)}\left\Vert x_{Q_{k}(x)}\right\Vert ^{2-d}}{P(Q_{k}(x)\cap B\neq\emptyset)}=C_{G}([0,1]^{d})\label{eq:xQ}
\end{equation}
and
\begin{equation}
\lim_{k\rightarrow\infty}\inf_{x\in Q}\frac{2^{k(2-d)}\left\Vert x_{Q_{k}(x)}\right\Vert ^{2-d}}{P(Q_{k}(x)\cap B\neq\emptyset)}=C_{G}([0,1]^{d}).\label{xqin}
\end{equation}
It is easy to show that
\[
\frac{\left\Vert x\right\Vert ^{2-d}}{\left\Vert x_{Q_{k}(x)}\right\Vert ^{2-d}}
\]
converges to $1$ uniformly on $Q$. Hence (\ref{eq:sup}) follows
from (\ref{eq:xQ}) and (\ref{eq:inf}) follows from (\ref{xqin}).
\end{proof}
\begin{lem}
\label{lem:ocub box}Let $0\leq\alpha<\infty$ be as in Proposition
\ref{lem:intunlim}, $G(x,y)$ be as in Remark \ref{rem:green fn}
and $Q\in\mathcal{Q}_{n}$ for some $n\in\mathbb{N}$ such that $\mathrm{dist}(Q,0)>0$.
Then
\[
\intop_{Q}\left\Vert x\right\Vert ^{2-d}\mathrm{d}\mathcal{C}_{k}(\lambda)
\]
converges to $\gamma\cdot\tau(Q)$ in probability as $k$ goes to
$\infty$, where
\begin{equation}
\gamma=\alpha\cdot C_{G}([0,1]^{d}).\label{eq:gamma}
\end{equation}
\end{lem}

\begin{proof}
Let
\[
M_{k}=\sup_{x\in Q}\frac{2^{k(2-d)}\left\Vert x\right\Vert ^{2-d}}{P(Q_{k}(x)\cap B\neq\emptyset)},
\]
then
\[
\frac{\intop_{S}\left\Vert x\right\Vert ^{2-d}\mathrm{d}x}{P(S\cap B\neq\emptyset)}\leq2^{-2k}M_{k}
\]
for every $S\in\mathcal{Q}_{k}$, $S\subseteq Q$ for every $k\geq n$.
Thus
\[
\intop_{Q}\left\Vert x\right\Vert ^{2-d}\mathrm{d}\mathcal{C}_{k}(\lambda)=\sum_{\begin{array}{c}
S\in\mathcal{Q}_{k}\\
S\subseteq Q
\end{array}}\frac{I_{S\cap B\neq\emptyset}}{P(S\cap B\neq\emptyset)}\intop_{S}\left\Vert x\right\Vert ^{2-d}\mathrm{d}x\leq
\]
\begin{equation}
\sum_{\begin{array}{c}
S\in\mathcal{Q}_{k}\\
S\subseteq Q
\end{array}}I_{S\cap B\neq\emptyset}2^{-2k}M_{k}=M_{k}2^{-2k}\#\left\{ S\in\mathcal{Q}_{k}:S\subseteq Q,\,B\cap S\neq\emptyset\right\} .\label{eq:nagymk}
\end{equation}
Similarly it can be shown that
\begin{equation}
\intop_{Q}\left\Vert x\right\Vert ^{2-d}\mathrm{d}\mathcal{C}_{k}(\lambda)\geq m_{k}2^{-2k}\#\left\{ S\in\mathcal{Q}_{k}:S\subseteq Q,\,B\cap S\neq\emptyset\right\} \label{eq:kismk}
\end{equation}
for
\[
m_{k}=\inf_{x\in Q}\frac{2^{k(2-d)}\left\Vert x\right\Vert ^{2-d}}{P(Q_{k}(x)\cap B\neq\emptyset)}.
\]

To prove the statement of the lemma, by Lemma \ref{lem:subseq prob 0},
it is enough to show that for every $\varepsilon>0$ and for every
subsequence $\left\{ \alpha_{k}\right\} _{k=1}^{\infty}$ of $\mathbb{N}$
we can find a subsequence $\left\{ \beta_{k}\right\} _{k=1}^{\infty}$
of $\left\{ \alpha_{k}\right\} _{k=1}^{\infty}$ such that
\begin{equation}
\lim_{k\rightarrow\infty}P\left(\left|\intop_{Q}\left\Vert x\right\Vert ^{2-d}\mathrm{d}\mathcal{C}_{\beta_{k}}(\lambda)-\gamma\cdot\tau(Q)\right|>\varepsilon\right)=0.\label{eq:celeq-1}
\end{equation}

Let $\varepsilon>0$ be fixed and $\left\{ \alpha_{k}\right\} _{k=1}^{\infty}$
be a subsequence of $\mathbb{N}$. For every $I=\cup_{n=1}^{N}I_{n}$
where $I_{1},\dots,I_{n}$ are disjoint compact intervals with positive
dyadic endpoints we have that
\[
h_{k}^{I}:=2^{-2k}\#\left\{ S\in\mathcal{Q}_{k}:B_{0}(I)\cap S\neq\emptyset\right\} 
\]
converges to $\alpha\lambda(I)$ in probability by Proposition \ref{lem:intunlim}.
Hence, by Lemma \ref{lem:mutual convergence subsequance}, we can
find an event $H$ with $P(H)=1$ and a subsequence $\left\{ \beta_{k}\right\} _{k=1}^{\infty}$
of $\left\{ \alpha_{k}\right\} _{k=1}^{\infty}$ such that $h_{\beta_{k}}^{I}(\omega)$
converges to $\alpha\lambda(I)$ for every $I=\cup_{n=1}^{N}I_{n}$
where $I_{1},\dots,I_{n}$ are disjoint compact intervals with positive
dyadic endpoints for every outcome $\omega\in H$. We can further
assume, by Lemma \ref{lem:ocup cover}, that for every $\omega\in H$
there exist $N^{-}(\omega),N^{+}(\omega)\in\mathbb{N}$ and sets $I^{-}(\omega)=\cup_{n=1}^{N^{-}(\omega)}I_{n}^{-}(\omega)$
where $I_{1}^{-}(\omega),\dots,I_{N^{-}}^{-}(\omega)$ are disjoint
compact intervals with positive dyadic endpoints and $I^{+}(\omega)=\cup_{n=1}^{N^{+}(\omega)}I_{n}^{+}(\omega)$
where $I_{1}^{+}(\omega),\dots,I_{N^{+}}^{+}(\omega)$ are disjoint
compact intervals with positive dyadic endpoints such that
\begin{equation}
I^{-}\subseteq B_{0}^{-1}(Q)\subseteq I^{+}\label{eq:minusplus}
\end{equation}
and
\begin{equation}
\lambda(I^{+}\setminus I^{-})<\varepsilon/2\gamma.\label{eq:epsilonka}
\end{equation}

Let $\omega\in H$ be fixed. Then
\[
\intop_{Q}\left\Vert x\right\Vert ^{2-d}\mathrm{d}\mathcal{C}_{\beta_{k},\omega}(\lambda)\leq M_{\beta_{k}}2^{-2\beta_{k}}\#\left\{ S\in\mathcal{Q}_{\beta_{k}}:S\subseteq Q,\,B_{\omega}\cap S\neq\emptyset\right\} \leq M_{\beta_{k}}\cdot h_{\beta_{k}}^{I^{+}}(\omega)
\]
by (\ref{eq:nagymk}) and (\ref{eq:minusplus}). Thus
\[
\limsup_{k\rightarrow\infty}\intop_{Q}\left\Vert x\right\Vert ^{2-d}\mathrm{d}\mathcal{C}_{\beta_{k},\omega}(\lambda)\leq\lim_{k\rightarrow\infty}M_{\beta_{k}}\cdot h_{\beta_{k}}^{I^{+}}(\omega)=C_{G}([0,1]^{d})\cdot\alpha\cdot\lambda(I^{+})
\]
\begin{equation}
\leq\gamma(\tau(Q)+\varepsilon/2\gamma)=\gamma\tau(Q)+\varepsilon/2\label{eq:limsupocup}
\end{equation}
by Lemma \ref{lem:fk}, the fact that $h_{\beta_{k}}^{I^{+}}(\omega)$
converges to $\lambda(I^{+})$, (\ref{eq:gamma}) and that $\lambda(I^{+})\leq\lambda(B_{0}^{-1}(Q))+\varepsilon/2\gamma=\tau(Q)+\varepsilon/2\gamma$
by (\ref{eq:epsilonka}) and the definition of $\tau$, (\ref{eq:ocup def}).
Similarly we can show that
\begin{equation}
\liminf_{k\rightarrow\infty}\intop_{Q}\left\Vert x\right\Vert ^{2-d}\mathrm{d}\mathcal{C}_{\beta_{k},\omega}(\lambda)\geq\gamma\tau(Q)-\varepsilon/2.\label{eq:liminfocup}
\end{equation}
Hence
\[
\limsup_{k\rightarrow\infty}P\left(\left|\intop_{Q}\left\Vert x\right\Vert ^{2-d}\mathrm{d}\mathcal{C}_{\beta_{k},\omega}(\lambda)-\gamma\tau(Q)\right|>\varepsilon\right)=0
\]
because (\ref{eq:limsupocup}) and (\ref{eq:liminfocup}) hold for
every $\omega\in H$ and $P(H)=1$. Thus (\ref{eq:celeq-1}) follows.
\end{proof}
\begin{thm}
\label{thm:ocup limit}Let $\nu(A)=\intop_{A}\left\Vert x\right\Vert ^{2-d}\mathrm{d}x$
for every Borel set $A\subseteq\mathbb{R}^{d}$. Then $\nu$ is a
locally finite Borel measure, $\nu=\nu_{R}$ and $\mathcal{C}(\nu)=\frac{1}{c(d)}\tau$
almost surely where $c(d)$ is as in (\ref{eq:cd def}).
\end{thm}

\begin{proof}
Clearly $\nu$ is a measure. For every $y\in\mathbb{R}^{d}$ such
that $\left\Vert y\right\Vert >r>0$ for some $r$ we have that $\nu(B(y,r))\leq\left(\left\Vert y\right\Vert -r\right)^{2-d}\lambda(B(y,r))$.
By the argument in the last paragraph of \cite[page 109]{Mattila book}
it follows that
\begin{equation}
\nu(B(0,1))=\intop_{B(0,1)}\left\Vert x\right\Vert ^{2-d}\mathrm{d}x<\infty\label{eq:finite int}
\end{equation}
and
\[
\intop_{B(0,1)}\left\Vert x-y\right\Vert ^{2-d}\mathrm{d}x\mathrm{d}y<\infty.
\]
Hence $\nu$ is a locally finite measure and $\nu_{R}=\nu$ by Proposition
\ref{prop:abs cont decomposition}.

The conditional measure $\mathcal{C}(\nu)$ of $\nu$ on $B$ exists
with respect to $\mathcal{Q}_{k}$ ($k\geq1$) with regularity kernel
$\varphi(x,y)=\left\Vert x-y\right\Vert ^{2-d}$ by Theorem \ref{thm:Brownian cond measure: Main}.
Let $\gamma$ be as in Lemma \ref{lem:ocub box}. It follows from
Lemma \ref{lem:ocub box} that $\mathcal{C}_{k}(\nu)(Q)$ converges
to $\gamma\cdot\tau(Q)$ in probability for every $n\in\mathbb{N}$
for every $Q\in\mathcal{Q}_{n}$ such that $\mathrm{dist}(Q,0)>0$.
By Property \textit{ii.)} and \textit{iv.)} of Definition \ref{def:def of cond meas}
we have that $\mathcal{C}(\nu)(Q)=\gamma\cdot\tau(Q)$ almost surely
for every $n\in\mathbb{N}$ for every $Q\in\mathcal{Q}_{n}$ such
that $\mathrm{dist}(Q,0)>0$. Hence it follows that
\[
\mathcal{C}(\nu)(Q)=\gamma\cdot\tau(Q)
\]
almost surely for every $n\in\mathbb{N}$ for every $Q\in\mathcal{Q}_{n}$
because $Q$ can be written as a countable union of boxes $Q_{i}\in\mathcal{Q}_{n_{i}}$
($i\in\mathbb{N}$) such that $\mathrm{dist}(Q_{i},0)>0$. It follows
from Property \textit{v.)} of Definition \ref{def:def of cond meas}
that $\mathcal{C}(\nu)(\{0\})=0$ almost surely. The set $\{\{0\}\}\cup\bigcup_{n=1}^{\infty}\mathcal{Q}_{n}$
forms a semiring that generates the Borel $\sigma$-algebra hence
it follows by Proposition \ref{lem:charateodory ineq} that
\[
\mathcal{C}(\nu)=\gamma\cdot\tau
\]
almost surely.

By Property \textit{ii.)} of Definition \ref{def:def of cond meas}
it follows that
\[
E\left(\gamma\cdot\tau([0,1]^{d})\right)=E\left(\mathcal{C}(\nu)([0,1]^{d})\right)=\nu([0,1]^{d})=\intop_{[0,1]^{d}}\left\Vert x\right\Vert ^{2-d}\mathrm{d}x.
\]
On the other hand
\[
E\left(\gamma\cdot\tau([0,1]^{d})\right)=\gamma\cdot E\left(\intop_{0}^{\infty}I_{B_{0}(t)\in[0,1]^{d}}\mathrm{d}t\right)=\gamma\cdot c(d)\cdot\intop_{[0,1]^{d}}\left\Vert x\right\Vert ^{2-d}\mathrm{d}x
\]
by \cite[Theorem 3.32]{Peres-Morters-Broanian motion} and \cite[Theorem 3.33]{Peres-Morters-Broanian motion}.
Thus $\gamma=c(d)^{-1}$ because the integral is positive and finite,
see (\ref{eq:finite int}).
\end{proof}
\begin{thm}
\label{thm:conditional measure  of lebesgue}We have that $\mathrm{d}\mathcal{C}(\lambda)=\frac{1}{c(d)}\left\Vert x\right\Vert ^{d-2}\mathrm{d}\tau$
almost surely where $c(d)$ is as in (\ref{eq:cd def}).
\end{thm}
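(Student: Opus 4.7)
The plan is to deduce the result directly from Theorem \ref{thm:ocup limit} by the change-of-measure property (Property x{*}.)) in Definition \ref{def:def of cond meas}. Set $\nu(A)=\int_{A}\|x\|^{2-d}\,\mathrm{d}\lambda(x)$, so that $\mathrm{d}\nu(x)=\|x\|^{2-d}\mathrm{d}\lambda(x)$. Away from the origin this density is strictly positive, so we may formally invert it and write
\[
\mathrm{d}\lambda(x)=\|x\|^{d-2}\,\mathrm{d}\nu(x).
\]
Since $\lambda(\{0\})=0$, this identity holds as measures on $\mathbb{R}^{d}$.

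Next, I would verify the hypotheses of Property x{*}.) applied to the nonnegative Borel function $f(x)=\|x\|^{d-2}$ and the measure $\nu$. Local integrability is immediate: for any $y\in\mathbb{R}^{d}$ choose a bounded neighbourhood $U$ of $y$, and then $\int_{U}\|x\|^{d-2}\,\mathrm{d}\nu(x)=\int_{U}\|x\|^{d-2}\cdot\|x\|^{2-d}\,\mathrm{d}\lambda(x)=\lambda(U)<\infty$. Also $\nu(\mathbb{R}^{d}\setminus X_{0})=\nu(\{0\})=0$, so Theorem \ref{thm:Brownian cond measure: Main} guarantees that $\mathcal{C}(\nu)$ exists with respect to $\mathcal{Q}_{k}$ with regularity kernel $\varphi(x,y)=\|x-y\|^{2-d}$. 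Property x{*}.) then yields
\[
\mathcal{C}(\lambda)=\mathcal{C}\bigl(\|x\|^{d-2}\,\mathrm{d}\nu(x)\bigr)=\|x\|^{d-2}\,\mathrm{d}\mathcal{C}(\nu)(x)\qquad\text{almost surely.}
\]

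Finally, substitute the conclusion of Theorem \ref{thm:ocup limit}, namely $\mathrm{d}\mathcal{C}(\nu)(x)=\frac{1}{c(d)}\,\mathrm{d}\tau(x)$ almost surely, to obtain
\[
\mathrm{d}\mathcal{C}(\lambda)(x)=\|x\|^{d-2}\cdot\frac{1}{c(d)}\,\mathrm{d}\tau(x)=\frac{1}{c(d)}\|x\|^{d-2}\,\mathrm{d}\tau(x)
\]
almost surely, which is the desired identity. There is no genuine obstacle here — all the real work was already done in Theorem \ref{thm:ocup limit}; the only point to be careful about is that Property x{*}.) is the locally compact/locally finite version (which is justified because $\mathbb{R}^{d}$ is locally compact and $\lambda,\nu$ are locally finite), and that the rewriting $\mathrm{d}\lambda=\|x\|^{d-2}\mathrm{d}\nu$ is legitimate precisely because both sides assign mass zero to $\{0\}$.
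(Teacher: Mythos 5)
Your proposal is correct and follows exactly the paper's own route: the paper's proof likewise cites Theorem \ref{thm:ocup limit} together with Property \emph{x*.)} of Definition \ref{def:def of cond meas}, and your write-up simply makes explicit the verification of local integrability and the harmless behaviour at the origin. No further changes are needed.
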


\begin{proof}
The conditional measure $\mathcal{C}(\lambda)$ of $\lambda$ on $B$
exists with respect to $\mathcal{Q}_{k}$ ($k\geq1$) with regularity
kernel $\varphi(x,y)=\left\Vert x-y\right\Vert ^{2-d}$ by Theorem
\ref{thm:Brownian cond measure: Main}. The statement follows from
Theorem \ref{thm:ocup limit} and Property \textit{x{*}.)} of Definition
\ref{def:def of cond meas}.
\end{proof}
\begin{rem}
\label{rem:what is alpha}It turns out in the proof of Theorem \ref{thm:ocup limit}
that $\gamma=c(d)^{-1}$ and $\gamma=\alpha\cdot C_{G}([0,1]^{d})$
by (\ref{eq:gamma}) where $G(x,y)=c(d)\left\Vert x-y\right\Vert ^{2-d}$.
Hence it follows that
\[
\alpha=\frac{1}{C_{\varphi}([0,1]^{d})}
\]
for $\varphi(x,y)=\left\Vert x-y\right\Vert ^{2-d}$.
\end{rem}

\section{Conditional measure on the percolation limit sets of trees\label{sec:Conditional-measure-on}}

In this section we move to other metric spaces than $\mathbb{R}^{d}$
to study the conditional measure of measures on the boundary of a
tree when $B$ is a percolation limit set.
\begin{defn}
Let $T=(V,H,\zeta)$ be a countable graph with vertex set $V$, edge
set $H$ and a special vertex $\{\zeta\}$, that we call the\textit{
root of} $T$. We say that \textit{$T$ is a rooted tree with root
$\zeta$} if the following hold:

i.) for every $v,w\in V$ there exists a unique self-avoiding finite
path from vertex $v$ to vertex $w$,

ii.) the degree of every vertex $v\in V$ is finite,

iii.) let $T_{0}=\{\zeta\}$ and for every positive integer $n$ let
$T_{n}$ be the collection of vertices $v\in V$ such that there exists
a unique self-avoiding path of length $n$ from $\zeta$ to $v$.
Then $\left\{ (v,w)\in V:w\in T_{n+1}\right\} \neq\emptyset$ for
every $v\in T_{n}$ ($n=0,1,\dots$).
\end{defn}

We call an infinite self-avoiding path starting at $\zeta$ a \textit{ray}.
We denote the set of rays by $\partial T$. For every vertex $v\in V$
let $\left|v\right|$ be the unique $n\in\mathbb{N}$ such that $v\in T_{n}$.
For two rays $x,y\in\partial T$ let $x\wedge y\in V$ be the unique
vertex such that both $x$ and $y$ visit $x\wedge y$ and for every
$v\in T_{n}$ for every $n>\left|x\wedge y\right|$ at most one of
$x$ and $y$ visits $v$. We define a metric on $\partial T$ by
\[
d(x,y):=2^{-\left|x\wedge y\right|}.
\]
Then $X=\partial T$ is a compact separable metric space that is homeomorphic
to the Cantor set. For two vertices $v,w\in V$ let $v\wedge w\in V$
be the unique vertex such that the unique self-avoiding paths from
$\zeta$ to $v$ and $\zeta$ to $w$ both visit $v\wedge w$ and
for every $z\in T_{n}$ for every $n>\left|x\wedge y\right|$ at most
one of the unique self-avoiding paths from $\zeta$ to $v$ and $\zeta$
to $w$ visits $v$.

For the rest of this section let $\alpha>0$ be fixed. Let $\varphi_{\alpha}(r)=r^{-\alpha}$
for $r>0$ and so
\[
\varphi_{\alpha}(x,y)=d(x,y)^{-\alpha}=2^{-\alpha\left|x\wedge y\right|}
\]
for $x,y\in\partial T$. Then for every $\delta>0$ we have that (\ref{eq:Kernel restriction})
holds for $c_{2}=(1+2\delta)^{\alpha}$ and $c_{3}=0$. Also (\ref{eq:phi =00003Dinfty})
holds.

For a vertex $v\in V$ we denote by $[v]$ the set of rays of $\partial T$
that goes through vertex $v$. Let
\begin{equation}
\mathcal{Q}_{k}=\left\{ [v]:v\in T_{k},C_{\alpha}([v])>0\right\} .\label{eq:treeqk}
\end{equation}
Then $\mathcal{Q}_{k}$ is a sequence of finite families of Borel
subsets of $\partial T$ such that $Q\cap S=\emptyset$ for $Q,S\in\mathcal{Q}_{k}$,
for all $k\in\mathbb{N}$. It is easy to see that (\ref{eq:unique-subset})
holds.  For every $v\in T_{k}$ we have that $\mathrm{diam}([v])=2^{-k}$,
hence (\ref{eq:diameter_goes_to0}) and (\ref{eq:same size}) hold.
For $v,w\in T_{k}$, $v\neq w$ we have that $\mathrm{dist}([v],[w])\geq2^{-(k-1)}$,
and so
\[
\left\{ S\in\mathcal{Q}_{k}:\max\left\{ \mathrm{diam}(Q),\mathrm{diam}(S)\right\} \geq\delta\cdot\mathrm{dist}(Q,S)\right\} =\{Q\}
\]
for every $Q\in\mathcal{Q}_{k}$ for $1/2<\delta<1$. Thus (\ref{eq:bounded sundivision})
holds for such $\delta$ and $M_{\delta}=1$.

Let
\[
p=2^{-\alpha}
\]
be a probability parameter. For every vertex $v\in V$ let $Y(v)$
be a Bernoulli variable with parameter $p$, such that $Y(v)$ ($v\in T$)
are mutually independent. Let
\[
B_{N}=\bigcap_{k=1}^{N}\bigcup_{\begin{array}{c}
v\in T_{k}\\
Y(v)=1
\end{array}}[v]
\]
be a random compact set. Let
\[
B=\bigcap_{N=1}^{\infty}B_{N}
\]
be a random compact set which we call the \textit{percolation limit
set}.

The following result is due to Lyons, see for example \cite[Theorem 9.17]{Peres-Morters-Broanian motion}
\begin{prop}
\label{lem:tree capacity}For every compact set $K\subseteq\partial T$
\[
C_{\alpha}(K)\leq P(B\cap K\neq\emptyset)\leq2C_{\alpha}(K).
\]
In particular, $P(B\cap K\neq\emptyset)=0$ if $C_{\alpha}(K)=0$.
\end{prop}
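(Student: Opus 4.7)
The strategy is to prove the two inequalities by very different methods: the lower bound by a straightforward second moment / Paley--Zygmund argument applied to a level-$n$ random sum, and the upper bound by constructing a canonical random probability measure on the percolation limit set whose expected $\alpha$-energy is controlled by the survival probability. Both arguments exploit the fact that, because the $Y(v)$ are i.i.d.\ Bernoulli$(p)$ with $p=2^{-\alpha}$, survival probabilities factor over disjoint pieces of ancestor paths. Throughout, write $B_n=\bigcap_{k\le n}\bigcup_{Y(v)=1,\,v\in T_k}[v]$, $A_n=\{\exists v\in T_n(K):[v]\subseteq B_n\}$ where $T_n(K)=\{v\in T_n:[v]\cap K\ne\emptyset\}$, and use that $A_n$ is decreasing with $\bigcap_n A_n=\{B\cap K\ne\emptyset\}$ (this is a König-type compactness statement: $B\cap K$ is a decreasing intersection of nonempty closed sets in the compact space $\partial T$).

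For the lower bound, assume $C_\alpha(K)>0$, pick a probability measure $\mu$ on $K$ with $I_\alpha(\mu)$ arbitrarily close to $1/C_\alpha(K)$, and put
\[
Z_n=\sum_{v\in T_n}\frac{I_{[v]\subseteq B_n}}{p^{n}}\,\mu([v]).
\]
Since $P([v]\subseteq B_n)=p^{n}$ for every $v\in T_n$, one has $E[Z_n]=\mu(\partial T)=1$. The key identity is
\[
\frac{P([v]\subseteq B_n,\,[w]\subseteq B_n)}{P([v]\subseteq B_n)\,P([w]\subseteq B_n)}=p^{-|v\wedge w|}=2^{\alpha|v\wedge w|},
\]
which for $v\ne w$ equals $\varphi_\alpha(x,y)$ for any $x\in[v]$, $y\in[w]$ (since then $|x\wedge y|=|v\wedge w|$) and for $v=w$ equals $2^{\alpha n}\le\varphi_\alpha(x,y)$ for all $x,y\in[v]$. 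Summing gives $E[Z_n^2]\le I_\alpha(\mu)$. Paley--Zygmund then yields $P(A_n)\ge P(Z_n>0)\ge E[Z_n]^2/E[Z_n^2]\ge 1/I_\alpha(\mu)$, and letting $n\to\infty$ and optimising over $\mu$ delivers $P(B\cap K\ne\emptyset)\ge C_\alpha(K)$.

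For the upper bound, the plan is to construct a random subprobability measure $\mu_B$ on $\partial T$, supported on $B\cap K$ and of total mass $1$ on the event $\{B\cap K\ne\emptyset\}$ (and $0$ otherwise), in such a way that the deterministic measure $\nu:=E[\mu_B]$ on $K$ satisfies $\nu(K)=q:=P(B\cap K\ne\emptyset)$ and
\[
E\bigl[I_\alpha(\mu_B)\bigr]\le 2q.
\]
Once this is in hand, $\tilde\nu:=\nu/q$ is a probability measure on $K$ with $I_\alpha(\tilde\nu)\le E[I_\alpha(\mu_B)]/q^2\le 2/q$ by convexity (Jensen applied to the quadratic form $I_\alpha$), so $C_\alpha(K)\ge 1/I_\alpha(\tilde\nu)\ge q/2$, which is exactly $P(B\cap K\ne\emptyset)\le 2C_\alpha(K)$. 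The natural candidate for $\mu_B$ is the weak limit (as $n\to\infty$) of the martingale of measures that distribute unit mass from the root down the surviving percolation subtree so that each step equalises among those children $w$ which lie in $B_{|w|}$ and whose subtree still has non-empty intersection with $K$. The expected second moment $E[\iint\varphi_\alpha\,d\mu_B\,d\mu_B]$ then splits, by conditioning on the last common ancestor of two randomly sampled rays, into a sum indexed by levels $k$, and a direct tree-level computation produces a geometric series whose sum is bounded by $2q$; the factor $2$ (as opposed to $1$) is an artefact of this geometric sum.

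The main obstacle is the upper bound: both the construction of the random measure $\mu_B$ (requiring tightness / martingale convergence on $\partial T$) and the energy estimate $E[I_\alpha(\mu_B)]\le 2q$ demand genuine work, whereas the lower bound is a mechanical Paley--Zygmund calculation once the tree factorisation $2^{\alpha|v\wedge w|}=\varphi_\alpha$ is identified. A cleaner packaging of the upper bound, avoiding the explicit construction, is to verify the hypotheses of Theorem~\ref{thm:upper non extinction} directly on $\partial T$ and invoke it: one needs the abstract conditional measure framework of the paper for $\mathcal{Q}_k$ as in \eqref{eq:treeqk} together with the $\mathcal{L}^2$-boundedness supplied by Proposition~\ref{prop:2bounded martingale}, and then the quantitative bound with the constant $2$ falls out of the same Paley--Zygmund computation run in reverse on a well-chosen extremal $\mu$. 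I would present the proof in the explicit tree form, however, because the factor $2$ is most transparent there.
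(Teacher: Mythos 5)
First, a point of comparison: the paper does not prove Proposition \ref{lem:tree capacity} at all — it is quoted as Lyons' theorem with a pointer to \cite{Peres-Morters-Broanian motion}, so there is no internal proof to measure your argument against. Your lower bound $C_{\alpha}(K)\leq P(B\cap K\neq\emptyset)$ is correct and complete: the factorisation $P([v]\subseteq B_{n},[w]\subseteq B_{n})/p^{2n}=p^{-|v\wedge w|}$, its comparison with $\varphi_{\alpha}$ on and off the diagonal, Paley--Zygmund, and the K\H{o}nig/compactness identification of $\bigcap_{n}A_{n}$ with $\{B\cap K\neq\emptyset\}$ all check out; this is the standard second-moment half of Lyons' theorem.

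The upper bound, however, contains a genuine gap: the entire content of that direction is the existence of a random measure $\mu_{B}$ on $B\cap K$ with mass $1$ exactly on the survival event and $E[I_{\alpha}(\mu_{B})]\leq 2q$, where $q=P(B\cap K\neq\emptyset)$, and you assert this rather than prove it. The construction you sketch is itself problematic: equalising mass at level $n+1$ among children $w$ with $[w]\subseteq B_{n+1}$ and $[w]\cap K\neq\emptyset$ either leaks mass when all such children of a surviving vertex later die out (so the limit need not carry mass $1$ on survival), or, if you instead split among children whose subtrees genuinely reach $B\cap K$, the scheme is no longer adapted and the advertised computation changes. Nor does the energy bound ``fall out of a geometric series'': writing $\varphi_{\alpha}(x,y)=\sum_{k\leq|x\wedge y|}c_{k}$ with $c_{k}\asymp 2^{\alpha k}$, one gets $I_{\alpha}(\nu)=\sum_{k}c_{k}\sum_{v\in T_{k}}\nu([v])^{2}$, and the naive estimate $\nu([v])\leq p^{k}$ yields a term of constant order for every $k$, i.e.\ a divergent series. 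The factor $2$ in Lyons' theorem comes from a decoupling/first-entry decomposition for a carefully chosen hitting measure (e.g.\ the law of the lexicographically first surviving ray in $K$), which is precisely the work that is missing here. Finally, your proposed ``cleaner packaging'' via Theorem \ref{thm:upper non extinction} is circular: hypothesis (\ref{eq:prob}) of that theorem is exactly the inequality $P(D\cap B\neq\emptyset)\leq b\cdot C_{K}(D)$ you are trying to establish. The honest options are to cite Lyons' theorem as the paper does, or to carry out the hitting-measure energy estimate in full.
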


By Proposition  \ref{lem:tree capacity} and (\ref{eq:treeqk}) it
follows that (\ref{eq:psoitivity of probability}) and (\ref{eq:lower hitting prob})
holds for $a=1$.

For $v,w\in V$ such that $[v]$ and $[w]$ are disjoint, we have
that
\[
\mathrm{dist}([v],[w])=2^{-k}
\]
for $k=\left|v\wedge w\right|$, hence if further $C_{\alpha}([v])>0$
and $C_{\alpha}([w])>0$ then
\[
\frac{P(B\cap[v]\neq\emptyset,B\cap[w]\neq\emptyset)}{P(B\cap[v]\neq\emptyset)\cdot P(B\cap[w]\neq\emptyset)}
\]
\[
=\frac{p^{k}\cdot P(B\cap[v]\neq\emptyset\mid B_{k}\cap[v\wedge w]\neq\emptyset)\cdot P(B\cap[w]\neq\emptyset\mid B_{k}\cap[v\wedge w]\neq\emptyset)}{p^{k}\cdot P(B\cap[v]\neq\emptyset\mid B_{k}\cap[v\wedge w]\neq\emptyset)\cdot p^{k}\cdot P(B\cap[w]\neq\emptyset\mid B_{k}\cap[v\wedge w]\neq\emptyset)}
\]
\begin{equation}
=p^{-k}=2^{-\alpha k}=2^{-\alpha\left|v\wedge w\right|}=\varphi_{\alpha}(\mathrm{dist}([v],[w])).\label{eq:upper alpha phi}
\end{equation}
Thus (\ref{eq:capacity-independence}) holds for $c=1$ for every
disjoint $Q$ and $S$ and so for every $\delta>0$. We have discussed
above that the assumptions of Section \ref{subsec:Special-assumptions}
are satisfied.

It follows from (\ref{eq:upper alpha phi}) that
\begin{equation}
F(x,y)=\underline{F}(x,y)=\overline{F}(x,y)=\varphi_{\alpha}(x,y),\label{eq:F egyenlo varphi}
\end{equation}
for every $x,y\in\partial T_{0}=\bigcap_{k=1}^{\infty}(\bigcup_{Q\in\mathcal{Q}_{k}}Q)$,
$x\neq y$ and $F(x,y)=0$ otherwise, see (\ref{def:upper and loweer F(x,y)}).
As we established above the conditions of Theorem \ref{cor:Double integral when W exists-summed}
and Theorem \ref{thm:intro non extinction} are satisfied and hence
we can conclude Theorem \ref{thm:cond meas on trees}.
\begin{rem}
\label{rem:X0 on vanish}If $I_{\alpha}(\nu)<\infty$ and $\nu(Q)>0$
then $C_{\alpha}(Q)>0$ because $I_{\alpha}(\nu\vert_{Q})<\infty$
and so $\mathrm{supp}\nu\subseteq\partial T_{0}$. Hence if $\nu(\partial T_{0})=0$
then $\nu_{R}=\nu_{\varphi_{\alpha}R}=0$. On the other hand if $\nu$
is a finite Borel measure such that $\nu(\partial T_{0})=0$ then
$\mathcal{C}_{k}(\nu)$ converges to $0$ in $\mathcal{L}^{1}$ by
Lemma \ref{lem:complem X0 is dead}. Hence for such $\nu$ the conditional
measure $\mathcal{C}(\nu)$ of $\nu$ on $B$ exists with respect
to $\mathcal{Q}_{k}$ ($k\geq1$) with regularity kernel $\varphi_{\alpha}$
and $\mathcal{C}(\nu)=0$ almost surely. Thus the assumption that
$\nu(\partial T\setminus\partial T_{0})=0$ is eliminated in the following
Theorem.
\end{rem}

\begin{thm}
\label{thm:cond meas on trees}Let $\nu$ be a finite, Borel measure
on $\partial T$. Then the conditional measure $\mathcal{C}(\nu)$
of $\nu$ on $B$ exists with respect to $\mathcal{Q}_{k}$ ($k\geq1$)
with regularity kernel $\varphi_{\alpha}$ and if $\tau$ is a finite,
Borel measure on $\partial T$ then
\[
E\left(\int\int f(x,y)\mathrm{d}\mathcal{C}(\nu)(x)\mathrm{d}\mathcal{C}(\tau)(y)\right)=\intop\intop\varphi_{\alpha}(x,y)f(x,y)\mathrm{d}\nu_{R}(x)\mathrm{d}\tau_{R}(y)
\]
for every $f:X\times X\longrightarrow\mathbb{R}$ Borel function with
$\intop\intop\varphi_{\alpha}(x,y)\left|f(x,y)\right|\mathrm{d}\nu_{R}(x)\mathrm{d}\tau_{R}(y)<\infty$.
Moreover,
\[
C_{\varphi_{\alpha}}(\nu)\leq P(\mathcal{C}(\nu)(X)>0)\leq2\overline{C_{\varphi_{\alpha}}}(\nu).
\]
\end{thm}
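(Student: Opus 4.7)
The plan is to reduce everything to the already-established general theorems, since the bulk of the hypotheses have been verified in the paragraphs preceding the statement. Concretely, the text preceding the theorem shows that the sequence $\mathcal{Q}_k$ of (\ref{eq:treeqk}) together with the kernel $\varphi_\alpha$ satisfies all the assumptions collected in Section \ref{subsec:Special-assumptions}: (\ref{eq:Kernel restriction}) with $c_2=(1+2\delta)^\alpha$, $c_3=0$; (\ref{eq:phi =00003Dinfty}); (\ref{eq:same size}) with $M=1$; (\ref{eq:bounded sundivision}) with $M_\delta=1$ for any $1/2<\delta<1$; (\ref{eq:lower hitting prob}) with $a=1$ by Proposition \ref{lem:tree capacity} and the very definition of $\mathcal{Q}_k$; and (\ref{eq:capacity-independence}) with $c=1$ by the direct product-of-Bernoullis computation in (\ref{eq:upper alpha phi}). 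Moreover (\ref{eq:F egyenlo varphi}) establishes that $F(x,y)=\underline{F}(x,y)=\overline{F}(x,y)=\varphi_\alpha(x,y)$ off the diagonal of $\partial T_0\times\partial T_0$. Finally, Proposition \ref{lem:tree capacity} gives the hitting-probability hypothesis of Theorem \ref{cor:Double integral when W exists-summed}: if $C_\alpha(D)=0$ then $P(B\cap D\neq\emptyset)=0$, so $B\cap D=\emptyset$ almost surely.

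Thus the first step is to handle measures supported on $X_0=\partial T_0$. For a finite Borel measure $\nu$ with $\nu(\partial T\setminus\partial T_0)=0$, I invoke Theorem \ref{cor:Double integral when W exists-summed} directly: it yields the existence of $\mathcal{C}(\nu)$ with respect to $\mathcal{Q}_k$ with regularity kernel $\varphi_\alpha$, together with the double-integration formula in which the weight $F(x,y)$ is replaced, by (\ref{eq:F egyenlo varphi}), by $\varphi_\alpha(x,y)$. The second step is to pass to a general finite Borel measure by decomposing $\nu=\nu\vert_{\partial T_0}+\nu\vert_{\partial T\setminus\partial T_0}$. For the second summand, Remark \ref{rem:X0 on vanish} already observes that its conditional measure exists and equals $0$ almost surely, and that its regular part $(\nu\vert_{\partial T\setminus\partial T_0})_R$ vanishes; treating $\tau$ symmetrically and combining through Property \textit{ix.)} of Definition \ref{def:def of cond meas}, the double-integration formula for $\nu$ and $\tau$ follows from the one for $\nu\vert_{\partial T_0}$ and $\tau\vert_{\partial T_0}$ because the extra summands contribute $0$ on both sides.

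For the non-extinction bounds, I apply Theorem \ref{thm:intro non extinction}. All hypotheses of Theorem \ref{cor:Double integral when W exists-summed} have been checked, and the second hitting-probability hypothesis $P(D\cap B\neq\emptyset)\leq b\cdot C_\varphi(D)$ holds with $b=2$ by Proposition \ref{lem:tree capacity}. The same remark about decomposition relative to $\partial T_0$ shows that we may replace $\nu$ by $\nu\vert_{\partial T_0}$ without changing either side of the desired inequality, since $P(\mathcal{C}(\nu)(X)>0)=P(\mathcal{C}(\nu\vert_{\partial T_0})(X)>0)$ and the capacities $C_{\varphi_\alpha}(\nu)$, $\overline{C_{\varphi_\alpha}}(\nu)$ only see the $\partial T_0$-portion of $\nu$ (any measure with finite $\varphi_\alpha$-energy must assign zero mass to $\partial T\setminus\partial T_0$). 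Thus Theorem \ref{thm:intro non extinction} delivers $C_{\varphi_\alpha}(\nu)\leq P(\mathcal{C}(\nu)(X)>0)\leq 2\overline{C_{\varphi_\alpha}}(\nu)$.

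No real obstacle arises: all the analytic work has already been done in Sections \ref{sec:Existence-of-the}, \ref{sec:Double-integration} and \ref{sec:Probability-of-non-extinction}. The only mild care required is the bookkeeping of the degenerate part of $\nu$ and the verification that $F(x,y)=\varphi_\alpha(x,y)$ holds on the full support of $\nu_R\times\tau_R$, which is immediate from Remark \ref{rem:X0 on vanish} since $\nu_R$ and $\tau_R$ are concentrated on $\partial T_0$.
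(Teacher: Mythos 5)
Your proposal is correct and follows essentially the same route as the paper: verify the hypotheses of Section \ref{subsec:Special-assumptions} (including $F=\underline{F}=\overline{F}=\varphi_\alpha$ via (\ref{eq:upper alpha phi}) and the capacity bounds of Proposition \ref{lem:tree capacity}), then invoke Theorem \ref{cor:Double integral when W exists-summed} and Theorem \ref{thm:intro non extinction}, disposing of the mass on $\partial T\setminus\partial T_0$ exactly as in Remark \ref{rem:X0 on vanish}. No gaps.
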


\begin{rem}
\label{rem:L2 lim}By Theorem \ref{thm:Special existence of conditional expectation finite+vague}
if $A\subseteq X$ is a Borel set such that $I_{\alpha}(\nu\vert_{A})<\infty$
then $\mathcal{C}_{k}(\nu)(A)$ converges to $\mathcal{C}(\nu)(A)$
in $\mathcal{L}^{2}$ (recall that $\nu\vert_{A}(T\setminus\partial T_{0})=0$
by Remark \ref{rem:X0 on vanish}).
\end{rem}

\subsection{Random multiplicative cascade measure as conditional measure }

Throughout this section  for a finite Borel measure $\nu$ on $\partial T$
let $\mathcal{C}(\nu)$ be the conditional measure of $\nu$ on $B$
with respect to $\mathcal{Q}_{k}$ ($k\geq1$) with regularity kernel
$\varphi_{\alpha}$ (which exists by Theorem \ref{thm:cond meas on trees}).
Let
\[
\mathcal{S}_{k}=\left\{ [v]:v\in T_{k}\right\} ,
\]
let $\mathcal{F}_{k}$ be the $\sigma$-algebra generated by the events
$\left\{ Q\cap B_{k}\neq\emptyset\right\} _{Q\in\mathcal{S}_{k}}$
and for a finite Borel measure $\nu$ on $\partial T$ let
\[
\mu_{k}^{\nu}=p^{-k}\nu\vert_{B_{k}}=\sum_{Q\in\mathcal{S}_{k}}P(Q\cap B_{k}\neq\emptyset)^{-1}\cdot I_{Q\cap B_{k}\neq\emptyset}\cdot\nu\vert_{Q}.
\]
Note that $\mu_{n}^{\nu}$ is the conditional measure of $\nu$ on
$B_{n}$ with respect to $\mathcal{S}_{k}$ ($k\geq1$) with regularity
kernel $\varphi(x,y)=1$.
\begin{prop}
\label{lem:cascade}Let $\nu$ be a finite Borel measure on $\partial T$.
There exists a random, finite Borel measure $\mu^{\nu}$ on $\partial T$
with the following properties:

1.) $\mu_{k}^{\nu}$ weakly converges to $\mu^{\nu}$ almost surely,

2.) for a countable collection of Borel sets $A_{n}\subseteq\partial T$
($n\in\mathbb{N}$) we have that $\mu_{k}^{\nu}(A_{n})$ converges
to $\mu^{\nu}(A_{n})$ for every $n\in\mathbb{N}$ as $k$ goes to
$\infty$ almost surely,

3.) $\mu^{\nu}=\mu^{\nu_{R}}$ almost surely for $\nu_{R}=\nu_{\varphi_{\alpha}R}$,

4.) $\mu^{\nu_{\perp}}=0$ almost surely for $\nu_{\perp}=\nu_{\varphi_{\alpha}\perp}$,

5.) $\mu^{\nu}=\sum_{i=1}^{\infty}\mu^{\nu_{i}}$ if $\nu=\sum\nu_{i}$,

6.) $E\left(\mu^{\nu}(A)\right)\leq\nu(A)$ for every Borel set $A\subseteq\partial T$.
\end{prop}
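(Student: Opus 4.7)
The natural starting point is the observation that, for every fixed Borel set $A\subseteq\partial T$, the sequence $\mu_k^{\nu}(A)$ is a nonnegative martingale with respect to the filtration $\mathcal{F}_k$. Indeed, for $w\in T_{k+1}$ with parent $v\in T_k$, the event $\{[w]\subseteq B_{k+1}\}$ is the intersection of the $\mathcal{F}_k$-measurable event $\{[v]\subseteq B_k\}$ with the independent event $\{Y(w)=1\}$, so $E(I_{[w]\subseteq B_{k+1}}\mid\mathcal{F}_k)=p\cdot I_{[v]\subseteq B_k}$; summing over children gives $E(\mu_{k+1}^{\nu}(A)\mid\mathcal{F}_k)=\mu_k^{\nu}(A)$. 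Doob's convergence theorem then produces an almost sure limit $L(A):=\lim_k\mu_k^{\nu}(A)$, and Fatou applied to $E(\mu_k^{\nu}(A))=\nu(A)$ will give Property 6 once $L(A)$ is identified with $\mu^{\nu}(A)$.

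Because $V$ is countable, I can next fix a probability-one event $H$ on which the limits $\ell_v:=L([v])$ and $L(\partial T)$ exist simultaneously for every $v\in V$. The additivity $\mu_j^{\nu}([v])=\sum_i\mu_j^{\nu}([w_i])$ for $j>|v|$, where $w_1,\dots,w_m$ are the children of $v$, passes to the limit to give $\ell_v=\sum_i\ell_{w_i}$ on $H$. Since the clopen cylinders $\{[v]:v\in V\}\cup\{\emptyset\}$ form a semiring generating the Borel $\sigma$-algebra of the compact ultrametric space $\partial T$, Carath\'eodory's extension theorem produces a unique Borel measure $\mu^{\nu}$ on $H$ with $\mu^{\nu}([v])=\ell_v$; I set $\mu^{\nu}:=0$ off $H$.

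For Property 1 I will approximate any $f\in C(\partial T)$ uniformly by cylinder step functions $f_n:=\sum_{v\in T_n}f(x_v)\chi_{[v]}$, which is possible since $\mathrm{diam}([v])=2^{-n}$ and $\partial T$ is compact. On $H$ I can then estimate, for fixed $n$,
\[
\bigl|\int f\,d\mu_k^{\nu}-\int f\,d\mu^{\nu}\bigr|\leq\|f-f_n\|_\infty\bigl(\mu_k^{\nu}(\partial T)+\mu^{\nu}(\partial T)\bigr)+\bigl|\int f_n\,d\mu_k^{\nu}-\int f_n\,d\mu^{\nu}\bigr|,
\]
where the second term vanishes as $k\to\infty$ because $f_n$ is a finite linear combination of $\chi_{[v]}$, and the first can be made arbitrarily small since $\mu_k^{\nu}(\partial T)\to L(\partial T)$ on $H$. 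This delivers a.s.\ weak convergence and, together with Lemma \ref{lem:limit is measure}, shows $\mu^{\nu}$ is a random finite Borel measure. For Property 2, the cylinder case is built into the construction; for general countable families of Borel sets, the identification $L(A_n)=\mu^{\nu}(A_n)$ a.s.\ follows from Theorem \ref{thm:ABS cont representation} (with reference measure $\nu$ and $d\mu_k^{\nu}/d\nu=p^{-k}I_{B_k}$, whose expectation is identically $1$), since the uniqueness there forces the extracted limit measure to agree with our $\mu^{\nu}$ on cylinders and hence on the whole Borel $\sigma$-algebra.

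Property 5 is obtained from $\mu_k^{\nu}=\sum_i\mu_k^{\nu_i}$ tested against each cylinder, where the identity $L([v])=\sum_iL_i([v])$ a.s.\ follows by uniform integrability (since $\sum_iE(\mu_k^{\nu_i}([v]))=\nu([v])<\infty$), and then transferred to all Borel sets via the Carath\'eodory uniqueness of measures agreeing on cylinders. Property 4 is the key qualitative statement: Proposition \ref{prop:singular felbontas loc fin intro} produces a Borel set $Z$ with $\nu_{\perp}(\partial T\setminus Z)=0$ and $C_{\alpha}(Z)=0$, Lemma \ref{lem:countab exhaustion} exhausts $Z$ by compact sets $K_n\subseteq Z$ of zero $\alpha$-capacity, and Proposition \ref{lem:tree capacity} gives $B\cap K_n=\emptyset$ almost surely. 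On that event, compactness of $K_n$ and $B_k$ together with $B=\bigcap_k B_k$ forces $B_k\cap K_n=\emptyset$ for some (random) $k$, and from that level onward $\mu_k^{\nu|_{K_n}}\equiv 0$, so $\mu^{\nu|_{K_n}}=0$ almost surely. Property 5 then yields $\mu^{\nu_{\perp}}=\sum_n\mu^{\nu_{\perp}|_{K_n}}=0$ a.s., and Property 3 follows from $\nu=\nu_R+\nu_{\perp}$. The main technical obstacle is the passage from pointwise martingale convergence on the countable family of cylinders to genuine almost sure weak convergence of measures, which is precisely where the ultrametric structure of $\partial T$ (making cylinder functions uniformly dense in $C(\partial T)$) plays an essential role.
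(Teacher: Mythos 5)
Your proof is correct, and the underlying engine is the same as the paper's: $\mu_k^{\nu}$ is a positive $T$-martingale with respect to $\mathcal{F}_k$. The difference is in how that is exploited. The paper treats Properties \textit{1.)} and \textit{2.)} as a black box, citing Kahane's theorem on positive martingales, whereas you re-derive the convergence from scratch in this specific setting: Doob's theorem on the countable family of cylinders, Carath\'eodory extension to a limit measure (note that countable additivity on the semiring of cylinders is automatic here because cylinders are compact, so every countable cylinder partition of a cylinder is finite), and uniform approximation of continuous functions by cylinder step functions to upgrade to almost sure weak convergence. This buys self-containedness at the price of length; the ultrametric structure of $\partial T$ is exactly what makes your density argument work, so the construction does not generalise beyond trees the way Kahane's theorem is usually stated, but for this proposition that is irrelevant. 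Your treatment of Properties \textit{3.)} and \textit{4.)} via Proposition \ref{prop:singular felbontas loc fin intro}, Lemma \ref{lem:countab exhaustion}, Proposition \ref{lem:tree capacity} and the finite-intersection-property argument is the same idea as the paper's appeal to (the analogue of) Theorem \ref{prop:dieing singular part}, except that you obtain almost sure vanishing directly rather than passing through convergence in probability of the total mass; both are fine. The only place where you are a little too quick is Property \textit{5.)}: "uniform integrability" is not quite the right justification, since the individual martingales $\mu_k^{\nu_i}([v])$ need not converge in $\mathcal{L}^1$ (mass can be lost in the limit, which is the whole point of Property \textit{4.)}). The interchange of $\lim_k$ and $\sum_i$ should instead be justified by observing that the tail $\mu_k^{\sum_{i\geq N}\nu_i}([v])$ is itself a nonnegative martingale whose almost sure limit is monotone decreasing in $N$ with expectation at most $\sum_{i\geq N}\nu_i([v])\rightarrow0$, hence tends to $0$ almost surely; alternatively one can simply invoke Proposition \ref{prop:SUM lem subseq} together with uniqueness of the limit, which is what the paper does. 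This is a repairable gap in the justification, not in the statement.
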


\begin{proof}
It is easy to check that the sequence of random measures $\mu_{k}^{\nu}$
is a $T$-martingale with respect to the filtration $\mathcal{F}_{k}$.
Hence Property \textit{1.)} and \textit{2.)} follows from \cite[Theorem 1]{Kahane-positive martingales}.
Property \textit{6.)} follows from Property \textit{2.)} and the nonnegative
martingale convergence theorem \cite[Theorem 5.2.9]{Durrett}. Property
\textit{5.)} follows from Proposition \ref{prop:SUM lem subseq}.

If $C_{\alpha}(K)=0$ for a compact set $K\subseteq\partial T$ then
$P(B\cap K\neq\emptyset)=0$ by Proposition \ref{lem:tree capacity}.
Hence, via an argument that is similar to the proof of Theorem \ref{prop:dieing singular part},
it can be shown that $\mu_{k}^{\nu_{\perp}}(\partial T)$ converges
to $0$ in probability. It implies that Property \textit{3.)} and
\textit{4.)} hold.
\end{proof}
\begin{rem}
\label{rem:mu is cascade}When $T$ is an $m$-ary tree for some $m\in\mathbb{N}$
and $\nu$ is the uniform measure on $\partial T$ then $\mu^{\nu}$
is the random multiplicative cascade measure with weight variables
$Y(v)/p$ for $v\in T$.
\end{rem}

\begin{lem}
\label{lem:percol bound}Let $\nu$ be a finite Borel measure on $\partial T$
such that $I_{\alpha}(\nu)<\infty$. Then
\[
\lim_{k\rightarrow\infty}\sum_{Q\in\mathcal{Q}_{k}}\frac{1}{P\left(Q\cap B\neq\emptyset\mid Q\cap B_{k}\neq\emptyset\right)}p^{-k}\nu(Q)^{2}=0.
\]
\end{lem}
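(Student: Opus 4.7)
\medskip

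The plan is to recognize this sum as controlled by a tail of the energy integral of $\nu$, which vanishes as the relevant domain shrinks to the diagonal. First I would simplify the summand: since the percolation up to level $k$ retains each cube $[v]$ (with $v\in T_k$) independently, $P(Q\cap B_k\neq\emptyset)=p^{k}$ for $Q\in\mathcal{Q}_k$, hence
\[
\frac{p^{-k}}{P(Q\cap B\neq\emptyset\mid Q\cap B_k\neq\emptyset)}=\frac{1}{P(Q\cap B\neq\emptyset)},
\]
so that the sum in question equals
\[
S_k\;=\;\sum_{Q\in\mathcal{Q}_k}\frac{\nu(Q)^{2}}{P(Q\cap B\neq\emptyset)}.
\]

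Next I would exploit Proposition \ref{lem:tree capacity} (giving $P(Q\cap B\neq\emptyset)\geq C_{\alpha}(Q)$, so $a=1$ in (\ref{eq:lower hitting prob})) and the definitional inequality relating capacity and energy. Specifically, when $\nu(Q)>0$, the normalisation $\nu|_Q/\nu(Q)$ is a probability measure on $Q$, and by the definition of $\varphi_\alpha$-capacity,
\[
\frac{1}{C_\alpha(Q)}\;\leq\;I_{\alpha}\!\left(\frac{\nu|_Q}{\nu(Q)}\right)\;=\;\frac{I_{\alpha}(\nu|_Q)}{\nu(Q)^{2}},
\]
which gives $\nu(Q)^{2}/C_\alpha(Q)\leq I_\alpha(\nu|_Q)$. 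Summing over the (disjoint) cubes in $\mathcal{Q}_k$,
\[
S_k\;\leq\;\sum_{Q\in\mathcal{Q}_k}I_{\alpha}(\nu|_Q)\;=\;\iint_{A_k}\varphi_\alpha(x,y)\,\mathrm{d}\nu(x)\,\mathrm{d}\nu(y),\qquad A_k:=\bigcup_{Q\in\mathcal{Q}_k}Q\times Q.
\]

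Now I would run a measure-theoretic continuity argument. Observe that $A_k$ is decreasing in $k$: by (\ref{eq:unique-subset}), any $Q\in\mathcal{Q}_k$ lies in a unique $D\in\mathcal{Q}_{k-1}$, so $Q\times Q\subseteq D\times D\subseteq A_{k-1}$. Moreover, $\bigcap_k A_k$ is contained in the diagonal $\Delta=\{(x,x):x\in\partial T\}$: if $(x,y)\in A_k$ for every $k$, then $x$ and $y$ traverse the same vertex at every level, forcing $|x\wedge y|=\infty$ and hence $x=y$. The measure $\mathrm{d}\mu(x,y):=\varphi_\alpha(x,y)\,\mathrm{d}\nu(x)\,\mathrm{d}\nu(y)$ is finite (equal to $I_\alpha(\nu)<\infty$), and since $\varphi_\alpha\equiv+\infty$ on $\Delta$, finiteness of $I_\alpha(\nu)$ forces $\nu\times\nu(\Delta)=0$ and thus $\mu(\Delta)=0$. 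By continuity from above for the finite measure $\mu$,
\[
\lim_{k\to\infty}S_k\;\leq\;\lim_{k\to\infty}\mu(A_k)\;=\;\mu\!\left(\bigcap_k A_k\right)\;\leq\;\mu(\Delta)\;=\;0.
\]

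The only mildly delicate point is ensuring that the identification $\bigcap_k A_k\subseteq\Delta$ truly is in terms of the support-correct diagonal—this is clean on $\partial T_0$, and outside $\partial T_0$ there is nothing to worry about since $I_\alpha(\nu)<\infty$ implies $\nu$ does not charge the zero-$\alpha$-capacity set $\partial T\setminus\partial T_0$ (which is a countable union $\bigcup_k\bigcup_{v\in T_k,\,C_\alpha([v])=0}[v]$ of zero-capacity cylinders). Everything else is bookkeeping. The main conceptual step is the passage from the hitting-probability weights to an energy-tail estimate via the capacity-energy duality, after which the conclusion is automatic.
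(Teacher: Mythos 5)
Your proof is correct and follows essentially the same route as the paper: reduce the weight to $1/P(Q\cap B\neq\emptyset)$, bound it by $1/C_\alpha(Q)$ via Proposition \ref{lem:tree capacity}, apply the capacity--energy inequality $\nu(Q)^2/C_\alpha(Q)\leq I_\alpha(\nu|_Q)$, and identify the resulting sum as a tail of the finite energy integral concentrating on the $\nu\times\nu$-null diagonal. The only cosmetic difference is that the paper majorises $\bigcup_Q Q\times Q$ by $\{d(x,y)\leq 2^{-k}\}$ and invokes Fubini for the null diagonal, whereas you argue by continuity from above along the decreasing sets $A_k$; both are equivalent.
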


\begin{proof}
We have that if $\nu(Q)>0$ then $\nu(Q)^{2}/I_{\alpha}(\nu\vert_{Q})\leq C_{\alpha}(Q)$.
It follows from Proposition \ref{lem:tree capacity} that $C_{\alpha}(Q)\cdot p^{-k}\leq P\left(Q\cap B\neq\emptyset\mid Q\cap B_{k}\neq\emptyset\right)$
for every $Q\in\mathcal{Q}_{k}$. Hence
\[
\sum_{Q\in\mathcal{Q}_{k}}\frac{1}{P\left(Q\cap B\neq\emptyset\mid Q\cap B_{k}\neq\emptyset\right)}p^{-k}\nu(Q)^{2}\leq\sum_{Q\in\mathcal{Q}_{k}}\frac{1}{C_{\alpha}(Q)}\nu(Q)^{2}
\]
\[
\leq\sum_{\begin{array}{c}
Q\in\mathcal{Q}_{k}\\
\nu(Q)\neq0
\end{array}}\frac{I_{\alpha}(\nu\vert_{Q})}{\nu(Q)^{2}}\nu(Q)^{2}\leq\sum_{Q\in\mathcal{Q}_{k}}I_{\alpha}(\nu\vert_{Q})\leq\iintop_{d(x,y)\leq2^{-k}}\varphi_{\alpha}(x,y)\mathrm{d}\nu(x)\mathrm{d}\nu(y).
\]
Since $I_{\alpha}(\nu)<\infty$ the statement follows because
\[
\iintop_{d(x,y)=0}\varphi_{\alpha}(x,y)\mathrm{d}\nu(x)\mathrm{d}\nu(y)=0
\]
by Fubini`s theorem.
\end{proof}
\begin{lem}
\label{lem:Efqs}Let
\[
f(Q,S)=\left(\frac{I_{Q\cap B_{k}\neq\emptyset}}{P(Q\cap B_{k}\neq\emptyset)}-\frac{I_{Q\cap B\neq\emptyset}}{P(Q\cap B\neq\emptyset)}\right)\left(\frac{I_{S\cap B_{k}\neq\emptyset}}{P(S\cap B_{k}\neq\emptyset)}-\frac{I_{S\cap B\neq\emptyset}}{P(S\cap B\neq\emptyset)}\right)
\]
for $Q,S\in\mathcal{Q}_{k}$. Then
\[
E(f(Q,S))=\begin{cases}
0 & \mathrm{if\,}Q\neq S\\
p^{-k}(1/P(Q\cap B\neq\emptyset\mid Q\cap B_{k}\neq\emptyset)-1) & \mathrm{if\,}Q=S
\end{cases}
\]
\end{lem}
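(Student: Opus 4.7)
The plan is to exploit the independence structure of the Bernoulli percolation variables $Y(v)$ on $T$. For $Q = [v_Q] \in \mathcal{Q}_k$, set $X_Q := I_{Q \cap B_k \neq \emptyset}$, which depends only on the $Y(v)$'s along the ancestor-path from the root to $v_Q$, and let $\sigma_Q$ denote the survival indicator of the percolation cluster rooted at $v_Q$, which depends only on $Y(v)$'s strictly below $v_Q$. By independence of the $Y(v)$ and disjointness of the two vertex sets, $X_Q$ and $\sigma_Q$ are independent, and $I_{Q \cap B \neq \emptyset} = X_Q \sigma_Q$. Writing $s_Q := E(\sigma_Q)$ (which equals $P(Q \cap B \neq \emptyset \mid Q \cap B_k \neq \emptyset)$ by the independence above), each factor appearing in $f(Q,S)$ rewrites compactly as
\[
F_Q := \frac{X_Q}{E(X_Q)}\left(1 - \frac{\sigma_Q}{s_Q}\right),
\]
and $f(Q,S) = F_Q F_S$.

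For the off-diagonal case $Q \neq S$, let $w = v_Q \wedge v_S$, so $|w| < k$. Decompose the two ancestor-paths into the shared initial segment ending at $w$ and the two disjoint branches $w \to v_Q$ and $w \to v_S$; correspondingly write $X_Q = A B_Q$ and $X_S = A B_S$, where $A, B_Q, B_S$ are products of $Y$'s over pairwise disjoint vertex subsets. Together with $\sigma_Q$ and $\sigma_S$ (whose defining subtrees are disjoint from each other and from the three ancestor-path subsets), the five random variables $A, B_Q, B_S, \sigma_Q, \sigma_S$ are mutually independent. Using $A^2 = A$ and this independence, the expectation of $F_Q F_S$ factorises into a product in which $E(1 - \sigma_Q/s_Q) = 0$ and $E(1 - \sigma_S/s_S) = 0$ appear as factors, giving $E(f(Q,S)) = 0$.

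For the diagonal case $Q = S$, I expand $F_Q^2$ directly: using that $X_Q$ and $\sigma_Q$ are indicators (so $X_Q^2 = X_Q$, $\sigma_Q^2 = \sigma_Q$) and independent, the cross term simplifies and the computation yields
\[
E(F_Q^2) = \frac{1}{E(X_Q)}\left(\frac{1}{s_Q} - 1\right).
\]
The stated formula then follows from $E(X_Q) = P(Q \cap B_k \neq \emptyset) = p^k$, since each of the $k$ vertices on the ancestor-path of $v_Q$ contributes an independent factor of $p$. I expect no real obstacle; the argument is a direct computation, and the only delicate point is the bookkeeping in the off-diagonal case — making sure every $Y(v)$ appears in exactly one of the five independent factors $A, B_Q, B_S, \sigma_Q, \sigma_S$ so that the product decomposition is genuinely an independence statement rather than an overlapping one.
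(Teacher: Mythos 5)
Your proof is correct and follows essentially the same route as the paper: both rest on rewriting each factor as $\frac{I_{Q\cap B_{k}\neq\emptyset}}{p^{k}}\bigl(1-\frac{I_{Q\cap B\neq\emptyset}}{P(Q\cap B\neq\emptyset\mid Q\cap B_{k}\neq\emptyset)}\bigr)$ and exploiting the independence of the within-box survival from the level-$k$ configuration and from the other box. The only cosmetic difference is that you make the independence explicit by factoring $X_{Q}=AB_{Q}$ into products of Bernoulli variables over disjoint vertex sets, whereas the paper phrases the same computation as a conditional expectation given $\mathcal{F}_{k}$.
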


\begin{proof}
We have that $P(Q\cap B_{k}\neq\emptyset)=P(S\cap B_{k}\neq\emptyset)=p^{k}$.
Since $Q\in\mathcal{Q}_{k}$ it follows that
\begin{equation}
P(Q\cap B\neq\emptyset\mid Q\cap B_{k}\neq\emptyset)=\frac{P(Q\cap B\neq\emptyset)}{p^{k}}>0\label{eq:Q fel Qk}
\end{equation}
and similarly for $S$. Then
\[
E\left(f(Q,S)\mid\mathcal{F}_{k}\right)=\frac{I_{Q\cap B_{k}\neq\emptyset}\cdot I_{S\cap B_{k}\neq\emptyset}}{p^{2k}}\cdot
\]
\[
\cdot E\left(\left(1-\frac{I_{Q\cap B\neq\emptyset}}{P(Q\cap B\neq\emptyset\mid Q\cap B_{k}\neq\emptyset)}\right)\left(1-\frac{I_{S\cap B\neq\emptyset}}{P(S\cap B\neq\emptyset\mid S\cap B_{k}\neq\emptyset)}\right)\mid\mathcal{F}_{k}\right).
\]
Given $\mathcal{F}_{k}$ if $Q\neq S$ then we have that $I_{Q\cap B\neq\emptyset}$
and $I_{S\cap B\neq\emptyset}$ are independent, hence $E\left(f(Q,S)\mid\mathcal{F}_{k}\right)=0$.

Let $q=P(Q\cap B\neq\emptyset\mid Q\cap B_{k}\neq\emptyset)$. If
$Q=S$ then
\[
E\left(f(Q,Q)\mid\mathcal{F}_{k}\right)=\frac{I_{Q\cap B_{k}\neq\emptyset}}{p^{2k}}E\left(\left(1-\frac{I_{Q\cap B\neq\emptyset}}{P(Q\cap B\neq\emptyset\mid Q\cap B_{k}\neq\emptyset)}\right)^{2}\mid\mathcal{F}_{k}\right)
\]
\[
=\frac{I_{Q\cap B_{k}\neq\emptyset}}{p^{2k}}\left((1-1/q)^{2}q+(1-q)\right)=\frac{I_{Q\cap B_{k}\neq\emptyset}}{p^{2k}}\cdot\frac{1-q}{q}.
\]
Hence
\[
E(f(Q,Q))=p^{-k}(1/q-1)=p^{-k}(1/P(Q\cap B\neq\emptyset\mid Q\cap B_{k}\neq\emptyset)-1).
\]
\end{proof}
\begin{prop}
\label{lem:L2 kul}Let $\nu$ be a finite Borel measure on $\partial T$
such that $I_{\alpha}(\nu)<\infty$. Then
\[
\lim_{k\rightarrow\infty}E\left((\mu_{k}^{\nu}(A)-\mathcal{C}_{k}(\nu)(A))^{2}\right)=0
\]
for every Borel set $A\subseteq\partial T$.
\end{prop}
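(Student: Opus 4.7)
The plan is to expand the square directly and exploit the orthogonality provided by Lemma \ref{lem:Efqs}. First I would observe that although $\mu_{k}^{\nu}$ sums over all $Q\in\mathcal{S}_{k}$ while $\mathcal{C}_{k}(\nu)$ sums over $Q\in\mathcal{Q}_{k}$, the finite-energy assumption $I_{\alpha}(\nu)<\infty$ forces $\nu(Q)=0$ whenever $C_{\alpha}(Q)=0$ (since $I_{\alpha}(\nu\vert_{Q})\leq I_{\alpha}(\nu)<\infty$ and $\nu(Q)^{2}/I_{\alpha}(\nu\vert_{Q})\leq C_{\alpha}(Q)$). Consequently, for every Borel set $A\subseteq\partial T$,
\[
\mu_{k}^{\nu}(A)-\mathcal{C}_{k}(\nu)(A)=\sum_{Q\in\mathcal{Q}_{k}}\left(\frac{I_{Q\cap B_{k}\neq\emptyset}}{P(Q\cap B_{k}\neq\emptyset)}-\frac{I_{Q\cap B\neq\emptyset}}{P(Q\cap B\neq\emptyset)}\right)\nu(A\cap Q).
\]

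Next I would square and take expectation. The cross terms are precisely the expressions $E(f(Q,S))$ from Lemma \ref{lem:Efqs}, weighted by $\nu(A\cap Q)\nu(A\cap S)$. Since $E(f(Q,S))=0$ for $Q\neq S$, only the diagonal contributes, giving
\[
E\!\left((\mu_{k}^{\nu}(A)-\mathcal{C}_{k}(\nu)(A))^{2}\right)=\sum_{Q\in\mathcal{Q}_{k}}\frac{p^{-k}\left(1-P(Q\cap B\neq\emptyset\mid Q\cap B_{k}\neq\emptyset)\right)}{P(Q\cap B\neq\emptyset\mid Q\cap B_{k}\neq\emptyset)}\nu(A\cap Q)^{2}.
\]
Dropping the nonnegative factor $1-P(Q\cap B\neq\emptyset\mid Q\cap B_{k}\neq\emptyset)\leq 1$ and replacing $\nu(A\cap Q)^{2}$ by $\nu(Q)^{2}$ yields an upper bound of
\[
\sum_{Q\in\mathcal{Q}_{k}}\frac{p^{-k}\nu(Q)^{2}}{P(Q\cap B\neq\emptyset\mid Q\cap B_{k}\neq\emptyset)}.
\]

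Finally, I would invoke Lemma \ref{lem:percol bound}, which states exactly that this quantity tends to $0$ as $k\rightarrow\infty$ under the hypothesis $I_{\alpha}(\nu)<\infty$, completing the proof. There is no real obstacle here — the only subtlety is the bookkeeping observation that $\nu$ charges no cell of zero $\alpha$-capacity, so that the two sums can legitimately be compared term by term on $\mathcal{Q}_{k}$; the rest is an orthogonality computation already packaged in Lemma \ref{lem:Efqs} followed by a direct appeal to Lemma \ref{lem:percol bound}.
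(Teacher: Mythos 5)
Your proposal is correct and follows essentially the same route as the paper: reduce to a sum over $\mathcal{Q}_{k}$ via the observation that finite $\alpha$-energy forces $\nu$ to vanish on zero-capacity cells (the paper cites Remark \ref{rem:X0 on vanish} for this), expand the square using the orthogonality in Lemma \ref{lem:Efqs}, bound the diagonal terms, and conclude by Lemma \ref{lem:percol bound}. The only cosmetic difference is that the paper first replaces $\nu$ by $\nu\vert_{A}$ to assume $A=\partial T$, while you keep general $A$ and bound $\nu(A\cap Q)^{2}\leq\nu(Q)^{2}$; both are fine.
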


\begin{proof}
If $\nu(A)=0$ then the proof is trivial so we assume that $\nu(A)>0$.
Without the loss of generality we can assume that $A=\partial T$
otherwise we replace $\nu$ by $\nu\vert_{A}$. By Remark \ref{rem:X0 on vanish}
we have that $\nu(Q)=0$ for every $Q\in\mathcal{S}_{k}\setminus\mathcal{Q}_{k}$.
Hence by Lemma \ref{lem:Efqs}
\[
E\left((\mu_{k}^{\nu}(A)-\mathcal{C}_{k}(\nu)(A))^{2}\right)=\sum_{Q\in\mathcal{Q}_{k}}\sum_{S\in\mathcal{Q}_{k}}E(f(Q,S))\nu(Q)\nu(S)
\]
\[
=\sum_{Q\in\mathcal{Q}_{k}}p^{-k}(1/P(Q\cap B\neq\emptyset\mid Q\cap B_{k}\neq\emptyset)-1)\nu(Q)^{2}\leq\sum_{Q\in\mathcal{Q}_{k}}\frac{1}{P\left(Q\cap B\neq\emptyset\mid Q\cap B_{k}\neq\emptyset\right)}p^{-k}\nu(Q)^{2}.
\]
Thus the statement follows from Lemma \ref{lem:percol bound}.
\end{proof}
\begin{prop}
\label{lem:l2 agree}Let $\nu$ be a finite Borel measure on $\partial T$
such that $I_{\alpha}(\nu)<\infty$. Then $\mu^{\nu}=\mathcal{C}(\nu)$
almost surely.
\end{prop}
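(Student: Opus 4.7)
The plan is to leverage Proposition \ref{lem:L2 kul} together with the $\mathcal{L}^2$-convergence of $\mathcal{C}_k(\nu)$ to $\mathcal{C}(\nu)$ (Remark \ref{rem:L2 lim}), and the almost sure weak convergence of $\mu_k^\nu$ to $\mu^\nu$ (Proposition \ref{lem:cascade}), in order to match $\mu^\nu$ and $\mathcal{C}(\nu)$ set-by-set along a countable determining family and then promote this to equality of random measures.

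First I would fix an arbitrary Borel set $A\subseteq\partial T$. Since $I_{\alpha}(\nu)<\infty$ we also have $I_{\alpha}(\nu\vert_{A})<\infty$, so by Remark \ref{rem:L2 lim} the sequence $\mathcal{C}_k(\nu)(A)$ converges to $\mathcal{C}(\nu)(A)$ in $\mathcal{L}^2$. On the other hand, Proposition \ref{lem:L2 kul} gives
\[
\lim_{k\rightarrow\infty}E\!\left((\mu_k^{\nu}(A)-\mathcal{C}_k(\nu)(A))^2\right)=0.
\]
Combining these by the triangle inequality in $\mathcal{L}^2$ shows that $\mu_k^\nu(A)$ converges in $\mathcal{L}^2$ to $\mathcal{C}(\nu)(A)$. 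At the same time, Property \textit{2.)} of Proposition \ref{lem:cascade} tells us that for any fixed countable collection of Borel sets $A_n$, we have $\mu_k^\nu(A_n)\to\mu^\nu(A_n)$ almost surely simultaneously for all $n$. Comparing the two limits of $\mu_k^\nu(A_n)$ (almost sure vs.\ $\mathcal{L}^2$, hence agreeing in probability) yields $\mu^\nu(A_n)=\mathcal{C}(\nu)(A_n)$ almost surely for every $n$ in the chosen countable family.

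To upgrade this to equality of random measures, I would take the countable family $\mathcal{G}$ of finite unions of cylinder sets $[v]$ (or equivalently the finite unions of balls $B(x,r)$ with $x$ in a countable dense subset and $r\in\mathbb{Q}$, as in Lemma \ref{lem:open eq dense}). Applying the previous step to each $G\in\mathcal{G}$ and taking the countable intersection of the corresponding full-probability events, I obtain a single event $H$ with $P(H)=1$ on which $\mu^\nu(G)=\mathcal{C}(\nu)(G)$ for every $G\in\mathcal{G}$ simultaneously. By Lemma \ref{lem:open eq dense}, this forces $\mu^\nu=\mathcal{C}(\nu)$ as measures on $H$, and hence $\mu^\nu=\mathcal{C}(\nu)$ almost surely.

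There is no serious obstacle here; the argument is essentially assembly of prior results. The only subtle point is that the $\mathcal{L}^2$ bound in Proposition \ref{lem:L2 kul} is stated set-by-set, so promoting the conclusion to an equality of random measures requires the countable determining class. If one wanted to avoid $\mathcal{G}$ entirely, an alternative route is to exploit uniqueness in the $\mathcal{L}^2$-limit: since $\mu^\nu$ and $\mathcal{C}(\nu)$ are both random finite Borel measures and they assign the same values (a.s.) to a $\pi$-system generating the Borel $\sigma$-algebra of the Cantor-like space $\partial T$, a monotone class argument, together with Proposition \ref{lem:charateodory ineq}, yields the same conclusion.
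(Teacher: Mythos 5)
Your argument is correct and is essentially the paper's own proof: both combine Remark \ref{rem:L2 lim}, Proposition \ref{lem:L2 kul} and Property \textit{2.)} of Proposition \ref{lem:cascade} to get $\mu^{\nu}(Q)=\mathcal{C}(\nu)(Q)$ almost surely on a countable determining class, and then pass to equality of measures. The paper takes your ``alternative route,'' applying Proposition \ref{lem:charateodory ineq} to the semiring $\cup_{k}\mathcal{S}_{k}$ rather than Lemma \ref{lem:open eq dense}, but this is an immaterial difference.
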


\begin{proof}
We have that $\mathcal{C}_{k}(\nu)(Q)$ converges to $\mathcal{C}(\nu)(Q)$
in $\mathcal{L}^{2}$ for every $Q\in\mathcal{S}_{k}$ by Remark \ref{rem:L2 lim}.
Hence it follows by Proposition \ref{lem:L2 kul} that $\mu_{k}^{\nu}(Q)$
converges to $\mathcal{C}(\nu)(Q)$ in $\mathcal{L}^{2}$. So by Property
\textit{2.)} of Proposition \ref{lem:cascade} it follows that $\mu^{\nu}(Q)=\mathcal{C}(\nu)(Q)$
for every $Q\in\mathcal{S}_{k}$ almost surely. Since $\cup_{k=1}^{\infty}\mathcal{S}_{k}$
is a semiring that generates the Borel $\sigma$-algebra of $\partial T$
it follows by Proposition \ref{lem:charateodory ineq} that $\mu^{\nu}=\mathcal{C}(\nu)$
almost surely.
\end{proof}
As we noted in Remark \ref{rem:mu is cascade} the random measure
$\mu^{\nu}$ is a multiplicative cascade measure when $\nu$ is the
uniform measure. The next theorem states that $\mu^{\nu}$ is the
conditional measure $C(\nu)$.
\begin{thm}
\label{thm:cond agree cascade}Let $\nu$ be a finite Borel measure
on $\partial T$. Then $\mu^{\nu}=\mathcal{C}(\nu)$ almost surely.
\end{thm}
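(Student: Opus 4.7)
The plan is to reduce the statement to the finite-energy case handled in Proposition \ref{lem:l2 agree} by decomposing $\nu$ according to Proposition \ref{decomposition} and exploiting the countable additivity of both $\mu^{(\cdot)}$ and $\mathcal{C}(\cdot)$ in the measure argument.

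First I would split $\nu = \nu_R + \nu_\perp$ into its $\varphi_\alpha$-regular and $\varphi_\alpha$-singular parts. For the singular part, Property \textit{4.)} of Proposition \ref{lem:cascade} gives $\mu^{\nu_\perp} = 0$ almost surely, while Property \textit{vii.)} of Definition \ref{def:def of cond meas} gives $\mathcal{C}(\nu_\perp) = 0$ almost surely, so the two objects agree on $\nu_\perp$. For the regular part, I invoke Property \textit{iv.)} of Proposition \ref{decomposition} to write $\nu_R = \sum_{n=1}^{\infty} \nu|_{A_n}$ as a countable sum of finite Borel measures $\nu|_{A_n}$ with $I_{\varphi_\alpha}(\nu|_{A_n}) < \infty$. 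Proposition \ref{lem:l2 agree} then applies to each summand, yielding $\mu^{\nu|_{A_n}} = \mathcal{C}(\nu|_{A_n})$ almost surely for every $n$, and hence simultaneously for all $n$ on an event of full probability.

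Now I combine the pieces. Property \textit{5.)} of Proposition \ref{lem:cascade} gives
\[
\mu^{\nu_R} = \sum_{n=1}^{\infty} \mu^{\nu|_{A_n}} \quad \text{and} \quad \mu^{\nu} = \mu^{\nu_R} + \mu^{\nu_\perp}
\]
almost surely, while Property \textit{ix.)} of Definition \ref{def:def of cond meas} yields the analogous identities
\[
\mathcal{C}(\nu_R) = \sum_{n=1}^{\infty} \mathcal{C}(\nu|_{A_n}) \quad \text{and} \quad \mathcal{C}(\nu) = \mathcal{C}(\nu_R) + \mathcal{C}(\nu_\perp)
\]
almost surely. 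Using Proposition \ref{lem:l2 agree} termwise and the vanishing of both singular parts, one obtains $\mu^{\nu} = \mathcal{C}(\nu)$ almost surely.

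There is no real obstacle beyond bookkeeping: the substantive analytic input — the $\mathcal{L}^2$ agreement on cylinder sets between the $T$-martingale limit and the conditional measure — has already been carried out in Proposition \ref{lem:L2 kul} and Proposition \ref{lem:l2 agree}. The only point worth some care is that the countable sum decomposition provided by Proposition \ref{decomposition} is available for all finite Borel $\nu$, and that both $\mu^{(\cdot)}$ and $\mathcal{C}(\cdot)$ respect countable sums of subordinate measures on a single almost sure event; once these are in hand the conclusion is immediate.
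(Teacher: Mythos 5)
Your proposal is correct and follows essentially the same route as the paper's own proof: decompose $\nu=\nu_{\varphi_{\alpha}\perp}+\sum_{i}\nu\vert_{A_{i}}$ via Proposition \ref{decomposition}, apply Proposition \ref{lem:l2 agree} to each finite-energy summand, kill the singular part with Property \textit{4.)} of Proposition \ref{lem:cascade} and Property \textit{vii.)} of Definition \ref{def:def of cond meas}, and reassemble using Property \textit{5.)} and Property \textit{ix.)}. No gaps.
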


\begin{proof}
By Proposition \ref{decomposition} there exists a sequence of finite
Borel measures $\nu_{i}$ such that $\nu=\nu_{\varphi_{\alpha}\perp}+\sum_{i=1}^{\infty}\nu_{i}$
and $I_{\alpha}(\nu_{i})<\infty$ for every $i\in\mathbb{N}$. It
follows from Proposition \ref{lem:l2 agree} that $\mu^{\nu_{i}}=\mathcal{C}(\nu_{i})$
for every $i\in\mathbb{N}$ almost surely. It follows from Property
\textit{4.)} of Proposition \ref{lem:cascade} that $\mu^{\nu_{\varphi_{\alpha}\perp}}=0$
almost surely and it follows from Property \textit{vii.)} of Definition
\ref{def:def of cond meas} that $\mathcal{C}(\nu_{\varphi_{\alpha}\perp})=0$
almost surely. Thus it follows from Property \textit{5.)} of Proposition
\ref{lem:cascade} and Property \textit{ix.)} of Definition \ref{def:def of cond meas}
that
\[
\mu^{\nu}=\mu^{\nu_{\varphi_{\alpha}\perp}}+\sum_{i=1}^{\infty}\mu^{\nu_{i}}=\mathcal{C}(\nu_{\varphi_{\alpha}\perp})+\sum_{i=1}^{\infty}\mathcal{C}(\nu_{i})=\mathcal{C}(\nu)
\]
almost surely.
\end{proof}
\selectlanguage{english}%
\begin{center}
$\mathbf{Acknowledgements}$
\par\end{center}

The author was partially supported by the ERC grant (grant number
306494) and by the ERC grant (grant number 772466). The author would
like to thank Xiong Jin, Zemer Kosloff, Georg Berschneider, Yuval
Peres, David Simmons, Mike Hochman, Benjamin Weiss, Kenneth Falconer,
B\'alint T\'oth and G\'abor Pete for the many useful discussion.
\selectlanguage{british}%

\end{document}